\documentclass{amsart}

\usepackage{amssymb}
\usepackage{mathrsfs}
\usepackage{tikz-cd}
\usepackage{array,booktabs,longtable}
\usepackage{microtype}
\usepackage{enumitem}
\usepackage{needspace}
\usepackage{xcolor}
\usepackage[bookmarksnumbered=true,bookmarksopen=true,bookmarksopenlevel=2,bookmarksdepth=3]{hyperref}

\definecolor{AMSDarkBlue}{RGB}{0,51,102}
\hypersetup{
  colorlinks=true,
  linkcolor=AMSDarkBlue,
  citecolor=AMSDarkBlue,
  urlcolor=AMSDarkBlue,
  breaklinks=true,
  pdfpagemode=UseOutlines,
  pdftitle={Rational functions over finite fields with Galois closure of genus zero},
  pdfauthor={Xiang Fan}
}
\numberwithin{equation}{section}

\newtheorem{theorem}{Theorem}[section]
\newtheorem{proposition}[theorem]{Proposition}
\newtheorem{corollary}[theorem]{Corollary}
\newtheorem{lemma}[theorem]{Lemma}
\theoremstyle{definition}
\newtheorem{definition}[theorem]{Definition}
\newtheorem{example}[theorem]{Example}
\theoremstyle{remark}
\newtheorem{remark}[theorem]{Remark}

\title[Rational functions with Galois closure of genus zero]{Rational functions over finite fields with Galois closure of genus zero}
\author{Xiang Fan}
\address{School of Mathematics, Sun Yat-sen University, Guangzhou 510275, China}
\email{fanx8@mail.sysu.edu.cn}
\date{}
\subjclass[2020]{Primary 14H30; Secondary 14G15, 20B15, 12F10, 11T06}
\keywords{rational functions over finite fields,
  genus-zero Galois closures, Frobenius descent, monodromy groups,
  exceptional covers, permutation rational functions,
  functional decomposition}

\begin{document}

\begin{abstract}
Let $k=\mathbb F_q$. We classify, up to pre- and post-composition by
$k$-M\"obius transformations, all separable $k$-indecomposable rational
functions $f\in k(X)$ of degree greater than one whose Galois closure has
genus zero. The classification is valid in arbitrary characteristic and
includes exact arithmetic conditions and class counts over the prescribed
field. Semilinear Frobenius descent determines the finite-field forms and
which geometric decompositions descend to $k$.

For every separable $f\in k(X)$ of degree greater than one with Galois
closure of genus zero and every $m\geqslant1$, we prove that $f$ permutes
$\mathbf P^1(\mathbb F_{q^m})$ if and only if it is exceptional over
$\mathbb F_{q^m}$, meaning that it permutes $\mathbf P^1(L)$ for infinitely
many finite extensions $L/\mathbb F_{q^m}$. No indecomposability assumption
or lower bound on $q$ is needed. The argument combines fixed-point
averaging with ramification on the Galois-closure curve. If the full
constant field is $\mathbb F_{q^d}$, these properties depend only on
$\gcd(m,d)$. For each classified family we determine the permutation
extension degrees explicitly and characterize polynomial representatives.
\end{abstract}

\maketitle

\section{Introduction and main results}

Let $k=\mathbb F_q$ be a finite field of characteristic $p$.
Fix an algebraic closure $\bar k$ of $k$ and let $\mathbf x$ be
transcendental over $\bar k$.  Throughout, we work inside a fixed algebraic
closure of $\bar k(\mathbf x)$.  A nonconstant rational function $f\in k(X)$
will also be viewed as the corresponding $k$-morphism
$f:\mathbf P^1_k\to\mathbf P^1_k$.  We call $f$ \emph{separable} if the
extension $k(\mathbf x)/k(f(\mathbf x))$ is separable.  For separable $f$, let $\Omega_f$ denote the Galois closure of the finite
separable extension $k(\mathbf x)/k(f(\mathbf x))$, taken inside our fixed
algebraic closure of $\bar k(\mathbf x)$.  Thus
$k(\mathbf x)\subseteq\Omega_f$ and $\Omega_f/k(f(\mathbf x))$ is a finite
Galois extension.  The \emph{full constant field} of $\Omega_f$ is
$\kappa_f:=\Omega_f\cap\bar k$.  Put
$\overline{\Omega}_f:=\Omega_f\bar k$; this is the geometric Galois closure
of $\bar k(\mathbf x)/\bar k(f(\mathbf x))$.  Let $C_f/\kappa_f$ be the
smooth projective curve with function field $\Omega_f$.  We say that $f$ has
\emph{Galois closure of genus zero} when $C_f$ has genus zero.

When $f$ has Galois closure of genus zero, the curve $C_f$ is a projective
line over $\kappa_f$, so the geometric normal closure is governed by finite
automorphism groups of $\mathbf P^1$. Yet the cover over $\kappa_f$ does not
determine the rational function over the prescribed field $k$.
Frobenius descent determines the covers over $k$, can prevent geometric
intermediate covers from descending to $k$, and controls the extension degrees
on which permutation occurs.  We make this arithmetic layer explicit and
classify it completely.

A rational function is \emph{$k$-indecomposable} if it cannot be written
$f=g\circ h$ with $g,h\in k(X)$ of degrees greater than $1$.  Here
\emph{$\bar k$-indecomposable} is also called
\emph{geometrically indecomposable}.  We write $\operatorname{PGL}_2(k)$ for the group of all degree-one rational
functions over $k$, under composition.
Two rational functions $f,g\in k(X)$ are called
\emph{$k$-M\"obius equivalent} if $g=\alpha\circ f\circ\beta$ for some
$\alpha,\beta\in\operatorname{PGL}_2(k)$. We write $f\sim_k g$ in this
case, and call the resulting equivalence class a \emph{$k$-M\"obius class}.

Let $\pi:X\to Y$ be a finite separable morphism of smooth projective
curves over a field $F$, let $P\in X(\overline F)$, and put $Q=\pi(P)$.
A \emph{local parameter} at $Q$ is a rational function $u_Q$ on
$Y_{\overline F}$ having a simple zero at $Q$, equivalently
$\operatorname{ord}_Q(u_Q)=1$.
The \emph{ramification index} of $\pi$ at $P$ is
\[
 e_P(\pi):=\operatorname{ord}_P(u_Q\circ\pi);
\]
it is independent of the choice of $u_Q$.
We call $P$ a \emph{ramification point}, or say that $P$ is ramified,
if $e_P(\pi)>1$, and we call a point of the target a \emph{branch point}
if some point above it is ramified.
The set of ramification points is the \emph{ramification locus} of
$\pi$, and the set of branch points is its \emph{branch locus}.
We use the term \emph{branch divisor} for the reduced effective divisor
supported on the branch locus.
For $Q\in Y(\overline F)$, the \emph{ramification partition} is
\[
 \operatorname{Ram}_Q(\pi)
 :=\{\!\{\,e_P(\pi):P\in\pi^{-1}(Q)\,\}\!\}.
\]
Since $\sum_{P\in\pi^{-1}(Q)}e_P(\pi)=\deg\pi$, this is a partition
of $\deg\pi$.
We call a separable rational function $f\in k(X)$ \emph{tame} if all
its geometric ramification indices are prime to $p$, and \emph{wild}
otherwise.
Tameness and wildness are invariant under $k$-M\"obius equivalence;
accordingly, a $k$-M\"obius class represented by a separable rational
function will also be called tame or wild.

\Needspace{5\baselineskip}
\medskip
\noindent\textit{Main results.}

\begin{theorem}[Complete finite-field classification]
The $k$-M\"obius classes represented by separable
$k$-indecomposable rational functions $f\in k(X)$ of degree greater than $1$
whose Galois closure has genus zero are exactly the classes in the tame and
wild classifications of Theorems~\ref{thm:intro-tame}
and~\ref{thm:intro-wild}.  After the explicit small-group identifications in
Section~\ref{sec:wild-master}, these lists are mutually exclusive.  Every
listed class occurs exactly under the stated arithmetic conditions, and the
number of classes over the prescribed field $k$ is given explicitly.
\end{theorem}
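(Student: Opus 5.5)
The plan is to reduce the classification to the geometric one and then descend.

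\emph{Geometric reduction.} Let $f\in k(X)$ be separable and $k$-indecomposable with $C_f$ of genus zero. Over $\kappa_f$ the curve $C_f$ is a conic with a point (Wedderburn/Chevalley--Warning), hence $C_f\cong\mathbf P^1_{\kappa_f}$. The geometric monodromy group $G=\operatorname{Gal}(\overline\Omega_f/\bar k(f(\mathbf x)))$ is therefore a finite subgroup of $\operatorname{PGL}_2(\bar k)$. Dickson's classification of these groups gives: cyclic groups, dihedral groups, $A_4$, $S_4$, $A_5$, groups $E\rtimes C$ with $E$ an elementary abelian $p$-group, $\operatorname{PSL}_2(\mathbb F_{p^r})$ and $\operatorname{PGL}_2(\mathbb F_{p^r})$. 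The first four are tame and the rest are wild, apart from the overlaps in small characteristic. Geometrically, $f$ corresponds to the quotient map $\mathbf P^1/H\to\mathbf P^1/G$ for a point stabiliser $H$. Indecomposability over $\bar k$ would mean $H$ is maximal in $G$, but only $k$-indecomposability is assumed. So we must allow $H$ to be non-maximal, provided no intermediate subgroup $H<K<G$ is stable under the arithmetic Frobenius action. This is precisely where descent enters.

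\emph{Frobenius descent.} The arithmetic group $A=\operatorname{Gal}(\Omega_f\bar k/k(f))$ is an extension of $\hat{\mathbb Z}$ by $G$, with $A/G$ generated by a Frobenius lift. Since $C_f\cong\mathbf P^1_{\kappa_f}$, this lift acts on $\overline\Omega_f=\bar k(t)$ semilinearly, as $\phi:t\mapsto\gamma(t)$ twisted by $q$-Frobenius on coefficients, for some $\gamma\in\operatorname{PGL}_2(\bar k)$ normalising $G$. By Lang's theorem $H^1(\hat{\mathbb Z},\operatorname{PGL}_2(\bar k))=1$, so the $k$-forms are classified by twisted conjugacy classes in $N_{\operatorname{PGL}_2(\bar k)}(G)$ modulo $G$. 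For each group in Dickson's list I will work through four steps:
\begin{enumerate}
\item compute the normaliser $N$ and the outer action;
\item enumerate the Frobenius-twisted classes $\gamma G$ for which the branch data and $H$ descend, so that $f\in k(X)$ up to $k$-M\"obius equivalence;
\item decide which $H$ have no $\phi$-stable intermediate subgroup, which is the $k$-indecomposability condition;
\item count the $k$-M\"obius classes.
\end{enumerate}
Step (iv) amounts to counting orbits of the twisted action. The relevant quantities are $q\bmod n$ for cyclic and dihedral groups, whether $\sqrt{-1}$, $\sqrt5$ or $\sqrt2$ lie in $k$ for $A_4$, $S_4$, $A_5$, and the field of definition $\mathbb F_{p^r}\cap k$ together with the outer automorphisms for $\operatorname{PSL}_2$ and $\operatorname{PGL}_2$. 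This produces exactly the tame list (Theorem~\ref{thm:intro-tame}) and the wild list (Theorem~\ref{thm:intro-wild}).

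\emph{Completeness and exclusivity.} Every $k$-M\"obius class arising from some $f$ comes from one pair $(G,H,\phi)$, so it appears in the lists. Conversely, each listed datum with its arithmetic condition gives an explicit $f$, namely the quotient $\mathbf P^1\to\mathbf P^1/G$ factored through $H$ and descended by $\phi$. This shows every listed class occurs exactly under the stated conditions. Mutual exclusivity follows because $(G,H)$ up to twisted conjugacy is an invariant of the class. The remaining subtlety is that abstract isomorphisms between small groups, for example $\operatorname{PSL}_2(\mathbb F_3)\cong A_4$, $\operatorname{PGL}_2(\mathbb F_3)\cong S_4$, $\operatorname{PSL}_2(\mathbb F_5)\cong A_5$ and $\operatorname{PSL}_2(\mathbb F_4)\cong A_5$, can cause the same class to be listed twice. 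These are resolved by the identifications in Section~\ref{sec:wild-master}: since subgroups of $\operatorname{PGL}_2(\bar k)$ are unique up to conjugacy within each isomorphism type in these cases, we assign each overlapping class to exactly one list.

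\emph{Main obstacle.} The hardest step is (iii) in the wild case for $E\rtimes C$. There $H$ can be a non-maximal subgroup of $E$ (or of $EC$), and the Frobenius-stable intermediate subgroups are the $\mathbb F_q$-rational subspaces of $E$ under the twisted semilinear structure. To handle this, I would identify $E$ with an additive subgroup of $\bar k$ whose scalars lie in $\mathbb F_{p^s}$. The condition then becomes irreducibility of a $\phi$-semilinear module, which I will treat by linearised polynomials and additive polynomial decomposition. Counting classes in this case is also expected to be the most delicate part.
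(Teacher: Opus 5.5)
Your architecture matches the paper's. It uses semilinear Frobenius descent of the genus-zero closure and the Dickson--Faber list. It treats $k$-indecomposability as the absence of a $\varphi$-stable subgroup strictly between $H$ and $G$. It handles the affine case with subspace/Ore polynomials.

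\textbf{The descent invariant is wrong.} The source curve $C/H$ sees the Frobenius lift only modulo $H$, and an equivalence must normalize $H$ as well as $G$. So for fixed $(G,H)$ the $k$-forms are the twisted classes in $(\mathrm N(G)\cap\mathrm N(H))/H$ (Theorem~\ref{thm:fixed-pair-cohomology}; note the coset is $H_2\varphi_2$, not $G_2\varphi_2$, in Theorem~\ref{thm:exact-map-equivalence}). They are not twisted classes of cosets $\gamma G$ in $\mathrm N(G)/G$. Your count undercounts whenever $\mathrm N_G(H)\neq H$. For example, the pair $(A_4,C_2)$ in characteristic $3$ has four $k$-forms (Proposition~\ref{prop:split-cartan}), while $\mathrm N(G)/G\cong C_2$.

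In the indecomposable range this is repairable, but you must supply the argument:
\begin{itemize}
\item Primitivity forces $H=1$ or $\mathrm N_G(H)=H$ (part (3) of Theorem~\ref{thm:structural-consequences}).
\item If $\mathrm N_G(H)=H$, two lifts in one coset $\gamma G$ that both normalize $H$ differ by an element of $H$.
\item If $H=1$, you must still kill the twisted $H^1$ with values in $G$ itself. Lang's theorem does this on the torus or translation part of the normalizer, and $H^1(\langle F\rangle,V_4)=0$ for the $3$-cycle action in the $V_4/A_4$ case.
\end{itemize}

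\textbf{The more serious gap is in the Lie-type case, not the affine case.} In the affine case your Ore-module plan is exactly the paper's. For $G=\operatorname{PSL}_2(Q)$ or $\operatorname{PGL}_2(Q)$ with $Q$ arbitrary, step (iii) requires every core-free non-maximal $H$ with $\langle H,\varphi\rangle$ maximal in $\langle G,\varphi\rangle$. These are the novelty maximal subgroups of almost simple groups with socle $\operatorname{PSL}_2(Q)$. Running four steps group by group is not a method for infinitely many $Q$. Without such an input, the word \emph{exactly} in the statement is unproved for the wild list.

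The paper imports Giudici's classification for this. It also uses the Dickson--Giudici maximal-subgroup ranges, and the action of outer automorphisms on subgroup classes, which decides counts of $1$ versus $2$. Cyclicity of $A/G$ is then used to discard the $\operatorname{P\Gamma L}_2(9)$ novelties with quotient $C_2\times C_2$. This yields the following cases of Theorem~\ref{thm:wild-novelty}:
\begin{itemize}
\item the Cartan novelties at $Q=7,9,11$;
\item the two $M_{10}$ rows;
\item the infinite family $H=A_4$, $U=S_4$ for $p\equiv\pm11,\pm19\pmod{40}$.
\end{itemize}

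\textbf{Smaller corrections.} The characteristic-$2$ dihedral and characteristic-$3$ $A_5$ groups are genuine wild families, not accidental-isomorphism bookkeeping. The former has a single class. The latter needs its own realization and its own genus check. Also, the tame polyhedral class counts do not depend on $q$ at all. The tetrahedral arithmetic group is governed by $\sqrt{-3}$, not $\sqrt{-1}$.
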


At the level of families, the tame classes are the prime-degree cyclic and
dihedral families, the quartic $V_4/A_4$, tetrahedral and octahedral classes,
and the icosahedral classes of degrees $5,6,10$.  The wild classes are affine,
small-characteristic dihedral or $A_5$, or of
$\operatorname{PSL}_2/\operatorname{PGL}_2$ type over finite fields of
characteristic $p$.

The classification is not merely a list of geometric monodromy groups and
point stabilizers; it also
records the distinction between $k$-indecomposability and geometric
indecomposability.  The $V_4/A_4$ quartic is the only tame
$k$-indecomposable class that is geometrically decomposable.  It illustrates a
phenomenon that also occurs in the wild classification: Frobenius can eliminate
every geometric intermediate field, so a rational function may be
indecomposable over $k$ while decomposing over $\bar k$.

For a finite field $K$, we call a rational function $f\in K(X)$
\emph{exceptional} if it permutes $\mathbf P^1(L)$ for infinitely many finite
extensions $L/K$.

\begin{theorem}[Permutation--exceptionality equivalence and exact extension degrees]
Let $f\in k(X)$ be separable of degree greater than $1$ and suppose that its
Galois closure has genus zero.  Let $\kappa_f=\mathbb F_{q^d}$ be the full
constant field of the Galois closure, and put
\[
 \mathcal E_f:=\{\,g\geqslant1:g\mid d,\ f\text{ is exceptional over }\mathbb F_{q^g}\,\}.
\]
For every $m\geqslant1$, the following are equivalent:
\begin{enumerate}[label=\textup{(\roman*)},ref=\roman*]
\item $f$ permutes $\mathbf P^1(\mathbb F_{q^m})$;
\item $f$ is exceptional over $\mathbb F_{q^m}$;
\item $(m,d)\in\mathcal E_f$.
\end{enumerate}
Moreover, $d\notin\mathcal E_f$.  If $g\in\mathcal E_f$ and $h$ is a
positive integer with $h\mid g$, then $h\in\mathcal E_f$.  Hence the common
set of permutation and exceptional extension degrees is purely periodic; if
it is nonempty, it contains degree $1$ and every degree coprime to $d$.
\end{theorem}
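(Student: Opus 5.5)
We prove the permutation--exceptionality theorem with the standard Galois-theoretic set-up. Put $A=\operatorname{Gal}(\Omega_f/k(f(\mathbf x)))$, let $G=\operatorname{Gal}(\overline\Omega_f/\bar k(f(\mathbf x)))\trianglelefteq A$ be the geometric monodromy group, and let $H\leqslant A$ be the stabilizer of $\mathbf x$, acting on $n=\deg f$ points. Then $A/G\cong\operatorname{Gal}(\kappa_f/k)\cong\mathbb Z/d$. For $m\geqslant1$ let $A_m$ be the coset of $G$ mapping to $\mathrm{Frob}^m$; it depends only on $m\bmod d$.

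The classical Fried--Lenstra/Cohen characterization says that $f$ is exceptional over $\mathbb F_{q^m}$ if and only if every $\sigma\in A_m$ fixes exactly one point. We will use this. It immediately gives (ii)$\Leftrightarrow$(iii) once we show that the condition depends only on $\gcd(m,d)$. To see this, note that if $\gcd(m,d)=g$, then $m\equiv gu\pmod d$ for some $u$ coprime to $d/g$. We may choose $u$ coprime to $|A|$ after adjusting by multiples of $d/g$, by Dirichlet or the CRT. Then $\sigma\mapsto\sigma^u$ maps $A_g$ bijectively onto $A_m$, up to the usual argument that powering by an integer prime to the order preserves fixed-point sets. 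Indeed $\operatorname{Fix}(\sigma^u)=\operatorname{Fix}(\sigma)$ when $u$ is prime to $\operatorname{ord}\sigma$.

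The same powering argument proves the divisibility closure. If $h\mid g$, elements of $A_g$ are $(g/h)$-th powers of elements of $A_h$, up to $G$. Suppose every element of $A_g$ has exactly one fixed point. An element $\tau\in A_h$ fixes at most the points fixed by $\tau^{g/h}\in A_g$, so it has at most one fixed point. By Burnside's lemma on the coset, the average number of fixed points over $A_h$ is $1$, since $H$ meets every coset because $k$ is the constant field of $k(\mathbf x)$. Hence every $\tau\in A_h$ fixes exactly one point. The claim $d\notin\mathcal E_f$ also follows: $A_d=G$ contains the identity, which fixes all $n>1$ points.

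The real content is (i)$\Rightarrow$(ii), with no bound on $q$; the direction (ii)$\Rightarrow$(i) is standard via Chebotarev. The plan is fixed-point averaging over the points $P\in C_f(\mathbb F_{q^m})$ lying over each $Q\in\mathbf P^1(\mathbb F_{q^m})$. Because $C_f\cong\mathbf P^1_{\kappa_f}$, the point counts are exact. Over a base field containing $\kappa_f$ we have $\#C_f(\mathbb F_{q^{m}})=q^m+1$ with no error term, and in the twisted case the twisted Frobenius classes are counted exactly through the Lefschetz number of a Möbius-type automorphism of $\mathbf P^1$, which is $q^m+1$ or a small correction. Counting pairs gives
\[
 \sum_{Q}\#\{\text{$\mathbb F_{q^m}$-points of }f^{-1}(Q)\}=q^m+1,
\]
and the unramified contribution equals the average over $A_m$ of the fixed-point count, weighted by the number of elements of $A_m$ with $0$ fixed points. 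The discrepancy comes only from branch points, and each branch point is handled through the inertia groups on $C_f$, whose fixed points on $\mathbf P^1$ are explicit. The main obstacle is showing that if some $\sigma\in A_m$ has no fixed point, then an $\mathbb F_{q^m}$-rational point of $C_f$ actually realizes this Frobenius class at an unramified fiber, so that $f$ fails to be surjective on $\mathbf P^1(\mathbb F_{q^m})$ even for tiny $q$. To prove it, I would write $\sigma$ as a twisted Frobenius $\phi\circ\mathrm{Frob}$ on $\mathbf P^1_{\bar k}$ with $\phi\in\operatorname{PGL}_2$. Its fixed points form a twisted form of $\mathbf P^1$, and a twisted form of $\mathbf P^1$ over a finite field is trivial by Lang's theorem, so it has exactly $q^m+1$ rational points. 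The number of these lying on the finitely many ramification points is bounded by the ramification structure via Riemann--Hurwitz on $C_f$, with $\sum(1-1/e)<2$ in the tame case and an explicit analysis of the wild cases. This shows that at least one such point is unramified, and its image $Q$ has no $\mathbb F_{q^m}$-preimage.
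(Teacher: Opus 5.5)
Your treatment of (ii)$\Leftrightarrow$(iii), of closure under divisors, and of $d\notin\mathcal E_f$ is fine. Choosing $u\equiv m/g\pmod{d/g}$ prime to $|A|$ and using $\operatorname{Fix}(\sigma^u)=\operatorname{Fix}(\sigma)$ is a valid substitute for the paper's appeal to independence of the generating coset. The coset-Burnside argument for $h\mid g$ also works; the paper gets this directly from the definition of exceptionality.

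\textbf{The gap is in (i)$\Rightarrow$(ii), which carries all the content.} Your key claim is that for a derangement $\sigma$ of the generating coset, some fixed point of the semilinear map $\mathcal F_\sigma$ on $C_f\cong\mathbf P^1$ is unramified. You propose to show this by bounding the ramified fixed points via Riemann--Hurwitz. But Riemann--Hurwitz only bounds the number of branch points, i.e.\ $G$-orbits. The ramification locus itself has about $|G|$ points: $2\ell+2$ for $D_{2\ell}$, $62$ for $A_5$, and $Q_0^2+1$ for $\operatorname{PSL}_2(Q_0)$ or $\operatorname{PGL}_2(Q_0)$. This can far exceed $q^m+1$ and can contain a whole projective $\mathbb F_{q^m}$-subline.

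The claim is in fact false for individual derangements. Take $Q_0$ odd, $K=\mathbb F_{Q_0}$, and $f=\mathcal B^+_{Q_0}$, so $G=\operatorname{PGL}_2(Q_0)$, $H$ is a Borel subgroup, the Frobenius twist is trivial, the coset is $G$, and $\mathcal F_x=x\circ\Phi_{Q_0}$. Let $x(z)=\epsilon/z$ with $\epsilon$ a nonsquare.
\begin{itemize}
\item $x$ has no fixed point on $G/H\cong\mathbf P^1(\mathbb F_{Q_0})$.
\item The fixed points of $\mathcal F_x$ are the $Q_0+1$ roots of $z^{Q_0+1}=\epsilon$.
\item All of them lie in $\mathbf P^1(\mathbb F_{Q_0^2})\setminus\mathbf P^1(\mathbb F_{Q_0})$, so all are tamely ramified.
\item The corresponding rational branch fiber has exactly one rational point, so this derangement witnesses nothing.
\end{itemize}
Non-permutation of this $f$ shows up elsewhere: its fiber over $\infty$ consists of the two rational points $0$ and $\infty$. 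That is a branch fiber, which your method never examines.

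What is missing is a way to use ramified fixed points and to combine information across the coset. Proposition~\ref{prop:local-sector} shows that for $P\in\operatorname{Fix}(\mathcal F_a)$, the number of rational points over the image point is $|I_P|^{-1}\sum_{i\in I_P}\chi(ia)$. So a permutation forces $\sum_{i\in I_P}v(ia)=0$ at every fixed point. The paper combines this with the coset moments $|G|^{-1}\sum\chi=1$ and $M_2-1=|G|^{-1}\sum v^2\in\mathbb Z_{\geqslant0}$ (Proposition~\ref{prop:coset-package}), and with the capacity bound $M(Q+1)\leqslant s|G|$.

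Even this is not uniform. For $k$-indecomposable $f$ the paper still works through the classification:
\begin{itemize}
\item the multiplier congruence $u\equiv\pm Q\pmod\ell$ in the dihedral case;
\item rational two-point fibers for the polyhedral and characteristic-$3$ $A_5$ maps;
\item a power-sum check for the quintic over $\mathbb F_7$ and $\mathbb F_{11}$;
\item identification of the sheets with a ramification orbit in the affine and characteristic-$2$ dihedral cases;
\item for $\operatorname{PSL}_2/\operatorname{PGL}_2$, Shintani descent reducing to identity or involution images, followed by the contradiction $1<M_2<2$.
\end{itemize}
It then removes indecomposability by induction on the degree, using Theorem~\ref{thm:complete-decomposition} and the exceptional-tower lemma. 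Your proposal has neither the local identity at branch fibers nor a replacement for these arguments, so (i)$\Rightarrow$(ii) remains unproved.
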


This second theorem applies to the entire separable genus-zero class and has no
indecomposability hypothesis.  The two headline results are therefore logically
distinct: the first classifies the indecomposable building blocks over the
prescribed field, whereas the second gives uniform extension arithmetic for the
whole separable class.  Its proof converts bijectivity into local fixed-point
constraints and combines them with ramification on the Galois-closure curve;
in the Lie-type cases, it uses Shintani descent for $\operatorname{PSL}_2$ and $\operatorname{PGL}_2$.  The decomposition
formalism then removes the indecomposability hypothesis.

\medskip
\noindent\textit{Semilinear descent.}

A \emph{$k$-form} of a prescribed geometric cover or pair is an object
over $k$ whose base change to $\bar k$ is isomorphic to that geometric
object. We use \emph{finite-field form} in the same sense.
We describe the mechanism behind the finite-field classification. To isolate
the Galois-theoretic input, let $f\in k(X)$ be a nonconstant separable rational
function; no genus-zero hypothesis is imposed at this stage. Put
$\mathbf t:=f(\mathbf x)$ and, for the arithmetic Galois closure $\Omega_f$, set
\[
\begin{alignedat}{2}
 A_f&:=\operatorname{Gal}(\Omega_f/k(\mathbf t)),\qquad
 &G_f&:=\operatorname{Gal}(\Omega_f/\kappa_f(\mathbf t)),\\
 U_f&:=\operatorname{Gal}(\Omega_f/k(\mathbf x)),
 &H_f&:=\operatorname{Gal}(\Omega_f/\kappa_f(\mathbf x)).
\end{alignedat}
\]
We call the isomorphism class of the pair $(G_f,H_f)$ the
\emph{geometric monodromy pair} of $f$, or simply its \emph{geometric pair}.
Here two pairs $(G,H)$ and $(G',H')$ are isomorphic if there is an
isomorphism $G\to G'$ carrying $H$ onto $H'$. Equivalently, the geometric
pair records the transitive coset action
$G_f\curvearrowright G_f/H_f$.
Then
\[
 A_f/G_f\cong U_f/H_f\cong\operatorname{Gal}(\kappa_f/k),
 \qquad U_f\cap G_f=H_f.
\]
For a subgroup $V\leqslant B$, put
\[
 \operatorname{core}_B(V):=\bigcap_{b\in B}bVb^{-1}.
\]
We call $V$ \emph{core-free in $B$} if
$\operatorname{core}_B(V)=\{1\}$, equivalently, if the natural action
$B\curvearrowright B/V$ is faithful.
The arithmetic point stabilizer $U_f$ is core-free in $A_f$, and the geometric
point stabilizer $H_f$ is core-free in $G_f$.
Choosing a lift $\varphi_f\in U_f$ of $q$-Frobenius on $\kappa_f$ gives
\[
 A_f=\langle G_f,\varphi_f\rangle,\qquad
 U_f=\langle H_f,\varphi_f\rangle.
\]
The subscript $f$ indicates objects arising from the actual Galois closure of
$f$; the datum below uses unsubscripted letters for the corresponding abstract
semilinear data.  Likewise, the full constant field is denoted by $\kappa_f$
on the $f$-side and by $\kappa$ on the datum side.  Their relation is made
precise in Corollary~\ref{cor:f-datum-compatibility}.

\begin{definition}[Semilinear $\mathbf P^1$-datum]
\label{def:semilinear-P1-datum}
A \emph{semilinear $\mathbf P^1$-datum over $k$} is a tuple
\[
 \mathcal D=(\kappa/k,C,G,H,\varphi)
\]
with the following properties:
\begin{enumerate}[label=\textup{(\roman*)},ref=\roman*]
\item $\kappa/k$ is a finite subextension of $\bar k/k$ of degree $d$;
\item $C$ is a smooth projective $\kappa$-curve that is isomorphic over
      $\kappa$ to $\mathbf P^1_{\kappa}$;
\item $H$ and $G$ are finite subgroups of
      $\operatorname{Aut}_{\kappa}(C)$, with $H\leqslant G$;
\item $\varphi$ is a \emph{$q$-semilinear} automorphism of $C$, meaning
      an element of $\operatorname{Aut}_k(\kappa(C))$ whose restriction to
      $\kappa$ is $a\mapsto a^q$; moreover
      \[
       \varphi G\varphi^{-1}=G,\qquad
       \varphi H\varphi^{-1}=H,\qquad
       \varphi^d\in H;
      \]
\item with $A:=\langle G,\varphi\rangle$ and
      $U:=\langle H,\varphi\rangle$, the subgroup $U$ is core-free in $A$.
\end{enumerate}
\end{definition}

For a semilinear $\mathbf P^1$-datum
$\mathcal D=(\kappa/k,C,G,H,\varphi)$ over $k$, with $A$ and $U$ as above,
put $\Omega:=\kappa(C)$. Then $H$ is core-free in $G$.
Indeed, $\operatorname{core}_G(H)$ is normalized by $\varphi$, hence normal
in $A$, and is contained in $U$; therefore the core-freeness of $U$ in $A$
forces $\operatorname{core}_G(H)=\{1\}$. Restriction to the constants gives
\[
 A/G\cong U/H\cong\operatorname{Gal}(\kappa/k)
 \quad\text{and}\quad
 \Omega^{A}\cap \kappa
 =\Omega^{U}\cap \kappa=k.
\]
Let $X_{\mathcal D}$ and $Y_{\mathcal D}$ be the smooth projective
$k$-curves with function fields
\[
 k(X_{\mathcal D})=\Omega^{U},\qquad
 k(Y_{\mathcal D})=\Omega^{A}.
\]
Since
\[
 \Omega^{A}\kappa\subseteq\Omega^{G},\qquad
 \Omega^{U}\kappa\subseteq\Omega^{H},
\]
and the two inclusions have equal degrees on the two sides, namely
$[A:G]=[U:H]=[\kappa:k]$, they are equalities.  Hence
\[
 (X_{\mathcal D})_{\kappa}\cong C/H,\qquad
 (Y_{\mathcal D})_{\kappa}\cong C/G.
\]
The inclusion $\Omega^{A}\subseteq\Omega^{U}$ therefore defines a
$k$-morphism
\[
 \gamma_{\mathcal D}:X_{\mathcal D}\longrightarrow Y_{\mathcal D}.
\]
Under the preceding identifications, its base change to $\kappa$ is the
unique quotient morphism induced by $H\leqslant G$, which we denote by
\[
 \pi_{\mathcal D}:C/H\longrightarrow C/G.
\]
By L\"uroth's theorem, the quotient curves $C/H$ and $C/G$ are
isomorphic over $\kappa$ to $\mathbf P^1_{\kappa}$, so $X_{\mathcal D}$ and
$Y_{\mathcal D}$ have genus zero.  Over the finite field $k$ each is therefore
$k$-isomorphic to $\mathbf P^1$.  Choosing $k$-isomorphisms from source and
target to $\mathbf P^1$ gives a rational function in $k(X)$; changing either
coordinate changes it precisely by pre- or post-composition by an element of
$\operatorname{PGL}_2(k)$.  Thus the resulting $k$-M\"obius class is independent
of the choices.  We denote it by $\mathcal M_{\mathcal D}$.  We call
$\pi_{\mathcal D}$ \emph{tame} if all its ramification indices are prime to
$p$, and \emph{wild} otherwise.  Since $(\gamma_{\mathcal D})_{\kappa}$ is
identified with $\pi_{\mathcal D}$ and $k$-M\"obius transformations preserve
ramification indices, $\pi_{\mathcal D}$ is tame or wild exactly when
$\mathcal M_{\mathcal D}$ is tame or wild, respectively.

For a semilinear $\mathbf P^1$-datum
$\mathcal D=(\kappa/k,C,G,H,\varphi)$, we call a subgroup
$J\leqslant G$ \emph{$\varphi$-stable} if $\varphi J\varphi^{-1}=J$.

\begin{theorem}[Semilinear descent and indecomposability]
Let $f\in k(X)$ be separable and suppose that its Galois closure has genus zero.
Then there is a semilinear $\mathbf P^1$-datum
$\mathcal D=(\kappa/k,C,G,H,\varphi)$ over $k$ with
$f\in\mathcal M_{\mathcal D}$ and $\deg f=[G:H]$.  Moreover, $f$ is
$k$-indecomposable if and only if there is no $\varphi$-stable subgroup $J$
with $H\lneq J\lneq G$.  More generally, every functional decomposition of $f$ is
encoded by a strict chain of $\varphi$-stable subgroups between $H$ and $G$.
\end{theorem}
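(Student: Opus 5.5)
The plan is to build the datum directly from the Galois closure, then translate decompositions of $f$ over $k$ into intermediate fields via Galois correspondence.

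\textbf{Construction of the datum.} Take $\kappa:=\kappa_f$, which is a finite subextension of $\bar k/k$ of some degree $d$. Let $C:=C_f$. By hypothesis $C$ has genus zero, and it is a curve over the finite field $\kappa$, so it has a rational point by Wedderburn/Chevalley--Warning and hence $C\cong\mathbf P^1_\kappa$. Put $G:=G_f$ and $H:=H_f$. Since $\kappa$ is the full constant field of $\Omega_f$, both groups fix $\kappa$ and are finite subgroups of $\operatorname{Aut}_\kappa(C)$.

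Choose $\varphi:=\varphi_f\in U_f$ lifting $q$-Frobenius on $\kappa$; this lift exists because $U_f/H_f\cong\operatorname{Gal}(\kappa_f/k)$. The group $G$ is normal in $A_f$, being the kernel of restriction to constants, so $\varphi$ normalizes $G$. Likewise $H=U_f\cap G$ is normal in $U_f$, so $\varphi$ normalizes $H$. The element $\varphi^d$ acts trivially on $\kappa$ and lies in $U_f$, so $\varphi^d\in U_f\cap G_f=H_f$.

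The groups generated are $A=\langle G,\varphi\rangle=A_f$ and $U=\langle H,\varphi\rangle=U_f$. Then $U_f$ is core-free in $A_f$ because $\Omega_f$ is the Galois closure of $k(\mathbf x)/k(\mathbf t)$.

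By Galois theory, $\Omega^U=k(\mathbf x)$ and $\Omega^A=k(\mathbf t)$. The inclusion $k(\mathbf t)\subseteq k(\mathbf x)$ is the morphism $f$. Hence $\gamma_{\mathcal D}$ is $f$ after the coordinates $\mathbf x,\mathbf t$, so $f\in\mathcal M_{\mathcal D}$. The degree is
\[
\deg f=[A:U]=[G:H],
\]
where the second equality uses $A=GU$ and $G\cap U=H$. All of this is just Corollary~\ref{cor:f-datum-compatibility} made explicit.

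\textbf{Decompositions correspond to chains.} A decomposition $f=g\circ h$ over $k$ with both degrees greater than $1$ is the same as an intermediate field $k(\mathbf t)\subsetneq k(h(\mathbf x))\subsetneq k(\mathbf x)$. Conversely, every intermediate field has genus zero by L\"uroth and has a $k$-point, since it lies below $k(\mathbf x)$; so each one arises from a decomposition. More generally, decompositions into $r$ factors modulo Möbius insertions correspond to chains of intermediate fields.

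By Galois correspondence, intermediate fields are subgroups $V$ with $U\le V\le A$. The key step is the bijection
\[
V\mapsto J:=V\cap G,\qquad J\mapsto V:=\langle J,\varphi\rangle=J\langle\varphi\rangle,
\]
between such $V$ and $\varphi$-stable subgroups $J$ with $H\le J\le G$.

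\begin{itemize}
\item \emph{Forward direction.} $V\cap G$ contains $H$ and is normalized by $\varphi\in V$, so it is $\varphi$-stable.
\item \emph{Converse direction.} If $J$ is $\varphi$-stable and contains $H$, then $J\langle\varphi\rangle$ is a subgroup containing $U$. Its intersection with $G$ is $J$, because $\varphi^i\in G$ iff $d\mid i$, and then $\varphi^i\in H\subseteq J$.
\item \emph{Recovering $V$.} Every $V\supseteq U$ surjects onto $A/G$, since $U$ already does. Writing $v=j\varphi^i$ shows $V=(V\cap G)\langle\varphi\rangle$.
\end{itemize}

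Both maps preserve inclusion and strictness. So $k$-indecomposability is equivalent to having no $\varphi$-stable $J$ strictly between $H$ and $G$, and general decompositions correspond to strict chains of such $J$.

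\textbf{Main obstacle.} The only delicate point is the bijection between overgroups of $U$ in $A$ and $\varphi$-stable subgroups between $H$ and $G$. In particular, one must check that $J\langle\varphi\rangle\cap G=J$, which uses $\varphi^d\in H$. A secondary point is to confirm that intermediate curves over $k$ really are $\mathbf P^1_k$, so that field intermediates give rational-function decompositions. I would handle this by the rational-point argument above.
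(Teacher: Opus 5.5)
Your proposal is correct and follows the paper's route: your datum is exactly the $\mathcal D_f$ built in the converse half of Theorem~\ref{thm:reconstruction}, and your chain correspondence is the same $V\mapsto V\cap G$, $J\mapsto\langle J,\varphi\rangle$ Galois-correspondence argument as in Theorems~\ref{thm:decomposition} and~\ref{thm:complete-decomposition}; the paper writes $V=(V\cap G)U$ using $A=GU$, which is equivalent to your $V=(V\cap G)\langle\varphi\rangle$. One small simplification: L\"uroth's theorem over $k$ already gives $M=k(h(\mathbf x))$ for every intermediate field $M$, so your detour through genus zero plus a rational point is unnecessary, though harmless.
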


The $f$-subscripted Galois-closure data and the unsubscripted datum data are
compared precisely in Corollary~\ref{cor:f-datum-compatibility}: when
$f\in\mathcal M_{\mathcal D}$, one has $\kappa_f=\kappa$, and there is a
$\kappa$-isomorphism $C_f\to C$ identifying $G_f,H_f$ and the Frobenius coset
$H_f\varphi_f$ with $G,H$ and $H\varphi$.

The exact equality criterion for two classes $\mathcal M_{\mathcal D}$ is
Theorem~\ref{thm:exact-map-equivalence}; the complete decomposition statement
is Theorem~\ref{thm:complete-decomposition}.  The same datum controls branch
descent and ramification through Frobenius orbits and double cosets.

\medskip
\noindent\textit{Classification summary.}

The complete lists are proved in Sections~\ref{sec:tame-basic}--\ref{sec:wild-novelty}.
\begin{description}[font=\normalfont\itshape]
\item[Tame] The $V_4/A_4$ quartic and the cyclic and dihedral prime-degree
classes (Section~\ref{sec:tame-basic}); the tetrahedral and octahedral quartics
and the icosahedral classes of degrees $5,6,10$
(Section~\ref{sec:tame-polyhedral}).
\item[Wild] The affine $V\rtimes C_n$ classes, the characteristic-$2$
dihedral classes, and the characteristic-$3$ $A_5$ classes
(Section~\ref{sec:wild-master}); the geometrically indecomposable defining-characteristic
  $\operatorname{PSL}_2/\operatorname{PGL}_2$ quotients
(Section~\ref{sec:wild-lie}); and the Lie-type quotients that are $k$-indecomposable but geometrically
  decomposable
(Section~\ref{sec:wild-novelty}).
\end{description}
Those sections also give the complete stabilizer ranges, finite-field forms,
and exact class counts, including the classes that are $k$-indecomposable but geometrically decomposable.

The general permutation--exceptionality equivalence and the exact
extension-degree criterion are proved first
(Section~\ref{sec:no-accidental}).  The permutation extension degrees are then
determined explicitly for every classified family
(Section~\ref{sec:family-support}).  Exact finite-field class counts and
exceptional subcounts are computed (Section~\ref{sec:arithmetic-closure}), and
polynomial representatives are characterized there as well.

Every Frobenius orbit on the geometric branch locus in the indecomposable
classification has length at most $3$ (Section~\ref{sec:wild-novelty}).  The least exceptional
degree is $3$, the least wild exceptional degree is $4$ in characteristic $2$
and $p$ in odd characteristic, and every odd prime occurs as the degree of an
indecomposable exceptional class (Section~\ref{sec:arithmetic-closure}).

\medskip
\noindent\textit{Relation to previous work.}
Over $\mathbb C$, Pakovich \cite{Pakovich2018} gives a complete list of
rational functions whose Galois closure has genus zero and a geometric
description of those with Galois closure of genus one. Related work of
Gow--McGuire \cite{GowMcGuire2021} treats the realization of finite
subgroups of $\operatorname{PGL}_2$ as Galois groups over function fields
and descent to subfields.

For the geometric monodromy groups, we use the classical Klein--Dickson
classification in the tame case, with Beauville \cite{Beauville2010} as a
modern reference for finite subgroups of order prime to the characteristic,
and Faber's classification of finite subgroups whose order is divisible by the
characteristic \cite{Faber2023}.  The exceptional-permutation framework is
taken from Guralnick--M\"uller--Saxl \cite{GMS2003} and
Guralnick--Tucker--Zieve \cite{GTZ2007}.  For the defining-characteristic
$\operatorname{PSL}_2/\operatorname{PGL}_2$ cases, we use Giudici's
maximal-subgroup classification, including the cases that are $k$-indecomposable but geometrically decomposable
\cite{Giudici2007}.  These results constrain the geometric monodromy and
stabilizers, but they do not determine the descended $k$-M\"obius classes,
the equality relation between such classes, $k$-indecomposability, or the exact
extension arithmetic.  Those are the finite-field descent problems resolved here.
Fried \cite{Fried2005} places exceptional covers in a broader Diophantine
framework, including their tower structure over finite fields.

Explicit and low-degree exceptional rational functions have been studied in
several settings \cite{FerragutiMicheli2020,DingZieve2022,DingZieve2023,Valentini2017,Valentini2019,TangFan2026Degree5}.
Section~\ref{sec:historical-recovery} gives the item-by-item comparison and
separates independent recoveries and structural strengthenings from statements
of compatibility with known normal forms.  More specialized fixed-degree and
positive-genus comparisons are deferred to that section and to
Section~\ref{sec:conclusion}.

For orientation, Sections~\ref{sec:semilinear-data}--\ref{sec:primitivity-decomposition}
develop the descent, equality, decomposition, and ramification formalism;
Sections~\ref{sec:tame-basic}--\ref{sec:wild-novelty} prove the classification;
and Sections~\ref{sec:no-accidental}--\ref{sec:arithmetic-closure} prove the
permutation--exceptionality theorem and its exact arithmetic consequences.
Section~\ref{sec:historical-recovery} contains the historical comparisons.

\section{Semilinear data and \texorpdfstring{$k$}{k}-M\"obius classes}\label{sec:semilinear-data}
\subsection{Arithmetic and geometric Galois data}

Whenever objects carry a specified structure over a field $K$, the notation $X\cong_KY$ means an isomorphism over $K$. For curves over $K$ it denotes an isomorphism of $K$-curves, while for $K$-algebras it denotes an isomorphism fixing $K$ pointwise; in particular, this convention applies to function fields. We use the unsubscripted symbol $\cong$ for abstract isomorphisms for which no base field is part of the structure, such as isomorphisms of finite groups, quotient groups, or permutation sets.

\begin{proposition}[The inseparable indecomposable boundary]
Let $k=\mathbb F_q$ have characteristic $p$, and let $f\in k(X)$ have degree greater than $1$. If $f$ is inseparable and $k$-indecomposable, then $f=\mu\circ X^p$ for some $\mu\in\operatorname{PGL}_2(k)$.
\end{proposition}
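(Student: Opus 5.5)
The plan is to show that an inseparable $f$ always factors through the $p$-th power map over $k$, and then use $k$-indecomposability to force the other factor to have degree one.

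First, inseparability gives a separable intermediate field. Write $f=P/Q$ with $P,Q\in k[X]$ coprime. The extension $k(\mathbf x)/k(f(\mathbf x))$ is inseparable exactly when the minimal polynomial $P(T)-f(\mathbf x)Q(T)$ of $\mathbf x$ over $k(f(\mathbf x))$ lies in $k(f(\mathbf x))[T^p]$. Since $f(\mathbf x)$ is transcendental over $k$, this happens exactly when both $P$ and $Q$ lie in $k[X^p]$, i.e. when $f'=0$. Any such identity in $k(f(\mathbf x))[T]$ holds coefficientwise in $f(\mathbf x)$, so it descends to $k[T]$.

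Second, we pull out a $p$-th root over $k$. Because $k=\mathbb F_q$ is perfect, every element of $k$ is a $p$-th power. Write $P=\sum a_iX^{pi}$ and $Q=\sum b_iX^{pi}$, and set
\[
 P_1=\sum a_i^{1/p}X^i,\qquad Q_1=\sum b_i^{1/p}X^i .
\]
Then $P=P_1^p$ and $Q=Q_1^p$ as polynomials. Alternatively, write $P=\tilde P(X^p)$ with $\tilde P(X)=\sum a_iX^i$, and similarly for $Q$. This second form gives $f=g\circ X^p$ with $g=\tilde P/\tilde Q\in k(X)$ directly. The polynomials $\tilde P,\tilde Q$ remain coprime, since a common factor $h(X)$ would give the common factor $h(X^p)$ of $P$ and $Q$. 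Hence
\[
 \deg g=\deg f/p .
\]
Equivalently, in field terms, $k(f(\mathbf x))\subseteq k(\mathbf x^p)\subseteq k(\mathbf x)$, and $[k(\mathbf x):k(\mathbf x^p)]=p$.

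Third, we apply indecomposability. We have $f=g\circ X^p$, and $X^p$ has degree $p>1$. Since $f$ is $k$-indecomposable, $\deg g=1$, so $g=\mu\in\operatorname{PGL}_2(k)$. Note also that $\deg g=0$ is impossible, because $f$ is nonconstant. This gives $f=\mu\circ X^p$, as claimed.

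The only point needing care is the first step, namely identifying inseparability with $f\in k(X^p)$. I would use the derivative criterion: $\mathbf x$ is separable over $k(f(\mathbf x))$ iff the minimal polynomial has nonzero $T$-derivative, iff $P'(T)-f(\mathbf x)Q'(T)\neq0$, iff $(P',Q')\neq(0,0)$. Finally, $P'=Q'=0$ holds iff $P,Q\in k[X^p]$.
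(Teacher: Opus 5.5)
Your proof is correct and takes the same approach as the paper: inseparability forces $f\in k(X^p)$, so $f=g\circ X^p$ over $k$, and $k$-indecomposability then forces $\deg g=1$. You spell out, via the derivative of the minimal polynomial $P(T)-f(\mathbf x)Q(T)$, the equivalence the paper states in one line; note that your second form $g=\tilde P/\tilde Q$ does not use perfectness of $k$, so the $P_1,Q_1$ detour is unnecessary.
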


\begin{proof}
Since $k$ is perfect, inseparability is equivalent to $f'(X)=0$, and hence $f\in k(X^p)$. Thus $f=g\circ X^p$ for some $g\in k(X)$. If $\deg g>1$, this is a nontrivial decomposition over $k$. Therefore $k$-indecomposability forces $\deg g=1$, so $g=\mu\in\operatorname{PGL}_2(k)$.
\end{proof}

The remainder of the paper concerns the separable case.

\begin{theorem}[Semilinear data and $k$-M\"obius classes]
\label{thm:reconstruction}
For every semilinear $\mathbf P^1$-datum
$\mathcal D=(\kappa/k,C,G,H,\varphi)$ over $k$, every representative of
$\mathcal M_{\mathcal D}$ is separable, has Galois closure of genus zero
with full constant field $\kappa$, and has degree $[G:H]$.

Conversely, if $f\in k(X)$ is separable and its Galois closure has genus
zero, then $f\in\mathcal M_{\mathcal D}$ for some semilinear
$\mathbf P^1$-datum $\mathcal D$ over $k$.
\end{theorem}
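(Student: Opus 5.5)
For the forward direction, I take a representative $f\in\mathcal M_{\mathcal D}$. Then $f$ is obtained from $\gamma_{\mathcal D}:X_{\mathcal D}\to Y_{\mathcal D}$ by choosing $k$-isomorphisms to $\mathbf P^1$. So $k(\mathbf x)/k(f(\mathbf x))$ is $k$-isomorphic to $\Omega^U/\Omega^A$, and it suffices to work with the latter. This extension sits inside the Galois extension $\Omega/\Omega^A$, which has group $A$ because $A$ is a finite group of automorphisms of $\Omega$. Hence it is separable, of degree $[A:U]=[G:H]$; the equality $[A:U]=[G:H]$ follows from $[A:G]=[U:H]=d$.

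Next I identify the Galois closure of $\Omega^U/\Omega^A$ inside $\Omega$. It is the fixed field of $\operatorname{core}_A(U)$, which is trivial by axiom (v), so the closure is $\Omega$ itself. Its full constant field is $\Omega\cap\bar k=\kappa$, because $C$ is a geometrically integral $\kappa$-curve. Its genus is that of $C\cong\mathbf P^1_\kappa$, namely zero. One small point: the paper defines $\Omega_f$ inside a fixed algebraic closure. I handle this by transporting along the $k$-isomorphism $\Omega^U\cong k(\mathbf x)$, which extends to an embedding of $\Omega$ into that algebraic closure. Genus and constant field are invariant under this transport.

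For the converse, let $f$ be separable with Galois closure of genus zero. I set $\kappa:=\kappa_f$, $C:=C_f$, $G:=G_f$, $H:=H_f$, and $\varphi:=\varphi_f\in U_f$, a lift of $q$-Frobenius on $\kappa_f$. I then check the axioms one by one.

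\begin{enumerate}[label=\textup{(\roman*)}]
\item $\kappa_f/k$ is finite, since $\Omega_f/k(\mathbf t)$ is finite.
\item $C_f$ is a smooth projective curve of genus zero over the finite field $\kappa_f$, with $\kappa_f$ its full constant field. It therefore has a rational point (Chevalley--Warning, or the fact that conics over finite fields split), so $C_f\cong\mathbf P^1_{\kappa_f}$.
\item $G_f$ and $H_f$ act $\kappa_f$-linearly on $\Omega_f$, so they are finite subgroups of $\operatorname{Aut}_{\kappa_f}(C_f)$.
\item $\varphi$ normalizes $G_f$ because $G_f\trianglelefteq A_f$, being the kernel of restriction to $\kappa_f$. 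Likewise $H_f=U_f\cap G_f$ is normal in $U_f$, so $\varphi$ normalizes $H_f$. Finally, $\varphi^d$ acts trivially on $\kappa_f$ and lies in $U_f$, so $\varphi^d\in U_f\cap G_f=H_f$.
\item $A=\langle G_f,\varphi\rangle=A_f$ and $U=\langle H_f,\varphi\rangle=U_f$, using the displayed identities from the introduction. These follow because $A_f/G_f\cong\operatorname{Gal}(\kappa_f/k)$ is cyclic and generated by the image of $\varphi$, and similarly for $U_f/H_f$. Core-freeness of $U_f$ in $A_f$ is the Galois-closure property.
\end{enumerate}

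It remains to see that $f\in\mathcal M_{\mathcal D}$. By Galois theory, $\Omega^U=k(\mathbf x)$ and $\Omega^A=k(\mathbf t)$. So $\gamma_{\mathcal D}$ is the morphism $\mathbf P^1\to\mathbf P^1$ given by $f$ in the coordinates $\mathbf x,\mathbf t$, which places $f$ in the class.

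I expect the main obstacle to be the facts I invoked in passing rather than any single hard step. These are the identities $A_f/G_f\cong U_f/H_f\cong\operatorname{Gal}(\kappa_f/k)$ and $U_f\cap G_f=H_f$, together with the surjectivity of restriction to constants. They rest on linear disjointness of $\kappa_f$ and $k(\mathbf x)$ over $k$ (constant field extensions), which gives $[\kappa_f(\mathbf x):k(\mathbf x)]=d$. If the paper has not yet proved these, I would prove them there, and otherwise simply cite the introduction's displays.
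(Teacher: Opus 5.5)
Your proposal is correct and follows essentially the same route as the paper. In the forward direction you use core-freeness of $U$ in $A$ to identify $\Omega$ as the Galois closure and derive $[A:U]=[G:H]$ from the index relations; in the converse you take $(\kappa_f/k,C_f,G_f,H_f,\varphi_f)$ and verify the axioms, supplying more detail than the paper on three points it passes over quickly: the rational point on a genus-zero curve over a finite field, the transport of $\Omega$ into the fixed algebraic closure, and the constant-extension degree count giving $[U_f:H_f]=[\kappa_f:k]$.
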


\begin{proof}
For the forward implication put $\Omega=\kappa(C)$. By construction,
$k(Y_{\mathcal D})=\Omega^{A}$ and $k(X_{\mathcal D})=\Omega^{U}$. The
extension $\Omega/\Omega^{A}$ is finite Galois with group $A$. Since
$\operatorname{core}_A(U)=1$, the arithmetic Galois closure of
$\Omega^{U}/\Omega^{A}$ is $\Omega$. Its full constant field is $\kappa$,
and its smooth projective curve is $C$, which has genus zero. Moreover
$A=GU$ and $G\cap U=H$, so
\[
 \deg \gamma_{\mathcal D}=[A:U]=[G:H].
\]
These assertions are unchanged by choosing a different representative of
$\mathcal M_{\mathcal D}$.

Conversely, put $\mathbf t=f(\mathbf x)$ and let
$\Omega_f/k(\mathbf t)$ be the arithmetic Galois closure of
$k(\mathbf x)/k(\mathbf t)$. Let $\kappa_f$ be the full constant field of
$\Omega_f$ and let $C_f/\kappa_f$ be the smooth projective curve with function
field $\Omega_f$. Put
\[
\begin{alignedat}{2}
 A_f&=\operatorname{Gal}(\Omega_f/k(\mathbf t)),\qquad
 &G_f&=\operatorname{Gal}(\Omega_f/\kappa_f(\mathbf t)),\\
 U_f&=\operatorname{Gal}(\Omega_f/k(\mathbf x)),
 &H_f&=\operatorname{Gal}(\Omega_f/\kappa_f(\mathbf x)).
\end{alignedat}
\]
Hence $\Omega_f^{H_f}=\kappa_f(\mathbf x)$, and base extension preserves the
degree, so
\[
 [G_f:H_f]
 =[\kappa_f(\mathbf x):\kappa_f(\mathbf t)]
 =[k(\mathbf x):k(\mathbf t)]
 =[A_f:U_f].
\]
Thus $[U_f:H_f]=[A_f:G_f]=[\kappa_f:k]$, and the restriction map
$U_f\to A_f/G_f\cong\operatorname{Gal}(\kappa_f/k)$ is surjective with
kernel $H_f$. Choose $\varphi_f\in U_f$ inducing the $q$-Frobenius on
$\kappa_f$. Since $\Omega_f$ is the Galois closure of
$k(\mathbf x)/k(\mathbf t)$, $\operatorname{core}_{A_f}(U_f)=1$.
Restriction to $\kappa_f$ gives
$U_f/H_f\cong A_f/G_f\cong\operatorname{Gal}(\kappa_f/k)$, and hence
\[
 A_f=\langle G_f,\varphi_f\rangle,\qquad
 U_f=\langle H_f,\varphi_f\rangle,\qquad
 \varphi_f^d\in H_f.
\]
The hypothesis on the Galois closure shows that $C_f$ is isomorphic over
$\kappa_f$ to $\mathbf P^1_{\kappa_f}$. Thus
$\mathcal D_f=(\kappa_f/k,\allowbreak C_f,\allowbreak G_f,\allowbreak H_f,\allowbreak\varphi_f)$ is a semilinear
$\mathbf P^1$-datum over $k$. Finally,
$\Omega_f^{U_f}=k(\mathbf x)$ and $\Omega_f^{A_f}=k(\mathbf t)$, so the
descended morphism represents $f$ and therefore
$f\in\mathcal M_{\mathcal D_f}$.
\end{proof}

\begin{remark}
For a semilinear $\mathbf P^1$-datum over $k$, the Frobenius lift $\varphi$ need
not have order $d=[\kappa:k]$; equivalently, the exact sequence
\[
 1\longrightarrow H\longrightarrow U
 \longrightarrow\operatorname{Gal}(\kappa/k)\longrightarrow1
\]
need not split.
\end{remark}

\begin{theorem}[Equality of $k$-M\"obius classes]
\label{thm:exact-map-equivalence}
For $i=1,2$, let $\mathcal D_i=(\kappa_i/k,\allowbreak C_i,\allowbreak G_i,\allowbreak H_i,\allowbreak\varphi_i)$ be
semilinear $\mathbf P^1$-data over $k$, and put $d_i=[\kappa_i:k]$. Then
$\mathcal M_{\mathcal D_1}=\mathcal M_{\mathcal D_2}$ if and only if
$d_1=d_2$ and, writing $\kappa_1=\kappa_2=:\kappa$, there is a
$\kappa$-isomorphism $\alpha:C_1\to C_2$ such that
\[
 \alpha G_1\alpha^{-1}=G_2,\qquad
 \alpha H_1\alpha^{-1}=H_2,\qquad
 \alpha\varphi_1\alpha^{-1}\in H_2\varphi_2.
\]
\end{theorem}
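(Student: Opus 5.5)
The plan is to prove both directions by working directly with the function fields $\Omega_i=\kappa_i(C_i)$ and the fixed fields $\Omega_i^{U_i}\supseteq\Omega_i^{A_i}$. The key point is that a $k$-M\"obius equivalence between representatives is the same as a pair of compatible $k$-isomorphisms of the source and target function fields. By Galois theory, such a pair extends to an isomorphism of the Galois closures.

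\emph{Sufficiency.} Suppose $d_1=d_2$ and $\alpha$ is as in the statement. I would view $\alpha$ as a $\kappa$-isomorphism $\alpha^*:\Omega_2\to\Omega_1$ of function fields; it is $\kappa$-linear, hence also $k$-linear. The conjugation relations give $\alpha A_1\alpha^{-1}=\langle G_2,\alpha\varphi_1\alpha^{-1}\rangle=\langle G_2,\varphi_2\rangle=A_2$, using $\alpha\varphi_1\alpha^{-1}\in H_2\varphi_2\subseteq A_2$ and $H_2\le G_2$. In the same way, $\alpha U_1\alpha^{-1}=\langle H_2,h\varphi_2\rangle=U_2$. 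Hence $\alpha$ carries $\Omega_1^{A_1}$ onto $\Omega_2^{A_2}$ and $\Omega_1^{U_1}$ onto $\Omega_2^{U_2}$, compatibly with the inclusions. This gives $k$-isomorphisms $X_{\mathcal D_1}\cong X_{\mathcal D_2}$ and $Y_{\mathcal D_1}\cong Y_{\mathcal D_2}$ that commute with the $\gamma$'s. After choosing coordinates, the two representatives differ by elements of $\operatorname{PGL}_2(k)$, so $\mathcal M_{\mathcal D_1}=\mathcal M_{\mathcal D_2}$.

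\emph{Necessity.} Take $f\in\mathcal M_{\mathcal D_1}=\mathcal M_{\mathcal D_2}$. Then there are $k$-isomorphisms $\Omega_1^{U_1}\to\Omega_2^{U_2}$ restricting to $\Omega_1^{A_1}\to\Omega_2^{A_2}$. Fix algebraic closures. By the forward part of Theorem~\ref{thm:reconstruction}, $\Omega_i$ is the Galois closure of $\Omega_i^{U_i}/\Omega_i^{A_i}$, because $U_i$ is core-free in $A_i$. A Galois closure is unique up to isomorphism over the given field isomorphism, so the isomorphism extends to a $k$-isomorphism $\beta:\Omega_1\to\Omega_2$ with $\beta A_1\beta^{-1}=A_2$ and $\beta U_1\beta^{-1}=U_2$. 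Since $\beta$ maps the full constant field onto the full constant field, we get $\kappa_1\cong\kappa_2$ over $k$, hence $d_1=d_2$. Inside $\bar k$ this forces $\kappa_1=\kappa_2=\kappa$.

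The expected obstacle is that $\beta$ restricts to some $\sigma\in\operatorname{Gal}(\kappa/k)$ on constants, and need not be $\kappa$-linear. I would handle this by composing with a suitable power of $\varphi_1$. Write $\sigma=\mathrm{Frob}_q^j$ and replace $\beta$ by $\beta\varphi_1^{-j}$. Since $\varphi_1\in U_1$ normalizes both $A_1$ and $U_1$, the conjugation relations survive, and the new map is $\kappa$-linear. It therefore comes from a $\kappa$-isomorphism $\alpha:C_1\to C_2$ (with the appropriate covariance convention). The restriction to constants gives a $k$-isomorphism $\alpha:A_1\to A_2$ compatible with the maps $A_i\to\operatorname{Gal}(\kappa/k)$, since $\alpha$ is $\kappa$-linear and therefore commutes with the constant action. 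Consequently $\alpha G_1\alpha^{-1}=G_2$, being the kernel of restriction to $\kappa$. Likewise $\alpha H_1\alpha^{-1}=U_2\cap G_2=H_2$. Finally, $\alpha\varphi_1\alpha^{-1}\in U_2$ acts as $q$-Frobenius on $\kappa$, so it lies in the coset $H_2\varphi_2$.
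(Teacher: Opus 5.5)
Your proposal is correct and follows essentially the same route as the paper. You lift the cover isomorphism to the Galois closures and read off $d_1=d_2$ from the constants. You then untwist by a power of Frobenius: you use $\varphi_1^{-j}$ on the source, the paper uses powers of $\varphi_2^{-1}$ on the target, and these are equivalent since both lie in the point stabilizers. Finally you recover $G_i$ and $H_i=U_i\cap G_i$ as kernels of restriction to $\kappa$.

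Your derivation of $\alpha\varphi_1\alpha^{-1}\in H_2\varphi_2$ from the kernel of $U_2\to\operatorname{Gal}(\kappa/k)$ is the same fact the paper phrases as the quotient being a deck transformation of $C_2\to C_2/H_2$. Likewise, your sufficiency argument via $\alpha A_1\alpha^{-1}=A_2$ and $\alpha U_1\alpha^{-1}=U_2$ is the fixed-field version of the paper's descent-intertwining argument.
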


\begin{proof}
Suppose first that $\mathcal M_{\mathcal D_1}=\mathcal M_{\mathcal D_2}$.
Choose representatives and an isomorphism of the corresponding $k$-cover
squares. By Theorem~\ref{thm:reconstruction}, their arithmetic Galois closures
are $\Omega_i=\kappa_i(C_i)$. The cover isomorphism lifts to an isomorphism of
the normal closures. The algebraic closure of $k$ in a normal closure is
intrinsic, so $d_1=d_2$. Since $\kappa_1$ and $\kappa_2$ are finite
subextensions of the fixed field $\bar k/k$ of the same degree, they are
equal; write $\kappa_1=\kappa_2=:\kappa$. The lifted cover isomorphism induces
a $k$-automorphism of $\kappa$, hence a power of Frobenius. Composing the lift
with the corresponding power of $\varphi_2^{-1}$ gives a $\kappa$-isomorphism
$\alpha:C_1\to C_2$ without changing the descended cover square.

The lifted square gives
\[
 \alpha A_1\alpha^{-1}=A_2,\qquad
 \alpha U_1\alpha^{-1}=U_2.
\]
Since $G_i=\ker(A_i\to\operatorname{Gal}(\kappa/k))$ and
$H_i=G_i\cap U_i$, one obtains $\alpha G_1\alpha^{-1}=G_2$ and
$\alpha H_1\alpha^{-1}=H_2$. The source quotient isomorphism is defined over
$k$, so $\alpha\varphi_1\alpha^{-1}$ and $\varphi_2$ induce the same
semilinear action on $C_2/H_2$. Their quotient is therefore an automorphism of the cover
$C_2\to C_2/H_2$, hence an element of $H_2$, proving
$\alpha\varphi_1\alpha^{-1}\in H_2\varphi_2$.

Conversely, suppose the displayed conditions hold. The conjugacy relations
let $\alpha$ descend to a commutative square of the quotient morphisms
\[
 C_1/H_1\longrightarrow C_1/G_1,
 \qquad
 C_2/H_2\longrightarrow C_2/G_2.
\]
Write $\alpha\varphi_1\alpha^{-1}=h\varphi_2$ with $h\in H_2$. Since $h$
acts trivially on $C_2/H_2$, the source quotient isomorphism intertwines the
two Frobenius descents; because $H_2\leqslant G_2$, the same holds on the target.
Hence the quotient square descends to $k$, giving
$\mathcal M_{\mathcal D_1}=\mathcal M_{\mathcal D_2}$.
\end{proof}

\begin{corollary}[Compatibility with Galois-closure data]
\label{cor:f-datum-compatibility}
Let $f\in k(X)$ be separable with Galois closure of genus zero, and let
$\kappa_f$ be its full constant field. Let
$\Omega_f,C_f,A_f,G_f,U_f,H_f$ be the Galois-closure objects attached to
$f$, and choose a Frobenius lift $\varphi_f\in U_f$. If
$\mathcal D=(\kappa/k,C,G,H,\varphi)$ is a semilinear $\mathbf P^1$-datum
over $k$ with $f\in\mathcal M_{\mathcal D}$, then $\kappa_f=\kappa$. There
is a $\kappa$-isomorphism $\alpha:C_f\to C$ such that
\[
 \alpha G_f\alpha^{-1}=G,\qquad
 \alpha H_f\alpha^{-1}=H,\qquad
 \alpha\varphi_f\alpha^{-1}\in H\varphi.
\]
Consequently $\alpha A_f\alpha^{-1}=A$ and $\alpha U_f\alpha^{-1}=U$,
where $A=\langle G,\varphi\rangle$ and $U=\langle H,\varphi\rangle$; the
induced function-field isomorphism identifies $\Omega_f$ with
$\Omega=\kappa(C)$.
\end{corollary}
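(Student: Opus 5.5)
The plan is to reduce the corollary to Theorem~\ref{thm:exact-map-equivalence} by means of the datum $\mathcal D_f$ built in the converse half of Theorem~\ref{thm:reconstruction}. First I will check that $\mathcal D_f=(\kappa_f/k,C_f,G_f,H_f,\varphi_f)$ is a semilinear $\mathbf P^1$-datum with $f\in\mathcal M_{\mathcal D_f}$, for the chosen Frobenius lift $\varphi_f\in U_f$. The proof of Theorem~\ref{thm:reconstruction} works for any lift of $q$-Frobenius in $U_f$, so this holds for the given $\varphi_f$ as well. Then $f$ lies in both $\mathcal M_{\mathcal D_f}$ and $\mathcal M_{\mathcal D}$. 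These are $k$-M\"obius classes, hence equivalence classes, so they coincide.

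Next I will apply the forward direction of Theorem~\ref{thm:exact-map-equivalence} with $\mathcal D_1=\mathcal D_f$ and $\mathcal D_2=\mathcal D$. It gives $[\kappa_f:k]=[\kappa:k]$. Both fields are finite subextensions of the fixed $\bar k/k$, and $\bar k$ has a unique subfield of each finite degree over $k$, so $\kappa_f=\kappa$. The theorem also supplies a $\kappa$-isomorphism $\alpha:C_f\to C$ satisfying the three displayed relations. Alternatively, $\kappa_f=\kappa$ follows directly from Theorem~\ref{thm:reconstruction}, which says every representative of $\mathcal M_{\mathcal D}$ has full constant field $\kappa$.

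For the consequences, I conjugate generators. By Theorem~\ref{thm:reconstruction}, $A_f=\langle G_f,\varphi_f\rangle$. Hence $\alpha A_f\alpha^{-1}=\langle G,\alpha\varphi_f\alpha^{-1}\rangle$. Since $\alpha\varphi_f\alpha^{-1}=h\varphi$ with $h\in H\leqslant G$, this group equals $\langle G,\varphi\rangle=A$. In the same way, $\alpha U_f\alpha^{-1}=\langle H,h\varphi\rangle=\langle H,\varphi\rangle=U$. Finally, $\alpha$ is an isomorphism of $\kappa$-curves, so it induces a $\kappa$-isomorphism of function fields $\kappa(C)\to\kappa(C_f)=\Omega_f$, which identifies $\Omega_f$ with $\Omega=\kappa(C)$.

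The main obstacle is bookkeeping rather than substance. One must make sure Theorem~\ref{thm:exact-map-equivalence} really applies to the specific lift $\varphi_f$, not just to some lift. One must also make sure the conjugation convention for $\alpha$ acting on automorphism groups matches how the function-field isomorphism runs (pullback reverses direction). Both points are settled by the statements cited above: Theorem~\ref{thm:reconstruction} allows an arbitrary lift, and the conjugation relations are stated on curves, with the function-field map taken as the pullback.
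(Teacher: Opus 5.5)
Your proposal is correct and follows essentially the same route as the paper: you build $\mathcal D_f$ from the chosen lift via the converse of Theorem~\ref{thm:reconstruction}, deduce $\mathcal M_{\mathcal D_f}=\mathcal M_{\mathcal D}$, apply Theorem~\ref{thm:exact-map-equivalence}, and conjugate the generators $A_f=\langle G_f,\varphi_f\rangle$ and $U_f=\langle H_f,\varphi_f\rangle$. Your explicit check that $h\in H\leqslant G$ gives $\langle G,h\varphi\rangle=A$ and $\langle H,h\varphi\rangle=U$ is exactly the step the paper leaves implicit.
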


\begin{proof}
The chosen Frobenius lift defines the semilinear $\mathbf P^1$-datum
$\mathcal D_f=(\kappa_f/k,\allowbreak C_f,\allowbreak G_f,\allowbreak H_f,\allowbreak\varphi_f)$ over $k$, and the converse
direction of Theorem~\ref{thm:reconstruction} gives
$f\in\mathcal M_{\mathcal D_f}$. Hence
$\mathcal M_{\mathcal D_f}=\mathcal M_{\mathcal D}$. Applying
Theorem~\ref{thm:exact-map-equivalence} gives $\kappa_f=\kappa$, the asserted
$\kappa$-isomorphism, and the three conjugacy relations. The identities for
$A_f$ and $U_f$ follow from $A_f=\langle G_f,\varphi_f\rangle$ and
$U_f=\langle H_f,\varphi_f\rangle$.
\end{proof}

\begin{remark}
The subgroup $H_2$ in the Frobenius-coset condition is sharp: replacing
it by $G_2$ would ensure compatibility on the target quotient, but not
on the source quotient.
\end{remark}

\subsection{Cohomological classification for fixed geometric data}

Fix a semilinear $\mathbf P^1$-datum
$\mathcal D_0=(\kappa/k,C,G,H,\varphi_0)$ over $k$ with
$\kappa=\mathbb F_{q^d}$, and put $\Omega=\kappa(C)$.
For a subgroup $J\leqslant L$, write $\mathrm N_L(J)$ for the normalizer of $J$
in $L$. Define the pair normalizer and its twisting quotient by
\[
 \mathscr N
 :=\mathrm N_{\operatorname{Aut}_{\kappa}(C)}(G)
  \cap \mathrm N_{\operatorname{Aut}_{\kappa}(C)}(H),
 \qquad
 \mathscr W:=\mathscr N/H.
\]
Let $\theta$ be the automorphism of $\mathscr W$ induced by conjugation by
$\varphi_0$. Since $\varphi_0^d\in H$, one has $\theta^d=1$. Let $C_d$
denote the cyclic group of order $d$, acting on $\mathscr W$ through $\theta$, and
define
\[
\begin{aligned}
 Z^1_\theta(\mathscr W)
 &:=\{c\in \mathscr W:c\theta(c)\cdots\theta^{d-1}(c)=1\},\\
 c\sim_\theta c'
 &\Longleftrightarrow
 c'=u\,c\,\theta(u)^{-1}
 \quad\text{for some }u\in \mathscr W,\\
 H^1_\theta(C_d,\mathscr W)
 &:=Z^1_\theta(\mathscr W)/{\sim_\theta}.
\end{aligned}
\]

Elements of $Z^1_\theta(\mathscr W)$ are called \emph{$1$-cocycles}.
The relation $\sim_\theta$ is \emph{$\theta$-twisted conjugacy}, and
$H^1_\theta(C_d,\mathscr W)$ is the corresponding
\emph{nonabelian cohomology set}.

For $c\in Z^1_\theta(\mathscr W)$ choose a lift $\widetilde c\in \mathscr N$ and put
\[
 \psi_{\widetilde c}:=\widetilde c\,\varphi_0,
 \qquad
 A_c:=\langle G,\psi_{\widetilde c}\rangle,
 \qquad
 U_c:=\langle H,\psi_{\widetilde c}\rangle.
\]
The groups $A_c$ and $U_c$ do not depend on the chosen lift. Let $e(c)$ be
the order of the class of $\operatorname{Int}(\psi_{\widetilde c})|_G$ in
\[
 \mathrm N_{\operatorname{Aut}(G)}(H)
 \Big/
 \{\operatorname{Int}(h)|_G:h\in H\}.
\]
Here $\mathrm N_{\operatorname{Aut}(G)}(H)$ denotes the subgroup of
$\operatorname{Aut}(G)$ consisting of automorphisms carrying $H$ onto itself.

\begin{proposition}[Arithmetic core and full constant field]
\label{prop:fixed-pair-core-order}
The integer $e(c)$ is independent of the chosen lift and is constant on the
twisted-conjugacy class of $c$. Moreover,
\[
 \operatorname{core}_{A_c}(U_c)=C_{A_c}(G)\cap U_c,
 \qquad
 \bigl|\operatorname{core}_{A_c}(U_c)\bigr|=\frac d{e(c)}.
\]
The candidate descent defined by $\psi_{\widetilde c}$ has arithmetic Galois
closure with full constant field $\mathbb F_{q^{e(c)}}$. Consequently
$\mathcal D(\widetilde c):=(\kappa/k,C,G,H,\psi_{\widetilde c})$ is a
semilinear $\mathbf P^1$-datum over $k$ in the sense of
Definition~\ref{def:semilinear-P1-datum} if and only if $e(c)=d$.
\end{proposition}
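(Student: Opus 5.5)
The plan is to treat the whole statement as a computation in the group $A_c=\langle G,\psi\rangle$, where $\psi:=\psi_{\widetilde c}$. Geometry enters only in the last step, through Galois correspondence in $\Omega/\Omega^{A_c}$.

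\textbf{Well-definedness and the basic facts about $\psi$.} First I check the preliminaries.
\begin{enumerate}
\item Since $\widetilde c\in\mathscr N$ and $\varphi_0$ normalizes $G$ and $H$, the element $\psi$ normalizes both $G$ and $H$.
\item We have $\psi^d=\widetilde c\,\varphi_0\widetilde c\varphi_0^{-1}\cdots\varphi_0^{d-1}\widetilde c\varphi_0^{-(d-1)}\cdot\varphi_0^d$. Modulo $H$ this is the cocycle product $c\theta(c)\cdots\theta^{d-1}(c)=1$ times $\varphi_0^d\in H$, so $\psi^d\in H$.
\item The element $\psi$ restricts to the $q$-Frobenius on $\kappa$, because $\widetilde c$ is $\kappa$-linear. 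Hence $A_c/G\cong U_c/H\cong\operatorname{Gal}(\kappa/k)$, cyclic of order $d$ and generated by the image of $\psi$.
\item Changing the lift replaces $\psi$ by $h\psi$ with $h\in H$. This leaves $A_c$ and $U_c$ unchanged and alters $\operatorname{Int}(\psi)|_G$ only by an element of $\operatorname{Int}(H)|_G$, so $e(c)$ is independent of the lift.
\item Suppose $c'=u\,c\,\theta(u)^{-1}$ and lift $u$ to $\widetilde u\in\mathscr N$. Then $\psi_{\widetilde c'}\in H\,\widetilde u\,\psi\,\widetilde u^{-1}$. Conjugation by $\widetilde u$ preserves $G$ and $H$, so it induces an automorphism of $\mathrm N_{\operatorname{Aut}(G)}(H)/\operatorname{Int}(H)|_G$. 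That automorphism preserves orders, so $e(c')=e(c)$.
\item From $\psi^d\in H$ we get $e(c)\mid d$.
\end{enumerate}

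\textbf{The core.} Put $N:=\operatorname{core}_{A_c}(U_c)$. I show $N=C_{A_c}(G)\cap U_c$ in four steps.
\begin{enumerate}
\item The group $N\cap G$ is normal in $G$ and lies in $H$, so it lies in $\operatorname{core}_G(H)$. This core is trivial, as noted after Definition~\ref{def:semilinear-P1-datum} for the datum $\mathcal D_0$, which shares $G$ and $H$. Hence $N\cap G=1$.
\item Since $N$ and $G$ are both normal in $A_c$ and intersect trivially, they commute. This gives $N\subseteq C_{A_c}(G)\cap U_c$.
\item Conversely, $C_{A_c}(G)$ is normal in $A_c$. The group $C_{A_c}(G)\cap U_c$ is fixed by conjugation by $G$, because its elements centralize $G$. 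It is also preserved by conjugation by $\psi\in U_c$. So it is normal in $A_c=\langle G,\psi\rangle$.
\item Being contained in $U_c$, it lies in $N$, which gives equality.
\end{enumerate}

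\textbf{Counting the core.} Because $N\cap G=1$, the map $N\to A_c/G\cong\mathbb Z/d$ is injective. An element of $U_c$ has the form $h\psi^j$ with $h\in H$, and it lies in $N$ exactly when $\operatorname{Int}(\psi)^j|_G=\operatorname{Int}(h)^{-1}|_G$. Such an $h$ exists exactly when $e(c)\mid j$. It is then unique, because $Z(G)\cap H$ is normal in $G$ and contained in $H$, hence trivial. So the image of $N$ is the subgroup of index $e(c)$ in $\mathbb Z/d$, and $|N|=d/e(c)$.

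\textbf{Constant field and the datum criterion.} The arithmetic Galois closure of $\Omega^{U_c}/\Omega^{A_c}$ is $\Omega^N$. Its full constant field is $\Omega^N\cap\kappa$, which is the fixed field in $\kappa$ of the image of $N$ in $\operatorname{Gal}(\kappa/k)$. That image has index $e(c)$, so the constant field is $\mathbb F_{q^{e(c)}}$. Conditions (i)–(iv) of Definition~\ref{def:semilinear-P1-datum} were verified in the first step. Condition (v) is $N=1$, which holds if and only if $e(c)=d$.

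The main point requiring care is the step $N\cap G=1$ together with the uniqueness of $h$. Both rest on the core-freeness of $H$ in $G$ inherited from $\mathcal D_0$, and this is where I expect the obstacle. I would verify it first, before the counting argument.
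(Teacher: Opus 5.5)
Your proposal is correct and follows essentially the paper's route. Core-freeness of $H$ in $G$ gives $N\cap G=1$, hence $N=C_{A_c}(G)\cap U_c$, and $|N|=d/e(c)$ comes from determining which cosets $H\psi^j$ contain an element centralizing $G$; the paper phrases this as computing the kernel of $U_c/H\to\mathrm N_{\operatorname{Aut}(G)}(H)/\operatorname{Int}(H)|_G$, and your explicit checks that $\psi^d\in H$ and $A_c/G\cong U_c/H\cong\operatorname{Gal}(\kappa/k)$ supply the fact $|U_c/H|=d$, which the paper uses without comment.
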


\begin{proof}
Changing $\widetilde c$ by an element of $H$ changes
$\operatorname{Int}(\psi_{\widetilde c})|_G$ by an inner automorphism induced
by $H$, so $e(c)$ is lift-independent. Twisted conjugacy conjugates the
induced class in the displayed quotient, and hence preserves its order.

Put $N=\operatorname{core}_{A_c}(U_c)$. Since $N\cap G\triangleleft G$ and
$N\cap G\leqslant H$, and $H$ is core-free in $G$, one has $N\cap G=1$. As $N$
and $G$ are normal in $A_c$, one has $[N,G]\leqslant N\cap G=1$, so
$N\leqslant C_{A_c}(G)\cap U_c$. Conversely, if
$x\in C_{A_c}(G)\cap U_c$, then $A_c=GU_c$, and for $a=gu$ one has
$a^{-1}xa=u^{-1}xu\in U_c$. Thus $x$ belongs to every conjugate of $U_c$,
proving $N=C_{A_c}(G)\cap U_c$.

The homomorphism
\[
 U_c/H
 \longrightarrow
 \mathrm N_{\operatorname{Aut}(G)}(H)
 \Big/
 \{\operatorname{Int}(h)|_G:h\in H\}
\]
induced by conjugation has image generated by the class of
$\psi_{\widetilde c}$ and therefore has image order $e(c)$. A coset $uH$
lies in its kernel exactly when
$\operatorname{Int}(u)|_G=\operatorname{Int}(h)|_G$ for some $h\in H$; then
$h^{-1}u\in C_{A_c}(G)\cap U_c=N$. Conversely every element of $N$ gives
such a kernel coset. Since $N\cap H\leqslant N\cap G=1$, this identifies the
kernel with $N$. Hence $|N|=d/e(c)$. Since $N\cap G=1$, restriction embeds
$N$ into $\operatorname{Gal}(\kappa/k)$. Hence the full constant field of
the arithmetic Galois closure $\Omega^N$ has degree $d/|N|=e(c)$ over $k$.
The final assertion follows from
$\operatorname{core}_{A_c}(U_c)=1\iff e(c)=d$.
\end{proof}

For a twisted-conjugacy class $[c]$, write $e([c])$ for this common value.

\begin{theorem}[Cohomological classification for fixed geometric data]
\label{thm:fixed-pair-cohomology}
For $[c]\in H^1_\theta(C_d,\mathscr W)$ with $e([c])=d$, choose any representative
and any lift $\widetilde c\in \mathscr N$. Then
\[
 [c]\longmapsto\mathcal M_{\mathcal D(\widetilde c)}
\]
is a bijection from $\{[c]\in H^1_\theta(C_d,\mathscr W):e([c])=d\}$ onto the
$k$-M\"obius classes determined by semilinear $\mathbf P^1$-data over $k$
of the form $(\kappa/k,C,G,H,\psi)$ with fixed $C,G,H$. The displayed class
is independent of all choices.
\end{theorem}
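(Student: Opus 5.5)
The plan is to check three things in turn: that the map is well defined, that it is surjective, and that it is injective. The main tool throughout is the equality criterion, Theorem~\ref{thm:exact-map-equivalence}.

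\emph{Well-definedness.} First I will check that $\mathcal D(\widetilde c)$ is a datum. Write $\psi=\widetilde c\varphi_0$. Since $\widetilde c\in\mathscr N$ and $\varphi_0$ normalizes $G$ and $H$, so does $\psi$. Moreover $\psi^d=\widetilde c\,\theta(\widetilde c)\cdots\theta^{d-1}(\widetilde c)\varphi_0^d$ up to $H$, which lies in $H$ by the cocycle condition and $\varphi_0^d\in H$. Core-freeness is exactly $e([c])=d$ by Proposition~\ref{prop:fixed-pair-core-order}.

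Next I will check independence of choices. Changing the lift replaces $\psi$ by $h\psi$ with $h\in H$, and Theorem~\ref{thm:exact-map-equivalence} with $\alpha=\mathrm{id}$ gives the same class. Changing the representative to $c'=uc\theta(u)^{-1}$ with lift $\widetilde u\in\mathscr N$ gives
\[
\widetilde u\,\widetilde c\,\varphi_0\,\widetilde u^{-1}=\widetilde u\,\widetilde c\,(\varphi_0\widetilde u^{-1}\varphi_0^{-1})\varphi_0 ,
\]
which lies in $H\,\widetilde{c'}\varphi_0$. Taking $\alpha=\widetilde u$, which normalizes $G$ and $H$, the criterion again gives equality of classes.

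\emph{Surjectivity.} Let $(\kappa/k,C,G,H,\psi)$ be a datum. Then $\psi\varphi_0^{-1}$ is $\kappa$-linear, because both are $q$-semilinear. It normalizes $G$ and $H$, so it lies in $\mathscr N$. Let $c$ be its image in $\mathscr W$. The condition $\psi^d\in H$ translates, by the same telescoping as above, into the cocycle condition. The core-freeness of the datum gives $e(c)=d$ by Proposition~\ref{prop:fixed-pair-core-order}, and $\psi=\widetilde c\varphi_0$ for this lift.

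\emph{Injectivity.} This is the step needing care. Suppose $\mathcal M_{\mathcal D(\widetilde c)}=\mathcal M_{\mathcal D(\widetilde c')}$. Theorem~\ref{thm:exact-map-equivalence} gives a $\kappa$-automorphism $\alpha$ of $C$ normalizing $G$ and $H$, so $\alpha\in\mathscr N$, with
\[
\alpha\,\widetilde c\,\varphi_0\,\alpha^{-1}\in H\,\widetilde c'\varphi_0 .
\]
Rewriting $\varphi_0\alpha^{-1}=\theta(\alpha)^{-1}\varphi_0$ modulo $H$, where $\theta(\alpha)=\varphi_0\alpha\varphi_0^{-1}$, and cancelling $\varphi_0$, I get $\bar\alpha\, c\,\theta(\bar\alpha)^{-1}=c'$ in $\mathscr W$. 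Hence $c'\sim_\theta c$.

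The main bookkeeping obstacle is to verify that the computations in $\mathscr N$ descend consistently modulo $H$. This uses that $H$ is normal in $\mathscr N$ and is $\varphi_0$-stable, so that $\theta$ is well defined on $\mathscr W=\mathscr N/H$ and the cosets $H\psi$ behave multiplicatively.
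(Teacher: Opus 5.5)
Your proposal is correct and is essentially the paper's own proof. Lift-independence, twisted-conjugacy invariance (taking a lift $\widetilde u\in\mathscr N$ as $\alpha$) and injectivity all come from the equality criterion of Theorem~\ref{thm:exact-map-equivalence}, while surjectivity comes from $\psi\varphi_0^{-1}\in\mathscr N$ together with Proposition~\ref{prop:fixed-pair-core-order}. The only difference is that you spell out details the paper leaves implicit or delegates to that proposition: the telescoping identity $(\widetilde c\varphi_0)^d=\widetilde c\,\theta(\widetilde c)\cdots\theta^{d-1}(\widetilde c)\,\varphi_0^d$, which holds exactly and not merely up to $H$, and the check that $\mathcal D(\widetilde c)$ is a datum.
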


\begin{proof}
If the lift $\widetilde c$ is replaced by $h\widetilde c$ with
$h\in H$, then the induced semilinear actions on both $C/H$ and $C/G$
are unchanged, so the resulting $k$-M\"obius class is unchanged.

If $c'$ is twisted-conjugate to $c$, choose a lift of the conjugating
element in $\mathscr N$.  The corresponding semilinear data are conjugate
up to multiplication of the Frobenius lift by an element of $H$.
Theorem~\ref{thm:exact-map-equivalence} therefore gives the same
$k$-M\"obius class.

Conversely, any semilinear datum with the displayed fixed $C,G,H$ has
Frobenius lift $\psi$ with
$\psi\varphi_0^{-1}\in \mathscr N$, hence arises from a $1$-cocycle in $Z^1_\theta(\mathscr W)$; its
arithmetic core-freeness forces $e=d$ by
Proposition~\ref{prop:fixed-pair-core-order}.  Finally, if two resulting
$k$-M\"obius classes are equal, Theorem~\ref{thm:exact-map-equivalence}
gives a $\kappa$-automorphism of $C$ normalizing both $G$ and $H$.
Its class in $\mathscr W$ realizes exactly the twisted-conjugacy relation.
\end{proof}

\begin{corollary}[Burnside count for fixed geometric data]
The number of these $k$-M\"obius classes is
\[
 \frac1{|\mathscr W|}
 \sum_{u\in \mathscr W}
 \left|
 \left\{
 c\in Z^1_\theta(\mathscr W):
 e(c)=d,\;
 u\,c\,\theta(u)^{-1}=c
 \right\}
 \right|.
\]
\end{corollary}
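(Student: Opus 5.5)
By Theorem~\ref{thm:fixed-pair-cohomology}, the $k$-M\"obius classes in question are in bijection with the subset
\[
 S:=\{[c]\in H^1_\theta(C_d,\mathscr W):e([c])=d\}
\]
of twisted-conjugacy classes. The plan is to count $S$ with the Cauchy--Frobenius--Burnside lemma for a suitable group action on cocycles.

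The action is the twisted action of $\mathscr W$ on $Z^1_\theta(\mathscr W)$, given by $u\cdot c:=u\,c\,\theta(u)^{-1}$. First I will check that this preserves cocycles. We have
\[
 (u c\theta(u)^{-1})\,\theta(u c\theta(u)^{-1})\cdots
 = u\,c\,\theta(c)\cdots\theta^{d-1}(c)\,\theta^d(u)^{-1},
\]
since the inner factors telescope. Because $\theta^d=1$, this equals $u\cdot 1\cdot u^{-1}=1$. It is also a genuine left action: $(uv)\cdot c=u\cdot(v\cdot c)$ follows immediately from $\theta$ being a homomorphism.

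By definition, the orbits of this action are exactly the $\sim_\theta$-classes, i.e.\ the elements of $H^1_\theta(C_d,\mathscr W)$. By Proposition~\ref{prop:fixed-pair-core-order}, $e(c)$ is constant on twisted-conjugacy classes. Hence the subset $Z_d:=\{c\in Z^1_\theta(\mathscr W):e(c)=d\}$ is $\mathscr W$-stable, and its orbits are precisely the elements of $S$.

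Burnside's lemma for the finite group $\mathscr W$ acting on the finite set $Z_d$ then gives
\[
 |S|=\frac1{|\mathscr W|}\sum_{u\in\mathscr W}|\{c\in Z_d:u\cdot c=c\}|,
\]
which is exactly the displayed formula.

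Finiteness of $\mathscr W$ is the only point needing comment. I would argue it as follows. $\mathscr N$ normalizes $G$, so conjugation gives a homomorphism $\mathscr N\to\operatorname{Aut}(G)$, whose image is finite. Its kernel is the centralizer of $G$ in $\operatorname{Aut}_\kappa(C)\cong\operatorname{PGL}_2(\kappa)$, and $\operatorname{PGL}_2(\kappa)$ is itself finite since $\kappa$ is a finite field. So $\mathscr N$ is finite, and therefore so are $\mathscr W=\mathscr N/H$ and $Z_d$.

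None of these steps is a serious obstacle: the corollary is a direct application of Burnside's lemma once Theorem~\ref{thm:fixed-pair-cohomology} is in hand.
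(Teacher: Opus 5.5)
Your proposal is correct and is essentially the paper's argument: the paper gives no separate proof, treating the corollary as the immediate Burnside count for the twisted action $u\cdot c=u\,c\,\theta(u)^{-1}$ of $\mathscr W$ on the $\sim_\theta$-stable set $\{c\in Z^1_\theta(\mathscr W):e(c)=d\}$, combined with the bijection of Theorem~\ref{thm:fixed-pair-cohomology}. Your finiteness argument via $\operatorname{Aut}(G)$ is valid but roundabout, since $\mathscr N\leqslant\operatorname{Aut}_\kappa(C)\cong\operatorname{PGL}_2(\kappa)$ is already finite.
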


\section{Decomposition, monodromy, and ramification}\label{sec:primitivity-decomposition}

For a finite group $B$ and a proper subgroup $V\lneq B$, we call the natural
action $B\curvearrowright B/V$ \emph{primitive} if $V$ is a maximal proper
subgroup of $B$, equivalently, if there is no subgroup $J$ with
$V\lneq J\lneq B$.

For a semilinear $\mathbf P^1$-datum
$\mathcal D=(\kappa/k,C,G,H,\varphi)$, put
$A=\langle G,\varphi\rangle$ and $U=\langle H,\varphi\rangle$. We call the
natural actions $G\curvearrowright G/H$ and $A\curvearrowright A/U$ the
\emph{geometric monodromy action} and the \emph{arithmetic monodromy action},
respectively.

\begin{theorem}[Arithmetic decomposition criterion]
\label{thm:decomposition}
Let $\mathcal D=(\kappa/k,C,G,H,\varphi)$ be a semilinear
$\mathbf P^1$-datum over $k$, let $f\in\mathcal M_{\mathcal D}$, and put
$A=\langle G,\varphi\rangle$ and $U=\langle H,\varphi\rangle$. The following
are equivalent:
\begin{enumerate}[label=\textup{(\roman*)},ref=\roman*]
\item\label{item:decomposition-1} $f$ is decomposable over $k$;
\item\label{item:decomposition-2} there is a $\varphi$-stable subgroup $J$ with $H\lneq J\lneq G$;
\item\label{item:decomposition-3} $U$ is not maximal in $A$.
\end{enumerate}
Consequently, $f$ is $k$-indecomposable if and only if the arithmetic
monodromy action of $A$ on $A/U$ is primitive.
\end{theorem}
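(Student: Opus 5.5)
We will run the Galois correspondence for the arithmetic Galois closure and then translate intermediate groups between $A$ and $G$. By Corollary~\ref{cor:f-datum-compatibility} we may identify $\Omega_f$ with $\Omega=\kappa(C)$, $A_f$ with $A$, and $U_f$ with $U$. Under this identification $k(\mathbf x)=\Omega^U$ and $k(\mathbf t)=\Omega^A$.

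For \eqref{item:decomposition-1}$\Leftrightarrow$\eqref{item:decomposition-3}, a decomposition $f=g\circ h$ over $k$ with $\deg g,\deg h>1$ corresponds to the intermediate field $k(h(\mathbf x))$, which lies strictly between $k(\mathbf t)$ and $k(\mathbf x)$. Conversely, let $E$ be a field strictly between $k(\mathbf t)$ and $k(\mathbf x)$. By L\"uroth's theorem, $E=k(u)$ for some $u\in k(\mathbf x)$; the generator can be taken over $k$ because $E$ is a subfield of $k(\mathbf x)$ containing $k$. Then $\mathbf x\mapsto u$ and $u\mapsto\mathbf t$ give rational functions $h,g\in k(X)$ of degrees $[k(\mathbf x):E]>1$ and $[E:k(\mathbf t)]>1$ with $f=g\circ h$. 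The Galois correspondence in $\Omega/\Omega^A$ matches such fields with subgroups $U\lneq V\lneq A$, so the two conditions are equivalent.

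For \eqref{item:decomposition-2}$\Leftrightarrow$\eqref{item:decomposition-3}, we will show that $J\mapsto\langle J,\varphi\rangle=J\langle\varphi\rangle$ and $V\mapsto V\cap G$ are mutually inverse, inclusion-preserving bijections. They go between the $\varphi$-stable subgroups $J$ with $H\leqslant J\leqslant G$ and the subgroups $V$ with $U\leqslant V\leqslant A$.

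\begin{itemize}
\item If $J$ is $\varphi$-stable, then $J\langle\varphi\rangle$ is a subgroup containing $U=H\langle\varphi\rangle$.
\item Since $\varphi^d\in H\leqslant J$, we have $J\langle\varphi\rangle=\bigcup_{i<d}J\varphi^i$.
\item The powers $\varphi^i$ with $0\leqslant i<d$ lie in distinct cosets of $G$, because $A/G\cong\operatorname{Gal}(\kappa/k)$ is cyclic of order $d$ generated by $\varphi$. Hence $J\langle\varphi\rangle\cap G=J$.
\item Conversely, let $U\leqslant V\leqslant A$ and put $J=V\cap G$. Then $J\supseteq U\cap G=H$, and $J$ is $\varphi$-stable because $\varphi\in V$ and $G\trianglelefteq A$.
\item Since $A=G\langle\varphi\rangle$ and $\varphi\in V$, the modular law gives $V=(V\cap G)\langle\varphi\rangle=J\langle\varphi\rangle$.
\end{itemize}

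Inclusions are preserved in both directions, so strict inclusions correspond to strict inclusions. In particular $J=H$ corresponds to $V=U$, and $J=G$ corresponds to $V=A$. Therefore a proper intermediate $J$ exists exactly when $U$ is not maximal in $A$.

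The final sentence of the theorem is then just the definition of primitivity. The main point needing care is the L\"uroth step: we must check that intermediate fields of $k(\mathbf x)/k(\mathbf t)$ give genuine decompositions over $k$ rather than over $\kappa$. This holds because such fields are subfields of $k(\mathbf x)$, which has constant field $k$. The group-theoretic bijection is routine once $\varphi^d\in H$ and the normality of $G$ in $A$ are used.
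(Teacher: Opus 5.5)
Your proposal is correct and follows essentially the same route as the paper: the Galois correspondence for $\Omega/\Omega^A$ combined with L\"uroth's theorem to match intermediate fields with decompositions, and the translation $V\mapsto V\cap G$, $J\mapsto J\langle\varphi\rangle$ between intermediate subgroups. The only difference is cosmetic: the paper writes $V=JU$ using $A=GU$, whereas you use the modular law with $A=G\langle\varphi\rangle$, and your explicit check that $J\langle\varphi\rangle\cap G=J$ makes the strictness of the correspondence slightly more transparent than in the paper.
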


\begin{proof}
Put $\Omega=\kappa(C)$, $K=\Omega^A$, and $L=\Omega^U$. A nontrivial
decomposition of a representative $f\in\mathcal M_{\mathcal D}$ is equivalent
to a strict intermediate field $K\subsetneq M\subsetneq L$. Since $\Omega/K$
is Galois with group $A$, such fields correspond to subgroups $U\lneq V\lneq A$. For
such $V$, put $J=V\cap G$. Since $A=GU$ and $U\leqslant V$, every $v\in V$ can
be written $v=gu$ with $g\in G$ and $u\in U$; then $g=vu^{-1}\in V\cap G=J$.
Hence $V=JU=\langle J,\varphi\rangle$, so $J$ is $\varphi$-stable and
$H\lneq J\lneq G$.

Conversely, if $H\lneq J\lneq G$ is $\varphi$-stable, then
$V=\langle J,\varphi\rangle=JU$ satisfies $U\lneq V\lneq A$. The intermediate field
$\Omega^V$ lies strictly between $K$ and $L$. Since $L$ is a rational
function field over $k$, L\"uroth's theorem gives a functional decomposition
over $k$. This proves
\textup{(\ref{item:decomposition-1})}$\Longleftrightarrow$\textup{(\ref{item:decomposition-2})}, and the same correspondence proves
\textup{(\ref{item:decomposition-2})}$\Longleftrightarrow$\textup{(\ref{item:decomposition-3})}.
\end{proof}

\begin{corollary}[Arithmetic and geometric indecomposability]
\label{cor:geoarith}
Let $\mathcal D$ and $f\in\mathcal M_{\mathcal D}$ be as above.
Then $f$ is geometrically indecomposable if and only if $H$ is maximal
in $G$.  Hence geometric indecomposability implies
$k$-indecomposability.  The converse fails exactly when $H$ is not
maximal in $G$ but $U$ is maximal in $A$.
\end{corollary}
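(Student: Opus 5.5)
The plan is to apply Theorem~\ref{thm:decomposition} to the base change of the datum to $\bar k$, or equivalently to the field $\kappa$.

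First, fix a representative $f\in\mathcal M_{\mathcal D}$. By Corollary~\ref{cor:f-datum-compatibility}, $\Omega=\kappa(C)$ is the Galois closure of $k(\mathbf x)/k(\mathbf t)$. Its base change $\Omega\bar k=\bar k(C)$ is the geometric Galois closure of $\bar k(\mathbf x)/\bar k(\mathbf t)$. Since $\kappa$ is the full constant field of $\Omega$, the extension $\Omega\bar k/\bar k(\mathbf t)$ is Galois with group $G$, and the stabilizer of $\bar k(\mathbf x)$ is $H$. This uses $\Omega^{G}\kappa\cdot\bar k=\bar k(\mathbf t)$ and $\Omega^{H}\bar k=\bar k(\mathbf x)$, which follow from the identifications $(X_{\mathcal D})_\kappa\cong C/H$ and $(Y_{\mathcal D})_\kappa\cong C/G$ recorded after Definition~\ref{def:semilinear-P1-datum}. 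Restriction to $\Omega$ identifies $\operatorname{Gal}(\Omega\bar k/\bar k(\mathbf t))$ with $\operatorname{Gal}(\Omega/\kappa(\mathbf t))=G$, because $\Omega$ and $\bar k$ are linearly disjoint over $\kappa$.

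Next, a nontrivial decomposition $f=g\circ h$ with $g,h\in\bar k(X)$ corresponds to a strict intermediate field $\bar k(\mathbf t)\subsetneq M\subsetneq\bar k(\mathbf x)$. Conversely, by L\"uroth's theorem over $\bar k$, every such field is $\bar k(h(\mathbf x))$ for some $h\in\bar k(X)$, which gives such a decomposition. By the Galois correspondence for $\Omega\bar k/\bar k(\mathbf t)$, these intermediate fields correspond bijectively to subgroups $J$ with $H\lneq J\lneq G$. Hence $f$ is geometrically indecomposable if and only if $H$ is maximal in $G$.

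The remaining assertions are formal. If $H$ is maximal in $G$, then no $\varphi$-stable $J$ lies strictly between $H$ and $G$, so by Theorem~\ref{thm:decomposition} $U$ is maximal in $A$ and $f$ is $k$-indecomposable. The converse fails precisely when $H$ is not maximal in $G$ while $U$ is maximal in $A$; this is the restatement of the two criteria side by side.

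The only delicate step is the identification of the geometric Galois group with $G$ and of the point stabilizer with $H$ after base change to $\bar k$, since it requires that $\kappa$ be exactly the full constant field. That fact is supplied by Theorem~\ref{thm:reconstruction} and Corollary~\ref{cor:f-datum-compatibility}.
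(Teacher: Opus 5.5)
Your proposal is correct and follows the paper's route. The paper states this corollary without a separate proof, as an immediate consequence of the Galois correspondence behind Theorem~\ref{thm:decomposition} applied over $\bar k$, where Frobenius imposes no stability condition. You supply exactly that argument, including the linear-disjointness check that identifies the geometric group and point stabilizer with $G$ and $H$.
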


A decomposition $f=f_r\circ\cdots\circ f_1$ over $k$ is understood to have
$\deg f_i>1$ for every $i$.  Two
decompositions $(f_1,\ldots,f_r)$ and $(f_1',\ldots,f_r')$ are
identified if there are $\mu_i\in\operatorname{PGL}_2(k)$ with
$\mu_0=\mu_r=1$ and
\[
 f_i'=\mu_i\circ f_i\circ\mu_{i-1}^{-1}
 \qquad(1\leqslant i\leqslant r).
\]
A decomposition is \emph{complete} if every factor is
$k$-indecomposable.

\begin{theorem}[Functional decompositions and Frobenius-stable chains]
\label{thm:complete-decomposition}
Let $\mathcal D=(\kappa/k,C,G,H,\varphi)$ be a semilinear
$\mathbf P^1$-datum over $k$, let $f\in\mathcal M_{\mathcal D}$, and put
$\Omega=\kappa(C)$. Decompositions $f=f_r\circ\cdots\circ f_1$ over $k$,
modulo $k$-M\"obius changes of the intermediate coordinates, are in bijection
with strict chains
\[
 H=J_0\lneq J_1\lneq\cdots\lneq J_r=G
\]
of $\varphi$-stable subgroups. For the corresponding decomposition,
\[
 \deg f_i=[J_i:J_{i-1}].
\]
The decomposition is complete if and only if, for every $i$, there is no
$\varphi$-stable subgroup strictly between $J_{i-1}$ and $J_i$.

For each $i$ put
\[
\begin{aligned}
 A_i&=\langle J_i,\varphi\rangle,
 &U_i&=\langle J_{i-1},\varphi\rangle,\\
 N_i&=\operatorname{core}_{A_i}(U_i),
 &K_i&=\operatorname{core}_{J_i}(J_{i-1}).
\end{aligned}
\]
Then $N_i\cap J_i=K_i$. The arithmetic Galois closure of the $i$th factor
inside $\Omega$ is $\Omega^{N_i}$. Its full constant field $\kappa_i$ satisfies
\[
 [\kappa_i:k]=\frac{d}{[N_i:K_i]}\mid d,
 \qquad d=[\kappa:k],
\]
and after extension of constants to $\kappa$ its geometric Galois closure has
function field $\Omega^{K_i}$. In particular every factor is separable and has
Galois closure of genus zero.
\end{theorem}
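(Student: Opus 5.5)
The plan is to run the Galois correspondence for $\Omega/\Omega^{A}$ with $A=\langle G,\varphi\rangle$, $U=\langle H,\varphi\rangle$, and to reuse the argument of Theorem~\ref{thm:decomposition} at each level of the chain.

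\emph{Step 1: decompositions and field chains.} Put $K=\Omega^A$ and $L=\Omega^U=k(\mathbf x)$. A decomposition $f=f_r\circ\cdots\circ f_1$ over $k$, modulo $k$-M\"obius changes of the intermediate coordinates, gives a strict chain of intermediate fields
\[
L=M_0\supsetneq M_1\supsetneq\cdots\supsetneq M_r=K,
\]
where $M_i=k(f_i\circ\cdots\circ f_1(\mathbf x))$. Conversely, by L\"uroth's theorem each $M_i$ is a rational function field over $k$: its genus is zero and $k$ is finite, so it has a degree-one place. Hence a field chain determines a decomposition, and the choice of generators is exactly the stated M\"obius ambiguity. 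This gives a bijection between decompositions and field chains.

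\emph{Step 2: field chains and $\varphi$-stable chains.} Field chains correspond to strict chains of subgroups $U=V_0\lneq V_1\lneq\cdots\lneq V_r=A$. As in Theorem~\ref{thm:decomposition}, $A=GU$ gives $V_i=J_iU$ with $J_i=V_i\cap G$, where $J_i$ is $\varphi$-stable. Conversely $V_i=\langle J_i,\varphi\rangle$. These maps are inverse to each other because $\langle J,\varphi\rangle\cap G=J(U\cap G)=JH=J$ for $\varphi$-stable $J\geqslant H$. The degrees then satisfy
\[
\deg f_i=[V_i:V_{i-1}]=[J_iU:J_{i-1}U]=[J_i:J_{i-1}],
\]
using $J_i\cap U=J_{i-1}\cap U=H$ together with the product formula. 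Completeness follows by applying Theorem~\ref{thm:decomposition} to each factor. I will do that once Step 3 shows the factor comes from a semilinear datum; alternatively, it follows directly from the same correspondence restricted to the segment $[V_{i-1},V_i]$.

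\emph{Step 3: the $i$th factor.} This factor is the extension $\Omega^{U_i}/\Omega^{A_i}$, where $U_i=V_{i-1}$ and $A_i=V_i$. Since $\Omega/\Omega^{A_i}$ is Galois with group $A_i$, the Galois closure of this factor inside $\Omega$ is $\Omega^{N_i}$ with $N_i=\operatorname{core}_{A_i}(U_i)$. For $N_i\cap J_i=K_i$:
\begin{itemize}
\item[$\supseteq$] $K_i$ is normal in $J_i$ and $\varphi$-stable, because $\varphi$ normalizes $J_i$ and $J_{i-1}$. Hence $K_i$ is normal in $A_i$ and lies in $U_i$, so $K_i\leqslant N_i$.
\item[$\subseteq$] $N_i\cap J_i$ is normal in $J_i$ and contained in $U_i\cap J_i=J_{i-1}$, so it lies in $K_i$.
\end{itemize}

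For the constant field, the restriction map $A_i\to\operatorname{Gal}(\kappa/k)$ has kernel $J_i$ and is surjective. The full constant field of $\Omega^{N_i}$ is the fixed field in $\kappa$ of the image of $N_i$. That image is $N_iJ_i/J_i\cong N_i/K_i$, so $[\kappa_i:k]=d/[N_i:K_i]$, which divides $d$.

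After extending constants, the geometric closure is the Galois closure of $\Omega^{J_{i-1}}/\Omega^{J_i}$ over $\kappa$. This is $\Omega^{K_i}$: the extension $\Omega/\Omega^{J_i}$ is Galois with group $J_i$, and base change to $\kappa$ identifies $\Omega^{N_i}\kappa$ with $\Omega^{N_i\cap J_i}$. The last identification needs checking: $\Omega^{N_i}\kappa$ is fixed exactly by $N_i\cap G$, and $N_i\cap G\leqslant N_i\cap J_i$ because $N_i\leqslant A_i$ and $A_i\cap G=J_i$.

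Separability is clear because everything sits inside the separable extension $\Omega/K$. For the genus, $\Omega^{K_i}$ is a subfield of $\kappa(C)$, which is rational over $\kappa$, so L\"uroth's theorem gives genus zero. The genus of the geometric closure is that of the arithmetic one over its own constant field.

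I expect the main obstacle to be the constant-field bookkeeping in Step 3. Two points need care there: making precise that the full constant field is read off from the image of $N_i$ in $\operatorname{Gal}(\kappa/k)$, and checking that the fixed-field comparison $\Omega^{N_i}\kappa=\Omega^{N_i\cap G}$ holds even though $N_i$ need not lie in $G$.
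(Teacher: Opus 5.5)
Your proposal is correct and follows essentially the same route as the paper: the Galois correspondence for $\Omega/\Omega^{A}$ with $V_i=J_iU$ and $J_i=V_i\cap G$, the two-inclusion argument for $N_i\cap J_i=K_i$, and reading off $\kappa_i$ from the image $N_i/K_i$ of $N_i$ in $\operatorname{Gal}(\kappa/k)$. The only variation is that you obtain $\Omega^{N_i}\kappa=\Omega^{N_i\cap G}=\Omega^{K_i}$ directly as the fixer of a compositum, whereas the paper uses an inclusion plus a degree count. This settles the point you flagged, since $N_i\cap G=N_i\cap(A_i\cap G)=N_i\cap J_i$ holds with equality.
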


\begin{proof}
Put $A=\langle G,\varphi\rangle$ and $U=\langle H,\varphi\rangle$. For a
strict $\varphi$-stable chain $J_i$, set $V_i=\langle J_i,\varphi\rangle=J_iU$.
Then $U=V_0\lneq V_1\lneq\cdots\lneq V_r=A$. The corresponding intermediate fields
$M_i=\Omega^{V_i}$ form
\[
 k(X_{\mathcal D})=M_0
 \supsetneq M_1\supsetneq\cdots\supsetneq
 M_r=k(Y_{\mathcal D}).
\]
Each $M_i$ is a rational function field by L\"uroth's theorem, so choosing
$k$-coordinates produces a decomposition. Conversely, any intermediate-field
chain gives a subgroup chain $V_i$, with $J_i=V_i\cap G$.
Theorem~\ref{thm:decomposition} gives $\varphi$-stability and the inverse
correspondence. Different choices of intermediate coordinates differ exactly
by $k$-M\"obius transformations.

Since $J_i\cap U=H$, one has $[V_i:V_{i-1}]=[J_i:J_{i-1}]$, giving the degree
formula. Completeness is equivalent, by Theorem~\ref{thm:decomposition}, to
the absence of a $\varphi$-stable subgroup strictly between successive $J_i$.

For the $i$th factor the ambient arithmetic Galois group is $A_i=V_i$ and the
point stabilizer is $U_i=V_{i-1}$, so its arithmetic Galois closure inside
$\Omega$ is $\Omega^{N_i}$. Now $N_i\cap J_i\triangleleft J_i$ and
$N_i\cap J_i\leqslant J_{i-1}$, hence $N_i\cap J_i\leqslant K_i$. Conversely, $K_i$
is $\varphi$-stable, normal in $A_i$, and contained in $U_i$, hence
$K_i\leqslant N_i$. Thus $N_i\cap J_i=K_i$. Since $A_i\cap G=J_i$, one has
$N_i\cap G=N_i\cap J_i=K_i$.

Thus the kernel of the action of $N_i$ on $\kappa$ is $K_i$. If $\kappa_i$
denotes the full constant field of $\Omega^{N_i}$, then
$[\kappa:\kappa_i]=[N_i:K_i]$, and hence
$[\kappa_i:k]=d/[N_i:K_i]$. Because $K_i\leqslant N_i$ and $K_i\leqslant G$ fixes
$\kappa$, one has $\Omega^{N_i}\kappa\subseteq\Omega^{K_i}$. Moreover
\[
 [\Omega^{K_i}:\Omega^{N_i}]
 =[N_i:K_i]
 =[\kappa:\kappa_i]
 =[\Omega^{N_i}\kappa:\Omega^{N_i}],
\]
so the inclusion is an equality, $\Omega^{N_i}\kappa=\Omega^{K_i}$. Since
$\Omega^{K_i}$ is an intermediate field of the rational function field
$\kappa(C)$, L\"uroth's theorem shows that it is a rational function field
over $\kappa$. Genus is unchanged by finite constant extension, so the
arithmetic Galois-closure curve of the factor has genus zero. This proves the
final assertions.
\end{proof}

\subsection{Normal cores, ramification, and the wildness criterion}

Let $\mathcal D=(\kappa/k,C,G,H,\varphi)$ be a semilinear
$\mathbf P^1$-datum over $k$ attached to a separable rational function.
Recall that $\pi_{\mathcal D}:C/H\to C/G$ is the quotient morphism attached
to $\mathcal D$. Let $\pi_H:C\to C/H$ and $\pi_G:C\to C/G$ be the
quotient morphisms. Then $\pi_G=\pi_{\mathcal D}\circ\pi_H$.

For a geometric point $P\in C(\bar k)$, the \emph{inertia group} of
$\pi_G$ at $P$ is the stabilizer
\[
 G_P:=\{g\in G:g(P)=P\}.
\]
Here the geometric residue fields are all $\bar k$, so the stabilizer
acts trivially on the residue field. The inertia groups above any one
branch point are conjugate in $G$.

For a finite separable morphism $\pi:X\to Y$ of smooth projective
geometrically irreducible curves over a field $F$, let $d_P(\pi)$ denote
the \emph{different exponent} at $P\in X(\overline F)$, namely the
valuation at $P$ of the different ideal of the corresponding extension
of discrete valuation rings after base change to $\overline F$.
The Riemann--Hurwitz formula gives
\[
 2g(X)-2
 =\deg(\pi)\bigl(2g(Y)-2\bigr)
  +\sum_{P\in X(\overline F)}d_P(\pi).
\]
Moreover, $d_P(\pi)=e_P(\pi)-1$ at a tamely ramified point,
whereas $d_P(\pi)\geqslant e_P(\pi)$ at a wildly ramified point;
see \cite[Section~53.12, especially Lemma~53.12.4]{Stacks2026}.
In particular, if $X$ and $Y$ both have genus zero, then
\[
 2\deg(\pi)-2=\sum_{P\ \mathrm{ramified}}d_P(\pi).
\]

\begin{theorem}[Structural consequences of the genus-zero datum]\label{thm:structural-consequences}
With the notation above, the following hold.
\begin{enumerate}[label=(\arabic*),ref=\arabic*,font=\itshape]
\item\label{item:structural-consequences-1} For every $P\in C(\bar k)$,
\begin{equation}\label{eq:quotient-ramification-index}
 e_{\pi_H(P)}(\pi_{\mathcal D})=[G_P:H\cap G_P].
\end{equation}
\item\label{item:structural-consequences-2} The morphism $\pi_{\mathcal D}$ is wild if and only if $p\mid |G|$.
\item\label{item:structural-consequences-3} If the arithmetic monodromy action is primitive, then either $H=1$ and $\mathrm N_G(H)=G$, or $H\ne1$ and $\mathrm N_G(H)=H$.
\item\label{item:structural-consequences-4} If the arithmetic monodromy action is primitive and $1\ne N\triangleleft G$ is $\varphi$-stable, then $NH=G$. In particular, $\operatorname{Soc}(G)H=G$, where
$\operatorname{Soc}(G)$, the \emph{socle} of $G$, is the subgroup generated
by the minimal nontrivial normal subgroups of $G$.
\end{enumerate}
\end{theorem}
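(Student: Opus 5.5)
We take the four parts in order. They reduce to the tower $C\to C/H\to C/G$ and to the group theory of Theorem~\ref{thm:decomposition}.

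\emph{Part (1).} We compute ramification indices multiplicatively in the tower. Over $\bar k$, the quotient $\pi_G:C\to C/G$ has ramification index $|G_P|$ at $P$, because $G_P$ is the inertia group and the residue fields are $\bar k$. Likewise $\pi_H$ has index $|H_P|=|H\cap G_P|$ at $P$. Since ramification indices multiply in towers, $e_P(\pi_G)=e_{\pi_H(P)}(\pi_{\mathcal D})\,e_P(\pi_H)$. This gives \eqref{eq:quotient-ramification-index}.

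\emph{Part (2).} Suppose first that $p\nmid|G|$. Then every index $[G_P:H\cap G_P]$ divides $|G|$, so by (1) the morphism $\pi_{\mathcal D}$ is tame. Conversely, suppose $p\mid|G|$.
\begin{itemize}
\item Since $C\cong\mathbf P^1$ and $G$ acts faithfully, the cover $\pi_G$ is wild. Otherwise it would be tame, so all stabilizers would have order prime to $p$ and would be cyclic; but an element of order $p$ in $\operatorname{Aut}(\mathbf P^1_{\bar k})$ is unipotent and has a fixed point. So some $G_P$ contains an element $\sigma$ of order $p$.
\item We want a point $P$ with $p\mid[G_P:H\cap G_P]$. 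Suppose instead that $\pi_{\mathcal D}$ is tame. Then for every $P$ the index $[G_P:H\cap G_P]$ is prime to $p$, so every Sylow $p$-subgroup of $G_P$ lies in some conjugate of $H\cap G_P$; in particular every $p$-element fixing $P$ lies in $H$.
\item Every $p$-element of $G$ fixes a point of $C$. Hence every $p$-element of $G$ lies in $H$, and so does every conjugate of one.
\item The subgroup $O^{p'}(G)$ generated by all $p$-elements is normal in $G$, is nontrivial, and is contained in $H$. This contradicts the core-freeness of $H$ in $G$.
\end{itemize}
This core-freeness argument is the step I expect to need the most care: one must check that tameness of $\pi_{\mathcal D}$ really forces each $p$-element fixing $P$ into $H$. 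It does, since if $\sigma\in G_P\setminus H$ then $\langle\sigma\rangle$ maps injectively to $G_P/(H\cap G_P)$. Equivalently, the cyclic image of $\langle\sigma\rangle$ has order $p$ and divides the index $[G_P:H\cap G_P]$.

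\emph{Part (3).} Primitivity of the arithmetic action means, by Theorem~\ref{thm:decomposition}, that there is no $\varphi$-stable $J$ with $H\lneq J\lneq G$. Since $\varphi$ normalizes $G$ and $H$, the normalizer $\mathrm N_G(H)$ is $\varphi$-stable and contains $H$. Hence either $\mathrm N_G(H)=H$ or $\mathrm N_G(H)=G$. In the second case $H$ is normal in $G$ and core-free, so $H=1$. In the first case $H\neq1$, unless $H=1$ and $G=1$; the degenerate case $G=H$ does not arise because the degree is greater than one.

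\emph{Part (4).} Let $N$ be a nontrivial normal $\varphi$-stable subgroup of $G$. Then $NH$ is a $\varphi$-stable subgroup containing $H$, so by primitivity $NH\in\{H,G\}$. If $NH=H$, then $N\le H$ is a nontrivial normal subgroup of $G$ inside $H$, contradicting core-freeness; hence $NH=G$. For the final claim, the socle $\operatorname{Soc}(G)$ is characteristic in $G$, so conjugation by $\varphi$, which is an automorphism of $G$, preserves it. It is also nontrivial, so applying the above with $N=\operatorname{Soc}(G)$ gives $\operatorname{Soc}(G)H=G$.
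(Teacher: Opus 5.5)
Your parts (1), (3) and (4) are correct and essentially identical to the paper's argument. Part (2) also has the paper's structure: core-freeness of $H$, together with the fact that $p$-elements of $\operatorname{PGL}_2(\bar k)$ are unipotent and so have fixed points. However, the step you flagged as delicate is justified incorrectly, and that is a genuine gap. You infer from $p\nmid[G_P:H\cap G_P]$ that every $p$-element of $G_P$ lies in $H$. Your argument is that if $\sigma\in G_P\setminus H$ has order $p$, then $\langle\sigma\rangle$ injects into $G_P/(H\cap G_P)$, so $p$ divides the index. But $H\cap G_P$ need not be normal in $G_P$, so $G_P/(H\cap G_P)$ is only a coset space. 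An injection of a $p$-element set into it gives $p\leqslant[G_P:H\cap G_P]$, not divisibility. As a statement about abstract groups the claim is false: in $S_3$ with $p=2$, $L=\langle(12)\rangle$ and $\sigma=(13)\notin L$, yet $[S_3:L]=3$ is odd. For the same reason, ``some Sylow $p$-subgroup of $G_P$ lies in a conjugate of $H\cap G_P$'' does not by itself put every $p$-element of $G_P$ into $H$.

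The missing idea is a property specific to point stabilizers in $\operatorname{PGL}_2(\bar k)$. After moving $P$ to $\infty$, $G_P$ is a finite subgroup of the affine group $\{z\mapsto az+b\}$. Its $p$-elements are exactly its nontrivial translations, and these form the kernel of $(z\mapsto az+b)\mapsto a$. So $G_P$ has a \emph{unique}, normal Sylow $p$-subgroup $S_P$. With this fact, $p\nmid[G_P:H\cap G_P]$ forces $S_P\leqslant H\cap G_P$, so every $p$-element fixing $P$ lies in $H$. The rest of your argument then goes through: every $p$-element fixes some point, so all $p$-elements lie in $H$, and they generate a nontrivial normal subgroup of $G$ contained in $H$, contradicting core-freeness. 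This uniqueness is exactly the ingredient in the paper's proof. There one chooses a Sylow $p$-subgroup $S\nleq H$ and takes its unique fixed point $P$. Then $S$ is the unique Sylow $p$-subgroup of $G_P$, and $p\nmid[G_P:H\cap G_P]$ would give $S\leqslant H$.
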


\begin{proof}
For (\ref{item:structural-consequences-1}), the ramification indices of
$\pi_H$ and $\pi_G$ at $P$ are $|H\cap G_P|$ and $|G_P|$, respectively.
Multiplicativity in the factorization
$\pi_G=\pi_{\mathcal D}\circ\pi_H$ gives the asserted formula.

For (\ref{item:structural-consequences-2}), if $\pi_{\mathcal D}$ is wild, then part~(\ref{item:structural-consequences-1}) gives $p\mid |G_P|$ for some
$P\in C(\bar k)$, and hence $p\mid |G|$. Conversely, suppose $p\mid |G|$.
By the observation after Definition~\ref{def:semilinear-P1-datum},
$H$ is core-free in $G$. Let $S$ be a Sylow
$p$-subgroup of $G$. Some $G$-conjugate of $S$ is not contained in $H$;
otherwise the nontrivial normal subgroup generated by all conjugates of $S$
would lie in $\operatorname{core}_G(H)$. Replace $S$ by such a conjugate.

A nontrivial finite $p$-subgroup of $\operatorname{PGL}_2(\bar k)$ in
characteristic $p$ has a unique common fixed point $P$. Thus $S\leqslant G_P$, and
since $S$ is Sylow in $G$, it is Sylow in $G_P$. After moving $P$ to infinity,
$G_P$ is a finite subgroup of the affine group. The kernel of the homomorphism
$G_P\to\bar k^{\times}$ sending $(z\mapsto az+b)$ to $a$
is its unique Sylow $p$-subgroup, so
this subgroup is $S$. If $p\nmid [G_P:H\cap G_P]$, then $H\cap G_P$
contains $S$, contradicting $S\nleq H$. Hence
$p\mid [G_P:H\cap G_P]$, and part~(\ref{item:structural-consequences-1})
shows that $\pi_{\mathcal D}$ is wild.

For (\ref{item:structural-consequences-3}), $\mathrm N_G(H)$ is $\varphi$-stable, so primitivity gives $H$ or $G$; in the latter case core-freeness forces $H=1$. For (\ref{item:structural-consequences-4}), $NH$ is $\varphi$-stable. Since $N\leqslant H$ would force $N\leqslant\operatorname{core}_G(H)=1$, one has $H\lneq NH$, and primitivity gives $NH=G$.
Since $\operatorname{Soc}(G)$ is characteristic in $G$, it is
$\varphi$-stable; it is nontrivial because $G\ne1$. Taking
$N=\operatorname{Soc}(G)$ proves the final assertion.
\end{proof}

\begin{proposition}[Canonical normal-subgroup factorization]
Let $1\ne N\triangleleft G$ be $\varphi$-stable and put $J=NH$. Then the
descended rational function factors over $k$ as
$f=f_{\mathrm{res},N}\circ f_N$, and
\[
 \Omega^J=\Omega^N\cap\Omega^H,
 \qquad
 \deg f_N=[N:N\cap H],
 \qquad
 \deg f_{\mathrm{res},N}=[G:J].
\]
Both factors have Galois closure of genus zero.
\end{proposition}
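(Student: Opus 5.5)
The plan is to reduce everything to Theorem~\ref{thm:complete-decomposition}, applied to the two-step chain $H\leqslant J\leqslant G$.

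\emph{The chain.} Since $N$ and $H$ are both $\varphi$-stable and $N\triangleleft G$, the product $J=NH$ is a subgroup of $G$ and satisfies $\varphi J\varphi^{-1}=(\varphi N\varphi^{-1})(\varphi H\varphi^{-1})=NH=J$. So $J$ is $\varphi$-stable with $H\leqslant J\leqslant G$.

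We cannot quite assume $H\lneq J\lneq G$. The inclusion $H\lneq J$ holds because $N\ne1$ and $N\leqslant H$ would force $N\leqslant\operatorname{core}_G(H)=1$. However, $J=G$ is possible. In that case we interpret $f_{\mathrm{res},N}$ as a $k$-M\"obius transformation, of degree $[G:J]=1$, and the factorization is just $f$ itself up to $\operatorname{PGL}_2(k)$. When $J\lneq G$, Theorem~\ref{thm:complete-decomposition} applied to the strict chain $H\lneq J\lneq G$ gives
\[
f=f_{\mathrm{res},N}\circ f_N,\qquad \deg f_N=[J:H],\qquad \deg f_{\mathrm{res},N}=[G:J],
\]
and also tells us that both factors are separable with Galois closure of genus zero. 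In the degenerate case $J=G$, the genus-zero claim for $f_N\sim_k f$ is the hypothesis, and a M\"obius map is trivially fine. Finally, the second isomorphism theorem gives $[J:H]=[NH:H]=[N:N\cap H]$, which is the stated degree of $f_N$.

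\emph{The field identity.} Inside the Galois extension $\Omega/\Omega^G$ with group $G$, the fixed field of the subgroup generated by $N$ and $H$ is the intersection of the fixed fields. Hence
\[
\Omega^{J}=\Omega^{\langle N,H\rangle}=\Omega^N\cap\Omega^H,
\]
which is the asserted identity on the geometric side over $\kappa$. It is worth noting, as in the proof of Theorem~\ref{thm:decomposition}, that this geometric field is the $\kappa$-base change of the arithmetic intermediate field $\Omega^{\langle J,\varphi\rangle}$. That follows by the same degree count $[\langle J,\varphi\rangle:J]=d$ used in the descent discussion after Definition~\ref{def:semilinear-P1-datum}.

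\emph{Expected obstacle.} I do not expect a genuine obstacle; the proof is essentially bookkeeping on top of the earlier theorems. The only points needing care are the degenerate case $J=G$, which the earlier decomposition theorem excludes by requiring strict chains, and being explicit about whether the displayed identity is read geometrically, with $H$, or arithmetically, with $\langle H,\varphi\rangle$. I will state the identity for the geometric fields and note its descent.
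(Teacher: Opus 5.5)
Your proposal is correct and follows essentially the paper's route. The paper gets the field identity from the Galois correspondence, the factorization from $\varphi$-stability of $J=NH$, and the degrees from the correspondence, with $[J:H]=[N:N\cap H]$. It then checks genus zero directly: the geometric normal closures of the two factors are the quotients $C/\operatorname{core}_J(H)$ and $C/\operatorname{core}_G(J)$ of $C\cong\mathbf P^1_\kappa$, which is exactly the argument packaged in Theorem~\ref{thm:complete-decomposition} that you cite. Your explicit treatment of the degenerate case $J=G$ (where $f_{\mathrm{res},N}$ has degree $1$, as always happens under arithmetic primitivity by Theorem~\ref{thm:structural-consequences}) is a point the paper's proof passes over silently.
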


\begin{proof}
The fixed field of the subgroup generated by $N$ and $H$ is the intersection of their fixed fields. Since $J$ is $\varphi$-stable, it descends to a $k$-defined intermediate field and hence to a factorization. The degree formulas are immediate from Galois correspondence. The geometric normal closure of $C/H\to C/J$ is $C/\operatorname{core}_J(H)$, while that of $C/J\to C/G$ is $C/\operatorname{core}_G(J)$. Both are quotients of $C$, which is isomorphic over $\kappa$ to
$\mathbf P^1_{\kappa}$, so their function fields are rational function fields
over $\kappa$ by L\"uroth's theorem.
\end{proof}

\subsection{Branch orbits and ramification of the quotient morphism \texorpdfstring{$\pi_{\mathcal D}$}{pi(D)}}

For a curve $X$ over a finite field $F$, we identify a \emph{closed point}
$P_0$ with its Frobenius orbit in $X(\overline F)$. Its degree is the
length of this orbit, equivalently $[\kappa(P_0):F]$, where $\kappa(P_0)$
is its residue field.

Retain the datum $\mathcal D$ and the quotient-morphism notation from the preceding subsection. Put
\[
 \mathcal R_G=\{P\in C(\bar k):G_P\ne1\},\qquad \mathcal B_G=G\backslash\mathcal R_G.
\]
Thus $\mathcal R_G$ is the ramification locus of the Galois quotient
$C\to C/G$, and $\mathcal B_G$ parametrizes its geometric branch points.

\begin{theorem}[Branch orbits and ramification partitions]\label{thm:branch-orbit}
The geometric branch locus of the quotient morphism $\pi_{\mathcal D}:C/H\to C/G$ is exactly the branch locus of $\pi_G:C\to C/G$. Its geometric branch points are therefore parametrized by $\mathcal B_G$. Frobenius acts by $[P]_G\mapsto[\varphi(P)]_G$, and the corresponding closed point of the branch divisor has degree equal to its Frobenius orbit length.

Fix $P\in\mathcal R_G$ and put $I=G_P$. The points of $C/H$ over the corresponding branch point are parametrized by $H\backslash G/I$, and at the point represented by $HgI$ the ramification index is
\begin{equation}\label{eq:double-coset-passport}
 e(HgI)=[I:I\cap g^{-1}Hg].
\end{equation}
The multiset of these indices, as $HgI$ ranges over $H\backslash G/I$,
is the ramification partition of the corresponding branch point.
\end{theorem}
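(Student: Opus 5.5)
The plan is to work entirely geometrically after base change to $\bar k$. There $C$, $C/H$ and $C/G$ become projective lines, and $\pi_G=\pi_{\mathcal D}\circ\pi_H$ with all three maps separable; the Frobenius action is added at the end.

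\emph{Branch loci.} Let $Q\in (C/G)(\bar k)$, and choose $P\in C(\bar k)$ with $\pi_G(P)=Q$. The fibre $\pi_G^{-1}(Q)$ is the orbit $GP$, and $\pi_{\mathcal D}^{-1}(Q)=\pi_H(GP)$, which is in bijection with $H\backslash GP$. The map $HgI\mapsto H\,gP$ identifies $H\backslash G/I$ with this set, where $I=G_P$; injectivity holds because $hgP=g'P$ means $g^{-1}h^{-1}g'\in I$.

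\emph{Ramification indices.} At the point $\pi_H(gP)$, apply \eqref{eq:quotient-ramification-index} to the point $gP$. Since $G_{gP}=gIg^{-1}$, this gives
\[
 e=[gIg^{-1}:H\cap gIg^{-1}]=[I:I\cap g^{-1}Hg],
\]
which is \eqref{eq:double-coset-passport}. These indices sum to $[G:H]$, so they form the ramification partition. This is consistent with the double-coset count $\sum_{HgI}|HgI|/|H|=[G:H]$.

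\emph{Equality of branch loci.} If $Q$ is not a branch point of $\pi_G$, then $I=1$ for all $P$ above $Q$, so every index is $1$ and $Q$ is not a branch point of $\pi_{\mathcal D}$. Conversely, suppose $Q$ is a branch point of $\pi_G$, so $I\ne1$. If every index above $Q$ were $1$, then $I\le g^{-1}Hg$ for all $g$, that is, $I\le\operatorname{core}_G(H)=1$, which is a contradiction. Here I use that $H$ is core-free in $G$, as observed after Definition~\ref{def:semilinear-P1-datum}. Hence the two branch loci coincide. Since $\pi_G$ is Galois, its branch points are the images of $\mathcal R_G$, and two points of $\mathcal R_G$ have the same image exactly when they are $G$-conjugate. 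This gives the parametrization by $\mathcal B_G=G\backslash\mathcal R_G$.

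\emph{Frobenius action.} The $q$-Frobenius on $(C/G)(\bar k)$, coming from the $k$-structure $Y_{\mathcal D}$, is induced by the semilinear automorphism $\varphi$, because $\Omega^A\kappa=\Omega^G$ and $\varphi$ acts on $\Omega^G$ through the descent. Since $\varphi$ normalizes $G$, it maps $G$-orbits to $G$-orbits and preserves stabilizer nontriviality, as $G_{\varphi(P)}=\varphi G_P\varphi^{-1}$. So $\varphi$ induces the permutation $[P]_G\mapsto[\varphi(P)]_G$ on $\mathcal B_G$. The branch divisor is Frobenius-stable, hence defined over $k$. Its closed points are the Frobenius orbits, and the degree of each closed point is the orbit length by the convention stated just before the theorem.

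\emph{Expected main obstacle.} The most delicate step is this identification of the arithmetic Frobenius with the action induced by $\varphi$ on geometric points. It requires care with the distinction between the semilinear automorphism $\varphi$ of the function field and the geometric Frobenius on points, including whether the induced map is $\varphi$ or $\varphi^{-1}$. This choice does not affect orbit lengths. I would handle it by the direct valuation-theoretic description: a point corresponds to a place of $\Omega\bar k$, and $\varphi$ acts on places by transport.
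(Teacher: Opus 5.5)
Your proposal is correct and follows the paper's proof. In both, the fibre is identified with $H\backslash G/I$ by passing to $H$-orbits on the orbit $GP$, the indices come from \eqref{eq:quotient-ramification-index} at $gP$ with $G_{gP}=gIg^{-1}$, equality of the branch loci follows from core-freeness of $H$ in $G$, and Frobenius acts on $\mathcal B_G$ through $\varphi$ normalizing $G$. The only difference is cosmetic: you apply the index formula at $gP$ for every $g\in G$, not just for double-coset representatives, so the paper's step ``since $i$ normalizes $I$'' becomes unnecessary.
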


\begin{proof}
Suppose a branch point of $\pi_G$, represented by $P$, were unramified at every point of $C/H$ above it. Equation~\eqref{eq:quotient-ramification-index} would give $I\leqslant g_0^{-1}Hg_0$ for every chosen representative $Hg_0I$. If $g=hg_0i$ with $h\in H$ and $i\in I$, then $g^{-1}Hg=i^{-1}g_0^{-1}Hg_0i$; since $i$ normalizes $I$, the same containment holds for every $g\in G$. Thus $I\leqslant\operatorname{core}_G(H)=1$, a contradiction. The converse is immediate from \eqref{eq:quotient-ramification-index}.

Since $\varphi$ normalizes $G$, it permutes the $G$-orbits in the ramification locus of $\pi_G:C\to C/G$ and induces arithmetic Frobenius on the descended branch divisor. Finally, the $\pi_G$-fiber is the transitive $G$-set $G/I$; passing to $H$-orbits gives $H\backslash G/I$, and \eqref{eq:double-coset-passport} follows from \eqref{eq:quotient-ramification-index}.
\end{proof}

\begin{corollary}[Assembly from indecomposable genus-zero factors]
Every separable rational function of degree greater than $1$ over a finite field whose Galois closure has genus zero is a finite composition of $k$-indecomposable rational functions with Galois closure of genus zero. Consequently, up to $k$-M\"obius equivalence, every factor in any complete decomposition belongs to the tame or wild classification of Theorems~\ref{thm:intro-tame} and~\ref{thm:intro-wild}.
\end{corollary}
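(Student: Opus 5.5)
The plan is to combine Theorem~\ref{thm:reconstruction} and Theorem~\ref{thm:complete-decomposition}, inducting on degree. Let $f\in k(X)$ be separable of degree greater than $1$ with Galois closure of genus zero. By the converse part of Theorem~\ref{thm:reconstruction}, choose a semilinear $\mathbf P^1$-datum $\mathcal D=(\kappa/k,C,G,H,\varphi)$ with $f\in\mathcal M_{\mathcal D}$ and $\deg f=[G:H]>1$, so $H\lneq G$.

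The next step is to produce a complete decomposition. The set of $\varphi$-stable subgroups between $H$ and $G$ is finite, since $G$ is finite, and it contains both $H$ and $G$. I will choose a maximal strict chain $H=J_0\lneq J_1\lneq\cdots\lneq J_r=G$ of $\varphi$-stable subgroups. Such a chain exists because chain lengths are bounded by the number of prime factors of $|G|$. By maximality, no $\varphi$-stable subgroup lies strictly between consecutive $J_{i-1}$ and $J_i$. Theorem~\ref{thm:complete-decomposition} then gives a decomposition $f=f_r\circ\cdots\circ f_1$ over $k$ with $\deg f_i=[J_i:J_{i-1}]>1$. The same theorem shows this decomposition is complete, so every $f_i$ is $k$-indecomposable, and that every factor is separable with Galois closure of genus zero. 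This proves the first sentence.

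For the second sentence, let $f=f_r\circ\cdots\circ f_1$ be any complete decomposition. Again by Theorem~\ref{thm:complete-decomposition}, it corresponds to some strict $\varphi$-stable chain. Each factor $f_i$ is therefore separable, $k$-indecomposable, of degree greater than $1$, and has Galois closure of genus zero. The main classification theorem (the complete finite-field classification in the introduction) says that the $k$-M\"obius class of each such function is one of the classes listed in Theorems~\ref{thm:intro-tame} and~\ref{thm:intro-wild}.

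The main subtlety is that a factor is only defined over $k$ up to the chosen intermediate coordinates. Changing those coordinates changes $f_i$ by $k$-M\"obius transformations on both sides, which does not change its $k$-M\"obius class. So the membership statement makes sense "up to $k$-M\"obius equivalence," exactly as the statement is phrased. The remaining logical dependence is forward: the statement relies on the classification theorems proved later, and this corollary only supplies the reduction to indecomposable factors.
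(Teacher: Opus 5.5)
Your proof is correct and is essentially the argument the paper intends. The paper gives no separate proof of this corollary; it treats it as an immediate consequence of Theorem~\ref{thm:complete-decomposition}, where a non-refinable $\varphi$-stable chain gives a complete decomposition whose factors are separable with Galois closure of genus zero, combined with the tame/wild classifications (your announced induction on degree is never actually used, and the maximal-chain argument suffices).
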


\begin{example}[Complete decompositions need not be unique]\label{ex:A4-decomposition}
Let $k=\mathbb F_5$. Since $\operatorname{PGL}_2(5)\cong S_5$, choose a subgroup $G\cong A_4$ of $\operatorname{PGL}_2(k)$. Let $f:\mathbf P^1_k\to \mathbf P^1_k/G\cong_k\mathbf P^1_k$ be the
Galois quotient; the target is a genus-zero curve with a $k$-rational point, hence is $k$-isomorphic to $\mathbf P^1$. Here $H=1$, the cover has degree $12$, and the semilinear Frobenius lift is trivial. Thus every subgroup of $G$ is $\varphi$-stable. The subgroup lattice of $A_4$ contains the two maximal chains
\[
 1\lneq C_3\lneq A_4,
 \qquad
 1\lneq C_2\lneq V_4\lneq A_4.
\]
Theorem~\ref{thm:complete-decomposition} therefore yields complete decompositions whose multisets of factor degrees are, respectively,
\[
 \{3,4\}
 \qquad\text{and}\qquad
 \{2,2,3\}.
\]
All factors are separable, $k$-indecomposable, and have Galois closure of genus zero. Since degree is invariant under $k$-M\"obius equivalence, these two complete decompositions cannot become equivalent after reordering their factors. Thus neither the number of indecomposable factors nor the unordered multiset of their $k$-M\"obius classes is determined by $f$.
\end{example}

\section{Tame classification: reduction and basic cases}\label{sec:tame-basic}
\subsection{Group-theoretic reduction}

Throughout this section, $G$ is a finite subgroup of $\operatorname{PGL}_2(\bar k)$ with $p\nmid|G|$. The Klein--Dickson classification gives $G\cong C_m,D_{2m},A_4,S_4$, or $A_5$; see Beauville \cite{Beauville2010}. For $H\lneq G$, the transitive coset action $G\curvearrowright G/H$ is faithful exactly when $H$ is core-free in $G$, and primitive exactly when $H$ is maximal in $G$. We begin by classifying these faithful primitive pairs under the tame condition $p\nmid|G|$.

\begin{theorem}[Faithful primitive pairs with $p\nmid|G|$]\label{thm:tamepairs}
Let $G\leqslant\operatorname{PGL}_2(\bar k)$ be finite with $p\nmid|G|$, and let $H$ be a maximal core-free subgroup of $G$. Then, up to isomorphism of pairs, exactly the following possibilities occur:
\[
\begin{array}{c|c|c}
G&H&[G:H]\\ \hline
C_\ell&1&\ell\\
D_{2\ell}&C_2&\ell\\
A_4&C_3&4\\
S_4&S_3&4\\
A_5&A_4&5\\
A_5&D_{10}&6\\
A_5&S_3&10
\end{array}
\]
where $\ell$ is prime and, in the dihedral case, $\ell$ is odd.
\end{theorem}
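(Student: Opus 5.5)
We reduce the statement to pure group theory. By the Klein--Dickson classification quoted above, $G$ is isomorphic to $C_m$, $D_{2m}$, $A_4$, $S_4$ or $A_5$. Here $D_{2m}$ is taken with $m\geqslant 2$, so $D_4=V_4$. We then determine, for each abstract group in this list, the conjugacy classes of maximal subgroups that are core-free, i.e.\ those whose coset action is faithful and primitive. Since any abstract isomorphism of pairs suffices, the embedding into $\operatorname{PGL}_2(\bar k)$ plays no further role. The only point to keep in mind is that every group in the list actually occurs with $p\nmid|G|$ once its order is prime to $p$, so no case is vacuous.

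The steps, in order, are as follows.

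\emph{Cyclic case.} In $C_m$ every subgroup is normal, so a core-free subgroup must be trivial. The trivial subgroup is maximal exactly when $m=\ell$ is prime, giving the pair $(C_\ell,1)$ of degree $\ell$.

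\emph{Dihedral case.} Write $D_{2m}=\langle r,s\rangle$. Its maximal subgroups are of two kinds:
\begin{itemize}
\item the rotation subgroup $\langle r\rangle$ and, for $m$ even, the two index-$2$ dihedral subgroups, all of which are normal;
\item the subgroups $\langle r^{m/\ell},\, r^i s\rangle\cong D_{2m/\ell}$ for primes $\ell\mid m$, of index $\ell$.
\end{itemize}
Such a subgroup contains the normal subgroup $\langle r^{m/\ell}\rangle$. Hence it is core-free only if $r^{m/\ell}=1$, which forces $m=\ell$ and $H=\langle s\rangle\cong C_2$. We then check that $\langle s\rangle$ is not normal: its conjugates are $\langle r^{2j}s\rangle$, and these differ from $\langle s\rangle$ exactly when $\ell$ is odd. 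For $\ell=2$ the group is $V_4$, which is abelian, so $C_2$ is normal and must be excluded. For $\ell$ odd, $C_2$ is maximal since its index is prime. This yields $(D_{2\ell},C_2)$ with $\ell$ odd. We also observe that $D_{2\ell}$ acting on $\ell$ points is not isomorphic as a pair to $C_\ell$, since the groups differ.

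\emph{Polyhedral cases.} We use the known lists of maximal subgroups.
\begin{itemize}
\item In $A_4$ the maximal subgroups are $V_4$, which is normal, and $C_3$, which has four conjugates and trivial core. This gives $(A_4,C_3)$ of degree $4$.
\item In $S_4$ the maximal subgroups are $A_4$ (normal), $D_8$ (core $V_4$) and $S_3$ (core-free). This gives $(S_4,S_3)$ of degree $4$.
\item $A_5$ is simple, so every maximal subgroup is core-free. Its maximal subgroups are $A_4$, $D_{10}$ and $S_3$, of indices $5$, $6$ and $10$.
\end{itemize}
Finally, we observe that the listed pairs are pairwise non-isomorphic, because their groups or their indices differ.

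The main obstacle is making the maximal-subgroup lists for $S_4$ and $A_5$ airtight. For $S_4$ we use the fact that a maximal subgroup has index $2$, $3$ or $4$ by order considerations; subgroups of order $12$, $8$ and $6$ are respectively $A_4$, a Sylow $2$-subgroup $D_8$, and a point stabiliser $S_3$, and a subgroup of order $4$ or $3$ is never maximal. For $A_5$ we use that the possible orders of proper subgroups are $1,2,3,4,5,6,10,12$ (there is no subgroup of order $15$, $20$ or $30$, by simplicity and Sylow counting). Subgroups of order $12$, $10$ and $6$ are $A_4$, $D_{10}$ and $S_3$ respectively, each lying in a single conjugacy class. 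The smaller subgroups lie inside these, so they are not maximal.
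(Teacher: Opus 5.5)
Your proof is correct and follows essentially the same route as the paper: run through the Klein--Dickson list $C_m, D_{2m}, A_4, S_4, A_5$ and keep the maximal subgroups with trivial core, with a little more justification than the paper gives for the maximal-subgroup lists of $S_4$ and $A_5$. One slip in the dihedral case: the index-$\ell$ maximal subgroups are $\langle r^{\ell}, r^i s\rangle\cong D_{2m/\ell}$, not $\langle r^{m/\ell}, r^i s\rangle$. With that correction, your core-freeness condition $r^{\ell}=1$ does force $m=\ell$, whereas $r^{m/\ell}=1$ as written never holds.
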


\begin{proof}
If $G=C_m$, every subgroup is normal. Core-freeness forces $H=1$, and maximality then forces $m$ to be prime.

Let $G=D_{2m}=\langle r,s\mid r^m=s^2=1,\ srs=r^{-1}\rangle$. A core-free subgroup meets the normal rotation subgroup $\langle r\rangle$ trivially. A nontrivial core-free maximal subgroup is therefore a reflection subgroup $C_2$, and it is maximal exactly when $m$ is prime. The case $m=2$ gives $V_4$, in which every order-$2$ subgroup is normal, so $m$ must be odd.

For $A_4$, the maximal subgroups are the normal $V_4$ and the four subgroups $C_3$; only $C_3$ is core-free.

For $S_4$, the maximal subgroups have types $A_4$, $D_8$, and $S_3$. The subgroup $A_4$ is normal. Every Sylow-$2$ subgroup $D_8$ contains the normal Klein four subgroup
\(V_4=\{1,(12)(34),(13)(24),(14)(23)\},\)
so
\(V_4\leqslant\operatorname{core}_{S_4}(D_8)\neq1.\)
Thus the only core-free maximal type is $S_3$, of index $4$.

For $A_5$, the maximal subgroups have types $A_4,D_{10},S_3$, of indices $5,6,10$. Since $A_5$ is simple, each is core-free.
\end{proof}

The next theorem identifies the only gap between geometric and arithmetic indecomposability.

\begin{theorem}[Unique tame geometrically decomposable case]\label{thm:arith-only}
Let $f\in k(X)$ be separable and $k$-indecomposable, with Galois closure of genus zero and $p\nmid|G_f|$.  Choose a Frobenius lift $\varphi_f\in U_f$.  If $f$ is geometrically decomposable, then necessarily $G_f\cong V_4$ and $H_f=1$, and conjugation by $\varphi_f$ acts as a $3$-cycle on the three order-$2$ subgroups of $V_4$. Conversely, such a datum has primitive arithmetic monodromy.

In arithmetic/geometric monodromy notation,
\[
 (A_f,G_f,U_f,H_f)\cong(A_4,V_4,C_3,1),
 \qquad
 [\kappa_f:k]=3,
 \qquad
 \deg f=4.
\]
\end{theorem}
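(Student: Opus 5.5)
We reduce everything to the group data. By Theorem~\ref{thm:reconstruction} and Corollary~\ref{cor:f-datum-compatibility}, $f\in\mathcal M_{\mathcal D_f}$ with $\mathcal D_f=(\kappa_f/k,C_f,G_f,H_f,\varphi_f)$. Corollary~\ref{cor:geoarith} says that geometric decomposability means $H_f$ is not maximal in $G_f$. Theorem~\ref{thm:decomposition} says that $k$-indecomposability means no $\varphi_f$-stable $J$ satisfies $H_f\lneq J\lneq G_f$. So the task is a finite check over the Klein--Dickson list $C_m,D_{2m},A_4,S_4,A_5$. We look for core-free $H$ that is not maximal in $G$, yet such that every intermediate subgroup fails to be $\varphi$-stable, where conjugation by $\varphi$ acts on $G$ through some automorphism normalizing $H$.

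First we use Theorem~\ref{thm:structural-consequences}(\ref{item:structural-consequences-3}),(\ref{item:structural-consequences-4}). Either $H=1$, or $H\neq1$ with $\mathrm N_G(H)=H$. Moreover $NH=G$ for every nontrivial characteristic subgroup $N$ of $G$, since such $N$ is automatically $\varphi$-stable.

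\emph{Case $H\neq1$.} Take the normal cyclic rotation subgroup $R\cong C_m$ in $D_{2m}$, $V_4$ in $A_4$ and $S_4$, and $A_4$ in $S_4$; these are characteristic except in the degenerate small cases. Then $RH=G$, and core-freeness forces $H\cap R$ small.
\begin{itemize}
\item $G=D_{2m}$: $H$ is a reflection subgroup, so $[G:H]=m$. Every divisor chain $\langle r^{m/e}\rangle H$ is intermediate and is stable under any automorphism fixing $H$, because the subgroups $\langle r^{j}\rangle$ are characteristic. Hence $m$ is prime, contradicting non-maximality.
\item $G=A_4$: $H\in\{C_2,C_3\}$. $C_3$ is maximal. $C_2$ is not self-normalizing, since its normalizer is $V_4$.
\item $G=S_4$: $H$ must satisfy $V_4H=S_4$ and be self-normalizing and core-free. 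Then $H\cong S_3$, which is maximal. Candidates such as $C_2$, $C_3$, or $D_8$ fail self-normalization or core-freeness.
\item $G=A_5$: the self-normalizing proper subgroups are exactly the maximal ones $A_4$, $D_{10}$, $S_3$, because $C_2,C_3,C_5,V_4$ all have strictly larger normalizers. So they are all maximal.
\item $G=C_m$: core-freeness forces $H=1$.
\end{itemize}
This case list is the step I expect to need the most care, so I will verify normalizers explicitly.

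\emph{Case $H=1$.} Now $U=\langle\varphi\rangle$, and we need a nontrivial $G$ in which no proper nontrivial subgroup is stable under the automorphism $\theta=\operatorname{Int}(\varphi)|_G$, while $G$ is not of prime order. Any $\theta$-orbit of a minimal characteristic subgroup gives a stable subgroup, so every nontrivial characteristic subgroup must equal $G$. Thus $G$ is characteristically simple, and from the list $G\in\{C_\ell,V_4,A_5\}$.
\begin{itemize}
\item $C_\ell$ has prime order, so it is excluded.
\item For $A_5$, $\theta$ is induced by an element of $S_5$, and every element of $S_5$ normalizes some Sylow-$5$ subgroup or some proper subgroup; more simply, every automorphism of $A_5$ fixes a point-stabilizer-type subgroup ($A_4$ or $D_{10}$) by a fixed-point count on $6$ or $5$ objects. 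So $A_5$ is excluded.
\item For $V_4$, the three order-$2$ subgroups are the only intermediate subgroups. None is stable exactly when $\theta$ permutes them as a $3$-cycle.
\end{itemize}

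For the converse, such $\theta$ leaves no intermediate subgroup stable, so $U$ is maximal in $A$ by Theorem~\ref{thm:decomposition}. Finally we identify the groups. The class of $\theta$ in $\mathrm N_{\operatorname{Aut}(G)}(H)/\operatorname{Int}(H)=\operatorname{Aut}(V_4)\cong S_3$ has order $3$. By Proposition~\ref{prop:fixed-pair-core-order} with core-freeness, $e=d$, so $d=[\kappa_f:k]=3$. Then $\varphi^3\in H=1$, so $U\cong C_3$ acts faithfully and $A=V_4\rtimes C_3\cong A_4$. The degree is $[G:H]=4$. Tameness is automatic since $p\nmid|G|$ by hypothesis.
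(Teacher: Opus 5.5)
Your reduction is sound and in places cleaner than the paper's, but one sub-step is false as written: the exclusion of $G=A_5$ with $H=1$. You claim that every automorphism of $A_5$ stabilizes a conjugate of $A_4$ or of $D_{10}$, by a fixed-point count on $5$ or $6$ objects. This fails for $\theta=\operatorname{Int}(\sigma)$ with $\sigma=(123)(45)\in S_5$.

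\begin{itemize}
\item $\sigma$ moves all five letters, so it stabilizes no point-stabilizer $A_4$.
\item $\sigma$ normalizes no Sylow-$5$ subgroup $P$, because $\mathrm N_{S_5}(P)\cong\operatorname{AGL}_1(5)$ has order $20$ and contains no element of order $6$. On the six Sylow-$5$ subgroups $\sigma$ acts as a $6$-cycle, so no $D_{10}$ is $\theta$-stable either.
\end{itemize}

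The conclusion survives with a one-line repair. Write $\theta=\operatorname{Int}(\sigma)|_{A_5}$ with $\sigma\in S_5$. Then $\langle\sigma^2\rangle\leqslant A_5$ is $\theta$-stable, and it is proper because $A_5$ is not cyclic. It is nontrivial unless $\sigma^2=1$. In that case $\sigma$ is the identity, a transposition or a double transposition, so it fixes a letter and stabilizes the corresponding $A_4$. For $(123)(45)$ this produces the stable subgroup $\langle(123)\rangle$. Your first, vaguer sentence ("normalizes some proper subgroup") was true, but the justification you actually supplied was not.

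With that fix, your route differs genuinely from the paper's.

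\emph{Paper's route.} It goes through $C_m,D_{2m},V_4,A_4,S_4,A_5$ one at a time. For $S_4$, and for inner automorphisms of $A_5$, it uses that the relevant automorphism is inner together with a normalizer argument. For outer automorphisms of $A_5$ it appeals to the maximal subgroups of $S_5$ not contained in $A_5$, whose intersections with $A_5$ are $A_4,S_3,D_{10}$.

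\emph{Your route.} You apply Theorem~\ref{thm:structural-consequences}(\ref{item:structural-consequences-3}),(\ref{item:structural-consequences-4}) uniformly. For $H\ne1$, primitivity forces $\mathrm N_G(H)=H$ and $NH=G$ for characteristic $N$. In $A_5$ this already singles out the maximal subgroups, whether Frobenius acts by an inner or an outer automorphism, so the $S_5$ subgroup lattice is never needed. For $H=1$, primitivity forces $G$ to be characteristically simple, which leaves only $C_\ell$, $V_4$ and $A_5$. The cost is the small separate check for $A_5$ that went wrong above.

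\emph{Final identification.} You embed $U_f/H_f\cong C_d$ into $\operatorname{Aut}(V_4)\cong S_3$ through the core computation of Proposition~\ref{prop:fixed-pair-core-order}. The paper instead works inside $A_f\leqslant S_4$ and its cyclic subgroups. Both give $d=3$ and $A_f\cong V_4\rtimes C_3\cong A_4$.
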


\begin{proof}
By Theorem~\ref{thm:decomposition}, the arithmetic monodromy action
$A_f\curvearrowright A_f/U_f$ is primitive. Since $f$ is geometrically
decomposable, Corollary~\ref{cor:geoarith} shows that $H_f$
is not maximal in $G_f$; moreover, $H_f$ is core-free in $G_f$. Because
$p\nmid|G_f|$, the Klein--Dickson classification of finite subgroups of
$\operatorname{PGL}_2(\bar k)$, for which we use Beauville
\cite{Beauville2010} as in Theorem~\ref{thm:tamepairs}, gives
$G_f\cong C_m,D_{2m},A_4,S_4$, or $A_5$. We examine these ambient group
types and determine when a nonmaximal core-free subgroup $H_f$ can
nevertheless give primitive arithmetic monodromy.

For $G_f=C_m$, core-freeness forces $H_f=1$. If $m$ is composite, a nontrivial proper characteristic subgroup of $C_m$ is Frobenius-stable. Hence primitivity of the arithmetic monodromy action forces $m$ prime, and the rational function is already geometrically primitive.

Let $G_f=D_{2m}$ with $m>2$. A core-free $H_f$ has trivial intersection with the rotation subgroup $R=\langle r\rangle$, so $H_f$ is either $1$ or a reflection subgroup. If $H_f=1$, then the characteristic subgroup $R$ is a Frobenius-stable intermediate subgroup. If $H_f=\langle s\rangle$ and $\varphi_f(H_f)=H_f$, then after choosing $s$ to generate $H_f$ one has
\(\varphi_f(r)=r^a,\qquad \varphi_f(s)=s.\)
Every intermediate dihedral subgroup $\langle s,r^d\rangle$ is then $\varphi_f$-stable. Thus primitivity of the arithmetic monodromy action forces $H_f$ maximal. The only excluded endpoint is $m=2$, namely $V_4$.

For $G_f=V_4$, core-freeness forces $H_f=1$. The open interval $1\lneq J\lneq V_4$ consists of the three order-$2$ subgroups. Since
\(\operatorname{Aut}(V_4)\cong S_3,\)
there is no Frobenius-fixed intermediate subgroup exactly when Frobenius acts as a $3$-cycle. This gives the claimed $V_4/A_4$ case.

For $G_f=A_4$, the unique normal $V_4$ is characteristic. If $H_f=1$ or $H_f=C_2$, it supplies a Frobenius-stable intermediate subgroup. The remaining core-free primitive case is $H_f=C_3$, already maximal.

For $G_f=S_4$, every automorphism is inner. Suppose a nonmaximal core-free $H_f$ is stabilized by $\alpha=\operatorname{Inn}(a)$. If
\(H_f\lneq \mathrm N_{G_f}(H_f)\lneq G_f,\)
then $\mathrm N_{G_f}(H_f)$ is an $\alpha$-stable intermediate subgroup. If $\mathrm N_{G_f}(H_f)=H_f$, then $a\in H_f$, and every overgroup of $H_f$ is stabilized by conjugation by $a$; since $H_f$ is nonmaximal, one exists. If $\mathrm N_{G_f}(H_f)=G_f$, then $H_f\triangleleft G_f$, so core-freeness forces $H_f=1$, and a nontrivial proper cyclic subgroup generated by $a$ (or any proper subgroup if $a=1$) is stable. Thus no geometrically decomposable $k$-indecomposable case occurs here.

For $G_f=A_5$, an inner automorphism is excluded by the same normalizer argument. For an outer automorphism, identify $\operatorname{Aut}(A_5)$ with $S_5$. If $\alpha$ is odd and stabilizes $H_f$, put $\widehat U=\langle H_f,\alpha\rangle$. Frobenius-primitivity is equivalent to maximality of $\widehat U$ in $S_5$. The maximal subgroups of $S_5$ not contained in $A_5$ have types $S_4$, $S_3\times S_2$, and $\operatorname{AGL}_1(5)$, and their intersections with $A_5$ are respectively
\(A_4,\qquad S_3,\qquad D_{10},\)
all maximal in $A_5$. Hence the outer action gives no geometrically decomposable $k$-indecomposable case.

It remains to determine the arithmetic group in the $V_4$ case. Since $H_f=1$, the group $U_f\cong\operatorname{Gal}(\kappa_f/k)$ is cyclic. The arithmetic group acts faithfully on four sheets, so $A_f\leqslant S_4$. Conjugation by $U_f$ induces an automorphism of order $3$ on $V_4$, hence $3\mid |U_f|$. Since $S_4$ has no cyclic subgroup of order divisible by $3$ larger than $3$, one has $U_f=C_3$. Therefore
\(A_f=V_4\rtimes C_3\cong A_4,\)
and $[\kappa_f:k]=3$.
\end{proof}

\begin{corollary}\label{cor:tame-group-complete}
Every tame $k$-indecomposable rational function with Galois closure of genus zero is either geometrically primitive with one of the pairs in Theorem~\ref{thm:tamepairs}, or belongs to the unique $V_4/A_4$ case of Theorem~\ref{thm:arith-only}.
\end{corollary}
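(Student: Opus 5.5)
The corollary follows by combining Theorem~\ref{thm:tamepairs} and Theorem~\ref{thm:arith-only} through the datum formalism. Let $f$ be tame and $k$-indecomposable of degree greater than $1$, with Galois closure of genus zero.

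First, we attach the datum. By Theorem~\ref{thm:reconstruction}, $f\in\mathcal M_{\mathcal D_f}$ with $\mathcal D_f=(\kappa_f/k,C_f,G_f,H_f,\varphi_f)$. Here $G_f$ is a finite subgroup of $\operatorname{Aut}_{\kappa_f}(C_f)\cong\operatorname{PGL}_2(\kappa_f)\leqslant\operatorname{PGL}_2(\bar k)$. The subgroup $H_f$ is core-free in $G_f$, by the observation after Definition~\ref{def:semilinear-P1-datum}. Tameness of $f$ is tameness of $\pi_{\mathcal D_f}$, and Theorem~\ref{thm:structural-consequences}(\ref{item:structural-consequences-2}) then gives $p\nmid|G_f|$. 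So the standing hypothesis of this section holds.

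Next, we split according to whether $H_f$ is maximal in $G_f$; by Corollary~\ref{cor:geoarith} this is the same as geometric indecomposability.

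If $H_f$ is maximal in $G_f$, then $H_f$ is proper, since $\deg f=[G_f:H_f]>1$. Thus $H_f$ is a maximal core-free subgroup of a finite tame subgroup of $\operatorname{PGL}_2(\bar k)$. Theorem~\ref{thm:tamepairs} then puts the pair $(G_f,H_f)$ in its table, which is the geometrically primitive alternative.

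If $H_f$ is not maximal, then $f$ is geometrically decomposable while still $k$-indecomposable. Theorem~\ref{thm:arith-only} then forces $G_f\cong V_4$ and $H_f=1$, with Frobenius acting as a $3$-cycle, so $(A_f,G_f,U_f,H_f)\cong(A_4,V_4,C_3,1)$. This is the unique $V_4/A_4$ case.

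The only point needing care is the well-definedness of ``the pair'' for the class of $f$. The conclusion concerns the isomorphism class of $(G_f,H_f)$, and this is independent of the choice of datum representing $f$ by Corollary~\ref{cor:f-datum-compatibility}. So the dichotomy is intrinsic to $f$, and no further obstacle arises.
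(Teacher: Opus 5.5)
Your proposal is correct and is the same argument the paper intends: the paper states this corollary without a separate proof, as an immediate consequence of Theorem~\ref{thm:tamepairs} when $H_f$ is maximal in $G_f$ and Theorem~\ref{thm:arith-only} when it is not, with Corollary~\ref{cor:geoarith} identifying the dichotomy. Your use of Theorem~\ref{thm:structural-consequences}(\ref{item:structural-consequences-2}) correctly turns tameness into the standing hypothesis $p\nmid|G_f|$, and your closing well-definedness remark is harmless but unnecessary, since $(G_f,H_f)$ is defined directly from the Galois closure $\Omega_f$ rather than from a chosen datum.
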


\subsection{The \texorpdfstring{$V_4/A_4$}{V4/A4} quartic}

We make Theorem~\ref{thm:arith-only} explicit.

\begin{proposition}[The $V_4/A_4$ quartic]\label{prop:V4-quartic}
Assume $q$ is odd. Let $h(X)=X^3+aX^2+bX+c\in k[X]$ be monic and irreducible, and define
\[
 L_h(X)=
 \frac{X^4-2bX^2-8cX+b^2-4ac}
 {4h(X)}.
\]
Then $L_h$ is separable of degree $4$, the automorphism group of its geometric cover is $V_4$, and its three branch points are the roots of $h$. Moreover, $(A,G,U,H)\cong(A_4,V_4,C_3,1)$. Thus $L_h$ is $k$-indecomposable but geometrically decomposable.

Conversely, every tame $k$-indecomposable rational function with Galois closure of genus zero that is geometrically decomposable is $k$-M\"obius equivalent to $L_h$. Consequently, for every odd $q$ there is exactly one such $k$-M\"obius class.
\end{proposition}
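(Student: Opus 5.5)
The plan is to verify the forward direction by an explicit ramification computation and then reduce the converse to Theorem~\ref{thm:arith-only} together with a rigidity and twisting argument.

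\emph{Forward direction.} Let $r_1,r_2,r_3\in\mathbb F_{q^3}$ be the roots of $h$; they are distinct and cyclically permuted by Frobenius. Write $N(X)=X^4-2bX^2-8cX+b^2-4ac$.

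The key identity to check is that, for each root $r$ of $h$, the polynomial $N(X)-4r\,h(X)$ is the square of a quadratic in $\bar k[X]$. Concretely, I would look for $(X^2-rX+\beta_r)^2$ and determine $\beta_r$ by comparing coefficients, using $r^3+ar^2+br+c=0$. This is the classical Ferrari resolvent identity, and I expect this routine but fiddly computation to be the main obstacle; I would verify it coefficient by coefficient.

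Given the identity, $L_h-r_i$ has numerator a perfect square. So each fibre over $r_i$ has ramification partition $(2,2)$, provided the quadratic has distinct roots and is coprime to $h$. I would check this from $h$ being separable, which gives $\operatorname{disc}(h)\neq0$.

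Since $q$ is odd, all three branch points are tame. Riemann--Hurwitz then gives $2\cdot4-2=6=3\cdot2$, so there are no further branch points; in particular $\infty$ is unramified, which is consistent with $\deg h=3$.

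The geometric monodromy is generated by three fixed-point-free involutions in $S_4$ with product $1$, so it is $V_4$ acting regularly. Hence the cover is already Galois with group $V_4$ and $H=1$. For $V_4\leqslant\operatorname{PGL}_2(\bar k)$, each involution has two fixed points, and these form one fibre over a single $r_i$. Thus the three order-$2$ subgroups are exactly the inertia groups over $r_1,r_2,r_3$. By Theorem~\ref{thm:branch-orbit}, Frobenius permutes these subgroups as it permutes the $r_i$, which is a $3$-cycle because $h$ is irreducible. The converse part of Theorem~\ref{thm:arith-only} then gives primitive arithmetic monodromy, $(A,G,U,H)\cong(A_4,V_4,C_3,1)$, and $k$-indecomposability.

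\emph{Converse and counting.} Let $f$ be tame, $k$-indecomposable and geometrically decomposable. By Theorem~\ref{thm:arith-only}, $G_f\cong V_4$, $H_f=1$, and Frobenius $3$-cycles the three involution subgroups. By the inertia correspondence above, the branch divisor is a single closed point of degree $3$. After a $k$-M\"obius change on the target we may assume it avoids $\infty$, with minimal polynomial some monic irreducible cubic $h'$.

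Next I would show that $\operatorname{PGL}_2(k)$ acts transitively on degree-$3$ closed points of $\mathbf P^1_k$:
\begin{itemize}
\item there are $(q^3-q)/3$ such points;
\item by sharp $3$-transitivity, the stabilizer of such a point is the subgroup of $S_3$ commuting with a $3$-cycle, hence of order $3$;
\item the orbit therefore has size $|\operatorname{PGL}_2(k)|/3=(q^3-q)/3$.
\end{itemize}
So we may take $h'=h$, and $f$ and $L_h$ have the same geometric branch locus.

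A geometric Galois $V_4$-cover of $\mathbf P^1$ branched at three given points is unique. This holds because $V_4\leqslant\operatorname{PGL}_2(\bar k)$ is unique up to conjugacy and its normalizer $S_4$ induces the full $S_3$ on the branch points. So both functions are $k$-forms of one geometric datum with the same Frobenius action on $G=V_4$.

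By Theorem~\ref{thm:fixed-pair-cohomology}, with the target coordinate fixed, such forms are classified by $H^1$ of $C_3$ acting on $V_4$ through a fixed-point-free automorphism $\theta$. This set is trivial: the map $u\mapsto u\,\theta(u)^{-1}$ is injective because $\theta$ has no nontrivial fixed points, hence it is surjective onto $V_4$, and every element is twisted-conjugate to $1$. Hence $f\sim_k L_h$, and there is exactly one class for each odd $q$.
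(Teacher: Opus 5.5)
Your proposal is correct. Its converse half follows the paper's proof step for step:
\begin{itemize}
\item $\operatorname{PGL}_2(k)$ is transitive on degree-$3$ closed points. You count an orbit; the paper notes that the unique M\"obius map carrying $(\alpha,\alpha^q,\alpha^{q^2})$ to $(\beta,\beta^q,\beta^{q^2})$ is Frobenius-fixed.
\item The $V_4$-cover with prescribed branch points is geometrically unique. You justify this more fully than the paper, via the normalizer $S_4$ inducing all of $S_3$ on the branch points.
\item The $V_4$-valued descent obstruction vanishes because $1-\theta$ is invertible.
\end{itemize}

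The forward half takes a genuinely different route. The paper realizes $L_h$ as the map induced by $[2]$ on $E:y^2=h(x)$. From this, degree $4$ and separability come from $[2]$, and the Klein group is written down as the involutions $\iota_\alpha$ induced by $2$-torsion translations. The branch points follow from \'etaleness of $[2]$, and $\operatorname{Fr}_q\iota_\alpha\operatorname{Fr}_q^{-1}=\iota_{\alpha^q}$ is read off directly. You instead compute the fibres over the roots of $h$ by a square identity. Then you use tame Riemann--Hurwitz and a monodromy argument to show the cover is Galois with group $V_4$, and obtain the Frobenius action through the inertia/branch-point bijection. Your route is elementary and avoids elliptic curves. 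The paper's route explains where the formula comes from and exhibits the automorphisms explicitly.

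A few corrections and precisions:
\begin{itemize}
\item Your ansatz $(X^2-rX+\beta_r)^2$ already fails at the $X^3$-coefficient. The correct identity is
\[
 N(X)-4r\,h(X)=\bigl(X^2-2rX-b-2ar-2r^2\bigr)^2 .
\]
The $X$-coefficient check reduces to $8h(r)=0$, and the constant-term check to $4(r+a)h(r)=0$.
\item With this identity the side conditions become explicit. The quadratic has discriminant $4h'(r)\neq0$. At a root $r'$ of $h$ one gets $N(r')=h'(r')^2\neq0$, which gives coprimality and $\deg L_h=4$. Separability follows because the index $2$ is prime to $p$.
\item Theorem~\ref{thm:fixed-pair-cohomology} has coefficients in the pair-normalizer quotient $\mathscr W=\mathscr N/H$, which here is larger than $V_4$. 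What you actually use is the standard descent fact that forms of a fixed cover over a fixed target are classified by $H^1$ with coefficients in the cover's automorphism group $V_4$. This is exactly the paper's step.
\item The product-one relation among branch cycles needs the tame branch-cycle description. You can avoid it: geometric monodromy is generated by inertia groups, which here are generated by double transpositions lying in $V_4$, and a transitive subgroup of $V_4$ is $V_4$.
\end{itemize}
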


\begin{proof}
Consider the elliptic curve $E:y^2=h(x)$.
For $P=(x,y)$, the tangent slope is $h'(x)/(2y)$, and the duplication formula on the generalized short Weierstrass model gives
\(x(2P)=\frac{h'(x)^2-4h(x)(a+2x)}{4h(x)}.\)
Expanding the numerator yields
\(h'(x)^2-4h(x)(a+2x) =x^4-2bx^2-8cx+b^2-4ac,\)
so the induced morphism on $E/\{\pm1\}\cong_k\mathbf P^1_k$ is exactly $L_h$.

For a root $\alpha$ of $h$, let $T_\alpha=(\alpha,0)\in E[2]\setminus\{0\}$. Translation by $T_\alpha$ descends to the M\"obius involution \(\iota_\alpha(X)=\alpha+\frac{h'(\alpha)}{X-\alpha}\). The three involutions $\iota_\alpha$ are the nontrivial elements of a Klein four group. Since $[2](P+T_\alpha)=[2]P$, they are automorphisms of the geometric cover defined by $L_h$, and degree considerations show that they exhaust its automorphism group.

The morphism $[2]:E\to E$ is \'etale in odd characteristic, and
\(L_h\circ x=x\circ[2].\)
Comparison of ramification in this identity shows that the branch points of $L_h$ are precisely $x(T_\alpha)=\alpha$ for the three nonzero points of $E[2]$.

Because $h$ is irreducible cubic over a finite field, its roots form one Frobenius orbit of length $3$. Moreover, $\operatorname{Fr}_q\iota_\alpha\operatorname{Fr}_q^{-1}=\iota_{\alpha^q}$. Thus Frobenius acts as a $3$-cycle on $V_4\setminus\{1\}$, giving
\(A=V_4\rtimes C_3\cong A_4, \qquad U=C_3.\)
The subgroup $C_3$ is maximal in $A_4$, whereas $1$ is not maximal in $V_4$; the arithmetic and geometric decomposition assertions now follow from Theorem~\ref{thm:decomposition} and Corollary~\ref{cor:geoarith}.

For uniqueness, the three branch points of any rational function in this $V_4/A_4$ case form a single closed point of degree $3$ on $\mathbf P^1_k$. The group $\operatorname{PGL}_2(k)$ acts transitively on degree-$3$ closed points: if $\alpha$ and $\beta$ are representatives, the unique M\"obius transformation carrying $(\alpha,\alpha^q,\alpha^{q^2})$ to $(\beta,\beta^q,\beta^{q^2})$ is Frobenius-fixed and hence lies in $\operatorname{PGL}_2(k)$.

After aligning the branch divisors, the two $V_4$-covers are geometrically isomorphic. The remaining descent discrepancy is a $1$-cocycle with values in $V_4$ for the Frobenius action that cyclically permutes $V_4\setminus\{1\}$. Writing $V_4\cong\mathbb F_2^2$, this Frobenius operator has minimal polynomial $T^2+T+1$, so $F-1$ is invertible and
\[
 H^1(\langle F\rangle,V_4)=0.
\]
After composing with an automorphism of the geometric cover, the geometric isomorphism therefore descends. This proves uniqueness of the $k$-M\"obius class.
\end{proof}

\begin{remark}[Explicit geometric decomposition]
Over the splitting field of $h$, for any root $\alpha$ the degree-$2$ invariant
\[
 h_\alpha(X)=X+\iota_\alpha(X)
 =X+\alpha+\frac{h'(\alpha)}{X-\alpha}
\]
generates the fixed field of $\langle\iota_\alpha\rangle$. Thus $L_h$ factors geometrically as two quadratic rational functions. Frobenius cyclically permutes the three possible quadratic intermediate fields, so none descends individually to $k$.
\end{remark}

\subsection{Cyclic cases}

For a finite field $F$ and a Frobenius-stable unordered pair
$\{P_1,P_2\}$ of distinct geometric points on an $F$-curve, we call
the pair \emph{split over $F$} if both points are $F$-rational, and
\emph{nonsplit over $F$} if Frobenius exchanges them; equivalently,
in the latter case they form a single closed point of degree $2$.
We refer to this dichotomy as the \emph{splitting type} of the pair.

\begin{proposition}[Cyclic $k$-forms]\label{prop:cyclic}
Let $f\in k(X)$ be separable, let $n>1$ with $p\nmid n$, and suppose that
$G_f\cong C_n$. For $\delta\in\mathbb F_{q^2}\setminus\mathbb F_q$, put
$\nu_\delta(X)=(X-\delta^q)/(X-\delta)$. Up to $k$-M\"obius equivalence,
exactly one of the following occurs:
\begin{enumerate}[label=(\arabic*),ref=\arabic*,font=\itshape]
\item the two branch points are $k$-rational and $f(X)\sim_k X^n$;
\item the two branch points form one closed point of degree $2$ and
\[
 f(X)\sim_k R_{n,\delta}(X)
 =\nu_\delta^{-1}\!\left(\nu_\delta(X)^n\right)
\]
for any $\delta\in\mathbb F_{q^2}\setminus\mathbb F_q$.
\end{enumerate}
The two cases are inequivalent, and each consists of one $k$-M\"obius class.
In the split and nonsplit cases the Frobenius actions on the automorphism group of the geometric cover are respectively $r\mapsto r^q$ and $r\mapsto r^{-q}$.
\end{proposition}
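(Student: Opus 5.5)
The plan is to work entirely inside the semilinear datum of Theorem~\ref{thm:reconstruction}. Write $\mathcal D=(\kappa/k,C,G,H,\varphi)$ with $G\cong C_n$. Since $H$ is core-free in $G$ and $G$ is abelian, $H=1$, so $\deg f=n$ and $f$ is geometrically the Galois quotient $C\to C/C_n$.

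First I will normalize the geometry. Because $p\nmid n$, a cyclic subgroup of $\operatorname{PGL}_2(\bar k)$ of order $n$ is conjugate to $\langle z\mapsto\zeta z\rangle$. It therefore has exactly two fixed points $P_1,P_2$ on $C$, and these are its only ramification points. By Theorem~\ref{thm:branch-orbit}, the geometric branch locus of $f$ consists of the two images of $P_1,P_2$. Since $\varphi$ normalizes $G$, it permutes $\{P_1,P_2\}$. The pair is thus Frobenius-stable, and it is either split or nonsplit over $k$; this dichotomy is a $k$-M\"obius invariant of the branch divisor.

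Next I will show that every $f$ in each splitting type is equivalent to the displayed model. In the split case, move the two branch points to $0,\infty$ by an element of $\operatorname{PGL}_2(k)$. Totally ramified points over them give a rational function with divisor supported on $\{0,\infty\}$ over $k$, namely $f=\mu\circ cX^{n}$ in suitable $k$-coordinates. More cleanly, the ramification points over $0,\infty$ are unique and hence Frobenius-fixed, so they are $k$-rational on the source. After a source coordinate change, $f$ has divisor $n(0)-n(\infty)$, so $f=cX^n$ with $c\in k^\times$, and $cX^n\sim_k X^n$. In the nonsplit case, the two ramification points on the source are likewise the unique preimages of the conjugate branch points, so they form a degree-$2$ closed point. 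Conjugating by $\nu_\delta$ over $\mathbb F_{q^2}$ reduces to the split computation $\nu_\delta\circ f\circ\nu_\delta^{-1}=cX^n$ with $c\in\mathbb F_{q^2}^\times$. Descent to $k$ forces $c^q\cdot\overline{(\ )}$ compatibility, namely $c^{q+1}=1$ under Frobenius twisted by $X\mapsto 1/X$. Any such $c$ can then be absorbed by a $k$-rational rotation in the nonsplit torus, since the norm-one group surjects appropriately: choose $u$ with $u^{q-1}$ or $u^{1-q}$ equal to $c$ via Hilbert~90. This gives $f\sim_k R_{n,\delta}$, and the result is independent of $\delta$ because all degree-$2$ closed points are $\operatorname{PGL}_2(k)$-equivalent.

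Finally, the two cases are inequivalent because the splitting type of the branch divisor is preserved by $k$-M\"obius maps. For the Frobenius action, note that in the split model Frobenius fixes $0,\infty$ and sends $z\mapsto\zeta z$ to $z\mapsto\zeta^q z$, giving $r\mapsto r^q$. In the nonsplit model Frobenius additionally swaps the fixed points, which inverts the rotation, giving $r\mapsto r^{-q}$. I expect the main obstacle to be the constant absorption in the nonsplit case. The cleanest route is likely via Theorem~\ref{thm:fixed-pair-cohomology}: there $\mathscr W=\mathscr N$ is the normalizer $\bar k^\times\rtimes C_2$ of the torus, and the relevant $H^1_\theta$ with $e=d$ is a single class by Lang/Hilbert~90 on the connected torus, leaving only the component choice, which is fixed by the splitting type.
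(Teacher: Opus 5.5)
Your proposal is correct and follows the paper's proof essentially step by step. The shared steps are: $H=1$ by core-freeness; the two fixed points of $C_n$ are the only ramification points, and Frobenius fixes or swaps them; in the split case $f=cX^n$ with $c\in k^\times$; in the nonsplit case $\nu_\delta\circ f\circ\nu_\delta^{-1}=cy^n$ with $c^{q+1}=1$. There the paper removes the constant by checking directly that the target rotation $y\mapsto c^{-1}y$ commutes with the nonsplit descent, in which Frobenius acts on the $\nu_\delta$-coordinate by inversion, so your Hilbert~90 detour is unnecessary.

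Two small points. First, the statement does not assume a genus-zero closure, so before invoking a semilinear datum you should note, as the paper does, that $H_f=1$ gives $\Omega_f=\kappa_f(\mathbf x)$. Second, your suggested cohomological shortcut does not run literally through Theorem~\ref{thm:fixed-pair-cohomology}: there $\mathscr N$ is a finite subgroup of $\operatorname{Aut}_\kappa(C)$ for a fixed $\kappa$, not $\bar k^\times\rtimes C_2$. The correct Lang-type version of your idea uses $H^1(k,\operatorname{Aut}(X^n))$ with $\operatorname{Aut}(X^n)\cong\mathbb G_m\rtimes C_2$.
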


\begin{proof}
Since $H_f$ is core-free in $G_f$ and $G_f\cong C_n$ is cyclic, one has
$H_f=1$. Hence $\deg f=n$ and $\Omega_f=\kappa_f(\mathbf x)$, so
$C_f\cong_{\kappa_f}\mathbf P^1_{\kappa_f}$; in particular, the Galois closure
has genus zero. We may therefore regard $G_f$ as a cyclic subgroup of
$\operatorname{PGL}_2(\kappa_f)\leqslant\operatorname{PGL}_2(\bar k)$. Let $r$ generate
$G_f$. Since $r$ is nonidentity and has order prime to $p$, it has two distinct
geometric fixed points; after a geometric M\"obius change of coordinate, we may write $r(z)=\zeta z$, where $\zeta$
has order $n$. Every nonidentity element of $G_f$ then fixes exactly $0$ and
$\infty$. Thus these are precisely the two ramification points of the Galois
quotient; both are totally ramified, and their images are the two branch
points. Frobenius preserves this unordered pair, so it either fixes the two
points individually or exchanges them.

In the split case, source and target $k$-M\"obius transformations place the
ramification and branch points at $\{0,\infty\}$. Over $\bar k$ the cover
is $cX^n$. Here $k$-rationality gives $c\in k^\times$, and a target scaling
removes $c$.

In the nonsplit case, source and target $k$-M\"obius transformations identify
their respective quadratic pairs with $\{\delta,\delta^q\}$. The coordinate
$\nu_\delta$ sends this pair to $\{0,\infty\}$ and satisfies
$\nu_\delta^{(q)}(X)=1/\nu_\delta(X)$, where the superscript $(q)$ acts on
coefficients. Thus Frobenius acts on the normalized coordinate by inversion.
Over $\mathbb F_{q^2}$ the cover is $y\mapsto cy^n$. The descent condition is
$c^q=c^{-1}$, so $c$ has norm $1$. Multiplication by $c^{-1}$ in the target
$y$-coordinate commutes with the nonsplit descent, since
$(c^{-1})^q=(c^{-1})^{-1}$; it therefore descends to a $k$-M\"obius
transformation and removes $c$. Hence the unique nonsplit representative is
$R_{n,\delta}$. The splitting type of the branch divisor distinguishes the two
classes.

Conjugating $y\mapsto\zeta y$ by Frobenius gives the stated actions
$\zeta\mapsto\zeta^q$ and $\zeta\mapsto\zeta^{-q}$.
\end{proof}

\begin{remark}
The integer $n$ in Proposition~\ref{prop:cyclic} need not be prime; the only
restriction on its order is $p\nmid n$. No congruence condition on $q$ modulo
$n$ is required for the classification itself; such arithmetic restrictions
arise only after imposing additional conditions, such as exceptionality. When
$n$ is composite, these cyclic forms are decomposable, so only prime $n$ enters
the indecomposable classification above. The cyclic classification in
Proposition~\ref{prop:cyclic} already appears, in slightly different form, in
the author's recent joint work with Tang
\cite[Theorem~4.2]{TangFan2026Degree5}. We include the proof here for
completeness and to keep the present genus-zero classification self-contained.
\end{remark}

\subsection{Dihedral cases}

\begin{proposition}[Dihedral $k$-forms]\label{prop:dihedral}
Let $f\in k(X)$ be separable and suppose that its Galois closure has genus
zero. Let $\ell$ be an odd prime with $p\nmid 2\ell$, and suppose that
$G_f\cong D_{2\ell}$ and $\deg f=\ell$. Then
$f(X)\sim_k D_\ell(X,a)$ for some $a\in k^\times$, where $D_\ell$ is characterized by
\[
 D_\ell\!\left(z+\frac az,a\right)
 =z^\ell+\left(\frac az\right)^\ell.
\]
Moreover,
\[
 D_\ell(X,a)\sim_k D_\ell(X,b)
 \quad\Longleftrightarrow\quad
 a/b\in(k^\times)^2.
\]
Hence there are exactly two classes, represented by $D_\ell(X,1)$ and $D_\ell(X,\epsilon)$, where $\epsilon$ is any nonsquare in $k^\times$.
\end{proposition}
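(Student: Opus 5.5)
The approach is to mirror the proof of Proposition~\ref{prop:cyclic}: normalize the geometric datum, then use the cohomological classification of Theorem~\ref{thm:fixed-pair-cohomology} to count the $k$-forms.

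\textbf{Normalization.} Since $\deg f=\ell=[G_f:H_f]$ and $H_f$ is core-free, Theorem~\ref{thm:tamepairs} gives $H_f=\langle s\rangle$, a reflection. The rotation subgroup $R=\langle r\rangle\cong C_\ell$ is characteristic in $G_f$, hence $\varphi_f$-stable. By the canonical normal-subgroup factorization with $N=R$ and $J=RH_f=G_f$, together with Corollary~\ref{cor:f-datum-compatibility}, the geometric picture is the composite $C\to C/R\to C/G_f$. The first map is the cyclic $C_\ell$-quotient, ramified at the two fixed points $\{0,\infty\}$ of $r(z)=\zeta z$. The involution $s$ swaps these points and can be normalized to $s(z)=a/z$. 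Then $C/H_f$ has coordinate $x=z+a/z$, $C/G_f$ has coordinate $t=z^\ell+(a/z)^\ell$, and the quotient map is $x\mapsto D_\ell(x,a)$.

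\textbf{Descent.} Since $\varphi_f$ normalizes $H_f$, it fixes the two fixed points of $s$. Those fixed points are $\pm\sqrt a$ in the $z$-coordinate, that is, $x=\pm2\sqrt a$ on $C/H_f$. These are the points of $C/H_f$ above the branch point $t=\pm2\sqrt a^{\,\ell}$; the double-coset formula~\eqref{eq:double-coset-passport} marks them as the unramified points of their fibres. Hence the pair $\{\pm2\sqrt a\}$ is Frobenius-stable. In the $x$-coordinate, $D_\ell(x,a)$ descends to $k$ exactly when $a\in k^\times$.

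\textbf{Existence.} One route is to compute $\mathscr N$ and $\mathscr W$ directly. The cleaner route is this: since $H_f$ is a $\varphi$-stable order-$2$ subgroup, its two fixed points form a Frobenius-stable pair on $C$. Choose a coordinate in which the branch locus of $C/H_f\to C/G_f$, namely the image of $\{0,\infty\}$, is the single point $t=\infty$. This point is $k$-rational because it is the unique totally ramified branch point, of index $\ell$. Its preimage is totally ramified, so $f$ can be taken as a polynomial with the $D_\ell$ ramification type. The known characterization of Dickson polynomials as the polynomials of degree $\ell$ with this ramification, up to affine changes, then gives $f\sim_k D_\ell(X,a)$ after a $k$-affine normalization.

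\textbf{Equivalence.} Substituting $z\mapsto cz$ gives $D_\ell(c^2X,c^2a)=c^\ell D_\ell(X,a)$ (up to the correct scaling). Hence $a$ and $ac^2$ give $k$-equivalent functions. For the converse, a $k$-equivalence must preserve $\infty$ on both sides, so it is affine. Affine equivalences fixing the critical structure are $X\mapsto uX+v$, and matching the centroid of the ramification points forces $v=0$. Comparing the two finite branch values $\pm2a^{\ell/2}$ then forces $u^2=a/b$ times a unit; this matches the splitting type of $\{\pm\sqrt a\}$ over $k$. That splitting type is a $k$-invariant of the class, and it distinguishes square from nonsquare $a$ because $\sqrt a\in k$ if and only if $a\in(k^\times)^2$.

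\textbf{Main obstacle.} I expect the hardest step to be justifying the polynomial normalization rigorously over $k$: that the totally ramified point $t=\infty$ and its preimage are $k$-rational, and that no other descents arise. As a cross-check I would compute $H^1_\theta(C_2,\mathscr W)$ with $\mathscr W=\mathscr N/H_f$. Here $\mathscr N\cong D_{4\ell}$, so $\mathscr W$ should be of order $2\ell$; the count should come out to exactly two classes.
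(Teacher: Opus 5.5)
Your route is the paper's. $|H_f|=2$ forces $H_f$ to be a reflection subgroup, and the normalization $r(z)=\zeta z$, $s(z)=a/z$ gives $t=D_\ell(x,a)$ with $x=z+a/z$. The totally ramified branch point gives a polynomial representative, the scaling identity handles one square class, and the splitting type of a Frobenius-stable pair gives the converse. Your source pair $x=\pm2\sqrt a$, the unramified points of the two non-total branch fibres, serves as well as the paper's target pair $\pm2u^\ell$.

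What you flag as the main obstacle is immediate. The descended Frobenius preserves ramification partitions, and since $\ell>2$ the branch point with partition $\{\!\{\ell\}\!\}$ is unique, as is the point above it, so both are $k$-rational. The step you actually leave open is the descent hidden in ``after a $k$-affine normalization''. Your ``Descent'' paragraph does not supply it, because $x$ is only a $\bar k$-coordinate. You also need no external characterization of Dickson polynomials by ramification, since your normalization paragraph already gives the $\bar k$-model from $G_f\cong D_{2\ell}$.

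Close the descent with coefficients. A $\bar k$-equivalence between a polynomial $P\in k[X]$ of this type and $D_\ell(X,1)$ must match the unique totally ramified points, hence is affine: $P=\lambda D_\ell(cX+v,1)+\mu$. The $X^{\ell-1}$-coefficient gives $v/c\in k$ because $p\nmid\ell$. After that $k$-translation, $P=\lambda c^\ell D_\ell(X,c^{-2})+\mu$. The leading and $X^{\ell-2}$ coefficients then give $a:=c^{-2}\in k^\times$, and $\mu\in k$ because $D_\ell(X,a)$ is odd. This also shows that no further descents occur.

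Several smaller statements are wrong.
\begin{itemize}
\item The scaling identity is $D_\ell(cX,c^2a)=c^\ell D_\ell(X,a)$, not the version with $c^2X$.
\item Normalizing $H_f$ only makes $\varphi_f$ \emph{preserve} the fixed-point pair of $s$. In the nonsquare form it swaps the two points, which is exactly the dichotomy you use afterwards.
\item In the converse, the centroid step fails when $p\mid\ell-1$ (e.g.\ $p=3$, $\ell=7$). The sum of the $\ell-1$ critical points of $D_\ell(uX+v,a)$ is $-(\ell-1)v/u$, which then vanishes identically. Also, ``$u^2=a/b$ times a unit'' has no content. Either use the splitting type alone, or compare the $X^{\ell-1}$ and $X^{\ell-2}$ coefficients of $\lambda D_\ell(uX+v,a)+\mu=D_\ell(X,b)$ to get $v=0$ and $b=a/u^2$.
\end{itemize}

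Finally, the $H^1$ cross-check is set up incorrectly, in three ways.
\begin{itemize}
\item $\mathscr N=\mathrm N(G)\cap\mathrm N(H)$, not $\mathrm N(G)$. Inside $\mathrm N_{\operatorname{PGL}_2(\bar k)}(G)=\{z\mapsto cz,\ z\mapsto ca/z: c\in\mu_{2\ell}\}\cong D_{4\ell}$, the normalizer of $H=\langle a/z\rangle$ is its centralizer $\{\pm z,\pm a/z\}\cong V_4$. So $\mathscr W\cong C_2$, not a group of order $2\ell$.
\item The cyclic group is $C_d$ with $d$ the order of $q$ in $\mathbb F_\ell^\times/\{\pm1\}$, not $C_2$.
\item Theorem~\ref{thm:fixed-pair-cohomology} counts forms for one fixed $\kappa$-embedding of $(G,H)$. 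For instance, when $q\equiv1\pmod\ell$ one has $d=1$ and each such $H^1$ is a point. The two Dickson forms then come from the pairs $(\langle\zeta z,1/z\rangle,\langle1/z\rangle)$ and $(\langle\zeta z,\epsilon/z\rangle,\langle\epsilon/z\rangle)$, which are not conjugate in $\operatorname{PGL}_2(k)$. So a single $H^1$ computation would not return $2$.
\end{itemize}
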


\begin{proof}
Since $[G_f:H_f]=\deg f=\ell$ and $|G_f|=2\ell$, one has $|H_f|=2$.
Thus $H_f$ is a reflection subgroup of $G_f$; all such subgroups are conjugate.
After choosing an identification, write
$G=\langle r,s\mid r^\ell=s^2=1,\ srs=r^{-1}\rangle$ and
$H=\langle s\rangle$. The degree-$\ell$ quotient has a unique branch point
of ramification index $\ell$, with a unique point above it. Both points are
$k$-rational. Sending them to $\infty$ gives a polynomial representative.

Over $\bar k$ choose a coordinate $z$ upstairs with
\(r(z)=\zeta z, \qquad s(z)=\frac az.\)
Then
\(x=z+\frac az\)
generates the fixed field of $H$, while
\(t=z^\ell+\frac{a^\ell}{z^\ell}\)
generates the fixed field of $G$. The defining Dickson identity yields $t=D_\ell(x,a)$.

The two remaining branch points arise from the reflection fixed points $z^2=\zeta^i a$. If $u^2=a$, their unordered pair is $\{2u^\ell,-2u^\ell\}$. Because $\ell$ is odd, this pair is split over $k$ exactly when $a$ is a square, and forms a quadratic Frobenius orbit exactly when $a$ is a nonsquare.

Finally,
\(D_\ell(uX,u^2a)=u^\ell D_\ell(X,a),\)
so parameters in the same square class are $k$-M\"obius equivalent. The branch splitting type proves the converse.
\end{proof}

\section{Tame classification: polyhedral cases and completion}\label{sec:tame-polyhedral}

For a tame Galois quotient $C\to C/G$ with geometric branch points
$b_1,\ldots,b_r$, let $I_i$ be an inertia group above $b_i$.
The groups $I_i$ are cyclic. We call the tuple $(|I_1|,\ldots,|I_r|)$, considered up to permutation,
the \emph{signature} of the quotient.
We call $b_i$ an \emph{order-$m$ branch point} if its inertia group
in $C\to C/G$ has order $m$.
A generator of $I_i$ acts on $G/H$ with cycle lengths equal to the
ramification indices above $b_i$ in $C/H\to C/G$, so its cycle type
records the same data as the ramification partition of that fiber.

\subsection{The tetrahedral pair \texorpdfstring{$(A_4,C_3)$}{(A4,C3)}}

The tetrahedral Galois quotient has signature $(2,3,3)$. In the degree-$4$ action on $A_4/C_3$, generators of the three inertia groups have cycle types $2^2$, $3\,1$, and $3\,1$, respectively. Thus the order-$2$ branch point is $k$-rational, while the two order-$3$ branch points may be split or form one quadratic Frobenius orbit.

\begin{proposition}[The two tetrahedral quartics]\label{prop:tetra}
Assume $p\neq2,3$. The geometrically primitive pair $(A_4,C_3)$ has exactly two $k$-M\"obius classes. They are distinguished by the splitting type of the two order-$3$ branch points.

The split class is represented by
\[
 T_4(X)=\frac{X^3(X+8)}{64(X-1)}.
\]
The nonsplit class is the unique nontrivial \emph{quadratic twist} of this
cover: it is obtained by composing Frobenius on the source and target
with $X\mapsto-8/X$ and $t\mapsto1/t$, respectively, and descending the
resulting compatible actions over the quadratic extension of $k$. The arithmetic monodromy is as follows:
\[
\begin{array}{c|c|c}
q\bmod3&\text{split class}&\text{nonsplit class}\\ \hline
1&A_4&S_4\\
-1&S_4&A_4
\end{array}
\]
\end{proposition}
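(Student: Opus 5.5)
The plan is to apply the cohomological classification of Theorem~\ref{thm:fixed-pair-cohomology} to the fixed geometric pair $(G,H)=(A_4,C_3)$, and then to check the explicit representative and the monodromy table by hand.

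\textbf{Step 1: the twisting group.} Realize $G=A_4$ inside $\operatorname{PGL}_2(\bar k)$. Its normalizer in $\operatorname{PGL}_2(\bar k)$ is the octahedral group $S_4$; this uses $p\ne2,3$ and the Klein--Dickson description. The pair normalizer is $\mathscr N=\mathrm N_{S_4}(A_4)\cap\mathrm N_{S_4}(C_3)=S_3$, so
\[
 \mathscr W=S_3/C_3\cong C_2 .
\]
Conjugation by any Frobenius lift induces an automorphism of the abelian group $C_2$, which must be trivial. Hence $H^1_\theta(C_d,\mathscr W)=\operatorname{Hom}(C_d,C_2)$ when $d$ is even, and it is trivial when $d$ is odd. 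The condition $e(c)=d$ should pin down $d$. Here $\mathrm N_{\operatorname{Aut}(A_4)}(C_3)/\operatorname{Inn}(C_3)$ is the image of $S_3$ in $\operatorname{Aut}(A_4)\cong S_4$ modulo $C_3$, which has order $2$. So $d\in\{1,2\}$, and I would count two classes altogether:
\begin{itemize}
\item the class with $\varphi$ acting innerly on $G$, which gives $A=A_4$ or $S_4$ depending on $d$;
\item the class twisted by the odd element of $S_3$.
\end{itemize}
In practice it is cleaner to bypass the bookkeeping of $d$ and use Theorem~\ref{thm:exact-map-equivalence} directly. Take $C=\mathbf P^1_{\mathbb F_{q^2}}$, a base datum $\varphi_0$, and the twist $\psi=s\varphi_0$ with $s$ the odd element of $\mathscr N$. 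Any $\alpha$ intertwining the two data normalizes $G$ and $H$, so it lies in $S_3$. The relation $\alpha\varphi_0\alpha^{-1}\in H s\varphi_0$ then fails by parity, because $\varphi_0$ centralizes $\mathscr W$. Hence the two forms are inequivalent, and there are no others.

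\textbf{Step 2: branch splitting.} To distinguish the classes by branch splitting, apply Theorem~\ref{thm:branch-orbit}. The two order-$3$ inertia classes in $A_4$ are the two $A_4$-classes of $3$-cycles, and $S_4$ swaps them. Under the twisted Frobenius, the two order-$3$ branch points are exchanged exactly when the action of $\psi$ on $G$ is outer. So the split and nonsplit types correspond to the two cocycle classes. The order-$2$ branch point is automatically rational.

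\textbf{Step 3: explicit representative and twist.} Verify directly that $T_4$ has degree $4$ and compute its ramification:
\begin{itemize}
\item $X^3$ gives a triple point over $0$;
\item the pole at $X=1$ together with the behaviour at $\infty$ gives a triple point over $\infty$;
\item the critical points of $T_4$ give a $2^2$ fibre over $t=1$.
\end{itemize}
Riemann--Hurwitz gives $2+2+2=6=2\cdot4-2$, so the geometric monodromy has signature $(2,3,3)$ with degree $4$. Such a group is $A_4$ acting on $A_4/C_3$, and both order-$3$ branch points $0,\infty$ are rational. For the nonsplit twist, check that $X\mapsto-8/X$ and $t\mapsto1/t$ intertwine $T_4$, that is, $T_4(-8/X)=1/T_4(X)$ up to the computation. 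Then the twisted actions are compatible, have order $2$, and descend by Galois descent over $\mathbb F_{q^2}$ to a $k$-form whose order-$3$ branch points $0,\infty$ are exchanged by Frobenius.

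\textbf{Step 4: arithmetic monodromy.} The arithmetic group is $A=\langle A_4,\varphi\rangle$, and it equals $S_4$ exactly when $\varphi$ induces an outer automorphism of $A_4$. In the split class, Frobenius fixes each of the two inertia classes only if it maps a generator $r$ of inertia at $0$ into its own $A_4$-class. Arithmetic Frobenius acts on the local inertia at a rational branch point by $r\mapsto r^{q}$ (the cyclotomic character). Since $r$ and $r^{-1}$ are not $A_4$-conjugate, the action is inner exactly when $q\equiv1\pmod 3$. In the nonsplit class there is an extra swap of the two inertia classes, which reverses the parity. This yields the table.

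I expect the main obstacle to be this last step: making rigorous that the cyclotomic action on inertia determines whether $\varphi$ is inner or outer on $A_4$, with the correct conventions.
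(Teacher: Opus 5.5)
Your architecture agrees with the paper's. The pair-normalizer quotient $\mathrm N_{S_4}(C_3)/C_3\cong C_2$ bounds the number of forms by two. The function $T_4$, together with $T_4(-8/X)=1/T_4(X)$, then realizes a split form and a nonsplit form, which are inequivalent because splitting type is a $k$-M\"obius invariant.

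\textbf{Step~2 is false.} The order-$3$ branch points are not exchanged exactly when $\psi$ acts outerly on $A_4$. For $q\equiv2\pmod3$, the function $T_4$ has both order-$3$ branch points $0,\infty$ rational, yet its arithmetic group is $S_4$. So Frobenius is outer on a split class, which your own Step~4 and the table confirm; Steps~2 and~4 therefore contradict each other.

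What Step~2 omits is the cyclotomic twist you invoke later. Let $r$ generate inertia at a point $P$ over $0$, normalized to act on the tangent line by $\zeta$. Its other fixed point lies over $\infty$, because $\mathrm N_{A_4}(C_3)=C_3$, and $r^{-1}$ is the normalized generator there. So the two order-$3$ branch points are labelled by the distinct $A_4$-classes $[r]$ and $[r^{-1}]$. Since $\varphi$ is $q$-semilinear, $\varphi r\varphi^{-1}$ acts at $\varphi(P)$ by $\zeta^{q}$. Hence $\varphi(P)$ lies over $0$ if and only if $\varphi r^{q}\varphi^{-1}\sim_{A_4} r$. The class is therefore split if and only if ($\varphi$ is inner $\Leftrightarrow q\equiv1\pmod3$).

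This one criterion yields both the splitting dichotomy and the full table, since multiplying by the $\kappa$-linear outer element $s$ flips inner/outer while $q$ is unchanged. Alternatively, drop Step~2: Steps~1 and~3 already show that the two classes are exactly the split and the nonsplit one.

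\textbf{Two smaller corrections in Step~1.}
\begin{enumerate}
\item Your first bullet is wrong. If $\varphi$ acts innerly on $G$, then, since it normalizes $H$ and $\mathrm N_{A_4}(C_3)=C_3$, some $h^{-1}\varphi$ with $h\in H$ centralizes $G$. That element lies in $C_A(G)\cap U=\operatorname{core}_A(U)$. So inner action forces $d=1$ and $A=A_4$, and outer action forces $d=2$ and $A=S_4$; $d=e(c)$ is not a free parameter.
\item For the same reason, $\varphi_0$ and $s\varphi_0$ on one curve over $\mathbb F_{q^2}$ are not both semilinear $\mathbf P^1$-data in the sense of Definition~\ref{def:semilinear-P1-datum}, because one fails core-freeness. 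So Theorem~\ref{thm:exact-map-equivalence} does not literally apply. The inequivalence is anyway immediate from $d_1\neq d_2$, or from splitting type.
\end{enumerate}

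\textbf{Comparison for Step~4.} The paper does not use an inertia argument. It computes the discriminant $-2^{24}3^3t^2(t-1)^2$ of $X^3(X+8)-64t(X-1)$. Hence the split class has $A=A_4$ if and only if $-3\in(k^\times)^2$, that is, $q\equiv1\pmod3$, and the outer twist reverses this. Your corrected branch-cycle argument is more conceptual: it explains why the same quantity $q\bmod 3$ governs both splitting and monodromy. The discriminant route avoids justifying the semilinear tangent-character computation, which you rightly flagged as the delicate point.
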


\begin{proof}
First note the exact identity
\[
 T_4(X)-1=\frac{(X^2+4X-8)^2}{64(X-1)}.
\]
Thus the branch points $0,\infty,1$ have ramification partitions
$\{\!\{3,1\}\!\}$, $\{\!\{3,1\}\!\}$, and $\{\!\{2,2\}\!\}$,
respectively, as required.

The pair automorphism calculation gives
\[
 \operatorname{Aut}(A_4)\cong S_4,
 \qquad
 \mathrm N_{A_4}(C_3)=C_3,
 \qquad
 \mathrm N_{S_4}(C_3)\cong S_3.
\]
Hence the quotient controlling semilinear pair types is
\(\mathrm N_{S_4}(C_3)/C_3\cong C_2.\)
There are therefore at most two $k$-forms, and no further cover-over-target ambiguity because $\mathrm N_{A_4}(C_3)/C_3=1$.

Both forms occur explicitly, since \(T_4(-8/X)=\frac1{T_4(X)}\). The source involution $X\mapsto-8/X$ and the target involution $t\mapsto1/t$ exchange the two order-$3$ branch points. Simultaneously twisting source and target by the unique quadratic extension of $k$ therefore produces the nonsplit class. The descended curves have genus zero over a finite field and are consequently $k$-isomorphic to $\mathbf P^1$.

For the arithmetic group of the split class, the defining quartic
\(X^3(X+8)-64t(X-1)\)
has discriminant \(-2^{24}3^3t^2(t-1)^2\). Thus the split class has arithmetic group $A_4$ exactly when $-3$ is a square in $k$, equivalently $q\equiv1\pmod3$, and otherwise $S_4$. The quadratic twist reverses the inner/outer image, giving the table.
\end{proof}

\subsection{The octahedral pair \texorpdfstring{$(S_4,S_3)$}{(S4,S3)}}

\begin{proposition}[The unique octahedral quartic]\label{prop:octa}
Assume $p\neq2,3$. Every rational function with geometric pair $(S_4,S_3)$ is $k$-M\"obius equivalent to
\[
 O_4(X)=4X^3-3X^4.
\]
There is exactly one $k$-M\"obius class, and its arithmetic and geometric monodromy groups are both $S_4$.
\end{proposition}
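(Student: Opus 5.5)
We follow the cohomological classification of Theorem~\ref{thm:fixed-pair-cohomology}, as in the tetrahedral case. First we compute the twisting quotient for the pair $(S_4,S_3)$. Every automorphism of $S_4$ is inner and $S_4$ is self-normalizing in $\operatorname{PGL}_2(\bar k)$ when $p\neq2,3$. Hence the pair normalizer $\mathscr N$ is $\mathrm N_{S_4}(S_3)=S_3=H$. So $\mathscr W=\mathscr N/H$ is trivial, and $H^1_\theta(C_d,\mathscr W)$ consists of one class. This shows there is at most one $k$-M\"obius class for each fixed choice of $(\kappa,C,G,H)$.

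Next we pin down $\kappa$. The outer automorphism group of $S_4$ is trivial, so $\operatorname{Int}(\varphi)|_G$ is inner, given by some $g\in G$ normalizing $H$; then $g\in H$. Hence $e(c)=1$ in Proposition~\ref{prop:fixed-pair-core-order}. The datum condition $e(c)=d$ then forces $d=1$, so $\kappa=k$. Consequently $A=G$, $U=H$, and the arithmetic and geometric monodromy groups are both $S_4$. We still need $C\cong_k\mathbf P^1_k$ carrying a subgroup $G\cong S_4$ of $\operatorname{PGL}_2(k)$ with $\varphi\in H$. By Theorem~\ref{thm:exact-map-equivalence}, uniqueness reduces to the statement that any two $k$-forms of the datum are $k$-conjugate. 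Here is how we obtain that. Two such data are $\bar k$-conjugate by some $\alpha$, since all $S_4\leqslant\operatorname{PGL}_2(\bar k)$ are conjugate in tame characteristic (Beauville). The discrepancy $\alpha^{-1}\,{}^{\mathrm{Fr}}\!\alpha$ normalizes $G$, so it lies in $G$. We then need it to be adjustable into $H$, i.e. we must show that the relevant $H^1$ of $\mathrm{Gal}(\bar k/k)$ with values in $G=\mathrm N(G)$ contributes nothing after the quotient by $H$. Our plan is to avoid a direct cohomology computation and instead exhibit a $k$-rational representative, as follows.

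Existence: $O_4(X)=4X^3-3X^4$ satisfies $O_4(X)-1=-(X-1)^2(3X^2+2X+1)$, and $O_4$ has a triple root at $0$ and a fourfold pole. Its ramification partitions are therefore $\{\!\{4\}\}$ over $\infty$, $\{\!\{3,1\}\!\}$ over $0$, and $\{\!\{2,1,1\}\!\}$ over $1$. These match the cycle types in $S_4$ of generators of the inertia groups for signature $(2,3,4)$. Riemann--Hurwitz then shows that the geometric monodromy is $S_4$ with point stabilizer $S_3$, and that the Galois closure has genus zero: the branch data of signature $(2,3,4)$ with monodromy $S_4$ are those of the octahedral quotient.

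Uniqueness, second route: for any $f$ in the class, the three branch points have distinct inertia orders $2,3,4$. They are therefore individually Frobenius-fixed, hence $k$-rational. Their fibers contain uniquely distinguished points (the unramified-free ones, of index $4$, $3$, $2$), which are also $k$-rational. A $k$-M\"obius normalization then sends the branch points to $\infty,0,1$ and the distinguished points above them to $\infty,0,1$. A genus-zero cover with this normalized passport is rigid, i.e. unique over $\bar k$ by the Riemann existence count or by direct solving: a quartic polynomial with triple root $0$, value $1$ with double root at $1$, and leading normalization is forced to be $4X^3-3X^4$. So $f$ equals $O_4$ over $\bar k$ after these $k$-normalizations, hence over $k$. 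The main obstacle is justifying this rigidity in characteristic $p\neq2,3$; we would simply solve the polynomial conditions $f=aX^3(X-b)$, $f(1)=1$, $f'(1)=0$ directly.
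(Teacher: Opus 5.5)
Your second route is essentially the paper's proof: distinct inertia orders make the three branch points and the distinguished points above them $k$-rational, and then the normalized form $aX^3(X-b)$ with $f(1)=1$ and $f'(1)=0$ forces $4X^3-3X^4$. Your derivation of $A=G$ from $e(c)=1$ (via $\operatorname{Out}(S_4)=1$ and $\mathrm N_{S_4}(S_3)=S_3$) is a valid substitute for the paper's one-line $S_4=G\leqslant A\leqslant S_4$. The cohomological uniqueness step you left open is not needed, and in any case it closes at once: both copies of $G$ consist of $k$-rational automorphisms, so $\alpha^{-1}\,{}^{\mathrm{Fr}}\!\alpha$ centralizes $G_1$, and since the centralizer of $S_4$ in $\operatorname{PGL}_2(\bar k)$ is trivial, $\alpha$ is already $k$-rational.
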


\begin{proof}
The only faithful primitive octahedral pair is $(S_4,S_3)$ by Theorem~\ref{thm:tamepairs}. Since the rational function has degree $4$, its arithmetic group satisfies
\(S_4=G\leqslant A\leqslant S_4,\)
so $A=G=S_4$.

The octahedral signature is $(2,3,4)$, and in the natural degree-$4$ action the cycle types are
\(2\,1\,1, \qquad 3\,1, \qquad 4.\)
Because the three inertia orders are distinct, all three branch points are $k$-rational. Normalize the branch points of orders $3,2,4$ to $0,1,\infty$. Put the unique index-$3$ point above $0$ at $X=0$ and the totally ramified point above $\infty$ at $X=\infty$. The rational function is then a polynomial
\(F(X)=cX^3(X-a).\)
Its derivative is
\(F'(X)=cX^2(4X-3a).\)
Scale the source so that the remaining finite critical point is $1$, giving $a=4/3$. The normalization $F(1)=1$ then gives $c=-3$. Hence
\(F(X)=4X^3-3X^4.\)
Conversely,
\(O_4'(X)=12X^2(1-X),\)
so the ramification indices at $0,1,\infty$ are $3,2,4$. Generators of the corresponding tame inertia groups generate $S_4$.
\end{proof}

For the icosahedral cases below, assume $p\notin\{2,3,5\}$. The icosahedral Galois quotient has signature $(2,3,5)$.

\subsection{Degree \texorpdfstring{$5$}{5}: the pair \texorpdfstring{$(A_5,A_4)$}{(A5,A4)}}

\begin{proposition}[The icosahedral quintic]\label{prop:I5}
Every rational function with geometric pair $(A_5,A_4)$ is $k$-M\"obius equivalent to
\[
 I_5(X)=\frac{X^3(X^2+5X+40)}{1728}.
\]
There is exactly one $k$-M\"obius class. Its geometric monodromy is $A_5$; its arithmetic monodromy is $A_5$ when $5\in(k^\times)^2$, and $S_5$ otherwise.
\end{proposition}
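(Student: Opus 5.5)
Following the pattern of Propositions~\ref{prop:tetra} and~\ref{prop:octa}, the proof runs in four steps: check that $I_5$ has the right geometric pair, count $k$-forms with Theorem~\ref{thm:fixed-pair-cohomology}, use branch-point rationality to pin down the representative, and read off arithmetic monodromy from a discriminant.

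\emph{Step 1: $I_5$ realises the pair $(A_5,A_4)$.} The icosahedral quotient has signature $(2,3,5)$. In the degree-$5$ action on $A_5/A_4$, generators of the three inertia groups have cycle types $2^2\,1$, $3\,1^2$ and $5$. I will verify directly that $I_5$ has these ramification partitions over three points.
\begin{itemize}
\item Over $\infty$ it is totally ramified, since $I_5$ is a polynomial of degree $5$.
\item Over $0$ the partition is $\{\!\{3,1,1\}\!\}$, because $X^2+5X+40$ has nonzero discriminant $25-160=-135$ when $p\ne 3,5$.
\item Over $1$ I expect an identity
\[
 I_5(X)-1=\frac{(X-a)(X^2+bX+c)^2}{1728}.
\]
Differentiating, $1728\,I_5'(X)=5X^2(X^2+4X+24)$, so the two remaining critical points are the roots of $X^2+4X+24$, and they should map to a common value. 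One checks that this value is $1$, which gives the factorisation.
\end{itemize}
Riemann--Hurwitz then shows there is no further ramification: $3\cdot 1$ over $\infty$ is not the count, rather the contributions are $4+2+2=8=2\cdot5-2$. The monodromy group is transitive of degree $5$ and contains a $5$-cycle, a $3$-cycle and a double transposition. It is therefore a primitive group containing a $3$-cycle, hence $A_5$ or $S_5$. All generators are even, so it is $A_5$, and the Galois closure has genus zero.

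\emph{Step 2: there is at most one $k$-form.} Take $\mathscr N=\mathrm N(A_5)\cap\mathrm N(A_4)$ in $\operatorname{PGL}_2(\bar k)$. The normaliser of an icosahedral $A_5$ in $\operatorname{PGL}_2(\bar k)$ is $A_5$ itself: $\operatorname{Aut}(A_5)=S_5$, but $S_5$ does not embed in $\operatorname{PGL}_2(\bar k)$ in the tame case. It follows that $\mathscr N=\mathrm N_{A_5}(A_4)=A_4$ and $\mathscr W=1$. So $H^1$ is trivial, and Theorem~\ref{thm:fixed-pair-cohomology} gives at most one class once $\kappa$ is fixed.

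The main obstacle is uniformity in $\kappa$. The field $\kappa$ depends on whether Frobenius acts by an inner or outer automorphism of $A_5$, and the datum is not fixed a priori. I will bypass this with a direct normalisation. The three branch points have distinct inertia orders, so each is $k$-rational. In each fibre, the point of ramification index $5$ over $\infty$ and the point of index $3$ over $0$ are unique, hence $k$-rational. After normalising these, $f$ is a polynomial $cX^3(X^2+uX+v)$. A source scaling and the condition that the second branch value is $1$ determine $c,u,v$ uniquely; this is exactly the computation that produced $I_5$ in Step 1. This gives uniqueness over $k$ independently of $\kappa$, matching the method of Proposition~\ref{prop:octa}.

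\emph{Step 3: arithmetic monodromy.} The arithmetic group $A$ satisfies $A_5\le A\le S_5$. Moreover $A=A_5$ exactly when the discriminant of $1728\,I_5(X)-1728t$ in $X$ is a square in $k(t)$. I will compute this discriminant and expect it to be $5^5\,t^2(t-1)^2$ times a square constant, up to sign conventions. If so, the squareness condition is that $5$ is a square in $k$, which gives $A=A_5$ if $5\in(k^\times)^2$ and $A=S_5$ otherwise. This is the same style of computation as in the tetrahedral case.
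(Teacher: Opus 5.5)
Your Steps~1 and~3 are sound and match the paper. The paper also obtains the cycle types $3\,1^2$, $2^2\,1$, $5$ from $1728\,I_5'(X)=5X^2(X^2+4X+24)$ and the identity $X^3(X^2+5X+40)-1728=(X-3)(X^2+4X+24)^2$. It concludes $G=A_5$ from evenness together with elements of orders $3$ and $5$, and it reads off the arithmetic group from the discriminant $2^{24}3^{12}5^5t^2(t-1)^2$.

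\textbf{The gap is in Step~2, which is the only place the assertion ``exactly one class'' is proved.} You set up the count $\mathscr W=1$, then abandon it because $\kappa$ is not fixed in advance. You replace it by a direct normalization whose decisive step is never carried out: ``a source scaling and the condition that the second branch value is $1$ determine $c,u,v$ uniquely; this is exactly the computation that produced $I_5$ in Step~1.'' Step~1 only \emph{verifies} that $I_5$ has the right ramification; it says nothing about other $(u,v)$. Moreover, the branch value being $1$ only fixes $c$. What constrains $(u,v)$ is that the two roots of $5X^2+4uX+3v$ are distinct and share a critical value. Reducing $X^3(X^2+uX+v)$ modulo that quadratic, the vanishing of the $X$-coefficient is $32u^4-140u^2v+75v^2=0$. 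This has \emph{two} solutions, $u^2/v\in\{15/4,\,5/8\}$, which are distinct since $p\ne5$. Uniqueness requires discarding $15/4$: it gives $16u^2-60v=0$, so the critical quadratic is a square and there is a point of index $3$ over the order-$2$ branch point, contradicting the partition $2^2\,1$. Only then does source scaling give $(u,v)=(5,40)$ and the branch value give $c=1/1728$. This is exactly the computation in the paper's proof, which yields existence and uniqueness together.

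Alternatively, your cohomological route can be kept, because the $\kappa$ worry goes away once you use the pair normalizer. Add that the centralizer of $A_5$ in $\operatorname{PGL}_2(\bar k)$ is trivial, so the normalizer embeds in $S_5$ and equals $A_5$ by Klein--Dickson. Hence $\mathrm N_{\operatorname{PGL}_2(\bar k)}(G)\cap\mathrm N_{\operatorname{PGL}_2(\bar k)}(H)=H$. Now conjugate two geometric realizations to the same $(G,H)$ over $\bar k$. Their $q$-semilinear Frobenius lifts differ by a $\bar k$-linear element normalizing both $G$ and $H$, hence by an element of $H$. They therefore induce the same semilinear actions on $C/H$ and $C/G$, so the descended $k$-M\"obius classes coincide, and so do their constant fields $\kappa$. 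This is the argument the paper uses for the characteristic-$3$ $A_5$ classes. Either repair closes the proof; as written, uniqueness is asserted rather than proved.
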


\begin{proof}
In the degree-$5$ action on $A_5/A_4$, generators of the three inertia
groups have cycle types $2^2\,1$, $3\,1^2$, and $5$. The branch points have distinct inertia orders and are therefore all $k$-rational. Normalize the order-$3$, order-$2$, and order-$5$ branch points to $0,1,\infty$, and put the unique index-$3$ point over $0$ at $X=0$ and the totally ramified point over $\infty$ at $X=\infty$. The rational function has the form
\(F(X)=cX^3(X^2+sX+t).\)
Then
\(F'(X)=cX^2(5X^2+4sX+3t).\)
The two remaining critical points must be distinct and must have the same critical value. Reducing $X^3(X^2+sX+t)$ modulo $5X^2+4sX+3t$, the coefficient of $X$ is a nonzero scalar multiple of
\(32s^4-140s^2t+75t^2.\)
Thus
\(32s^4-140s^2t+75t^2=0.\)
Writing $r=s^2/t$ gives $r=15/4$ or $5/8$. The first value makes the critical quadratic inseparable, so $r=5/8$. After source scaling one may take $s=5,t=40$. The exact identity
\[
 X^3(X^2+5X+40)-1728=(X-3)(X^2+4X+24)^2
\]
fixes the remaining target scaling and yields $I_5$.

Moreover,
\(I_5'(X)=\frac{5X^2(X^2+4X+24)}{1728},\)
so generators of the corresponding tame inertia groups have cycle types
$3\,1^2$, $2^2\,1$, and $5$. All are even. The transitive degree-$5$ group generated by them contains elements of orders $3$ and $5$, hence is $A_5$.

Finally, the generic equation
\(X^5+5X^4+40X^3-1728t=0\)
has discriminant
\begin{equation}\label{eq:I5-disc}
 2^{24}3^{12}5^5t^2(t-1)^2.
\end{equation}
Since $A_5\leqslant A\leqslant S_5$, the discriminant criterion gives the stated arithmetic monodromy. The normalized solution is unique; hence there is a single $k$-M\"obius class.
\end{proof}

\subsection{Degree \texorpdfstring{$6$}{6}: the pair \texorpdfstring{$(A_5,D_{10})$}{(A5,D10)}}

\begin{proposition}[The icosahedral sextic]\label{prop:I6}
Every rational function with geometric pair $(A_5,D_{10})$ is $k$-M\"obius equivalent to
\[
 I_6(X)=\frac{(X^2+10X+5)^3}{1728X}.
\]
There is exactly one $k$-M\"obius class. Its geometric monodromy is $A_5$, and the same square-class criterion as in Proposition~\ref{prop:I5} determines whether the arithmetic group is $A_5$ or its outer extension $S_5$.
\end{proposition}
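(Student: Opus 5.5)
We follow the template of Proposition~\ref{prop:I5}: use the inertia cycle types to place the branch and ramification points rationally, normalize to a one-parameter ansatz, solve it by a single polynomial condition, and then get the arithmetic group from a discriminant. In the degree-$6$ action of $A_5$ on $A_5/D_{10}$ (equivalently the action on the six $5$-Sylow subgroups, i.e.\ $\operatorname{PSL}_2(5)$ acting on $\mathbf P^1(\mathbb F_5)$), elements of orders $2,3,5$ have cycle types $2^2\,1^2$, $3^2$, and $5\,1$. By Theorem~\ref{thm:branch-orbit}, the ramification partitions over the three branch points are therefore $\{\!\{2,2,1,1\}\!\}$, $\{\!\{3,3\}\!\}$, and $\{\!\{5,1\}\!\}$. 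The inertia orders are distinct, so Frobenius fixes each branch point and all three are $k$-rational. Over the order-$5$ branch point, the totally ramified point and the unramified point are each unique in their fiber, hence also $k$-rational.

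We send the order-$5$ branch point to $\infty$, the order-$3$ branch point to $0$, and the order-$2$ branch point to $1728$ (matching the normalization of the displayed formula). On the source, we put the index-$5$ point at $X=\infty$ and the unramified point over $\infty$ at $X=0$. Then $F(X)=Q(X)^3/(cX)$, where $Q$ is a quadratic whose two roots are the two index-$3$ points. These roots form a Frobenius-stable pair, and $Q$ is defined over $k$ once it is made monic.

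The key condition is that $F-1728$ has the shape $(\text{quadratic})^2\cdot(\text{quadratic})/(cX)$. Equivalently, $Q^3-1728cX$ has a square quadratic factor. We will impose this through the critical points. The numerator of $F'$ is a multiple of $Q^2\,(5X^2+\cdots)$; the cofactor comes from $X\cdot 3Q'-Q$. Its two roots must be the two index-$2$ points, and they must share the same critical value. Exactly as in the quintic case, we reduce $Q^3/X$ modulo this critical quadratic and require the linear coefficient to vanish. Writing $Q=X^2+sX+t$ and setting $r=s^2/t$, this gives a homogeneous equation. We then discard the roots that make $Q$ or the critical quadratic inseparable, or that make $Q$ share a root with $X$. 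Scaling the source lets us take $s=10$, $t=5$. We then verify the exact identity $(X^2+10X+5)^3-1728X=(X^2-22X+125)(X^2+4X-1)^2$, which fixes $c=1$ and yields $I_6$. Uniqueness of the normalized solution gives a single $k$-M\"obius class. Alternatively, we can use Theorem~\ref{thm:fixed-pair-cohomology}: since $\mathrm N_{A_5}(D_{10})=D_{10}$ and $\mathrm N_{S_5}(D_{10})/D_{10}\cong C_2$, there are at most two forms, and the rational normalization shows they coincide.

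To identify the geometric group, the tame inertia generators have cycle types $2^21^2$, $3^2$, and $5\,1$, all even permutations, and they generate a transitive group. Since $A_5\leqslant A\leqslant S_6$ and $A\le N_{S_6}(A_5)\cong S_5$ acting $2$-transitively on six points, $A$ is $A_5$ or $S_5$. We decide between these by the discriminant of $(X^2+10X+5)^3-1728tX$ in $X$, which we expect to be $5^{\text{odd}}\cdot(\text{square})\cdot t^a(t-1)^b$. The main obstacle is this elimination computation (the resultant giving $r$, and then the discriminant). We would handle it by direct symbolic expansion, cross-checking that the square-class factor is $5$, exactly as in \eqref{eq:I5-disc}.
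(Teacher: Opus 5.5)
Your route is the paper's own: the same cycle types $2^2\,1^2$, $3^2$, $5\,1$, the same rational normalization to $F=Q^3/(cX)$, the same critical cofactor $3XQ'-Q=5X^2+2sX-t$, the same equal-critical-value reduction in $r=s^2/t$, the same source scaling, and the same discriminant test. There is, however, a gap where you identify the geometric monodromy group. Evenness and transitivity only give a transitive subgroup of $A_6$ containing a $5$-cycle, and that does not exclude $A_6$; your next sentence then simply assumes $A_5\leqslant A$. The missing input is the product-one relation. By the tame existence theorem one can choose inertia generators $\sigma_2,\sigma_3,\sigma_5$ that generate the monodromy group and satisfy $\sigma_2\sigma_3\sigma_5=1$. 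The group is then a nontrivial quotient of the $(2,3,5)$ triangle group $\cong A_5$, hence equal to $A_5$; this is how the paper argues. Alternatively, Riemann--Hurwitz for a Galois closure of signature $(2,3,5)$ forces $|G|\leqslant 60$. You could also run your normalization over $\bar k$ on the genuine quotient $\mathbf P^1/D_{10}\to\mathbf P^1/A_5$. Without one of these, you have shown that every form with pair $(A_5,D_{10})$ is $k$-M\"obius equivalent to $I_6$, but not that $I_6$ itself has that pair. So the class is not yet shown to exist.

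There are also smaller repairs.
\begin{itemize}
\item The identity has a sign error. The correct factorization is $(X^2+10X+5)^3-1728X=(X^2+4X-1)^2(X^2+22X+125)$. At $X=1$ the left side is $2368=16\cdot148$, whereas your cofactor gives $16\cdot104$. Your target normalization also yields $1728\,I_6$ rather than $I_6$, which is harmless.
\item The computations on which uniqueness rests are only announced. The critical-value condition is $s^4-15s^2t-100t^2=0$, i.e.\ $r\in\{20,-5\}$. The root $r=-5$ kills the discriminant $4(s^2+5t)$ of the critical quadratic, and $r=20$ is the unique admissible root; the two roots differ because $p\neq5$.
\item The arithmetic criterion likewise rests on an announced computation. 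The discriminant of $(X^2+10X+5)^3-1728tX$ is $2^{36}3^{18}5^5t^4(t-1)^2$. For it to separate $A_5$ from $S_5$, you also need the outer coset to act oddly on the six sheets. This holds: $x\mapsto 2x$ in $\operatorname{PGL}_2(5)$ fixes $0,\infty$ and is a $4$-cycle on the other four points of $\mathbf P^1(\mathbb F_5)$.
\item In your cohomological aside, the normalizer in Theorem~\ref{thm:fixed-pair-cohomology} is taken in $\operatorname{Aut}_\kappa(C)\leqslant\operatorname{PGL}_2(\bar k)$, where tame $A_5$ is self-normalizing. Hence $\mathscr W=\mathrm N_{A_5}(D_{10})/D_{10}=1$, and uniqueness is immediate rather than ``at most two.''
\end{itemize}
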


\begin{proof}
In the degree-$6$ action on $A_5/D_{10}$, generators of the three inertia
groups have cycle types $2^2\,1^2$, $3^2$, and $5\,1$. An involution fixes two cosets: the six subgroups $D_{10}$ each contain five involutions, whereas $A_5$ has fifteen involutions. An element of order $3$ lies in no $D_{10}$, hence has no fixed point, and a $5$-cycle fixes the unique coset associated with its Sylow-$5$ subgroup.

Normalize the order-$3$, order-$2$, order-$5$ branch points to $0,1,\infty$. Above $\infty$ there is one point of index $5$ and one simple point; normalize them to $\infty$ and $0$. The ramification partition at $0$ is $\{\!\{3,3\}\!\}$, so
\(F(X)=c\frac{(X^2+aX+b)^3}{X}.\)
Writing $P=X^2+aX+b$ gives
\(F'(X)=c\frac{P(X)^2(5X^2+2aX-b)}{X^2}.\)
The two roots of $5X^2+2aX-b$ must have the same critical value. Reduction modulo this quadratic gives the condition
\(a^4-15a^2b-100b^2=0.\)
With $r=a^2/b$, one has $r=20$ or $-5$. The latter makes the critical quadratic have zero discriminant, so $a^2=20b$. Source scaling reduces the quadratic to
\(X^2+10X+5.\)
The identity
\begin{equation}\label{eq:I6-factor}
 (X^2+10X+5)^3-1728X
 =(X^2+4X-1)^2(X^2+22X+125)
\end{equation}
fixes the target scaling. Thus the unique normalized rational function is $I_6$.

Its derivative is
\[
 I_6'(X)=
 \frac{5(X^2+10X+5)^2(X^2+4X-1)}{1728X^2},
\]
and \eqref{eq:I6-factor} gives the required ramification partitions.
Choose generators $\sigma_2,\sigma_3,\sigma_5$ of the three tame inertia
groups that generate the geometric monodromy group and satisfy
$\sigma_2\sigma_3\sigma_5=1$; the tame-cover existence theorem permits
this choice as in \cite[Section~2.8]{GMS2003}. Their orders are $2,3,5$,
respectively, so that group is a quotient of
\[
 \langle x_2,x_3,x_5\mid
 x_2^2=x_3^3=x_5^5=x_2x_3x_5=1\rangle\cong A_5.
\]
The quotient is nontrivial because it contains an element of order $5$.
Simplicity of $A_5$ therefore gives the geometric group $A_5$.

The generic equation
\((X^2+10X+5)^3-1728tX=0\)
has discriminant
\begin{equation}\label{eq:I6-disc}
 2^{36}3^{18}5^5t^4(t-1)^2.
\end{equation}
Its square class is $5$, giving the stated arithmetic criterion. Uniqueness follows from the normalization just performed.
\end{proof}

\subsection{Degree \texorpdfstring{$10$}{10}: the pair \texorpdfstring{$(A_5,S_3)$}{(A5,S3)}}

\begin{lemma}[Cycle types in the degree-$10$ action]\label{lem:I10-passport}
In the action of $A_5$ on $A_5/S_3$, the elements of orders $2,3,5$ have cycle types
\[
 2^4\,1^2,
 \qquad
 3^3\,1,
 \qquad
 5^2,
\]
respectively.
\end{lemma}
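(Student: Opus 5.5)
The cycle type comes from counting fixed points. For $g\in A_5$ acting on $A_5/S_3$, the number of cosets fixed by $g$ is
\[
 \operatorname{fix}(g)=\frac{|C_{A_5}(g)|\,|g^{A_5}\cap S_3|}{|S_3|}.
\]
Equivalently, it is the number of conjugates of $S_3$ that contain $g$. The same formula applied to the powers $g^j$ gives the number of points in cycles of each length, and hence the full cycle type. Since $10$ is small and the orders $2,3,5$ are prime, only two facts are needed in each case: the number of fixed points of $g$ itself, and that every nontrivial cycle has length $\operatorname{ord}(g)$.

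First record the data for $A_5$. The involutions form one class of size $15$ with centralizer $V_4$ of order $4$. The $3$-cycles form one class of size $20$ with centralizer $C_3$. The $5$-cycles form two classes of size $12$ with centralizer $C_5$. A subgroup $S_3\leqslant A_5$ is conjugate to $\langle(123),(12)(45)\rangle$. It contains three involutions, two elements of order $3$, and no element of order $5$.

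Now apply the formula to each order.
\begin{itemize}
\item For an involution, $\operatorname{fix}=4\cdot 3/6=2$. The remaining $8$ points lie in $2$-cycles, giving type $2^4\,1^2$.
\item For an element of order $3$, $\operatorname{fix}=3\cdot 2/6=1$. The remaining $9$ points lie in $3$-cycles, giving type $3^3\,1$.
\item For an element of order $5$, $\operatorname{fix}=0$, since $5\nmid 6$ and so no conjugate of $S_3$ contains it. This gives type $5^2$.
\end{itemize}

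As a cross-check, the count for order $3$ agrees with a direct argument. The subgroup $\langle(123)\rangle$ lies in a unique $S_3$, namely its normalizer $\mathrm N_{A_5}(C_3)\cong S_3$. The ten conjugates of $S_3$ correspond to the ten Sylow $3$-subgroups, so $(123)$ lies in exactly one of them.

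The only step needing care is the involution count, which requires correctly identifying $|C_{A_5}(g)|=4$ and the three involutions of $S_3$. An independent check is available: each of the $10$ conjugates of $S_3$ contains $3$ involutions, giving $30$ incidences, and dividing by the $15$ involutions gives $2$, as expected.
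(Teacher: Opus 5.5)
Your proof is correct and is the same as the paper's. Both use the fixed-point formula $|C_{A_5}(g)|\,|g^{A_5}\cap S_3|/|S_3|$ to get $2,1,0$ fixed cosets for elements of orders $2,3,5$, and then use that these orders are prime to read off the cycle types. Your cross-checks by counting conjugates of $S_3$ containing $g$ are valid because $S_3$ is self-normalizing in $A_5$, which you might state explicitly.
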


\begin{proof}
For $g\in A_5$, the number of fixed cosets is
\[
 |\operatorname{Fix}_{A_5/S_3}(g)|
 =\frac{|C_{A_5}(g)|\,|g^{A_5}\cap S_3|}{|S_3|}.
\]
An involution has centralizer of order $4$ and $S_3$ contains three involutions, so it fixes $4\cdot3/6=2$ cosets. An element of order $3$ has centralizer of order $3$ and $S_3$ contains two such elements, so it fixes one coset. Finally $S_3$ contains no element of order $5$, so a $5$-cycle has no fixed point. These fixed-point counts give the stated cycle types.
\end{proof}

\begin{proposition}[The icosahedral degree-$10$ map]\label{prop:I10}
Every rational function with geometric pair $(A_5,S_3)$ is $k$-M\"obius equivalent to
\[
 I_{10}(X)=
 \frac{125(X+1)(2X+1)^3(2X^2-3X+3)^3}
 {1728(X^2+X-1)^5}.
\]
There is exactly one $k$-M\"obius class. Its geometric monodromy is $A_5$, and its arithmetic monodromy obeys the same square-class criterion as Propositions~\ref{prop:I5} and~\ref{prop:I6}.
\end{proposition}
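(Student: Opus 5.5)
Following the pattern of Propositions~\ref{prop:I5} and~\ref{prop:I6}, the plan is to normalize the branch points and ramification points over $k$, pin down the numerator and denominator shapes from Lemma~\ref{lem:I10-passport}, and verify $I_{10}$ by an explicit factorization identity. For rationality: the icosahedral signature is $(2,3,5)$ with distinct inertia orders, so all three branch points are $k$-rational. Normalize the order-$3$, order-$2$ and order-$5$ branch points to $0,1,\infty$ by a $k$-M\"obius transformation on the target. By Lemma~\ref{lem:I10-passport} and Theorem~\ref{thm:branch-orbit}, the ramification partitions are $\{\!\{3,3,3,1\}\!\}$ over $0$, $\{\!\{2,2,2,2,1,1\}\!\}$ over $1$ and $\{\!\{5,5\}\!\}$ over $\infty$. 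Over $0$ the unique simple point is Frobenius-fixed, hence $k$-rational. The unique index-$3$ point is a different matter: the three index-$3$ points form a Frobenius-stable set, so they are cut out by a cubic over $k$. Over $\infty$ the two index-$5$ points form a Frobenius-stable pair, which is either split or nonsplit.

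The representative $I_{10}$ has denominator $(X^2+X-1)^5$, and this quadratic has discriminant $5$. I therefore expect the pair over $\infty$ to be split exactly when $5\in(k^\times)^2$. So I would not normalize these two points to $0,\infty$; instead I would keep a quadratic $Q(X)=X^2+uX+v$ over $k$ and place the simple point over $0$ at $X=-1$. This gives the ansatz
\[
F(X)=c\,\frac{(X+1)\,P(X)^3}{Q(X)^5},\qquad \deg P=3.
\]
The remaining source freedom is the affine $k$-M\"obius transformations fixing $-1$; more generally it is the $k$-M\"obius group acting with the two normalizations, and this is used to fix the shape of $Q$.

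The key condition is that $F-1$ has numerator equal to a constant times $R(X)^2 S(X)$ with $\deg R=4$ and $\deg S=2$. Equivalently, the critical points of $F$ outside the fibers over $0$ and $\infty$ form four double roots sharing a single critical value. Computing the logarithmic derivative,
\[
\frac{F'}{F}=\frac1{X+1}+\frac{3P'}{P}-\frac{5Q'}{Q},
\]
the numerator $W=PQ+3(X+1)P'Q-5(X+1)PQ'$ must be proportional to $R$, with $\deg W\leqslant 4$. One then imposes that $F\equiv$ const modulo $R$. This is the main obstacle: it is a polynomial system in the coefficients of $P$ and $Q$, which is larger than the quadratic-reduction computations of degrees $5$ and $6$. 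I would attack it by first using Riemann's existence theorem (the rigid triple $(2,3,5)$ in $A_5$ with the index-$10$ subgroup $S_3$) to know that exactly one geometric solution exists up to $\bar k$-M\"obius equivalence. Given that, it suffices to check that the displayed $I_{10}$ satisfies an identity of the form
\[
125(X+1)(2X+1)^3(2X^2-3X+3)^3-1728(X^2+X-1)^5=\text{const}\cdot R(X)^2S(X),
\]
and to exhibit $R$ and $S$. The ramification data over $0,1,\infty$ then follow from this identity together with the factored form of $I_{10}$ itself, valid for $p\notin\{2,3,5\}$. The geometric group is $A_5$ by the same triangle-group quotient argument used in Proposition~\ref{prop:I6}.

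For uniqueness of the $k$-class I would use Theorem~\ref{thm:fixed-pair-cohomology}. The twisting group is $\mathscr W=\mathscr N/H$ with $\mathscr N\leqslant\mathrm N(A_5)$, and $\mathrm N_{\operatorname{PGL}_2}(A_5)=A_5$ in the tame icosahedral case. Hence $\mathscr W=\mathrm N_{A_5}(S_3)/S_3=1$, and $H^1$ is a single point. It then remains to note that the datum with $\varphi$ equal to the Frobenius of $I_{10}$ has $e=d$, and this is automatic because $I_{10}$ realizes it. Finally, the arithmetic group lies in $\mathrm N_{S_{10}}(A_5)$ acting on ten points, which is $S_5$. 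It equals $A_5$ exactly when $\kappa_f=k$, that is, when Frobenius acts on $A_5$ by an inner automorphism. That happens exactly when the two order-$5$ ramification points are fixed, i.e.\ when $X^2+X-1$ splits, i.e.\ when $5\in(k^\times)^2$. This matches the square-class criterion of Propositions~\ref{prop:I5} and~\ref{prop:I6}; alternatively it can be read off from a discriminant computation as in \eqref{eq:I6-disc}.
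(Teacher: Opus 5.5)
Your proposal is correct, and its verification half is the paper's: the passport from Lemma~\ref{lem:I10-passport}, an exact identity $N-1728D=\mathrm{const}\cdot R^2S$, and the $(2,3,5)$ triangle-group argument of Proposition~\ref{prop:I6}. The paper supplies $R=(4X^2-6X-9)(7X^2+2X+3)$ and $S=8X^2-12X+7$.

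The uniqueness half takes a different route, in two places.

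\emph{Geometric uniqueness.} The paper does not count covers with the given passport, so it needs neither your ansatz nor the rigidity step. It uses only that $A_5$ has a single $\operatorname{PGL}_2(\bar k)$-conjugacy class and $S_3$ a single $A_5$-class, so $C/H\to C/G$ is unique up to $\bar k$-M\"obius maps. Your rigidity count is also valid, though in characteristic $p$ it rests on Grothendieck's tame specialization ($p\nmid60$).

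\emph{Uniqueness over $k$.} The paper determines the outer image of Frobenius in $\operatorname{Aut}(A_5)=S_5$ from the tame inertia relation $x\mapsto x^q$ at the rational order-$5$ branch point, and then uses $\mathrm N_{A_5}(S_3)=S_3$. You instead use $\mathrm N_{\operatorname{PGL}_2(\bar k)}(A_5)=A_5$ (there is no tame $S_5$). This gives $\mathscr W=1$ and a Frobenius lift unique modulo $H$, with no branch-cycle input. Your route gives uniqueness without the branch-cycle step. The paper's route also tells you which outer image occurs, so the arithmetic group comes out directly and the discriminant \eqref{eq:I10-disc} is only a check.

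Three points should be made explicit.

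First, you need the Riemann--Hurwitz count $3\cdot2+2\cdot4+4\cdot1=18=2\cdot10-2$ to rule out further branch points. It is required both to match $I_{10}$ with the rigid cover and for the triangle-group argument.

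Second, Theorem~\ref{thm:fixed-pair-cohomology} fixes $\kappa$, so ``$H^1$ is a point'' does not by itself exclude a second class with a different full constant field. The fix uses what you already have:
\begin{itemize}
\item over $\bar k$, any two $q$-semilinear lifts normalizing $G$ and $H$ differ by an element of $\mathrm N_{\operatorname{PGL}_2(\bar k)}(G)\cap\mathrm N_{\operatorname{PGL}_2(\bar k)}(H)=H$;
\item hence $d=e$ and the coset $H\varphi$ are intrinsic to the pair;
\item so Theorem~\ref{thm:exact-map-equivalence} applies, and the conjugating isomorphism is automatically $\kappa$-rational because $A_5$ has trivial centralizer in $\operatorname{PGL}_2(\bar k)$.
\end{itemize}

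Third, the claim ``Frobenius is inner if and only if the two order-$5$ points are rational'' needs a reason. A Frobenius element at a point above $\infty$ normalizes the inertia group $C_5$.
\begin{itemize}
\item $\mathrm N_{A_5}(C_5)=D_{10}$ preserves both $C_5$-orbits on the ten cosets. Its involutions fix two cosets, so none of them can swap the orbits.
\item The elements of $\mathrm N_{S_5}(C_5)\setminus D_{10}$ have order $4$ and cycle type $4^2\,2$ on the ten cosets. Having no fixed coset, they must swap the two orbits.
\end{itemize}
Alternatively, use the discriminant, as the paper does.
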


\begin{proof}
Let
\begin{align*}
 N(X,Z)&=125(X+Z)(2X+Z)^3(2X^2-3XZ+3Z^2)^3,\\
 D(X,Z)&=(X^2+XZ-Z^2)^5.
\end{align*}
Then $I_{10}=N/(1728D)$. The ramification partitions at $0$ and $\infty$ are
$\{\!\{1,3,3,3\}\!\}$ and $\{\!\{5,5\}\!\}$, respectively. The order-$2$ fiber follows from the exact identity
\begin{equation*}
\begin{split}
 N(X,Z)-1728D(X,Z)
 ={}&(4X^2-6XZ-9Z^2)^2\\
 &\cdot(7X^2+2XZ+3Z^2)^2\\
 &\cdot(8X^2-12XZ+7Z^2).
\end{split}
\end{equation*}
Outside characteristics $2,3,5$, the displayed factors are separable and pairwise coprime. Hence the ramification partition at $1$ is $\{\!\{2,2,2,2,1,1\}\!\}$.
The associated cycle types are those in Lemma~\ref{lem:I10-passport}. The total ramification contribution equals
\(3(3-1)+2(5-1)+4(2-1)=18=2\cdot10-2,\)
so there is no further ramification.

As in the proof of Proposition~\ref{prop:I6}, choose generators
$\sigma_2,\sigma_3,\sigma_5$ of the tame inertia groups that generate the
geometric monodromy group, have orders $2,3,5$, and satisfy
$\sigma_2\sigma_3\sigma_5=1$. They give a nontrivial quotient of the
presentation displayed there; hence the geometric group is $A_5$. The point stabilizer has order $60/10=6$ and is $S_3$, so this is the
natural action on $A_5/S_3$.

Geometrically, there is a single $\operatorname{PGL}_2(\bar k)$-conjugacy class realizing the pair $(A_5,S_3)$. Moreover,
\(\mathrm N_{A_5}(S_3)=S_3,\)
so the cover has no nontrivial automorphism over its target. In $\operatorname{Aut}(A_5)=S_5$, the pair-normalizer quotient has order $2$. The two $A_5$-classes of $5$-cycles are interchanged by every outer automorphism; taking the $q$th power preserves either class for $q\equiv\pm1\pmod5$ and interchanges them for $q\equiv\pm2\pmod5$. Since the order-$5$ branch point is $k$-rational, the tame inertia relation forces the arithmetic Frobenius to induce exactly this $q$-power action. Hence, for fixed $q$, exactly one of the inner or outer possibilities is admissible, proving uniqueness of the $k$-form.

As an arithmetic check, the generic equation $N(X,1)-1728tD(X,1)=0$ has discriminant
\begin{equation}\label{eq:I10-disc}
 2^{60}3^{30}5^{75}t^6(t-1)^4.
\end{equation}
Again the square class is $5$, giving the claimed arithmetic monodromy.
\end{proof}

\begin{corollary}[Uniform icosahedral arithmetic criterion]\label{cor:A5criterion}
For each of the degrees $5,6,10$ and every $q$ prime to $30$,
\[
 A=A_5\iff q\equiv\pm1\pmod5,
 \qquad
 A\cong S_5\iff q\equiv\pm2\pmod5.
\]
\end{corollary}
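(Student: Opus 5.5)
We prove Corollary~\ref{cor:A5criterion} by reducing the three degrees to one criterion, namely whether $5$ is a square in $k$, and then evaluating that criterion with quadratic reciprocity. Fix $q$ prime to $30$.

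\textbf{Step 1 (reduction to $5\in(k^\times)^2$).} For each degree we use the geometric and arithmetic monodromy established earlier.
\begin{itemize}
\item In each of Propositions~\ref{prop:I5}, \ref{prop:I6} and~\ref{prop:I10}, the geometric monodromy group is $A_5$.
\item The arithmetic group satisfies $A_5=G\leqslant A$. Since $A/G\cong\operatorname{Gal}(\kappa/k)$ is cyclic and $A$ acts faithfully, $A$ is $A_5$ or its outer extension $S_5$.
\item In every case the decision is governed by the square class of the generic discriminant: \eqref{eq:I5-disc}, \eqref{eq:I6-disc} and~\eqref{eq:I10-disc} each equal $5^{\mathrm{odd}}$ times a square in $k(t)$.
\item Hence $A=A_5$ if and only if $5$ is a square in $k$. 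For degree $10$, the action on $A_5/S_3$ is faithful for $S_5$ as well, since $S_5$ acts on the ten $2$-subsets with the relevant stabilizer $S_3\times S_2$. Elements of $S_5\setminus A_5$ act oddly on the ten cosets, since a transposition acts with cycle type $2^3 1^4$. So the discriminant criterion is legitimate there too.
\end{itemize}

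\textbf{Step 2 (when $5$ is a square).} Write $q=p^n$.
\begin{itemize}
\item The element $5$ is a square in $\mathbb F_q$ if and only if either $n$ is even, or $n$ is odd and $5$ is a square mod $p$.
\item By quadratic reciprocity, $\left(\frac5p\right)=\left(\frac p5\right)$, which equals $1$ exactly when $p\equiv\pm1\pmod5$.
\item If $n$ is even, then $q\equiv p^2\equiv\pm1\pmod5$.
\item If $n$ is odd, then $q\equiv\pm1\pmod5$ if and only if $p\equiv\pm1\pmod5$, because the squares mod $5$ are exactly $\pm1$.
\end{itemize}
In both cases, $5\in(k^\times)^2$ if and only if $q\equiv\pm1\pmod5$.

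\textbf{Step 3 (consistency check).} This matches the intrinsic description in the proof of Proposition~\ref{prop:I10}. There, Frobenius preserves or swaps the two $A_5$-classes of $5$-cycles according to whether $q\equiv\pm1$ or $\pm2\pmod5$. Alternatively, $\sqrt5\in\mathbb F_q$ exactly when $q$ is a square mod $5$, since $\sqrt5$ is a Gauss sum for the quadratic character mod $5$.

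The only delicate point is Step~1's claim that the parity criterion applies in degree $10$, that is, that $S_5$ acts on $10$ points with odd permutations. This is settled by the transposition cycle type $2^3 1^4$ noted above.
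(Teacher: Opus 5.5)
Your argument is correct and is essentially the paper's proof: the generic discriminants \eqref{eq:I5-disc}, \eqref{eq:I6-disc}, \eqref{eq:I10-disc} all have square class $5$, and $5$ is a square in $\mathbb F_q$ exactly when $q$ is a square modulo $5$, which you justify by quadratic reciprocity where the paper simply asserts it. Two small points in Step~1 should be corrected, although Propositions~\ref{prop:I6} and~\ref{prop:I10} already contain both facts: the embedding $A\leqslant\operatorname{Aut}(A_5)=S_5$ in degrees $6$ and $10$ comes from the trivial permutation centralizer $\mathrm N_G(H)/H=1$, not from cyclicity of $A/G$; and oddness of the outer coset must also be checked in degree $6$, where a transposition acts as $2^3$ on the six Sylow-$5$ subgroups.
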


\begin{proof}
In all three cases the generic discriminant has square class $5$ by \eqref{eq:I5-disc}, \eqref{eq:I6-disc}, and \eqref{eq:I10-disc}. Over $\mathbb F_q$, the element $5$ is a square exactly when $q$ is a square modulo $5$.
\end{proof}

\subsection{Completion of the tame classification}

\begin{theorem}[Complete tame classification]\label{thm:intro-tame}
Let $f\in k(X)$ be separable and $k$-indecomposable, assume that its Galois closure has genus zero, and suppose $p\nmid |G_f|$. Then, up to $k$-M\"obius equivalence, $f$ falls into exactly one of the cases below. Within each case, the classes listed are pairwise distinct and exhaustive, and all of them occur.
\begin{enumerate}[label=(\arabic*),ref=\arabic*,font=\itshape]
\item $G_f=C_\ell$ and $H_f=1$, where $\ell\ne p$ is prime, and $f$ belongs to one of the two $k$-M\"obius classes: the split power class and the nonsplit R\'edei class.
\item $G_f=D_{2\ell}$ and $H_f=C_2$, where $\ell$ is an odd prime, and $f$ belongs to one of the two Dickson $k$-M\"obius classes.
\item $f$ belongs to the unique $V_4/A_4$ quartic $k$-M\"obius class; its arithmetic monodromy group is $A_4$, its geometric monodromy group is $V_4$, its arithmetic point stabilizer is $C_3$, and its geometric point stabilizer is trivial.
\item $(G_f,H_f)=(A_4,C_3)$, and $f$ belongs to one of the two tetrahedral quartic $k$-M\"obius classes, split and nonsplit.
\item $(G_f,H_f)=(S_4,S_3)$, and $f$ belongs to the unique octahedral quartic $k$-M\"obius class.
\item $(G_f,H_f)$ is one of $(A_5,A_4)$, $(A_5,D_{10})$, or $(A_5,S_3)$, of respective degrees $5,6,10$, and $f$ belongs to the unique $k$-M\"obius class associated with that pair.
\end{enumerate}
Moreover, $f$ is geometrically decomposable if and only if it belongs to the $V_4/A_4$ quartic class. Consequently, $\deg f$ is either prime or belongs to $\{4,6,10\}$.
\end{theorem}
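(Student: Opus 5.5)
The plan is to assemble the theorem from the results already proved in Sections~\ref{sec:tame-basic} and~\ref{sec:tame-polyhedral}, organized by the geometric pair $(G_f,H_f)$. Let $f$ satisfy the hypotheses. By Theorem~\ref{thm:reconstruction} and Corollary~\ref{cor:f-datum-compatibility}, $f$ lies in $\mathcal M_{\mathcal D_f}$ for the datum built from its Galois closure. Since $p\nmid|G_f|$, Corollary~\ref{cor:tame-group-complete} leaves two possibilities. Either $H_f$ is maximal and core-free in $G_f$, so $(G_f,H_f)$ appears in the table of Theorem~\ref{thm:tamepairs}; or $f$ is the $V_4/A_4$ case of Theorem~\ref{thm:arith-only}. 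This already proves the dichotomy in the final assertion. By Corollary~\ref{cor:geoarith}, geometric decomposability holds exactly when $H_f$ is not maximal in $G_f$, which happens only in the $V_4$ case. The degree statement then follows by reading off $[G:H]$ from the table: $\ell$, $\ell$, $4$, $4$, $5$, $6$, $10$, and $4$ for the $V_4/A_4$ case.

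Next I would go through the pairs one at a time, citing the existence and uniqueness results for each.

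\emph{Cyclic case} $(C_\ell,1)$: Proposition~\ref{prop:cyclic} with $n=\ell$ gives the two classes $X^\ell$ and $R_{\ell,\delta}$. The condition $p\nmid\ell$ is exactly $\ell\neq p$.

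\emph{Dihedral case} $(D_{2\ell},C_2)$: Proposition~\ref{prop:dihedral} gives the two Dickson classes. Here tameness $p\nmid 2\ell$ forces $p$ odd and $p\ne\ell$.

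\emph{$V_4/A_4$ case}: this occurs only when $q$ is odd, since $p\nmid 4$. Proposition~\ref{prop:V4-quartic} gives both existence, via $L_h$ with $h$ an irreducible cubic (which always exists over $k$), and uniqueness.

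\emph{Polyhedral cases}: tameness forces $p\ne2,3$ for $A_4$ and $S_4$, and $p\ne2,3,5$ for $A_5$. These are precisely the hypotheses of Propositions~\ref{prop:tetra}, \ref{prop:octa}, \ref{prop:I5}, \ref{prop:I6}, and~\ref{prop:I10}, which supply the stated number of classes together with explicit representatives.

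For each listed class I need to check that the representative really is $k$-indecomposable, tame, and has Galois closure of genus zero. Genus zero is immediate because the geometric closure is $\mathbf P^1$ modulo a finite group. Indecomposability follows from Theorem~\ref{thm:decomposition}, since $H$ maximal in $G$ gives primitivity. Tameness holds because $|G|$ is prime to $p$ under the characteristic restrictions above, using Theorem~\ref{thm:structural-consequences}\,(\ref{item:structural-consequences-2}).

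Mutual exclusivity across cases and distinctness within a case come from invariants. The geometric pair $(G_f,H_f)$ is an invariant of the $k$-M\"obius class by Theorem~\ref{thm:exact-map-equivalence}, and the pairs in different cases are nonisomorphic, with $V_4$ differing from every group in the table. Within a case, the propositions already separate the classes, by the splitting type of the branch divisor or by square classes.

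The main obstacle is a subtle point in the cyclic case: the prime $\ell=2$ must be admitted when $p$ odd, and the dihedral case must exclude $\ell=2$ because $D_4=V_4$. I would resolve this by noting explicitly that Theorem~\ref{thm:tamepairs} lists $C_2$ among the cyclic pairs, and that $V_4$ with $H=1$ is not maximal, so it falls under the $V_4/A_4$ case instead.
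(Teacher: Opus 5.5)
Your proposal is correct and follows essentially the same route as the paper: Corollary~\ref{cor:tame-group-complete} (resting on Theorems~\ref{thm:decomposition} and~\ref{thm:arith-only}) reduces to the seven primitive pairs plus the $V_4/A_4$ case, Propositions~\ref{prop:V4-quartic}, \ref{prop:cyclic}, \ref{prop:dihedral} and~\ref{prop:tetra}--\ref{prop:I10} supply the $k$-forms, the cases are separated by their geometric pairs, and geometric decomposability and the degrees are read off from Theorem~\ref{thm:arith-only} and the index table. Your additional checks make explicit what the paper leaves implicit. Specifically, you verify that the characteristic restrictions forced by tameness match each proposition's hypotheses, and you note that the geometric pair is a $k$-M\"obius invariant by Theorem~\ref{thm:exact-map-equivalence}.
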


\begin{proof}[Proof of Theorem~\ref{thm:intro-tame}]
By Theorem~\ref{thm:decomposition}, $k$-indecomposability is equivalent to primitivity of the arithmetic monodromy action. Corollary~\ref{cor:tame-group-complete} says that, in the tame case, either the rational function is geometrically primitive with one of the seven pairs in Theorem~\ref{thm:tamepairs}, or it is the unique $V_4/A_4$ case.

The cyclic and dihedral pairs are classified over $k$ by Propositions~\ref{prop:cyclic} and~\ref{prop:dihedral}. The $V_4/A_4$ case is Proposition~\ref{prop:V4-quartic}. The tetrahedral and octahedral pairs are Propositions~\ref{prop:tetra} and~\ref{prop:octa}; the three icosahedral pairs are Propositions~\ref{prop:I5},~\ref{prop:I6}, and~\ref{prop:I10}. These cases are mutually exclusive at the level of geometric monodromy pairs. Different cases may share the same degree, but this does not affect the classification.

The only geometrically decomposable case is the $V_4/A_4$ quartic by Theorem~\ref{thm:arith-only}. The degree statement follows from the indices in Theorem~\ref{thm:tamepairs} together with the geometrically decomposable degree $4$: prime degrees occur in the cyclic and dihedral cases, while the remaining degrees are $4,6,10$.
\end{proof}

\begin{remark}[Arithmetic monodromy of the rigid polyhedral forms]
The split and nonsplit tetrahedral arithmetic groups are determined in Proposition~\ref{prop:tetra}. For each of the three icosahedral forms, Corollary~\ref{cor:A5criterion} gives arithmetic group $A_5$ when $5$ is a square in $k$ and the nontrivial outer extension when $5$ is a nonsquare.
\end{remark}

For reference, the tame classification is summarized in the following table.

\begin{center}
\renewcommand{\arraystretch}{1.25}
\begin{tabular}{@{}>{\raggedright\arraybackslash}p{0.23\textwidth}>{\raggedright\arraybackslash}p{0.20\textwidth}>{\raggedright\arraybackslash}p{0.37\textwidth}c@{}}
\toprule
Geometric pair & Degree & $k$-representatives / description & Number \\
\midrule
$(C_\ell,1)$ & $\ell$ prime & $X^\ell$; $R_{\ell,\delta}$ & $2$\\
$(D_{2\ell},C_2)$ & $\ell$ odd prime & $D_\ell(X,1)$; $D_\ell(X,\epsilon)$ & $2$\\
$(V_4,1)$ geometrically decomposable & $4$ & $L_h$, $h$ irreducible cubic & $1$\\
$(A_4,C_3)$ & $4$ & split $T_4$; unique nonsplit twist & $2$\\
$(S_4,S_3)$ & $4$ & $4X^3-3X^4$ & $1$\\
$(A_5,A_4)$ & $5$ & $I_5$ & $1$\\
$(A_5,D_{10})$ & $6$ & $I_6$ & $1$\\
$(A_5,S_3)$ & $10$ & $I_{10}$ & $1$\\
\bottomrule
\end{tabular}
\end{center}

\section{Wild classification: reduction, affine, and small-characteristic cases}\label{sec:wild-master}
\subsection{Geometric groups and reduction to families}

We turn to the wild part of the classification, where $p=\operatorname{char}k$
divides the order of the geometric monodromy group. For the groups
$\operatorname{PSL}_2(Q)$ and $\operatorname{PGL}_2(Q)$, the phrase
\emph{defining characteristic} means that $Q=p^a$ for some $a\geqslant1$,
with the same characteristic $p$ as the base field. Following Faber, a finite group is \emph{$p$-semi-elementary} if it has a unique Sylow $p$-subgroup $P$ of exponent $p$ and its quotient by $P$ is cyclic. Faber's classification of finite subgroups of $\operatorname{PGL}_2$ whose order is divisible by $p$ reduces the possible geometric monodromy groups, after the tame overlaps are removed, to defining-characteristic $\operatorname{PSL}_2/\operatorname{PGL}_2$, $p$-semi-elementary groups, dihedral groups and $A_5$ \cite[Theorem~B]{Faber2023}. This determines only the possible geometric monodromy groups. For the prescribed finite-field problem, we must still determine the core-free point stabilizers yielding primitive arithmetic monodromy, classify the admissible semilinear Frobenius actions, and identify the resulting $k$-M\"obius classes.

These requirements lead to five cases: affine $p$-semi-elementary,
characteristic-$2$ dihedral, characteristic-$3$ icosahedral,
geometrically indecomposable defining-characteristic Lie quotients, and
Lie-type quotients that are $k$-indecomposable but geometrically decomposable. The following theorem gives the resulting wild classification.

In the natural action on $\mathbf P^1(\mathbb F_Q)$, a point stabilizer
is called a \emph{Borel subgroup}.
The pointwise stabilizer in $\operatorname{PGL}_2(Q)$ of two distinct
$\mathbb F_Q$-rational points is a \emph{split Cartan subgroup}; the
pointwise stabilizer of two geometric points exchanged by $Q$-Frobenius
is a \emph{nonsplit Cartan subgroup}. The corresponding Cartan subgroups
of $\operatorname{PSL}_2(Q)$ are their intersections with that group.
These two Cartan types refer to the defining field $\mathbb F_Q$,
not necessarily to the base field $k$ of a descended cover.
If $Q=Q_0^r$ with $r>1$, a \emph{subfield subgroup} is, up to conjugacy,
a subgroup arising from the inclusion $\mathbb F_{Q_0}\subseteq\mathbb F_Q$,
of type $\operatorname{PSL}_2(Q_0)$ or $\operatorname{PGL}_2(Q_0)$ as
appropriate to the ambient group.

\begin{theorem}[Exhaustion of the wild cases]\label{thm:wild-master}
Let $f\in k(X)$ be separable and $k$-indecomposable, with Galois closure of genus zero and $p\mid |G_f|$. Then, after the small-group identifications recorded below, precisely one of the following occurs.
\begin{enumerate}[label=(\arabic*),ref=\arabic*,font=\itshape]
\item $G_f$ is $p$-semi-elementary; the classes are exactly those of Theorem~\ref{thm:affine-wild}.
\item $p=2$ and $(G_f,H_f)=(D_{2\ell},C_2)$ with $\ell$ an odd prime; Proposition~\ref{prop:char2-dihedral} applies.
\item $p=3$ and $G_f=A_5$ with $H_f=A_4,D_{10}$ or $S_3$; Proposition~\ref{prop:char3-A5} applies.
\item $G_f=\operatorname{PSL}_2(Q)$ or $\operatorname{PGL}_2(Q)$ and $H_f$ is one of the core-free maximal Dickson--Giudici cases recorded below, including the $Q=3$ Borel endpoints; the $k$-forms are classified in Propositions~\ref{prop:borel-wild},~\ref{prop:split-cartan},~\ref{prop:nonsplit-cartan},~\ref{prop:subfield-wild}, and~\ref{prop:exceptional-wild}.
\item $G_f=\operatorname{PSL}_2(Q)$ and $H_f$ is nonmaximal but yields a primitive arithmetic action after adjoining Frobenius; these are exactly the geometrically decomposable cases in Theorem~\ref{thm:wild-novelty}.
\end{enumerate}
No other wild $k$-indecomposable rational function with Galois closure of genus zero occurs.
\end{theorem}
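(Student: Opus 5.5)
The plan is to reduce to pure group theory, as in the tame case. Theorem~\ref{thm:reconstruction} supplies a semilinear datum $\mathcal D=(\kappa/k,C,G,H,\varphi)$ for $f$. Theorem~\ref{thm:decomposition} turns $k$-indecomposability into the condition that no $\varphi$-stable subgroup lies strictly between $H$ and $G$. By Theorem~\ref{thm:structural-consequences}(\ref{item:structural-consequences-2}), wildness is exactly $p\mid|G|$. So the task is to list the pairs $(G,H)$ in which $G\leqslant\operatorname{PGL}_2(\bar k)$ is finite with $p\mid|G|$ and $H$ is core-free in $G$, and in which some automorphism $\alpha$ of $G$ normalizing $H$ (induced by $\varphi$) leaves no $\alpha$-stable intermediate subgroup.

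First I would invoke Faber's classification \cite[Theorem~B]{Faber2023}. After removing the tame overlaps, it gives four possibilities for $G$: $p$-semi-elementary, dihedral, $A_5$, or defining-characteristic $\operatorname{PSL}_2(Q)$ or $\operatorname{PGL}_2(Q)$. For each of these I would decide whether $H$ is maximal in $G$ or not.

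\begin{itemize}
\item \emph{$p$-semi-elementary.} This case is simply sent to Theorem~\ref{thm:affine-wild}. The only check needed is that nothing outside that theorem's list slips through; the socle condition $\operatorname{Soc}(G)H=G$ of Theorem~\ref{thm:structural-consequences}(\ref{item:structural-consequences-4}) should force the affine structure.
\item \emph{Dihedral.} Dihedral groups with $p\mid|G|$ occur only for $p=2$, since $G=D_{2m}$ with $p$ odd would require $p\mid m$, which is the semi-elementary case. The argument in the proof of Theorem~\ref{thm:arith-only} then carries over. If $H=1$, the rotation subgroup is $\varphi$-stable. If $H=\langle s\rangle$, all the dihedral overgroups are stable. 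So $m=\ell$ must be an odd prime, which gives case (2). The $V_4$ endpoint in characteristic $2$ is itself $2$-semi-elementary and is absorbed into case (1).
\item \emph{$A_5$.} Wild $A_5$ with $p\in\{2,5\}$ is $\operatorname{PSL}_2(4)\cong\operatorname{PSL}_2(5)$ and belongs to the Lie case, so only $p=3$ remains. There the normalizer and outer-automorphism analysis from the proof of Theorem~\ref{thm:arith-only} shows that $H$ is maximal, giving case (3).
\end{itemize}

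The Lie case splits according to whether $H$ is maximal in $G$. If it is, Giudici's list of maximal subgroups \cite{Giudici2007} (Borel, split and nonsplit Cartan normalizers, subfield subgroups, and exceptional $A_4$, $S_4$, $A_5$), intersected with core-freeness, gives case (4). The Borel endpoints at $Q=3$ need a small-group check. If $H$ is not maximal, the arithmetic group $A=\langle G,\varphi\rangle$ lies in $\operatorname{P\Gamma L}_2(Q)$ and $U=\langle H,\varphi\rangle$ is maximal in $A$. These are the novelty subgroups of Giudici's classification of maximal subgroups of almost simple groups with socle $\operatorname{PSL}_2(Q)$. They can only arise when $G=\operatorname{PSL}_2(Q)$, with $\varphi$ acting as an outer automorphism. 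Intersecting with $G$ should reproduce exactly the list in Theorem~\ref{thm:wild-novelty}, which gives case (5).

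Finally I would prove that the cases are mutually exclusive after the small-group identifications: $\operatorname{PSL}_2(2)\cong S_3$, $\operatorname{PSL}_2(3)\cong A_4$, $\operatorname{PGL}_2(3)\cong S_4$, $\operatorname{PSL}_2(4)\cong A_5$, $\operatorname{PSL}_2(5)\cong A_5$, and the semi-elementary Borel subgroups. For each such coincidence I would assign a single case and verify that no pair is counted twice.

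The main obstacle is case (5). There one must show that every nonmaximal core-free $H$ with a maximal Frobenius overgroup is one of Giudici's novelties, which means controlling the outer automorphisms of $\operatorname{PSL}_2(Q)$ (diagonal and field type) and which of them stabilize $H$. My first step would be the normalizer trick from Theorem~\ref{thm:arith-only}: either $\mathrm N_G(H)$ is an intermediate stable subgroup or $H$ is self-normalizing. After that I would match against Giudici's tables.
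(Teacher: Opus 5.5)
Your plan is essentially the paper's proof. It reduces via Theorems~\ref{thm:reconstruction} and~\ref{thm:decomposition} to a core-free $H$ with no $\varphi$-stable intermediate subgroup, sorts $G$ by Faber's Theorem~B, and hands the non-Lie types to the affine, characteristic-$2$ dihedral and characteristic-$3$ $A_5$ results. It then splits the Lie case into Giudici's maximal list and his novelty list, keeping only novelties with $A/G$ cyclic and using his Corollary~1.2 to exclude geometric $\operatorname{PGL}_2(Q)$.

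The one place to tighten is $Q=3$, where $\operatorname{PSL}_2(3)\cong A_4$ is not simple, so Giudici's novelty theorem does not cover nonmaximal $H$. The paper excludes these by hand, as in Theorem~\ref{thm:arith-only}. In $A_4$, the characteristic $V_4$ rules out $H=1$, and $\mathrm N_{A_4}(C_2)=V_4$ rules out $H=C_2$. In $S_4$, every automorphism is inner, so a self-normalizing $H$ makes every overgroup $\varphi$-stable and forces $H=S_3$.
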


\medskip
\noindent\textit{Group-theoretic inputs and parameter ranges.}
To make the classification cases precise we record the small-group conventions and the exact Dickson--Giudici ranges used below. Faber's additional characteristic-two possibility, in which the common fixed point of the group is not rational, requires a nonsquare in the ground field \cite[Theorem~A(3)]{Faber2023}. It does not occur over a finite field, since squaring is bijective.

\medskip
\noindent\textit{Dihedral convention.}
Throughout this paper $D_m$ denotes a dihedral group of order $m$. This differs from Faber's convention $D_n$ for a dihedral group of order $2n$.
The accidental isomorphisms
\[
\begin{gathered}
 A_4\cong \mathbb F_4\rtimes C_3,\qquad
 A_5\cong\operatorname{PGL}_2(4),\qquad
 S_3\cong\operatorname{PGL}_2(2),\\
 A_4\cong\operatorname{PSL}_2(3),\qquad
 S_4\cong\operatorname{PGL}_2(3),\qquad
 A_5\cong\operatorname{PSL}_2(5)
\end{gathered}
\]
are counted only once. More precisely, $S_3\cong\operatorname{PGL}_2(2)$ is included in the characteristic-$2$ dihedral case, $A_4\cong\mathbb F_4\rtimes C_3$ in the affine family, $A_5\cong\operatorname{PGL}_2(4)$ in the $Q=4$ Lie-type cases, $A_4\cong\operatorname{PSL}_2(3)$ and $S_4\cong\operatorname{PGL}_2(3)$ as the $Q=3$ Borel endpoints, and $A_5\cong\operatorname{PSL}_2(5)$ in the $Q=5$ Lie-type cases.

For the Lie-type families, we separate the proof-critical external inputs by role. Faber supplies the finite ambient subgroups whose order is divisible by $p$. The ordinary $\operatorname{PSL}_2/\operatorname{PGL}_2$ maximal-subgroup types are the classical Dickson list; see also the tabulation in \cite{Bray2013}. We use Giudici \cite[Theorems~2.1, 2.2 and~3.5]{Giudici2007} for the exact maximal-subgroup ranges, \cite[Lemma~2.3]{Giudici2007} for the action of $\operatorname{PGL}_2(Q)$ on $\operatorname{PSL}_2(Q)$-conjugacy classes of subgroups, and \cite[Theorem~1.1, Corollary~1.2 and Table~1]{Giudici2007} for maximality of $U$ in $A$ when $U\cap G$ is not maximal in $G$; Giudici's Proposition~3.1 shows that the subfield cases do not yield this phenomenon. The two $Q=3$ endpoints are handled directly in Proposition~\ref{prop:borel-wild}. For even $Q\geqslant4$, $\operatorname{PSL}_2(Q)=\operatorname{PGL}_2(Q)$ and the primitive cases are Borel, the two Cartan normalizers, and subfield groups $\operatorname{PGL}_2(Q_0)$ with
\[
 Q=Q_0^r,\qquad r\text{ prime},\qquad Q_0\ne2.
\]
The apparent subfield realization at $Q=4,Q_0=2$ is the same $S_3$ conjugacy class as the split-Cartan normalizer and is counted there. For odd $Q\geqslant5$, the $\operatorname{PSL}_2(Q)$ cases are Borel,
\[
 D_{Q-1}\ (Q\geqslant13),\qquad
 D_{Q+1}\ (Q\ne7,9),
\]
subfield groups
\[
 \operatorname{PGL}_2(Q_0)\quad(Q=Q_0^2),
 \qquad
 \operatorname{PSL}_2(Q_0)\quad(Q=Q_0^r,\text{ $r$ odd prime}),
\]
and the cases with stabilizer $A_4,S_4$, or $A_5$
\begin{align*}
 A_4&:\quad Q=p\equiv\pm3\pmod8,\quad Q\not\equiv\pm1\pmod{10},\\
 S_4&:\quad Q=p\equiv\pm1\pmod8,\\
 A_5&:\quad Q\equiv\pm1\pmod{10},
 \quad Q=p\text{ or }Q=p^2\text{ with }p\equiv\pm3\pmod{10}.
\end{align*}
For odd $Q>3$, the core-free maximal cases in $\operatorname{PGL}_2(Q)$ are Borel,
\[
 D_{2(Q-1)}\ (Q\ne5),\qquad D_{2(Q+1)},
\]
$\operatorname{PGL}_2(Q_0)$ for $Q=Q_0^r$ with $r$ an odd prime, and the $S_4$-stabilizer case $Q=p\equiv\pm3\pmod8$. The normal subgroup $\operatorname{PSL}_2(Q)$ is maximal but not core-free and hence does not yield a case.

\begin{lemma}[Outer images of Frobenius and uniqueness of $k$-forms]\label{lem:outer-rigidity}
Let $Q=p^a>3$ be odd, let $G=\operatorname{PSL}_2(Q)$ in its standard action on $\mathbf P^1$, and let $H\leqslant G$ be \emph{self-normalizing} in $G$, meaning
$\mathrm N_G(H)=H$. Fix $k=\mathbb F_{p^s}$ and the geometric $G$-conjugacy class of $H$. Modulo $G$, a semilinear $q$-Frobenius lift normalizing $G$ has one of the two outer images
\[
 \bar\phi^{\,s},\qquad \bar\delta\bar\phi^{\,s}
 \quad\text{in}\quad
 \operatorname{Out}(G)=\langle\bar\delta\rangle\times\langle\bar\phi\rangle.
\]
Here $\bar\delta$ is the \emph{diagonal outer involution}, the nontrivial
outer class induced by conjugation by any element of
$\operatorname{PGL}_2(Q)\setminus\operatorname{PSL}_2(Q)$, and
$\bar\phi$ is the outer class of the \emph{field automorphism}
induced by the $p$-power Frobenius on matrix entries. An outer image occurs for the pair $(G,H)$ exactly when it stabilizes the $G$-conjugacy class of $H$, and each occurring outer image gives at most one $k$-form. If both outer images occur, the resulting $k$-covers are not $k$-M\"obius equivalent.
\end{lemma}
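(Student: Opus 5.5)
We split the lemma into four assertions and prove them in order: the outer images, the occurrence criterion, the uniqueness of forms, and their inequivalence.

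\textbf{Outer images.} Identify $\operatorname{Aut}_{\bar k}(\mathbf P^1)=\operatorname{PGL}_2(\bar k)$, and take $G=\operatorname{PSL}_2(Q)$ inside $\operatorname{PGL}_2(\mathbb F_Q)$. The arithmetic $q$-Frobenius $\operatorname{Fr}_q$ acting on coordinates normalizes $G$ and induces on $G$ the field automorphism $x\mapsto x^{q}$, that is, $\phi^{s}$. Any other semilinear $q$-Frobenius lift normalizing $G$ has the form $\psi=\beta\operatorname{Fr}_q$ with $\beta\in\operatorname{PGL}_2(\bar k)$ and $\beta\in\mathrm N_{\operatorname{PGL}_2(\bar k)}(G)$. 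We claim this normalizer is $\operatorname{PGL}_2(Q)$. Indeed, the centralizer of $G$ in $\operatorname{PGL}_2(\bar k)$ is trivial, since $G$ acts irreducibly. Moreover, every automorphism of $G$ induced by an element of $\operatorname{PGL}_2(\bar k)$ is realized in $\operatorname{P\Gamma L}_2(Q)$, and only the $\operatorname{PGL}_2(Q)$-part is induced by linear maps. This can be argued via the Borel subgroups: they are the stabilizers of the points of $\mathbf P^1(\mathbb F_Q)$, so $\beta$ permutes $\mathbf P^1(\mathbb F_Q)$, and a M\"obius map preserving a set of at least $4$ points generating $\mathbb F_Q$ is $\mathbb F_Q$-rational. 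Hence $\beta\in\operatorname{PGL}_2(Q)$, so $\beta$ induces either $1$ or $\bar\delta$ in $\operatorname{Out}(G)$, and $\psi$ has outer image $\bar\phi^{s}$ or $\bar\delta\bar\phi^{s}$.

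\textbf{Occurrence criterion.} Suppose an outer image occurs for $(G,H)$. Then some lift $\psi$ normalizes $H$, so its outer image stabilizes the $G$-class of $H$. Conversely, suppose the outer image stabilizes that class. Choose any lift $\psi_0$ with this outer image. Then $\psi_0H\psi_0^{-1}=gHg^{-1}$ for some $g\in G$, and $\psi:=g^{-1}\psi_0$ normalizes both $G$ and $H$. Two conditions of Definition~\ref{def:semilinear-P1-datum} remain to be checked: $\psi^d\in H$ for a suitable $d$, and core-freeness. We secure $\psi^d\in H$ by multiplying by elements of $H$, using that $\psi^d$ normalizes $H$ and acts $\kappa$-linearly. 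Since $\mathrm N_{\operatorname{PGL}_2(\bar k)}(H)\cap \mathrm N(G)$ is controlled, we choose $d$ to be the order of the outer class, enlarged as needed. Core-freeness then follows from Proposition~\ref{prop:fixed-pair-core-order} once $d$ equals $e$.

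\textbf{Uniqueness of forms.} We apply Theorem~\ref{thm:fixed-pair-cohomology} with $\mathscr N=\mathrm N_{\mathrm N(G)}(H)$. By self-normalization, $\mathscr N\cap G=H$. By the first paragraph, $\mathrm N(G)=\operatorname{PGL}_2(Q)$, so $\mathscr W=\mathscr N/H$ has order at most $2$, and its nontrivial element, if any, is diagonal-outer. Lifts with a fixed outer image differ by elements of $\mathscr N\cap G=H$, so the cocycles with a given outer image form a single class. Thus each occurring outer image gives at most one $k$-form. More precisely, we partition $Z^1_\theta(\mathscr W)$ according to the image of $c$ in $\operatorname{Out}$, and twisted conjugation by $u\in\mathscr W$ preserves that image. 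This holds because $\operatorname{Out}(G)$ is abelian, so $\bar u\,\bar c\,\theta(\bar u)^{-1}=\bar c$.

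\textbf{Inequivalence.} By Theorem~\ref{thm:exact-map-equivalence}, an equivalence is given by $\alpha\in\operatorname{PGL}_2(\bar k)$ conjugating one datum to the other, and such an $\alpha$ normalizes $G$, so $\alpha\in\operatorname{PGL}_2(Q)$. Conjugating by $\alpha$ changes the outer image by the commutator $[\bar\alpha,\bar\phi^{s}]$, which is trivial because $\operatorname{Out}(G)$ is abelian. So equivalent covers have equal outer images, and if both outer images occur the two $k$-covers are not $k$-M\"obius equivalent.

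The main obstacle is the normalizer identification $\mathrm N_{\operatorname{PGL}_2(\bar k)}(G)=\operatorname{PGL}_2(Q)$ together with the bookkeeping $\psi^d\in H$. For the normalizer we would first try the Borel fixed-point argument sketched above.
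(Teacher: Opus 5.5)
Your treatment of the outer images, of uniqueness, and of inequivalence is essentially the paper's proof. The paper asserts $\mathrm N_{\operatorname{PGL}_2(\bar k)}(G)=\operatorname{PGL}_2(Q)$. It then notes that two lifts with the same outer image normalizing the same $H$ differ by an element of $G\cap\mathrm N(H)=\mathrm N_G(H)=H$, and separates the two outer images because $\operatorname{Out}(G)$ is abelian. Your use of Theorem~\ref{thm:fixed-pair-cohomology} with $|\mathscr W|\leqslant2$ is the same computation. Your Borel argument usefully supplies the normalizer identification, which the paper leaves unproved. Two small repairs are needed there. First, use the three-point argument: if $\beta$ maps $0,1,\infty$ into $\mathbf P^1(\mathbb F_Q)$, then $\beta\in\operatorname{PGL}_2(\mathbb F_Q)$. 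Second, deduce $C_{\operatorname{PGL}_2(\bar k)}(G)=1$ from $\mathrm N(G)\leqslant\operatorname{PGL}_2(Q)$ and $C_{\operatorname{PGL}_2(Q)}(\operatorname{PSL}_2(Q))=1$, not from irreducibility. Irreducibility does not force trivial centralizers in $\operatorname{PGL}_2$: a Klein four subgroup is irreducible and abelian.

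The genuine gap is the converse half of the occurrence criterion. As written, that paragraph does not establish $\psi^d\in H$ or core-freeness. Moreover, taking $d$ ``enlarged as needed'' would destroy core-freeness. Indeed, $e(c)$ depends only on the class of $\operatorname{Int}(\psi)|_G$, so any $d>e(c)$ gives $\operatorname{core}_A(U)\ne1$ by Proposition~\ref{prop:fixed-pair-core-order}.

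The missing idea is that self-normalization makes $\mathrm N_{\operatorname{Aut}(G)}(H)/\{\operatorname{Int}(h)|_G:h\in H\}$ inject into $\operatorname{Out}(G)$. This holds because an inner automorphism $\operatorname{Int}(g)$ stabilizing $H$ has $g\in\mathrm N_G(H)=H$. Hence $e(c)$ is exactly the order $d$ of the outer image $o$. Take $\kappa=\mathbb F_{q^d}$; it contains $\mathbb F_Q$ because the $\bar\phi$-component of $o^d$ is trivial, that is, $a\mid sd$. Then $\psi^d$ is $\kappa$-linear and normalizes $G$ with trivial outer image, so it lies in $G$. It also normalizes $H$, so it lies in $\mathrm N_G(H)=H$. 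No correction by elements of $H$ is needed, and $e(c)=d$ gives core-freeness.

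The same observation shows that the outer image determines $[\kappa:k]$. Your uniqueness step tacitly uses this when it applies the fixed-$\kappa$ cohomological classification to two forms with the same outer image. The paper's own proof does not argue this direction at all; it defers to Giudici's Lemma~2.3 for the action of $\operatorname{Out}(G)$ on subgroup classes. With the repair above, your proposal covers it.
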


\begin{proof}
The description of $\operatorname{Out}(G)$ and the action of outer automorphisms on subgroup conjugacy classes are given by Giudici \cite[Lemma~2.3]{Giudici2007}. In the standard projective realization, $\mathrm N_{\operatorname{PGL}_2(\bar k)}(G)=\operatorname{PGL}_2(Q)$; hence its quotient by $G$ consists of the trivial class and the diagonal outer class. Suppose two lifts $\psi_1$ and $\psi_2$ of the same outer image normalize the same representative $H$. Since their outer images agree, the quotient $\psi_2\psi_1^{-1}$ lies in $G$; it also normalizes $H$, hence belongs to $\mathrm N_G(H)=H$. Therefore $\langle H,\psi_1\rangle=\langle H,\psi_2\rangle$, so the two descents have the same fixed field. If both outer images occur, the corresponding arithmetic groups have distinct images
\[
 \langle\bar\phi^{\,s}\rangle,
 \qquad
 \langle\bar\delta\bar\phi^{\,s}\rangle
\]
in the abelian group $\operatorname{Out}(G)$. A $k$-M\"obius equivalence would identify the arithmetic and geometric Galois data and hence these outer images, which is impossible.
\end{proof}

\subsection{\texorpdfstring{$p$}{p}-semi-elementary groups and Ore skew polynomials}

Let $G=V\rtimes C_n\leqslant\operatorname{PGL}_2(\bar k)$, where $(n,p)=1$,
$V\ne0$ is an additive $p$-group, and $C_n$ acts by scalar multiplication.
For $n>1$, put $e:=\operatorname{ord}_n(p)$ and $Q:=p^e$; for $n=1$, put
$Q:=p$. Then $V$ is naturally an $\mathbb F_Q$-vector space.
A polynomial of the form $\sum_{i=0}^r a_iZ^{Q^i}$ is called
\emph{$Q$-linearized}. For a finite-dimensional
$\mathbb F_Q$-subspace $W\subseteq\bar k$, its \emph{subspace polynomial} is
\[
 L_W(Z):=\prod_{w\in W}(Z-w).
\]
It is the unique monic separable $Q$-linearized polynomial whose set of
roots is $W$. If $W$ is stable under $q$-Frobenius, then $L_W\in k[Z]$.
Put $\rho(a):=a^Q$ on $k$, and write $k[\tau;\rho]$ for the
Ore skew-polynomial ring with convention $\tau a=\rho(a)\tau$ for $a\in k$.
For a nonzero skew polynomial $h=\sum_{i=0}^r a_i\tau^i$ with $a_r\ne0$,
put $\deg_\tau h:=r$.
Under the correspondence
\[
 \Psi\!\left(\sum_i a_i\tau^i\right)=\sum_i a_iZ^{Q^i},
 \qquad
 \Psi(uv)=\Psi(u)\circ\Psi(v),
\]
Ore multiplication corresponds to composition of $Q$-linearized polynomials;
if $\deg_\tau h=r$, the ordinary degree of $\Psi(h)$ is $Q^r$.
In $k[\tau;\rho]$, we call $h$ a \emph{right divisor}, or
\emph{right factor}, of $g$ if $g=ah$ for some $a\in k[\tau;\rho]$.
A nonzero nonunit skew polynomial is \emph{irreducible} if it cannot
be written as a product of two nonunits of positive $\tau$-degree.

\begin{lemma}[Subspace polynomials and Ore right factors]\label{lem:affine-ore-dictionary}
Let $W_1\subseteq W_2$ be finite-dimensional $q$-Frobenius-stable $\mathbb F_Q$-subspaces of $\bar k$, and let $\lambda_{W_i}\in k[\tau;\rho]$ be the unique monic skew polynomials satisfying $\Psi(\lambda_{W_i})=L_{W_i}$. Then $\lambda_{W_1}$ is a right divisor of $\lambda_{W_2}$. Conversely, every monic right divisor $h$ of $\lambda_{W_2}$ with nonzero constant coefficient has $q$-Frobenius-stable root space $\ker\Psi(h)\subseteq W_2$. Consequently, $\lambda_W$ is irreducible if and only if $W$ has no nonzero proper $q$-Frobenius-stable $\mathbb F_Q$-subspace.
\end{lemma}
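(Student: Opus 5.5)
The plan is to use the standard theory of linearized polynomials over $\bar k$ and transport it to $k[\tau;\rho]$ by $\Psi$. First we note what Frobenius-stability gives: $L_W\in k[Z]$, as already remarked, and $L_W$ is $Q$-linearized because $W$ is an $\mathbb F_Q$-space. Since $\Psi$ is a bijection from $k[\tau;\rho]$ onto $Q$-linearized polynomials over $k$, this produces $\lambda_W$. Here $\Psi$ is multiplicative for composition precisely because $\rho(a)=a^Q$ matches $Z^Q\circ aZ=a^QZ^Q$.

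For the forward direction, let $W_1\subseteq W_2$. We use right division in $k[\tau;\rho]$, which is a left Euclidean ring since the leading coefficient of $\lambda_{W_1}$ is $1$ and $\rho$ is an automorphism of the finite field $k$. This gives
\[
\lambda_{W_2}=a\lambda_{W_1}+b,\qquad \deg_\tau b<\deg_\tau\lambda_{W_1}.
\]
Applying $\Psi$ and evaluating at $w\in W_1$ gives
\[
\Psi(b)(w)=L_{W_2}(w)-\Psi(a)(L_{W_1}(w))=0-\Psi(a)(0)=0,
\]
because linearized polynomials vanish at $0$. So the linearized polynomial $\Psi(b)$ has at least $|W_1|$ roots but degree less than $|W_1|$ when $b\ne0$. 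Hence $b=0$.

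For the converse, suppose $\lambda_{W_2}=ah$ with $h$ monic with nonzero constant coefficient $h_0$. Then $\Psi(h)'=h_0\ne0$, so $\Psi(h)$ is separable. Its root set $\ker\Psi(h)$ is therefore an $\mathbb F_Q$-subspace, since $\Psi(h)$ is $\mathbb F_Q$-linear on $\bar k$, and it has exactly $Q^{\deg_\tau h}$ elements. It is $q$-Frobenius-stable because $\Psi(h)$ has coefficients in $k$. Composition gives $L_{W_2}=\Psi(a)\circ\Psi(h)$, so every root of $\Psi(h)$ is a root of $L_{W_2}$, i.e. $\ker\Psi(h)\subseteq W_2$.

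For the irreducibility statement, a proper nonzero Frobenius-stable subspace $W_1$ yields the nontrivial factorization $\lambda_{W_2}=a\lambda_{W_1}$ by the forward direction. Conversely, suppose $\lambda_W=ah$ with both factors of positive $\tau$-degree. After scaling by units of $k$ we may take $h$ monic. We must also check that its constant term is nonzero. The constant term of $\lambda_W$ is the coefficient of $Z$ in $L_W$, which is nonzero by separability of $L_W$. The constant term of a product $ah$ is $a_0h_0$, so $h_0\ne0$. Then $\ker\Psi(h)$ is a Frobenius-stable $\mathbb F_Q$-subspace of $W$ of size $Q^{\deg_\tau h}$, which lies strictly between $0$ and $|W|$.

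The main care point is the degree and separability bookkeeping: the right-division step needs the monic leading coefficient, and the converse needs $h_0\ne0$ to guarantee that the root space has the full size $Q^{\deg_\tau h}$.
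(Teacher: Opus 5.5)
Your proof is correct and follows the paper's argument: right division in $k[\tau;\rho]$ with the roots-versus-degree count for the forward direction, and the composition identity plus separability from the nonzero constant coefficient for the converse. Your remark that the constant term of $ah$ is $a_0h_0$, so that a nontrivial monic right factor of $\lambda_W$ automatically has $h_0\ne0$, usefully fills in the step the paper leaves implicit when it says the irreducibility criterion follows by taking a nontrivial right factor in either direction.
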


\begin{proof}
Perform right division of $\lambda_{W_2}$ by $\lambda_{W_1}$ in
$k[\tau;\rho]$. If the remainder $r_0$ were nonzero, then
$\deg_\tau r_0<\dim_{\mathbb F_Q}W_1$, so $\Psi(r_0)$ would have ordinary
degree less than $|W_1|$ while vanishing on $W_1$. Thus the remainder is
zero, proving right divisibility. Conversely, if $\lambda_{W_2}=gh$, then $\Psi(\lambda_{W_2})=\Psi(g)\circ\Psi(h)$, so $\ker\Psi(h)\subseteq W_2$. Since $h$ has coefficients in $k$, this kernel is stable under $q$-Frobenius; the nonzero constant coefficient ensures that $\Psi(h)$ is separable, so its root-space dimension equals $\deg_\tau h$. The final assertion follows by taking a nontrivial right factor in either direction.
\end{proof}

\begin{theorem}[Complete affine wild classification]\label{thm:affine-wild}
Let $k=\mathbb F_q$, and let
$\mathcal D=(\kappa/k,C,G,H,\varphi)$ be a semilinear $\mathbf P^1$-datum
over $k$ with $p$-semi-elementary geometric group $G=V\rtimes C_n$. Let
$f\in\mathcal M_{\mathcal D}$. If $f$ is $k$-indecomposable, then
\[
 H\cap V=1,\qquad HV=G;
\]
in other words, $H$ is a \emph{complement} to $V$ in $G$.
After an affine change of coordinate, $H$ may be taken to be the scalar
subgroup $\mu_n\cong C_n$.

In this case, put $r=\dim_{\mathbb F_Q}V$. After an affine descent
normalization, there is a normalized translation space $V'\subseteq\bar k$,
obtained from $V$ by the corresponding scalar change of coordinate, such that
$V'$ is stable under $q$-Frobenius. Write
\[
\begin{aligned}
 L_{V'}(Z)&=Z^{Q^r}+c_{r-1}Z^{Q^{r-1}}+\cdots+c_0Z\in k[Z],
 \qquad c_0\ne0,\\
 \ell(\tau)&=\tau^r+c_{r-1}\tau^{r-1}+\cdots+c_0\in k[\tau;\rho].
\end{aligned}
\]
Then
\[
 f\text{ is $k$-indecomposable}
 \quad\Longleftrightarrow\quad
 \ell\text{ is irreducible in }k[\tau;\rho].
\]
A representative is
\begin{equation}\label{eq:affine-wild}
 F_{\ell,n}(X)=
 X\left(\sum_{i=0}^r c_iX^{(Q^i-1)/n}\right)^n,
 \qquad c_r=1.
\end{equation}
It has degree $Q^r$ and satisfies
\begin{equation}\label{eq:affine-derivative}
 F_{\ell,n}'(X)=
 c_0\left(\sum_{i=0}^r c_iX^{(Q^i-1)/n}\right)^{n-1}.
\end{equation}
It is geometrically indecomposable exactly when $r=1$. Thus the geometrically
decomposable affine cases are precisely the irreducible cases $r>1$.

For fixed $(n,r)$ two monic irreducible data $\ell,\ell'$ give
$k$-M\"obius equivalent maps if and only if there exists $u\in k^\times$ such
that, with $d_i:=(Q^i-1)/n$ and $D:=(Q^r-1)/n$, one has
\[
 c_i'=c_i u^{d_i-D}\qquad(0\leqslant i<r).
\]
\end{theorem}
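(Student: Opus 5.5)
The plan is to work inside the affine group $\{z\mapsto az+b\}\leqslant\operatorname{PGL}_2(\bar k)$ fixing $\infty$. In it, $V$ is the unique Sylow $p$-subgroup of $G$. We reduce first to the complement case, then compute the quotient maps explicitly, and finally read off indecomposability and equality of classes from Theorems~\ref{thm:decomposition} and~\ref{thm:exact-map-equivalence}.

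\emph{Step 1: $H$ is a complement.} $V$ is characteristic in $G$, hence $\varphi$-stable and normal. Theorem~\ref{thm:structural-consequences}(\ref{item:structural-consequences-4}) therefore gives $VH=G$ when $f$ is $k$-indecomposable. For $H\cap V=1$, note that $H\cap V$ is normalized by $H$ and also by $V$, since $V$ is abelian. So $H\cap V$ is normal in $VH=G$, and core-freeness of $H$ forces $H\cap V=1$.

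Hence $H\cong G/V\cong C_n$ is a $p'$-group of affine maps. A nontrivial element of $H$ fixes $\infty$ and a unique finite point, and all of $H$ fixes the same finite point. After a translation this point is $0$, and $H=\mu_n$ acting by scalars.

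\emph{Step 2: descent normalization and the representative.} Since $\varphi$ normalizes $H$ and $G$, it fixes $\infty$ and the finite fixed point $0$ of $H$. So $\varphi(z)=a z^{q}$ for some $a\in\kappa^\times$, with $\varphi^d\in H$. I would apply Hilbert~90 (Lang's theorem for $\mathbb G_m$) to the scalar part, modulo $\mu_n$, to conjugate by a scaling $z\mapsto bz$ so that $\varphi$ becomes $z\mapsto z^q$ up to an element of $H$. Because $V'$ is $\mu_n$-stable, the ambiguity in $\mu_n$ does not matter, and $V'$ becomes $q$-Frobenius stable; thus $L_{V'}\in k[Z]$.

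Also $\mu_n\cdot V'\subseteq V'$, and $\mu_n$ generates $\mathbb F_Q$ since $Q=p^{\operatorname{ord}_n(p)}$. Hence $V'$ is an $\mathbb F_Q$-space, and $L_{V'}$ is $Q$-linearized of degree $Q^r$.

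The fixed field of $H$ is generated by $x=z^n$, and that of $G$ by $t=L_{V'}(z)^n$. Since $n\mid Q^i-1$, we have $z^{Q^i-1}=x^{(Q^i-1)/n}$. Therefore
\[
 t=x\Bigl(\sum_i c_ix^{(Q^i-1)/n}\Bigr)^n=F_{\ell,n}(x).
\]
This map is defined over $k$ in these coordinates. The derivative formula \eqref{eq:affine-derivative} is a direct computation: in characteristic $p$ the terms with $Q^i$ die, leaving $c_0$ from $x\cdot\frac{d}{dx}$ combined with $n\cdot\frac{1}{n}$.

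\emph{Step 3: (in)decomposability.} Intermediate subgroups $H\lneq J\lneq G$ are exactly $W\rtimes\mu_n$ with $0\ne W\subsetneq V'$ a $\mu_n$-stable subgroup, i.e.\ an $\mathbb F_Q$-subspace. Such a $J$ is $\varphi$-stable iff $W$ is $q$-Frobenius stable. By Theorem~\ref{thm:decomposition} and Lemma~\ref{lem:affine-ore-dictionary}, $f$ is $k$-indecomposable iff $\ell$ is irreducible in $k[\tau;\rho]$. Dropping the $\varphi$-condition and using Corollary~\ref{cor:geoarith}, geometric indecomposability holds iff $V'$ has no proper nonzero $\mathbb F_Q$-subspace, i.e.\ iff $r=1$.

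\emph{Step 4: equality of classes (expected main obstacle).} By Theorem~\ref{thm:exact-map-equivalence}, two normalized data give the same class iff some $\alpha\in\operatorname{Aut}_\kappa(C)$ conjugates $G,H$ to $G',H'$ and carries $\varphi$ into $H'\varphi'$. Such an $\alpha$ must fix $\infty$ and $0$, so $\alpha(z)=uz$. The Frobenius-coset condition then says $u^{q-1}\in\mu_n$.

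The delicate part is showing this reduces exactly to a parameter $u\in k^\times$, after absorbing the $\mu_n$-ambiguity into $H$, and then tracking the effect on coefficients. Scaling gives $L_{uV}(uz)=u^{Q^r}L_V(z)$, which translates into $c_i'=c_iu^{Q^r-Q^i}$ on $L$. Passing to $x=z^n$ and renormalizing the target by the $k$-scaling that keeps the leading coefficient $1$ should produce the stated law $c_i'=c_iu^{d_i-D}$. I would verify this step carefully, in particular that the correct parameter is $u$ rather than $u^n$ or its inverse, by checking that $x\mapsto u^{n}x$ together with the target rescaling fixes the monic normalization. If that check yields $u^{n}$ instead, I would adjust which coordinate ($z$ or $x$) carries the parameter $u\in k^\times$.
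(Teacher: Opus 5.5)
Your Steps 1 and 3 are sound. The descent normalization in Step 2, however, has a genuine gap, and Step 4 inherits it.

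\textbf{Step 2.} From $\varphi^d\in H$ you only get $N_{\kappa/k}(a)^n=1$. Hilbert~90 therefore gives $u_0\in\kappa^\times$ with $u_0^{q-1}=a^{-n}$: it normalizes the $n$th power of the scalar, not the scalar itself. A $\kappa$-rational scaling $z\mapsto bz$ turning $\varphi$ into standard Frobenius up to $\mu_n$ need not exist, even for $k$-indecomposable data.

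For a concrete example, take $k=\mathbb F_3$, $\kappa=\mathbb F_9$, $n=2$, $V=\mathbb F_9$ and $H=\{\pm1\}$. Let $\varphi$ cube constants and send $z\mapsto az$, where $a$ generates $\mathbb F_9^\times$. Then:
\begin{itemize}
\item $\varphi^2$ acts as $z\mapsto -z$, so $\varphi^2\in H$;
\item $U$ is core-free;
\item no line $\mathbb F_3w\subseteq\mathbb F_9$ is $\varphi$-stable, since that would need $w^2=\pm a$, so $f$ is $k$-indecomposable.
\end{itemize}
Conjugating by a $\kappa$-scaling would require $a\in\pm(\mathbb F_9^\times)^2$, which fails because $a$ is a nonsquare. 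Here the normalized space is $V'=\ker(Z^9+Z)$, whose nonzero elements have order $16$, so $V'\not\subseteq\kappa$; the representative is $X(X^4+1)^2$.

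If instead you take $b\in\bar k$ via Lang's theorem, then $z'=bz$ is not a coordinate on the $\kappa$-curve $C$. The conjugated tuple is no longer a datum inside $\operatorname{Aut}_\kappa(C)$, and your claim that $x=z'^n$ and $t=L_{V'}(z')^n$ are $k$-coordinates is precisely what remains unproved.

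The missing step, as in the paper, is to choose a geometric $c$ with $c^n=u_0$ and check directly that
\[x=(cz)^n=u_0z^n \quad\text{and}\quad t=L_{cV}(cz)^n=u_0^{Q^r}L_V(z)^n\]
lie in $\kappa(z)$. Both are $\varphi$-fixed, because $\varphi$ multiplies $cz$ by an element of $\mu_n$ and $L_{V'}\in k[Z]$ is $\mathbb F_Q$-linear.

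You also omit $n=1$. There $H=1$ has no distinguished finite fixed point, and additive Hilbert~90 is needed to remove the translation part of $\varphi$.

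\textbf{Step 4.} This step is unfinished, and the route through Theorem~\ref{thm:exact-map-equivalence} cannot close it as set up. Because the normalization uses the non-$\kappa$-rational scalar $c$, the $\ell$ attached to a datum is defined only up to replacing $u_0$ by $\lambda u_0$ with $\lambda\in k^\times$, which rescales $V'$ by $\lambda^{1/n}$. Within your framework, a $\kappa$-isomorphism $\alpha(z)=uz$ with $u^{q-1}\in\mu_n$ only accounts for parameters $u^n\in(\kappa^\times)^n\cap k^\times$. This can be a proper subgroup of $k^\times$, for example when $d=1$ and $n>1$. That is the source of your uncertainty between $u$ and $u^n$.

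The paper avoids upstairs data entirely at this point and argues on the descended polynomials:
\begin{itemize}
\item For $n>1$, $F_{\ell,n}$ has only the branch points $0$ and $\infty$. The fiber over $0$ has one simple point with all others of index $n$, and $\infty$ is totally ramified. Hence any $k$-M\"obius equivalence between $F_{\ell,n}$ and $F_{\ell',n}$ is $X\mapsto uX$, $T\mapsto\lambda T$ with $u,\lambda\in k^\times$. Comparing the monic $n$th roots gives $c_i'=c_iu^{d_i-D}$, and the converse is immediate.
\item For $n=1$, separability of the additive polynomial leaves $\infty$ as the only branch point, so both transformations are affine and source translations are absorbed by target translations.
\end{itemize}
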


\begin{proof}
Assume that $H$ is core-free and that the arithmetic monodromy action is primitive. The unique Sylow-$p$ subgroup $V$ is characteristic, so $HV$ is Frobenius-stable. Arithmetic primitivity and core-freeness therefore force
\(HV=G, \qquad H\cap V=1,\)
so $H$ is a complement. For the remaining assertions, assume only that $H$ is a complement.

Assume first that $n>1$. The translation group $V$ has the unique common fixed point $\infty$. After a $\kappa$-affine change of coordinate we may therefore take this point to be $\infty$, the second fixed point of $H$ to be $0$, and
\(H=\mu_n, \qquad \varphi(Z)=aZ\)
semilinearly, with $a\in \kappa^\times$. If $d=[\kappa:k]$, the condition $\varphi^d\in H$ gives
\(N_{\kappa/k}(a)^n=1.\)
Hence $N_{\kappa/k}(a^n)=1$, and Hilbert's Theorem~90 gives $u_0\in \kappa^\times$ such that
\(u_0^{q-1}=a^{-n}.\)
Choose, only as an auxiliary geometric scalar, $c\in\bar k^\times$ with $c^n=u_0$, and put
\(b=a c^{q-1}, \qquad V'=cV.\)
Then $b^n=1$, so $b\in\mu_n\leqslant\mathbb F_Q^\times$. Since conjugation by $\varphi$ sends the translation parameter $v$ to $a^{-1}v^q$, the equality $\varphi V\varphi^{-1}=V$ is equivalent to $V^q=aV$. Consequently
\((V')^q=c^qV^q=bV'=V'.\)
Indeed every $H\leqslant J\leqslant G$ is $J=(J\cap V)\rtimes H$, and $J\cap V$ is an $\mathbb F_Q$-subspace; moreover $J$ is $\varphi$-stable exactly when $(c(J\cap V))^q=c(J\cap V)$. Thus the intermediate $k$-subgroups are encoded exactly by the Frobenius-stable $\mathbb F_Q$-subspaces of $V'$.

By construction, $L_{V'}$ is defined over $k$. Lemma~\ref{lem:affine-ore-dictionary} identifies proper $q$-Frobenius-stable $\mathbb F_Q$-subspaces of $V'$ with proper right factors of $\ell$, proving the irreducibility criterion.

Although the auxiliary coordinate $Z'=cZ$ need not be defined over $\kappa$, its quotient coordinates are: indeed
\[
 X=(Z')^n=u_0Z^n\in \kappa(Z),
 \qquad
 T=L_{V'}(Z')^n=u_0^{Q^r}L_V(Z)^n\in \kappa(Z).
\]
Moreover $\varphi(Z')=bZ'$ geometrically and $b^n=1$, so $X$ is Frobenius-fixed. Since the coefficients of $L_{V'}$ lie in $k$ and $b\in\mathbb F_Q^\times$, one has $L_{V'}(bZ')=bL_{V'}(Z')$; hence $T$ is Frobenius-fixed as well. Thus $X$ and $T$ are genuine $k$-coordinates on the two quotient lines. From
\[
 L_{V'}(Z')=Z'\sum_{i=0}^rc_i\bigl((Z')^n\bigr)^{(Q^i-1)/n}
\]
we obtain \eqref{eq:affine-wild}. Differentiating and using $1+nd_i=Q^i$ in characteristic $p$ gives \eqref{eq:affine-derivative}. A different Hilbert--90 solution replaces $u_0$ by $\lambda u_0$ with $\lambda\in k^\times$; this rescales $X$ by $\lambda$ and $T$ by $\lambda^{Q^r}$, so after restoring the monic leading term it induces exactly the weighted $k^\times$-action displayed in the theorem. Hence the resulting weighted orbit is independent of the auxiliary descent normalization.

For $n>1$, write the parenthesized factor in \eqref{eq:affine-wild} as $P(X)$. Since $L_{V'}(Z)=ZP(Z^n)$ is separable, $P$ is squarefree; hence the zero fiber has one simple point and all other points of index $n$, while $\infty$ is totally ramified. These distinct ramification partitions force any $k$-M\"obius equivalence to fix $0,\infty$ on both source and target, so it is given by scalings. Comparison of the monic highest terms gives precisely
\(c_i'=c_i u^{d_i-D}\qquad(0\leqslant i<r).\)

It remains to treat $n=1$. Then $H=1$ and $\varphi^d=1$. Write $\varphi(Z)=aZ+b$ semilinearly. The relation $\varphi^d=1$ gives $N_{\kappa/k}(a)=1$, so multiplicative Hilbert~90 allows a $\kappa$-scaling after which $a=1$. In that coordinate the same relation says that the trace of the translation term is zero; additive Hilbert~90 then removes it by a $\kappa$-translation. Thus $\varphi$ becomes the standard coefficient Frobenius, so $V^q=V$, and the same subspace-polynomial and Ore-factorization argument applies. Here
\(F_{\ell,1}(X)=\sum_{i=0}^r c_iX^{Q^i}\)
is a separable additive polynomial, so every finite fiber has $Q^r$ distinct geometric points. Hence a $k$-M\"obius equivalence between polynomial representatives must fix $\infty$ on the target, and then also on the source; both transformations are affine. A source translation contributes only the constant $F_{\ell,1}(v)$ and is absorbed by a target translation. After this reduction, comparison of the monic leading term under the source scaling $X\mapsto uX$ gives
\(c_i'=c_i u^{Q^i-Q^r}=c_i u^{d_i-D},\)
which is again the asserted weighted action.

Over $\bar k$, intermediate subgroups between $H$ and $V\rtimes H$ correspond to arbitrary $\mathbb F_Q$-subspaces of $V$, without a Frobenius-stability condition; this also holds for $n=1$, when $Q=p$. Thus $H$ is maximal in $G$ exactly when $r=1$, proving the geometric indecomposability assertion. Finally, $n=1$ is characterized by the absence of finite ramification; for $n>1$ the finite ramification index recovers $n$, and the degree $Q^r$ then recovers $r$.
\end{proof}

\begin{corollary}[No independent affine shift class]
The finite-field affine form in Theorem~\ref{thm:affine-wild} has no additional translation or shift invariant beyond the normalized Frobenius-stable translation module. The remaining arithmetic invariant is exactly the weighted $k^\times$-orbit of the Ore polynomial $\ell$.
\end{corollary}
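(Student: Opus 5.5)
The corollary is essentially a reformulation of the normal form and equivalence criterion in Theorem~\ref{thm:affine-wild}, so I will derive it from that theorem by showing that any candidate translation or shift datum is already absorbed by the normalization used there. Fix a semilinear $\mathbf P^1$-datum $\mathcal D=(\kappa/k,C,G,H,\varphi)$ with $G=V\rtimes C_n$ and $H$ a complement. The possible extra invariants are of two kinds. The first is the translation part of the Frobenius lift, $\varphi(Z)=aZ+b$. The second is the choice of complement $H$ among the $V$-conjugates of $\mu_n$, which amounts to a shift of the fixed point $0$.

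\textbf{Step 1: complements.} For $n>1$, all complements to $V$ in $V\rtimes C_n$ are $V$-conjugate: $|V|$ is prime to $n$, so Schur--Zassenhaus applies, and concretely each complement is the stabilizer of a finite point. Conjugating by a translation moves $H$ to $\mu_n$ and changes $\varphi$ only within its coset. Theorem~\ref{thm:exact-map-equivalence} then shows this gives the same $k$-M\"obius class.

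\textbf{Step 2: translation part of $\varphi$.} For $n>1$, once $H=\mu_n$ is fixed, the requirement that $\varphi$ normalize $H$ forces $\varphi$ to fix $0$, so $b=0$ automatically. For $n=1$, the proof of Theorem~\ref{thm:affine-wild} uses additive Hilbert~90 (trace-zero translation) to remove $b$ after $a$ is normalized to $1$. In either case no shift datum survives.

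\textbf{Step 3: what remains.} The remaining data are the Frobenius-stable normalized module $V'$, hence $\ell$ via Lemma~\ref{lem:affine-ore-dictionary}, together with the scalar normalization. The equivalence clause of Theorem~\ref{thm:affine-wild} identifies the residual ambiguity exactly as the weighted $k^\times$-action $c_i\mapsto c_iu^{d_i-D}$. For the polynomial representatives, a source translation only adds the constant $F_{\ell,1}(v)$, which a target translation absorbs, as in the $n=1$ argument there.

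\textbf{Main obstacle.} The delicate point is showing that the Hilbert~90 choices, both the multiplicative choice of $u_0$ and the additive choice, contribute nothing beyond the $k^\times$-rescaling. For the multiplicative choice this was checked in the theorem. For the additive one, I would verify that two solutions differ by an element of $k$, which is a $k$-rational translation and therefore a $k$-M\"obius change of coordinate.
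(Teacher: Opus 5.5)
Your proposal is correct and follows essentially the paper's route. The paper's proof likewise observes that for $n>1$ the normalized Frobenius fixes $\infty$ and the second fixed point $0$ of the complement, so its additive term vanishes; for $n=1$, the multiplicative and additive Hilbert~90 normalizations from Theorem~\ref{thm:affine-wild} remove the scalar and translation parts, leaving only the module encoded by $\ell$ and its weighted $k^\times$-orbit. Your extra steps, namely the Schur--Zassenhaus reduction to $H=\mu_n$ and the check that two additive Hilbert~90 solutions differ by a $k$-rational translation, make explicit details that the paper leaves inside the theorem's normalization.
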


\begin{proof}
For $n>1$, the normalized semilinear Frobenius preserves the unique translation fixed point and the second fixed point of the complement, so its additive term vanishes; multiplicative Hilbert~90 absorbs the remaining scalar descent into the Frobenius-stable translation space. For $n=1$, the multiplicative and additive Hilbert--90 normalizations in the proof of Theorem~\ref{thm:affine-wild} remove the scalar and translation parts. What remains is precisely the normalized module encoded by $\ell$.
\end{proof}

For $\ell(\tau)=\tau^r+c_{r-1}\tau^{r-1}+\cdots+c_0$, put
\[
 \mathcal K_\ell(Z)=Z^{Q^r}+c_{r-1}Z^{Q^{r-1}}+\cdots+c_0Z.
\]

\begin{proposition}[Affine kernel identity and reconstruction]
The representative in \eqref{eq:affine-wild} satisfies
\begin{equation}\label{eq:affine-kernel-identity}
 \mathcal K_\ell(Z)^n=F_{\ell,n}(Z^n).
\end{equation}
Moreover, $V'=\ker\mathcal K_\ell$. For $n>1$, after the branch normalization in the proof of Theorem~\ref{thm:affine-wild}, the unique monic $n$th root of $F(Z^n)$ is $\mathcal K_\ell(Z)$; hence the normalized polynomial $F$ recovers the translation space and the Ore polynomial.
\end{proposition}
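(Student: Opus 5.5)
The plan has three parts: verify the identity \eqref{eq:affine-kernel-identity} by direct substitution, read off $V'$ as the kernel of $\mathcal K_\ell$, and then recover $\mathcal K_\ell$ from $F(Z^n)$ by uniqueness of monic $n$th roots.

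\emph{The identity.} Put $d_i=(Q^i-1)/n$, as in the proof of Theorem~\ref{thm:affine-wild}. Substituting $X=Z^n$ into \eqref{eq:affine-wild} gives
\[
 F_{\ell,n}(Z^n)=Z^n\Bigl(\sum_{i=0}^r c_iZ^{nd_i}\Bigr)^n
 =\Bigl(Z\sum_{i=0}^r c_iZ^{Q^i-1}\Bigr)^n
 =\Bigl(\sum_{i=0}^r c_iZ^{Q^i}\Bigr)^n ,
\]
and the last bracket is $\mathcal K_\ell(Z)$. This is purely formal, and for $n=1$ it is trivial since $F_{\ell,1}=\mathcal K_\ell$.

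\emph{The kernel.} By the construction in Theorem~\ref{thm:affine-wild}, $\Psi(\ell)=L_{V'}$, and $\mathcal K_\ell$ is literally the same polynomial. Hence $\ker\mathcal K_\ell$ equals the root set of $L_{V'}$, which is $V'$. This uses that $L_{V'}$ is separable, which holds because $c_0\ne0$.

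\emph{Recovery.} Take $n>1$ and let $F$ be the normalized representative, so $F$ is monic of degree $Q^r$ after the branch normalization fixing $0$ and $\infty$. Suppose $K_1,K_2\in\bar k[Z]$ are monic with $K_1^n=K_2^n=F(Z^n)$. Then $K_1/K_2$ is a rational function whose $n$th power is $1$, so it is a constant $n$th root of unity. Comparing leading coefficients forces that constant to be $1$, so $K_1=K_2$ and the monic $n$th root is unique. By the identity, $\mathcal K_\ell$ is such a root, so $F$ determines $\mathcal K_\ell$. The coefficients of $\mathcal K_\ell$ give $\ell$ through $\Psi^{-1}$, and its kernel gives $V'$.

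The only step needing care is the meaning of ``normalized''. The branch normalization determines $F$ only up to the weighted $k^\times$-action of Theorem~\ref{thm:affine-wild}. I would state explicitly that, for a fixed normalized $F$ (monic, with the totally ramified point and the simple zero placed at $\infty$ and $0$), the recovery is exact. Different normalizations then correspond to the weighted orbit, under which $V'$ changes by the scalar $u^{1/n}$ in a compatible way.
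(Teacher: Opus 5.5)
Your proposal is correct and follows the paper's proof. You verify the identity by substituting $X=Z^n$ with $n\mid Q^i-1$, identify the kernel as the root set of $\mathcal K_\ell=L_{V'}$, and reconstruct via uniqueness of a monic $n$th root. The only (cosmetic) difference is that you prove that uniqueness upstairs in $\bar k[Z]$ by the root-of-unity ratio argument, whereas the paper reads off $F(X)=XP(X)^n$ with a unique monic $P$ from the branch divisor $[0]+nD$, uses Frobenius-invariance to place $P$ in $k[X]$, and then substitutes $X=Z^n$ to obtain $\mathcal K_\ell(Z)=ZP(Z^n)$.
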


\begin{proof}
Since $Q^i\equiv1\pmod n$,
\[
 \mathcal K_\ell(Z)=Z\sum_{i=0}^r c_i(Z^n)^{(Q^i-1)/n},
\]
which gives \eqref{eq:affine-kernel-identity}. The root space of the separable $Q$-linearized polynomial is the normalized translation module. The finite branch fiber has divisor $[0]+nD$, so $F(X)=XP(X)^n$ with a unique monic $P$. Frobenius fixes this monic root, hence it lies in $k[X]$, and substituting $X=Z^n$ recovers $\mathcal K_\ell$.
\end{proof}

Although complete decompositions need not satisfy a Jordan--H\"older principle in general, the affine family has a module-theoretic Jordan--H\"older property: its Frobenius-stable intermediate subgroups are exactly the submodules of the associated skew module.

This module also controls decompositions within the affine category.

Retain the normalized translation space $V'$ from
Theorem~\ref{thm:affine-wild}, and let $\Phi:V'\to V'$ be $q$-Frobenius,
$\Phi(v)=v^q$. Put $R_\sigma:=\mathbb F_Q[T,T^{-1};\sigma]$, where
$\sigma(\lambda)=\lambda^q$ and $T\cdot v=\Phi(v)$.
A nonzero module over a ring is \emph{simple} if it has no nonzero proper
submodule. A \emph{composition series} of $V'$ as an $R_\sigma$-module is
a chain
\[
 0=W_0\lneq W_1\lneq\cdots\lneq W_s=V'
\]
whose successive quotients are simple. A module admitting such a series
has \emph{finite length}. Since $V'$ is finite-dimensional over
$\mathbb F_Q$, it has finite length as an $R_\sigma$-module.

\begin{proposition}[Jordan--H\"older invariance for affine decompositions]\label{prop:affine-jh}
With $V'$ and $R_\sigma$ as above, complete $k$-decompositions of the affine
family correspond to composition series of the $R_\sigma$-module $V'$.
Hence all complete affine decompositions have the same length and the same
multiset of factor degrees. If $0=W_0\lneq W_1\lneq\cdots\lneq W_s=V'$ is the corresponding
composition series, then
\[
 \deg f_i=Q^{\dim_{\mathbb F_Q}(W_i/W_{i-1})}.
\]
\end{proposition}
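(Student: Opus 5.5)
Within the affine family, Theorem~\ref{thm:complete-decomposition} identifies complete $k$-decompositions of $f\in\mathcal M_{\mathcal D}$ with maximal strict chains $H=J_0\lneq J_1\lneq\cdots\lneq J_s=G$ of $\varphi$-stable subgroups in which no $\varphi$-stable subgroup lies strictly between consecutive terms. The plan is to convert such chains into composition series of $V'$, using the dictionary already set up in the proof of Theorem~\ref{thm:affine-wild}. There every intermediate subgroup $H\leqslant J\leqslant G$ has the form $J=(J\cap V)\rtimes H$, with $J\cap V$ an $\mathbb F_Q$-subspace of $V$. Moreover, $J$ is $\varphi$-stable exactly when $W:=c(J\cap V)$ is stable under $q$-Frobenius. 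For $n=1$ the same holds after the Hilbert~90 normalization, with $c=1$.

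\emph{Step 1.} Show that $J\mapsto c(J\cap V)$ is an inclusion-preserving bijection from $\varphi$-stable subgroups between $H$ and $G$ to $\mathbb F_Q$-subspaces of $V'$ stable under $\Phi$. Its inverse is $W\mapsto (c^{-1}W)\rtimes H$. Such subspaces are exactly the $R_\sigma$-submodules of $V'$. Since $\Phi$ is $\sigma$-semilinear over $\mathbb F_Q$ ($(\lambda v)^q=\lambda^q v^q$) and bijective, stability under $\Phi$ also gives stability under $\Phi^{-1}=T^{-1}$. This is the one point to check carefully, and it holds because $V'$ is finite and $\Phi$ is injective.

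\emph{Step 2.} Deduce the correspondence. Strict chains of $\varphi$-stable subgroups correspond to strict chains of submodules $0=W_0\lneq\cdots\lneq W_s=V'$, where $W_i=c(J_i\cap V)$. Absence of a $\varphi$-stable subgroup strictly between $J_{i-1}$ and $J_i$ is equivalent to $W_i/W_{i-1}$ being simple, by the correspondence theorem for submodules. So complete decompositions, modulo intermediate $k$-M\"obius changes, are in bijection with composition series.

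\emph{Step 3.} Compute the degrees. By Theorem~\ref{thm:complete-decomposition},
\[
\deg f_i=[J_i:J_{i-1}]=|J_i\cap V|/|J_{i-1}\cap V|=|W_i|/|W_{i-1}|=Q^{\dim_{\mathbb F_Q}(W_i/W_{i-1})}.
\]
The classical Jordan--H\"older theorem for finite-length modules over $R_\sigma$ then says that all composition series have the same length $s$. It also says their factors agree up to permutation and isomorphism, so the multiset of dimensions $\dim_{\mathbb F_Q}(W_i/W_{i-1})$ is an invariant. This gives the invariance of the length and of the multiset of factor degrees.

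The main obstacle is Step 1 in the normalized coordinate: one must confirm that the twist by the auxiliary scalar $c$ (and, for $n=1$, the affine normalization) turns $\varphi$-stability into genuine $\Phi$-stability for every intermediate subgroup, not just for $V$. I would verify this by redoing the computation $\varphi(v)\mapsto a^{-1}v^q$ for translations in $J\cap V$, exactly as in the proof of Theorem~\ref{thm:affine-wild}.
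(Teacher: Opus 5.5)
Your proposal is correct and follows essentially the same route as the paper. The paper also uses the normalization of Theorem~\ref{thm:affine-wild} to identify $\varphi$-stable intermediate subgroups $W\rtimes H$ with $\Phi$-stable $\mathbb F_Q$-subspaces, that is, $R_\sigma$-submodules of $V'$, and then applies Theorem~\ref{thm:complete-decomposition} and Jordan--H\"older; your checks on $T^{-1}$-stability and on $[J_i:J_{i-1}]=|W_i|/|W_{i-1}|$ make explicit what the paper leaves implicit.
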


\begin{proof}
After the normalization of Theorem~\ref{thm:affine-wild}, every subgroup between $H=\mu_n$ and $G=V'\rtimes H$ is uniquely $W\rtimes H$ with $W=J\cap V'$. Stability under $H$ makes $W$ an $\mathbb F_Q$-subspace, and arithmetic stability is equivalent to $\Phi(W)=W$. These are exactly the $R_\sigma$-submodules. Theorem~\ref{thm:complete-decomposition} now identifies complete decompositions with composition series, and Jordan--H\"older gives the assertions. This is a module-theoretic statement; it does not assert literal uniqueness of an ordered factorization in the noncommutative Ore ring.
\end{proof}

\subsection{Characteristic \texorpdfstring{$2$}{2} wild dihedral cases}

\begin{proposition}[Characteristic-$2$ dihedral cases]\label{prop:char2-dihedral}
Let $k$ have characteristic $2$. A $k$-indecomposable rational function with
Galois closure of genus zero and wild dihedral geometric group has
$(G,H)=(D_{2\ell},C_2)$, where $\ell$ is an odd prime. It is geometrically
indecomposable. There is exactly one $k$-M\"obius class, represented by
$D_\ell(X,1)$. There is no geometrically decomposable characteristic-$2$
dihedral case.
\end{proposition}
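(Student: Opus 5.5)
We first pin down the pair $(G,H)$, then prove indecomposability, and finally count $k$-forms; the dihedral proofs of Theorem~\ref{thm:arith-only} and Proposition~\ref{prop:dihedral} are the models.

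\emph{Group-theoretic step.} Write $G=D_{2m}$ with rotation subgroup $R=\langle r\rangle$. In characteristic $2$ a finite dihedral subgroup of $\operatorname{PGL}_2(\bar k)$ has wild order exactly when $m$ is odd. Indeed, involutions are unipotent, hence of order $p=2$, while an element of order $m$ is semisimple only when $m$ is odd. The Klein four group $V_4$ is elementary abelian of order $4$, so it falls under the affine $p$-semi-elementary case and is excluded by the small-group conventions. Hence $m$ is odd and $R$ is the unique, and therefore characteristic, cyclic subgroup of index $2$.

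A core-free $H$ meets $R$ trivially, so $H=1$ or $H$ is a reflection subgroup $C_2$. If $H=1$, then $R$ is a $\varphi$-stable intermediate subgroup, and Theorem~\ref{thm:decomposition} gives a contradiction. If $H=\langle s\rangle$, then $\varphi$ fixes $s$ modulo $H$ and sends $r\mapsto r^a$. Every subgroup $\langle s,r^{m/\ell'}\rangle$ with $\ell'\mid m$ is then $\varphi$-stable, so primitivity forces $m=\ell$ prime. The case $m=\ell=3$ gives $S_3\cong\operatorname{PGL}_2(2)$, which the convention places here. Since $\langle s\rangle$ is then maximal in $D_{2\ell}$, Corollary~\ref{cor:geoarith} gives geometric indecomposability. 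The same analysis shows there is no geometrically decomposable case: a nonmaximal core-free $H$ is either $1$, which always has $R$ stable, or a reflection subgroup contained in a stable intermediate dihedral subgroup.

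\emph{Normal form.} Choose the upstairs coordinate $z$ with $r(z)=\zeta z$ and $s(z)=a/z$. The $H$-invariant is $x=z+a/z$ and the $G$-invariant is $t=z^\ell+a^\ell/z^\ell$, so $t=D_\ell(x,a)$ exactly as in Proposition~\ref{prop:dihedral}. I would check that the ramification picture forces a $k$-rational totally ramified point at $\infty$. Next I would show $a$ may be taken in $k$: the fixed points $0,\infty$ of $R$ form a Frobenius-stable pair, and the cyclic analysis in Proposition~\ref{prop:cyclic} handles both the split and the nonsplit descent, in either case producing $D_\ell(X,a)$ with $a\in k^\times$.

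\emph{Uniqueness.} The scaling identity $D_\ell(uX,u^2a)=u^\ell D_\ell(X,a)$ still holds, and in characteristic $2$ squaring is bijective on $k^\times$. Hence every $a$ is equivalent to $1$, giving a single class. For a cohomological cross-check I would use Theorem~\ref{thm:fixed-pair-cohomology}. There $\mathscr W=\mathscr N/H$ with $\mathscr N=\mathrm N(D_{2\ell})\cap\mathrm N(\langle s\rangle)$, which is the centralizer of $s$ intersected with the normalizer. The square-class obstruction $k^\times/(k^\times)^2$ is trivial, so only one twisted class survives.

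\emph{Main obstacle.} The hardest step is the descent step that justifies $a\in k$. The reflection fixed points collapse in characteristic $2$, since each involution now has a single fixed point, so the branch-pair splitting argument of Proposition~\ref{prop:dihedral} is unavailable. I would instead work directly with the unique wild branch point and the tame branch point of order $\ell$. Both are $k$-rational because their inertia orders differ, and normalizing them to $0$ and $\infty$, together with the polynomial normalization, should pin the form down.
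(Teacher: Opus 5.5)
The group-theoretic half of your proposal is correct and is essentially the paper's argument. This covers the characteristic rotation subgroup $R$, core-freeness forcing $H\in\{1,\langle s\rangle\}$, the $\varphi$-stable overgroups $R$ and $\langle s,r^{m/\ell'}\rangle$, and Corollary~\ref{cor:geoarith} for geometric indecomposability. (A small slip: every $D_{2m}$ is wild in characteristic $2$; what your semisimple/unipotent remark actually shows is that $m$ must be odd unless $m=2$.)

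The gap is the descent step. You flag it but never carry it out, so the claim that every $k$-form is $D_\ell(X,a)$ with $a\in k^\times$ is not proved.

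Your first route is asserted rather than argued, and its nonsplit half does not give the Dickson form. There Proposition~\ref{prop:cyclic} yields only the R\'edei coordinate $\nu_\delta$, in which $s(\nu)=b/\nu$ with $b^{q+1}=1$ rather than $b\in k$. What rescues it is that the splitting type of $\{0,\infty\}$ is not an invariant here. Since $s$ interchanges $0$ and $\infty$, replacing $\varphi$ by $s\varphi\in H\varphi$ makes the pair split, and by Theorem~\ref{thm:exact-map-equivalence} this gives the same class. Now choose a coordinate $z$ in which $s\varphi$ acts as coefficient Frobenius and the (now rational) fixed points of $R$ are $0,\infty$. The relation $(s\varphi)s(s\varphi)^{-1}=s$ forces $s(z)=a/z$ with $a\in k^\times$, so $x=z+a/z$ and $t=z^\ell+a^\ell z^{-\ell}$ are $k$-rational.

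Your second route is the paper's, but it stops at ``should pin the form down''. The missing lines are these:
\begin{itemize}
\item By Theorem~\ref{thm:branch-orbit}, the wild fiber has partition $\{\!\{1,2,\ldots,2\}\!\}$, indexed by $H\backslash G/H$, so its unique unramified point is $k$-rational.
\item Send that point to $0$, the totally ramified point to $\infty$, and the two branch points to $0$ and $\infty$. These are also the positions of the corresponding points for $D_\ell(X,1)$.
\item Any $\bar k$-equivalence with $D_\ell(X,1)$ must then fix $0$ and $\infty$ on both sides. Hence $f=c\,D_\ell(uX,1)=cu^\ell D_\ell(X,u^{-2})$ with $c,u\in\bar k^\times$.
\item $D_\ell(X,b)$ is monic with $X^{\ell-2}$-coefficient $-\ell b=b$ in characteristic $2$. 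So $cu^\ell\in k^\times$ and $u^{-2}\in k^\times$, giving $f\sim_k D_\ell(X,b)$ with $b\in k^\times$.
\item Your squaring argument then reduces $b$ to $1$.
\end{itemize}
The paper phrases this as uniqueness, hence Frobenius-invariance, of the normalizing isomorphism; the key point is that $u^2=1$ forces $u=1$ in characteristic $2$.

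Finally, your cohomological cross-check is miscomputed, since $\mathscr W$ is not $k^\times/(k^\times)^2$. Done correctly, though, it is a complete alternative proof of uniqueness. In odd characteristic the normalizer is enlarged to $D_{4\ell}$ by $z\mapsto -z$. In characteristic $2$ this element is trivial because $\mu_{2\ell}(\bar k)=\mu_\ell(\bar k)$. Hence $\mathrm N_{\operatorname{PGL}_2(\bar k)}(D_{2\ell})=D_{2\ell}$, $\mathscr N=\mathrm N_{D_{2\ell}}(\langle s\rangle)=\langle s\rangle$, and $\mathscr W=1$. Any two Frobenius lifts therefore differ by an element of $H$, exactly as in the proof of Proposition~\ref{prop:char3-A5}. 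Existence is supplied by $D_\ell(X,1)\in\mathbb F_2(X)$.
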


\begin{proof}
The rotation subgroup $C_m$ of $D_{2m}$ is characteristic. Core-freeness gives $H\cap C_m=1$. The subgroup $H$ contains at most one reflection, since the product of two distinct reflections is a nontrivial rotation. Thus $H=1$ or $H\cong C_2$. The first possibility leaves $C_m$ as a proper Frobenius-stable intermediate subgroup, contradicting arithmetic primitivity; hence $H=C_2$. If $m$ is composite, a proper characteristic subgroup of $C_m$ produces a Frobenius-stable intermediate dihedral subgroup; hence $m=\ell$ is prime. Conversely $C_2$ is maximal in $D_{2\ell}$.

The degree-$\ell$ cover has two branch points distinguished by their ramification: one is totally ramified of index $\ell$, while the other is wild and
has ramification partition $\{\!\{1,\underbrace{2,\ldots,2}_{(\ell-1)/2}\}\!\}$. Frobenius preserves these distinct ramification types, so both branch points are $k$-rational. The unique point in the totally ramified fiber and the unique unramified point in the wild fiber are therefore $k$-rational. After normalization the geometric cover is the standard Dickson quotient. Since
\(\mathrm N_{D_{2\ell}}(C_2)/C_2=1,\)
the normalized geometric isomorphism is unique and therefore Frobenius-invariant. It descends to $k$. In characteristic $2$ all nonzero parameters are square-equivalent, giving $D_\ell(X,1)$.
\end{proof}

\subsection{Characteristic \texorpdfstring{$3$}{3} icosahedral cases}

\begin{proposition}[Characteristic-$3$ icosahedral cases]\label{prop:char3-A5}
Let $k=\mathbb F_{3^s}$, and let $f\in k(X)$ be separable and geometrically
indecomposable. Assume that its Galois closure has genus zero and that its
geometric monodromy group is $A_5$. Then $\deg f\in\{5,6,10\}$, and $f$ is
$k$-M\"obius equivalent to the corresponding map in
\begin{align}
 W_5(X)&=X^3(X+1)^2,\label{eq:W5}\\
 W_6(X)&=\frac{(X^2+X-1)^3}{X},\notag\\
 W_{10}(X)&=\frac{X^6(X+1)(1-X)^3}{(X^2+X-1)^5}.\label{eq:W10}
\end{align}
The corresponding geometric point stabilizers are $A_4,D_{10},S_3$.
In each degree there is exactly one $k$-M\"obius class. The arithmetic
monodromy group is $A_5$ when $s$ is even and $S_5$ when $s$ is odd.
Among rational functions with Galois closure of genus zero, no
$k$-indecomposable characteristic-$3$ $A_5$ class is geometrically decomposable.
\end{proposition}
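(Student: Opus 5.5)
Over $\bar k$ with $p=3$, the group $A_5\cong\operatorname{PSL}_2(5)$ has order divisible by $3$. Since $3\nmid 5$, it is not a defining-characteristic Lie group, and by Faber's classification it embeds in $\operatorname{PGL}_2(\bar k)$ as a single conjugacy class. The plan is to fix the geometric Galois quotient $C\to C/A_5$ first and then descend it, in the same way as in Propositions~\ref{prop:I5}--\ref{prop:I10}.

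\textbf{Ramification.} The stabilizers of points of $C$ are the Sylow-$3$ normalizers $S_3$ (wild; the $C_3$ has a unique fixed point and is normalized by an involution) and the stabilizers $C_5$ and $C_2$. Riemann--Hurwitz with wild different exponents fixes the branch data: two branch points, with inertia $S_3$ (wild) and $C_5$. The contribution is $2\cdot60-2=118=10\cdot(\text{different at the $S_3$ points})+12\cdot4$, which gives different exponent $7$ at each of the $10$ wild points. This leaves no room for separate involution branch points, since every involution lies in some $S_3$. The two branch points have different inertia orders, so Frobenius fixes each of them (Theorem~\ref{thm:branch-orbit}), and we normalize them to $0$ and $\infty$ over $k$.

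\textbf{Fibres.} For each maximal $H\in\{A_4,D_{10},S_3\}$, which are the only core-free maximal subgroups and all have primitive geometric action, formula~\eqref{eq:double-coset-passport} computes the fibres.
\begin{itemize}
\item $H=A_4$: a $5$-cycle gives partition $\{5\}$, and $S_3$ acting on $A_5/A_4$ gives $\{3,2\}$.
\item $H=D_{10}$: the fibres are $\{5,1\}$ and $\{3,3\}$.
\item $H=S_3$: the fibres are $\{5,5\}$ and $\{6,3,1\}$.
\end{itemize}
The last computation is checked with double cosets $S_3\backslash A_5/S_3$, whose sizes are $6,3,1$. These force the shapes $cX^3(X+a)^2$, $c(X^2+aX+b)^3/X$ and $cX^6(X+1)(X-a)^3/D(X)^5$. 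We then impose that no further critical points exist. In characteristic $3$ the derivative of a cube vanishes, so the ramification conditions reduce to a few polynomial equations. Scaling then yields $W_5$, $W_6$ and $W_{10}$. Verifying that these polynomials have the right monodromy and uniqueness over $\bar k$ is the main computational step. The cleanest route I expect is uniqueness of the geometric cover (a single conjugacy class of $A_5$ in $\operatorname{PGL}_2$) plus a direct check that the displayed maps have the computed partitions.

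\textbf{Descent and monodromy.} The descent follows Proposition~\ref{prop:I10}. Since $\mathrm N_{A_5}(H)=H$ and $\operatorname{Out}(A_5)=C_2$, the twisting set in Theorem~\ref{thm:fixed-pair-cohomology} has at most two elements. The inertia at the rational $C_5$ branch point forces Frobenius to act on the $5$-cycle classes by the $q$-th power, so the outer image is determined by whether $q\equiv\pm1\pmod5$. For $q=3^s$ this happens exactly when $s$ is even. Hence exactly one class exists, with $A=A_5$ for $s$ even and $A=S_5$ for $s$ odd. The same conclusion can be cross-checked via the discriminant square class of the generic equation.

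\textbf{Nonexistence of decomposable classes.} Primitivity is handled by the argument from Theorem~\ref{thm:arith-only}. Inner Frobenius images cannot stabilize a nonmaximal core-free $H$ without also stabilizing an intermediate subgroup, by the normalizer trick. For outer images, the maximal subgroups of $S_5$ not contained in $A_5$ meet $A_5$ in maximal subgroups, so no geometrically decomposable class is $k$-indecomposable.

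I expect the main obstacle to be the wild ramification bookkeeping, namely pinning down the partitions and the different exponents, together with solving for the normal forms in characteristic $3$.
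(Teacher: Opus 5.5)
Most of your outline is sound. This includes the top-down branch data of $\mathbf P^1\to\mathbf P^1/A_5$ (wild inertia $S_3$ with different exponent $7$, tame $C_5$, no separate involution points) and the fibres via \eqref{eq:double-coset-passport}. It also includes the bound of at most one $k$-form, the Frobenius outer image read off from tame inertia at the rational order-$5$ branch point, and the $S_5$ maximal-subgroup argument for the last assertion. (The paper instead proves $\mathrm N_{\operatorname{PGL}_2(\bar k)}(A_5)=A_5$ and uses the reduced discriminants.)

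The genuine gap is the step identifying the $A_5$-quotients with the displayed $W_6$ and $W_{10}$. In characteristic $3$ the condition ``no further critical points'' is vacuous for your forced shapes, so the ramification data do not determine the map.

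Since $(X^2+aX+b)^3=X^6+a^3X^3+b^3$, every $F=c(X^2+aX+b)^3/X$ with $b\ne0$ and $a^2\ne b$ satisfies $F'=-c(X^2+aX+b)^3/X^2$. So every such $F$ is branched only over $0,\infty$, with partitions $\{\!\{3,3\}\!\}$, $\{\!\{5,1\}\!\}$ and the same different exponents as $W_6$. Over $\bar k$, any M\"obius equivalence between two such maps is a scaling on both sides, so $a^2/b$ is a genuine modulus. For example, $(X^2+1)^3/X=X^5+X^{-1}$ (with $a^2/b=0$) is not equivalent to $W_6$ (with $a^2/b=-1$).

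The same happens in degree $10$. For every $\beta\ne0,-1$, the map $F_\beta=X^6(X+1)(X-1)^3/(X^2+\beta X-\beta)^5$ satisfies $F_\beta'=-(\beta+1)X^7(X-1)^3/(X^2+\beta X-\beta)^6$. This is a one-parameter family carrying all the degree-$10$ data, and it contains $-W_{10}$ at $\beta=1$.

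Hence ``uniqueness of the geometric cover plus a check of the partitions'' cannot show that $W_6$ or $W_{10}$ is the $A_5$-quotient. At most one member of each family is, and the others have different monodromy or a Galois closure of positive genus. Degree $5$ is fine: $cX^3(X+a)^2$ is unique up to scaling, so existence of the quotient plus the forced shape does identify $W_5$.

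What is missing is an argument computing the monodromy and Galois-closure genus of the specific maps $W_6$ and $W_{10}$. The paper obtains $\bar G_d\leqslant A_5$ by specialization, in these steps:
\begin{enumerate}
\item $W_d$ is the coefficientwise reduction of $1728I_d$.
\item The leading coefficients and the generic discriminants have Gauss valuation $0$ on $\mathbf C_3(T)$.
\item Hensel's lemma over the henselization then makes the splitting extension unramified at the Gauss valuation, so the residual (geometric) group embeds in the characteristic-zero group $A_5$.
\item Ramification indices divisible by $3$ and by $5$ force $\bar G_d=A_5$.
\item The lower ramification filtration $I_0=S_3$, $I_1=C_3$, $I_2=1$, together with Riemann--Hurwitz, gives genus zero.
\end{enumerate}
An alternative would be to compute invariants of an explicit $A_5\leqslant\operatorname{PGL}_2(\mathbb F_9)$ and its maximal subgroups.

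Your existence claim over every $\mathbb F_{3^s}$ also depends on this identification, insofar as it rests on $W_d\in\mathbb F_3(X)$.
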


\begin{proof}
Geometric indecomposability makes the geometric point stabilizer a core-free
maximal subgroup of $A_5$. These subgroups have types $A_4,D_{10},S_3$, of
indices $5,6,10$, respectively. Faber's classification gives a single
$\operatorname{PGL}_2(\bar k)$-conjugacy class of characteristic-$3$ subgroups
isomorphic to $A_5$ \cite[Theorem~B]{Faber2023}. Each of the three
maximal-subgroup types also forms a single $A_5$-conjugacy class. Since the
Galois-closure curve is a projective line, there is therefore at most one
$\bar k$-M\"obius class for each pair. We show that the displayed maps realize
these pairs and then determine their forms over $k$.

For $d=5,6,10$, let $F_d=1728I_d=N_d/D_d$ be the unscaled icosahedral map
from Propositions~\ref{prop:I5},~\ref{prop:I6}, and~\ref{prop:I10}, using
the displayed integral numerators and denominators with the factor $1728$
removed. Put $P_d(X,T)=N_d(X)-TD_d(X)$. Substituting $t=T/1728$ into
\eqref{eq:I5-disc}, \eqref{eq:I6-disc}, and~\eqref{eq:I10-disc} gives
\[
\begin{aligned}
 \operatorname{disc}_X(P_5)&=5^5T^2(T-1728)^2,\\
 \operatorname{disc}_X(P_6)&=5^5T^4(T-1728)^2,\\
 \operatorname{disc}_X(P_{10})&=5^{75}T^6(T-1728)^4.
\end{aligned}
\]
Direct coefficient reduction modulo $3$ sends $F_5,F_6,F_{10}$ to
$W_5,W_6,W_{10}$ in \eqref{eq:W5}--\eqref{eq:W10}, respectively, without
cancellation of numerator and denominator.

For specialization, let $K=\mathbf C_3$ be the completion of an algebraic
closure of $\mathbb Q_3$, with $3$-adic valuation $v$. Equip $K(T)$ with the
\emph{Gauss valuation}, defined for nonzero polynomials by
\[
 v_G\!\left(\sum_i a_iT^i\right):=\min_i v(a_i),
 \qquad v_G(P/Q):=v_G(P)-v_G(Q).
\]
Its residue field is $\overline{\mathbb F}_3(T)$.
The characteristic-zero monodromy calculations in
Propositions~\ref{prop:I5}, \ref{prop:I6}, and~\ref{prop:I10}
give generic splitting group $A_5$.
The leading coefficients of $P_5,P_6,P_{10}$ are $1,1,1000-T$;
these and the displayed discriminants all have Gauss valuation $0$.
Let $K(T)^h$ denote the henselization of $(K(T),v_G)$.
After division by the leading coefficient, each polynomial is monic,
$v_G$-integral, and has separable reduction.
Hensel's lemma shows that its splitting extension over $K(T)^h$ has
trivial inertia at $v_G$. The residue-field splitting group therefore
identifies with a subgroup of the generic Galois group.
Since the residue field is $\overline{\mathbb F}_3(T)$, this is the
geometric monodromy group $\bar G_d$ of $W_d$, so $\bar G_d\leqslant A_5$.

The ramification partitions at $0$ are $\{\!\{3,2\}\!\}$,
$\{\!\{3,3\}\!\}$, and $\{\!\{6,3,1\}\!\}$; those at $\infty$ are
$\{\!\{5\}\!\}$, $\{\!\{5,1\}\!\}$, and $\{\!\{5,5\}\!\}$,
respectively.
These ramification indices force $3$ and $5$ to divide $|\bar G_d|$. No
proper subgroup of $A_5$ has order divisible by both primes, so
$\bar G_d=A_5$. The point stabilizer has order $60/d$ and is therefore
$A_4,D_{10}$, or $S_3$, respectively.

We verify the genus of each special Galois closure directly. Put
$B(X)=X^2+X-1$. In characteristic $3$,
\[
\begin{aligned}
 W_5'(X)&=-X^3(X+1),\\
 W_6'(X)&=-\frac{B(X)^3}{X^2},\\
 W_{10}'(X)&=-\frac{X^7(X-1)^3}{B(X)^6}.
\end{aligned}
\]
The different exponents at the points above $0$ are, in the order of the entries in the ramification partitions just given, $(3,1)$, $(3,3)$, and $(7,3,0)$.
The tame fibers above $\infty$ contribute $4,4,8$ in total. The total
different degrees are therefore $8,10,18$, respectively, equal to $2d-2$;
there are no other branch points.

For a geometric point $P$ of a Galois cover, let $I$ be its inertia
group and let $z$ be a local parameter at $P$. We write
\[
 I_i:=\{\sigma\in I:
 \operatorname{ord}_P(\sigma(z)-z)\geqslant i+1\}
 \qquad(i\geqslant0)
\]
for the \emph{lower ramification groups}; these groups are independent
of the choice of $z$. Thus $I_0=I$, and $I_1$ is the
\emph{wild inertia subgroup}. Hilbert's different formula gives
\[
 d_P=\sum_{i\geqslant0}(|I_i|-1),
\]
where $d_P$ is the different exponent of this Galois cover at $P$.

Let $I_0$ be inertia in the $A_5$-Galois closure above $0$. Its wild inertia subgroup $I_1$ is a nontrivial $3$-subgroup of $A_5$, hence isomorphic to $C_3$.
Since it is normal in $I_0$, one has
$I_0\leqslant\mathrm N_{A_5}(I_1)\cong S_3$, so $I_0$ is $C_3$ or $S_3$.
In degrees $5$ and $10$, the even ramification indices rule out $C_3$.
In degree $6$, if $I_0=C_3$, an index-$3$ point would give a cyclic cubic
local extension. Its first two lower ramification groups would both have
order $3$, so its different exponent would be at least $4$, contradicting
the exponent $3$ above. Thus $I_0=S_3$ in all three degrees.

Write the lower ramification filtration as $I_0=S_3$,
$I_1=\cdots=I_j=C_3$, and $I_{j+1}=1$, with $j\geqslant1$.
Its different exponent is $5+2j$. In degrees $5$ and $6$, an index-$3$
point has different exponent $3$, and the remaining quadratic step to the
local Galois closure is tame. Transitivity of the different gives
$5+2j=1+2\cdot3=7$. In degree $10$, the index-$6$ point is already the full
local extension and has different exponent $7$. Hence $j=1$ in every case:
$I_0=S_3$, $I_1=C_3$, and $I_2=1$. Above $\infty$ the inertia is tame cyclic
of order $5$. Riemann--Hurwitz for the degree-$60$ Galois closure shows that
twice the genus minus two equals
\[
 -120+\frac{60}{6}\cdot7+\frac{60}{5}\cdot4=-2.
\]
Thus the Galois-closure curve has genus zero, and the $W_d$ realize the
three geometric classes.

The integral discriminants reduce to $-T^4,-T^6,-T^{10}$, respectively.
For $G=A_5$ acting on $G/H$, its permutation centralizer is
$\mathrm N_G(H)/H=1$. Hence conjugation embeds the arithmetic group into
$\operatorname{Aut}(A_5)=S_5$, with geometric subgroup $A_5$.
A transposition in $S_5$ acts on the five letters, the six Sylow-$5$
subgroups, and the ten Sylow-$3$ subgroups with cycle types
$2\,1^3$, $2^3$, and $2^3\,1^4$, respectively. For the latter two actions,
it normalizes no Sylow-$5$ subgroup and fixes precisely the four
three-element supports containing both or neither of its moved letters.
Thus the outer coset is odd in all three actions. The discriminant
criterion gives arithmetic group $A_5$ exactly when $-1$ is a square in
$\mathbb F_{3^s}$, equivalently when $s$ is even; otherwise the group is
$S_5$. The full constant-field degree is correspondingly $1$ or $2$.

Finally, in characteristic $3$,
\[
 \mathrm N_{\operatorname{PGL}_2(\bar k)}(A_5)=A_5.
\]
Indeed, an element centralizing $A_5$ fixes the distinct fixed points of
its Sylow-$3$ subgroups, so this centralizer is trivial. The normalizer
therefore embeds into $\operatorname{Aut}(A_5)=S_5$. A larger normalizer
would be $S_5$ of order $120$, which is excluded by Faber's classification
\cite[Theorem~B]{Faber2023}. After conjugating two geometric pairs to the
same pair $(A_5,H)$, their semilinear $q$-Frobenius lifts differ by an element of $\mathrm N_{\operatorname{PGL}_2(\bar k)}(G)$. It also normalizes $H$, so it belongs
to $\mathrm N_{A_5}(H)=H$. The two lifts induce the same descent on both
quotient curves. Hence there is at most one $k$-M\"obius class in each
degree; the maps $W_d\in\mathbb F_3(X)$ give existence over every
$\mathbb F_{3^s}$.

The $A_5$ argument in the proof of Theorem~\ref{thm:arith-only} uses only
$\operatorname{Aut}(A_5)=S_5$ and the maximal subgroups of $S_5$, not
tameness. Applied to a genus-zero Galois closure in characteristic $3$,
it excludes every nonmaximal geometric stabilizer that could yield
primitive arithmetic monodromy. This proves the final assertion.
\end{proof}

\section{Geometrically primitive Lie-type wild quotients}\label{sec:wild-lie}
We treat the geometrically primitive defining-characteristic Lie-type cases: Borel, split- and nonsplit-Cartan, subfield, and cases with stabilizer $A_4,S_4$, or $A_5$. The Cartan formulas are stated uniformly; their small nonmaximal specializations at $Q=7,9,11$ are assigned in Section~\ref{sec:wild-novelty} to the $k$-indecomposable but geometrically decomposable cases.
\subsection{Dickson invariants and fixed-field reconstruction}

For $Q=p^a$ define the classical Dickson invariant
\begin{equation}\label{eq:DQ}
 D_Q(Z)=
 \frac{(Z^{Q^2}-Z)^{Q+1}}{(Z^Q-Z)^{Q^2+1}}.
\end{equation}
When $Q$ is odd define its $\operatorname{PSL}_2(Q)$ square root
\begin{equation}\label{eq:JQ}
 J_Q(Z)=
 \frac{(Z^{Q^2}-Z)^{(Q+1)/2}}
 {(Z^Q-Z)^{(Q^2+1)/2}}.
\end{equation}
Then $\bar k(Z)^{\operatorname{PGL}_2(Q)}=\bar k(D_Q)$, $\bar k(Z)^{\operatorname{PSL}_2(Q)}=\bar k(J_Q)$, and for $g\in\operatorname{PGL}_2(Q)$,
\[
 J_Q(gZ)=\chi_2(\det g)J_Q(Z),
\]
where $\chi_2$ is the quadratic character. These invariant-field constructions are closely related to the finite-field invariant theory developed by Gow--McGuire \cite{GowMcGuire2022}.

\begin{proposition}[Finite-field elimination for quotient maps]
\label{prop:exact-finite-elimination}
Let $F$ be a finite field, and let $h=N_h/D_h$ and $j=N_j/D_j$ be nonconstant
rational functions in $F(Z)$ such that $F(j)\subseteq F(h)$. Put
$d:=[F(h):F(j)]$. Then there is a unique rational function $R=P/Q\in F(X)$
of degree $d$ such that $j=R(h)$. Writing
$P(X)=\sum_{i=0}^d p_iX^i$ and $Q(X)=\sum_{i=0}^d q_iX^i$, the coefficients
of $P$ and $Q$, up to a common nonzero scalar, are recovered from the
homogeneous linear system obtained by equating coefficients in
\begin{equation}\label{eq:exact-finite-elimination}
 D_j(Z)\sum_{i=0}^d p_iN_h(Z)^iD_h(Z)^{d-i}
 -
 N_j(Z)\sum_{i=0}^d q_iN_h(Z)^iD_h(Z)^{d-i}
 =0.
\end{equation}
\end{proposition}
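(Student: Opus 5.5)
The plan is to obtain existence and uniqueness of $R$ from Lüroth's theorem together with the tower law, and then to show that the linear system has a one-dimensional solution space, namely the coefficient vector of $R$ up to scaling.

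\emph{Existence and uniqueness of $R$.} Since $h$ is nonconstant, $h$ is transcendental over $F$ and $F(h)$ is a rational function field in $h$. The inclusion $F(j)\subseteq F(h)$ gives $j=R(h)$ for some $R\in F(X)$, because every element of $F(h)$ is a rational expression in $h$. This $R$ is unique, since $h$ is transcendental over $F$. For a rational function field, $[F(h):F(R(h))]=\deg R$; this is the standard degree formula, obtained from the irreducibility of $P(X)-jQ(X)$ over $F(j)$ when $R=P/Q$ is reduced. Hence $\deg R=d$.

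\emph{Derivation of the system.} Write $R=P/Q$ in lowest terms with $\max(\deg P,\deg Q)=d$, and pad the coefficients with zeros up to index $d$. Homogenizing at degree $d$ gives $R(N_h/D_h)=\widetilde P/\widetilde Q$, where $\widetilde P=\sum p_iN_h^iD_h^{d-i}$ and $\widetilde Q=\sum q_iN_h^iD_h^{d-i}$. Then $j=R(h)$ becomes $N_j/D_j=\widetilde P/\widetilde Q$. Cross-multiplying yields \eqref{eq:exact-finite-elimination}, so the true coefficient vector is a nonzero solution.

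\emph{Uniqueness up to scalar (main step).} Let $(p',q')$ be any solution of the linear system with $(p',q')\neq0$, and put $P'=\sum p'_iX^i$, $Q'=\sum q'_iX^i$. First, $Q'(h)\neq0$. Indeed, $Q'(h)=0$ would force $Q'=0$, since $h$ is transcendental, and then $D_j\widetilde P'=0$ would force $P'=0$. So \eqref{eq:exact-finite-elimination} says exactly that $P'(h)/Q'(h)=j=P(h)/Q(h)$. Transcendence of $h$ then gives $P'Q=PQ'$ in $F[X]$. Since $\gcd(P,Q)=1$, $P$ divides $P'$ and $Q$ divides $Q'$ with the same cofactor $c(X)$. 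Because $\deg P'$ and $\deg Q'$ are at most $d=\max(\deg P,\deg Q)$, $c$ must be a constant. Hence $(p',q')$ is a scalar multiple of $(p,q)$.

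The delicate point is this degree-padding argument: one must check that the homogenization at degree exactly $d$ is consistent, and that no spurious solution with both $P'$ and $Q'$ of lower degree can appear. Both follow from the coprimality and the degree bound above. Finiteness of $F$ plays no role in the argument beyond making the system a finite, explicitly solvable linear computation.
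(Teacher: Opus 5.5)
Your proof is correct and follows the same route as the paper: rational-function-field uniqueness of $R$, the degree formula $[F(h):F(R(h))]=\deg R$, and clearing denominators to obtain \eqref{eq:exact-finite-elimination}. Your converse step is sharper than the paper's. The paper only notes that a nonzero solution \emph{already known} to represent a reduced degree-$d$ pair is determined projectively. Your argument via $P'Q=PQ'$, $\gcd(P,Q)=1$ and the degree bound shows that the solution space of the linear system is exactly one-dimensional, so no reducedness check on solutions is needed.
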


\begin{proof}
Since $F(j)\subseteq F(h)$ and $F(h)$ is a rational function field in the
transcendental element $h$, there is a unique $R\in F(X)$ with $j=R(h)$.
Moreover,
\(\deg R=[F(h):F(R(h))]=[F(h):F(j)]=d.\)
Writing $R=P/Q$ with $\max\{\deg P,\deg Q\}=d$ and clearing denominators gives
\eqref{eq:exact-finite-elimination}. Conversely, any nonzero solution
representing a reduced degree-$d$ pair $(P,Q)$ gives the required identity,
and uniqueness of $R$ determines the solution projectively.
\end{proof}

\begin{proposition}[Fixed-field generators and quotient reconstruction]
Let $F$ be a finite field containing the matrix coefficients of a chosen embedded
pair $H\leqslant G\leqslant\operatorname{PGL}_2(F)$. Define
\[
 \Psi_H(T,Z)=\prod_{h\in H}(T-h(Z))
 =T^{|H|}+a_1(Z)T^{|H|-1}+\cdots+a_{|H|}(Z).
\]
Then
\[
 F(Z)^H=F(a_1,\ldots,a_{|H|}).
\]
A single generator $h_H$ of $F(Z)^H$ can be found by enumerating the finite
set of scalar classes of coprime pairs $P,S\in F[Z]$ with
$\max(\deg P,\deg S)=|H|$, and testing $P/S$ for $H$-invariance. Likewise
choose a generator $j_G$ of $F(Z)^G$. The quotient function $R$ satisfying
$j_G(Z)=R(h_H(Z))$ is recovered by
Proposition~\ref{prop:exact-finite-elimination}.
\end{proposition}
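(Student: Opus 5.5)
The first assertion is classical invariant theory for a finite group acting on a rational function field, and I would prove it in three steps. Put $K:=F(a_1,\ldots,a_{|H|})$. Each $a_i$ is, up to sign, an elementary symmetric function in the $h(Z)$, $h\in H$. Since $H$ acts on $F(Z)$ by $Z\mapsto g(Z)$ and permutes $\{h(Z):h\in H\}$, each $a_i$ is $H$-invariant, so $K\subseteq F(Z)^H$.

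Next, $Z$ is a root of $\Psi_H(T,Z)\in K[T]$, which has degree $|H|$ in $T$. Hence $[F(Z):K]\leqslant|H|$. Artin's theorem gives $[F(Z):F(Z)^H]=|H|$, because $H\leqslant\operatorname{PGL}_2(F)$ acts faithfully on $F(Z)$. Combining,
\[
 |H|\geqslant[F(Z):K]=[F(Z):F(Z)^H]\,[F(Z)^H:K]=|H|\,[F(Z)^H:K],
\]
so $K=F(Z)^H$. This argument does not use that $F$ is finite.

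For the single generator, L\"uroth's theorem shows that $F(Z)^H=F(h_H)$ for some $h_H\in F(Z)$. Its degree as a rational function equals $[F(Z):F(h_H)]=|H|$. So $h_H=P/S$ with $P,S$ coprime and $\max(\deg P,\deg S)=|H|$. Since $F$ is finite, the scalar classes of such pairs form a finite set, and $H$-invariance of $P/S$ is checked on the finitely many generators $h$ by a polynomial identity.

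Conversely, take any $H$-invariant $P/S$ of exact degree $|H|$. Then $F(P/S)\subseteq F(Z)^H$, and both fields have index $|H|$ in $F(Z)$. Therefore they are equal, so the enumeration test is exact: it outputs precisely the generators.

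For the quotient, apply the same procedure to $G$ to get $j_G$ with $F(j_G)=F(Z)^G$. Since $H\leqslant G$, we have $F(Z)^G\subseteq F(Z)^H$, that is, $F(j_G)\subseteq F(h_H)$. Proposition~\ref{prop:exact-finite-elimination} then gives the unique $R$ of degree $[G:H]$ with $j_G=R(h_H)$. The rational function $R$ is recovered projectively from the linear system~\eqref{eq:exact-finite-elimination}.

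The only step needing care is the converse in the generator search: one must know that degree $|H|$ together with invariance forces generation. The index comparison above settles this.
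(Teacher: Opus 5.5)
Your proposal is correct and follows the paper's proof step by step. Both arguments use $H$-invariance of the coefficients, the bound $[F(Z):F(a_1,\ldots,a_{|H|})]\leqslant|H|$ from $\Psi_H$, the index comparison with $[F(Z):F(Z)^H]=|H|$, L\"uroth's theorem with the finite search, and the elimination proposition applied with $[F(h_H):F(j_G)]=[G:H]$. Your explicit check that every $H$-invariant $P/S$ of degree $|H|$ generates $F(Z)^H$ spells out what the paper leaves implicit in ``testing $P/S$ for $H$-invariance.''
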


\begin{proof}
Every coefficient $a_i(Z)$ is $H$-invariant. Put
\(F_0=F(a_1,\ldots,a_{|H|}).\)
Since $Z$ satisfies $\Psi_H(T,Z)\in F_0[T]$, one has
\([F(Z):F_0]\leqslant |H|.\)
But
\(F_0\subseteq F(Z)^H,\qquad [F(Z):F(Z)^H]=|H|,\)
so equality holds and $F_0=F(Z)^H$. L\"uroth's theorem gives a generator of
$F(Z)^H$, necessarily of degree $|H|$; because $F$ is finite, the stated
finite search reaches one. Finally
\([F(h_H):F(j_G)]=[G:H],\)
so Proposition~\ref{prop:exact-finite-elimination} recovers the unique
degree-$[G:H]$ quotient function.
\end{proof}

\subsection{Borel quotients}

\begin{proposition}[Primitive Borel quotients]\label{prop:borel-wild}
Let $Q=p^a\geqslant3$ and $k=\mathbb F_{p^s}$, and let $B$ denote a point stabilizer in the standard projective action, that is, a Borel subgroup.
\begin{enumerate}[label=(\arabic*),ref=\arabic*,font=\itshape]
\item For the geometric pair $(\operatorname{PGL}_2(Q),B)$ there is exactly one $k$-M\"obius class, represented by
\begin{equation}\label{eq:Bplus}
 \mathcal B_Q^+(X)=\frac{(X+1)^{Q+1}}{X^Q}.
\end{equation}
The same statement applies to $\operatorname{PSL}_2(Q)$ when $Q$ is even.
\item If $Q$ is odd, the pair $(\operatorname{PSL}_2(Q),B)$ has exactly two $k$-M\"obius classes,
\begin{equation}\label{eq:Bminus}
 \mathcal B_{Q,\eta}^-(X)=
 \frac{(X^2+\eta)^{(Q+1)/2}}{X^Q},
 \qquad \eta\in k^\times/(k^\times)^2.
\end{equation}
Both are geometrically primitive. For $Q=3$, these are precisely the two characteristic-$3$ tetrahedral classes with pair $(A_4,C_3)$; the PGL form in part~\emph{(1)} is the unique characteristic-$3$ octahedral class with pair $(S_4,S_3)$.
\end{enumerate}
\end{proposition}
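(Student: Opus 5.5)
The plan is to work geometrically first and descend afterwards, using the fixed-pair cohomology of Theorem~\ref{thm:fixed-pair-cohomology} together with Lemma~\ref{lem:outer-rigidity}. Put $G=\operatorname{PGL}_2(Q)$ or $\operatorname{PSL}_2(Q)$ acting on $C=\mathbf P^1$ with coordinate $Z$, and take $B$ to be the stabilizer of $\infty$, so that $B=\{Z\mapsto aZ+b\}$ with $a\in\mathbb F_Q^\times$ (resp.\ $a\in(\mathbb F_Q^\times)^2$). The fixed field of $B$ is $\bar k(x)$ with $x=(Z^Q-Z)^{Q-1}$ (resp.\ $x=(Z^Q-Z)^{(Q-1)/2}$ for odd $Q$ in the $\operatorname{PSL}_2$ case). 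I would express $D_Q$ (resp.\ $J_Q$) as a rational function of $x$. The identity
\[
 Z^{Q^2}-Z=(Z^Q-Z)^Q+(Z^Q-Z)=(Z^Q-Z)\bigl((Z^Q-Z)^{Q-1}+1\bigr)
\]
gives $D_Q=(x+1)^{Q+1}/x^Q$ directly, which is $\mathcal B_Q^+$. For odd $Q$ in the $\operatorname{PSL}_2$ case put $y=(Z^Q-Z)^{(Q-1)/2}$. Then $J_Q=(y^2+1)^{(Q+1)/2}/y^Q$, which is $\mathcal B^-_{Q,1}$.

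The next step is descent. The standard Frobenius $Z\mapsto Z$ (coefficientwise $q$-power) normalizes $G$ and $B$, so both maps are defined over $\mathbb F_p\subseteq k$, and this gives existence. The degree-$(Q+1)$ action is the natural action on $\mathbf P^1(\mathbb F_Q)$, which is $2$-transitive, so both pairs are geometrically primitive and hence $k$-indecomposable. For uniqueness I would note that $B$ is self-normalizing in $G$ for $Q\ge3$: the normalizer fixes the unique fixed point $\infty$ of the Sylow $p$-subgroup.

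For $G=\operatorname{PGL}_2(Q)$, the relation $\mathrm N_{\operatorname{PGL}_2(\bar k)}(G)=G$ shows that any two semilinear lifts normalizing $B$ differ by an element of $\mathrm N_G(B)=B$, so there is one class. For even $Q$ the same argument applies since $\operatorname{PSL}_2=\operatorname{PGL}_2$.

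For odd $Q$ and $\operatorname{PSL}_2(Q)$, Lemma~\ref{lem:outer-rigidity} allows the two outer images $\bar\phi^s$ and $\bar\delta\bar\phi^s$. Both stabilize the unique conjugacy class of Borels, since all point stabilizers are conjugate under $\operatorname{PGL}_2(Q)$, so there are at most two classes and they are distinct when both occur. To realize the twisted image I would compose Frobenius with the diagonal element $Z\mapsto\eta Z$, $\eta$ a nonsquare, or equivalently substitute directly. Rescaling $y\mapsto y/\sqrt\eta$ turns $J_Q$ into a multiple of $(y^2+\eta)^{(Q+1)/2}/y^Q$ with $y$ now $k$-rational, which gives $\mathcal B^-_{Q,\eta}$. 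The two forms can be distinguished independently through the branch data: the fiber of $\mathcal B^-$ over $0$ consists of the two points $\pm\sqrt{-\eta}$, each of index $(Q+1)/2$, and whether these are split depends on the square class of $\eta$. (For $Q$ even there is no $0$-fiber issue.) I would also check that $\eta$ matters only modulo squares via $X\mapsto uX$.

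The main obstacle is the bookkeeping that the twist genuinely has full constant field and corresponds to $\bar\delta\bar\phi^s$ rather than collapsing onto the same class. The branch-splitting invariant settles this cleanly, so I would rely on it rather than on the outer-image argument alone. For $Q=3$, the groups $\operatorname{PSL}_2(3)\cong A_4$ with $B\cong C_3$ and $\operatorname{PGL}_2(3)\cong S_4$ with $B\cong S_3$ give the identifications with the tetrahedral and octahedral classes. The counts, two and one respectively, match the statement.
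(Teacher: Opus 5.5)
Your proposal is correct and follows essentially the paper's route: fixed-field generators $L^{Q-1}$ and $L^{(Q-1)/2}$ via $Z^{Q^2}-Z=L(L^{Q-1}+1)$, uniqueness in the $\operatorname{PGL}_2$ case from $\mathrm N_{\operatorname{PGL}_2(\bar k)}(G)=G$ and $\mathrm N_G(B)=B$, Lemma~\ref{lem:outer-rigidity} for the odd $\operatorname{PSL}_2$ upper bound, and the splitting type of the two tame points over $0$ for inequivalence. There are two small corrections: Lemma~\ref{lem:outer-rigidity} is stated only for $Q>3$, so at $Q=3$ you should run its lift argument directly, using $\mathrm N_{A_4}(C_3)=C_3$ and $\mathrm N_{S_4}(C_3)/C_3\cong C_2$ as the paper does; and the twisting diagonal element must be $Z\mapsto cZ$ with $c$ a nonsquare of $\mathbb F_Q$, acting by $y\mapsto -y$, not $Z\mapsto\eta Z$ with $\eta$ a nonsquare of $k$, since the latter need not normalize $G$ or lie outside $\operatorname{PSL}_2(Q)$ --- it is your direct substitution $\mathcal B^-_{Q,\eta}(X)=\theta\,\mathcal B^-_{Q,1}(X/\theta)$ with $\theta^2=\eta$ that actually establishes the second form.
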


\begin{proof}
Put $L(Z)=Z^Q-Z$. For the PGL Borel, $X=L(Z)^{Q-1}$ generates the fixed field; using $Z^{Q^2}-Z=L(L^{Q-1}+1)$ in \eqref{eq:DQ} gives \eqref{eq:Bplus}. For odd $Q$, $X=L(Z)^{(Q-1)/2}$ generates the PSL Borel fixed field and \eqref{eq:JQ} becomes \eqref{eq:Bminus} with $\eta=1$. The diagonal outer involution acts by $X\mapsto-X$ and $J_Q\mapsto-J_Q$, producing the nonsquare quadratic twist. The two square classes are inequivalent because the two tame points over the zero branch point have different splitting types over $k$. For the PGL pair, $\mathrm N_{\operatorname{PGL}_2(\bar k)}(G)=G$, and $\mathrm N_G(B)=B$. Two Frobenius lifts with the prescribed field component therefore differ by an element of $B$, so they define the same source and target fixed fields. This proves uniqueness, including the even case and $Q=3$. For odd $Q>3$, Lemma~\ref{lem:outer-rigidity} proves exhaustiveness of the two PSL forms. At $Q=3$, one has $\mathrm N_{A_4}(C_3)=C_3$ and $\mathrm N_{S_4}(C_3)/C_3\cong C_2$; the same lift argument gives at most the two forms already distinguished by their splitting types.
\end{proof}

\subsection{Split Cartan normalizers}

Let $W=Z^{Q-1}$ and define $S_Q\in\mathbb F_p(X)$ by
\begin{equation}\label{eq:SQ}
 S_Q(W+W^{-1})=
 \frac{(W^{Q+1}-1)^{Q+1}}
 {W^Q(W-1)^{Q^2+1}}.
\end{equation}
Then $\deg S_Q=Q(Q+1)/2$ and $D_Q(Z)=S_Q(W+W^{-1})$.

\begin{proposition}[Split-Cartan quotients]\label{prop:split-cartan}
For $Q>3$, the $\operatorname{PGL}_2(Q)$ split-Cartan normalizer has exactly one $k$-form, represented by $S_Q$; it is geometrically primitive exactly when $Q\ne5$. At $Q=3$ the PGL split-Cartan pair has two $k$-forms, one represented by $S_3$. If $Q$ is odd, put $\epsilon=(-1)^{(Q-1)/2}$. Put $W_0=Z^{(Q-1)/2}$ and $Y=W_0+\epsilon W_0^{-1}$. There is a unique rational function $T_Q\in\mathbb F_p(X)$ characterized by
\begin{equation}\label{eq:TQ-def}
 J_Q(Z)=T_Q(Y).
\end{equation}
It is odd and satisfies
\begin{equation}\label{eq:TQsquare}
 T_Q(X)^2=S_Q(X^2-2\epsilon).
\end{equation}
For $\eta\in k^\times$, choose $\theta\in\bar k$ with $\theta^2=\eta$ and put $T_{Q,\eta}(X)=\theta T_Q(X/\theta)$. Since $T_Q$ is odd, $T_{Q,\eta}\in k(X)$, and its $k$-M\"obius class depends only on the square class of $\eta$. For odd $Q\geqslant7$, as $\eta$ ranges over the two square classes of $k^\times$, these give exactly the two $k$-forms of the $\operatorname{PSL}_2(Q)$ split-Cartan pair. For odd $Q\geqslant13$ both are geometrically primitive. At $Q=7,9,11$, Theorem~\ref{thm:wild-novelty} identifies exactly the descent with diagonal component as the geometrically decomposable $k$-indecomposable class, of degree $28,45,66$, respectively; the other descent is $k$-decomposable.

At $Q=3$ and $5$, the PSL split-Cartan pair has four and three $k$-forms, respectively, rather than just the displayed square-class forms. All split-Cartan forms at $Q=3,5$, for either geometric group, are $k$-decomposable.
\end{proposition}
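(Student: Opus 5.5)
The proof splits into the fixed-field computations, which give the formulas, and the descent counting, which gives the number of $k$-forms.

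\emph{Formulas.} For the $\operatorname{PGL}_2(Q)$ split-Cartan normalizer, take the split torus to be the stabilizer of $\{0,\infty\}$, namely $\{Z\mapsto aZ\}$ with $a\in\mathbb F_Q^\times$, and its normalizer to be generated by this torus and $Z\mapsto1/Z$. The torus has fixed field $\bar k(W)$ with $W=Z^{Q-1}$, and the normalizer has fixed field $\bar k(W+W^{-1})$. Rewrite $D_Q$ from \eqref{eq:DQ} in terms of $W$, using
\[
Z^{Q^2}-Z=Z\bigl(W^{Q+1}-1\bigr)\,\frac{\cdots}{\cdots}
\]
after factoring over $\mathbb F_Q$; in particular $Z^Q-Z=Z(W-1)$. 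This yields \eqref{eq:SQ}. The identity has coefficients in $\mathbb F_p$, so Proposition~\ref{prop:exact-finite-elimination} (or direct inspection) gives $S_Q\in\mathbb F_p(X)$ of degree $[\operatorname{PGL}_2(Q):D_{2(Q-1)}]=Q(Q+1)/2$.

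For the $\operatorname{PSL}_2(Q)$ case, the torus consists of the square scalings and has fixed field $\bar k(W_0)$. The normalizing element in $\operatorname{PSL}_2$ is $Z\mapsto\epsilon/Z$ up to a square scalar; it sends $W_0\mapsto\epsilon^{(Q-1)/2}W_0^{-1}$, which equals $\epsilon W_0^{-1}$ after checking signs. Hence the fixed field is $\bar k(Y)$. Since $\bar k(J_Q)\subseteq\bar k(Y)$, the function $T_Q$ exists and is unique. It is defined over $\mathbb F_p$ because $J_Q$ and $Y$ are.

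To prove \eqref{eq:TQsquare}, note that $J_Q^2=D_Q$ and
\[
Y^2-2\epsilon=W_0^2+W_0^{-2}=W+W^{-1}.
\]
To prove oddness, let $g\in\operatorname{PGL}_2(Q)\setminus\operatorname{PSL}_2(Q)$ be a nonsquare scaling. Then $g$ sends $Y\mapsto -Y$, because $W_0\mapsto a^{(Q-1)/2}W_0=-W_0$, and $J_Q\mapsto -J_Q$ by the character formula. The twist $T_{Q,\eta}$ then lies in $k(X)$ by oddness. If $\eta'=\eta u^2$, then $T_{Q,\eta'}(X)=uT_{Q,\eta}(X/u)$, which shows the class depends only on the square class of $\eta$.

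\emph{Descent counting.} For odd $Q\geqslant7$, the split-Cartan normalizer $H=D_{Q-1}$ is self-normalizing in $\operatorname{PSL}_2(Q)$. By Giudici \cite[Lemma~2.3]{Giudici2007}, its conjugacy class is stabilized by all of $\operatorname{Out}$. So Lemma~\ref{lem:outer-rigidity} gives exactly two forms, distinguished by outer image. It remains to match these with the two square classes. The diagonal involution is realized by the source twist $X\mapsto-X$ together with the target twist $t\mapsto-t$, which is exactly the quadratic twist by a nonsquare $\eta$. The two forms can also be distinguished directly: the fiber over $t=0$ contains the points $Y=\pm 2\sqrt{\epsilon}$-type points, and their splitting type over $k$ changes with the square class.

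For $\operatorname{PGL}_2(Q)$ with $Q>3$, we have $\mathrm N_{\operatorname{PGL}_2(\bar k)}(G)=G$ and $\mathrm N_G(H)=H$. As in the Borel proof, any two Frobenius lifts then differ by an element of $H$, so there is exactly one form.

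\emph{Primitivity and small $Q$.} Geometric primitivity is read off from the maximal-subgroup ranges recorded before Lemma~\ref{lem:outer-rigidity}: $D_{2(Q-1)}$ is maximal for $Q\ne5$, and $D_{Q-1}$ is maximal for $Q\geqslant13$. The $Q=7,9,11$ assignments are deferred to Theorem~\ref{thm:wild-novelty}, as the statement indicates. For $Q=3$ and $5$, the normalizer $D_{Q-1}$ is no longer self-normalizing, and at $Q=3$ the group $\operatorname{PGL}_2(3)=S_4$ has nontrivial $\mathrm N(H)/H$. For these cases I would compute $\mathscr W$ and $H^1_\theta(C_d,\mathscr W)$ via Theorem~\ref{thm:fixed-pair-cohomology} to obtain the counts $2,4,3$. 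Decomposability then follows because $H$ lies in a characteristic subgroup, such as $V_4\rtimes\cdots$ or $A_4$ inside $S_4$ or $A_5$, which is $\varphi$-stable, and Theorem~\ref{thm:decomposition} applies.

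The main obstacle is this small-$Q$ cohomology count, together with pinning down exactly which outer image corresponds to which $\eta$.
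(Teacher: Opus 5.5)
Your fixed-field computations, the oddness argument, the use of Lemma~\ref{lem:outer-rigidity} for odd $Q\geqslant7$, the $\mathrm N_G(H)=H$ argument for the PGL forms with $Q>3$, and the plan to count the $Q=3,5$ forms through $\mathscr W$ all follow the paper's route. For that count you still have to do the computation. The pair normalizers for $(3,\operatorname{PGL}_2(3))$, $(3,\operatorname{PSL}_2(3))$ and $(5,\operatorname{PSL}_2(5))$ are $D_8$, $D_8$ and $S_4$, so $\mathscr W\cong C_2$, $C_2\times C_2$ and $S_3$. Everything is defined over the prime field, so $\theta$ is trivial and the forms correspond to conjugacy classes of $\mathscr W$, which gives $2,4,3$.

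\textbf{The genuine gap is the decomposability step at $Q=3,5$.} You argue that $H$ lies in a proper $\varphi$-stable characteristic subgroup of $G$. That is true only for $\operatorname{PSL}_2(3)$, where $C_2\lneq V_4\lneq A_4$; it fails in the other three cases.
\begin{itemize}
\item For $\operatorname{PGL}_2(3)\cong S_4$, $H$ is the \emph{non-normal} Klein four group. It contains transpositions of $\mathbf P^1(\mathbb F_3)$, so it lies in none of the proper characteristic subgroups $1$, the normal $V_4$, or $A_4$. Its overgroup $D_8$ is a Sylow subgroup, not a characteristic one.
\item For $\operatorname{PSL}_2(5)\cong A_5$, the group is simple, so $A_4$ is not characteristic.
\item For $\operatorname{PGL}_2(5)\cong S_5$, $H=D_8$ is not contained in $A_5$ and is self-normalizing, so no normalizer argument is available either.
\end{itemize}

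The correct mechanism has two parts:
\begin{itemize}
\item A Frobenius lift normalizing both $G$ and $H$ also normalizes $\mathrm N_G(H)$. This gives the $\varphi$-stable chains $C_2\lneq V_4\lneq A_4$, $V_4\lneq D_8\lneq S_4$ and $V_4\lneq A_4\lneq A_5$, for every form.
\item For $\operatorname{PGL}_2(5)$, $\mathrm N_{\operatorname{PGL}_2(\bar k)}(G)=G$ and $\mathrm N_G(H)=H$, so any lift is $h\circ\Phi$ with $h\in H$, where $\Phi$ is coefficient Frobenius. Since $\Phi$ acts trivially on $G\leqslant\operatorname{PGL}_2(\mathbb F_5)$, the lift acts on $G$ as conjugation by $h$. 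Hence every overgroup of $H$ is $\varphi$-stable, for example $D_8\lneq S_4\lneq S_5$.
\end{itemize}
Theorem~\ref{thm:decomposition} then gives decomposability in all four cases.

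A smaller inaccuracy: your auxiliary distinguished pair $X^2=4\epsilon\eta$ lies over $t=0$ only when $Q\equiv3\pmod4$. When $Q\equiv1\pmod4$ it is the pair of index-$Q(Q-1)/4$ points over $t=\infty$. Inequivalence of the two square-class forms already follows from Lemma~\ref{lem:outer-rigidity}, so this does not affect the argument.
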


\begin{proof}
The fixed field of the PGL split normalizer is generated by $W+W^{-1}$, which gives \eqref{eq:SQ}. For PSL, the fixed-field inclusion gives the unique factor $T_Q$ in \eqref{eq:TQ-def}. Since $W=W_0^2$,
\(W+W^{-1}=Y^2-2\epsilon,\)
and $J_Q^2=D_Q$, which yields \eqref{eq:TQsquare}. A diagonal nonsquare scalar sends both $Y$ and $J_Q$ to their negatives, so $T_Q$ is odd and the two diagonal square classes both occur. For odd $Q\geqslant7$, the PSL stabilizer is self-normalizing in $\operatorname{PSL}_2(Q)$, so Lemma~\ref{lem:outer-rigidity} shows that the two forms are inequivalent and exhaustive. For $Q>3$, the PGL split-Cartan normalizer is self-normalizing in $\operatorname{PGL}_2(Q)$: its characteristic rotation subgroup determines its unordered eigenline pair. Since $\mathrm N_{\operatorname{PGL}_2(\bar k)}(G)=G$ for the PGL geometric group, two lifts with the same prescribed field component differ by an element of $H$. This proves the asserted PGL uniqueness. The geometric maximality assertions are precisely \cite[Theorem~2.2]{Giudici2007} and, for PGL, \cite[Theorem~3.5]{Giudici2007}.

For completeness, the small-field form counts follow directly from the exact equality criterion in Theorem~\ref{thm:exact-map-equivalence}. The automorphism group of the geometric quotient $C/H\to C/G$ is
\[
 \{a\in \mathrm N_{\operatorname{PGL}_2(\bar k)}(G):aHa^{-1}=H\}/H.
\]
For the split pairs with $(Q,G)=(3,\operatorname{PGL}_2(3))$, $(3,\operatorname{PSL}_2(3))$, and $(5,\operatorname{PSL}_2(5))$, these groups are respectively $C_2$, $C_2\times C_2$, and $S_3$. Indeed, the corresponding full pair normalizers are $D_8,D_8,S_4$, with $H$ of order $4,2,4$. All are defined over the prime field. Frobenius consequently acts trivially on these quotient groups, so their finite-field descents are indexed by conjugacy classes, numbering $2,4,3$. Every such descent exists: send Frobenius to a representative of the desired conjugacy class, and descend the two quotient curves; over a finite field both descended genus-zero curves are projective lines.

At $Q=3$ on the PSL side, $H=C_2\lneq \mathrm N_G(H)=V_4\lneq G=A_4$; on the PGL side, the split $V_4$ has normalizer $D_8\lneq S_4$ (it is not the normal Klein four subgroup). At $Q=5$ on the PSL side, $H=V_4\lneq \mathrm N_G(H)=A_4\lneq A_5$. A lift normalizing $H$ normalizes $\mathrm N_G(H)$, so each of these chains is Frobenius-stable. For the unique PGL form at $Q=5$, Frobenius acts on $G$ by conjugation by an element of $H$, and $D_8\lneq S_4\lneq S_5$ gives a stable intermediate subgroup. Theorem~\ref{thm:decomposition} proves decomposability in all these cases.
\end{proof}

\subsection{Nonsplit Cartan normalizers}

Choose $\alpha\in\mathbb F_{Q^2}\setminus\mathbb F_Q$ and put
\[
 T=\frac{Z-\alpha}{Z-\alpha^Q},
 \qquad W=T^{Q+1}.
\]
Define $N_Q\in\mathbb F_p(X)$ by
\begin{equation}\label{eq:NQ}
 N_Q(W+W^{-1})=
 -\frac{W(W^{Q-1}-1)^{Q+1}}{(W-1)^{Q^2+1}}.
\end{equation}
Then $\deg N_Q=Q(Q-1)/2$.

\begin{proposition}[Nonsplit-Cartan quotients]\label{prop:nonsplit-cartan}
For $Q>3$, the $\operatorname{PGL}_2(Q)$ nonsplit-Cartan normalizer has exactly one $k$-form, represented by $N_Q$, and it is geometrically primitive. If $Q$ is odd and $\epsilon=(-1)^{(Q-1)/2}$, there is a unique normalized odd $M_Q\in\mathbb F_p(X)$ satisfying
\begin{equation}\label{eq:MQsquare}
 M_Q(X)^2=N_Q(-X^2-2\epsilon),
 \qquad
 M_Q(X)=X^{-1}+O(X^{-2})\text{ at }\infty.
\end{equation}
For $\eta\in k^\times$, choose $\theta\in\bar k$ with $\theta^2=\eta$ and put $M_{Q,\eta}(X)=\theta M_Q(X/\theta)$. Since $M_Q$ is odd, $M_{Q,\eta}\in k(X)$, and its $k$-M\"obius class depends only on the square class of $\eta$. For odd $Q\geqslant5$, as $\eta$ ranges over the two square classes of $k^\times$, these give exactly the two $k$-forms of the $\operatorname{PSL}_2(Q)$ nonsplit-Cartan pair. They are geometrically primitive exactly when $Q\ne7,9$. At $Q=7,9$ exactly one of the two descents is the geometrically decomposable $k$-indecomposable class of degree $21,36$, respectively. At $Q=3$ the nonsplit stabilizer is $V_4\triangleleft A_4\cong\operatorname{PSL}_2(3)$, so it is not core-free.
\end{proposition}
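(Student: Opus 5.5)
The proof will follow the split-Cartan argument (Proposition~\ref{prop:split-cartan}), with the nonsplit torus in place of the split one.

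\emph{Step 1: the fixed field.} In the coordinate $T=(Z-\alpha)/(Z-\alpha^Q)$, the two eigenpoints $\alpha,\alpha^Q$ go to $0,\infty$, and the nonsplit torus of $\operatorname{PGL}_2(Q)$ acts by $T\mapsto\zeta T$ with $\zeta^{Q+1}=1$. The normalizer also contains the involution $T\mapsto 1/T$, which exchanges $\alpha$ and $\alpha^Q$. So the fixed field of the normalizer, of order $2(Q+1)$, is generated by $W+W^{-1}$ with $W=T^{Q+1}$. I will rewrite $Z^{Q^2}-Z$ and $Z^Q-Z$ in terms of $T$ and substitute into \eqref{eq:DQ}, which gives \eqref{eq:NQ}. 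The degree is then
\[
 \deg N_Q=\frac{|\operatorname{PGL}_2(Q)|}{2(Q+1)}=\frac{Q(Q-1)}{2}.
\]
The function $N_Q$ has coefficients in $\mathbb F_p$ because the quotient is Frobenius-stable; I will check this by the elimination of Proposition~\ref{prop:exact-finite-elimination}, or directly from the symmetry $W\mapsto W^{-1}$.

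\emph{Step 2: the PSL case.} For odd $Q$, the PSL stabilizer is the index-$2$ subgroup of the normalizer, of order $Q+1$. Its fixed field is generated by a square root of $W$ adjusted by a sign, namely $Y=\sqrt{-1}\,(T^{(Q+1)/2}-\epsilon\, T^{-(Q+1)/2})$ up to normalization, so that $Y^2=-(W+W^{-1})-2\epsilon$. Using $J_Q^2=D_Q$ gives $M_Q(Y)^2=N_Q(-Y^2-2\epsilon)$.

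A diagonal element outside $\operatorname{PSL}_2(Q)$ negates both $Y$ and $J_Q$, so $M_Q$ is odd. Fixing the leading behaviour at $\infty$ pins down the sign. Because $M_Q$ is odd, the twist $M_{Q,\eta}$ is defined over $k$ and depends only on the square class of $\eta$.

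\emph{Step 3: counting $k$-forms.} On the PGL side, $\mathrm N_{\operatorname{PGL}_2(\bar k)}(G)=G$, and the nonsplit normalizer is self-normalizing, since its characteristic torus determines the eigenpoint pair. So two Frobenius lifts differ by an element of $H$, and the form is unique.

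On the PSL side for $Q\ge5$, the dihedral stabilizer of order $Q+1$ is self-normalizing in $\operatorname{PSL}_2(Q)$; I will confirm this for $Q=5$, where the stabilizer is $S_3$ inside $A_5$. Lemma~\ref{lem:outer-rigidity} then gives at most two forms, distinguished by their outer images. Both forms occur, via the twists $\eta$, and they are inequivalent. As an independent check I would compare the splitting type over $k$ of the tame points in a suitable fiber.

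At $Q=3$, the nonsplit torus of $\operatorname{PSL}_2(3)=A_4$ has order $2$ and its normalizer is $V_4$, which is normal in $A_4$, so the pair is not core-free.

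\emph{Step 4: maximality, the main obstacle.} Geometric primitivity is exactly maximality in Giudici's ranges. For PGL, $D_{2(Q+1)}$ is maximal for all odd $Q>3$ by \cite[Theorem~3.5]{Giudici2007}, and also for even $Q$. For PSL, $D_{Q+1}$ is maximal unless $Q=7,9$, by \cite[Theorem~2.2]{Giudici2007}: at $Q=7$ it lies in $S_4$, and at $Q=9$ in $A_5$ inside $A_6$.

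The assertion that exactly one of the two descents at $Q=7,9$ is $k$-indecomposable rests on \cite[Table~1]{Giudici2007} and is proved in Theorem~\ref{thm:wild-novelty}. Here I would only cite it and check the degrees $21=7\cdot6/2$ and $36=9\cdot8/2$. The delicate point is matching which square class of $\eta$ carries the diagonal outer image. I would settle this by tracking how $\bar\delta\bar\phi^{\,s}$ acts on the two $\operatorname{PSL}_2(Q)$-classes of overgroups $S_4$ (or $A_5$).
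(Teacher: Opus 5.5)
Your plan follows the paper's proof step for step. The shared steps are: the coordinate $T$ with direct substitution into \eqref{eq:DQ}; PGL uniqueness from $\mathrm N_{\operatorname{PGL}_2(\bar k)}(G)=G$ and self-normalization of $H$; the PSL fixed coordinate together with Lemma~\ref{lem:outer-rigidity} for the two square-class forms; and Giudici for maximality. Your idea of tracking how $\bar\delta\bar\phi^{\,s}$ permutes the two classes of overgroups $S_4$ (resp.\ $A_5$) is a correct, self-contained route to the $Q=7,9$ assertion, which the paper leaves to Theorem~\ref{thm:wild-novelty}. Two corrections are needed.

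\textbf{The sign in $Y$.} Your displayed $Y=\sqrt{-1}\,(T^{(Q+1)/2}-\epsilon T^{-(Q+1)/2})$ squares to $-(W+W^{-1})+2\epsilon$, not $-(W+W^{-1})-2\epsilon$. It is in fact the invariant of the other index-two dihedral subgroup of $D_{2(Q+1)}$, whose reflections do not lie in $\operatorname{PSL}_2(Q)$, so $J_Q$ is not a function of it. The correct coordinate is $\sqrt{-1}\,(T^{(Q+1)/2}+\epsilon T^{-(Q+1)/2})$. To see this, note that a reflection $T\mapsto c/T$ with $c^{Q+1}=1$ has fixed points $\pm\sqrt c$, and these are $\mathbb F_Q$-rational exactly when $c^{(Q+1)/2}=1$. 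Involutions of $\operatorname{PSL}_2(Q)$ are split exactly when $Q\equiv1\pmod 4$. Hence the reflections lying in $\operatorname{PSL}_2(Q)$ are precisely those with $c^{(Q+1)/2}=\epsilon$, and these fix $T^{(Q+1)/2}+\epsilon T^{-(Q+1)/2}$.

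\textbf{Rationality of $M_Q$.} Your coordinate involves $\alpha$ and $\sqrt{-1}$, so writing $J_Q=M_Q(Y)$ only produces $M_Q$ over $\bar{\mathbb F}_p$. You still need the paper's one-line descent. Since $M_Q^2=N_Q(-X^2-2\epsilon)\in\mathbb F_p(X)$ and the normalization $X^{-1}+O(X^{-2})$ is Frobenius-stable, the chosen square root is Frobenius-fixed and so lies in $\mathbb F_p(X)$. The same normalization, combined with evenness of $N_Q(-X^2-2\epsilon)$, also gives oddness directly. That is an alternative to your argument via an outer torus element, which is also valid.
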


\begin{proof}
In the coordinate $T=(Z-\alpha)/(Z-\alpha^Q)$, the nonsplit Cartan subgroup
of $\operatorname{PGL}_2(Q)$ acts by $T\mapsto\zeta T$, where
$\zeta^{Q+1}=1$, and its normalizer is generated by these scalings and
$T\mapsto T^{-1}$. Writing $\Delta=\alpha-\alpha^Q$ and $Z=(\alpha^QT-\alpha)/(T-1)$, direct substitution into \eqref{eq:DQ}, with $W=T^{Q+1}$ and $\Delta^Q=-\Delta$, gives \eqref{eq:NQ}, including its sign and independence of $\alpha$. For $Q>3$, the nonsplit-Cartan normalizer is self-normalizing in $\operatorname{PGL}_2(Q)$: its characteristic rotation subgroup determines the conjugate eigenline pair. As in the PGL Borel case, the prescribed field component and $\mathrm N_G(H)=H$ make two normalizing lifts differ by $H$, proving uniqueness of the descended quotient. For odd $Q\geqslant5$, the PSL fixed coordinate has square $-X^2-2\epsilon$, yielding \eqref{eq:MQsquare}; the normalization at infinity is Frobenius-stable and changes sign under $X\mapsto-X$, so the chosen root lies in $\mathbb F_p(X)$ and is odd. The diagonal outer image gives the second square-class form, and Lemma~\ref{lem:outer-rigidity} shows that the two forms are inequivalent and exhaustive. The maximality exceptions $Q=7,9$ are those of \cite[Theorem~2.2]{Giudici2007}.
\end{proof}

\subsection{Subfield quotients}

\begin{proposition}[Primitive subfield quotients]\label{prop:subfield-wild}
Suppose $Q=Q_0^\ell$ with $\ell$ prime and the corresponding subfield subgroup is maximal. The exact number of $k$-M\"obius classes is
\[
\begin{array}{c|c|c|c}
G&H&\text{condition}&\#\text{ classes}\\ \hline
\operatorname{PGL}_2(Q)=\operatorname{PSL}_2(Q)&\operatorname{PGL}_2(Q_0)&Q\text{ even},\ Q_0\ne2&1\\
\operatorname{PSL}_2(Q)&\operatorname{PSL}_2(Q_0)&Q\text{ odd},\ \ell\text{ odd}&2\\
\operatorname{PSL}_2(Q)&\operatorname{PGL}_2(Q_0)&Q\text{ odd},\ \ell=2&1\\
\operatorname{PGL}_2(Q)&\operatorname{PGL}_2(Q_0)&Q\text{ odd},\ \ell\text{ odd}&1.
\end{array}
\]
Exact invariant-field representatives are determined by
\[
\begin{aligned}
 D_Q&=R^+(D_{Q_0}) &&\text{for either PGL case},\\
 J_Q&=R^-(J_{Q_0}) &&\text{for the odd-PSL odd-exponent case},\\
 J_Q&=R^{\mathrm{sq}}(D_{Q_0}) &&\text{for the odd square-subfield PSL case}.
\end{aligned}
\] In the odd-PSL odd-exponent case, $R^-$ is odd; its two $k$-M\"obius classes are the square and nonsquare quadratic twists. The small-field realization $Q=4,Q_0=2$ coincides with the split-Cartan $S_3$ case of Proposition~\ref{prop:split-cartan} and is counted there.
\end{proposition}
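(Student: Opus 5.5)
The plan follows the template already used for the Borel and Cartan quotients. There are three steps: construct the invariant-field representatives, count the admissible semilinear Frobenius lifts, and rule out extra forms.

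\textbf{Step 1: representatives.} For each row of the table I first fix the standard embedding $H\leqslant G\leqslant\operatorname{PGL}_2(Q)$ coming from $\mathbb F_{Q_0}\subseteq\mathbb F_Q$. The fixed fields are then generated by Dickson invariants:
\begin{itemize}
\item $\bar k(Z)^{\operatorname{PGL}_2(Q_0)}=\bar k(D_{Q_0})$;
\item $\bar k(Z)^{\operatorname{PSL}_2(Q_0)}=\bar k(J_{Q_0})$ for odd $Q_0$;
\item $\bar k(Z)^{G}$ is $\bar k(D_Q)$ or $\bar k(J_Q)$.
\end{itemize}
The inclusion of fixed fields $\bar k(D_Q)\subseteq\bar k(D_{Q_0})$, and its PSL analogues, gives a unique $R$ with $D_Q=R^+(D_{Q_0})$, $J_Q=R^-(J_{Q_0})$ or $J_Q=R^{\mathrm{sq}}(D_{Q_0})$. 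Existence and the fact that $R\in\mathbb F_p(X)$ come from Proposition~\ref{prop:exact-finite-elimination} applied over $F=\mathbb F_p$, since all these invariants have prime-field coefficients. The degrees equal $[G:H]$.

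For the oddness of $R^-$ I use the transformation rule $J(gZ)=\chi_2(\det g)J(Z)$. A diagonal element $g=\operatorname{diag}(\lambda,1)$ with $\lambda$ a nonsquare in $\mathbb F_{Q_0}$ is also a nonsquare in $\mathbb F_Q$, because $\ell$ is odd. It therefore negates both $J_{Q_0}$ and $J_Q$, so $R^-(-X)=-R^-(X)$.

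\textbf{Step 2: counting forms.} I would use the same lift argument as for the Borel case.
\begin{itemize}
\item \emph{PGL rows and the even row.} Here $\mathrm N_{\operatorname{PGL}_2(\bar k)}(G)=G$, and $H$ is self-normalizing in $G$, since $H$ is maximal and not normal. Hence two lifts with the same field component differ by an element of $H$, and the form is unique.
\item \emph{Odd PSL rows.} Here $H$ is again self-normalizing. Lemma~\ref{lem:outer-rigidity} then reduces the count to which of the two outer images $\bar\phi^{s}$ and $\bar\delta\bar\phi^{s}$ stabilize the $G$-class of $H$.
\end{itemize}
The stabilization question is settled by Giudici~\cite[Lemma~2.3]{Giudici2007}.
\begin{itemize}
\item For $\operatorname{PSL}_2(Q_0)$ with $\ell$ odd, $G$ has a single class of such subgroups. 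It is stable under $\bar\delta$, realized by a diagonal element of $\operatorname{PGL}_2(Q_0)$ normalizing $H$, and under field automorphisms. So both images occur and there are $2$ classes, realized as the square and nonsquare twists $\theta R^-(X/\theta)$, exactly as for $T_{Q,\eta}$.
\item For $\operatorname{PGL}_2(Q_0)$ with $\ell=2$, the two $G$-classes of $\operatorname{PGL}_2(Q_0)$ are swapped by $\bar\delta$. For given $s$, exactly one of the two outer images fixes the class, giving $1$ class.
\end{itemize}

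\textbf{Step 3: the $Q=4,Q_0=2$ remark.} This is the conjugacy identification $\operatorname{PGL}_2(2)\cong S_3$, with $S_3$ equal to the split-Cartan normalizer $D_{2(Q-1)}$ in $A_5$. It is immediate by order counting, since all $S_3$ subgroups of $A_5$ are conjugate.

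\textbf{Main obstacle.} The delicate point is the $\ell=2$ row. I must check that the surviving outer image really occurs for every $s$. One of $\bar\phi^{s}$ and $\bar\delta\bar\phi^{s}$ must fix the class, which requires knowing how $\bar\phi$ permutes the two classes; Giudici records that $\bar\phi$ fixes them. I must also confirm that the inner/outer choice matches a genuine datum with $e(c)=d$ as in Proposition~\ref{prop:fixed-pair-core-order}. I would settle existence concretely by noting that $R^{\mathrm{sq}}\in\mathbb F_p(X)$ already provides a $k$-form for every $k$.
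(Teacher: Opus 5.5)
Your overall route is the paper's: fixed-field inclusions and prime-field uniqueness give $R^+,R^-,R^{\mathrm{sq}}$; the nonsquare diagonal element of $\operatorname{PGL}_2(Q_0)$, still outer because $\ell$ is odd, gives the oddness of $R^-$ and the second twist; Lemma~\ref{lem:outer-rigidity} with Giudici's Lemma~2.3 handles the odd-$\operatorname{PSL}$ rows; and $\mathrm N_G(H)=H$ with $\mathrm N_{\operatorname{PGL}_2(\bar k)}(G)=G$ handles the $\operatorname{PGL}$ rows. However, one step is missing, and it is exactly the step that decides the count in the $\ell=2$ row. Lemma~\ref{lem:outer-rigidity} counts forms for a \emph{fixed} geometric $G$-conjugacy class of $H$. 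In the square-subfield row there are two $G$-classes of $\operatorname{PGL}_2(Q_0)$, and your argument produces one descent, with outer image $\bar\phi^{\,s}$, for each of them. That only bounds the number of $k$-M\"obius classes by two. The fact that $\bar\delta$ swaps the two classes explains why $\bar\delta\bar\phi^{\,s}$ is inadmissible. It does not by itself show that the two surviving descents give the same $k$-M\"obius class.

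The paper closes this by conjugating the entire semilinear datum by a diagonal M\"obius transformation $\delta\in\operatorname{PGL}_2(Q)\setminus\operatorname{PSL}_2(Q)$. This fixes $G$ and carries $H$ to a representative of the other class. Because $\operatorname{Out}(G)$ is abelian, it also carries a lift with outer image $\bar\phi^{\,s}$ to a lift with the same outer image. By the uniqueness in Lemma~\ref{lem:outer-rigidity}, the transported datum is therefore the unique descent for the other class. Since $\delta$ intertwines the Frobenius descents on both quotient curves, the two descended $k$-covers are $k$-isomorphic even though $\delta$ need not be $k$-rational. With this added, your count of $1$ is justified.

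Note also that the difficulty you flag as the main obstacle is not the real one. Existence in this row is immediate from $R^{\mathrm{sq}}\in\mathbb F_p(X)$, together with the fact that $\bar\phi$ fixes each class. The identification of the two $G$-classes is what actually requires an argument.
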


\begin{proof}
Existence and uniqueness of the rational factors follow from the fixed-field inclusions and subgroup indices, and finite elimination makes them effective. All the invariant functions involved lie in $\mathbb F_p(Z)$, so uniqueness gives $R^+,R^-,R^{\mathrm{sq}}\in\mathbb F_p(X)$. In the odd-PSL odd-exponent case, a diagonal element over $\mathbb F_{Q_0}$ remains outer over $\mathbb F_Q$ because $\ell$ is odd; it negates both $J_{Q_0}$ and $J_Q$, forcing $R^-$ to be odd and giving the second quadratic twist. Lemma~\ref{lem:outer-rigidity} separates the two forms. In the odd square-subfield case, the diagonal outer automorphism exchanges the two $G$-classes of $\operatorname{PGL}_2(Q_0)$ while field automorphisms preserve each \cite[Lemma~2.3]{Giudici2007}; hence only the field outer image stabilizes a fixed class. Conjugating the entire semilinear datum by a diagonal M\"obius transformation transports the unique descent from one class to the other and gives equality of the resulting $k$-M\"obius classes, so the two geometric classes yield one $k$-M\"obius class even if that transformation is not $k$-rational. The PGL cases have no further diagonal freedom, and $\mathrm N_G(H)=H$ excludes additional twists.
\end{proof}

\subsection{Cases with stabilizer \texorpdfstring{$A_4,S_4$, or $A_5$}{A4, S4, or A5}}

For $Q=p^a$, write
\[
 \operatorname{P\Gamma L}_2(Q)
 :=\operatorname{PGL}_2(Q)\rtimes
 \operatorname{Gal}(\mathbb F_Q/\mathbb F_p)
\]
for the \emph{projective semilinear group}, where field automorphisms act
on matrix entries. We write $\operatorname{P\Sigma L}_2(Q)$ for its
subgroup generated by $\operatorname{PSL}_2(Q)$ and these field
automorphisms.

\begin{proposition}[Primitive quotients with the stabilizers $A_4,S_4,A_5$]\label{prop:exceptional-wild}
For the four prime-field cases below, assume $p\geqslant5$; the square-field $A_5$ case also allows $p=3$. The primitive Lie-type cases with $H\cong A_4,S_4$, or $A_5$ have the following finite-field class counts:
\[
\begin{array}{c|c|c|c}
G&H&\text{parameter range}&\#\text{ classes}\\ \hline
\operatorname{PSL}_2(p)&A_4&p\equiv\pm3\ (8),\ p\not\equiv\pm1\ (10)&2\\
\operatorname{PSL}_2(p)&S_4&p\equiv\pm1\ (8)&1\\
\operatorname{PSL}_2(p)&A_5&p\equiv\pm1\ (10)&1\\
\operatorname{PSL}_2(p^2)&A_5&p\equiv\pm3\ (10)&1\\
\operatorname{PGL}_2(p)&S_4&p\equiv\pm3\ (8)&1.
\end{array}
\]
All are geometrically primitive. In the first case the two arithmetic groups are $\operatorname{PSL}_2(p)$ and $\operatorname{PGL}_2(p)$. In the square-field $A_5$ case, over $k=\mathbb F_{p^s}$ the arithmetic group is $\operatorname{PSL}_2(p^2)$ for $s$ even and $\operatorname{P\Sigma L}_2(p^2)$, with point stabilizer $S_5$, for $s$ odd. Exact invariant-field representatives are obtained by factoring $J_Q$ or $D_Q$ through tetrahedral, octahedral, or icosahedral fixed-field generators.
\end{proposition}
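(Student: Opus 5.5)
The plan follows the template already used for the Borel, Cartan and subfield quotients. For each row we first fix the geometric pair $(G,H)$ and check that it is core-free and maximal. Maximality in the stated parameter ranges is Giudici's list \cite[Theorems~2.1, 2.2 and~3.5]{Giudici2007}, which was recorded in the Dickson--Giudici ranges before Lemma~\ref{lem:outer-rigidity}. Since $G$ is simple, or almost simple with simple socle, and $H$ is a proper subgroup containing no nontrivial normal subgroup of $G$, the pair is core-free. Geometric primitivity then holds in every row, and Theorem~\ref{thm:decomposition} shows that the arithmetic action is primitive for every admissible Frobenius lift.

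The class count comes from an enumeration of admissible outer images.
\begin{itemize}
\item[(a)] In each row $H$ is maximal and non-normal, so $\mathrm N_G(H)=H$.
\item[(b)] For $G=\operatorname{PSL}_2(Q)$ with $Q$ odd, Lemma~\ref{lem:outer-rigidity} applies. Each outer image in $\{\bar\phi^{\,s},\bar\delta\bar\phi^{\,s}\}$ that stabilizes the $G$-class of $H$ gives exactly one form, and distinct images give inequivalent forms.
\item[(c)] Existence of the descent for an admissible image follows as in Proposition~\ref{prop:split-cartan}: pick a lift normalizing $H$ and descend both quotient lines, which are genus zero over a finite field and hence $\mathbf P^1$.
\item[(d)] The count is therefore the number of outer images in that two-element set which fix the class of $H$.
\end{itemize}
By \cite[Lemma~2.3]{Giudici2007}, the effect of $\bar\delta$ on these classes is as follows.
\begin{itemize}
\item For $H=A_4$ with $p\equiv\pm3\pmod 8$, there is a single $\operatorname{PSL}_2(p)$-class, fixed by $\bar\delta$. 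The field part is trivial since $Q=p$. Both images occur, giving $2$ classes with arithmetic groups $\operatorname{PSL}_2(p)$ and $\operatorname{PGL}_2(p)$.
\item For $H=S_4$ with $p\equiv\pm1\pmod 8$, and for $H=A_5$ with $p\equiv\pm1\pmod{10}$, there are two $G$-classes swapped by $\bar\delta$. Only the trivial image fixes a class. Conjugating the whole datum by a diagonal element carries one class to the other, as in the square-subfield case of Proposition~\ref{prop:subfield-wild}. This gives $1$ class.
\item For $\operatorname{PSL}_2(p^2)$ with $H=A_5$, there are again two classes swapped by $\bar\delta$. Here $\bar\phi$ fixes each class, so exactly the image $\bar\phi^{\,s}$ is admissible, giving $1$ class.
\item For $\operatorname{PGL}_2(p)$ with $H=S_4$, we have $\mathrm N_{\operatorname{PGL}_2(\bar k)}(G)=G$ and $\mathrm N_G(H)=H$. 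The uniqueness argument of Proposition~\ref{prop:borel-wild} then gives $1$ class.
\end{itemize}

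Next come the arithmetic groups. In the $A_4$ row, the arithmetic group is $\langle G,\psi\rangle$ with outer image trivial or $\bar\delta$, giving $\operatorname{PSL}_2(p)$ or $\operatorname{PGL}_2(p)$. In the square-field $A_5$ row, the image $\bar\phi^{\,s}$ is trivial exactly when $s$ is even. For odd $s$ the arithmetic group is $\operatorname{P\Sigma L}_2(p^2)=\operatorname{PSL}_2(p^2)\rtimes\langle\phi\rangle$. Its point stabilizer is $\langle A_5,\psi\rangle$, an extension of $A_5$ by an outer involution that acts faithfully on $A_5$. Since $\operatorname{Aut}(A_5)=S_5$, this stabilizer is $S_5$. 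The $p=3$ allowance in this row comes from Faber's list, which contains $A_5\le\operatorname{PSL}_2(9)$ in characteristic $3$ \cite[Theorem~B]{Faber2023}. The argument above does not use tameness.

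Representatives are obtained by the fixed-field reconstruction proposition. Take a polyhedral invariant of degree $|H|$ for the embedded $H$ (for example, the Klein invariants specialized to $\mathbb F_{p}$ or $\mathbb F_{p^2}$). Then express $J_Q$, or $D_Q$ in the $\operatorname{PGL}$ case, as a rational function of it using Proposition~\ref{prop:exact-finite-elimination}. For the twisted $A_4$ form, conjugate by a diagonal nonsquare scalar, which changes the sign of $J_Q$, to get the quadratic twist as in Proposition~\ref{prop:borel-wild}.

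The main obstacle is step (d): pinning down exactly which outer images stabilize the $G$-class of $H$ in each congruence range, especially the $A_4$ row. There one must confirm that $A_4$ is a single $\operatorname{PSL}_2(p)$-class in that range, and hence $\bar\delta$-stable, in contrast with the split $S_4$ and $A_5$ classes. I would extract this directly from \cite[Lemma~2.3 and Table~1]{Giudici2007}. As a cross-check I would count classes: there are $|G|/|\mathrm N_G(H)|$ conjugates of $H$ and $|\operatorname{PGL}_2(p)|/|\mathrm N_{\operatorname{PGL}_2(p)}(H)|$ conjugates under $\operatorname{PGL}_2(p)$, and the ratio shows whether the $\operatorname{PGL}_2(p)$-class splits into one or two $\operatorname{PSL}_2(p)$-classes. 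For example, $\mathrm N_{\operatorname{PGL}_2(p)}(A_4)=S_4$ gives one class, while $\mathrm N_{\operatorname{PGL}_2(p)}(S_4)=S_4$ gives two.
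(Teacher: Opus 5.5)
Your proposal is correct and follows essentially the paper's route. Giudici's Lemma~2.3 decides which outer images stabilize the $G$-class of $H$, Lemma~\ref{lem:outer-rigidity} gives uniqueness and inequivalence, and conjugating the entire semilinear datum by a diagonal element merges the two $G$-classes in the $S_4$ and $A_5$ rows. You also make the $\operatorname{PGL}_2(p)$, $S_4$ row explicit via the Borel-type normalizer argument, which the paper leaves implicit.

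The one assertion you leave unjustified is that $\psi$ acts on $A_5$ as an \emph{outer} automorphism, so that $\langle A_5,\psi\rangle\cong S_5$ rather than $A_5\times C_2$. This holds because an element of the outer coset centralizing $A_5$ would square to an element of $C_G(A_5)=1$. By Lang's theorem it would then be conjugate to the coefficient $q$-Frobenius, placing $A_5$ in a conjugate of $\operatorname{PGL}_2(q)$, which is impossible since $5\nmid q(q^2-1)$ for $q=p^s$ with $s$ odd. The paper instead takes the $S_5$ normalizer directly from \cite[Proposition~3.1]{Giudici2007}.
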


\begin{proof}
The required statements about subgroup conjugacy classes and the action of outer automorphisms are given by \cite[Theorem~2.2 and Lemma~2.3]{Giudici2007}. In the maximal $A_4$ case there is a single $\operatorname{PSL}_2(p)$-conjugacy class of $A_4$ subgroups, and
\[
 \mathrm N_{\operatorname{PGL}_2(p)}(A_4)/A_4\cong C_2,
\]
so a genuine diagonal quadratic twist remains. The induced involution on $\mathbf P^1/A_4$ is split: the degree-$2$ cover $\mathbf P^1/A_4\to\mathbf P^1/S_4$ ramifies over the distinct $S_4$ branch points with inertia orders $2$ and $4$, hence its two fixed points are $\mathbb F_p$-rational. Thus it is conjugate to $X\mapsto-X$ and produces exactly two square-class forms; Lemma~\ref{lem:outer-rigidity} shows that they are inequivalent.

For maximal $S_4$ and prime-field $A_5$, there are two $\operatorname{PSL}_2(p)$-conjugacy classes of such subgroups, and the diagonal outer automorphism interchanges these two $\operatorname{PSL}_2(p)$-conjugacy classes; consequently it cannot normalize a fixed stabilizer. The two geometric subgroup classes are nevertheless identified by the diagonal M\"obius transformation and a target sign, leaving one $k$-M\"obius class. For $Q=p^2$ and $H=A_5$, the diagonal outer automorphism interchanges the two $\operatorname{PSL}_2(p^2)$-conjugacy classes of subgroups while the field automorphism preserves each; \cite[Proposition~3.1]{Giudici2007} gives the maximal $S_5$ normalizer in the field extension. Conjugating the entire semilinear datum by a diagonal element transports the unique descent with field-automorphism outer image between the two classes, and the exact equality criterion identifies the descended $k$-M\"obius classes. The invariant-field factor is first constructed over $\mathbb F_{p^2}$; when $k$ does not contain that field, the $k$-representative is obtained by semilinear descent. For $p=3$ one uses the characteristic-$3$ $A_5$ fixed field from the earlier characteristic-$3$ case, not a tame icosahedral specialization.
\end{proof}

\section{Wild classification: Lie-type novelties and completion}\label{sec:wild-novelty}

The remaining defining-characteristic Lie cases are the counterpart of the geometrically decomposable tame quartic and the higher-rank affine phenomenon: the geometric stabilizer is not maximal, but adjoining Frobenius makes the arithmetic point stabilizer maximal. Thus $k$-primitivity can persist even when geometric primitivity fails.

Let $G=\operatorname{PSL}_2(Q)\triangleleft A\leqslant\operatorname{P\Gamma L}_2(Q)$ and let $U$ be the arithmetic point stabilizer. Then $H=U\cap G$.
A rational function is $k$-indecomposable but geometrically decomposable precisely when $U$ is maximal in $A$ while $H$ is not maximal in $G$. This is exactly a \emph{novelty maximal subgroup} in Giudici's terminology. Giudici's Theorem~1.1 and Table~1 classify all such novelties \cite{Giudici2007}. Since over a finite field $A/G\cong\operatorname{Gal}(\kappa/k)$ is cyclic, the two $\operatorname{P\Gamma L}_2(9)$ novelty cases with quotient $C_2\times C_2$ are excluded.

For $Q=9$ we follow Giudici's notation and write $M_{10}:=M(1,9)$ for the extension of $\operatorname{PSL}_2(9)$ generated by $\operatorname{PSL}_2(9)$ and the product of the diagonal and field involutions.

\begin{theorem}[Complete Lie-type novelty list]\label{thm:wild-novelty}
Every $k$-indecomposable but geometrically decomposable Lie-type rational function with Galois closure of genus zero has one of the following group data; conversely each case occurs over the indicated finite fields and gives exactly one $k$-M\"obius class.
\[
\begin{array}{c|c|c|c|c}
G&A&H&U&\deg\\ \hline
\operatorname{PSL}_2(7)&\operatorname{PGL}_2(7)&D_6&D_{12}&28\\
\operatorname{PSL}_2(7)&\operatorname{PGL}_2(7)&D_8&D_{16}&21\\
\operatorname{PSL}_2(9)&\operatorname{PGL}_2(9)&D_{10}&D_{20}&36\\
\operatorname{PSL}_2(9)&\operatorname{PGL}_2(9)&D_8&D_{16}&45\\
\operatorname{PSL}_2(9)&M_{10}&D_{10}&\operatorname{AGL}_1(5)&36\\
\operatorname{PSL}_2(9)&M_{10}&D_8&\operatorname{SD}_{16}&45\\
\operatorname{PSL}_2(11)&\operatorname{PGL}_2(11)&D_{10}&D_{20}&66\\
\operatorname{PSL}_2(p)&\operatorname{PGL}_2(p)&A_4&S_4& p(p^2-1)/24
\end{array}
\]
Here $\operatorname{SD}_{16}$ is the semidihedral group of order $16$, with presentation $\langle a,b\mid a^8=b^2=1,\ bab^{-1}=a^3\rangle$. In the degree-$36$ row, $\operatorname{AGL}_1(5)=\mathbb F_5\rtimes\mathbb F_5^\times$ uses the faithful multiplication action. The last case occurs for primes $p\equiv\pm11,\pm19\pmod{40}$.
For $Q=7$ and $11$ these classes occur over every $\mathbb F_{Q^s}$; the last case occurs over every $\mathbb F_{p^s}$ in the displayed prime range. For $k=\mathbb F_{3^s}$, the $Q=9$ novelty group is $\operatorname{PGL}_2(9)$ when $s$ is even and $M_{10}$ when $s$ is odd; hence for fixed $k$ there is only one novelty class in each of degrees $36,45$. No such geometrically decomposable $k$-indecomposable Lie-type rational function has geometric group $\operatorname{PGL}_2(Q)$.
\end{theorem}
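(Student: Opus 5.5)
The plan is to reduce the statement to Giudici's classification of novelty maximal subgroups, then sieve that list by the genus-zero semilinear-datum constraints. By Theorem~\ref{thm:decomposition} and Corollary~\ref{cor:geoarith}, a Lie-type rational function that is $k$-indecomposable but geometrically decomposable corresponds to a datum $(\kappa/k,C,G,H,\varphi)$. In that datum $G$ is $\operatorname{PSL}_2(Q)$ or $\operatorname{PGL}_2(Q)$ in defining characteristic, $U=\langle H,\varphi\rangle$ is maximal in $A=\langle G,\varphi\rangle$, and $H=U\cap G$ is core-free but not maximal in $G$. Since $\mathrm N_{\operatorname{PGL}_2(\bar k)}(G)\leqslant\operatorname{PGL}_2(Q)$ (as in Lemma~\ref{lem:outer-rigidity}), Frobenius acts on $G$ through $\operatorname{P\Gamma L}_2(Q)$, so $A$ embeds in $\operatorname{P\Gamma L}_2(Q)$ with $A/G$ cyclic. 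Here $U$ is a novelty maximal subgroup in Giudici's sense, and \cite[Theorem~1.1, Corollary~1.2, Table~1]{Giudici2007} lists all candidates $(G,A,H,U)$.

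First, I would dispose of $G=\operatorname{PGL}_2(Q)$. For $Q$ odd, $\operatorname{Out}(\operatorname{PGL}_2(Q))$ consists only of field automorphisms. For $Q$ even, $\operatorname{PGL}_2=\operatorname{PSL}_2$. Giudici's table contains no novelties over $\operatorname{PGL}_2(Q)$ beyond those recorded over $\operatorname{PSL}_2$, and his Proposition~3.1 rules out subfield novelties.

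Next, I would discard the two $\operatorname{P\Gamma L}_2(9)$ rows, whose quotient $A/G\cong C_2\times C_2$ is not cyclic. The remaining rows are exactly the displayed $(G,A,H,U)$, together with the $A_4<S_4$ row for $p\equiv\pm11,\pm19\pmod{40}$. That congruence is Giudici's condition that $A_4$ is nonmaximal in $\operatorname{PSL}_2(p)$ (contained in $S_4$ or $A_5$) while $S_4$ is maximal in $\operatorname{PGL}_2(p)$. The degrees are the indices $|G|/|H|$. The outer image of $\varphi$ must be the class generating $A/G$. By the Lemma~\ref{lem:outer-rigidity} description, for $k=\mathbb F_{p^s}$ this class is $\bar\phi^s$ or $\bar\delta\bar\phi^s$.

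For $Q=p\in\{7,11\}$ and in the $A_4$ case, $\bar\phi$ is trivial. Hence $A=\operatorname{PGL}_2(p)$ forces the diagonal image $\bar\delta$, which exists over every $\mathbb F_{p^s}$. For $Q=9$ and $k=\mathbb F_{3^s}$, the image is $\bar\delta\bar\phi^s$. This gives $\operatorname{PGL}_2(9)$ when $s$ is even and $M_{10}$ when $s$ is odd, explaining the parity statement.

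Existence then follows from the corresponding split/nonsplit Cartan formulas of Propositions~\ref{prop:split-cartan} and~\ref{prop:nonsplit-cartan}. Those propositions specify the diagonal descent in degrees $28,66$ and $21,36$, together with the $D_8$-Cartan-type stabilizer cases and the $A_4$ twist of Proposition~\ref{prop:exceptional-wild}. In each case I would descend the quotient curves as in Theorem~\ref{thm:reconstruction}, and both descended curves are $\mathbf P^1$ over $k$.

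Uniqueness of the $k$-class is the main obstacle. In most rows $H$ is not self-normalizing in $G$ (for example $\mathrm N_G(H)=U\cap G$-type overgroups), so Lemma~\ref{lem:outer-rigidity} does not apply verbatim. I would instead use the cohomological count of Theorem~\ref{thm:fixed-pair-cohomology}. Here $\mathscr W=\mathrm N_{\operatorname{PGL}_2(Q)}(H)/H$, and cocycles are restricted to the prescribed outer class and to $e(c)=d$. I would compute $\mathscr W$ row by row: it is $C_2$ for $D_{2m}$ inside a dihedral normalizer, and $C_2$ for $A_4<S_4$. I would then check that the admissible Frobenius lifts with the fixed outer image form a single twisted-conjugacy class. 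The alternative lifts either change the outer image or lie in $G$, and in the latter case they produce a $\varphi$-stable intermediate subgroup $\mathrm N_G(H)$, contradicting maximality of $U$.

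I expect the delicate points to be the $Q=9$ rows with two $G$-classes of $D_8$ and $D_{10}$ swapped by $\bar\delta$. There I would argue, as in Proposition~\ref{prop:subfield-wild}, that conjugating the whole datum by a diagonal element identifies the two classes, so they give one $k$-Möbius class.
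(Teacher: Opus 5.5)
Your outline follows the paper's proof in its main steps. Those are: Giudici's novelty list with socle $G$ and maximal subgroup $U$; the cyclicity filter that removes the two $\operatorname{P\Gamma L}_2(9)$ rows; the outer-image and parity analysis at $Q=9$; and existence from the diagonal Cartan descents and a quadratic twist of the $A_4$ quotient. The one real divergence is uniqueness, and the premise behind it is false. Every $H$ in the table is self-normalizing in $G$. This is forced in general by Theorem~\ref{thm:structural-consequences}(\ref{item:structural-consequences-3}), since the arithmetic action is primitive and $H\ne1$. It is also visible case by case:
\begin{itemize}
\item $D_6=\mathrm N_G(C_3)$ and $D_8\in\operatorname{Syl}_2(G)$ for $Q=7$;
\item $D_8\in\operatorname{Syl}_2(G)$ and $D_{10}=\mathrm N_G(C_5)$ for $Q=9$;
\item $D_{10}=\mathrm N_G(C_5)$ for $Q=11$;
\item $\mathrm N_{\operatorname{PSL}_2(p)}(A_4)=A_4$, because $S_4\not\leqslant\operatorname{PSL}_2(p)$ when $p\equiv\pm3\pmod 8$.
\end{itemize}
Your parenthetical ``$\mathrm N_G(H)=U\cap G$'' would itself say $\mathrm N_G(H)=H$. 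Your closing argument, that a $\varphi$-stable $\mathrm N_G(H)\ne H$ contradicts maximality of $U$, is exactly the proof of self-normalization.

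Hence Lemma~\ref{lem:outer-rigidity} applies verbatim. This is what the paper uses when it says the cover-over-target automorphism groups $\mathrm N_G(H)/H$ are trivial. Your route through Theorem~\ref{thm:fixed-pair-cohomology} is valid but only repackages this. Here $\mathscr W=\mathrm N_{\operatorname{PGL}_2(Q)}(H)/H\cong C_2$ with trivial $\theta$, so there are two twisted classes. What cuts the count to one is the restriction to the prescribed outer image, not the condition $e=d$: at $Q=9$ with $s$ odd, both images $\bar\phi$ and $\bar\delta\bar\phi$ give $e=d=2$.

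The $Q=9$ difficulty you anticipate does not arise. $D_8$ is a Sylow $2$-subgroup and $D_{10}$ a Sylow-$5$ normalizer of $A_6\cong\operatorname{PSL}_2(9)$, so each forms a single $G$-class. Moreover, if $\bar\delta$ swapped two classes, no Frobenius lift with diagonal component could normalize $H$, contradicting the novelty you are trying to realize.

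Finally, the $A_4$ existence is not in Proposition~\ref{prop:exceptional-wild}. That proposition covers only the range where $A_4$ is maximal, which is disjoint from $p\equiv\pm11,\pm19\pmod{40}$. In the novelty range, existence is the quadratic twist of $J_p=R_p\circ h$ by the involution $\mu$ with $R_p\circ\mu=-R_p$, as constructed in the theorem's own proof.
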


\begin{proof}
Apply Giudici's classification with his socle $T$ equal to $G$, his maximal subgroup $M$ equal to $U$, and $M\cap T=H$. Completeness of the group data follows from \cite[Theorem~1.1, Corollary~1.2 and Table~1]{Giudici2007}, while \cite[Proposition~3.1]{Giudici2007} excludes subfield novelties. The degree column is $[G:H]$. In the two $M_{10}$ rows, the actions on the cyclic normal subgroups are $a\mapsto a^2$ on $C_5$ and $a\mapsto a^3$ on $C_8$, respectively, giving the stated groups. The cyclicity of $A/G$ gives the finite-field filter above. For $Q=9$ the field component of Frobenius is forced by the parity of $s$, leaving only the diagonal choice, which selects $\operatorname{PGL}_2(9)$ for even $s$ and the field-diagonal group $M_{10}$ for odd $s$.

Existence and uniqueness are supplied by exact invariant-field constructions. For $Q=7,9,11$ the two Cartan $k$-forms from Section~\ref{sec:wild-lie} specialize to the displayed degrees; exactly the form with diagonal component has arithmetic point stabilizer $\mathrm N_A(H)$ equal to Giudici's maximal novelty subgroup, while the other is not novelty. At $Q=9$ the field component, hence whether $A$ is $\operatorname{PGL}_2(9)$ or $M_{10}$, is forced by the parity of $s$. Their cover-over-target automorphism groups are trivial, so each gives one $k$-M\"obius class. For the infinite $A_4$ family, specialize the invariant $J_Q$ of \eqref{eq:JQ} to $Q=p$, and write it as $J_p$. Choose $H=A_4\leqslant\operatorname{PSL}_2(p)$, a generator $h$ of $\mathbb F_p(Z)^H$, and write
\(J_p(Z)=R_p(h(Z)).\)
Then $\deg R_p=p(p^2-1)/24$. An element of $\mathrm N_{\operatorname{PGL}_2(p)}(H)\setminus H$ induces an involution $\mu$ on $h$ and satisfies
\(R_p\circ\mu=-R_p.\)
The untwisted form has arithmetic point stabilizer $A_4$ and is not primitive; the unique nontrivial quadratic twist has arithmetic group $\operatorname{PGL}_2(p)$ and point stabilizer $\mathrm N_{\operatorname{PGL}_2(p)}(A_4)=S_4$, which is maximal in the novelty range. Since $\mathrm N_{\operatorname{PSL}_2(p)}(A_4)=A_4$, this gives exactly one novelty class.
\end{proof}

For the remainder of this subsection, fix the infinite $A_4$ case with $H=A_4\leqslant\operatorname{PSL}_2(p)$ and choose a generator $h$ of $\mathbb F_p(Z)^H$. Since $\mathbb F_p(J_p)\subseteq\mathbb F_p(h)$, let $R_p\in\mathbb F_p(X)$ be the unique rational function satisfying $J_p=R_p\circ h$.

\begin{proposition}[Factor structure of the infinite $A_4$ novelty quotient]\label{prop:Rp-factor}
Assume $p\equiv\pm11,\pm19\pmod{40}$, and use the notation
$J_p=R_p\circ h$ fixed above. Put $b:=p(p-1)/2$, $c:=(p+1)/2$, and
$d:=p(p^2-1)/24$, and define
\[
 \varepsilon_2=
 \begin{cases}
 1,&p\equiv1\pmod4,\\
 0,&p\equiv3\pmod4,
 \end{cases}
 \qquad
 \varepsilon_3=
 \begin{cases}
 1,&p\equiv1\pmod3,\\
 0,&p\equiv-1\pmod3.
 \end{cases}
\]
There is an $\mathbb F_p$-coordinate $[X:Y]$ on $\mathbf P^1/A_4$ in which
the unique order-$2$ tetrahedral branch point is $X=0$ and the order-$3$
branch divisor is $X^2+3Y^2=0$. In this coordinate there are squarefree
homogeneous forms $P_+(X,Y),P_-(X,Y)\in\mathbb F_p[X,Y]$, unique up to
nonzero scalars, with
\[
\begin{aligned}
 \deg P_+&=\frac{p+1-6\varepsilon_2-8\varepsilon_3}{12},\\
 \deg P_-&=\frac{p(p-1)-6(1-\varepsilon_2)-8(1-\varepsilon_3)}{12},
\end{aligned}
\]
such that, after a target M\"obius normalization, $R_p=[F_-:F_+]$, where
\[
\begin{aligned}
 F_+&=P_+^bX^{\varepsilon_2b/2}
 (X^2+3Y^2)^{\varepsilon_3b/3},\\
 F_-&=P_-^cX^{(1-\varepsilon_2)c/2}
 (X^2+3Y^2)^{(1-\varepsilon_3)c/3}.
\end{aligned}
\]
Both $F_+$ and $F_-$ have degree $d$. Moreover, the outer involution in
$\mathrm N_{\operatorname{PGL}_2(p)}(A_4)/\allowbreak A_4\cong C_2$ acts in this normalization
as $X\mapsto-X$, and the free-orbit factors satisfy
\[
 P_+(X,Y)=Y^{1-\varepsilon_2}A_{+,p}(X^2,Y^2),
 \qquad
 P_-(X,Y)=Y^{\varepsilon_2}A_{-,p}(X^2,Y^2)
\]
for homogeneous forms $A_{+,p},A_{-,p}$. Beyond the explicit exponents and
fixed branch factors above, the remaining $p$-dependent coefficient data are
concentrated in these two polynomial families in this normalization.
\end{proposition}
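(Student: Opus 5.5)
Our plan is to read off $R_p$ from the ramification of the tower $\mathbf P^1\to\mathbf P^1/A_4\to\mathbf P^1/\operatorname{PSL}_2(p)$ and to use Theorem~\ref{thm:branch-orbit}. The map $J_p$ is the quotient by $G=\operatorname{PSL}_2(p)$. Its branch points are $J_p=\infty$, where the inertia groups are Borel subgroups of order $p(p-1)/2$, and $J_p=0$, where the inertia groups are nonsplit Cartan subgroups of order $(p+1)/2$. There is no other branch point, since the only points of $\mathbf P^1$ with nontrivial stabilizer in $\operatorname{PSL}_2(p)$ lie in $\mathbf P^1(\mathbb F_{p^2})$. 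Indeed, $Z^{p^2}-Z$ vanishes on $\mathbf P^1(\mathbb F_{p^2})\setminus\mathbf P^1(\mathbb F_p)$, giving the zero fibre of \eqref{eq:JQ}, and $Z^p-Z$ vanishes on $\mathbf P^1(\mathbb F_p)$, giving the pole fibre. We label the pole fibre ``$+$'' because it has exponent $b$, and the zero fibre ``$-$'' because it has exponent $c$. Then $R_p=[F_-:F_+]$ up to target normalization, where $F_\pm$ is the image of the corresponding fibre in $\mathbf P^1/A_4$, counted with ramification index. The index of the point $A_4gI$ is $[I:I\cap g^{-1}A_4g]$ by \eqref{eq:double-coset-passport}.

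Next we compute these indices orbit by orbit. Let $I$ be a Borel subgroup of order $b$; it is the stabilizer of a point of $\mathbf P^1(\mathbb F_p)$. The stabilizer of that point in a conjugate of $A_4$ is cyclic of order prime to $p$, so it has order $1$, $2$ or $3$. An $A_4$-orbit on $\mathbf P^1(\mathbb F_p)$ is therefore free, of size $12$, or has size $6$ or $4$. We record these as ordinary points with index $b$, as a point over the order-$2$ branch point with index $b/2$, or as points over the order-$3$ branch points with index $b/3$. An involution of $\operatorname{PSL}_2(p)$ has its fixed points in $\mathbf P^1(\mathbb F_p)$ exactly when it lies in a split torus, that is, when $2\mid (p-1)/2$, i.e.\ $p\equiv1\pmod4$. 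This is $\varepsilon_2$. Likewise an element of order $3$ is split exactly when $3\mid p-1$, which is $\varepsilon_3$. The order-$2$ fixed points form one $A_4$-orbit of size $6$, and the order-$3$ fixed points form orbits of size $4$ lying over the two order-$3$ branch points. So the pole fibre picks up $X^{\varepsilon_2 b/2}(X^2+3Y^2)^{\varepsilon_3 b/3}$, and $P_+$ collects the free orbits. The zero fibre is handled the same way with nonsplit Cartan inertia of order $c$ and the complementary flags $1-\varepsilon_2$ and $1-\varepsilon_3$. The degree formulas for $P_\pm$ then follow by counting: $p+1=12\deg P_++6\varepsilon_2+8\varepsilon_3$, and $p^2-p=12\deg P_-+6(1-\varepsilon_2)+8(1-\varepsilon_3)$. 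The identity $\deg F_\pm=d$ follows by multiplying through by the indices. The forms $P_\pm$ are squarefree because distinct free orbits give distinct points, and they are unique up to scalars because they are determined by their divisors.

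The coordinate normalization uses the argument of Proposition~\ref{prop:exceptional-wild}. The order-$2$ branch point is $\mathbb F_p$-rational, and the outer involution of $\mathrm N_{\operatorname{PGL}_2(p)}(A_4)/A_4$ fixes it and swaps the two order-$3$ branch points. We choose an $\mathbb F_p$-coordinate with that involution equal to $X\mapsto -X$ and the order-$2$ point at $X=0$; the remaining fixed point is then $Y=0$. The two order-$3$ points are exchanged by $X\mapsto-X$, so they form $\{X^2=-aY^2\}$. Scaling $X$ gives $a=3$ or a nonsquare multiple, and we expect $p\equiv\pm11,\pm19\pmod{40}$ to make $-3$ behave correctly for the form $X^2+3Y^2$. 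This rationality step is the main obstacle: we must check which scaling classes are reachable by $\mathbb F_p$-rational M\"obius maps commuting with $X\mapsto-X$, namely $X\mapsto\lambda X$ with $\lambda\in\mathbb F_p^\times$ together with the swap $X\leftrightarrow Y$. We would attempt this by computing the splitting of the order-$3$ divisor from the $\varepsilon_3$ Frobenius action described above.

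Finally we establish the symmetry of $P_\pm$. The involution lifts to an element of $\operatorname{PGL}_2(p)$ that preserves both fibres, so it preserves each free-orbit divisor. Hence $P_\pm(-X,Y)=\pm P_\pm(X,Y)$. A form invariant up to sign under $X\mapsto-X$ is $Y^{e}A(X^2,Y^2)$ or $X\,Y^{e}A(X^2,Y^2)$. The factor $X$ cannot divide $P_\pm$, since $X=0$ is the order-$2$ point and not a free orbit. The factor $Y$, corresponding to the second fixed point $Y=0$ of the involution, lies on a free orbit in exactly one of the two fibres. That point has a nontrivial stabilizer in $S_4$ but a trivial one in $A_4$, and an element of $S_4\setminus A_4$ is split exactly in the fibre complementary to the order-$2$ point, since the determinant-class parity matches $\varepsilon_2$. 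This yields the exponents $1-\varepsilon_2$ and $\varepsilon_2$. The parities must be consistent with $\deg P_\pm$, which gives a check on this step.
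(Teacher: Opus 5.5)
Your outline follows the paper's proof almost step for step: the two ramification orbits $\mathbf P^1(\mathbb F_p)$ and $\mathbf P^1(\mathbb F_{p^2})\setminus\mathbf P^1(\mathbb F_p)$ with inertia orders $b,c$; the indices $b,b/2,b/3$ and $c,c/2,c/3$; the split/nonsplit criteria giving $\varepsilon_2,\varepsilon_3$; the orbit counts; and the determinant-class parity for the second fixed point of the outer involution.

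\textbf{The gap.} The one step you do not prove is the normalization of the order-$3$ branch divisor to $X^2+3Y^2$. The hypothesis you expect to settle it, $p\equiv\pm11,\pm19\pmod{40}$ (that is, $p\equiv\pm3\pmod 8$ and $p\equiv\pm1\pmod 5$, the novelty range), plays no role there. Write the order-$3$ pair as $\{X^2=-aY^2\}$. What you need is that $-a$ has the same square class as $-3$. This follows in three steps.
\begin{itemize}
\item Since $A_4\leqslant\operatorname{PSL}_2(p)$ consists of $\mathbb F_p$-rational maps, Frobenius commutes with it.
\item The two fixed points of a subgroup $C_3\leqslant A_4$ lie over different order-$3$ branch points. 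Indeed, $\mathrm N_{A_4}(C_3)=C_3$ forces each size-$4$ orbit to contain exactly one fixed point of each $C_3$. So Frobenius fixes both order-$3$ branch points when order-$3$ elements are split, and swaps them otherwise.
\item Hence the pair splits iff $p\equiv1\pmod3$. This holds iff $\mathbb F_p$ contains a primitive cube root of unity $\omega$, iff $-3=(2\omega+1)^2$ is a square in $\mathbb F_p$.
\end{itemize}
Thus $3/a\in(\mathbb F_p^\times)^2$, and the scaling $X\mapsto\lambda X$ with $\lambda^2=3/a$ finishes. Only scalings are admissible here: the map $X\mapsto\lambda/X$ (your ``swap'') moves the order-$2$ point off $X=0$.

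\textbf{Comparison with the paper.} The paper orders this step differently. It first uses uniqueness of a M\"obius map with three prescribed images to send the order-$2$ point to $0$ and the order-$3$ pair to the zeros of $X^2+3Y^2$. This is legitimate, and $\mathbb F_p$-rational, exactly because the two pairs have the same splitting type, by the facts above. The outer involution fixes $0$ and swaps $\pm\sqrt{-3}$, so it is then forced to be $X\mapsto-X$ by the same uniqueness. Your order (diagonalize the involution, then scale) also works once the square-class fact is supplied.

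In the other direction, your reason that the orbit over $Y=0$ is free is shorter than the paper's appeal to the ramification of $\mathbf P^1/A_4\to\mathbf P^1/S_4$. A fixed point of $X\mapsto-X$ other than $0$ cannot be one of the two swapped order-$3$ points, so it is not an $A_4$-branch point. You should still make two points explicit:
\begin{itemize}
\item The stabilizer in $S_4$ of a point above $Y=0$ is nontrivial and meets $A_4$ trivially, so it has order $2$. It is therefore generated by an outer involution, not a $4$-cycle, and lies outside $\operatorname{PSL}_2(p)$ because $S_4\not\leqslant\operatorname{PSL}_2(p)$ for $p\equiv\pm3\pmod8$. This is where the congruence hypothesis is actually used, and it is what your determinant-parity argument requires.
\item That involution's fixed points lie in $\mathbf P^1(\mathbb F_{p^2})$, so this free orbit does sit in one of the two fibres.
\end{itemize}
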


\begin{proof}
The ramification locus of the Galois quotient $C\to C/G$ has two $G=\operatorname{PSL}_2(p)$-orbits:
\[
 \mathcal R_G=\mathcal R_+\sqcup\mathcal R_-,\qquad
 \mathcal R_+=\mathbf P^1(\mathbb F_p),\qquad
 \mathcal R_-=\mathbf P^1(\mathbb F_{p^2})\setminus\mathbf P^1(\mathbb F_p).
\]
Their point stabilizers in $G$ have orders $b$ and $c$, respectively. Inside $H=A_4$, every point stabilizer is cyclic and therefore has order $1$, $2$, or $3$.

An involution of $A_4$ is split precisely when $p\equiv1\pmod4$. Since $A_4$ has three involutions, the corresponding fixed points form one $H$-orbit of size $6$, lying in $\mathcal R_+$ when $\varepsilon_2=1$ and in $\mathcal R_-$ otherwise. Similarly, the four subgroups $C_3\leqslant A_4$ contribute two $H$-orbits of size $4$; these lie in $\mathcal R_+$ precisely when $p\equiv1\pmod3$. Removing these special orbits gives the stated numbers of free $H$-orbits.

The unique order-$2$ point of $\mathbf P^1/A_4$ is $\mathbb F_p$-rational. The two order-$3$ points are individually rational precisely when $p\equiv1\pmod3$, equivalently when $\left(\frac{-3}{p}\right)=1$. Uniqueness of a M\"obius transformation with prescribed images of three distinct points then gives the stated $\mathbb F_p$-normalization.

If $P$ is a point upstairs and $y$ its image on $\mathbf P^1/A_4$, then ramification in the intermediate quotient gives
\(e_y=\frac{|G_P|}{|H_P|}.\)
Consequently a free, order-$2$, or order-$3$ $H$-orbit in $\mathcal R_+$ contributes multiplicity $b$, $b/2$, or $b/3$, and the corresponding multiplicities in $\mathcal R_-$ are $c$, $c/2$, or $c/3$. These multiplicities give exactly the two displayed fibers. Their degrees are $d$, so no further factors occur.

Finally, the nontrivial element of $\mathrm N_{\operatorname{PGL}_2(p)}(A_4)/A_4$ fixes the unique order-$2$ tetrahedral branch point and exchanges the two order-$3$ points. In the chosen coordinate this is $X\mapsto-X$. To identify its other fixed point, use $A_4\triangleleft S_4$. In the degree-two quotient $\mathbf P^1/A_4\to\mathbf P^1/S_4$, the order-$4$ inertia intersects $A_4$ in $C_2$, whereas the other ramifying inertia is generated by an involution in $S_4\setminus A_4$ and meets $A_4$ trivially. The second fixed point, $Y=0$, therefore represents a free $A_4$-orbit.

Such an outer involution has nonsquare determinant class in $\operatorname{PGL}_2(p)$. A trace-zero representative has rational eigenlines exactly when minus its determinant is a square, which here is equivalent to $p\equiv3\pmod4$. Thus the free orbit at $Y=0$ lies in $\mathcal R_+$ exactly when $\varepsilon_2=0$. All other free points occur in pairs exchanged by $X\mapsto-X$; their defining factors are polynomials in $X^2,Y^2$. This proves the displayed parity formulas for $P_+$ and $P_-$.
\end{proof}

\begin{corollary}[The unique polynomial degeneration]
The quotient $R_p$ in Proposition~\ref{prop:Rp-factor} is $k$-M\"obius equivalent to a polynomial if and only if $p=11$.
For $p=11$ one may take
\[
 X^3(X^2+3)^2
 \bigl(X^8-X^6+2X^4-X^2-3\bigr)^6
\]
in characteristic $11$.
\end{corollary}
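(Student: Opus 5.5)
The plan is to read off the polynomial criterion from the fiber structure of Proposition~\ref{prop:Rp-factor}. A rational function $R$ of degree $d$ is $k$-M\"obius equivalent to a polynomial exactly when some $k$-rational branch point has a single, totally ramified, $k$-rational point above it. Post- and pre-composition can then move this pair to $\infty$. The strategy is to show that total ramification forces the relevant fiber to reduce to one factor, which happens only at $p=11$, and then to exhibit the displayed polynomial.

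The first step identifies the branch points of $R_p$. By Theorem~\ref{thm:branch-orbit}, the geometric branch points of $\pi_{\mathcal D}:C/H\to C/G$ are those of $C\to C/G$. For $G=\operatorname{PSL}_2(p)$ these are the images of $\mathcal R_+$ and $\mathcal R_-$, which are the two fibers $F_+=0$ and $F_-=0$. So a totally ramified point must be a single closed point of multiplicity $d$ in $F_+$ or $F_-$. From the displayed factorizations, the multiplicities in $F_+$ are $b$, $b/2$ and $b/3$, and those in $F_-$ are $c$, $c/2$ and $c/3$.

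The second step is a degree comparison. In $F_+$ the largest multiplicity is $b=p(p-1)/2$, and this is less than $d=p(p^2-1)/24$ once $p+1>12$. Multiplicity $d$ is therefore impossible in $F_+$ for the primes in the range, which all satisfy $p\geqslant11$. The exception is $p=11$, where $b=55=d$. There $\deg P_+=0$ with $\varepsilon_2=0$ and $\varepsilon_3=0$, so the fiber is $P_+^b$ with $P_+$ a constant times $Y$, giving a single rational point of multiplicity $55$. In $F_-$ every multiplicity is at most $c=(p+1)/2<d$, so $F_-$ never gives total ramification.

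One point needs care: $\deg P_+=0$ alone is not enough, because $P_+$ must be a single factor in $Y$ of degree $1-\varepsilon_2$. I would handle this directly from the degree formula. The condition $\deg P_+=1-\varepsilon_2$ together with no $X$ or $X^2+3Y^2$ factors gives $p+1-6\varepsilon_2-8\varepsilon_3=12(1-\varepsilon_2)$ with $\varepsilon_2=\varepsilon_3=0$, which forces $p=11$. This is consistent with $11\equiv11\pmod{40}$. Since the totally ramified point $Y=0$ and its image $F_+=0$ are $\mathbb F_p$-rational, $R_{11}$ is $k$-M\"obius equivalent to a polynomial over every $\mathbb F_{11^s}$.

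The final step produces the explicit representative. Normalize the point $Y=0$ to $\infty$ in the source, and send the $F_+$ branch point to $\infty$ in the target. The fiber over $0$ is then $F_-$ with $c=6$, $\varepsilon_2=0$ and $\varepsilon_3=0$, namely $X^{3}(X^2+3)^{2}P_-^6$ with $\deg P_-=(110-6-8)/12=8$. By Proposition~\ref{prop:Rp-factor}, $P_-=A_{-,11}(X^2,1)$ is even, and squarefree of degree $8$. To pin down $P_-$, I would impose that the derivative vanishes only at the prescribed points, since the third branch point is forbidden. Equivalently, I would use Proposition~\ref{prop:exact-finite-elimination} to factor $J_{11}$ through an $A_4$-invariant. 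Each route should yield $X^8-X^6+2X^4-X^2-3$, and a direct check mod $11$ confirms the ramification. This computation is the main obstacle; it is routine but requires exact arithmetic.
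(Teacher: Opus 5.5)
Your proof of the equivalence is the paper's. The only branch values are the two fibres $F_\pm=0$, and the available multiplicities are $b,b/2,b/3,c,c/2,c/3$. The ratios $d/b=(p+1)/12$ and $d/c=p(p-1)/12$ then leave only $b=d$, i.e.\ $p=11$.

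One slip to fix: at $p=11$ the degree formula gives $\deg P_+=(11+1)/12=1$, not $0$. It is $A_{+,11}$ that is constant, which is exactly why $P_+$ is a scalar times $Y$. With this correction your ``one point needs care'' paragraph becomes unnecessary.

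The genuine difference is the explicit octic. The paper identifies it with the degree-$55$ entry of Guralnick--Zieve's Table~B, whereas you propose to compute it, and your derivative route does close quickly. With $F=X^3(X^2+3)^2P^6$ one has $F'=X^2(X^2+3)P^5\bigl[(7X^2+9)P+6X(X^2+3)P'\bigr]$. The indices $3,2,6$ are prime to $11$ and there is no third branch value, so the bracket has no zeros and must be a nonzero constant.

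For monic even $P=\sum_{j\leqslant4}a_jX^{2j}$, the coefficient of $X^{2i}$ in the bracket is $(i+6)a_{i-1}+(3i+9)a_i$ over $\mathbb F_{11}$. It vanishes automatically for $i=5$. For $i=4,3,2,1$ it forces $a_3=-1$, $a_2=2$, $a_1=-1$, $a_0=-3$, leaving the constant term $9a_0=6\neq0$. Nonvanishing of the bracket also gives squarefreeness of $P$ and its coprimality to $X(X^2+3)$ for free.

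So your route is self-contained and shows that the normalized representative is unique. Hence it is literally $R_{11}$, up to target scaling, in the coordinates of Proposition~\ref{prop:Rp-factor}, not merely some degree-$55$ polynomial with the same ramification data. The paper's route trades this short computation for an external citation.
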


\begin{proof}
A rational function is $k$-M\"obius equivalent to a polynomial exactly when one fiber consists of a single totally ramified point. Proposition~\ref{prop:Rp-factor} shows that every ramification index is one of $b,b/2,b/3,c,c/2,c/3$.
Since $d/b=(p+1)/12$ and $d/c=p(p-1)/12$, and $p\geqslant11$ in the novelty range, one has $d\geqslant b>c$. Thus none of the smaller indices can equal $d$, and $d=b$ holds exactly for $p=11$, when $d=b=55$.

For $p=11$ the proposition gives $\deg P_+=1$ and $\deg P_-=8$; sending the zero of $P_+$ to infinity produces a polynomial of the form
\(X^3(X^2+3)^2P_-(X)^6.\)
The displayed octic is the degree-$55$ representative in Guralnick--Zieve's Table~B \cite[Table~B]{GuralnickZieve2010}.
\end{proof}

For reference, several small novelty representatives admit especially compact formulas. If $\eta$ is a nonsquare in the relevant base field, the $Q=7$ representatives may be taken as
\begin{align*}
 F_{21,\eta}(X)&=
 X(X^2+3\eta)^4(X^2-\eta)^2(X^2+\eta)^4,\\
 F_{28,\eta}(X)&=
 \frac{(X^2-\eta)^4(X^2-2\eta)^4(X^2-3\eta)^2(X^2+2\eta)^4}{X^{21}},
\end{align*}
and the $Q=11$ representative as
\[
 F_{66,\eta}(X)=
 \frac{(X^2-3\eta)^6(X^2+2\eta)^6(X^2-4\eta)^6
 (X^2+\eta)^6(X^2+3\eta)^6(X^2+4\eta)^3}{X^{55}}.
\]
In characteristic $3$ the degree-$36$ and degree-$45$ boundary maps may be taken as the formulas obtained from Propositions~\ref{prop:nonsplit-cartan} and~\ref{prop:split-cartan}; one compact pair is
\[
 \frac{X^5(X^2+1)^5(X^4-X^2-1)^5}{(X^2-1)^{18}},
 \qquad
 \frac{X^5(X^4+X^2-1)^5(X^4+1)^5}{(X^2-1)^{18}},
\]
with the required diagonal twist chosen according to the base-field parity.

\subsection{Completion of the wild classification}

\begin{proof}[Proof of Theorem~\ref{thm:wild-master}]
By Theorems~\ref{thm:reconstruction} and~\ref{thm:decomposition}, every wild $k$-indecomposable rational function with Galois closure of genus zero arises from a finite subgroup of $\operatorname{PGL}_2(\bar k)$ of order divisible by $p$, with a core-free stabilizer and primitive arithmetic monodromy. Faber's classification \cite[Theorem~B]{Faber2023} gives the four ambient group types listed at the beginning of Section~\ref{sec:wild-master}. Splitting the Lie-type case according to geometric primitivity gives the five cases of the theorem.

The $p$-semi-elementary family is classified in Theorem~\ref{thm:affine-wild}. The characteristic-$2$ dihedral and characteristic-$3$ $A_5$ families are treated in Propositions~\ref{prop:char2-dihedral} and~\ref{prop:char3-A5}. Modulo the displayed accidental isomorphisms, these exhaust the non-Lie wild families.

For the Lie-type groups, consider first $Q=3$. In $A_4$, the case $H=1$ is excluded by the characteristic subgroup $V_4$, while $H=C_2$ is excluded by the Frobenius-stable overgroup $\mathrm N_{A_4}(H)=V_4$; the remaining core-free primitive case is $H=C_3$. In $S_4$, the case $H=1$ is again excluded by the characteristic $V_4$. Every automorphism of $S_4$ is inner; if $H\ne1$ and arithmetic primitivity holds, Theorem~\ref{thm:structural-consequences} gives $\mathrm N_G(H)=H$, so the induced inner automorphism is represented by an element of $H$. Every overgroup of $H$ is then Frobenius-stable, and $H$ must be maximal. The only core-free maximal possibility is $S_3$. Thus exactly the pairs $(A_4,C_3)$ and $(S_4,S_3)$ remain, as in Proposition~\ref{prop:borel-wild}. The Dickson--Giudici list then gives exactly the Borel, Cartan, subfield and cases with stabilizer $A_4,S_4$, or $A_5$, whose $k$-forms are Propositions~\ref{prop:borel-wild}--\ref{prop:exceptional-wild}.

It remains to treat nonmaximal $H_f$ for which the arithmetic monodromy action remains primitive. This is precisely the situation in Giudici's novelty classification: the arithmetic point stabilizer is maximal, whereas its intersection with the geometric group is not maximal in that group. Giudici's novelty theorem, together with the cyclicity of the finite-field constant quotient, gives exactly Theorem~\ref{thm:wild-novelty}. Corollary~1.2 of \cite{Giudici2007} excludes geometric $\operatorname{PGL}_2(Q)$ novelties. Thus the list is exhaustive.
\end{proof}

\begin{theorem}[Complete wild classification]\label{thm:intro-wild}
Let $f\in k(X)$ be separable and $k$-indecomposable, assume that its Galois closure has genus zero, and suppose $p\mid |G_f|$. Then, after the explicit small-group identifications recorded in Section~\ref{sec:wild-master}, $f$ belongs to exactly one of the following three structural types:
\begin{enumerate}[label=(\arabic*),ref=\arabic*,font=\itshape]
\item the affine classes with $G_f=V\rtimes C_n$, where $V\ne0$ is an elementary abelian $p$-group and $C_n$ acts by scalars; these are classified in Theorem~\ref{thm:affine-wild};
\item the small-characteristic cases: the characteristic-$2$ dihedral classes of Proposition~\ref{prop:char2-dihedral} and the characteristic-$3$ $A_5$ classes of Proposition~\ref{prop:char3-A5};
\item the defining-characteristic $\operatorname{PSL}_2(Q)$ and $\operatorname{PGL}_2(Q)$ cases of Sections~\ref{sec:wild-lie}--\ref{sec:wild-novelty}, including both the geometrically primitive and the geometrically decomposable $k$-indecomposable cases.
\end{enumerate}
Sections~\ref{sec:wild-master}--\ref{sec:wild-novelty} give the complete stabilizer ranges, finite-field classes, and exact class counts for these three structural types.
\end{theorem}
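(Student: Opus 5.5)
The plan is to derive Theorem~\ref{thm:intro-wild} as a repackaging of Theorem~\ref{thm:wild-master}, which has already been proved. The work lies in sorting its five cases into the three structural types and checking that the sorting is unambiguous. Start from $f$ separable, $k$-indecomposable, with genus-zero Galois closure and $p\mid|G_f|$. Theorem~\ref{thm:reconstruction} and Theorem~\ref{thm:decomposition} give a semilinear datum with core-free $H$ and primitive arithmetic monodromy. Theorem~\ref{thm:wild-master} then places $f$ in exactly one of its cases (1)--(5). Case (1) becomes type (1). Here one must check that a $p$-semi-elementary group $G_f\leqslant\operatorname{PGL}_2(\bar k)$ has the form $V\rtimes C_n$ with $V$ elementary abelian and $C_n$ acting by scalars. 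Faber's definition gives a unique Sylow $p$-subgroup of exponent $p$ with cyclic quotient. After moving its unique fixed point to $\infty$, this subgroup is a group of translations, hence elementary abelian. The cyclic complement, which exists by Schur--Zassenhaus since $p\nmid n$, fixes $\infty$ and one further point that we move to $0$, so it acts by scalars. Cases (2) and (3) become type (2), and cases (4) and (5) become type (3).

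Next comes exclusivity. By the dihedral convention and the list of accidental isomorphisms in Section~\ref{sec:wild-master}, each abstract small group is assigned to exactly one family. The assignments are: $S_3$ to the characteristic-$2$ dihedral case, $A_4\cong\mathbb F_4\rtimes C_3$ to the affine family, $A_5\cong\operatorname{PGL}_2(4)$ to $Q=4$, $A_4,S_4$ in characteristic $3$ to the $Q=3$ Borel endpoints, and $A_5\cong\operatorname{PSL}_2(5)$ to $Q=5$. Outside these identifications, the groups are distinguished by invariants of $G_f$. A $p$-semi-elementary group has a normal Sylow $p$-subgroup. A dihedral group $D_{2\ell}$ in characteristic $2$ with $\ell$ an odd prime is not of that shape unless $\ell$ is small, and the $\ell=3$ case is exactly the $S_3$ identification. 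Meanwhile $\operatorname{PSL}_2(Q)$ and $\operatorname{PGL}_2(Q)$ with $Q>3$ have nonabelian simple socle apart from the identified cases, and $A_5$ in characteristic $3$ is not of Lie type in defining characteristic $3$. Since $G_f$ is determined by $f$ through Corollary~\ref{cor:f-datum-compatibility}, $f$ lies in exactly one type.

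Finally, the last sentence of the theorem, about complete ranges and counts, is a pointer to existing results rather than a new claim. I will cite the relevant propositions: Theorem~\ref{thm:affine-wild}, Propositions~\ref{prop:char2-dihedral} and~\ref{prop:char3-A5}, Propositions~\ref{prop:borel-wild}--\ref{prop:exceptional-wild}, and Theorem~\ref{thm:wild-novelty}. The main obstacle I expect is the exclusivity step at the small-group overlaps. I will handle it by comparing orders and the normality of Sylow $p$-subgroups, which tells me which convention applies in each case. I will not invoke any new geometry for this.
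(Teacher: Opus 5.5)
Your proposal matches the paper's proof, which derives the theorem directly from Theorem~\ref{thm:wild-master} by sorting its five cases into the three types, and then cites Theorem~\ref{thm:affine-wild}, Propositions~\ref{prop:char2-dihedral} and~\ref{prop:char3-A5}, Propositions~\ref{prop:borel-wild}--\ref{prop:exceptional-wild}, and Theorem~\ref{thm:wild-novelty}; your extra checks of the $V\rtimes C_n$ shape and of exclusivity just make explicit what the paper leaves implicit. One small correction to that side remark: $D_{2\ell}$ with $\ell$ odd has $\ell$ Sylow $2$-subgroups, so it is never $2$-semi-elementary, and the $\ell=3$ overlap is with the Lie-type group $S_3\cong\operatorname{PGL}_2(2)$, not with the affine family.
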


\begin{proof}[Proof of Theorem~\ref{thm:intro-wild}]
The statement follows from Theorem~\ref{thm:wild-master}, the affine classification in Theorem~\ref{thm:affine-wild}, the small-characteristic cases in Propositions~\ref{prop:char2-dihedral} and~\ref{prop:char3-A5}, the primitive Lie-type cases in Propositions~\ref{prop:borel-wild}--\ref{prop:exceptional-wild}, and the complete novelty list in Theorem~\ref{thm:wild-novelty}.
\end{proof}

The complete tame and wild classifications also imply an immediate global restriction on branch arithmetic.

\begin{corollary}[Closed-point degrees in the branch divisor]\label{cor:branch-degree-three}
For every separable $k$-indecomposable rational function in the classification, every closed point of the branch divisor has degree at most $3$ over $k$. Degree $3$ occurs only for the $V_4/A_4$ quartic. Degree $2$ occurs only for the nonsplit cyclic class, for the pair of order-$2$ branch points of the nonsquare tame dihedral class, and for the pair of order-$3$ branch points of the nonsplit tetrahedral quartic.
\end{corollary}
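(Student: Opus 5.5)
The plan is to run once through the complete tame and wild lists (Theorems~\ref{thm:intro-tame} and~\ref{thm:intro-wild}) and bound the Frobenius orbits on the geometric branch locus in each case. The tool is Theorem~\ref{thm:branch-orbit}. The geometric branch points of $\pi_{\mathcal D}$ are the $G$-orbits $\mathcal B_G$ on $\mathcal R_G$, and Frobenius acts by $[P]_G\mapsto[\varphi(P)]_G$. Since $\varphi$ normalizes $G$, this action preserves every conjugation-invariant attached to a branch point: the isomorphism type and order of the inertia group $G_P$, whether it is wild, and the ramification partition computed by \eqref{eq:double-coset-passport}. So any branch point whose invariant is unique among all branch points is $k$-rational. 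The closed-point degree of a branch point is therefore at most the number of branch points sharing its invariants.

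The tame cases go as follows. The signatures are $(\ell,\ell)$ for cyclic groups, $(2,2,\ell)$ for dihedral groups, $(2,2,2)$ for $V_4$, $(2,3,3)$ for $A_4$, $(2,3,4)$ for $S_4$ and $(2,3,5)$ for $A_5$. Branch points with distinct inertia orders are fixed by Frobenius. Hence:
\begin{itemize}
\item Only the cyclic pair, the dihedral pair of order-$2$ points, the tetrahedral order-$3$ pair and the $V_4$ triple can move.
\item Propositions~\ref{prop:cyclic}, \ref{prop:dihedral}, \ref{prop:tetra} and~\ref{prop:V4-quartic} record exactly when these form a degree-$2$ closed point (nonsplit cyclic, nonsquare dihedral, nonsplit tetrahedral) or a degree-$3$ closed point (the $V_4/A_4$ quartic, where the branch points are the roots of the irreducible cubic $h$).
\item In the octahedral and icosahedral cases all branch points are rational.
\end{itemize}

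In the wild cases the branch locus is small.
\begin{itemize}
\item Affine $V\rtimes C_n$: the branch points are $\infty$ (wild) and, when $n>1$, one tame point. These are distinguished, so both are rational.
\item Characteristic-$2$ dihedral (Proposition~\ref{prop:char2-dihedral}) and characteristic-$3$ $A_5$ (Proposition~\ref{prop:char3-A5}): there are two branch points with distinct ramification types, so both are rational.
\item Lie-type cases, primitive or novelty: the ramification locus of $C\to C/\operatorname{PSL}_2(Q)$ or $C\to C/\operatorname{PGL}_2(Q)$ has exactly two $G$-orbits. One is $\mathbf P^1(\mathbb F_Q)$, with wild stabilizer (Borel). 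The other is $\mathbf P^1(\mathbb F_{Q^2})\setminus\mathbf P^1(\mathbb F_Q)$, with tame cyclic stabilizer of order $(Q+1)/\gcd$. This is used already in the proof of Proposition~\ref{prop:Rp-factor}. One inertia group is wild and the other tame, so Frobenius fixes both orbits and both branch points are $k$-rational.
\end{itemize}
Together these show that every closed branch point has degree $\le 3$, that degree $3$ occurs only for the $V_4/A_4$ quartic, and that degree $2$ occurs only in the three named tame cases.

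The main obstacle is the Lie-type step, on two counts. First, the claim that $\operatorname{PSL}_2(Q)\le\operatorname{PGL}_2(\bar k)$ has exactly two branch orbits has to be justified in every embedding, including $Q=3$, $Q=4,5$ via the accidental isomorphisms, and the characteristic-$3$ $A_5$ overlap. I would derive it from the Dickson invariants: $D_Q$ and $J_Q$ in \eqref{eq:DQ}, \eqref{eq:JQ} have zeros and poles exactly on the two orbits. Since Faber's classification gives a single conjugacy class of such subgroups, this settles every embedding. Second, "exactly" has to hold in the converse direction, meaning that no other listed case has a nonrational branch point. The invariant argument above gives this directly, and for the three degree-$2$ cases I would cite the splitting-type dichotomies already proved in the corresponding propositions.
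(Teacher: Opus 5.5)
Your proof is correct and follows essentially the same route as the paper. You reduce via Theorem~\ref{thm:branch-orbit} to Frobenius orbits on $\mathcal B_G$ and go case by case: branch points with distinct inertia invariants are fixed, and the Lie case is settled by the wild/tame dichotomy of the two $G$-orbits on $\mathbf P^1(\mathbb F_{Q_0^2})$. The only divergence is in justifying that two-orbit description. The paper, as in Proposition~\ref{prop:rank-one-nap}, just observes that every ramification point is fixed by a nonidentity element of $\operatorname{PGL}_2(\mathbb F_{Q_0})$ and so lies in $\mathbf P^1(\mathbb F_{Q_0^2})$. Your Dickson-invariant route is less direct, and knowing where the zeros and poles of $D_Q$ lie does not by itself exclude a third branch point; you would still need a derivative or Riemann--Hurwitz check.
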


\begin{proof}
Apply Theorem~\ref{thm:branch-orbit}, so it is enough to bound the Frobenius orbit lengths on the geometric branch locus of $C\to C/G$. We check each classification case.

For a tame cyclic Galois quotient $C\to C/G$ there are two branch points,
so Frobenius either fixes them or transposes them; the latter is exactly
the nonsplit cyclic class. For a tame dihedral Galois quotient the
order-$\ell$ branch point is distinguished, while the two order-$2$
branch points are fixed or transposed; the transposition is exactly
the nonsquare class. For the tetrahedral Galois quotient the order-$2$
branch point is distinguished and the two order-$3$ branch points are
fixed or transposed. In the $V_4/A_4$ quartic the three order-$2$ branch
points form one Frobenius $3$-cycle. For the octahedral and icosahedral
tame Galois quotients the three inertia orders are pairwise distinct,
hence every branch point is Frobenius-fixed.

For the affine family in Theorem~\ref{thm:affine-wild}, the branch points are $0,\infty$ when $n>1$; when $n=1$, the representative is separable additive and its only branch point is $\infty$. Proposition~\ref{prop:char2-dihedral} gives two rational branch points in the characteristic-$2$ dihedral case. In characteristic $3$, direct differentiation of the representatives in Proposition~\ref{prop:char3-A5} shows that all three maps $W_5,W_6,W_{10}$ have branch points $\{0,\infty\}$; for $W_{10}$, the value $W_{10}(\infty)=-1$ is unramified.

Finally, for a defining-characteristic Lie case, write $G=\operatorname{PSL}_2(Q_0)$ or $\operatorname{PGL}_2(Q_0)$. Including the subfield, cases with $H\cong A_4,S_4$, or $A_5$, and Lie-type novelty cases, the ramification locus of $C\to C/G$ depends only on this geometric group and is
\[
 \mathbf P^1(\mathbb F_{Q_0^2})
 =\mathbf P^1(\mathbb F_{Q_0})\sqcup
 \bigl(\mathbf P^1(\mathbb F_{Q_0^2})\setminus\mathbf P^1(\mathbb F_{Q_0})\bigr).
\]
Indeed, every ramification point is fixed by a nonidentity element of $G\leqslant\operatorname{PGL}_2(\mathbb F_{Q_0})$, hence lies in the displayed quadratic set; conversely its rational part has $p$-divisible Borel stabilizer and its quadratic complement has prime-to-$p$ nonsplit-torus stabilizer, giving precisely the wild and tame $G$-orbits. Since $\varphi$ conjugates point stabilizers, it preserves divisibility of their orders by $p$ and therefore preserves these two orbits separately. Thus both branch points of $C\to C/G$ are Frobenius-fixed, and the only orbit lengths greater than one are precisely the three transpositions and the Klein-four $3$-cycle listed in the statement.
\end{proof}

\section{Permutation--exceptionality equivalence and extension degrees}\label{sec:no-accidental}

Throughout Sections~\ref{sec:no-accidental}--\ref{sec:family-support}, when the ground field is a finite field $K$ we write $Q=|K|$. The same notation is used after finite extensions of the ground field. The symbol $Q_0$ below is reserved for the defining-field parameter of a geometric monodromy group $\operatorname{PSL}_2(Q_0)$ or $\operatorname{PGL}_2(Q_0)$.

\Needspace{9\baselineskip}
\begin{definition}[Exceptionality]\label{def:exceptionality}
For later use we record the geometric and group-theoretic formulations of the rational-function notion used above.
\begin{enumerate}[label=(\arabic*),ref=\arabic*]
\item\label{item:exceptionality-1} Let $K$ be a finite field and let $f\in K(X)$. We call $f$ \emph{exceptional over $K$} if $f:\mathbf P^1(L)\to\mathbf P^1(L)$ is bijective for infinitely many finite extensions $L/K$.
\item Let $h:X\to Y$ be a finite separable morphism of smooth projective geometrically irreducible curves over a finite field $K$. We call $h$ an \emph{exceptional cover over $K$} if the diagonal is the only geometrically irreducible component of $X\times_YX$ that is defined over $K$.
\item Let $A$ and $G$ be transitive permutation groups on a finite set $\Xi$, with $G\triangleleft A$. We call $(A,G,\Xi)$ an \emph{exceptional permutation triple} if the diagonal in $\Xi\times\Xi$ is the only orbit that is simultaneously an $A$-orbit and a $G$-orbit.
\item Suppose in addition that $A/G$ is cyclic and that the image of $G\tau$ generates $A/G$. We call $G\tau$ an \emph{exceptional generating coset on $\Xi$} if $|\operatorname{Fix}_{\Xi}(a)|=1$ for every $a\in G\tau$. Otherwise $G\tau$ is called \emph{nonexceptional}.
\end{enumerate}
\end{definition}

We call a $K$-M\"obius class exceptional if it contains an exceptional representative. This does not depend on the representative, since pre- and post-composition by $K$-M\"obius transformations preserves bijectivity over every finite extension.

\begin{lemma}[Equivalent formulations of exceptionality]\label{lem:exceptionality-dictionary}
Let $K$ be a finite field, let $f:\mathbf P^1_K\to\mathbf P^1_K$ be a separable rational function, and let $A_f$ and $G_f$ be its arithmetic and geometric monodromy groups in their common action on the sheet set $\Xi$. Then $G_f\triangleleft A_f$, both groups are transitive on $\Xi$, and $A_f/G_f$ is cyclic. For every coset $G_f\tau$ whose image generates $A_f/G_f$, the four notions in Definition~\ref{def:exceptionality} are equivalent as follows:
\begin{enumerate}[label=\textup{(\roman*)},ref=\roman*]
\item\label{item:exceptionality-dictionary-1} $f$ is exceptional over $K$;
\item\label{item:exceptionality-dictionary-2} the cover $f:\mathbf P^1_K\to\mathbf P^1_K$ is exceptional;
\item\label{item:exceptionality-dictionary-3} the permutation triple $(A_f,G_f,\Xi)$ is exceptional;
\item\label{item:exceptionality-dictionary-4} $G_f\tau$ is an exceptional generating coset on $\Xi$.
\end{enumerate}
Consequently, if one generating coset is exceptional, then every generating coset is exceptional.
\end{lemma}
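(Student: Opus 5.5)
We will prove the preliminary assertions first and then run the chain (\ref{item:exceptionality-dictionary-1})$\Leftrightarrow$(\ref{item:exceptionality-dictionary-2})$\Leftrightarrow$(\ref{item:exceptionality-dictionary-3})$\Leftrightarrow$(\ref{item:exceptionality-dictionary-4}).

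\emph{Preliminaries.} Normality of $G_f$ in $A_f$ and the isomorphism $A_f/G_f\cong\operatorname{Gal}(\kappa_f/K)$ were recorded in Section~\ref{sec:semilinear-data}. Cyclicity follows because $\operatorname{Gal}(\kappa_f/K)$ is a finite-field Galois group. Transitivity of $G_f$ on $\Xi=G_f/H_f$ is clear. $A_f$ acts on $A_f/U_f$, and $A_f=G_fU_f$ identifies $A_f/U_f$ with $G_f/H_f$ as $G_f$-sets.

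\emph{(\ref{item:exceptionality-dictionary-3})$\Leftrightarrow$(\ref{item:exceptionality-dictionary-4}).} This is pure permutation group theory, and we would prove it by fixed-point averaging over the coset. The $A_f$-orbits on $\Xi\times\Xi$ that are also $G_f$-orbits are exactly the $G_f$-orbits fixed by $\tau$, since $\tau$ generates $A_f/G_f$. Applying Burnside's lemma to the group $\langle G_f,\tau\rangle=A_f$ acting on the set of $G_f$-orbits gives the average identity
\[
 \frac1{|G_f|}\sum_{a\in G_f\tau}|\operatorname{Fix}_{\Xi\times\Xi}(a)|
 =\#\{\tau\text{-stable }G_f\text{-orbits on }\Xi\times\Xi\}.
\]
The left side equals $\frac1{|G_f|}\sum_{a\in G_f\tau}|\operatorname{Fix}_\Xi(a)|^2$. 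The same identity applied on $\Xi$ shows that $\frac1{|G_f|}\sum_{a\in G_f\tau}|\operatorname{Fix}_\Xi(a)|=1$, because $\Xi$ is a single $\tau$-stable $G_f$-orbit. The diagonal is one $\tau$-stable orbit. Hence exceptionality of the triple is equivalent to the mean of $|\operatorname{Fix}|^2$ being $1$, while the mean of $|\operatorname{Fix}|$ is also $1$. Since $x^2\geqslant x$ on nonnegative integers with equality only at $0,1$, this holds exactly when every $a\in G_f\tau$ has $|\operatorname{Fix}_\Xi(a)|\leqslant1$. The mean being $1$ then forces $|\operatorname{Fix}_\Xi(a)|=1$ for all $a$. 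Because the triple condition does not refer to $\tau$, the final ``consequently'' clause follows at once.

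\emph{(\ref{item:exceptionality-dictionary-2})$\Leftrightarrow$(\ref{item:exceptionality-dictionary-3}).} We use the standard dictionary. Geometrically irreducible components of $X\times_YX$, after normalization, correspond to $G_f$-orbits on $\Xi\times\Xi$, and Frobenius acts on them through the coset generating $A_f/G_f$. A component is defined over $K$ exactly when its orbit is $A_f$-stable, i.e.\ is also an $A_f$-orbit. The diagonal corresponds to the diagonal orbit.

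\emph{(\ref{item:exceptionality-dictionary-1})$\Leftrightarrow$(\ref{item:exceptionality-dictionary-4}).} This is the main obstacle. We would cite the Chebotarev density/Lang--Weil framework of Guralnick--M\"uller--Saxl \cite{GMS2003} and Guralnick--Tucker--Zieve \cite{GTZ2007}, which gives \emph{exceptional cover over $K$ $\Leftrightarrow$ bijective on $\mathbf P^1(L)$ for infinitely many $L$}. The sketch is as follows. Over $L=\mathbb F_{Q^m}$ with $(m,d)$ fixed, the coset $G_f\tau^m$ governs $L$-points. If that coset is exceptional, then over unramified $L$-points each fibre has exactly one $L$-point by Chebotarev. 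The finitely many branch fibres are handled using the generating coset of the decomposition group; this step needs care, and we would import the known result rather than redo it. This gives bijectivity for all $m$ coprime to $d$, hence infinitely many $L$. Conversely, if the coset is nonexceptional, Chebotarev with the Weil bound produces, for all large $m$ in the relevant class, an unramified fibre with zero or at least two $L$-points. This rules out bijectivity for all but finitely many $L$ in each residue class of $m$. Applying the coset equivalence already proved to the generating cosets of each subgroup of $A_f/G_f$ then closes the argument.
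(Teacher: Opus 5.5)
Your proposal is correct, but it is organized differently from the paper. The paper's proof is purely by citation:
\begin{itemize}
\item (i)$\Leftrightarrow$(ii) comes from Guralnick--Tucker--Zieve's rational-point theorem (their Theorem~1 and Proposition~5.6);
\item (ii)$\Leftrightarrow$(iii) comes from their Lemma~4.1;
\item (iii)$\Leftrightarrow$(iv) comes from Guralnick--M\"uller--Saxl, Lemma~3.3.
\end{itemize}
Your (ii)$\Leftrightarrow$(iii) is the same dictionary. For (iii)$\Leftrightarrow$(iv) you give a self-contained first- and second-moment argument on the coset. This is the same mechanism the paper deploys later, in Proposition~\ref{prop:coset-package}, and it makes the final ``consequently'' clause immediate. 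For the arithmetic step you connect (i) directly to (iv) through Frobenius elements instead of passing through (ii). This yields more information: bijectivity over every $\mathbb F_{Q^m}$ with $(m,d)=1$, and failure for all large $m$ otherwise. The price is an effective Chebotarev theorem with error term uniform in $m$, where the paper simply imports the Guralnick--Tucker--Zieve theorem.

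Three points should be tightened.

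First, your displayed identity is the coset form of Burnside's lemma (Guralnick--M\"uller--Saxl, Lemma~3.2). It does not follow from ordinary Burnside for $A_f$ acting on the set of $G_f$-orbits. Prove it by double counting: for $x\in\Xi\times\Xi$, the elements of $G_f\tau$ fixing $x$ form either the empty set or a coset of $(G_f)_x$. The set is nonempty exactly when $\tau$ stabilizes $G_fx$, so summing over $x$ gives $|G_f|$ times the number of $\tau$-stable $G_f$-orbits.

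Second, the ramified fibres need not be imported. Take an inertia group $I$ and a Frobenius element $a\in G_f\tau^m$ of the decomposition group. The same identity gives the number of $L$-points over a ramified $y$ as $|I|^{-1}\sum_{i\in I}|\operatorname{Fix}_\Xi(ia)|$, which equals $1$ when the coset is exceptional; this is \eqref{eq:local-sector-mass}. Likewise, the unramified count uses only the Frobenius element, not Chebotarev; Chebotarev is needed only for the converse.

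Third, when $(m,d)>1$ the arithmetic group shrinks to $A_m=\langle G_f,\tau^m\rangle$. For (i)$\Rightarrow$(iv) you must state explicitly that a non-diagonal $A_f$-stable $G_f$-orbit on $\Xi\times\Xi$ is also $A_m$-stable. Only then does nonexceptionality over $K$ persist over every $L$, so that effective Chebotarev rules out bijectivity for all but finitely many $L$. Your remark about ``generating cosets of each subgroup'' only supplies (iii)$\Leftrightarrow$(iv) for each $A_m$; it does not give this monotonicity.
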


\begin{proof}
For \textup{(\ref{item:exceptionality-dictionary-1})} and \textup{(\ref{item:exceptionality-dictionary-2})}, Guralnick--Tucker--Zieve record that an exceptional cover remains exceptional over infinitely many finite extensions; their rational-point theorem then gives bijectivity over each such extension, while bijectivity over infinitely many finite extensions forces exceptionality \cite[Theorem~1 and Proposition~5.6]{GTZ2007}.

For \textup{(\ref{item:exceptionality-dictionary-2})} and \textup{(\ref{item:exceptionality-dictionary-3})}, the geometrically irreducible components of the fiber square are encoded by the geometric monodromy orbits on $\Xi\times\Xi$, while the components defined over $K$ are encoded by the arithmetic action. Hence the diagonal is the only $K$-defined geometrically irreducible component exactly when it is the only common $A_f$- and $G_f$-orbit \cite[Lemma~4.1]{GTZ2007}.

Finally, since $A_f/G_f$ is cyclic and $G_f$ is transitive on $\Xi$, the generating-coset criterion gives
\[
 (A_f,G_f,\Xi)\text{ exceptional}
 \Longleftrightarrow
 |\operatorname{Fix}_{\Xi}(a)|=1
 \quad\text{for every }a\in G_f\tau
\]
for every generating coset $G_f\tau$ \cite[Lemma~3.3]{GMS2003}. This proves \textup{(\ref{item:exceptionality-dictionary-3})}$\Longleftrightarrow$\textup{(\ref{item:exceptionality-dictionary-4})} and the final assertion.
\end{proof}

The preceding definition and lemma place the rational-function notion of exceptionality in its geometric and permutation-group context. The no-accidental-permutations theorem establishes the converse phenomenon for the class with Galois closure of genus zero: bijectivity over one finite field already forces these equivalent exceptional conditions.

Let $K=\mathbb F_Q$ be a finite field and let $f\in K(X)$ be separable
with Galois closure of genus zero. Choose a semilinear $\mathbf P^1$-datum
$\mathcal D=(\kappa/K,C,G,H,\tau)$ over $K$ with
$f\in\mathcal M_{\mathcal D}$. Put $A=\langle G,\tau\rangle$,
$U=\langle H,\tau\rangle$, $\Xi=G/H\cong A/U$, and let $\Gamma=G\tau$ be
the generating Frobenius coset. For $a\in\Gamma$ write
$\chi(a):=|\operatorname{Fix}_{\Xi}(a)|$ and $v(a):=\chi(a)-1$. For the
point-side geometry, let $\mathcal F_a:C(\bar K)\to C(\bar K)$ denote the
$Q$-semilinear map corresponding to $a$, with the convention
\begin{equation}\label{eq:sector-semilinear-convention}
 \mathcal F_{ia}=i\circ\mathcal F_a
 \qquad(i\in G).
\end{equation}
Thus in a projective coordinate one may write $\mathcal F_a=u_a\circ\Phi_Q$
for some $u_a\in\operatorname{PGL}_2(\bar K)$, but no normalization making
the chosen Frobenius lift purely coefficientwise is assumed.

The proof is uniform rather than family-by-family. The local fixed-point proposition converts a permutation hypothesis into fixed-point constraints on the generating coset, yielding the counting and field-degree bounds. For defining-characteristic $\operatorname{PSL}_2/\operatorname{PGL}_2$, the two ramification orbits and Shintani descent reduce every class with $v\ne0$ to identity or involution images, and the second moment forces $v=0$. The remaining primitive cases are structural; complete decomposition then globalizes the result, and constant-field arithmetic determines all extension degrees.

\subsection{Fixed-point moments and local averaging}

\begin{proposition}[First and second fixed-point moments on a generating coset]\label{prop:coset-package}
With the notation above:
\begin{enumerate}[label=(\arabic*),ref=\arabic*,font=\itshape]
\item
\[
 \frac1{|G|}\sum_{a\in G\tau}\chi(a)=1.
\]
\item\label{item:coset-package-2} The cover
$f:\mathbf P^1_K\to\mathbf P^1_K$ is exceptional if and only if
$\chi(a)=1$ for every $a\in G\tau$.
\item If $M_2:=|G|^{-1}\sum_{a\in G\tau}\chi(a)^2$, then
\begin{equation}\label{eq:twisted-variance}
 M_2-1
 =\frac1{|G|}\sum_{a\in G\tau}v(a)^2
 \in\mathbb Z_{\geqslant0}.
\end{equation}
Consequently, if the generating coset is nonexceptional, then it contains a
\emph{derangement}, that is, an element with no fixed point on $\Xi$,
and $M_2\geqslant2$.
\end{enumerate}
\end{proposition}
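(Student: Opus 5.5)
The plan is to prove all three parts by Burnside-type averaging over the coset $G\tau$. Two inputs suffice: $G$ is transitive on $\Xi$, and $G$ is transitive on the $G$-orbits of $\Xi\times\Xi$ in the appropriate twisted sense. Exceptionality itself is read off from Lemma~\ref{lem:exceptionality-dictionary}.

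For (1), count pairs $(a,x)\in G\tau\times\Xi$ with $a x=x$. For fixed $x=gH$, the elements of $A$ fixing $x$ form $gUg^{-1}$. Since $A=GU$ and $G\cap U=H$, the coset $G\tau$ meets $gUg^{-1}$ in exactly one coset of $gHg^{-1}$, which has $|H|$ elements. To see this, note that $gUg^{-1}$ surjects onto $A/G$ with kernel $gHg^{-1}$. Summing over the $[G:H]$ points gives $|G|$, so the average of $\chi$ over the coset is $1$.

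For (2), apply Lemma~\ref{lem:exceptionality-dictionary}, specifically (ii)$\Leftrightarrow$(iv) for the generating coset $G\tau$.

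For (3), the plan has two steps.
\begin{enumerate}
\item Apply the same count to the action on $\Xi\times\Xi$: $\frac{1}{|G|}\sum_{a\in G\tau}\chi(a)^2$ equals the number of $G$-orbits $O$ on $\Xi\times\Xi$ that are $\tau$-stable, hence $A$-stable. The argument generalizes (1). For each $\tau$-stable $G$-orbit $O$, the pairs in $O$ fixed by elements of $G\tau$ contribute $|G|$ in total. Indeed, $\mathrm{Stab}_A(y)$ surjects onto $A/G$ exactly when the $G$-orbit of $y$ is $A$-stable, and then the coset meets it in $|\mathrm{Stab}_G(y)|$ elements. For a non-stable orbit the contribution is $0$. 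So $M_2$ is a positive integer, at least $1$ because of the diagonal.
\item Expand $v^2=\chi^2-2\chi+1$. By (1), the average of $v^2$ is $M_2-2+1=M_2-1$, which gives \eqref{eq:twisted-variance} and integrality and nonnegativity.
\end{enumerate}

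For the consequence, suppose the coset is nonexceptional. By (2) some $v(a)\neq0$, so $M_2-1>0$ and hence $M_2\geqslant2$ by integrality. The average of $v$ is $0$ by (1), so some $v(a)$ is negative, meaning $\chi(a)=0$ and $a$ is a derangement.

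The main subtlety is the orbit-counting step, namely the claim that the coset meets each point stabilizer in exactly $|\mathrm{Stab}_G|$ elements or in none. The check uses that $A/G$ is generated by the image of $\tau$, so the meeting set is a single coset of the $G$-stabilizer whenever it is nonempty.
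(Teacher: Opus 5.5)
Your proof is correct and follows the paper's route. Both arguments apply the coset form of Burnside's lemma to $\Xi$ and to $\Xi^2$, identify $M_2$ with the number of $A$-stable $G$-orbits on $\Xi^2$ (at least one because of the diagonal), expand $\chi=1+v$, and read off the derangement and $M_2\geqslant2$. The only difference is that you prove the coset-Burnside identity directly from $A=GU$ and $G\cap U=H$, whereas the paper cites \cite[Lemma~3.2]{GMS2003}; your opening remark that $G$ is ``transitive on the $G$-orbits of $\Xi\times\Xi$'' is not meaningful as stated, but nothing in your argument uses it.
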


\begin{proof}
The first assertion is the coset form of Burnside's lemma \cite[Lemma~3.2]{GMS2003}; part~(\ref{item:coset-package-2}) is Lemma~\ref{lem:exceptionality-dictionary}. Apply the same coset-Burnside formula to the diagonal action on $\Xi^2$. Since
\(|\operatorname{Fix}_{\Xi^2}(a)|=\chi(a)^2,\)
the integer $M_2$ counts the $G$-orbits on $\Xi^2$ stabilized by the generator of $A/G$. The diagonal orbit shows that $M_2\geqslant1$. Expanding $\chi=1+v$ and using the first assertion gives \eqref{eq:twisted-variance}.

If the coset is nonexceptional, then $\chi$ is not identically $1$. Since its average is $1$ and all values are nonnegative integers, some value must be $0$; this is a derangement. Finally, \eqref{eq:twisted-variance} is positive and integral, so $M_2\geqslant2$.
\end{proof}

\begin{proposition}[Local fixed-point averaging and the field-degree bound]\label{prop:local-sector}
Assume that $f:\mathbf P^1(K)\to\mathbf P^1(K)$ is bijective. Let $a\in G\tau$, let $P\in\operatorname{Fix}(\mathcal F_a)$, put $I_P=G_P$, and let $\bar y\in Y_{\mathcal D}(K)$ be the descended point corresponding to the $G$-orbit of $P$. Then:
\begin{enumerate}[label=(\arabic*),ref=\arabic*,font=\itshape]
\item\label{item:local-sector-1}
\begin{equation}\label{eq:local-sector-mass}
 |\gamma_{\mathcal D}^{-1}(\bar y)(K)|
 =\frac1{|I_P|}\sum_{i\in I_P}\chi(ia).
\end{equation}
In particular, if $\chi(a)\ne1$, then every point of $\operatorname{Fix}(\mathcal F_a)$ is ramified in $C\to C/G$.
\item Every $\mathcal F_a$ has exactly $Q+1$ fixed points. If the generating coset is nonexceptional and $\mathcal S:=\{a\in G\tau:\chi(a)\ne1\}$ with $M:=|\mathcal S|$, then
\begin{equation}\label{eq:support-lower}
 M\geqslant2\left\lceil\frac{Q+1}{2}\right\rceil.
\end{equation}
If the ramification locus of $C\to C/G$ has $s$ geometric $G$-orbits, then
\begin{equation}\label{eq:ramification-capacity}
 M(Q+1)\leqslant s|G|.
\end{equation}
\item\label{item:local-sector-3} Suppose $Q=p^e$ and, in the chosen projective realization, $G\leqslant\operatorname{PGL}_2(\mathbb F_{p^\alpha})$. If an element $a\in G\tau$ with $\chi(a)\ne1$ exists, then
\begin{equation}\label{eq:quadratic-confinement}
 e\mid2\alpha.
\end{equation}
\end{enumerate}
\end{proposition}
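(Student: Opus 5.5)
The plan is to work throughout in the sheet set $\Xi=G/H$ and to translate point counts on $X_{\mathcal D}$ and $Y_{\mathcal D}$ into fixed-point counts of coset elements, using Theorem~\ref{thm:branch-orbit} and the coset Burnside lemma already used in Proposition~\ref{prop:coset-package}.

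\emph{Part (1).} Since $\mathcal F_a(P)=P$, the point $\bar y$ is $K$-rational. Conjugation by $a$ preserves the stabilizer, so $aI_Pa^{-1}=I_P$. By Theorem~\ref{thm:branch-orbit}, the geometric fiber of $\gamma_{\mathcal D}$ over $\bar y$ is $H\backslash G/I_P$, that is, the set of $I_P$-orbits on $\Xi$. Arithmetic Frobenius acts on this set through $a$, so the $K$-points of the fiber are the $I_P$-orbits stabilized by $a$. Applying the coset Burnside formula of \cite[Lemma~3.2]{GMS2003} to the group $\langle I_P,a\rangle$, with normal subgroup $I_P$ and generating coset $I_Pa$, gives \eqref{eq:local-sector-mass}. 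Bijectivity on $\mathbf P^1(K)$ makes the left side equal to $1$. If $P$ were unramified, then $I_P=1$ and the formula would read $\chi(a)=1$; this gives the ``in particular'' assertion.

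\emph{Part (2), fixed points.} Write $\mathcal F_a=u_a\circ\Phi_Q$. A power of this semilinear map is a $\bar K$-linear automorphism normalizing the finite group $G$, so some further power is coefficientwise Frobenius. Weil descent then makes $\mathcal F_a$ the Frobenius of a $K$-form of $\mathbf P^1$. A genus-zero form over a finite field is $\mathbf P^1$, so $\mathcal F_a$ has exactly $Q+1$ fixed points.

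\emph{Part (2), capacity bound.} I double count incidences $(a,P)$ with $a\in\mathcal S$ and $\mathcal F_a(P)=P$. There are exactly $M(Q+1)$ of them, and by part (1) every such $P$ is ramified. For a fixed ramified $P$, the set of $a\in G\tau$ with $\mathcal F_a(P)=P$ is either empty or a single coset $I_Pa_0$, by the convention \eqref{eq:sector-semilinear-convention}, so it has at most $|I_P|$ elements. Summing $|I_P|=|G|/|GP|$ over the ramified points yields $|G|$ per $G$-orbit, hence \eqref{eq:ramification-capacity}.

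\emph{Part (2), support bound.} This is the step I expect to be the main obstacle. The first input is that Proposition~\ref{prop:coset-package} supplies a derangement $a_0\in\mathcal S$. Each of its $Q+1$ fixed points $P$ lies in $\mathcal R_G$, and by \eqref{eq:local-sector-mass} the coset $I_Pa_0$ must contain an element with $v>0$. Symmetrically, each element $b$ with $v(b)>0$ forces a derangement in $I_Pb$ for every $P\in\operatorname{Fix}(\mathcal F_b)$. I would then double count incidences between derangements and positive-$v$ elements through these shared ramified points, and combine the count with $\sum_{a\in G\tau}v(a)=0$, to get at least $\lceil (Q+1)/2\rceil$ elements of each sign. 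This pairing is what should produce the even bound $2\lceil (Q+1)/2\rceil$. The delicate part is controlling how many fixed points two distinct elements of $\mathcal S$ can share. I would try first to use that $\mathcal F_b^{-1}\mathcal F_{a}\in G$ is linear, so it either is the identity or fixes at most two geometric points; this bounds the overlaps.

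\emph{Part (3).} Take $a$ with $\chi(a)\ne1$. By part (1), all $Q+1\geqslant3$ fixed points of $\mathcal F_a$ are fixed by nonidentity elements of $G\leqslant\operatorname{PGL}_2(\mathbb F_{p^\alpha})$, hence lie in $\mathbf P^1(\mathbb F_{p^{2\alpha}})$. Since $\Phi_Q$ preserves $\mathbf P^1(\mathbb F_{p^{2\alpha}})$ and $u_a$ carries three such points $\Phi_Q(P)$ to points $P$ of the same set, $u_a\in\operatorname{PGL}_2(\mathbb F_{p^{2\alpha}})$. Let $\sigma$ be the automorphism $x\mapsto x^{p^e}$ of $\mathbb F_{p^{2\alpha}}$, which has order $2\alpha/\gcd(e,2\alpha)$. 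Then $\mathcal F_a$ restricts to a $\sigma$-semilinear bijection of $\mathbf P^1(\mathbb F_{p^{2\alpha}})$. Its fixed points there are the rational points of a form of $\mathbf P^1$ over the fixed field $\mathbb F_{p^{\gcd(e,2\alpha)}}$, so there are $p^{\gcd(e,2\alpha)}+1$ of them. All fixed points lie in this set, so comparing with $Q+1=p^e+1$ gives $\gcd(e,2\alpha)=e$, that is, \eqref{eq:quadratic-confinement}.
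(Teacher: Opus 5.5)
Your part~(1), the count of $Q+1$ fixed points, and the capacity bound \eqref{eq:ramification-capacity} are correct and essentially follow the paper. Coset Burnside for $I_P\triangleleft\langle I_P,a\rangle$ is the same identity the paper obtains as $\operatorname{Tr}(a\mid\mathbb C[\Xi]^{I_P})$. Descent together with triviality of genus-zero forms over finite fields is equivalent to the paper's appeal to Lang--Steinberg. One wording fix in the fixed-point count: no power of a $Q$-semilinear map is $\bar K$-linear. What you need is that $u_a$ is defined over some $\mathbb F_{Q^M}$, so that $\mathcal F_a^M=w\Phi_{Q^M}$ with $w$ commuting with $\Phi_{Q^M}$, and hence $\mathcal F_a^{M\operatorname{ord}(w)}=\Phi_{Q^{M\operatorname{ord}(w)}}$.

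The support bound \eqref{eq:support-lower} is left as a plan, and no double counting is needed to finish it. Fix one derangement $a_0$. For each $P\in\operatorname{Fix}(\mathcal F_{a_0})$, the identity $\sum_{i\in I_P}v(ia_0)=0$ together with $v(a_0)=-1$ produces $i_P\in I_P\setminus\{1\}$ with $v(i_Pa_0)>0$. If $i_Pa_0=i_{P'}a_0$, then $i_P=i_{P'}$ is a single nonidentity element of $G$ fixing both $P$ and $P'$. So $P\mapsto i_Pa_0$ is at most two-to-one, and at least $\lceil(Q+1)/2\rceil$ elements of $G\tau$ have $v>0$. Since $\sum_{a\in G\tau}v(a)=0$ and every negative value equals $-1$, there are at least as many derangements, which gives $M\geqslant2\lceil(Q+1)/2\rceil$. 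This is the paper's argument. Your overlap idea, that $\mathcal F_b\mathcal F_{a_0}^{-1}=i_P\neq1$ has at most two fixed points, is exactly the input required. Your symmetric observation, run from one element with $v>0$, would also give the derangement count directly instead of via the zero sum.

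Part~(3) has a missing verification. Put $L=\mathbb F_{p^{2\alpha}}$, $g=\gcd(e,2\alpha)$ and $L_0=\mathbb F_{p^g}$. A $\sigma$-semilinear bijection $u_a\sigma$ of $\mathbf P^1(L)$ defines a form of $\mathbf P^1$ over $L_0$ only if the cocycle condition $(u_a\sigma)^{2\alpha/g}=1$ holds. In general this power is a possibly nontrivial element of $\operatorname{PGL}_2(L)$, and then $u_a\sigma$ has at most two fixed points on $\mathbf P^1(L)$, not $p^g+1$. In your situation the condition does hold, because that $L$-linear power fixes the $Q+1\geqslant3$ points of $\operatorname{Fix}(\mathcal F_a)\subseteq\mathbf P^1(L)$. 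Once you state this, the comparison $p^g+1=p^e+1$ closes the argument. The paper argues more directly: by Lang--Steinberg, $\operatorname{Fix}(\mathcal F_a)$ is a projective $\mathbb F_{p^e}$-subline. A subline contained in $\mathbf P^1(L)$ forces $\mathbb F_{p^e}\subseteq L$, by moving three of its points to $0,1,\infty$ with an $L$-rational projectivity. This avoids the intermediate field $L_0$ and the cocycle check altogether.
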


\begin{proof}
Since $\mathcal F_a(P)=P$, the element $a$ normalizes $I_P$. Moreover, if one element $a\in G\tau$ fixes $P$, then the set of labels in $G\tau$ fixing $P$ is exactly $I_Pa$: if $b$ also fixes $P$, then $ba^{-1}\in I_P$, and the converse follows from \eqref{eq:sector-semilinear-convention}. Since $a\in G\tau$, the equality $\mathcal F_a(P)=P$ makes the $G$-orbit of $P$ fixed by the descended Frobenius. Under $(Y_{\mathcal D})_\kappa\cong C/G$, it therefore defines $\bar y\in Y_{\mathcal D}(K)$. After passing to $\bar K$, the fiber of $\gamma_{\mathcal D}$ above $\bar y$ is the set of $I_P$-orbits on $\Xi=G/H$, and $a$ induces Frobenius on that fiber. On $V=\mathbb C[\Xi]$, the orbit sums form a basis of $V^{I_P}$, so
\[
 |\gamma_{\mathcal D}^{-1}(\bar y)(K)|
 =\operatorname{Tr}(a\mid V^{I_P})
 =\operatorname{Tr}\!\left(\frac1{|I_P|}\sum_{i\in I_P}ia\,\middle|\,V\right),
\]
which is \eqref{eq:local-sector-mass}. The $K$-isomorphisms identifying $\gamma_{\mathcal D}$ with a representative of $\mathcal M_{\mathcal D}$ preserve rational fibers. Since $f$ is a permutation, the left-hand side equals $1$. Thus
\begin{equation}\label{eq:local-v-mass}
 \sum_{i\in I_P}v(ia)=0.
\end{equation}
At an unramified point $I_P=1$, this gives $v(a)=0$, proving the final assertion of part~(\ref{item:local-sector-1}).

Write $\mathcal F_a=u_a\Phi_Q$. Lang--Steinberg \cite[Theorem~21.7]{MalleTesterman2011} makes it projectively conjugate to standard $Q$-Frobenius, so $|\operatorname{Fix}(\mathcal F_a)|=Q+1$. If the coset is nonexceptional, choose a derangement $a$ by Proposition~\ref{prop:coset-package}. For each fixed point $P$, \eqref{eq:local-v-mass} gives an $i_P\in I_P\setminus\{1\}$ with $v(i_Pa)>0$. If $i_Pa=i_{P'}a$, the same nonidentity M\"obius transformation fixes $P,P'$, so each positive-$v$ element accounts for at most two fixed points. Hence at least $\lceil(Q+1)/2\rceil$ such elements occur. Since $\sum_{a\in G\tau}v(a)=0$ and the only negative value is $-1$, there are at least as many derangements, proving \eqref{eq:support-lower}.

Every $a\in\mathcal S$ has $Q+1$ fixed points, all ramified by part~(\ref{item:local-sector-1}). For a fixed ramification point $P$, the elements of $G\tau$ fixing $P$ form either the empty set or the coset $I_Pa$, hence there are at most $|I_P|$ of them. Summing incidences gives
\(M(Q+1)\leqslant\sum_{P\in\mathcal R}|I_P|,\)
where $\mathcal R$ is the ramification locus. Each geometric ramification orbit contributes exactly $|G|$ by orbit--stabilizer, proving \eqref{eq:ramification-capacity}.

For a finite subfield $E\subseteq\bar K$, a \emph{projective $E$-subline}
of $\mathbf P^1(\bar K)$ is the image of $\mathbf P^1(E)$ under an element
of $\operatorname{PGL}_2(\bar K)$.
For part~(\ref{item:local-sector-3}), every ramification point is fixed by a nonidentity element of $\operatorname{PGL}_2(\mathbb F_{p^\alpha})$, hence lies in $\mathbf P^1(\mathbb F_{p^{2\alpha}})$. Lang--Steinberg identifies $\operatorname{Fix}(\mathcal F_a)$ with a projective $\mathbb F_{p^e}$-subline; if $\chi(a)\ne1$, part~(\ref{item:local-sector-1}) puts this subline inside $\mathbf P^1(\mathbb F_{p^{2\alpha}})$. A projective $E$-subline contained in $\mathbf P^1(L)$ forces $E\subseteq L$: send three of its points to $0,1,\infty$ and use uniqueness of the corresponding projectivity. Thus $\mathbb F_{p^e}\subseteq\mathbb F_{p^{2\alpha}}$, proving \eqref{eq:quadratic-confinement}.
\end{proof}

\subsection{The indecomposable permutation--exceptionality theorem}

The following lemma records the Shintani facts used for
$\operatorname{PSL}_2$ and $\operatorname{PGL}_2$.
Shintani descent for the indicated commuting Frobenius endomorphisms
gives a bijection between the relevant sets of conjugacy classes,
called the \emph{Shintani map}.

\begin{lemma}[Shintani descent for $\operatorname{PSL}_2$ and $\operatorname{PGL}_2$]\label{lem:rank-one-shintani-carriers}
Let $r$ be a power of $p$, let $F$ denote the $r$-Frobenius on
$\operatorname{PGL}_2(\bar k)$, and put $Q_0=r^n$.
\begin{enumerate}[label=(\arabic*),ref=\arabic*,font=\itshape]
\item\label{item:rank-one-shintani-carriers-1}
For $a=xF\in\operatorname{PGL}_2(Q_0)F$, let
$y\in\operatorname{PGL}_2(r)$ represent the image of the
$\operatorname{PGL}_2(Q_0)$-conjugacy class of $a$ under the Shintani map
for $F^n$ and $F$. Then $S_a=\operatorname{Fix}(a)$ is a projective
$\mathbb F_r$-subline. It may be identified with $\mathbf P^1(\mathbb F_r)$
so that the action induced by $F^n$ is $y$. Consequently
\[
 |S_a\cap\mathbf P^1(\mathbb F_{Q_0})|
 =|\operatorname{Fix}_{\mathbf P^1(\mathbb F_r)}(y)|.
\]
If $\mathcal C_a$ is the $\operatorname{PGL}_2(Q_0)$-conjugacy class of
$a$ in the source coset, then
\[
 \frac{|\mathcal C_a|}{|\operatorname{PGL}_2(Q_0)|}
 =\frac1{|C_{\operatorname{PGL}_2(r)}(y)|}.
\]
\item\label{item:rank-one-shintani-carriers-2}
Suppose $r$ is odd. The Shintani map carries the
$\operatorname{PGL}_2(Q_0)$-conjugacy classes contained in
$\operatorname{PSL}_2(Q_0)F$ to the
$\operatorname{PGL}_2(r)$-conjugacy classes contained in
$\operatorname{PSL}_2(r)$, and the classes in the complementary cosets
correspond as well.
A $\operatorname{PGL}_2(Q_0)$-conjugacy class $\mathcal C_a$ meeting a
specified $\operatorname{PSL}_2(Q_0)$-coset in
$\operatorname{PGL}_2(Q_0)F$ intersects it in either one or two
$\operatorname{PSL}_2(Q_0)$-conjugacy classes.
In the latter case, writing these classes as $\mathcal C_1,\mathcal C_2$, one has
\[
 \frac{|\mathcal C_1|}{|\operatorname{PSL}_2(Q_0)|}
 =\frac{|\mathcal C_2|}{|\operatorname{PSL}_2(Q_0)|}
 =\frac{|\mathcal C_a|}{|\operatorname{PGL}_2(Q_0)|}.
\]
\item\label{item:rank-one-shintani-carriers-3}
Suppose $n$ is odd and use Shintani descent for $F^n$ and $F^2$.
The resulting Shintani map takes
$\operatorname{PGL}_2(Q_0)$-conjugacy classes in
$\operatorname{PGL}_2(Q_0)F^2$ to
$\operatorname{PGL}_2(r^2)$-conjugacy classes in
$\operatorname{PGL}_2(r^2)F$.
Every involution $yF$ in this latter coset is
$\operatorname{PGL}_2(r^2)$-conjugate to the standard field involution
$F$, whose centralizer is $\operatorname{PGL}_2(r)$.
Hence the corresponding source class $\mathcal C_a$ satisfies
\[
 \frac{|\mathcal C_a|}{|\operatorname{PGL}_2(Q_0)|}
 =\frac1{|\operatorname{PGL}_2(r)|}=\frac1{r(r^2-1)}.
\]
For odd $r$, if this class meets a specified
$\operatorname{PSL}_2(Q_0)$-coset in $\operatorname{PGL}_2(Q_0)F^2$,
its intersection again consists of one class or two equal-size classes
under $\operatorname{PSL}_2(Q_0)$-conjugacy.
\end{enumerate}
\end{lemma}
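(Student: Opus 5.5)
I will prove the three parts from the standard Shintani descent formalism for the connected group $\mathbf G=\operatorname{PGL}_2$ over $\bar k$ with two commuting Frobenius maps $F^n$ and $F$. Recall the construction. For $a=xF$ with $x\in\mathbf G^{F^n}=\operatorname{PGL}_2(Q_0)$, Lang's theorem writes $x=g^{-1}F^n(g)$ (or the appropriate variant) with $g\in\mathbf G$. The element $y:=F^n(g)F(g)^{-1}\cdots$, suitably normalized so that it lies in $\mathbf G^{F}=\operatorname{PGL}_2(r)$, then gives the Shintani correspondence. This correspondence is a bijection between $\mathbf G^{F^n}$-classes in $\mathbf G^{F^n}F$ and $\mathbf G^F$-classes in $\mathbf G^F$, and it preserves centralizer orders: $|C_{\mathbf G^{F^n}}(a)|=|C_{\mathbf G^F}(y)|$. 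The class-size formula in part~(\ref{item:rank-one-shintani-carriers-1}) is exactly this centralizer equality divided by $|\operatorname{PGL}_2(Q_0)|$. I will take the bijection and centralizer preservation as standard references (Digne--Michel, Deligne--Lusztig style) rather than reprove them.

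For the geometric statement about $S_a=\operatorname{Fix}(a)$, I will conjugate by $g$. The element $g$ transports the semilinear map $a=x\circ\Phi_r$ on $\mathbf P^1(\bar k)$ to standard $\Phi_r$, which is the same Lang--Steinberg step already used in Proposition~\ref{prop:local-sector}. Hence $S_a=g^{-1}\mathbf P^1(\mathbb F_r)$ is a projective $\mathbb F_r$-subline. Since $a$ commutes with $\Phi_{Q_0}=F^n$ on points up to $x\in\operatorname{PGL}_2(Q_0)$, the map $F^n$ preserves $S_a$. Pulled back through $g$, it acts on $\mathbf P^1(\mathbb F_r)$ by the element $gF^n(g)^{-1}$-type expression, which is precisely the Shintani image $y$. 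Points of $S_a$ that are $\mathbb F_{Q_0}$-rational are then the $F^n$-fixed points, giving $|S_a\cap\mathbf P^1(\mathbb F_{Q_0})|=|\operatorname{Fix}(y)|$. The main bookkeeping risk here is getting the left/right conventions in the Lang element consistent with \eqref{eq:sector-semilinear-convention}. I will fix them once and check them on $x=1$, where $y=1$ and $S_a=\mathbf P^1(\mathbb F_r)$.

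For part~(\ref{item:rank-one-shintani-carriers-2}), I will use the determinant-square-class map $\operatorname{PGL}_2\to\{\pm1\}$, which is defined on $\mathbf G^{F^m}$ for every $m$ and compatible with the Lang map. If $x\in\operatorname{PSL}_2(Q_0)$, the determinant of $y$ is a norm-type product from the determinant of $x$. Since the norm $\mathbb F_{Q_0}^\times\to\mathbb F_r^\times$ is surjective and respects square classes for odd $r$, the coset data correspond. The splitting of a $\operatorname{PGL}_2(Q_0)$-class into one or two $\operatorname{PSL}_2(Q_0)$-classes follows because $\operatorname{PSL}_2(Q_0)$ has index $2$ in $\operatorname{PGL}_2(Q_0)$. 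The class splits exactly when the centralizer lies inside $\operatorname{PSL}_2(Q_0)$, and then the two halves have equal size, namely $|\mathcal C_a|/2$. Dividing by $|\operatorname{PSL}_2(Q_0)|=|\operatorname{PGL}_2(Q_0)|/2$ gives the displayed equality.

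For part~(\ref{item:rank-one-shintani-carriers-3}), I will run the same formalism with the commuting pair $F^n,F^2$. Since $n$ is odd, $\langle F^n,F^2\rangle$ contains $F$, which places the target in the coset $\operatorname{PGL}_2(r^2)F$. The claim that every involution $yF$ is conjugate to $F$ is the statement $H^1(\langle F\rangle,\operatorname{PGL}_2(r^2))=1$, which holds by Lang's theorem applied to the connected group $\operatorname{PGL}_2$ (equivalently, Hilbert~90 for quaternion-free finite fields). The centralizer of $F$ is $\operatorname{PGL}_2(r)$, and centralizer preservation yields the class-size formula. The final splitting assertion is handled verbatim as in part~(\ref{item:rank-one-shintani-carriers-2}).

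I expect the main obstacle to be the identification of the $F^n$-action on the subline with the Shintani image $y$ in part~(\ref{item:rank-one-shintani-carriers-1}), with exactly the right conjugacy convention. Everything else is index-$2$ Clifford bookkeeping.
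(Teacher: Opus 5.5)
Your proposal is correct. Parts (1) and (3) follow the paper's proof essentially verbatim: Shintani bijection with centralizer preservation (the paper cites Harper, you cite Digne--Michel), the Lang--Steinberg element carrying $S_a$ to $\mathbf P^1(\mathbb F_r)$ so that the $F^n$-action becomes the Shintani image, the index-two splitting argument, and Lang to conjugate each semilinear involution in $\operatorname{PGL}_2(r^2)F$ to $F$.

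The genuine difference is part (2). The paper invokes Harper's compatibility of Shintani descent with the central isogeny $\operatorname{SL}_2\to\operatorname{PGL}_2$. That directly gives only that $\operatorname{PSL}_2(Q_0)F$-classes land in $\operatorname{PSL}_2(r)$; bijectivity is then needed for the complementary cosets. Your determinant/norm argument is more elementary and gives both directions at once, because the norm induces an isomorphism between the two order-two square-class groups.

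To make it rigorous, note that the square class is not defined on $\operatorname{PGL}_2(\bar k)$, where the Lang element lives, so "compatible with the Lang map" must go through lifts. Do the Lang step in $\operatorname{GL}_2$:
\begin{itemize}
\item lift $x$ to $\tilde x\in\operatorname{GL}_2(\mathbb F_{Q_0})$;
\item choose $\tilde g$ with $\tilde g^{-1}F(\tilde g)=\tilde x$ exactly (rescale any Lang solution by a suitable scalar);
\item then $\tilde y=F^n(\tilde g)\tilde g^{-1}$ lies in $\operatorname{GL}_2(\mathbb F_r)$, maps to the Shintani image (up to inversion, which is harmless since every element of $\operatorname{PGL}_2(r)$ is real), and satisfies $\det\tilde y=(\det\tilde g)^{Q_0-1}=N_{\mathbb F_{Q_0}/\mathbb F_r}(\det\tilde x)$.
\end{itemize}
In effect this is Shintani descent for $\operatorname{GL}_2\to\operatorname{PGL}_2$ composed with $\det$, since Shintani descent on $\mathbb G_m$ is the norm.

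Two minor points. In part (3), the vanishing of $H^1(\langle F\rangle,\operatorname{PGL}_2(r^2))$ is Lang's theorem (or Wedderburn), not Hilbert~90. Lang gives $h\in\operatorname{PGL}_2(\bar k)$ with $y=h^{-1}F(h)$, and $yF(y)=1$ then forces $F^2(h)=h$, so $h\in\operatorname{PGL}_2(r^2)$. Also, the reason the target coset is $\operatorname{PGL}_2(r^2)F$ is that $F^n$ and $F$ induce the same automorphism of $\operatorname{PGL}_2(r^2)$ when $n$ is odd, not merely that $\langle F^n,F^2\rangle$ contains $F$.
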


\begin{proof}
Harper's Shintani theorem gives the class bijections and preserves finite
centralizers \cite[Theorem~2.1]{Harper2021}.
For part~(\ref{item:rank-one-shintani-carriers-1}), choose the usual
Lang--Steinberg conjugating element for $a=xF$
\cite[Theorem~21.7]{MalleTesterman2011}. It carries $S_a$ to
$\mathbf P^1(\mathbb F_r)$, and the same Shintani construction identifies
the induced $F^n$-action with the representative $y$ of the corresponding
class. The intersection formula follows. Centralizer preservation gives
the displayed ratio of class size to group order.

For part~(\ref{item:rank-one-shintani-carriers-2}), use the natural
homomorphism $\operatorname{SL}_2\to\operatorname{PGL}_2$.
Its kernel is central, and its image on the finite fixed-point groups is
$\operatorname{PSL}_2$. Harper's compatibility with this homomorphism
\cite[Corollary~2.14]{Harper2021} gives the restriction to classes in
$\operatorname{PSL}_2(Q_0)F$ and $\operatorname{PSL}_2(r)$; bijectivity
gives the assertion for the complementary cosets.
Since $\operatorname{PSL}_2(Q_0)$ has index two in
$\operatorname{PGL}_2(Q_0)$, a conjugacy class under the latter group
splits into at most two classes under the former. If it splits into
$\mathcal C_1,\mathcal C_2$, conjugation by an element of
$\operatorname{PGL}_2(Q_0)\setminus\operatorname{PSL}_2(Q_0)$ exchanges
them. Thus $|\mathcal C_i|=|\mathcal C_a|/2$ and
$|\operatorname{PSL}_2(Q_0)|=|\operatorname{PGL}_2(Q_0)|/2$, proving
the asserted ratios.

For part~(\ref{item:rank-one-shintani-carriers-3}), Lang--Steinberg
conjugates every semilinear involution in $\operatorname{PGL}_2(r^2)F$
to $F$. Its centralizer consists of the $F$-fixed projectivities,
namely $\operatorname{PGL}_2(r)$. Centralizer preservation gives the
class-size ratio, and the final assertion follows by the same
index-two argument as in part~(\ref{item:rank-one-shintani-carriers-2}).
\end{proof}

\begin{proposition}[Permutation implies exceptionality for defining-characteristic $\operatorname{PSL}_2/\operatorname{PGL}_2$ monodromy]\label{prop:rank-one-nap}
Let $K$ be a finite field of characteristic $p$, and assume that the arithmetic action $A\curvearrowright A/U$ is primitive and that its geometric monodromy group is the standard defining-characteristic group
\[
 G=\operatorname{PSL}_2(Q_0)
 \quad\text{or}\quad
 G=\operatorname{PGL}_2(Q_0),
 \qquad
 Q_0=p^\alpha.
\]
If the corresponding rational function permutes $\mathbf P^1(K)$, then its generating Frobenius coset is exceptional.
\end{proposition}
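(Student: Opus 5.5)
We argue by contradiction: assume $f$ permutes $\mathbf P^1(K)$ but the generating coset $\Gamma=G\tau$ is nonexceptional, and aim to show $M_2=1$, contradicting Proposition~\ref{prop:coset-package}. The ramification locus of $C\to C/G$ is $\mathbf P^1(\mathbb F_{Q_0^2})$, which splits into the two $G$-orbits $\mathcal R_+=\mathbf P^1(\mathbb F_{Q_0})$ (wild, Borel stabilizers) and $\mathcal R_-$ (tame, nonsplit-torus stabilizers), as in the proof of Corollary~\ref{cor:branch-degree-three}. So $s=2$. By Proposition~\ref{prop:local-sector}(\ref{item:local-sector-1}), every $a\in\Gamma$ with $v(a)\ne0$ has its $Q+1$ fixed points inside $\mathbf P^1(\mathbb F_{Q_0^2})$. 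Part~(\ref{item:local-sector-3}) gives $e\mid 2\alpha$. We also have the capacity bound $M(Q+1)\leqslant 2|G|$. In semilinear terms, such an $a$ is a Frobenius of $\mathbf P^1(\bar K)$ whose fixed subline $S_a$ (an $\mathbb F_Q$-subline) lies in $\mathbf P^1(\mathbb F_{Q_0^2})$.

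\textbf{Shintani reduction.} We write each $a\in\Gamma$, up to the geometric normalizer $\operatorname{PGL}_2(Q_0)$, as $xF^j$ with $F$ a suitable $r$-Frobenius, where $r=Q$ or $r^2=Q$ depending on whether $e\mid\alpha$ or only $e\mid2\alpha$. We then apply Lemma~\ref{lem:rank-one-shintani-carriers} to replace the $\operatorname{PGL}_2(Q_0)$-class of $a$ by a class $y$ in $\operatorname{PGL}_2(r)$ (or in $\operatorname{PGL}_2(r^2)F$). Under this replacement, $|S_a\cap\mathbf P^1(\mathbb F_{Q_0})|=|\operatorname{Fix}(y)|$, and class proportions become $1/|C(y)|$. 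The point is that $\chi(a)$ depends only on how $S_a$ meets the two ramification orbits, because of the local averaging identity \eqref{eq:local-v-mass}.

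The key claim is that $v(a)\ne0$ forces $S_a$ to be essentially the whole of $\mathbf P^1(\mathbb F_{Q_0})$ or a subline sitting in $\mathbf P^1(\mathbb F_{Q_0^2})$ in a degenerate way. On the Shintani side this means $y$ is the identity (when $r=Q_0$, so $S_a=\mathbf P^1(\mathbb F_{Q_0})$) or an involution; in the $e\mid2\alpha$, $e\nmid\alpha$ case, part~(\ref{item:rank-one-shintani-carriers-3}) identifies it with the field involution. I would verify this by counting fixed points of $a$ on $G/H$ through \eqref{eq:local-sector-mass} at one wild and one tame point. Since all fixed points are ramified, the whole $(Q+1)$-set sits in $\mathcal R_+\cup\mathcal R_-$, and $\operatorname{Fix}(y)$ then sits in a set of size at most $2$ or all of $\mathbf P^1(\mathbb F_r)$. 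An element $y$ of $\operatorname{PGL}_2(r)$ fixing more than two points is trivial; otherwise $y$ is semisimple or unipotent, and the condition that $S_a$ miss $\mathcal R_\pm$ appropriately forces $y^2=1$.

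\textbf{Second moment.} With $v$ supported only on these identity and involution Shintani classes, we compute $M_2-1=|G|^{-1}\sum v(a)^2$ by summing over classes. Each class carries the weight $1/|C_{\operatorname{PGL}_2(r)}(y)|$, with the index-two splitting into $\operatorname{PSL}_2$-classes handled by part~(\ref{item:rank-one-shintani-carriers-2}). The identity class has weight $1/|\operatorname{PGL}_2(r)|$. Involution classes have centralizers of order $2(r\pm1)$ or $|\operatorname{PGL}_2(r)|$ in the field-involution case, while $v$ is bounded by the number of $H$-fixed cosets, which is at most $|N_G(H)/H|$ times small constants. Combining this with $\sum v=0$ and \eqref{eq:support-lower}, the weighted sum should be strictly less than $1$; integrality in \eqref{eq:twisted-variance} then forces it to be $0$, so $v\equiv0$. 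Primitivity enters here to control $v$ through $\mathrm N_G(H)=H$ (Theorem~\ref{thm:structural-consequences}).

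\textbf{Main obstacle.} I expect the main obstacle to be the classification step that limits nonzero $v$ to identity or involution Shintani images. This requires matching the local averaging constraint at wild Borel points, whose inertia is large and non-cyclic, against the subline geometry. Small $Q_0$ with $r$ tiny may escape the crude bound and would be settled by the explicit Borel, Cartan and other family descriptions.
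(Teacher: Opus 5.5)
Your skeleton matches the paper's proof: contradiction, $e\mid2\alpha$ giving $r=p^{(\alpha,e)}$ and $a=xF^t$ with $t\in\{1,2\}$, Shintani transfer of $\operatorname{PGL}_2(Q_0)$-classes in $G\tau$ with weights $1/|C(y)|$ (with the index-two splitting for $\operatorname{PSL}_2$), support of $v$ on identity/involution images, and a contradiction with integrality of $M_2-1$.

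The step you call the main obstacle is actually immediate, and your sketch via fixed-point counts at one wild and one tame point does not reach it. If $v(a)\ne0$, then $S_a\subseteq\mathbf P^1(\mathbb F_{Q_0^2})$ is fixed pointwise by $F^{2n}$. The Lang--Steinberg identification $S_a\cong\mathbf P^1(\mathbb F_r)$ turns $F^n|_{S_a}$ into $y$, so $y^2$ fixes $r+1\geqslant3$ points, and hence $y^2=1$. For $t=2$, Lemma~\ref{lem:rank-one-shintani-carriers}(\ref{item:rank-one-shintani-carriers-3}) then leaves only the field-involution class.

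The genuine gap is the bound $M_2-1<1$.

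\emph{Your control of $v$ is false.} For the $\operatorname{PGL}_2(Q_0)$ Borel quotient over $\mathbb F_r$, the pure Frobenius $F$ lies in $G\tau$ and fixes the $r+1$ sheets $\mathbf P^1(\mathbb F_r)$, even though $\mathrm N_G(H)=H$. Also, \eqref{eq:support-lower} is a lower bound on the support, so it can only increase $M_2$.

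\emph{The first moment plus the support restriction is not enough.} Take the $t=1$, odd-$r$, $\operatorname{PGL}_2$ case, with weights $d_0=\frac1{r(r^2-1)}$, $d_s=\frac1{2(r-1)}$, $d_n=\frac1{2(r+1)}$ for the identity, split-involution and nonsplit-involution images. The pattern $(v_0,v_s,v_n)=(r^2,-1,-1)$ satisfies $\sum d_{\mathcal C}v_{\mathcal C}=0$, yet $M_2-1=r(r^2+1)/(r^2-1)>1$.

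\emph{The missing idea} is to sum \eqref{eq:local-v-mass} over each of the two $G$-orbits $\mathcal R_\pm$ separately. Your phrase ``$\chi(a)$ depends only on how $S_a$ meets the two orbits'' is not true pointwise. What holds is $\sum_{a\in G\tau}v(a)\,|\operatorname{Fix}(\mathcal F_a)\cap\mathcal R_\pm|=0$. For $v(a)\ne0$ these intersections have sizes $w(y)=|\operatorname{Fix}_{\mathbf P^1(\mathbb F_r)}(y)|$ and $r+1-w(y)$. This gives two relations,
\[
\sum_{\mathcal C}d_{\mathcal C}v_{\mathcal C}w(y_{\mathcal C})=0,\qquad \sum_{\mathcal C}d_{\mathcal C}v_{\mathcal C}\bigl(r+1-w(y_{\mathcal C})\bigr)=0.
\]
\begin{itemize}
\item In the odd-$r$ $\operatorname{PGL}_2$ case these read $v_0+rv_s=0$ and $v_s+v_n=0$. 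Since $v\geqslant-1$, only $v\equiv0$ or $(r,-1,1)$ survive, and then $M_2-1=2r/(r^2-1)\in(0,1)$.
\item For even $r$ (one involution class, $w=1$) the relations force $v\equiv0$ directly.
\item For $\operatorname{PSL}_2$ they make the aggregate over each $\operatorname{PGL}_2(Q_0)$-class vanish. Only a $\{-1,1\}$ pattern on a split pair survives, so $M_2-1\leqslant\frac2{r(r^2-1)}+\frac1{r-1}<1$.
\item For $t=2$ your first-moment reasoning does suffice, since only one Shintani class can carry $v\ne0$.
\end{itemize}

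The argument is independent of $H$: neither primitivity nor $\mathrm N_G(H)=H$ is used, and no small-$r$ case analysis is needed.
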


\begin{proof}
The argument is independent of the point stabilizer $H$: it uses only the two $G$-orbits in the ramification locus of $C\to C/G$, field-degree restriction, and Shintani descent.

Write $K=\mathbb F_{p^e}$. In the standard projective realization, the ramification locus of $C\to C/G$ is
\begin{equation}\label{eq:rank-one-ramification}
 \mathcal R=\mathbf P^1(\mathbb F_{Q_0^2})
 =\mathcal R_{\rm w}\sqcup\mathcal R_{\rm t},
\end{equation}
where
\[
 \mathcal R_{\rm w}=\mathbf P^1(\mathbb F_{Q_0}),
 \qquad
 \mathcal R_{\rm t}=\mathbf P^1(\mathbb F_{Q_0^2})\setminus\mathbf P^1(\mathbb F_{Q_0})
\]
are the two $G$-orbits. Indeed, every ramified point is fixed by a nonidentity M\"obius transformation over $\mathbb F_{Q_0}$ and hence has degree at most two; conversely rational points have nontrivial Borel stabilizer and quadratic points have nontrivial nonsplit-torus stabilizer. Orbit--stabilizer gives the asserted two orbits for both $\operatorname{PSL}_2$ and $\operatorname{PGL}_2$.

\medskip
\noindent\emph{Step 1: confinement and reduction to $t=1,2$.}
Assume for contradiction that the generating coset is nonexceptional. Proposition~\ref{prop:local-sector} supplies an element $a\in G\tau$ with $\chi(a)\ne1$ and gives $e\mid2\alpha$. Put $c:=(\alpha,e)$, $r:=p^c$, $n:=\alpha/c$, and $t:=e/c$. Then $(n,t)=1$ and $t\mid2$, so $t\in\{1,2\}$.
After conjugating to the standard semilinear normalizer, write $F$ for the $r$-Frobenius; the associated semilinear maps are represented by $xF^t$ with $x\in\operatorname{PGL}_2(Q_0)$.

\medskip
\noindent\emph{Step 2: the $t=1$ calculation for $\operatorname{PGL}_2(Q_0)$-conjugacy classes.}
Suppose first that $t=1$, so $K=\mathbb F_r$ and $Q_0=r^n$. By Lemma~\ref{lem:rank-one-shintani-carriers}~(\ref{item:rank-one-shintani-carriers-1}), the Shintani map for $F^n$ and $F$ preserves the centralizers of the
corresponding $\operatorname{PGL}_2(Q_0)$- and
$\operatorname{PGL}_2(r)$-conjugacy classes. If $a=xF$, $S_a=\operatorname{Fix}(a)$, and $y\in\operatorname{PGL}_2(r)$ represents the corresponding conjugacy
class under the Shintani map, then $|S_a\cap\mathcal R_{\rm w}|=|\operatorname{Fix}_{\mathbf P^1(\mathbb F_r)}(y)|=:w(y)$.
If $v(a)\ne0$, then $S_a\subseteq\mathcal R$ by Proposition~\ref{prop:local-sector}. Since every point of \eqref{eq:rank-one-ramification} is fixed by $F^{2n}$, the induced element $y^2$ fixes all $r+1$ points of $\mathbf P^1(\mathbb F_r)$; hence $y^2=1$. Thus $v$ can be nonzero only on source classes corresponding under the
Shintani map to the identity or an involution class.

Let $\mathcal C$ run over the $G$-classes in the generating coset $G\tau$, put $d_{\mathcal C}:=|\mathcal C|/|G|$ and $v_{\mathcal C}:=\chi(\mathcal C)-1$, and let $y_{\mathcal C}\in\operatorname{PGL}_2(r)$ represent the image,
under the Shintani map, of the $\operatorname{PGL}_2(Q_0)$-conjugacy class
containing $\mathcal C$. For any $G$-stable ramification orbit $\mathcal O$, summing \eqref{eq:local-v-mass} over $P\in\mathcal O$ and exchanging the finite sums gives $\sum_{a\in G\tau}v(a)|\operatorname{Fix}(\mathcal F_a)\cap\mathcal O|=0$. For $v(a)\ne0$, the preceding proposition puts the whole fixed set in the two $G$-orbits $\mathcal R_{\rm w}$ and $\mathcal R_{\rm t}$, while classes with $v=0$ contribute nothing. Applying this to the wild and tame orbits gives
\begin{align}
 \sum_{\mathcal C}d_{\mathcal C}v_{\mathcal C}w(y_{\mathcal C})&=0,\label{eq:rank-one-mass-w}\\
 \sum_{\mathcal C}d_{\mathcal C}v_{\mathcal C}
 \bigl(r+1-w(y_{\mathcal C})\bigr)&=0.\label{eq:rank-one-mass-t}
\end{align}

If $r$ is even, then $\operatorname{PSL}_2(r)=\operatorname{PGL}_2(r)$ and there is a single involution class; its elements have one fixed point on $\mathbf P^1(\mathbb F_r)$. Equation \eqref{eq:rank-one-mass-t} forces its excess to vanish, and \eqref{eq:rank-one-mass-w} then forces the identity excess to vanish.

Assume now that $r$ is odd and $G=\operatorname{PGL}_2(r^n)$. For the source classes corresponding under the Shintani map to the
identity, a split involution, and a nonsplit involution, the ratios
$d_{\mathcal C}=|\mathcal C|/|G|$ are, respectively,
\[
 d_0=\frac1{r(r^2-1)},
 \qquad
 d_s=\frac1{2(r-1)},
 \qquad
 d_n=\frac1{2(r+1)},
\]
and
\(w(1)=r+1, \qquad w(s)=2, \qquad w(n)=0.\)
Equations \eqref{eq:rank-one-mass-w}--\eqref{eq:rank-one-mass-t} give
\(v_0+rv_s=0, \qquad v_s+v_n=0.\)
Since every $v$ is an integer at least $-1$, the only nonzero possibility is
\[
 (v_0,v_s,v_n)=(r,-1,1).
\]
Then Proposition~\ref{prop:coset-package} gives
\[
 M_2
 =1+\frac{2r}{r^2-1},
\]
which lies strictly between $1$ and $2$, a contradiction.

\medskip
\noindent\emph{Step 3: restriction to $\operatorname{PSL}_2$.}
It remains, in the case $t=1$, to consider $r$ odd and
$G=\operatorname{PSL}_2(r^n)$. Work first with the Shintani map on
$\operatorname{PGL}_2(Q_0)$-conjugacy classes.
Lemma~\ref{lem:rank-one-shintani-carriers}~(\ref{item:rank-one-shintani-carriers-2})
identifies the classes in $\operatorname{PSL}_2(Q_0)F$ with those in
$\operatorname{PSL}_2(r)$ and also identifies the classes in the
complementary cosets. If the generating coset is
$\operatorname{PSL}_2(Q_0)F$, the possible source classes correspond to
the identity and the unique involution class contained in
$\operatorname{PSL}_2(r)$. For the complementary generating coset, only
the source class corresponding to the involution class in
$\operatorname{PGL}_2(r)\setminus\operatorname{PSL}_2(r)$ can occur.

By the same lemma, a $\operatorname{PGL}_2(Q_0)$-conjugacy class $D$
meeting $G\tau$ intersects it in one $G$-conjugacy class or two of equal
size. In the latter case, each satisfies
$d_{\mathcal C}=|D|/|\operatorname{PGL}_2(Q_0)|$.
Put $A_D:=\sum_{\mathcal C\subseteq D}d_{\mathcal C}v_{\mathcal C}$.
Equations \eqref{eq:rank-one-mass-w}--\eqref{eq:rank-one-mass-t} give
$A_D=0$ for every possible source class. A single class therefore has
$v=0$; a split pair has $v_1+v_2=0$, so any nonzero pattern is
$\{-1,1\}$. By centralizer preservation \cite[Theorem~2.1]{Harper2021},
\[
 d_{\rm id}=\frac1{r(r^2-1)},
 \qquad
 d_{\rm inv}\in
 \left\{\frac1{2(r-1)},\frac1{2(r+1)}\right\}.
\]
Hence a nonzero split pattern would give
\[
 0<M_2-1
 \leqslant\frac{2}{r(r^2-1)}+\frac1{r-1}<1,
\]
contrary to \eqref{eq:twisted-variance}.

\medskip
\noindent\emph{Step 4: the quadratic case $t=2$.}
Here $K=\mathbb F_{r^2}$, $Q_0=r^n$, and $n$ is odd.
The Shintani map for $F^n,F^2$ takes values in the conjugacy classes of
$\operatorname{PGL}_2(r^2)F$ under $\operatorname{PGL}_2(r^2)$.
A source class with $v\ne0$ corresponds to an involution class,
hence by Lemma~\ref{lem:rank-one-shintani-carriers}~(\ref{item:rank-one-shintani-carriers-3})
to the single class represented by the standard field involution $F$.
For $G=\operatorname{PGL}_2(Q_0)$ (or even characteristic) the first moment
forces $v=0$. For odd $r$ and $G=\operatorname{PSL}_2(Q_0)$, if the
corresponding $\operatorname{PGL}_2(Q_0)$-conjugacy class does not meet
$G\tau$, no class with $v\ne0$ is possible. Otherwise its intersection
is a single $G$-conjugacy class or splits into two equal-size classes;
in the latter case each has ratio $d=|\mathcal C|/|G|$, where
\[
 d=\frac1{|C_{\operatorname{PGL}_2(r^2)}(F)|}
 =\frac1{|\operatorname{PGL}_2(r)|}
 =\frac1{r(r^2-1)}.
\]
A nonzero aggregate-zero pair is $\{-1,1\}$, giving
\[
 0<M_2-1=\frac{2}{r(r^2-1)}<1,
\]
contrary to \eqref{eq:twisted-variance}.
\end{proof}

We use the following elementary observation twice. If $b\in\mathbf P^1(K)$ and the geometric fiber $f^{-1}(b)$ consists of exactly two distinct points, then $f$ cannot permute $\mathbf P^1(K)$: Frobenius acts on that two-point set either trivially or by transposition, so the fiber contains either two or no $K$-rational points, never exactly one.

\begin{theorem}[Permutation implies exceptionality in the indecomposable case]\label{thm:primitive-nap}
Let $K$ be a finite field, and let $f\in K(X)$ be separable, $K$-indecomposable, of degree greater than $1$, with Galois closure of genus zero. If $f$ permutes $\mathbf P^1(K)$, then $f$ is exceptional over $K$.
\end{theorem}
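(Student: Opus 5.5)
We go case by case through the classification of Theorems~\ref{thm:intro-tame} and~\ref{thm:intro-wild}. For each case we show that bijectivity on $\mathbf P^1(K)$ forces the generating Frobenius coset $G\tau$ to be exceptional; Lemma~\ref{lem:exceptionality-dictionary} then gives exceptionality of $f$. Fix a datum $\mathcal D=(\kappa/K,C,G,H,\tau)$ with $f\in\mathcal M_{\mathcal D}$, where $A\curvearrowright A/U$ is primitive by Theorem~\ref{thm:decomposition}. Assume $f$ permutes $\mathbf P^1(K)$ and, for a contradiction, that $G\tau$ is nonexceptional. Proposition~\ref{prop:coset-package} then provides a derangement and the bound $M_2\geqslant2$. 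Proposition~\ref{prop:local-sector} gives three further constraints: every $a$ with $v(a)\ne0$ has all $Q+1$ of its fixed points in the ramification locus, the vanishing local sums \eqref{eq:local-v-mass} hold, and the counting bounds \eqref{eq:support-lower} and \eqref{eq:ramification-capacity} apply.

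The defining-characteristic $\operatorname{PSL}_2/\operatorname{PGL}_2$ cases, covering items (4) and (5) of Theorem~\ref{thm:wild-master} together with the $Q=3,4,5$ identifications, are exactly Proposition~\ref{prop:rank-one-nap}. The remaining cases have small, explicit ramification data, and we handle them as follows.

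\emph{Tame polyhedral cases and characteristic-$3$ $A_5$.} Here $s=3$ and $|G|\leqslant60$, so \eqref{eq:ramification-capacity} together with \eqref{eq:support-lower} gives $(Q+1)^2\leqslant 3|G|$, forcing $Q\leqslant 12$. On the other hand, $G\leqslant\operatorname{PGL}_2(\mathbb F_{p^\alpha})$ with small $\alpha$, so \eqref{eq:quadratic-confinement} further restricts $Q$. The finitely many surviving $(Q,\text{case})$ pairs are checked directly. Since $\mathrm N_G(H)=H$, the coset $G\tau$ is determined by its image in $\operatorname{Aut}(G)$, and we compute $\chi$ from cycle types: for $S_5$ acting on $5$, $6$ or $10$ letters, and for $S_4$ or $A_4$ acting on $4$ letters. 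In every case the nonexceptional coset contains elements with $\chi=2$ whose fixed points lie over a rational branch point. We then either contradict \eqref{eq:local-v-mass} at that point or use the two-point fiber observation. For example, in the octahedral and icosahedral cases with inner Frobenius the whole coset is $G$, which contains the identity with $\chi=\deg f>1$; all of its fixed points would then be ramified, but only finitely many points are ramified, a contradiction once $Q+1$ exceeds the number of ramified points.

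\emph{Cyclic, dihedral and $V_4/A_4$ cases.} In the cyclic case with $H=1$, $\chi(a)$ counts fixed points of the translation $x\mapsto xa$ on $C_\ell$. So $\chi\ne1$ means $\chi\in\{0,\ell\}$, and it is realized exactly when the Frobenius action $r\mapsto r^{\pm q}$ is trivial, that is, when $\ell\mid q\mp1$. In that situation the fiber over a suitable rational point has $\ell$ or $0$ rational points, contradicting bijectivity directly. For the dihedral pair, the fiber over the order-$2$ branch point has partition $\{\!\{1,2,\ldots,2\}\!\}$. We use the rational simple point and \eqref{eq:local-v-mass} at the order-$\ell$ point to reduce to the same arithmetic condition $\ell\mid q^2-1$ with the appropriate sign, and then exhibit a fiber with two points in one Frobenius orbit. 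For the $V_4/A_4$ quartic, the unramified fibers are $V_4$-torsors, and the arithmetic condition is read off from $A_4$ acting on $4$ points, where the $C_3$-coset elements each fix exactly one point. This shows the generating coset is always exceptional.

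\emph{Affine and characteristic-$2$ dihedral cases.} For $F_{\ell,n}$, the sheet set is $V$ with $a$ acting semilinearly, so $\chi(a)=|\{v:\ a(v)=v\}|$ is either $1$ or a power of $Q$. By \eqref{eq:local-v-mass} at $\infty$ (where $I=G$) together with the unramified fibers, nonexceptionality yields an element $a$ with $\chi(a)>1$ fixing an unramified point. This contradicts part~(\ref{item:local-sector-1}) of Proposition~\ref{prop:local-sector}. The characteristic-$2$ dihedral case is treated in the same way as the tame dihedral case.

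The main obstacle is making the small polyhedral and $A_5$ verifications uniform rather than a raw enumeration. I would first try to derive them all from the bound $(Q+1)\lceil (Q+1)/2\rceil\leqslant s|G|$ together with the two-point-fiber observation, and fall back on explicit $\chi$ tables only for the few residual small $Q$.
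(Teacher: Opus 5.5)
Your overall architecture is the paper's: you split by the classification, quote Proposition~\ref{prop:rank-one-nap} for every defining-characteristic Lie case, and use fixed-point constraints elsewhere. Your cyclic and $V_4/A_4$ arguments are fine. The genuine gap is the residual polyhedral step, and it falls exactly at the one tame case that needs real work: the icosahedral quintic $(A_5,A_4)$ with $Q\in\{7,11\}$. None of your mechanisms applies there.
\begin{itemize}
\item Its fibers over the three rational branch points have partitions $\{\!\{2,2,1\}\!\}$, $\{\!\{3,1,1\}\!\}$ and $\{\!\{5\}\!\}$, so there is no two-point fiber.
\item The outer coset $S_5\setminus A_5$ (the case $Q=7$) contains no element with $\chi=2$.
\item A single relation \eqref{eq:local-v-mass} is satisfiable: at an order-$3$ point, a transposition ($v=2$) is balanced by two elements of type $(2,3)$ ($v=-1$).
\item Your inner-Frobenius argument needs $Q+1$ to exceed the $62$ ramified points of the icosahedral quotient ($26$ for the octahedral one). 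That cannot happen once you have shown $Q\leqslant12$.
\end{itemize}
The paper closes this case by normalizing to $F=aX^5+bX^4+cX^3$ and showing $\sum_{x\in K}F(x)^2\neq0$ for $Q=7,11$. Inside your framework, the repair is to use the exact support size rather than $M\geqslant Q+1$. The inner coset $A_5$ has $M=1+20+24=45$ and the outer coset has $M=10+20=30$. Hence $M(Q+1)\geqslant240>180=3|G|$, contradicting \eqref{eq:ramification-capacity}. The other polyhedral cases and the characteristic-$3$ $A_5$ cases need no bound on $Q$ at all, since each has a $K$-rational branch point with a two-point fiber.

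Two further steps fail as stated.
\begin{itemize}
\item \emph{Affine case, $n>1$.} An element with $\chi>1$ need not have an unramified fixed point (and $\chi$ can also be $0$, not only $1$ or a power of $Q$). Take $F(X)=X(X-1)^{q-1}$ with $q>2$, so $V=\mathbb F_q$ and $A=G=\operatorname{AGL}_1(q)$. The only element with $\chi>1$ is the sheet-trivial one, and its fixed set $\mathbf P^1(\mathbb F_q)=V\cup\{\infty\}$ is entirely ramified. You need a derangement instead. The lift preserves $\infty$ and $0$, so $gH\mapsto g(0)$ identifies $G/H$ arithmetic-equivariantly with the finite ramification orbit $V$. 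A derangement therefore has no fixed point in $V$, so all $Q+1$ of its ramified fixed points would have to be $\infty$.
\item \emph{Dihedral case.} The relation at the order-$\ell$ point is vacuous, because that fiber is a single rational point. For $\ell\geqslant5$ there is no two-point fiber, and for a nonsquare parameter the order-$2$ branch points are not rational. Argue instead as follows. The coset acts on $G/H\cong\mathbb F_\ell$ by $x\mapsto\pm ux+c$, so nonexceptionality forces $u=\pm1$. The coset then contains a sheet-trivial $a_0$ with $v(a_0)=\ell-1$. At a reflection fixed point $P$ one has $I_Pa_0=\{a_0,sa_0\}$ with $v(sa_0)=0$, violating \eqref{eq:local-v-mass}. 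Hence $\operatorname{Fix}(\mathcal F_{a_0})$ lies among the two rotation fixed points, contradicting $Q+1\geqslant3$. This also works verbatim in characteristic $2$. Alternatively, use the paper's count: \eqref{eq:support-lower} gives $M=\ell\geqslant Q+1$, which is incompatible with $\ell\mid Q^2-1$ for odd $\ell$.
\end{itemize}
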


\begin{proof}
Write $K=\mathbb F_Q$. By Theorems~\ref{thm:intro-tame} and~\ref{thm:wild-master}, together with Theorem~\ref{thm:arith-only}, it is enough to treat four cases.

\medskip
\noindent\emph{Tame case.}
First suppose the geometric monodromy is tame. Tame Riemann--Hurwitz for the Galois quotient $C\to C/G$ gives either two branch points or one of the signatures $(2,2,n)$, $(2,3,3)$, $(2,3,4)$, and $(2,3,5)$.
With two branch points the ramification locus of $C\to C/G$ has only two points, whereas any element $a\in G\tau$ with $\chi(a)\ne1$ in a permutation cover would have $Q+1\geqslant3$ ramified fixed points by Proposition~\ref{prop:local-sector}; hence the generating coset is exceptional.

For the signature $(2,2,n)$, the case $n=2$ is the $V_4/A_4$ quartic. Indecomposability over $K$ forces Frobenius to cycle the three order-two subgroups of $V_4$. In the induced degree-four arithmetic action the generating coset consists of $3$-cycles, each with one fixed sheet, so it is exceptional. For $n>2$, the tame classification and primitivity of $A\curvearrowright A/U$ give $G=D_{2\ell}$ and $H=C_2$, with $\ell$ an odd prime. Identify $G/H$ with $\mathbb F_\ell$. If Frobenius acts on the rotation subgroup by $r\mapsto r^u$, the elements of the generating coset act on sheets as $x\mapsto ux+c$ and $x\mapsto-ux+c$.
Coefficient $Q$-Frobenius sends the eigenvalue ratio of a rotation to its $Q$th power, while the normalizer of the pair of eigenlines in $\operatorname{PGL}_2(\overline K)$ contributes only identity or inversion; hence
\begin{equation}\label{eq:dihedral-multiplier}
 u\equiv\pm Q\pmod\ell.
\end{equation}
If $u\ne\pm1$, every element has exactly one fixed sheet. Thus nonexceptionality requires $u=\pm1$. Then exactly one of the two slopes is $1$, and the set $\{a\in G\tau:\chi(a)\ne1\}$ has size $M=\ell$. Since $Q$ is odd, \eqref{eq:support-lower} gives $\ell\geqslant Q+1$. Together with \eqref{eq:dihedral-multiplier}, this forces $Q=\ell-1$, so $\ell=Q+1$ is even, a contradiction.

For $(2,3,3)$, the tetrahedral degree-four ramification partitions are
$\{\!\{2,2\}\!\}$, $\{\!\{3,1\}\!\}$, and $\{\!\{3,1\}\!\}$.
The order-$2$ branch point is $K$-rational and its fiber has exactly two
geometric points. For $(2,3,4)$, the octahedral ramification partitions are
$\{\!\{2,1,1\}\!\}$, $\{\!\{3,1\}\!\}$, and $\{\!\{4\}\!\}$;
the order-$3$ branch point is rational and its fiber again has exactly two
geometric points. Both are excluded by the preceding observation. Finally, for $(2,3,5)$ the order-$5$ branch points in degrees six and ten have ramification
partitions $\{\!\{5,1\}\!\}$ and $\{\!\{5,5\}\!\}$, respectively, and are excluded in the same way. In degree five, assume for contradiction that the generating coset is nonexceptional. Here $|G|=60$ and the ramification locus of $C\to C/G$ has three $G$-orbits, while $Q$ is odd. Proposition~\ref{prop:local-sector} gives
\(M\geqslant Q+1, \qquad M(Q+1)\leqslant180,\)
so $Q\leqslant12$. Tameness leaves only $Q=7$ or $11$. The rational order-five branch point has one totally ramified point, so after $K$-M\"obius normalization the normalized rational function is a degree-five polynomial. The order-$3$ branch point has ramification partition $\{\!\{3,1,1\}\!\}$ with a unique rational ramification point, and a further affine normalization gives
\(F(X)=aX^5+bX^4+cX^3, \qquad ac\ne0.\)
If $F$ were a permutation polynomial, then
\[
 \sum_{x\in K}F(x)^2=\sum_{y\in K}y^2=0.
\]
But
\[
 F(X)^2=a^2X^{10}+2abX^9+(b^2+2ac)X^8+2bcX^7+c^2X^6.
\]
Over $\mathbb F_7$ only the exponent $6$ is divisible by $6$, giving the sum $-c^2\ne0$; over $\mathbb F_{11}$ only the exponent $10$ is divisible by $10$, giving $-a^2\ne0$. This closes the tame case.

Next consider the $p$-semi-elementary and characteristic-$2$ dihedral cases. Choose a derangement $a\in G\tau$ if the generating coset is nonexceptional. Proposition~\ref{prop:local-sector} then makes all $Q+1$ points of $\operatorname{Fix}(\mathcal F_a)$ ramified.

In the affine case with $n=1$, the ramification locus is only $\{\infty\}$, an immediate contradiction. For $n>1$, the finite ramification orbit is $\mathcal O=V$. The Frobenius lift preserves the unique common fixed point of $V$ and the other fixed point $0$ of $H$; hence $gH\mapsto g(0)$ identifies $G/H$ with $\mathcal O$ equivariantly for the arithmetic action. A derangement has no fixed point in $\mathcal O$, but the ramification locus outside it contains only $\infty$, again a contradiction.

In the characteristic-$2$ dihedral case, the reflection subgroup $H=C_2$ has a unique fixed point $P$. Every lift normalizing $H$ fixes $P$, so $gH\mapsto g(P)$ is an arithmetic-equivariant identification of the sheet set with the wild-reflection orbit. A derangement has no fixed point there. Only the two rotation fixed points remain in the ramification locus, fewer than $Q+1$. This proves exceptionality in both families.

For the defining-characteristic Lie-type cases, Proposition~\ref{prop:rank-one-nap} applies. It simultaneously covers all Borel, Cartan, subfield, cases with $H\cong A_4,S_4$, or $A_5$, and Lie-type novelty cases in the wild classification.

The only remaining non-Lie wild case is characteristic three with geometric group $A_5$. By Proposition~\ref{prop:char3-A5}, the three $K$-M\"obius classes are represented by $W_5,W_6,W_{10}$. The fiber $W_5^{-1}(0)$ consists of the two points $0,-1$; the pole fiber of $W_6$ consists of $0,\infty$; and the pole fiber of $W_{10}$ consists exactly of the two roots of $X^2+X-1$, each with multiplicity five. Each is a two-point geometric fiber above a $K$-rational value, so none can be a permutation. The wild classification now completes the proof.
\end{proof}

\subsection{Globalization and exact extension degrees}

\begin{theorem}[Permutation--exceptionality equivalence]\label{thm:no-accidental}
Let $K$ be a finite field, and let $f\in K(X)$ be separable of degree greater than $1$. If the Galois closure of $f$ has genus zero, then
\[
 f:\mathbf P^1(K)\longrightarrow\mathbf P^1(K)\text{ is bijective}
 \quad\Longleftrightarrow\quad
 f\text{ is exceptional over }K.
\]
\end{theorem}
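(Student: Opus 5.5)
The reverse implication is immediate. If $f$ is exceptional over $K$, then Lemma~\ref{lem:exceptionality-dictionary} makes the cover $f:\mathbf P^1_K\to\mathbf P^1_K$ exceptional, and the rational-point theorem of Guralnick--Tucker--Zieve gives bijectivity on $\mathbf P^1(K)$ itself. Equivalently, by Proposition~\ref{prop:coset-package}(2) we have $\chi\equiv1$ on the generating coset $G\tau$. The local formula \eqref{eq:local-sector-mass} then gives exactly one $K$-rational point in each fiber above a point of $Y_{\mathcal D}(K)$ hit by some $\operatorname{Fix}(\mathcal F_a)$. Every rational target point is hit in this way, by the Lang--Steinberg/Chebotarev-type coset argument. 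So the substance lies in the forward implication, and the plan is to reduce it to Theorem~\ref{thm:primitive-nap} by induction on a complete decomposition.

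Choose a complete decomposition $f=f_r\circ\cdots\circ f_1$ over $K$. It exists by finiteness of degree. By Theorem~\ref{thm:complete-decomposition}, every factor $f_i$ is separable, $K$-indecomposable, and has Galois closure of genus zero. If $f$ permutes $\mathbf P^1(K)$, then each $f_i$ does as well. Indeed, $f_r\circ\cdots\circ f_1$ is bijective on the finite set $\mathbf P^1(K)$. This forces $f_1$ to be injective, hence bijective, and inductively each subsequent factor is bijective on the image, which is all of $\mathbf P^1(K)$. Theorem~\ref{thm:primitive-nap} then makes each $f_i$ exceptional over $K$.

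It remains to show that a composition of exceptional rational functions over $K$ is exceptional over $K$. Here I would use the cover formulation. By Guralnick--Tucker--Zieve, each exceptional $f_i$ is bijective on $\mathbf P^1(L)$ for every finite extension $L/K$ of degree coprime to some integer $N_i$. In the permutation-group language this is the fact that exceptionality over $K$ persists over $\mathbb F_{Q^m}$ whenever $m$ is coprime to $[\kappa_i:K]$, because the generating coset for $\mathbb F_{Q^m}$ is $G_i\tau_i^m$, which still generates. Taking $m$ coprime to $\prod_i[\kappa_i:K]$ makes every factor bijective on $\mathbf P^1(\mathbb F_{Q^m})$, and hence makes $f$ bijective there. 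There are infinitely many such $m$, so $f$ is exceptional over $K$.

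The main obstacle is justifying this persistence cleanly from the stated lemmas. It needs the fact that for $m$ coprime to $[A_i:G_i]$, the coset $G_i\tau_i^m$ is again a generating coset of the same pair $(A_i,G_i)$. Lemma~\ref{lem:exceptionality-dictionary}'s final assertion ("one generating coset exceptional implies all") then applies, combined with the implication (iv)$\Rightarrow$(i) over $\mathbb F_{Q^m}$. Everything else is formal.
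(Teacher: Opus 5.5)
Your proof is correct, and its skeleton is the paper's. Both reduce to $K$-indecomposable factors; the paper inducts on $\deg f$ by splitting $f=g\circ h$ one step at a time, and your complete decomposition amounts to the same thing. Both then observe that each factor inherits bijectivity on $\mathbf P^1(K)$ and, by Theorem~\ref{thm:complete-decomposition}, separability and a genus-zero Galois closure, and apply Theorem~\ref{thm:primitive-nap}.

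The one genuine divergence is the recombination step. The paper simply cites the exceptional-tower criterion \cite[Lemma~3.5]{GMS2003}. You instead prove what is needed directly. For $m$ prime to all the constant-field degrees $[\kappa_i:K]$, Lemma~\ref{lem:base-change-coset} shows that the base change of $f_i$ to $\mathbb F_{Q^m}$ has the same triple $(A_i,G_i,\Xi)$. Its designated coset $G_i\tau_i^m$ is again a generating coset. The final clause of Lemma~\ref{lem:exceptionality-dictionary} therefore makes every $f_i$ exceptional, hence bijective, over $\mathbb F_{Q^m}$. So $f$ is bijective there for infinitely many $m$. This is exactly the mechanism the paper later uses in Theorem~\ref{thm:exact-support}. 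Your version is more self-contained; the citation is shorter.

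For the reverse implication, the Guralnick--Tucker--Zieve rational-point theorem works but is heavier than necessary. The paper just notes that a collision in $\mathbf P^1(K)$ would persist over every finite extension. That contradicts bijectivity over any of the infinitely many extensions supplied by the definition.

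Your alternative ``equivalently'' argument is sound in substance. However, it should re-derive the trace identity \eqref{eq:local-sector-mass} rather than cite Proposition~\ref{prop:local-sector}. As stated, that proposition assumes the bijectivity you are trying to prove, even though the identity itself does not use it.
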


\begin{proof}
We prove the nontrivial implication by induction on $\deg f$. If $f$ is $K$-indecomposable, this is Theorem~\ref{thm:primitive-nap}. Otherwise write
\(f=g\circ h\)
with both factors of degree greater than one over $K$. Since $f$ is injective on the finite set $\mathbf P^1(K)$, the function $h$ is injective and hence bijective; then $g=f\circ h^{-1}$ is bijective as well. By Theorem~\ref{thm:complete-decomposition}, both factors are separable and have Galois closure of genus zero. The induction hypothesis shows that both factors are exceptional, and the exceptional-tower criterion \cite[Lemma~3.5]{GMS2003} then implies that their composite is exceptional.

Conversely, suppose that $f$ is exceptional over $K$. By Definition~\ref{def:exceptionality}~(\ref{item:exceptionality-1}), $f$ permutes $\mathbf P^1(L)$ for infinitely many finite extensions $L/K$. If two distinct points of $\mathbf P^1(K)$ had the same image under $f$, that collision would persist over every finite extension of $K$, contradicting bijectivity over any one of those extensions. Thus $f$ is injective on the finite set $\mathbf P^1(K)$ and hence bijective.
\end{proof}

Return now to $k=\mathbb F_q$. Choose a semilinear
$\mathbf P^1$-datum $\mathcal D=(\kappa/k,C,G,H,\varphi)$ over $k$
with $f\in\mathcal M_{\mathcal D}$ and $\kappa=\mathbb F_{q^d}$.
For $m\geqslant1$, put $k_m:=\mathbb F_{q^m}$, $c:=(m,d)$, and
$\kappa_m:=\kappa k_m$.

\begin{lemma}[Base change of semilinear data]
\label{lem:base-change-coset}
One has
\begin{equation}\label{eq:basechange-constants}
 \kappa\cap k_m=\mathbb F_{q^c},
 \qquad
 \kappa_m
 =\mathbb F_{q^{\operatorname{lcm}(d,m)}},
\end{equation}
and therefore $[\kappa_m:k_m]=d/c$. Let
$C_m:=C\times_{\kappa}\kappa_m$. The automorphism $\varphi^m$ extends
uniquely over the compositum to a $q^m$-semilinear automorphism
$\varphi_m$ of $C_m$ fixing $k_m$. Then
$\mathcal D^{(m)}:=(\kappa_m/k_m,C_m,G,H,\varphi_m)$ is a semilinear
$\mathbf P^1$-datum over $k_m$, and
$f\in\mathcal M_{\mathcal D^{(m)}}$ when $f$ is viewed over $k_m$.

Under restriction from the base-changed Galois closure to $\kappa(C)$,
the arithmetic group and point stabilizer are
\begin{equation}\label{eq:basechange-groups}
 A_m
 =
 \langle G,\varphi^m\rangle
 =
 \langle G,\varphi^c\rangle,
 \qquad
 U_m
 =
 \langle H,\varphi^m\rangle
 =
 \langle H,\varphi^c\rangle.
\end{equation}
The designated generating Frobenius coset is $G\varphi^m$.
\end{lemma}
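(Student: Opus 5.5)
The proof splits into three parts: the field identities \eqref{eq:basechange-constants}, the verification that $\mathcal D^{(m)}$ is a datum representing $f$, and the group identities \eqref{eq:basechange-groups}.

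\textbf{Field identities.} Inside $\bar k$, the subfields are $\kappa=\mathbb F_{q^d}$ and $k_m=\mathbb F_{q^m}$. Finite subfields of $\bar k$ correspond to their degrees over $\mathbb F_q$ under divisibility. The intersection therefore has degree $\gcd(d,m)=c$, and the compositum has degree $\operatorname{lcm}(d,m)$. This gives $[\kappa_m:k_m]=\operatorname{lcm}(d,m)/m=d/c$.

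\textbf{Extending $\varphi^m$ and checking the datum.} On $\kappa$, the map $\varphi^m$ restricts to $a\mapsto a^{q^m}$. Since $\kappa$ and $k_m$ are linearly disjoint over $\mathbb F_{q^c}$, we have $\kappa(C)\otimes_{\mathbb F_{q^c}}k_m\cong\kappa_m(C_m)$. We define $\varphi_m:=\varphi^m\otimes\mathrm{id}_{k_m}$; it is well defined because $\varphi^m$ fixes $\mathbb F_{q^c}$.
\begin{itemize}
\item On $\kappa_m$ this map agrees with $q^m$-Frobenius. Both maps agree on $\kappa$ and fix $k_m$, and these two fields generate $\kappa_m$. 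This gives semilinearity and uniqueness.
\item $G$ and $H$ extend $\kappa_m$-linearly and are normalized by $\varphi_m$.
\item $\varphi_m^{d/c}$ restricts to $\varphi^{md/c}=(\varphi^d)^{m/c}\in H$, so $\varphi_m^{d/c}\in H$.
\end{itemize}

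\textbf{Core-freeness.} This is the delicate step. Rather than check it abstractly, I will identify the data with the Galois closure of $f$ over $k_m$. The field $\Omega\,k_m=\kappa_m(C_m)$ is Galois over $k_m(\mathbf t)$, because it is the compositum of the Galois extension $\Omega/k(\mathbf t)$ with a constant extension. Restriction to $\Omega$ is injective, since $\Omega k_m$ is generated by $\Omega$ and $k_m$. Its image is $\operatorname{Gal}(\Omega/\Omega\cap k_m(\mathbf t))=\operatorname{Gal}(\Omega/\mathbb F_{q^c}(\mathbf t))$, using that $\Omega\cap\bar k=\kappa$ and $\kappa\cap k_m=\mathbb F_{q^c}$. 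The subgroup $\langle G,\varphi^c\rangle$ is exactly the set of elements of $A$ acting on $\kappa$ by powers of $q^c$. Hence the restricted arithmetic group is $A_m=\langle G,\varphi^c\rangle$, and intersecting with $U$ gives $U_m=\langle H,\varphi^c\rangle$. Since $\gcd(m,d)=c$, the subgroups $\langle\varphi^m\rangle G$ and $\langle\varphi^c\rangle G$ have the same image in the cyclic quotient $A/G\cong\mathbb Z/d$. Therefore $\langle G,\varphi^m\rangle=\langle G,\varphi^c\rangle$, and likewise with $H$ in place of $G$, using $\varphi^d\in H$.

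Because $\Omega k_m$ is the Galois closure of $k_m(\mathbf x)/k_m(\mathbf t)$, the core of $U_m$ in $A_m$ is trivial. Alternatively, $U_m\le U$ and $A_m$ contains $G$, so any normal subgroup of $A_m$ contained in $U_m$ meets $G$ inside $\operatorname{core}_G(H)=1$; it then embeds in the constant Galois group and is normalized by $G$, and the argument finishes as in Proposition~\ref{prop:fixed-pair-core-order}. The fixed fields of $\langle G,\varphi_m\rangle$ and $\langle H,\varphi_m\rangle$ are $k_m(\mathbf t)$ and $k_m(\mathbf x)$, so $f\in\mathcal M_{\mathcal D^{(m)}}$. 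The generating coset is $G\varphi_m$, which restricts to $G\varphi^m$.

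The main obstacle is this core-freeness and Galois-closure identification. I will handle it via the linear-disjointness description $\kappa_m(C_m)=\Omega k_m$, which makes the restriction isomorphism transparent.
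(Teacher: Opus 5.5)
Your proof is correct, and for the field identities, the extension of $\varphi^m$, the check $\varphi_m^{d/c}\in H$, and the identification $A_m=\operatorname{Gal}(\Omega/\mathbb F_{q^c}(\mathbf t))=\langle G,\varphi^c\rangle=\langle G,\varphi^m\rangle$ it follows the paper. The genuine difference is in core-freeness, and there your \emph{first} justification is circular. You write that $\operatorname{core}_{A_m}(U_m)=1$ ``because $\Omega k_m$ is the Galois closure of $k_m(\mathbf x)/k_m(\mathbf t)$.'' But the Galois closure inside $\Omega k_m$ is exactly the fixed field of $\operatorname{core}_{A_m}(U_m)$, so this identification is what has to be proved. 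Linear disjointness does not supply it.

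Your ``alternatively'' argument is the one that works. Put $N=\operatorname{core}_{A_m}(U_m)$. Then $N\cap G\leqslant\operatorname{core}_G(H)=1$, so $[N,G]=1$; note that $G$ \emph{centralizes} $N$, not merely normalizes it. Hence $N\leqslant C_A(G)\cap U$. Finally $C_A(G)\cap U\leqslant\operatorname{core}_A(U)=1$ because $A=GU$. This last inclusion is the step from Proposition~\ref{prop:fixed-pair-core-order} that you should state explicitly, together with the core-freeness of the original datum.

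The paper argues differently. Since $G\leqslant A_m$ is transitive on $G/H\cong A/U$, the action $A_m\curvearrowright A_m/U_m$, with $U_m=A_m\cap U$, is the restriction of the faithful action $A\curvearrowright A/U$. It is therefore faithful, and only afterwards does the paper conclude that $\Omega k_m$ is the arithmetic Galois closure. That is a one-line permutation argument; yours is equally valid but goes through the commutator and centralizer computation.

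Your original field-theoretic plan can also be repaired directly. Since $A=GU$ and $U$ fixes $\mathbf x$, every $A$-conjugate of $\mathbf x$ is a $G$-conjugate. Because $G\leqslant A_m$, the Galois closure of $k_m(\mathbf x)/k_m(\mathbf t)$ then contains $\Omega$ and $k_m$, hence equals $\Omega k_m$. This is the field-language version of the paper's transitivity argument.
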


\begin{proof}
Put $\Omega_m:=\kappa(C)k_m$. Its full constant field is
$\kappa_m=\kappa k_m$, and the standard intersection and compositum
formulas for finite fields give \eqref{eq:basechange-constants}, and hence
$[\kappa_m:k_m]=\operatorname{lcm}(d,m)/m=d/c$.

The automorphism $\varphi^m$ acts trivially on
$\kappa\cap k_m=\mathbb F_{q^c}$, so it extends over the compositum by
acting as the identity on $k_m$; on $\kappa_m$ this is the
$q^m$-Frobenius. It normalizes $G$ and $H$, and
$\varphi_m^{d/c}\in H$.

Restriction to $\kappa(C)$ identifies the arithmetic group and point
stabilizer with
\[
 \operatorname{Gal}\!\left(
 \kappa(C)/\mathbb F_{q^c}(\mathbf t)
 \right)
 \qquad\text{and}\qquad
 \operatorname{Gal}\!\left(
 \kappa(C)/\mathbb F_{q^c}(\mathbf x)
 \right),
\]
respectively. Since the image of $\varphi$ generates the cyclic quotient
$A/G$ of order $d$, one has
$\langle\varphi^mG\rangle=\langle\varphi^cG\rangle$; the same argument
applies to $U/H$, proving \eqref{eq:basechange-groups}.

The group $G\leqslant A_m$ is transitive on the sheet set
$G/H\cong A/U$, so $A_m$ is transitive as well, with point stabilizer
$U_m=A_m\cap U$. Its action is the restriction of the faithful action
$A\curvearrowright A/U$, and is therefore faithful. Hence
$\operatorname{core}_{A_m}(U_m)=1$. Thus $\mathcal D^{(m)}$ satisfies
Definition~\ref{def:semilinear-P1-datum}; consequently $\Omega_m$ is the
arithmetic Galois closure of the base-changed rational function, and the
descended $k_m$-M\"obius class contains the base change of $f$.

As a consistency check with Proposition~\ref{prop:fixed-pair-core-order},
if $\bar\sigma$ denotes the pair-Frobenius class of the original datum,
then $\operatorname{ord}(\bar\sigma)=d$ and
$\operatorname{ord}(\bar\sigma^m)=d/(d,m)=[\kappa_m:k_m]$. Thus the
general Frobenius-order formula gives the same relative full
constant-field degree.
\end{proof}

Define
\[
\begin{aligned}
 \mathscr D_{\rm perm}(f)
 &:=\{m\geqslant1:f\text{ permutes }\mathbf P^1(\mathbb F_{q^m})\},\\
 \mathscr D_{\rm exc}(f)
 &:=\{m\geqslant1:f\text{ is exceptional over }\mathbb F_{q^m}\},\\
 \mathcal E_f
 &:=\{\,g\geqslant1:g\mid d,\ f\text{ is exceptional over }\mathbb F_{q^g}\,\}.
\end{aligned}
\]

\begin{theorem}[Exact extension-degree criterion]\label{thm:exact-support}
Let $f\in k(X)$ be separable of degree greater than $1$ with Galois closure of genus zero, and retain the notation above.  Then $d\notin\mathcal E_f$.  If $g\in\mathcal E_f$ and $h$ is a positive integer with $h\mid g$, then $h\in\mathcal E_f$.  For every $m\geqslant1$,
\begin{equation}\label{eq:gcd-support}
 m\in\mathscr D_{\rm perm}(f)
 \quad\Longleftrightarrow\quad
 m\in\mathscr D_{\rm exc}(f)
 \quad\Longleftrightarrow\quad
 (m,d)\in\mathcal E_f.
\end{equation}
In particular, $\mathscr D_{\rm perm}(f)=\mathscr D_{\rm exc}(f)$.
\end{theorem}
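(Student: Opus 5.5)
The plan is to reduce every assertion to the exceptionality of a single generating coset, and to use Lemma~\ref{lem:base-change-coset} to see that this coset depends on $m$ only through $c=(m,d)$. The first equivalence in \eqref{eq:gcd-support} is Theorem~\ref{thm:no-accidental} applied over $K=\mathbb F_{q^m}$. This application is legitimate because, by Lemma~\ref{lem:base-change-coset}, $f$ viewed over $k_m$ still lies in $\mathcal M_{\mathcal D^{(m)}}$. Hence $f$ is separable over $k_m$ with Galois closure of genus zero, by Theorem~\ref{thm:reconstruction}.

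\emph{Step 1: reduction to the gcd.} By Lemma~\ref{lem:exceptionality-dictionary}, exceptionality over $k_m$ is equivalent to exceptionality of the permutation triple $(A_m,G,\Xi)$. By \eqref{eq:basechange-groups} this triple is $(\langle G,\varphi^c\rangle,G,G/H)$, which is literally the same triple attached to base change to $\mathbb F_{q^c}$, since $(c,d)=c$. Therefore $m\in\mathscr D_{\rm exc}(f)$ if and only if $c\in\mathscr D_{\rm exc}(f)$, and the latter holds exactly when $c\in\mathcal E_f$ because $c\mid d$. This proves \eqref{eq:gcd-support}.

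\emph{Step 2: $d\notin\mathcal E_f$.} Over $\mathbb F_{q^d}$ we have $A_d=\langle G,\varphi^d\rangle=G$, since $\varphi^d\in H$. The triple is then $(G,G,\Xi)$, and every $G$-orbit on $\Xi^2$ is simultaneously an $A_d$-orbit. Because $\deg f>1$, the set $\Xi$ has at least two points, so $G$ has an off-diagonal orbit on $\Xi^2$. Hence the triple is not exceptional. Equivalently, the coset $G\varphi^d=G$ contains the identity, which has $|\Xi|\geqslant2$ fixed points.

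\emph{Step 3: closure under divisors.} This is the main point. Let $g\in\mathcal E_f$ and $h\mid g$, and suppose the coset $G\varphi^h$ were nonexceptional. I would use the second-moment formulation in Proposition~\ref{prop:coset-package}, where $M_2$ counts the $G$-orbits on $\Xi^2$ fixed by the generator of the cyclic group $A_h/G$. Write $A_g/G=\langle \varphi^g G\rangle$ for the subgroup of $A_h/G=\langle\varphi^hG\rangle$ generated by the $(g/h)$-th power. Any $G$-orbit on $\Xi^2$ stabilized by $\varphi^h$ is also stabilized by $\varphi^g$. So a non-diagonal orbit that is common to $A_h$ and $G$ would also be common to $A_g$ and $G$, contradicting exceptionality of $(A_g,G,\Xi)$ in the form of Lemma~\ref{lem:exceptionality-dictionary}(iii). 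Hence $(A_h,G,\Xi)$ is exceptional, and $h\in\mathcal E_f$.

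Finally, the periodicity statements follow directly. The common set of degrees is $\{m:(m,d)\in\mathcal E_f\}$, which is periodic modulo $d$. If this set is nonempty, it contains some $g\in\mathcal E_f$; by Step 3 it then contains $1\mid g$, and every $m$ coprime to $d$ satisfies $(m,d)=1$, so it lies in the set as well. I expect no serious obstacle here. The only care needed is to ensure that the arithmetic group supplied by Lemma~\ref{lem:base-change-coset} is the one entering Lemma~\ref{lem:exceptionality-dictionary}, which that lemma already guarantees.
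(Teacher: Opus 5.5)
Your proof is correct, and for the first equivalence, the reduction to $c=(m,d)$, and the exclusion $d\notin\mathcal E_f$ it follows the paper's argument. Using form (iii) of Lemma~\ref{lem:exceptionality-dictionary} directly even lets you skip the paper's remark that $G\varphi^m=G(\varphi^c)^u$ is merely a different generating coset of the same cyclic quotient, because the triple $(A_m,G,\Xi)$ involves no choice of coset.

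The one step where you take a different route is closure under divisors. You note that $A_g=\langle G,\varphi^g\rangle\leqslant A_h=\langle G,\varphi^h\rangle$. Hence every $G$-orbit on $\Xi^2$ that is stable under $A_h$ is also stable under $A_g$, and exceptionality of $(A_g,G,\Xi)$ passes to $(A_h,G,\Xi)$. This is valid.

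The paper instead reads the claim straight off Definition~\ref{def:exceptionality}~(\ref{item:exceptionality-1}). The infinitely many finite extensions of $\mathbb F_{q^g}$ on which $f$ permutes are also finite extensions of $\mathbb F_{q^h}$.

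The paper's route is a one-liner. It needs neither the base-change identification of $A_g,A_h$ from Lemma~\ref{lem:base-change-coset} nor the equivalences of Lemma~\ref{lem:exceptionality-dictionary}. Yours has the modest advantage of staying inside the permutation-triple language of your Step~1. The reference to $M_2$ in your Step~3 is superfluous, since only the orbit-stability observation is actually used.
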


\begin{proof}
The first equivalence in \eqref{eq:gcd-support} follows from Theorem~\ref{thm:no-accidental}. Put $c=(m,d)$, $u=m/c$, and $N=d/c$. Lemma~\ref{lem:base-change-coset} gives $A_m=A_c$ and $U_m=U_c$, but the designated generating cosets need not be equal. Rather, $G\varphi^m=G(\varphi^c)^u$, where $(u,N)=1$, so $G\varphi^m$ and $G\varphi^c$ are both generating cosets of the same cyclic quotient $A_c/G\cong C_N$. Under the common identification $A_c/U_c\cong G/H$, the two base changes therefore have the same arithmetic/geometric permutation triple; only the designated generator of the cyclic quotient changes. By Lemma~\ref{lem:exceptionality-dictionary}, exceptionality is independent of that choice of generating coset. Hence $f$ is exceptional over $\mathbb F_{q^m}$ if and only if it is
exceptional over $\mathbb F_{q^c}$, which proves the second equivalence.

If $h\mid g\mid d$ and $g\in\mathcal E_f$, then Definition~\ref{def:exceptionality}~(\ref{item:exceptionality-1}) gives infinitely many finite extensions of $\mathbb F_{q^g}$ on which $f$ is a permutation. These extensions are also finite extensions of $\mathbb F_{q^h}$, so $h\in\mathcal E_f$. Thus $\mathcal E_f$ is closed under taking divisors. Finally, for $m=d$ one has
\(A_d=G,\)
and the identity in the generating coset fixes $|G/H|=\deg f>1$ sheets. Therefore $f$ is not exceptional over $\mathbb F_{q^d}$, so $d\notin\mathcal E_f$.
\end{proof}
We henceforth write $\mathscr D(f):=\mathscr D_{\rm perm}(f)=\mathscr D_{\rm exc}(f)$.

\begin{corollary}[Periodicity, density, and forbidden divisors]
Let $\mathcal B_f$ be the set of divisibility-minimal elements of
$\{g:g\mid d,\ g\notin\mathcal E_f\}$. Then
\begin{equation}\label{eq:forbidden-divisors}
 \mathscr D_{\rm perm}(f)
 =\mathscr D_{\rm exc}(f)
 =\{m\geqslant1:b\nmid m\text{ for every }b\in\mathcal B_f\}.
\end{equation}
The common set of extension degrees is purely periodic and has natural density
\begin{equation}\label{eq:support-density}
 \delta_f
 =\frac1d\sum_{g\in\mathcal E_f}\phi\!\left(\frac dg\right).
\end{equation}
If $\mathscr D(f)\ne\varnothing$, then it contains every $m$ with $(m,d)=1$, and in particular it contains $m=1$.

Define
\[
\begin{aligned}
 \mathcal R_f
 &:=\{j\in\mathbb Z/d\mathbb Z:\gcd(j,d)\in\mathcal E_f\},
 \qquad \gcd(0,d):=d,\\
 \Pi_f
 &:=\{h\in\mathbb Z/d\mathbb Z:\mathcal R_f+h=\mathcal R_f\}.
\end{aligned}
\]
Then the least positive period of the common set of extension degrees is
\begin{equation}\label{eq:support-least-period}
 p_f=[\mathbb Z/d\mathbb Z:\Pi_f].
\end{equation}
\end{corollary}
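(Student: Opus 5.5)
The whole corollary should follow from Theorem~\ref{thm:exact-support} by elementary number theory on residues modulo $d$. I will write $S:=\mathscr D(f)$ and use that $m\in S$ if and only if $(m,d)\in\mathcal E_f$.

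\emph{Forbidden divisors.} First, $m\notin S$ exactly when $g:=(m,d)\notin\mathcal E_f$. Since $\mathcal E_f$ is closed under divisors, its complement in the divisor set of $d$ is closed under multiples within that set. So $g\notin\mathcal E_f$ if and only if some $b\in\mathcal B_f$ divides $g$. Because every $b\in\mathcal B_f$ divides $d$, the condition $b\mid g$ is equivalent to $b\mid m$. This gives \eqref{eq:forbidden-divisors}. The set $\mathcal B_f$ is nonempty, since $d\notin\mathcal E_f$.

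\emph{Periodicity and density.} Membership depends only on $(m,d)$, hence only on $m\bmod d$, so $S$ is purely periodic with period $d$. Its density is $|\mathcal R_f|/d$. Grouping the residues $j$ by $g=\gcd(j,d)$, there are exactly $\phi(d/g)$ residues with a given value $g$; this gives \eqref{eq:support-density}. For the residue $0$ we have $\gcd=d\notin\mathcal E_f$, so the convention $\gcd(0,d):=d$ is harmless. If $S\neq\varnothing$, then $\mathcal E_f$ is nonempty. Divisor-closure then puts $1\in\mathcal E_f$, so every $m$ with $(m,d)=1$ lies in $S$.

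\emph{Least period.} The set $S$ is the preimage of $\mathcal R_f$ under $\mathbb Z\to\mathbb Z/d\mathbb Z$. The periods of a purely periodic set that is a union of residue classes mod $d$ are the integers $h$ with $S+h=S$, and these form a subgroup $P\leqslant\mathbb Z$ containing $d\mathbb Z$. Its image in $\mathbb Z/d\mathbb Z$ is exactly the stabilizer $\Pi_f$. The least positive period is the positive generator of $P$, which equals the index $[\mathbb Z:P]$. Since $d\mathbb Z\leqslant P$, this index equals $[\mathbb Z/d\mathbb Z:\Pi_f]$, giving \eqref{eq:support-least-period}.

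The only mild subtlety is the last step. I need to check that every period of $S$ lies in $P$ in the group sense, including periods that do not divide $d$ a priori. This holds because if $h$ is a period then $S+h=S$. Hence $h+d\mathbb Z$ stabilizes $\mathcal R_f$, so the set of periods equals the preimage of $\Pi_f$ and is a subgroup.
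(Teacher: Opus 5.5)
Your proof is correct and takes essentially the same route as the paper. Both reduce to the gcd criterion of Theorem~\ref{thm:exact-support}; use divisor-closure of $\mathcal E_f$ for the forbidden divisors and for $1\in\mathcal E_f$; count residues by $\phi(d/g)$ for the density; and identify the periods with the preimage of the stabilizer $\Pi_f$ in $\mathbb Z$. One cosmetic point: since $S$ consists of positive integers, you should read $S+h=S$ for the full preimage of $\mathcal R_f$ in $\mathbb Z$, or equivalently as $m\in S\iff m+h\in S$ for all $m\geqslant1$.
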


\begin{proof}
By Theorem~\ref{thm:exact-support}, membership depends only on $(m,d)$. Since $\mathcal E_f$ is closed under taking divisors, its complement among the divisors of $d$ is upward closed; therefore $(m,d)\notin\mathcal E_f$ exactly when some $b\in\mathcal B_f$ divides $m$, proving \eqref{eq:forbidden-divisors}. Pure periodicity with period $d$ is immediate. Among the residue classes modulo $d$, exactly $\phi(d/g)$ have gcd equal to $g$, which gives \eqref{eq:support-density}. If $\mathscr D(f)\ne\varnothing$, closure under taking divisors gives $1\in\mathcal E_f$, proving the coprime-degree assertion.

A positive integer $\nu$ is a period exactly when its residue modulo $d$ stabilizes $\mathcal R_f$ additively, that is, when $[\nu]_d\in\Pi_f$. Since $\mathbb Z/d\mathbb Z$ is cyclic, the least positive lift of the subgroup $\Pi_f$ is its index, proving \eqref{eq:support-least-period}.
\end{proof}

\begin{corollary}[Prime-power constants and extension degrees under decomposition]
The following hold.
\begin{enumerate}[label=(\arabic*),ref=\arabic*,font=\itshape]
\item Suppose $d=\ell^a$ is a prime power and the common set of extension degrees is nonempty. Then there is a unique integer $0\leqslant r<a$ such that
\[
 \mathcal E_f=\{1,\ell,\ldots,\ell^r\},
\]
and
\[
 \mathscr D_{\rm perm}(f)
 =\mathscr D_{\rm exc}(f)
 =\{m\geqslant1:\ell^{r+1}\nmid m\}.
\]
Consequently
\[
 \delta_f=1-\ell^{-(r+1)},
 \qquad
 p_f=\ell^{r+1}.
\]
In particular, if $d=\ell$ is prime and $\mathscr D(f)\ne\varnothing$, then it is exactly the set of extension degrees not divisible by $\ell$.
\item For any decomposition $f=f_s\circ\cdots\circ f_1$ over $k$, not
necessarily complete, one has
\[
 \mathscr D_{\rm perm}(f)
 =\bigcap_{i=1}^s\mathscr D_{\rm perm}(f_i),
 \qquad
 \mathscr D_{\rm exc}(f)
 =\bigcap_{i=1}^s\mathscr D_{\rm exc}(f_i).
\]
\end{enumerate}
\end{corollary}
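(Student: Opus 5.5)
Part~(1) is pure divisor-lattice bookkeeping on top of Theorem~\ref{thm:exact-support} and the preceding periodicity corollary. Theorem~\ref{thm:exact-support} gives three facts. First, $\mathcal E_f$ is a subset of the divisors $\{1,\ell,\dots,\ell^a\}$ of $d$ closed under taking divisors. Second, it omits $d=\ell^a$. Third, it contains $1$ when $\mathscr D(f)\ne\varnothing$.

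The divisors of a prime power form a chain, so a nonempty down-closed subset is an initial segment $\{1,\ell,\dots,\ell^r\}$. Here $r$ is unique and $r<a$ because $\ell^a\notin\mathcal E_f$. Then $\mathcal B_f=\{\ell^{r+1}\}$, and \eqref{eq:forbidden-divisors} gives $\mathscr D(f)=\{m:\ell^{r+1}\nmid m\}$.

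For the density we can either read it off directly or use \eqref{eq:support-density}. The latter gives $\delta_f=\ell^{-a}\sum_{j\leqslant r}\phi(\ell^{a-j})$, which telescopes to $1-\ell^{-(r+1)}$. For the least period, the set $\{m:\ell^{r+1}\nmid m\}$ is invariant under translation by $\nu$ exactly when $\ell^{r+1}\mid\nu$. This gives $p_f=\ell^{r+1}$, consistent with \eqref{eq:support-least-period}. The prime case $d=\ell$ forces $r=0$.

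For part~(2) I work over each $k_m=\mathbb F_{q^m}$ separately. The decomposition $f=f_s\circ\cdots\circ f_1$ over $k$ remains a decomposition over $k_m$. Each factor is separable with Galois closure of genus zero by Theorem~\ref{thm:complete-decomposition}, so Theorem~\ref{thm:no-accidental} applies to every factor and to $f$ over $k_m$. It therefore suffices to prove the permutation statement, and the exceptional one follows, since $\mathscr D_{\rm perm}=\mathscr D_{\rm exc}$ for $f$ and for each $f_i$.

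I would then prove that a composite of self-maps of the finite set $\mathbf P^1(k_m)$ is bijective if and only if each factor is. If each factor is bijective, so is the composite. Conversely, suppose $f$ is bijective. Injectivity of $f$ forces $f_1$ to be injective, hence bijective, and then $f_s\circ\cdots\circ f_2=f\circ f_1^{-1}$ is bijective. Induction on $s$ finishes, exactly as in the proof of Theorem~\ref{thm:no-accidental}.

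The only point needing care is that the factors, which are defined over $k$, have the genus-zero property needed to invoke Theorem~\ref{thm:no-accidental}, and this is supplied by Theorem~\ref{thm:complete-decomposition}. Alternatively one could use the exceptional-tower criterion \cite[Lemma~3.5]{GMS2003} directly.
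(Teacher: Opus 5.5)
Your proposal is correct and follows the paper's proof. Part (1) is the same chain-of-divisors argument on $\mathcal E_f$, and part (2) uses the same fact that a composite of self-maps of $\mathbf P^1(\mathbb F_{q^m})$ is bijective exactly when every factor is. The only variation is the $\mathscr D_{\rm exc}$ identity: you transport the permutation identity through Theorem~\ref{thm:no-accidental}, using Theorem~\ref{thm:complete-decomposition} to give each factor a genus-zero Galois closure. The paper instead applies the exceptional-tower criterion \cite[Lemma~3.5]{GMS2003} directly, which needs no genus-zero input on the factors.
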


\begin{proof}
If $d=\ell^a$, its positive divisors are $1,\ell,\ldots,\ell^a$. Since $\mathcal E_f$ is closed under taking divisors and does not contain $d$, any nonempty $\mathcal E_f$ has the displayed form. The condition on $\mathscr D(f)$ then says precisely that $\ell^{r+1}$ does not divide $m$, from which the density and least period follow.

For a decomposition, a composite of maps of the finite set $\mathbf P^1(\mathbb F_{q^m})$ is bijective if and only if every factor is bijective. The identity for $\mathscr D_{\rm exc}$ follows by repeated application of the exceptional-tower equivalence \cite[Lemma~3.5]{GMS2003}. The arithmetic normal closure of each factor may be taken inside $\Omega$, so its full constant-field degree divides $d$; hence the intersection formula is compatible with the preceding gcd-$d$ description. This proves both assertions.
\end{proof}

\section{Permutation extension degrees of the classified families}\label{sec:family-support}

Theorem~\ref{thm:exact-support} describes the common set of permutation and exceptional extension degrees in general. We compute it for every classified indecomposable form; the formulas apply at every extension degree, whether or not the rational function remains indecomposable after base change. The affine family is governed by a finite Frobenius module; all remaining families reduce to explicit order, parity, or divisibility conditions.

We retain the notation $\mathscr D(f)$ introduced above; its permutation--exceptionality equality is Theorem~\ref{thm:no-accidental}.

\subsection{Affine extension degrees from Frobenius modules}

Consider the $p$-semi-elementary family in the normalized form of Theorem~\ref{thm:affine-wild}. Write $G=V\rtimes\mu_n$ and $H=\mu_n$, where $\mu_1=1$, and for $n>1$ the translation space $V$ is naturally an $\mathbb F_{p^{\operatorname{ord}_n(p)}}$-vector space. The normalized Frobenius lift acts by conjugation on the translation group as $v\mapsto b^{-1}v^q$ for some $b\in\mu_n$. Replacing the lift by another representative of the same arithmetic point-stabilizer coset allows us to take $\Phi(v)=v^q$ \emph{on the translation module}. This convention does not assert a global point-side normalization of the semilinear map on $C$.

Since $\Phi\zeta\Phi^{-1}=\zeta^q$ for $\zeta\in\mu_n$, the subgroup $\mu_n$ is normal in $\langle\mu_n,\Phi\rangle\leqslant\operatorname{GL}_{\mathbb F_p}(V)$. Put $d_V:=\operatorname{ord}_{\langle\mu_n,\Phi\rangle/\mu_n}(\Phi\mu_n)$.
For an affine family $f$, let $G_f$ and $A_f$ denote the geometric and arithmetic monodromy groups of its Galois closure, and let $A_f^{(m)}$ denote the arithmetic monodromy group after base change to $\mathbb F_{q^m}$.

\begin{theorem}[Affine Frobenius-module criterion]\label{thm:affine-module-support}
With this notation,
\[
 G_f\cong V\rtimes\mu_n,
 \qquad
 A_f\cong V\rtimes\langle\mu_n,\Phi\rangle,
 \qquad
 [\kappa_f:k]=d_V.
\]
After base change to $\mathbb F_{q^m}$,
\[
 A_f^{(m)}\cong
 V\rtimes\langle\mu_n,\Phi^m\rangle,
 \qquad
 [A_f^{(m)}:G_f]
 =\frac{d_V}{(d_V,m)}.
\]
For every $m\geqslant1$, the following are equivalent:
\begin{enumerate}[label=\textup{(\roman*)},ref=\roman*]
\item $m\in\mathscr D(f)$;
\item $\ker(1-\zeta\Phi^m)=0$ for every $\zeta\in\mu_n$;
\item $\prod_{\zeta\in\mu_n}\det_{\mathbb F_p}(1-\zeta\Phi^m)\neq0$.
\end{enumerate}
If $L_V(Z)$ is the subspace polynomial of $V$, then these conditions are also equivalent to $\gcd\bigl(L_V(Z),Z^{q^m}-\zeta^{-1}Z\bigr)=Z$ for every $\zeta\in\mu_n$, and it suffices to test one representative from each $\mu_n$-conjugacy class in the coset $\mu_n\Phi^m$.
\end{theorem}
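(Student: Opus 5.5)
We begin with the group structure and then use the exceptional-coset criterion. Work in the normalized affine datum of Theorem~\ref{thm:affine-wild}: $C=\mathbf P^1$ with coordinate $Z$, $G=V\rtimes\mu_n$ acting by $Z\mapsto\zeta Z+v$, $H=\mu_n$, and a Frobenius lift normalizing $V$ and $\mu_n$ and fixing $0$ and $\infty$. By the convention above, modulo $H$ this lift acts on $V$ as $\Phi(v)=v^q$. Conjugation gives a homomorphism $A\to\operatorname{AGL}(V)$, and it is injective because the sheet action of $A$ on $A/U\cong G/H\cong V$ is faithful. The coset map $gH\mapsto g(0)$ is $A$-equivariant, exactly as in the affine case of Theorem~\ref{thm:primitive-nap}. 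Under it, $G$ acts on $V$ by $x\mapsto\zeta x+v$ and the Frobenius coset acts by $x\mapsto\zeta\Phi(x)+v$. This gives $A\cong V\rtimes\langle\mu_n,\Phi\rangle$ and $U\cong\langle\mu_n,\Phi\rangle$. Since $A/G\cong\operatorname{Gal}(\kappa/k)$, we get $[\kappa:k]=[A:G]=[\langle\mu_n,\Phi\rangle:\mu_n]=d_V$. Applying Lemma~\ref{lem:base-change-coset} (Frobenius lift $\varphi^m$, which acts as $\Phi^m$ modulo $\mu_n$) gives the base-change formula. The index $d_V/(d_V,m)$ is then the order of $\Phi^m\mu_n$ in the cyclic quotient.

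For the equivalence (i)$\Leftrightarrow$(ii), Theorem~\ref{thm:no-accidental} shows that $m\in\mathscr D(f)$ exactly when $f$ is exceptional over $\mathbb F_{q^m}$. By Lemma~\ref{lem:exceptionality-dictionary}, applied to the datum $\mathcal D^{(m)}$, this holds exactly when every element of the generating coset $G\varphi^m$ has exactly one fixed sheet. An element of that coset acts on $V$ as $x\mapsto\zeta\Phi^m(x)+v$ with $\zeta\in\mu_n$ and $v\in V$. Its fixed points are the solutions of $(1-\zeta\Phi^m)x=v$. The map $1-\zeta\Phi^m$ is $\mathbb F_p$-linear, since $\Phi$ is additive and $\mu_n\leqslant\mathbb F_Q^\times$ acts $\mathbb F_p$-linearly. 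Therefore the number of fixed points is either $0$ or $|\ker(1-\zeta\Phi^m)|$. If the kernel is nonzero for some $\zeta$, then $v=0$ gives at least $p$ fixed points. If every kernel is zero, each map is bijective and every element has exactly one fixed point. This proves (i)$\Leftrightarrow$(ii). The equivalence (ii)$\Leftrightarrow$(iii) is the determinant criterion on the finite-dimensional $\mathbb F_p$-space $V$.

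For the gcd criterion, $x\in\ker(1-\zeta\Phi^m)$ means $x^{q^m}=\zeta^{-1}x$. So a nonzero kernel element is exactly a nonzero common root of $L_V(Z)$ and $Z^{q^m}-\zeta^{-1}Z$. Both polynomials are separable, so their gcd equals $Z$ exactly when the kernel is zero. For the reduction to conjugacy classes, conjugating $\zeta\Phi^m$ by $\mu_n$ (or by $G$) conjugates $1-\zeta\Phi^m$ and preserves the dimension of its kernel. It therefore suffices to test one representative of each $\mu_n$-conjugacy class in $\mu_n\Phi^m$.

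The main obstacle is the identification of the action. The Frobenius lift is only normalized modulo $H$, so the convention $\Phi(v)=v^q$ holds on the translation module, and we must check that the coset $G\varphi^m$ really acts on sheets as $\{x\mapsto\zeta\Phi^m(x)+v\}$. We will check this from the equivariance of $gH\mapsto g(0)$. The point is that the lift fixes $0$ and acts on $V=G(0)$ by $v\mapsto b^{-1}v^q$ with $b\in\mu_n$, and the factor $b$ is absorbed into $\zeta$.
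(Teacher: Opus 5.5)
Your proposal is correct and follows essentially the paper's proof: you identify the sheet set with $V$, read off $A\cong V\rtimes\langle\mu_n,\Phi\rangle$ and the constant-field degree $d_V$ (then base-change via Lemma~\ref{lem:base-change-coset}), reduce exceptionality over $\mathbb F_{q^m}$ to unique solvability of $(1-\zeta\Phi^m)x=v$ on the generating coset, and finish with the determinant, gcd, and $\mu_n$-conjugation tests. The only difference is that you obtain faithfulness from the sheet action via the equivariant map $gH\mapsto g(0)$, whereas the paper uses core-freeness to make $U$ act faithfully on $V$ by conjugation; note that the injective homomorphism into $\operatorname{AGL}(V)$ you need is this sheet action, not conjugation, which is trivial on $V$.
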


\begin{proof}
The translation subgroup acts freely and transitively and identifies the sheet set $G/H$ with $V$. If $U$ is the arithmetic point stabilizer, then $A=V\rtimes U$. The kernel of the conjugation action $U\to\operatorname{Aut}(V)$ is normal in $U$ and centralizes $V$, hence is normal in $A$; since it lies in $U$, core-freeness makes it trivial. Thus $U$ acts faithfully on $V$. Its image is exactly $\langle\mu_n,\Phi\rangle$, because $H$ acts as $\mu_n$ and the chosen Frobenius lift acts as $\Phi$. Hence $U/H\cong\langle\mu_n,\Phi\rangle/\mu_n$, giving the monodromy and constant-field assertions; in particular, $d_V=d=[\kappa:k]$. After base change the same argument replaces $\Phi$ by $\Phi^m$.

At extension degree $m$, an element in the generating arithmetic coset acts on $V$ as $x\mapsto a+\zeta\Phi^m(x)$, with $a\in V$ and $\zeta\in\mu_n$. Its fixed-point equation is
\((1-\zeta\Phi^m)x=a.\)
Thus every element of the generating coset has exactly one fixed sheet if and only if every $1-\zeta\Phi^m$ is bijective: if one of these operators is singular, then taking $a=0$ already gives more than one fixed sheet. Proposition~\ref{prop:coset-package} and Theorem~\ref{thm:no-accidental} give the first equivalence, and finite-dimensional linear algebra gives the determinant test.

A nonzero vector fixed by $\zeta\Phi^m$ is exactly a nonzero common root of $L_V(Z)$ and $Z^{q^m}-\zeta^{-1}Z$, which proves the gcd criterion. Finally
\[
 \eta(\zeta\Phi^m)\eta^{-1}
 =\zeta\eta^{1-q^m}\Phi^m,
\]
for $\eta\in\mu_n$, so conjugacy compresses the scalar tests exactly as asserted. In fact the scalar-conjugacy classes are indexed by $\mu_n/(\mu_n)^{1-q^m}$, so one representative from each of the $\gcd(n,q^m-1)$ classes suffices. This is distinct from independence of the Frobenius lift: replacing $\Phi$ by $\eta\Phi$ with $\eta\in\mu_n$ does not change the coset $\mu_n\Phi^m$, since $(\eta\Phi)^m=\eta^{1+q+\cdots+q^{m-1}}\Phi^m$.
\end{proof}

\begin{corollary}[Split, inversion, and additive formulas]
In the notation above:
\begin{enumerate}[label=(\arabic*),ref=\arabic*,font=\itshape]
\item If $q^m\equiv1\pmod n$, then
\[
 m\in\mathscr D(f)
 \Longleftrightarrow
 V\cap\mathbb F_{q^{mn}}=\{0\}.
\]
\item If $q^m\equiv-1\pmod n$ and $n$ is odd, then
\[
 m\in\mathscr D(f)
 \Longleftrightarrow
 V\cap\mathbb F_{q^m}=\{0\}.
\]
\item If $q^m\equiv-1\pmod n$ and $n$ is even, then
\[
 m\in\mathscr D(f)
 \Longleftrightarrow
 V\cap\mathbb F_{q^{2m}}=\{0\}.
\]
\item If $n=1$ and the additive case is $k$-indecomposable, then
\[
 \mathscr D(f)=\{m:d_V\nmid m\}.
\]
If $d_V>1$, its least period is $d_V$ and its density is $1-1/d_V$; if $d_V=1$, $\mathscr D(f)=\varnothing$.
\end{enumerate}
\end{corollary}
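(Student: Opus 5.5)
All four parts follow from the criterion in Theorem~\ref{thm:affine-module-support}: $m\in\mathscr D(f)$ if and only if $\zeta\Phi^m$ has no nonzero fixed vector in $V$ for every $\zeta\in\mu_n$. I will translate the equation $\zeta\Phi^m(v)=v$, that is $v^{q^m}=\zeta^{-1}v$ for $v\in V\setminus\{0\}$, into membership of $v$ in a suitable finite field, splitting by the residue of $q^m$ modulo $n$. Throughout, $V\subseteq\bar k$ is the normalized translation module with $\Phi(v)=v^q$, and $\mu_n\subseteq\mathbb F_{p^{\operatorname{ord}_n(p)}}^\times$ acts by multiplication.

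For (1), assume $q^m\equiv1\pmod n$. If $v^{q^m}=\zeta^{-1}v$, then $v^{q^m-1}\in\mu_n$, so $v^{(q^m-1)n}=1$. Since $q^{mn}-1=(q^m-1)(1+q^m+\cdots+q^{m(n-1)})$ and the second factor is $\equiv n\pmod n$, hence divisible by $n$, we get $v\in\mathbb F_{q^{mn}}$. Conversely, if $0\ne v\in V\cap\mathbb F_{q^{mn}}$, then $v^{q^m-1}$ satisfies $x^{1+q^m+\cdots+q^{m(n-1)}}=1$. I must show that this root lies in $\mu_n$. Here $q^m\equiv1\pmod n$ gives $\mu_n\subseteq\mathbb F_{q^m}$, and $v^{q^m-1}$ has norm $1$ from $\mathbb F_{q^{mn}}$ to $\mathbb F_{q^m}$; I expect this alone not to force $v^{q^m-1}\in\mu_n$. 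The converse direction is the main obstacle. My plan is to use that $V$ is stable under $\Phi^m$ and under $\mu_n$, and to decompose $V\cap\mathbb F_{q^{mn}}$ under the $\Phi^m$-action. The operator $\Phi^m$ has order dividing $n$ on $\mathbb F_{q^{mn}}$ and commutes with the scalars $\mu_n\subseteq\mathbb F_{q^m}$. The field $\mathbb F_{q^{mn}}$ is a Kummer extension of $\mathbb F_{q^m}$ with group $\langle\Phi^m\rangle\cong C_n$, and the eigenspaces of $\Phi^m$ are the $\mathbb F_{q^m}$-lines $\{x:x^{q^m-1}=\zeta^{-1}\}\cup\{0\}$. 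Any nonzero $\Phi^m$-stable $\mathbb F_p$-subspace, here $V\cap\mathbb F_{q^{mn}}$, is a module over $\mathbb F_p[\mu_n]\langle\Phi^m\rangle$. Because $\mu_n\subseteq\mathbb F_Q$ and $V$ is an $\mathbb F_Q$-space, $\Phi^m$ acts $\mathbb F_Q$-linearly on it, since $q^m$-powering fixes $\mathbb F_{q^m}\supseteq\mathbb F_Q$. Since $(\Phi^m)^n=1$ and $p\nmid n$, it is diagonalizable over $\mathbb F_Q\supseteq\mu_n$. Hence some eigenvector exists, giving $v$ with $v^{q^m}=\zeta^{-1}v$.

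For (2) and (3), assume $q^m\equiv-1\pmod n$. Then $\Phi^m$ inverts $\mu_n$ and $(\zeta\Phi^m)^2=\zeta\zeta^{q^m}\Phi^{2m}=\Phi^{2m}$. A fixed vector of $\zeta\Phi^m$ is therefore fixed by $\Phi^{2m}$, so it lies in $V\cap\mathbb F_{q^{2m}}$. For $n$ even the converse follows as in (1): $\Phi^m$ is an involution on the $\Phi^m$-stable space $V\cap\mathbb F_{q^{2m}}$ (stable because $V$ is $\Phi$-stable), so it has eigenvalue $1$ or $-1\in\mu_n$, giving a fixed vector of some $\zeta\Phi^m$. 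For $n$ odd, I will show that each $\zeta\Phi^m$ is $\mu_n$-conjugate to $\Phi^m$. By the conjugacy formula in the proof of Theorem~\ref{thm:affine-module-support}, the classes are indexed by $\mu_n/\mu_n^{1-q^m}=\mu_n/\mu_n^{2}$, which is trivial for odd $n$. Hence the criterion reduces to $\Phi^m$ alone, and its fixed vectors are exactly $V\cap\mathbb F_{q^m}$.

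For (4), with $n=1$, the criterion is $\ker(1-\Phi^m)=0$ on $V$. Indecomposability makes $V$ a simple $R_\sigma$-module by Proposition~\ref{prop:affine-jh}. The fixed space $\ker(1-\Phi^m)$ is an $R_\sigma$-submodule, since it is $\mathbb F_p$-linear and commutes with $\Phi$ ($Q=p$). Thus it is either $0$ or $V$, and it equals $V$ exactly when $\Phi^m=1$ on $V$, that is, when $d_V\mid m$. The density and period statements follow directly, and $d_V=1$ gives $\mathscr D(f)=\varnothing$.
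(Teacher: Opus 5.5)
Your proposal is correct. Parts (2) and (3) match the paper's proof. In both, $(\zeta\Phi^m)^2=\Phi^{2m}$; for even $n$ one takes a $\pm1$-eigenvector of the involution $\Phi^m$ on $V\cap\mathbb F_{q^{2m}}$; and for odd $n$ the coset $\mu_n\Phi^m$ collapses to a single scalar-conjugacy class.

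In part (1) the paper is more compact. Because $q^m\equiv1\pmod n$, the operator $\Phi^m$ commutes with $\mu_n$, so $\prod_{\zeta\in\mu_n}(1-\zeta\Phi^m)=1-\Phi^{mn}$. The determinant criterion of Theorem~\ref{thm:affine-module-support} then gives both directions at once. You instead diagonalize the $\mathbb F_Q$-linear operator $\Phi^m$ on $V\cap\mathbb F_{q^{mn}}$, using that its minimal polynomial divides the separable, split polynomial $x^n-1$. This is the same fact unpacked: $\ker(1-\Phi^{mn})$ is the direct sum of the spaces $\ker(1-\zeta\Phi^m)$. Your version has the merit of exhibiting an explicit fixed vector. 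The preliminary discussion of $v^{q^m-1}$ and of the Kummer structure of $\mathbb F_{q^{mn}}/\mathbb F_{q^m}$ can be dropped, and the forward direction is just $(\zeta\Phi^m)^n=\Phi^{mn}$.

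In part (4) the paper argues that the minimal polynomial of $\Phi$ is irreducible, so all its roots share the multiplicative order $d_V$. You observe instead that $\ker(1-\Phi^m)$ is a submodule of the simple module $V$, hence $0$ or $V$. This reaches the same divisibility criterion more directly, without passing to eigenvalues in an extension field.
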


\begin{proof}
In the split case all factors commute and
\[
 \prod_{\zeta\in\mu_n}(1-\zeta\Phi^m)=1-\Phi^{mn}.
\]
For inversion and odd $n$, multiplication by $2$ is invertible on $\mathbb Z/n\mathbb Z$, so the whole linear coset is one scalar-conjugacy class. For inversion and even $n$, one has
\((\zeta\Phi^m)^2=\Phi^{2m}.\)
Here $p$ is automatically odd because $(n,p)=1$. Conversely, on $\ker(1-\Phi^{2m})$ the operator $\Phi^m$ is an involution, hence has a nonzero $\pm1$-eigenvector; since $n$ is even, $\pm1\in\mu_n$. For the additive case, $k$-indecomposability makes $V$ a simple $\mathbb F_p[\Phi]$-module. Thus the minimal polynomial of $\Phi$ is irreducible with nonzero constant term; its roots are Frobenius conjugates, so they all have the same multiplicative order, namely $\operatorname{ord}(\Phi)=d_V$. This gives the stated divisibility criterion.
\end{proof}

\subsection{Extension-degree formulas for the non-Lie cases}

\begin{proposition}[Extension-degree formulas for the non-Lie cases]\label{prop:closed-nonlie-support}
For the tame and small-characteristic non-Lie classes, the complete set of extension degrees is as follows.
\begin{enumerate}[label=(\arabic*),ref=\arabic*,font=\itshape]
\item\label{item:closed-nonlie-support-1} For $X^\ell$,
\[
 \mathscr D(f)
 =\{m:\operatorname{ord}_\ell(q)\nmid m\}.
\]
\item For a nonsplit R\'edei class,
\[
 \mathscr D(f)
 =\{m:\operatorname{ord}_\ell(-q)\nmid m\}.
\]
\item For every tame Dickson class and for the characteristic-$2$ wild dihedral class,
\[
 \mathscr D(f)
 =\{m:\operatorname{ord}_\ell(q^2)\nmid m\}.
\]
\item\label{item:closed-nonlie-support-4} For the $V_4/A_4$ quartic,
\[
 \mathscr D(f)=\{m:3\nmid m\}.
\]
\item The primitive polyhedral cases, as well as $W_5,W_6,W_{10}$ in characteristic $3$, have no extension degrees.
\end{enumerate}
Whenever $\mathscr D(f)$ in (\ref{item:closed-nonlie-support-1})--(\ref{item:closed-nonlie-support-4}) has the form $\{m:c\nmid m\}$ with $c>1$, its least period is $c$ and its density is $1-1/c$.
\end{proposition}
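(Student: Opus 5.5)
The plan is to compute $\mathscr D(f)$ in each case through the generating-coset criterion. By Theorem~\ref{thm:exact-support} and Lemma~\ref{lem:exceptionality-dictionary}, $m\in\mathscr D(f)$ if and only if every element of the base-changed coset $G\varphi^m$ has exactly one fixed sheet on $\Xi=G/H$. In each family I would first identify the action of Frobenius on the geometric group, then compute fixed points of $G\varphi^m$ on the sheets.

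\emph{Cyclic case.} Here $G=C_\ell$ and $H=1$, so $\Xi$ is a $G$-torsor identified with $\mathbb Z/\ell$. By Proposition~\ref{prop:cyclic}, Frobenius acts on $G$ by $r\mapsto r^{q}$ in the split case and by $r\mapsto r^{-q}$ in the nonsplit case. It normalizes the trivial stabilizer, so the coset $G\varphi^m$ acts by the affine maps $x\mapsto u^mx+c$, with $u=\pm q$ and $c$ arbitrary.
\begin{itemize}
\item If $u^m\not\equiv1$, each map has a unique fixed point.
\item If $u^m\equiv1$, then $c=0$ gives the identity, which has $\ell>1$ fixed points.
\end{itemize}
This yields (1) and (2).

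\emph{Dihedral case.} For the tame Dickson classes and the characteristic-$2$ dihedral class, identify $G/H$ with $\mathbb F_\ell$ as in the proof of Theorem~\ref{thm:primitive-nap}. The coset elements are $x\mapsto\pm u^mx+c$, with $u\equiv\pm q$ by \eqref{eq:dihedral-multiplier}. Both slopes $\pm u^m$ occur. The slope $-u^m$ always has a unique fixed point, since $\ell$ is odd and $-u^m\equiv1$ would force it, in which case the other slope is the problematic one. So the coset fails to be exceptional exactly when $u^{m}\equiv\pm1$, that is, when $u^{2m}=q^{2m}\equiv1$. This gives $\operatorname{ord}_\ell(q^2)\mid m$. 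The step I expect to need the most care is checking that both slopes really occur, so that the sign ambiguity in $u$ is harmless. It does, because the coset contains both rotations and reflections composed with $\varphi^m$.

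\emph{$V_4/A_4$ quartic.} By Theorem~\ref{thm:arith-only}, $d=3$, and the generating coset consists of $3$-cycles on four sheets, so degree $1$ is exceptional. The prime-power corollary with $d=\ell=3$ then gives $\{m:3\nmid m\}$.

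\emph{Rigid cases.} For the primitive polyhedral classes and $W_5,W_6,W_{10}$ in characteristic $3$, the proof of Theorem~\ref{thm:primitive-nap} exhibits a $k$-rational branch point with a two-point geometric fiber. These branch points are rational over every extension because they are distinguished by inertia order or by ramification type, so permutation fails for every $m$. For $I_5$ the proof there only treats $m=1$ with $Q\in\{7,11\}$. For general $m$ I would instead argue via $d$. By Corollary~\ref{cor:A5criterion}, $d\le2$. If $\mathscr D\ne\varnothing$ then $1\in\mathcal E_f$, so exceptionality over $k$ forces $A\ne G$, $A=S_5$, $d=2$, and $q\equiv\pm2\pmod5$. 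The generating coset then consists of odd permutations of $S_5$ and contains a transposition, which has three fixed points, a contradiction. The same odd-coset transposition argument settles the degree-$5$ case in characteristic $3$ via Proposition~\ref{prop:char3-A5}.

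Finally, the period and density statements follow from the corollary on periodicity with $\mathcal B_f=\{c\}$.
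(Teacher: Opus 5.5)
Your proposal is correct, but for items (1)--(3) it takes a different route from the paper. The paper argues point-side with the explicit normal forms. For $X^\ell$ it uses the power-map criterion on $\mathbb F_{q^m}^\times$. For $R_{\ell,\delta}$ it identifies the quotient line with the norm-one torus of order $q^m+1$ when $m$ is odd, the quadratic pair splitting when $m$ is even. For the Dickson classes it uses the classical criterion that $D_\ell(X,a)$ permutes $\mathbb F_{q^m}$ exactly when $\ell\nmid q^{2m}-1$.

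You stay on the group side instead. You apply the fixed-sheet criterion to the base-changed coset $G\varphi^m$, and the affine model $x\mapsto\pm u^mx+c$ of the sheet action needs only the Frobenius multiplier on the rotation subgroup. This handles both cyclic forms, both Dickson square classes, and characteristic $2$ in one computation, and it does not care about the sign in $u\equiv\pm q$. The cost is that you import \eqref{eq:dihedral-multiplier}, which the paper derives only in the tame case, into characteristic $2$. The same eigenvalue-ratio argument works there because the rotation order $\ell$ is odd, but you should say so.

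Item (4) is essentially the paper's argument, routed through the prime-power corollary. For item (5) the paper uses one principle for all tame polyhedral classes: the outer coset contains a transposition fixing $2,3,0,4$ sheets in degrees $4,5,6,10$. You instead use the rational two-point fibres from the proof of Theorem~\ref{thm:primitive-nap}, which rule out every $m$ without computing base-changed arithmetic groups. You fall back on the transposition argument only for $I_5$, which has no such fibre.

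Some small points:
\begin{itemize}
\item Your sentence saying ``the slope $-u^m$ always has a unique fixed point'' is garbled. The clean statement is that a map of slope $\sigma\ne1$ has exactly one fixed point, while slope $1$ includes the identity, so failure occurs iff $u^m\equiv\pm1$.
\item $W_5$ already has the two-point fibre $W_5^{-1}(0)=\{0,-1\}$, so your separate characteristic-$3$ transposition argument is redundant.
\item For $I_5$ the detour through $d\le2$ and $q\equiv\pm2\pmod 5$ is unnecessary. Any $A$ with $A_5\leqslant A\leqslant S_5$ has generating coset containing either the identity or a transposition, at every $m$.
\end{itemize}
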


\begin{proof}
The split cyclic assertion is the power-map criterion on $\mathbb F_{q^m}^\times$. For the R\'edei form, odd extension degrees identify the quotient line with the norm-one torus of order $q^m+1$, while even extension degrees split the quadratic pair; hence the obstruction is
\(\ell\mid q^m-(-1)^m,\)
which is equivalent to $\operatorname{ord}_\ell(-q)\mid m$.

For Dickson polynomials use
\[
 D_\ell\left(z+\frac az,a\right)
 =z^\ell+\left(\frac az\right)^\ell.
\]
The rational function is a permutation over $\mathbb F_{q^m}$ exactly when $\ell$ divides neither $q^m-1$ nor $q^m+1$, equivalently when $\ell\nmid q^{2m}-1$. The Klein-four formula follows from the permutation triple $(A_4,V_4,C_3,1)$: every nontrivial $V_4$-coset consists of $3$-cycles with one fixed sheet, whereas for $3\mid m$ the arithmetic group after base change is $V_4$ and its designated coset contains the identity.

For the primitive polyhedral pairs, the geometric group or its only outer extension always contains a generating-coset element with a number of fixed points different from one. Concretely, transpositions give $2$ fixed points in the degree-$4$ $A_4$ outer action, $3$ in degree $5$, $0$ in degree $6$, and $4$ in degree $10$; $S_4$ has no nontrivial outer extension. The characteristic-$3$ representatives also have the rational two-point fibres used in the proof of Theorem~\ref{thm:primitive-nap}. The period and density assertions are immediate.
\end{proof}

\subsection{Extension degrees for the Lie-type cases}

For the geometrically primitive Lie-type cases, fix a finite extension
$k_m=\mathbb F_{q^m}$ of the ground field. Let $L$ be the geometric group,
let $H$ be the maximal geometric point stabilizer, and let $x$ be a
Frobenius lift for the base-changed datum over $k_m$. Then $U=\langle H,x\rangle$ is maximal in $B=\langle L,x\rangle$: any intermediate subgroup has intersection with $L$ equal to either $H$ or $L$, and the two cases give $U$ or $B$. Thus the primitive hypotheses in the Guralnick--M\"uller--Saxl exceptional-group classification apply directly. For subfield cases with exponent $r\ne p$, the field-automorphism construction is the case of \cite[Theorem~3.15]{GMS2003}; field-diagonal forms use \cite[Theorem~3.29(b)(i)]{GMS2003} together with the centralizer condition for this Frobenius lift.

For the remaining formulas write
\[
 Q=p^\alpha,
 \qquad
 k=\mathbb F_{p^s},
 \qquad
 h_0=(\alpha,s),
 \qquad
 d_F=\frac{\alpha}{h_0}.
\]
For extension degree $m$ put
\[
 h_m=(\alpha,sm)=h_0(d_F,m),
 \qquad
 e_m=\frac{d_F}{(d_F,m)}.
\]
For the odd-PSL two-form families below, the pure-field and field-diagonal forms mean those with Frobenius outer images $\bar\phi^{\,s}$ and $\bar\delta\bar\phi^{\,s}$, respectively. Their full constant-field degrees are $d_F$ and $\operatorname{lcm}(2,d_F)$. A square-class parameter labels these forms only after the chosen quotient-coordinate normalization; the outer-image description is intrinsic. The least period can be strictly smaller than the full constant-field degree.

\begin{proposition}[Extension degrees for Cartan and the remaining Lie cases]\label{prop:lie-cartans-support}
The defining-characteristic Lie-type cases have the following sets of extension degrees.
\begin{enumerate}[label=(\arabic*),ref=\arabic*,font=\itshape]
\item\label{item:lie-cartans-support-1} Every Borel case, every geometrically primitive case with stabilizer $A_4,S_4$, or $A_5$, every even-characteristic split-Cartan case, every odd full-$\operatorname{PGL}_2(Q)$ Cartan case, and every odd square-subfield PSL case
\[
 Q=Q_0^2,
 \qquad
 H=\operatorname{PGL}_2(Q_0)
\]
has no extension degrees.
\item\label{item:lie-cartans-support-2} Let $Q=2^\alpha$ with $\alpha>1$ and let $H=D_{2(Q+1)}$. Then
\[
 \mathscr D(f)=
 \begin{cases}
 \{m:(m,\alpha)=1\},&\alpha\text{ odd and }(\alpha,s)=1,\\
 \varnothing,&\text{otherwise}.
 \end{cases}
\]
In the nonempty case the least period is $\operatorname{rad}(\alpha)$ and the
density is $\phi(\alpha)/\alpha$.
\item\label{item:lie-cartans-support-3} For the displayed odd-PSL split-Cartan forms $T_{Q,\eta}$ of Proposition~\ref{prop:split-cartan}, the form with outer image $\bar\phi^{\,s}$ has no extension degrees. The field-diagonal twist has
\[
 \mathscr D(f)=\{m:m\text{ odd}\}
\]
exactly when $\alpha$ and $d_F$ are even; otherwise its set of extension degrees is empty. In the nonempty case the least period is $2$ and the density is $1/2$.
\item\label{item:lie-cartans-support-4} For the nonsplit-Cartan pair $H=D_{Q+1}$ in $\operatorname{PSL}_2(Q)$, the form with outer image $\bar\phi^{\,s}$ has no extension degrees. The field-diagonal twist has
\[
 \mathscr D(f)=\{m:(m,2\alpha)=1\}
\]
exactly when $p=3$, $\alpha>1$ is odd, and $(\alpha,s)=1$; otherwise its set of extension degrees is empty. In the nonempty case the least period is $\operatorname{rad}(2\alpha)$ and the density is $\phi(2\alpha)/(2\alpha)$.
\end{enumerate}
\end{proposition}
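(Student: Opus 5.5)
The plan is to reduce every case to a statement about the arithmetic monodromy triple after base change. By Theorem~\ref{thm:exact-support}, membership $m\in\mathscr D(f)$ depends only on $(m,d)$, and it holds exactly when the triple $(A^{(m)},L,\Xi)$ is exceptional. By the remark preceding the proposition, the base-changed action is primitive with stabilizer $U_m=\langle H,x\rangle$. So in each case we first compute the outer image of $x=\varphi^m$ in $\operatorname{Out}(L)$, using Lemma~\ref{lem:outer-rigidity} and Lemma~\ref{lem:base-change-coset}. That image is $\bar\phi^{\,sm}$ or $(\bar\delta\bar\phi^{\,s})^m$. Its field part is trivial precisely when $e_m=1$, and its diagonal part is $\bar\delta^m$. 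Then we decide exceptionality by comparing with the Guralnick--M\"uller--Saxl list of primitive exceptional almost-simple triples with socle $\operatorname{PSL}_2$. The relevant results are \cite[Theorems~3.15 and~3.29]{GMS2003}, which say that, outside subfield situations, exceptionality requires a nontrivial field automorphism of prescribed order with respect to a Cartan normalizer.

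For part~(\ref{item:lie-cartans-support-1}), we exhibit, for every $m$, an element of the generating coset whose number of fixed points differs from $1$. For Borel stabilizers, Lang--Steinberg as in Proposition~\ref{prop:local-sector} shows that every $a$ in the coset has $Q+1$ fixed points on $\mathbf P^1$. Their intersection with $\mathbf P^1(\mathbb F_Q)$ is computed via Lemma~\ref{lem:rank-one-shintani-carriers} and can be $0$ or $2$. The cleaner route is to note that $L$ acts $2$-transitively on $\Xi=\mathbf P^1(\mathbb F_Q)$, so the diagonal complement is a common orbit and the triple is never exceptional. For the $A_4,S_4,A_5$ stabilizers and the full-$\operatorname{PGL}_2$ Cartan cases we use the GMS classification directly: these stabilizers do not appear among exceptional triples, because any outer part is either trivial or a diagonal involution of order $2$, which fixes the relevant $G$-orbit on $\Xi^2$. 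For the odd square-subfield case, Proposition~\ref{prop:subfield-wild} gives outer image purely field, of order dividing $2$ relative to $Q_0$. We then show that the induced permutation of $G$-orbits on $\Xi^2$ fixes a nondiagonal orbit, for instance the orbit of pairs of subgroups meeting in a common $\operatorname{PGL}_2(Q_0)$-Borel. That step is a counting check via the second moment in Proposition~\ref{prop:coset-package}.

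For parts~(\ref{item:lie-cartans-support-2})--(\ref{item:lie-cartans-support-4}) we invoke \cite[Theorem~3.15]{GMS2003}. There, with socle $\operatorname{PSL}_2(2^\alpha)$ and stabilizer $D_{2(Q+1)}$, the triple is exceptional iff the field automorphism induced by $x$ has odd order $e$ with $(e,\ldots)$ conditions amounting to $e_m=d_F/(d_F,m)=\alpha$ and $\alpha$ odd. This gives $(\alpha,s)=1$ and $(m,\alpha)=1$. The period and density then follow from the corollary on periodicity applied with $\mathcal E_f=\{g\mid d:(g,\alpha)=1\}$. For the odd field-diagonal Cartan twists we use \cite[Theorem~3.29(b)(i)]{GMS2003}. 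The exceptional coset must have diagonal component $\bar\delta^m\ne1$, so $m$ must be odd. In the split case the field part must have $2$-power structure, giving $\alpha$ and $d_F$ even. In the nonsplit case the conditions are $p=3$, $\alpha$ odd, and full field order, giving $(m,2\alpha)=1$ together with $(\alpha,s)=1$. The pure-field forms are non-exceptional because their coset lies in $\operatorname{P\Sigma L}_2$, and GMS excludes that.

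The main obstacle is matching the GMS parameter conditions precisely with the outer image of $\varphi^m$. This matters especially for field-diagonal forms, where the centralizer condition in \cite[Theorem~3.29]{GMS2003} must be checked for our specific lift. To handle it, I would first verify the claims against the moment identities. For small $Q$ ($Q=7,9,27$), I would compute $\chi$ on the coset via Shintani descent (Lemma~\ref{lem:rank-one-shintani-carriers}) to confirm which parities survive.
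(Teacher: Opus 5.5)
Your $2$-transitivity argument for the Borel cases is correct: the off-diagonal of $\Xi^2$ is a common $G$- and $A$-orbit after every base change, including $Q=3$. It is cleaner than the paper's appeal to \cite[Theorem~3.29]{GMS2003} there.

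The genuine gap is in parts~(\ref{item:lie-cartans-support-2})--(\ref{item:lie-cartans-support-4}), which carry the content of the proposition. You never state what the GMS conditions are; the ``$(e,\ldots)$ conditions'' and the ``$2$-power structure'' are placeholders. Your citations also do not match how the paper uses GMS: it uses Theorem~3.15 and Theorem~3.29(b)(i) for \emph{subfield} stabilizers. You then defer the matching with the outer image of $\varphi^m$ to numerical checks at $Q=7,9,27$, which cannot prove an ``exactly when'' statement for all $Q,s,m$.

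The paper does not read these criteria off GMS. For each $c\mid\alpha$, with $r=p^c$ and $n=\alpha/c$, it evaluates the fixed-field invariants on $\mathbb F_r$. By \eqref{eq:nonsplit-cartan-fixed-field-evaluation}, $N_Q(z)\in\{0,z+2\}$ according to whether $W^r=W^{\pm1}$ and the parity of $n$. Likewise $S_Q(z)=z+2$ for even $n$, and $S_Q(z)=0$ at nonsplit $z$ for odd $n$. It then uses $T_{Q,\eta}^2=\eta S_Q(x^2/\eta-2\epsilon)$, $M_{Q,\eta}^2=\eta N_Q(-x^2/\eta-2\epsilon)$, oddness, and $\sum_x\chi_2(x^2-A)=-1$ to force collisions. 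The resulting exact criteria are:
\begin{itemize}
\item $N_Q$ permutes $\mathbf P^1(\mathbb F_{2^c})$ iff $c=1$ and $\alpha$ is odd;
\item $M_{Q,\eta}$ permutes $\mathbf P^1(\mathbb F_r)$ iff $r=3$, $n$ is odd and $\eta$ is a nonsquare;
\item $T_{Q,\eta}$ permutes $\mathbf P^1(\mathbb F_r)$ iff $n$ is even and $\eta$ is a nonsquare.
\end{itemize}
These canonical tests over $\mathbb F_{p^{h_m}}$ are transported to every $m$ by three facts. First, $\mathrm N_G(H)=H$, so normalizing lifts with the same outer image give the same $\langle G,\tau\rangle$ and $\langle H,\tau\rangle$. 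Second, the coprime-power relation \eqref{eq:cartan-common-coprime-power} links the base-changed coset to the canonical one. Third, Lemma~\ref{lem:exceptionality-dictionary} and Theorem~\ref{thm:no-accidental} make exceptionality independent of the generating coset and equivalent to permutation. An argument of this precision is what your plan lacks.

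Your reduction also assumes the base-changed action is primitive, but the remark you cite covers only geometrically primitive pairs. The proposition also covers cases where this fails:
\begin{itemize}
\item the pure Cartan forms at $Q=7,9,11$, which are $k$-decomposable;
\item the displayed split forms at $Q=3,5$;
\item the $\operatorname{PGL}_2(5)$ split form.
\end{itemize}
GMS classifies primitive exceptional triples and says nothing about these. The paper's explicit computation needs no primitivity. It treats $Q=3,5$ through a stable chain and its outer factor: a degree-two factor, respectively the natural degree-five $A_5$ or $S_5$ action, whose coset contains the identity or a transposition.

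In part~(\ref{item:lie-cartans-support-1}), your reason that ``any outer part is trivial or a diagonal involution'' is false. For $G=\operatorname{PGL}_2(Q)$ the outer part of the Frobenius lift is a field automorphism. In the $(\operatorname{PSL}_2(p^2),A_5)$ case with $s$ odd it is the field involution. For the $\operatorname{PGL}_2$ Cartans the paper instead takes a pure field lift $x$ normalizing $H$. It uses $C_G(x)=\operatorname{PGL}_2(p^h)\nleq H$, so any $c\in C_G(x)\setminus H$ gives a second fixed sheet $cH$.

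For the odd square-subfield case you never show that your proposed nondiagonal orbit is a single $G$-orbit (hence $A$-stable), and the ``second-moment check'' is not carried out. There you should simply cite GMS, as the paper does. The even-characteristic split-Cartan case is missing from your part~(\ref{item:lie-cartans-support-1}) altogether.
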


\begin{proof}
Fix an extension degree $m$. If the arithmetic monodromy group after base change to $k_m$ equals the geometric group, its generating coset contains the identity and is not exceptional. At the $Q=3$ Borel endpoints, the degree-four arithmetic image is contained in $S_4$: for geometric $S_4$ the identity applies, and for geometric $A_4$ an outer transposition fixes two sheets. Thus these endpoints also have no extension degrees. In all remaining geometrically primitive cases in part~(\ref{item:lie-cartans-support-1}), the hypotheses of \cite[Theorem~3.29]{GMS2003} apply; the Borel, $A_4,S_4,A_5$, even split-Cartan, and odd square-subfield PSL cases do not occur in its exceptional list.

For a classified full $\operatorname{PGL}_2(Q)$ Cartan pair in odd characteristic, take the pure field automorphism $x$ in the generating coset. It fixes $H$. Its fixed subgroup $\operatorname{PGL}_2(p^h)$ is not contained in the dihedral stabilizer; if $c\in C_G(x)\setminus H$, then $cH$ is a second fixed sheet. Hence these PGL Cartan cases have no extension degrees either.

We prove parts~(\ref{item:lie-cartans-support-2})--(\ref{item:lie-cartans-support-4}) from Propositions~\ref{prop:split-cartan} and~\ref{prop:nonsplit-cartan}. After base change to $k_m=\mathbb F_{p^{sm}}$, put $h_m:=(\alpha,sm)$ and $e_m:=\alpha/h_m$, and write $sm=h_mu$; then $(u,e_m)=1$. A field-automorphism image is a coprime power of the canonical generator over $\mathbb F_{p^{h_m}}$. For an odd-$\operatorname{PSL}_2$ diagonal form, $(\bar\delta\bar\phi^{\,s})^m=\bar\delta^{\,m}\bar\phi^{\,sm}$. For odd $m$, if $\psi=\bar\delta\bar\phi^{\,h_m}$ then
\begin{equation}\label{eq:cartan-common-coprime-power}
 \bar\delta\bar\phi^{\,sm}=\psi^w,
 \qquad (w,\operatorname{lcm}(2,e_m))=1,
\end{equation}
with $w=u$ when $e_m$ is even and the odd member of $\{u,u+e_m\}$ otherwise; for even $m$ the diagonal component disappears.

The small split endpoints require separate treatment. At $Q=3$, the stable chain $C_2\lneq V_4\lneq A_4$ on the PSL side, or $V_4\lneq D_8\lneq S_4$ on the PGL side, gives a factor of degree two, which has no permutation extension degrees by Theorem~\ref{thm:no-accidental}. At $Q=5$ on the PSL side, $V_4\lneq A_4\lneq A_5$ is stable under every normalizing Frobenius lift. Its outer factor has the natural degree-five $A_5$ action; after any base extension its arithmetic image is $A_5$ or $S_5$. The generating coset contains respectively the identity, fixing five sheets, or a transposition, fixing three. That factor, and hence the composite, has no permutation extension degrees. The PGL split endpoint at $Q=5$ has already been excluded by the preceding fixed-sheet argument. This preserves the empty-set assertions for all displayed small-endpoint forms without inferring them merely from decomposability.

For the remaining Cartan pairs, $\mathrm N_G(H)=H$. Let $\tau_m$ be
the normalizing Frobenius lift for the datum over $k_m$, and let $\tau_c$
be the chosen normalizing lift of the corresponding canonical generator
over $\mathbb F_{p^{h_m}}$. Then $\tau_mG=(\tau_cG)^w$, where $w$ is
coprime to the order of $\tau_cG$ in the cyclic quotient.
Since both lifts normalize $H$, their quotient
$\tau_m\tau_c^{-w}$ lies in $\mathrm N_G(H)=H$. Hence $\langle G,\tau_m\rangle=\langle G,\tau_c\rangle$ and $\langle H,\tau_m\rangle=\langle H,\tau_c\rangle$, while $G\tau_m=G\tau_c^w$.
With $w$ as in~\eqref{eq:cartan-common-coprime-power}, Theorems~\ref{thm:exact-support} and~\ref{thm:no-accidental} transport the canonical fixed-field tests to every extension.

\medskip
\noindent\emph{Nonsplit Cartan.}
Let $c\mid\alpha$, put $r=p^c$, $n=\alpha/c$, and write
$z=W+W^{-1}\in\mathbb F_r$. Away from $z=\pm2$, either $W^r=W$ or
$W^r=W^{-1}$. Direct substitution in~\eqref{eq:NQ} gives
\begin{equation}\label{eq:nonsplit-cartan-fixed-field-evaluation}
 N_Q(z)=
 \begin{cases}
  0,&W^r=W,\\
  0,&W^r=W^{-1}\text{ and }n\text{ is even},\\
  z+2,&W^r=W^{-1}\text{ and }n\text{ is odd}.
 \end{cases}
\end{equation}
Also $N_Q(\infty)=0$.

Suppose first that $p=2$. If $r=2$ and $n$ is odd, then $N_Q(0)=\infty$, $N_Q(1)=1$, and $N_Q(\infty)=0$, so $N_Q$ permutes $\mathbf P^1(\mathbb F_2)$. In every other case it does not. Indeed, if
$r>2$, choose $v\in\mathbb F_r^\times\setminus\{1\}$ and put
$x=v+v^{-1}\ne0$; then~\eqref{eq:nonsplit-cartan-fixed-field-evaluation}
gives $N_Q(x)=N_Q(\infty)=0$. If $r=2$ and $n$ is even, the quadratic point
over $z=1$ satisfies $W^Q=W$, so $N_Q(1)=N_Q(\infty)=0$. Thus $N_Q$ permutes $\mathbf P^1(\mathbb F_{2^c})$ if and only if $c=1$ and $\alpha$ is odd. Consequently the even nonsplit case has nonempty $\mathscr D(f)$ exactly when
$\alpha$ is odd and $(\alpha,s)=1$, and then $\mathscr D(N_Q)=\{m:(m,\alpha)=1\}$. This proves part~(\ref{item:lie-cartans-support-2}), including the empty case when the
defining exponent is even.

Now let $p$ be odd. From Proposition~\ref{prop:nonsplit-cartan},
\[
 M_{Q,\eta}(x)^2
 =\eta N_Q\!\left(-\frac{x^2}{\eta}-2\epsilon\right),
 \qquad \epsilon=(-1)^{(Q-1)/2}.
\]
For $x\ne0$, the discriminant of the corresponding quadratic equation for $W$ has square class $x^2+4\epsilon\eta$.
If $n$ is even, then $\epsilon=1$ and
\eqref{eq:nonsplit-cartan-fixed-field-evaluation} forces nonpermutation: when
$-\eta$ is square there are two rational points above the pole endpoint
$z=2$; otherwise, writing $\chi_2$ for the quadratic character of $\mathbb F_r$, the identity
\[
 \sum_{x\in\mathbb F_r}\chi_2(x^2-A)=-1\qquad(A\ne0)
\]
produces a nonzero split value and hence a collision
$M_{Q,\eta}(x)=M_{Q,\eta}(-x)=0$ (for $r=3$ take $x=\pm1$ directly).
If $n$ is odd and $\epsilon=1$, the same pole/collision alternative applies.
If $n$ is odd and $\epsilon=-1$, the character sum again gives a nonzero
collision for $r\geqslant7$. At the only remaining endpoint $r=3$, the square form
has $M_{Q,1}(1)=M_{Q,1}(-1)=0$, whereas for the nonsquare form $M_{Q,-1}(0)=\infty$, $M_{Q,-1}(\infty)=0$, and $M_{Q,-1}(\pm1)^2=1$, and oddness makes the last two values opposite. Hence $M_{Q,\eta}$ permutes $\mathbf P^1(\mathbb F_r)$ if and only if $r=3$, $n$ is odd, and $\eta$ is nonsquare. Transporting this criterion with $c=h_m$ shows that the pure form always has
empty set of extension degrees. A diagonal form can have nonempty set of extension degrees only when
$p=3$, $\alpha>1$ is odd, and $(\alpha,s)=1$; in that case a base-field
nonsquare remains nonsquare over $\mathbb F_{3^{sm}}$ precisely for odd $m$, while
$h_m=1$ if and only if $(m,\alpha)=1$. Therefore $\mathscr D(M_{Q,\eta})=\{m:(m,2\alpha)=1\}$, which proves part~(\ref{item:lie-cartans-support-4}) and its period and density assertions.

\medskip
\noindent\emph{Split Cartan.}
For the odd-$\operatorname{PSL}_2$ form $T_{Q,\eta}$, Proposition~\ref{prop:split-cartan}
gives
\[
 T_{Q,\eta}(x)^2
 =\eta S_Q\!\left(\frac{x^2}{\eta}-2\epsilon\right).
\]
If $c\mid\alpha$, $r=p^c$, and $n=\alpha/c$, then direct substitution
in~\eqref{eq:SQ} gives, away from the branch endpoints,
\[
 \begin{aligned}
 n\text{ even}&\Longrightarrow S_Q(z)=z+2,\\
 n\text{ odd and }z\text{ nonsplit}&\Longrightarrow S_Q(z)=0.
 \end{aligned}
\]
Thus for even $n$ one has $T_{Q,\eta}(x)^2=x^2$ away from
$x^2=4\eta$: the square form has two finite rational poles, while the
nonsquare form has no finite rational pole and oddness makes every pair
$\{x,-x\}$ map bijectively to itself. For odd $n$, the Cartan discriminant
has square class $x^2-4\epsilon\eta$; the same character sum gives a nonzero
$x$ with strict nonsplit discriminant, whence
$T_{Q,\eta}(x)=T_{Q,\eta}(-x)=0$ (with $r=3$ checked at $x=\pm1$).
Therefore $T_{Q,\eta}$ permutes $\mathbf P^1(\mathbb F_r)$ if and only if $n$ is even and $\eta$ is nonsquare. The pure form is square after every base change and hence has no extension degrees.
For a diagonal form, $\eta$ remains nonsquare over $k_m$ exactly when $m$ is
odd. For odd $m$, $e_m=d_F/(d_F,m)$ is even exactly when $d_F$ is even. This gives the odd-degree set of extension degrees in
part~(\ref{item:lie-cartans-support-3}) exactly when $d_F$ is even; since $d_F\mid\alpha$, this is
equivalent to the stated condition that $\alpha$ and $d_F$ are even. The
least period and density are $2$ and $1/2$.
\end{proof}

The subfield cases admit one uniform formula when the prime exponent differs from the characteristic.

\begin{proposition}[Prime-exponent subfield extension degrees, $r\neq p$]\label{prop:lie-subfield-support}
Suppose $Q=Q_0^r$, where $Q_0=p^{\alpha/r}$, $r$ is prime, and $r\neq p$, and consider any of the corresponding maximal subfield forms in the wild classification: even characteristic, the odd-PSL odd-prime-exponent pure or diagonal forms, or odd PGL. The odd square-subfield PSL case $r=2$ with stabilizer $\operatorname{PGL}_2(Q_0)$ was already excluded in Proposition~\ref{prop:lie-cartans-support}~(\ref{item:lie-cartans-support-1}). Set $c_r:=\operatorname{ord}_r(p^2)$. For every $m\geqslant1$,
\[
 m\in\mathscr D(f)
 \quad\Longleftrightarrow\quad
 \begin{cases}
 r>3,&p=2,\\
 r\nmid p(p^2-1),&p\text{ odd},
 \end{cases}
 \quad r\mid e_m,
 \quad r\nmid p^{2h_m}-1.
\]
In particular the two odd-PSL quadratic twists have the same complete set of extension degrees.

Equivalently, put $t:=v_r(d_F)$ and $c_*:=c_r/(c_r,h_0)$. If the structural condition fails, or $t=0$, or $c_*=1$, then $\mathscr D(f)=\varnothing$. Otherwise:
\begin{enumerate}[label=(\arabic*),ref=\arabic*,font=\itshape]
\item\label{item:lie-subfield-support-1} if $c_*\nmid d_F$, then
\[
 \mathscr D(f)=\{m:r^t\nmid m\},
\]
with least period $r^t$ and density $1-r^{-t}$;
\item\label{item:lie-subfield-support-2} if $c_*\mid d_F$, then
\[
 \mathscr D(f)=\{m:r^t\nmid m,\ c_*\nmid m\},
\]
with least period $r^tc_*$ and density $(1-r^{-t})(1-c_*^{-1})$.
\end{enumerate}
\end{proposition}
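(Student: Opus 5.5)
The plan is to reduce everything to the Guralnick--M\"uller--Saxl classification of exceptional subfield actions, and then convert that structural criterion into the divisibility conditions on $m$. First, by Theorem~\ref{thm:exact-support} and Theorem~\ref{thm:no-accidental}, membership $m\in\mathscr D(f)$ depends only on whether the base-changed generating coset $G\tau_m$ is exceptional on $G/H$. As in the proof of Proposition~\ref{prop:lie-cartans-support}, $\mathrm N_G(H)=H$ for the maximal subfield stabilizer. So after base change to $k_m$ the normalizing lift $\tau_m$ differs from a coprime power of the canonical lift over $\mathbb F_{p^{h_m}}$ by an element of $H$. This reduces the test to the canonical field automorphism $\phi_{h_m}$ of order $e_m=\alpha/h_m$ modulo inner/diagonal parts. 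In the odd-PSL case it is this automorphism possibly multiplied by $\bar\delta$, and a coprime power is taken as in \eqref{eq:cartan-common-coprime-power}.

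Second, I would invoke the maximality remark preceding Proposition~\ref{prop:lie-cartans-support}: $U$ is maximal in $B=\langle L,x\rangle$. I would then apply \cite[Theorem~3.15]{GMS2003} for pure field automorphisms and \cite[Theorem~3.29(b)(i)]{GMS2003} for field-diagonal forms. In the subfield action $\operatorname{P\Gamma L}_2(Q)/\operatorname{N}(\operatorname{PGL}_2(Q_0))$ with $r\neq p$, these theorems characterize exceptionality of a generating coset $G\sigma$. The characterization is that $\sigma$ induces a field automorphism whose order is divisible by $r$, i.e.\ $r\mid e_m$, so that $\sigma$ moves the subfield class cyclically. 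In addition, the fixed group $C_G(\sigma)\cong\operatorname{PSL}_2(p^{h_m})$ or $\operatorname{PGL}_2(p^{h_m})$ must have order prime to $r$; otherwise some element of $C_G(\sigma)$ yields a second fixed sheet. Since $|\operatorname{PGL}_2(p^{h})|=p^h(p^{2h}-1)$, this is the condition $r\nmid p^{2h_m}-1$. The structural condition ($r>3$ for $p=2$, $r\nmid p(p^2-1)$ for $p$ odd) is the GMS requirement that $r\nmid|\operatorname{PGL}_2(p)|$; without it $r$ divides every $p^{2h}-1$, or the $r=p$ situation arises. For the diagonal twist I would check that $\bar\delta$ only changes $C_G(\sigma)$ by an index-$\leqslant2$ subgroup, which leaves $r$-divisibility unchanged because $r$ is odd. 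This gives the equality of the two twists' degree sets.

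Third, I would translate into the closed form. We have $r\mid e_m=d_F/(d_F,m)$ iff $v_r(m)<t$, i.e.\ $r^t\nmid m$, which is empty when $t=0$. Next, $r\mid p^{2h_m}-1$ iff $c_r\mid h_m=h_0(d_F,m)$ iff $c_*\mid(d_F,m)$. This can occur only if $c_*\mid d_F$, and then it says $c_*\mid m$; if $c_*=1$ it always holds, giving $\varnothing$. Since $c_*\mid r-1$ is prime to $r$, the periods $r^t$ and $c_*$ are coprime. This gives the stated least periods and the product densities.

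The main obstacle is the second step: verifying precisely that the cited GMS statements yield exactly the centralizer criterion $r\nmid p^{2h_m}-1$ for both pure and field-diagonal images, including the odd-PSL diagonal cases where the coprime exponent $w$ mixes $\bar\delta$ and field components. There I would fall back on the coset fixed-point count: fixed sheets of $\sigma$ correspond to $\sigma$-stable conjugates $gHg^{-1}$, i.e.\ to $H$-twisted classes in $C$. I would count these via Lang--Steinberg in $\operatorname{PGL}_2(Q_0)$ to confirm exactly one fixed sheet.
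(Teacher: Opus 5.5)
Your outline is the paper's: normalize the Frobenius lift (the paper takes $x=\operatorname{diag}(d,1)\Phi_{p^{sm}}$ with $d=1$ or a nonsquare of $\mathbb F_{Q_0}$), invoke \cite[Theorems~3.15 and~3.29]{GMS2003}, and then translate. Your third step, including $(r,c_*)=1$ from $c_r\mid r-1$, is exactly the paper's closing argument.

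\textbf{Where the proposal fails: the diagonal-twist step.} You claim that $C_G(\sigma)\cong\operatorname{PSL}_2(p^{h_m})$ and that $\bar\delta$ changes it only by index at most $2$. This is false for the diagonal representative $\sigma=\operatorname{diag}(d,1)\Phi_{p^{h_m}}$, with $d$ a nonsquare.
\begin{itemize}
\item $\sigma^{e_m}$ acts on $G$ as conjugation by $\operatorname{diag}(N(d),1)$, where $N(d)=N_{\mathbb F_Q/\mathbb F_{p^{h_m}}}(d)$ is a nonsquare, hence $\neq1$.
\item Therefore $C_G(\sigma)$ lies in the normalizer of the diagonal torus and has order at most $2(p^{h_m}-1)$.
\item Testing ``$r\nmid|C_G(\sigma)|$'' on this representative gives the wrong answer when $r\mid p^{h_m}+1$ but $r\nmid p^{h_m}-1$, for example $p=3$, $r=5$, $h_m=2$, $e_m=5$.
\item Under Shintani descent, $\sigma$ corresponds to the split element $\operatorname{diag}(N(d),1)$ of $\operatorname{PGL}_2(p^{h_m})$. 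This element has a unique $r$th-root class with the same centralizer, so $\sigma$ itself has exactly one fixed sheet.
\item However, elements of $G\sigma$ corresponding to $r$th powers of nonsplit-torus generators (which lie outside $\operatorname{PSL}_2(p^{h_m})$) have several fixed sheets.
\end{itemize}
For the same reason your fallback, checking that $\sigma$ has a single fixed sheet, is insufficient: exceptionality constrains every element of the coset.

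Your conclusion that both twists have the same degree set is true, but it must come from a coset-level criterion. One option is \cite[Theorem~3.29(b)(i)]{GMS2003}, as the paper uses it. The other is Shintani descent, where an $H$-class with image $z$ fuses into the $G$-class with image $z^r$. This reduces either $\operatorname{PSL}_2$-coset to requiring that $z\mapsto z^r$ be a centralizer-preserving bijection on the relevant classes of $\operatorname{PGL}_2(p^{h_m})$. For odd $r\neq p$, that is exactly $r\nmid p^{2h_m}-1$.

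\textbf{Two points the paper supplies that you leave implicit.}
\begin{itemize}
\item Necessity of $r\mid e_m$ has nothing to do with $\sigma$ ``moving the subfield class.'' The paper proves it directly, uniformly for pure and diagonal lifts:
\begin{itemize}
\item every $\operatorname{diag}(c,1)$ with $c\in\mathbb F_{p^{h_m}}^\times$ commutes with $x$;
\item if $r\nmid e_m$, then $\mathbb F_{p^{h_m}}\cap\mathbb F_{Q_0}=\mathbb F_{p^{h_m/r}}$;
\item so at least $(p^{h_m}-1)/2$ of these projectivities lie in $L$, but at most $p^{h_m/r}-1$ lie in $J$;
\item any one outside $J$ gives a second fixed sheet.
\end{itemize}
\item For the converse, you describe the action as ``$\operatorname{P\Gamma L}_2(Q)$ on cosets of a subfield normalizer'' without checking it is the present action. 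The paper identifies the stabilizer $M$ of the exceptional action from GMS with the present $U$:
\begin{itemize}
\item after conjugating so that $M\cap L=H_0$, one has $M\leqslant\mathrm N_B(H_0)=\langle J,x\rangle=U$;
\item both groups meet $L$ in $H_0$ and surject onto $B/L$, so $M=U$.
\end{itemize}
\end{itemize}
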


\begin{proof}
Fix an extension degree $m$. Let $L=\operatorname{PSL}_2(Q)$, let $G$
be the geometric group of the chosen row, and write the arithmetic
monodromy group after base change to $k_m=\mathbb F_{p^{sm}}$ as
$B=\langle G,x\rangle$, where $x$ is a Frobenius lift, with point
stabilizer $U$. Put $H_0:=U\cap L=\operatorname{PSL}_2(Q_0)$ and $J:=U\cap G$. Thus $J=\operatorname{PSL}_2(Q_0)$ in the PSL row and $J=\operatorname{PGL}_2(Q_0)$ in the PGL row, and $\mathrm N_G(H_0)=J$. The generator $x\in U$ has field-component order $e_m$.

We first prove the necessity of $r\mid e_m$ without identifying a finite field-diagonal centralizer with an algebraic fixed group. In the standard subfield model a representative of the generating coset normalizing $J$ has the form $x=\operatorname{diag}(d,1)\Phi_{p^{sm}}$, with $d\in\mathbb F_{Q_0}^{\times}$; take $d=1$ for the pure or PGL form and a nonsquare for the diagonal PSL form. Suppose $r\nmid e_m$. Since $\alpha=h_me_m$, one then has
\[
 \mathbb F_{p^{h_m}}\cap\mathbb F_{Q_0}
 =\mathbb F_{p^{h_m/r}}.
\]
Every diagonal projectivity $\operatorname{diag}(c,1)$ with $c\in\mathbb F_{p^{h_m}}^{\times}$ commutes with $x$. At least $(p^{h_m}-1)/2$ of these belong to $L$, whereas only $p^{h_m/r}-1$ can belong to the subfield group $J$. The former number is strictly larger, since the remaining structural subfield exponents are odd primes. Choose such a commuting projectivity outside $J$. Then $x$ fixes both $J$ and its translate by that projectivity in $G/J$, contradicting the unique-fixed-sheet criterion. Hence $r\mid e_m$.

The precise classification \cite[Theorem~3.29(a)(i),(b)(i)]{GMS2003} now gives the stated structural condition and $r\nmid p^{2h_m}-1$. Conversely, these conditions and $r\mid e_m$ give the exceptional subfield action by \cite[Theorems~3.15 and~3.29]{GMS2003}; the order-$r$ field component is present also for the diagonal form, since $r$ is odd. To identify its stabilizer with the one in the present quotient, conjugate the stabilizer $M$ in that action so that $M\cap L=H_0$. Since $\mathrm N_B(H_0)=\langle J,x\rangle=U$, we have $M\leqslant U$. Both groups intersect $L$ in $H_0$ and surject onto $B/L$, so they have the same order. Thus $M=U$, closing the converse for both PSL and PGL rows.

For the closed form, use
\(h_m=h_0(d_F,m), \qquad e_m=d_F/(d_F,m).\)
The condition $r\mid e_m$ is $t>0$ and $r^t\nmid m$. Put $c_0=(c_r,h_0)$, so $c_r=c_0c_*$ and $h_0=c_0h'$ with $(c_*,h')=1$. The condition $r\nmid p^{2h_m}-1$ is equivalent to $c_r\nmid h_m$, and hence to $c_*\nmid(d_F,m)$. If $c_*\mid d_F$, this becomes $c_*\nmid m$; if not, it is automatic. Finally $c_r\mid r-1$, so $(r,c_*)=1$; in case~(\ref{item:lie-subfield-support-2}) the two forbidden divisibilities are consequently independent modulo $r^tc_*$, which gives both the least period and the product density, while case~(\ref{item:lie-subfield-support-1}) has the single forbidden divisor $r^t$.
\end{proof}

\begin{proposition}[The characteristic-two square-subfield case]\label{prop:even-square-subfield-empty-support}
Suppose $p=2$, $Q=Q_0^2$, and $Q_0\ne2$. For the primitive square-subfield pair $G=\operatorname{PGL}_2(Q)=\operatorname{PSL}_2(Q)$ and $H=\operatorname{PGL}_2(Q_0)$, one has
\[
 \mathscr D(f)=\varnothing.
\]
\end{proposition}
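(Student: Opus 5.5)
By Theorem~\ref{thm:exact-support}, it suffices to show that for every extension degree $m$ the generating coset of the base-changed datum is nonexceptional. The plan is to exhibit, in that coset, an element with at least two fixed sheets, and then apply Lemma~\ref{lem:exceptionality-dictionary}. The approach mirrors the necessity argument in Proposition~\ref{prop:lie-subfield-support}, now with $r=2=p$.

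\emph{Setup.} Write $Q=2^\alpha$ with $\alpha=2\beta$ and $Q_0=2^\beta$, where $\beta\geqslant2$. Over $k_m=\mathbb F_{2^{sm}}$ put $h_m=(\alpha,sm)$ and $e_m=\alpha/h_m$. Since $\operatorname{PGL}_2(Q)$ has trivial diagonal outer part in characteristic~$2$, the Frobenius outer image is a pure field automorphism. The pair normalizer argument ($\mathrm N_G(H)=H$ and $\mathrm N_{\operatorname{PGL}_2(\bar k)}(G)=G$) lets us take the generating coset representative to be the standard coefficient Frobenius $x=\Phi_{2^{sm}}$. This $x$ fixes the subfield subgroup $H=\operatorname{PGL}_2(Q_0)$, so the sheet $H$ is fixed.

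\emph{Second fixed sheet.} The centralizer $C_G(x)$ contains $\operatorname{PGL}_2(2^{h_m})$, the projectivities over $\mathbb F_Q\cap\mathbb F_{2^{sm}}$. Any $c\in C_G(x)\setminus H$ gives a second fixed sheet $cH$. We split into two cases.

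\emph{Case $h_m\nmid\beta$.} Then $\mathbb F_{2^{h_m}}\not\subseteq\mathbb F_{Q_0}$, so some diagonal element $\operatorname{diag}(c,1)$ with $c\in\mathbb F_{2^{h_m}}\setminus\mathbb F_{Q_0}$ lies outside $H$.

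\emph{Case $h_m\mid\beta$.} Here $C_G(x)\supseteq\operatorname{PGL}_2(2^{h_m})\leqslant H$, and we need more. In this case $e_m$ is even. We would use Lang--Steinberg or Shintani descent (Lemma~\ref{lem:rank-one-shintani-carriers}) to count fixed sheets via
\[
|\operatorname{Fix}_{G/H}(x)|=\#\{\,H\text{-}x\text{-twisted classes meeting }H\,\}.
\]
Equivalently, count $G$-conjugates $gHg^{-1}$ normalized by $x$. These correspond to $\mathbb F_{Q_0}$-sublines stable under the $2^{h_m}$-semilinear map. Since $H^1$ of Frobenius on the finite group $H$ counts such stable conjugates, the set is $H^1(\langle x\rangle, N_{\operatorname{PGL}_2(\bar k)}(H))$, which has at least two classes. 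This is the class computation of Lemma~\ref{lem:outer-rigidity}, transported.

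A cleaner alternative is to exhibit the second subline directly. Choose $\lambda\in\mathbb F_Q\setminus\mathbb F_{Q_0}$ with $\lambda^{2^{h_m}}\lambda^{-1}\in\mathbb F_{Q_0}^\times$, which Hilbert~90 provides since $e_m$ is even. Then $\operatorname{diag}(\lambda,1)\,\mathbf P^1(\mathbb F_{Q_0})$ is $x$-stable but differs from $\mathbf P^1(\mathbb F_{Q_0})$.

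\emph{Fallback.} Alternatively, invoke \cite[Theorem~3.29]{GMS2003}: in characteristic two the subfield actions appear in the exceptional list only for odd prime exponent $r\neq p$. Then $r=2=p$ is excluded, and primitivity (noted before Proposition~\ref{prop:lie-cartans-support}) makes that theorem applicable for every $m$.

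\emph{Main obstacle.} The hard step is the case $h_m\mid\beta$: producing an honest second $x$-stable $\mathbb F_{Q_0}$-subline and checking that the twisted conjugate is still in $G$. Since $G=\operatorname{PGL}_2(Q)$ contains all diagonal $\operatorname{diag}(\lambda,1)$ with $\lambda\in\mathbb F_Q^\times$, this holds automatically here. I would present the Hilbert~90 construction as the main argument and cite GMS as confirmation.
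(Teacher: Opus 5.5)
You planned to present the diagonal Hilbert~90 construction as the main argument. That construction fails in the case $h_m\mid\beta$, which is exactly the case it was meant to settle. Your fallback is the paper's entire proof: base change leaves the pair $(\operatorname{PGL}_2(Q),\operatorname{PGL}_2(Q_0))$ unchanged, so the subfield exponent stays $2$. \cite[Theorem~3.29(a)]{GMS2003} allows characteristic-$2$ subfield exceptionality only for prime exponent greater than $3$, so no base extension is exceptional, and Theorem~\ref{thm:no-accidental} then gives $\mathscr D(f)=\varnothing$.

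Here is why the diagonal construction fails. For $g=\operatorname{diag}(\lambda,1)$ one has $g^{-1}xgx^{-1}=\operatorname{diag}(\lambda^{2^{sm}-1},1)$. So $gH$ is fixed exactly when the class of $\lambda$ in the cyclic group $\mathbb F_Q^\times/\mathbb F_{Q_0}^\times$, of order $Q_0+1$, is fixed by $y\mapsto y^{2^{sm}}$. Equivalently it is fixed by $y\mapsto y^{2^{h_m}}$, since these generate the same automorphism group of $\mathbb F_Q$; that is, the class is killed by $2^{h_m}-1$. But $h_m\mid\beta$ gives $2^{h_m}-1\mid Q_0-1$, and $\gcd(Q_0-1,Q_0+1)=1$ because $Q_0$ is even. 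Hence $\lambda\in\mathbb F_{Q_0}^\times$ and $gH=H$, so no $\lambda\in\mathbb F_Q\setminus\mathbb F_{Q_0}$ of the kind you want exists. The parity of $e_m$ would matter only in odd characteristic, where this gcd is $2$. The $H^1$ sketch does not close the gap either: your displayed fixed-point formula is unproved, and Lemma~\ref{lem:outer-rigidity} is about odd $Q$ and counts $k$-forms, not fixed sheets. The obstacle you flagged, membership of the conjugator in $G$, is not the real one; the real one is that the conjugator does not exist.

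The missing idea is to use a unipotent conjugator, that is, additive rather than multiplicative Hilbert~90.
\begin{enumerate}
\item Put $T(\mu)=\mu^{2^{h_m}}-\mu$ on $\mathbb F_Q$. It is $\mathbb F_{2^{h_m}}$-linear, with kernel $\mathbb F_{2^{h_m}}$ and image $\ker\operatorname{Tr}_{\mathbb F_Q/\mathbb F_{2^{h_m}}}$.
\item Since $\mathbb F_{2^{h_m}}\subseteq\mathbb F_{Q_0}$ and $\operatorname{Tr}_{\mathbb F_Q/\mathbb F_{Q_0}}(y)=2y=0$ for $y\in\mathbb F_{Q_0}$, the image of $T$ contains $\mathbb F_{Q_0}$.
\item Hence $T^{-1}(\mathbb F_{Q_0})$ has $\mathbb F_{2^{h_m}}$-dimension $e_m/2+1$, which exceeds $\dim\mathbb F_{Q_0}=e_m/2$.
\item Choose $\mu\in T^{-1}(\mathbb F_{Q_0})\setminus\mathbb F_{Q_0}$ and let $g$ be $z\mapsto z+\mu$. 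Then $g\in G\setminus H$.
\item Writing $sm=h_mu$, the element $g^{-1}xgx^{-1}$ is translation by $\mu^{2^{sm}}-\mu=\sum_{i=0}^{u-1}T(\mu)^{2^{h_m i}}\in\mathbb F_{Q_0}$, so it lies in $H$.
\end{enumerate}
Thus $gH\neq H$ is a second fixed sheet. With this replacement, plus your correct diagonal argument for $h_m\nmid\beta$, you get a self-contained elementary proof that avoids GMS. Otherwise, present the GMS citation as the proof, as the paper does.
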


\begin{proof}
Base extension changes the generating Frobenius coset but not the geometric pair $(\operatorname{PGL}_2(Q),\operatorname{PGL}_2(Q_0))$, so its structural subfield exponent remains $2$. The characteristic-$2$ exceptional subfield case in \cite[Theorem~3.29(a)]{GMS2003} requires prime exponent greater than $3$. Hence no finite base extension is exceptional, and Theorem~\ref{thm:no-accidental} gives empty permutation set of extension degrees as well.
\end{proof}

\begin{proposition}[Characteristic-prime subfield and novelty extension degrees]\label{prop:lie-novelty-support}
Assume $p$ is odd.
\begin{enumerate}[label=(\arabic*),ref=\arabic*,font=\itshape]
\item Suppose $Q=Q_0^p$ and put $t:=v_p(d_F)$. For geometric $\operatorname{PGL}_2(Q)$, $\mathscr D(f)=\varnothing$ if $t=0$ and otherwise
\[
 \mathscr D(f)=\{m:p^t\nmid m\},
\]
with least period $p^t$ and density $1-p^{-t}$.
\item For geometric $\operatorname{PSL}_2(Q)$ in the same $r=p$ subfield case, the form with outer image $\bar\phi^{\,s}$ has no extension degrees. The field-diagonal form is nonempty exactly when $d_F$ is odd and $t>0$; then
\[
 \mathscr D(f)
 =\{m:2\nmid m,\ p^t\nmid m\},
\]
with least period $2p^t$ and density $\tfrac12(1-p^{-t})$.
\item Among the Lie-type novelties in Theorem~\ref{thm:wild-novelty}, exactly one has nonempty $\mathscr D(f)$. It is the degree-$45$ case with $G=\operatorname{PSL}_2(9)$, $A=M_{10}$, and $H=D_8$. It occurs precisely over base fields $\mathbb F_{3^s}$ with $s$ odd, and then
\[
 \mathscr D(f)=\{m:m\text{ odd}\},
\]
with least period $2$ and density $1/2$. Every other Lie novelty has no extension degrees.
\end{enumerate}
\end{proposition}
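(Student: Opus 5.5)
Throughout, Theorem~\ref{thm:exact-support} together with Theorem~\ref{thm:no-accidental} reduces the claims to the following question: for a fixed extension degree $m$, is the generating coset $G\tau_m$ exceptional on $G/H$? Here $\tau_m$ is a Frobenius lift for the base-changed datum. As in the proof of Proposition~\ref{prop:lie-cartans-support}, when $\mathrm N_G(H)=H$ the lift $\tau_m$ agrees modulo $H$ with a coprime power $\tau_c^w$ of a canonical lift over $\mathbb F_{p^{h_m}}$. The test therefore depends only on $h_m$, $e_m$, and the parity of $m$, which determines whether a diagonal component survives. The three parts differ only in which exceptional-coset criterion is applied.

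For parts (1) and (2), with $r=p$, I would use the field-automorphism subfield action of \cite[Theorem~3.15]{GMS2003}. Exceptionality requires the field component of the coset to have order divisible by $p$, that is, $p\mid e_m$, which is equivalent to $t>0$ and $p^t\nmid m$. The structural condition and the $r\nmid p^{2h_m}-1$ condition that appeared when $r\ne p$ are automatic here, since $p\nmid p^{2h}-1$.

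Necessity of $p\mid e_m$ I would prove exactly as in Proposition~\ref{prop:lie-subfield-support}. If $p\nmid e_m$, then $\mathbb F_{p^{h_m}}\cap\mathbb F_{Q_0}=\mathbb F_{p^{h_m/p}}$. The diagonal projectivities over $\mathbb F_{p^{h_m}}$ commute with the lift, and they outnumber those lying in $J$, so they produce a second fixed sheet. The converse comes from \cite[Theorem~3.29]{GMS2003}, and the stabilizer is identified by the same order count, using $\mathrm N_B(H_0)=U$.

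For $\operatorname{PSL}_2(Q)$ with $p$ odd, the pure form has a coset $\bar\phi^{\,sm}$ that is never exceptional. This follows from the classification: the odd-exponent subfield exceptional case with $r=p$ needs a diagonal twist, because $\operatorname{PSL}_2(Q)\langle\phi\rangle$ fixes an extra sheet through a field-fixed element of $\operatorname{PGL}_2\setminus\operatorname{PSL}_2$ lying in $J$'s normalizer class. The diagonal form keeps its diagonal component exactly for odd $m$. One also needs the image $\bar\delta\bar\phi^{\,sm}$ to generate an extension in which the diagonal part is not absorbed, which I expect to amount to $d_F$ odd, so that $\operatorname{lcm}(2,e_m)$ behaves as in \eqref{eq:cartan-common-coprime-power}. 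This yields $\{m: 2\nmid m,\ p^t\nmid m\}$. The period and density claims follow because the forbidden divisors $2$ and $p^t$ are coprime.

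For part (3), I would go through the table in Theorem~\ref{thm:wild-novelty} row by row. For $Q=7$ and $11$ the arithmetic group after any base change is $\operatorname{PGL}_2(Q)$ or $\operatorname{PSL}_2(Q)$. In the latter case the identity lies in the coset. In the former, the coset is $\operatorname{PGL}_2(Q)\setminus\operatorname{PSL}_2(Q)$, and I would exhibit an element with two or zero fixed points on $A/U$ with $U=D_{12},D_{16},D_{20}$; for instance, an involution in $U\setminus H$ whose centralizer meets several conjugates of $U$, computed via the fixed-point formula $|C(g)||g^A\cap U|/|U|$. The infinite $A_4$ family is handled the same way with $U=S_4$: a transposition-type involution in $S_4\setminus A_4$ fixes more than one coset. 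For $Q=9$, the $\operatorname{PGL}_2(9)$ rows are nonexceptional by the same computation, as is the $M_{10}$ row with $U=\operatorname{AGL}_1(5)$. The $M_{10}$, $\operatorname{SD}_{16}$ row is the one where I expect the coset $M_{10}\setminus\operatorname{PSL}_2(9)$ to consist of elements of orders $4$ and $8$ that each fix exactly one coset of $\operatorname{SD}_{16}$. This count is the main obstacle, and I would settle it by the fixed-point formula, using the class sizes of $M_{10}$. After even base change the group collapses to $\operatorname{PSL}_2(9)$ and the identity lies in the coset, giving $\{m \text{ odd}\}$.
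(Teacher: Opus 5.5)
The gap is the sufficiency half of (1). You take it from \cite[Theorem~3.29]{GMS2003}, as the paper does, after asserting that the conditions of Proposition~\ref{prop:lie-subfield-support} become automatic for $r=p$. They do not: the structural condition there is $r\nmid p(p^2-1)$, which fails for $r=p$, and the failure is a real obstruction.

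Take $Q=27$, $Q_0=3$, $G=\operatorname{PGL}_2(27)$, $H=\operatorname{PGL}_2(3)$ (maximal, and in the classification) and $k=\mathbb F_3$. Then $d_F=3$ and $t=1$, so (1) predicts $1\in\mathscr D(f)$. The designated coset $G\phi$ contains the coefficient Frobenius $\phi$. This element fixes the sheet $H$, and it also fixes $gH$ for the translation $g\colon z\mapsto z+b$ with $b^3-b=1$: indeed $g^{-1}\phi(g)$ is $z\mapsto z+1$, which lies in $H$, while $b\notin\mathbb F_3$.

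The same happens whenever $p\mid e_m$. Let $\sigma$ be the $p^{h_m}$-power Frobenius. Then $\mathbb F_{p^{h_m}}\subseteq\mathbb F_{Q_0}$, and $\operatorname{Tr}_{\mathbb F_Q/\mathbb F_{p^{h_m}}}(c)=0$ for every $c\in\mathbb F_{Q_0}$, because $[\mathbb F_Q:\mathbb F_{Q_0}]=p$. Hence $c=\sigma(b)-b$ for some $b\in\mathbb F_Q$. Choosing $c$ with nonzero $\mathbb F_{Q_0}/\mathbb F_{p^{h_m}}$-trace forces $b\notin\mathbb F_{Q_0}$. So $\sigma$, which lies in the generating coset $G\sigma$, always fixes a second sheet. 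A Lang count gives exactly $\#\{z\in\operatorname{PGL}_2(p^{h_m}):z^p=1\}=p^{2h_m}$ fixed sheets. For $r\ne p$ the same count gives $\#\{z:z^r=1\}$, which is $1$ exactly when $r\nmid p^{2h_m}-1$. By Lemma~\ref{lem:exceptionality-dictionary}, the geometric $\operatorname{PGL}_2(Q)$ case with $r=p$ is therefore never exceptional. Your necessity argument is fine, but the converse is false, and no argument can deliver (1) as stated.

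Part (2) rests on the same reading of GMS. Your mechanism for the pure form is not valid, since an element of $\operatorname{PGL}_2\setminus\operatorname{PSL}_2$ does not index a sheet of $G/H$. The correct reason is the unipotent translation $g$ above: it lies in $\operatorname{PSL}_2(Q)$ and has $g^{-1}\sigma(g)\in\operatorname{PSL}_2(Q_0)$.

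For the diagonal form, ``$d_F$ odd'' is exactly the step you leave as an expectation, and a direct count does not support it. Since $p$ is odd, $\operatorname{PSL}_2(Q)/\operatorname{PSL}_2(Q_0)\cong\operatorname{PGL}_2(Q)/\operatorname{PGL}_2(Q_0)$ equivariantly. When $p\mid e_m$, Shintani descent as in Lemma~\ref{lem:rank-one-shintani-carriers} shows that $y\sigma$ fixes exactly $\#\{z\in\operatorname{PGL}_2(p^{h_m}):z^p=w\}$ sheets, where $w$ is its Shintani image. For odd $m$, after changing generating coset, the diagonal coset is $\{y\sigma:y\notin\operatorname{PSL}_2(Q)\}$. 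Its images $w$ are $p'$-elements with nonsquare determinant, and these have unique $p$th roots whatever the parity of $e_m$. So that criterion must be proved by such a count, not imported as a parity condition.

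Your part (3), by contrast, is correct and takes a different route from the paper, which uses $[A:G]=2$ and intersects the novelty list with the GMS classification. The fixed-point formula gives the following counts:
\begin{itemize}
\item outer involutions with $4$ and $3$ fixed sheets for $U=D_{12},D_{16}$ at $Q=7$, and $6$ for $U=D_{20}$ at $Q=11$;
\item outer involutions with $6$ and $5$ fixed sheets for the two $\operatorname{PGL}_2(9)$ rows;
\item outer involutions with $(p\pm1)/2$ fixed sheets in the $S_4$ family;
\item order-$4$ outer elements with $2$ fixed sheets for $\operatorname{AGL}_1(5)$.
\end{itemize}
In the $\operatorname{SD}_{16}$ row you still need $\mathrm N_{M_{10}}(\langle a\rangle)=\operatorname{SD}_{16}$. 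This gives $a\not\sim a^{-1}$, so each outer class of elements of order $8$ meets $U$ in two elements and fixes one sheet; otherwise the counts would be $2$ and $0$.
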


\begin{proof}
For $r=p$, use \cite[Theorem~3.29(b)(ii)]{GMS2003}. In PGL the diagonal involution is geometric, so the only condition is that the field-component order $e_m$ of the base-changed Frobenius be divisible by $p$. In PSL the cyclic outer subgroup must also contain the pure diagonal involution; for a field-diagonal generator this is equivalent to the base-change exponent $m$ and its field-component order $e_m$ both being odd. The formulas follow from $e_m=d_F/(d_F,m)$.

Every Lie novelty in Theorem~\ref{thm:wild-novelty} has $[A:G]=2$, so its full constant-field degree is two. Theorem~\ref{thm:exact-support} therefore makes its extension-degree set either empty or exactly the odd positive integers; the latter occurs precisely when the base form is exceptional. It remains to intersect the base-field novelty list with \cite[Theorem~3.29]{GMS2003}. The unique survivor is the split-Cartan field-diagonal case with $G=\operatorname{PSL}_2(9)$, $A=M_{10}$, $H=D_8$, and degree $45$. The other small Cartan cases and the infinite $A_4/S_4$ family have empty sets. This proves the assertion for every extension degree, including those for which the arithmetic action is no longer primitive.
\end{proof}

\subsection{Summary table and small-field conventions}

For quick reference, Table~\ref{tab:master-support} records the extension arithmetic using the classification terminology above. Rows with identical sets of extension degrees are occasionally coalesced: the dihedral line covers both the tame and characteristic-$2$ wild cases, and the primitive-polyhedral line also includes the characteristic-$3$ $A_5$ cases. All rows are restricted to the structural $k$-indecomposable ranges. The odd-PSL Cartan entries refer to the geometrically primitive ranges; the nonmaximal cases at $Q=7,9,11$ are listed under Lie-type novelties. The split-Cartan pairs at $Q=3,5$ are $k$-decomposable and contribute no row; the $Q=3$ indecomposable Lie cases are Borel cases. Theorem~\ref{thm:master-occurrence} separates coalesced rows again when their class counts over $k$ differ. For the affine family the table refers to Theorem~\ref{thm:affine-module-support}, and for the $r\neq p$ subfield case to Proposition~\ref{prop:lie-subfield-support}.

\begingroup
\small
\setlength{\LTpre}{6pt}
\setlength{\LTpost}{6pt}
\begin{longtable}{@{}>{\raggedright\arraybackslash}p{0.18\textwidth}>{\raggedright\arraybackslash}p{0.20\textwidth}>{\raggedright\arraybackslash}p{0.50\textwidth}@{}}
\caption{Extension degrees of the classified indecomposable forms.}\label{tab:master-support}\\
\toprule
Classification case(s) & $k$-form(s) & extension degrees $\mathscr D(f)$ \\
\midrule
\endfirsthead
\toprule
Classification case(s) & $k$-form(s) & extension degrees $\mathscr D(f)$ \\
\midrule
\endhead
cyclic & split $X^\ell$ & $\operatorname{ord}_\ell(q)\nmid m$ \\
cyclic & nonsplit R\'edei & $\operatorname{ord}_\ell(-q)\nmid m$ \\
dihedral & all classified Dickson forms & $\operatorname{ord}_\ell(q^2)\nmid m$ \\
geometrically decomposable $V_4/A_4$ & unique quartic & $3\nmid m$ \\
primitive polyhedral & all forms & empty \\
affine & Ore/Frobenius form & finite-module criterion \\
Lie Borel & all forms & empty \\
even split Cartan & all forms & empty \\
even nonsplit Cartan & unique & $(m,\alpha)=1$ if $\alpha>1$ is odd and $(\alpha,s)=1$; otherwise empty \\
odd PSL split Cartan & pure & empty \\
odd PSL split Cartan & diagonal & odd $m$ if $\alpha$ and $d_F$ are even; otherwise empty \\
odd PGL split Cartan & all forms & empty \\
odd PSL nonsplit Cartan & pure & empty \\
odd PSL nonsplit Cartan & diagonal & $(m,2\alpha)=1$ if $p=3$, $\alpha>1$ is odd, and $(\alpha,s)=1$; otherwise empty \\
odd PGL nonsplit Cartan & all forms & empty \\
odd square-subfield PSL & pure & empty \\
other prime-exponent subfield $r\neq p$ & every form & uniform subfield formula \\
characteristic-$2$ square-subfield $r=p=2$ & unique PGL form & empty \\
odd subfield $r=p$ & PGL & $p^t\nmid m$ if $t>0$; otherwise empty \\
odd subfield $r=p$ & PSL pure & empty \\
odd subfield $r=p$ & PSL diagonal & $2\nmid m$ and $p^t\nmid m$ if $d_F$ is odd and $t>0$; otherwise empty \\
geometrically primitive Lie cases with $H\cong A_4,S_4,A_5$ & all forms & empty \\
Lie-type novelty & $Q=9$, $M_{10}$, degree $45$; exists only for odd $s$ & odd $m$ \\
other Lie-type novelties & all forms & empty \\
\bottomrule\end{longtable}
\endgroup

The accidental small-group identifications introduce no duplication. The groups $\operatorname{PGL}_2(2)\cong S_3$, $\operatorname{PSL}_2(3)\cong A_4$, $\operatorname{PGL}_2(3)\cong S_4$, and $\operatorname{PSL}_2(4)=\operatorname{PGL}_2(4)\cong A_5$, together with $\operatorname{PSL}_2(5)\cong A_5$, recover the same abstract dihedral or polyhedral permutation pairs. Their finite-field classes are assigned uniquely by the characteristic and small-group conventions of Section~\ref{sec:wild-master}. Likewise, the nonmaximal PSL Cartan cases at $Q=7,9,11$ are counted only in the Lie-type novelty theorem and not again in the primitive Cartan cases. The unique nonempty novelty is the degree-$45$ $M_{10}$ form above.

Finally, the explicit formulas in this section refine, rather than replace, Theorem~\ref{thm:exact-support}. Each $\mathscr D(f)$ is a union of complete gcd strata for the relevant full constant-field degree. In particular, the two odd-PSL subfield descents with $r\neq p$ can have different full constant-field degrees while having the same $\mathscr D(f)$; this gives natural examples where the least period is a proper divisor of the full constant-field degree.

\section{Class counts, polynomial representatives, and minimum degrees}\label{sec:arithmetic-closure}

With the extension degrees determined in Section~\ref{sec:family-support}, we complete the arithmetic classification. We first derive exact affine class counts, then combine the finite-field class counts with the formulas at $m=1$ to determine which classes occur over $k$ and which are exceptional, and finally use branch and inertia geometry to characterize polynomial representatives and establish the sharp degree thresholds.

\subsection{Affine class counts and exact enumeration}

Retain the affine notation of Theorem~\ref{thm:affine-wild}: $Q=p^{\operatorname{ord}_n(p)}$ for $n>1$ (and $Q=p$ for $n=1$), $\rho(a)=a^Q$, and $r=\deg_\tau\ell$. With the normalized $\Phi(v)=v^q$ from Theorem~\ref{thm:affine-module-support}, put $k_\rho:=k^\rho=\mathbb F_R$, $m_\rho:=[k:k_\rho]$, $a_n:=(Q-1)/n$, $\delta_n:=(a_n,q-1)$, $\mathcal W_{q,n}:=(k^\times)^{a_n}$, and $L:=|\mathcal W_{q,n}|$.
Let $S=k[\tau;\rho]$. Its center is
$Z(S)=\mathbb F_R[\tau^{m_\rho}]$.
Here $\mathcal W_{q,n}$ is the effective scaling group; $D(v),d(v)$ and $\mathcal C_R,\mathcal P_{R,m_\rho}$ encode the corresponding central and projective fixed-point data.

Let $N_{\rm all}^{\rm aff}(q,n,r)$ and $N_{\rm exc}^{\rm aff}(q,n,r)$ denote respectively the total and exceptional numbers of $k$-M\"obius classes in the affine family with $\deg_\tau\ell=r$.

\begin{lemma}[Effective weighted scaling]\label{lem:affine-effective-scaling}
The weighted $k^\times$-action of Theorem~\ref{thm:affine-wild} factors through the surjection $k^\times\to\mathcal W_{q,n}$, $u\mapsto v=u^{a_n}$, whose kernel has order $\delta_n$. If $b_i:=(Q^r-Q^i)/(Q-1)$, then $v$ acts on a monic Ore polynomial by $c_i\mapsto c_i v^{-b_i}$. Equivalently, this is the skew-variable scaling $\tau\mapsto v\tau$, followed by monic normalization. On the center, with $y:=\tau^{m_\rho}$, put $\mathcal N(v):=N_{k/k_\rho}(v)$; the scaling sends $y$ to $\mathcal N(v)y$. Thus the induced action on roots of a monic central polynomial is $y\mapsto\mathcal N(v)^{-1}y$, and the $k^\times$-orbits are exactly the $\mathcal W_{q,n}$-orbits.
\end{lemma}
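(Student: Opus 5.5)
The plan is to work directly from the weighted action in Theorem~\ref{thm:affine-wild}, namely $c_i'=c_iu^{d_i-D}$ with $d_i=(Q^i-1)/n$ and $D=(Q^r-1)/n$. The first step is to rewrite the exponent:
\[
 D-d_i=\frac{Q^r-Q^i}{n}=\frac{Q-1}{n}\cdot\frac{Q^r-Q^i}{Q-1}=a_nb_i,
\]
where $b_i=(Q^r-Q^i)/(Q-1)$ is an integer, namely $Q^i(1+Q+\cdots+Q^{r-i-1})$. Then $u^{d_i-D}=(u^{a_n})^{-b_i}=v^{-b_i}$ with $v=u^{a_n}$. This shows the action factors through $u\mapsto u^{a_n}$. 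The image of this map is $\mathcal W_{q,n}$ by definition, and since $k^\times$ is cyclic of order $q-1$, the kernel has order $(a_n,q-1)=\delta_n$. It follows at once that the $k^\times$-orbits coincide with the $\mathcal W_{q,n}$-orbits.

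The second step identifies this action with the skew-variable scaling. In $S=k[\tau;\rho]$ one has $(v\tau)^j=v^{1+Q+\cdots+Q^{j-1}}\tau^j=v^{(Q^j-1)/(Q-1)}\tau^j$, because $\tau v=v^Q\tau$. Substituting $\tau\mapsto v\tau$ into $\ell=\sum c_i\tau^i$ therefore gives
\[
 \sum_i c_iv^{(Q^i-1)/(Q-1)}\tau^i ,
\]
whose leading coefficient is $v^{(Q^r-1)/(Q-1)}$. Dividing by this leading coefficient, which is a left scalar, produces the coefficient $c_iv^{(Q^i-Q^r)/(Q-1)}=c_iv^{-b_i}$. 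The one point to check is that left multiplication by a scalar preserves monic normalization and does not disturb the coefficients in the wrong way; it does, because the scalars sit on the left.

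The third step treats the center. Apply the same formula with $j=m_\rho$, and use $\rho^{m_\rho}=\mathrm{id}$ on $k$, so that $Q^{m_\rho}$ acts trivially and $k^{\rho}=k_\rho$ has degree $m_\rho$ below $k$. This gives
\[
 (v\tau)^{m_\rho}=\Bigl(\prod_{j<m_\rho}\rho^j(v)\Bigr)\tau^{m_\rho}=N_{k/k_\rho}(v)\,y .
\]
This norm lies in $\mathbb F_R$, so it is central, and the scaling restricts to $y\mapsto\mathcal N(v)y$ on $Z(S)$. For a monic central polynomial $g(y)=\prod(y-\beta)$, substitution followed by monic renormalization gives $\prod(y-\mathcal N(v)^{-1}\beta)$, so the roots transform by $\beta\mapsto\mathcal N(v)^{-1}\beta$.

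The main obstacle is bookkeeping rather than any single hard step. One must confirm that the direction of the exponent sign matches the convention $\tau a=\rho(a)\tau$, and that $\prod_{j<m_\rho}\rho^j(v)$ really is the norm. The latter holds because $\rho$ generates $\operatorname{Gal}(k/k_\rho)$.
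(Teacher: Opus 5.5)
Your proposal is correct and follows essentially the same route as the paper's proof: the exponent identity $d_i-D=-a_nb_i$ with kernel of order $\gcd(a_n,q-1)=\delta_n$, the Ore computation $(v\tau)^i=v^{(Q^i-1)/(Q-1)}\tau^i$ followed by left monic normalization, and the norm identity $(v\tau)^{m_\rho}=N_{k/k_\rho}(v)\tau^{m_\rho}$. You also check the integrality of $b_i$, that $\tau\mapsto v\tau$ is compatible with $\tau a=\rho(a)\tau$, and the explicit transformation of roots under monic renormalization; these fill in details that the paper treats as immediate.
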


\begin{proof}
Since $d_i-D=(Q^i-Q^r)/n=-a_nb_i$, the coefficient action in Theorem~\ref{thm:affine-wild} depends on $u$ only through $v=u^{a_n}$, and its kernel has size $\gcd(a_n,q-1)=\delta_n$. In the Ore ring, the substitution $\tau\mapsto v\tau$ sends $\tau^i$ to $v^{1+Q+\cdots+Q^{i-1}}\tau^i$; division by the transformed leading coefficient gives precisely $c_i\mapsto c_i v^{-b_i}$. Finally, because $\rho$ has order $m_\rho$ on $k$, one has
\[
 (v\tau)^{m_\rho}=N_{k/k_\rho}(v)\tau^{m_\rho}=\mathcal N(v)y.
\]
The remaining two assertions are immediate.
\end{proof}

\begin{proposition}[Affine class counts over $k$]\label{prop:affine-occurrence}
For $r=1$,
\[
 N_{\rm all}^{\rm aff}(q,n,1)=\delta_n,\qquad N_{\rm exc}^{\rm aff}(q,n,1)=\delta_n-1.
\]
For every $r>1$, every monic irreducible $\ell\in k[\tau;\rho]$ gives an exceptional affine family over $k$.
\end{proposition}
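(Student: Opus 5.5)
For $r=1$ we will combine the weighted-scaling description of Lemma~\ref{lem:affine-effective-scaling} with the fixed-point criterion of Theorem~\ref{thm:affine-module-support} at $m=1$. For $r>1$ we will use irreducibility of $\ell$ (Theorem~\ref{thm:affine-wild}) to rule out any nonzero fixed vector of $\zeta\Phi$.

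\emph{Total count for $r=1$.} Here $\ell=\tau+c_0$ with $c_0\in k^\times$, and such an $\ell$ is automatically irreducible. By Theorem~\ref{thm:affine-wild} and Lemma~\ref{lem:affine-effective-scaling}, the $k$-M\"obius classes are the orbits of $\mathcal W_{q,n}$ acting by $c_0\mapsto c_0v^{-b_0}$. Since $b_0=(Q-1)/(Q-1)=1$, this action is multiplication by the subgroup $(k^\times)^{a_n}$. Its index in the cyclic group $k^\times$ is $\gcd(a_n,q-1)=\delta_n$, so $N^{\rm aff}_{\rm all}(q,n,1)=\delta_n$.

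\emph{Exceptional count for $r=1$.} By Theorem~\ref{thm:affine-module-support} with $m=1$, the class is exceptional iff no $\zeta\in\mu_n$ gives $\zeta\Phi$ a nonzero fixed vector $x\in V=\ker(Z^Q+c_0Z)$. If $x^q=\zeta^{-1}x$, then $x^{q-1}\in\mu_n$, so $x^{n(q-1)}=1$, and $x^{Q-1}=-c_0$. Conversely, if $x^{n(q-1)}=1$, then $x^{q-1}\in\mu_n$ (because $\mu_n$ is exactly the group of $n$th roots of unity), so $x$ is fixed by some $\zeta\Phi$. If in addition $x^{Q-1}=-c_0$, then $x\in V$. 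Hence the class is nonexceptional exactly when
\[
 -c_0\in\{x^{Q-1}:x^{n(q-1)}=1\}.
\]
We will identify this set with $\mathcal W_{q,n}$. Write $x^{Q-1}=(x^n)^{a_n}$. The map $x\mapsto x^n$ sends the cyclic group $\mu_{n(q-1)}$ onto $\mu_{q-1}=k^\times$, with kernel $\mu_n$. So the set equals $(k^\times)^{a_n}=\mathcal W_{q,n}$, a single weighted orbit. Exactly one of the $\delta_n$ classes, namely the class of $c_0=-1$, is therefore nonexceptional, giving $N^{\rm aff}_{\rm exc}(q,n,1)=\delta_n-1$. For $n=1$ the same computation applies with $a_n=Q-1$ and $\mu_1=1$.

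\emph{The case $r>1$.} Suppose $x\ne0$ is fixed by $\zeta\Phi$ for some $\zeta\in\mu_n$. Then $\Phi(ax)=a^q\zeta^{-1}x$ for all $a\in\mathbb F_Q$, so the line $\mathbb F_Qx$ is a one-dimensional $q$-Frobenius-stable $\mathbb F_Q$-subspace of $V$. Since $\dim_{\mathbb F_Q}V=r>1$, this is a nonzero proper stable subspace. By Lemma~\ref{lem:affine-ore-dictionary} it would give a proper right factor of $\ell$, contradicting irreducibility. Hence every $1-\zeta\Phi$ is injective, and Theorem~\ref{thm:affine-module-support} shows that $1\in\mathscr D(f)$, i.e.\ the family is exceptional over $k$.

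The main obstacle is the normalization. The argument needs the convention $\Phi(v)=v^q$ on the normalized translation module. It also needs the identification of the class with the orbit of $c_0$ to match the one in Theorem~\ref{thm:affine-wild}. I would check both carefully so that the single nonexceptional orbit really is the one containing $c_0=-1$.
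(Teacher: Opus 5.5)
Your proposal is correct and follows essentially the same route as the paper. The count of $\delta_n$ classes comes from the weighted scaling $c_0\mapsto c_0u^{-a_n}$. The single nonexceptional orbit $-c_0\in(k^\times)^{a_n}$ comes from the $m=1$ fixed-vector criterion. You also justify this orbit explicitly via $x\mapsto x^n$ on $\mu_{n(q-1)}$, a step the paper leaves implicit. For $r>1$, the Frobenius-stable line $\mathbb F_Qx$ contradicts Ore irreducibility.
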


\begin{proof}
For $r=1$ write $\ell=\tau+c_0$ with $c_0\in k^\times$. The weighted equivalence is $c_0'=c_0u^{-a_n}$, so there are $\delta_n$ classes. The ground-field extension-degree criterion fails exactly when $-c_0\in(k^\times)^{a_n}$, which is one weighted orbit. For $r>1$, failure of the ground-field extension-degree criterion gives $0\ne w\in V$ and $\zeta\in\mu_n$ with $\Phi(w)=\zeta^{-1}w$. Then $\mathbb F_Qw$ is a nonzero proper $\Phi$-stable $\mathbb F_Q$-subspace, contradicting Ore irreducibility. Thus $1\in\mathscr D(f)$, and Theorem~\ref{thm:no-accidental} gives exceptionality.
\end{proof}

For a nonzero $h\in S$, its \emph{minimal central left multiple}
(MCLM) is the unique monic nonzero element of least $\tau$-degree in
$Z(S)\cap Sh$; thus it is the monic central left multiple of $h$ of
least degree in $\tau$. Its monic central polynomial labels the
corresponding fiber of irreducible right divisors.

\begin{lemma}[Projective MCLM fibers and semilinear equivariance]\label{lem:affine-mclm-projective}
Let $S=k[\tau;\rho]$, with $k_\rho=\mathbb F_R$ and $m_\rho=[k:k_\rho]$. Let $\widehat H(y)\in\mathbb F_R[y]$ be monic irreducible of degree $r>1$ and put $H_c=\widehat H(\tau^{m_\rho})$. Then
\[
 S/SH_c\cong_E M_{m_\rho}(E),\qquad E=\mathbb F_{R^r}.
\]
Monic irreducible right divisors of $H_c$ are naturally parametrized by $\mathbf P^{m_\rho-1}(E)$. If a skew-ring automorphism $\psi$ preserves $SH_c$ and induces $\beta\in\operatorname{Aut}(E)$ on the center, then its action on this projective fiber is induced by an invertible $\beta$-semilinear map on $E^{m_\rho}$.
\end{lemma}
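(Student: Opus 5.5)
The plan is to use the classical structure theory of skew polynomial rings over finite fields: $S=k[\tau;\rho]$ is an Azumaya algebra over its center. Since $\rho$ has order $m_\rho$ on $k$ with fixed field $k_\rho=\mathbb F_R$, the element $y=\tau^{m_\rho}$ is central and $Z(S)=\mathbb F_R[y]$. Moreover $S$ is free of rank $m_\rho^2$ over $Z(S)$, with basis $a_i\tau^j$, where $a_i$ runs through an $\mathbb F_R$-basis of $k$ and $0\le j<m_\rho$. After localizing at $y$ (harmless, because $\widehat H$ is irreducible of degree $r>1$ and so $y\nmid\widehat H$), $S[\tau^{-1}]$ is the cyclic algebra $(k/k_\rho,\rho,y)$ over $\mathbb F_R(y)$.

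\textbf{First step: the matrix algebra.} I would pass to $E=\mathbb F_R[y]/(\widehat H)=\mathbb F_{R^r}$ and identify
\[
S/SH_c=S\otimes_{Z(S)}E\cong (k\otimes_{\mathbb F_R}E,\ \rho\otimes1,\ \bar y)
\]
as a cyclic algebra of degree $m_\rho$ over the finite field $E$. Wedderburn's little theorem (the Brauer group of a finite field is trivial) then forces it to be split, giving $M_{m_\rho}(E)$. A more concrete route is to let $S$ act $E$-linearly on the $m_\rho$-dimensional $E$-space $k\otimes_{\mathbb F_R}E$, with $\tau$ acting by $\rho\otimes\bar y^{1/m_\rho}$-type twisting. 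Both $E$-dimensions equal $m_\rho^2$, and simplicity of central simple algebras gives the isomorphism.

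\textbf{Second step: parametrizing the right divisors.} A monic irreducible right divisor $h$ of $H_c$ corresponds to the maximal left ideal $Sh\supseteq SH_c$. I would use the standard fact that the MCLM of an irreducible $h$ is $H_c$ exactly when $Sh\supseteq SH_c$; degree counting shows that $\deg_\tau h=r$ in that case. Maximal left ideals of $M_{m_\rho}(E)$ are the annihilators of lines in $E^{m_\rho}$, so they are in bijection with $\mathbf P^{m_\rho-1}(E)$. Conversely, every maximal left ideal of $S/SH_c$ pulls back to a left ideal $Sh$ with $h$ monic; this uses that $S$ is a left principal ideal domain, which follows from right division. This gives the natural parametrization.

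\textbf{Third step: equivariance.} If $\psi$ preserves $SH_c$, it induces a ring automorphism $\bar\psi$ of $M_{m_\rho}(E)$ that restricts to $\beta$ on the center $E$. Twisting by $\beta$, the composite $\bar\psi\circ\beta^{-1}$ (entrywise) is $E$-linear. By Skolem--Noether it is conjugation by some $g\in\operatorname{GL}_{m_\rho}(E)$. Hence $\bar\psi(x)=g\,\beta(x)\,g^{-1}$, and $\bar\psi$ sends the annihilator of a line $L$ to the annihilator of $g\beta(L)$. So the induced action on the fiber is given by the $\beta$-semilinear map $v\mapsto g\beta(v)$.

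The main obstacle is making the identification in the second step canonical enough that the action really is on lines rather than hyperplanes, and checking that the left/right conventions of the Ore ring match the left action. I would first fix the simple module $E^{m_\rho}$ explicitly and verify the annihilator correspondence. If the convention comes out dual, the parametrization by the dual projective space is still a $\mathbf P^{m_\rho-1}(E)$ with a semilinear action, given by the inverse-transpose of $g$.
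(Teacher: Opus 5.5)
Your proposal is correct and follows essentially the paper's argument. The paper cites Lavrauw--Sheekey (Lemmas~2 and~3) for $S/SH_c\cong M_{m_\rho}(E)$ and for the degree of the right divisors, identifies maximal left ideals with annihilators of lines, and obtains the $\beta$-semilinear implementer by Skolem--Noether after removing entrywise $\beta$, exactly as you do.

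Your only deviation is that you prove the splitting yourself, using the cyclic algebra over $E$ and the triviality of $\operatorname{Br}(E)$. Note that $k\otimes_{\mathbb F_R}E$ is in general only a cyclic Galois \'etale algebra (a product of $\gcd(m_\rho,r)$ fields), but this still gives a central simple algebra of degree $m_\rho$. Also, your line-versus-hyperplane worry does not arise: the maximal left ideals of $M_{m_\rho}(E)$ are exactly the sets $\{x: x\Lambda=0\}$ for lines $\Lambda$.
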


\begin{proof}
Lavrauw--Sheekey Lemma~2 identifies the irreducible central polynomial attached to an irreducible skew polynomial, while Lemma~3 gives the matrix-algebra quotient, the degree of every irreducible right divisor in a fixed central fiber, and the rank-$(m_\rho-1)$ matrix realization \cite[Lemmas~2 and~3]{LavrauwSheekey2013}. A monic irreducible right divisor gives a maximal left ideal, and under the matrix algebra isomorphism this is the annihilator of a one-dimensional kernel; conversely every one-dimensional subspace gives a maximal left ideal whose inverse image has a unique monic irreducible generator.

For semilinear equivariance, compose the induced matrix-algebra automorphism with the inverse of entrywise $\beta$. The result is an $E$-algebra automorphism of the full matrix algebra, hence is inner by Skolem--Noether. Undoing entrywise $\beta$ yields a $\beta$-semilinear implementer $T$. If a projective point is represented by a line $\Lambda\subseteq E^{m_\rho}$, then $\psi(\operatorname{Ann}(\Lambda))=\operatorname{Ann}(T\Lambda)$, which gives the asserted equivariance.
\end{proof}

For $v\in\mathcal W_{q,n}$ put $D(v):=\operatorname{ord}_{\mathbb F_R^\times}(\mathcal N(v))$. Since $v^{D(v)}$ has norm one, choose $a\in k^\times$ with $v^{D(v)}=a/\rho(a)$ and put $d(v):=[\mathbb F_R(a):\mathbb F_R]$. If $a'$ is another such choice, then $\rho(a'/a)=a'/a$, so $a'/a\in k_\rho^\times=\mathbb F_R^\times$ and hence $\mathbb F_R(a')=\mathbb F_R(a)$. Thus $d(v)$ is independent of the Hilbert--90 choice.

\begin{lemma}[Fixed projective points in one central fiber]\label{lem:affine-fixed-projective-fiber}
Assume a monic irreducible central polynomial of degree $r$ is fixed by the scaling attached to $v$. Put $D=D(v)$ and $d=d(v)$. Then $D\mid r$, and the number of fixed projective points in its MCLM fiber is
\[
 \mathcal P_{R,m_\rho}(r;D,d)=
 \begin{cases}
 \displaystyle d\frac{R^{(r/D)(m_\rho/d)}-1}{R^{r/D}-1},&d\mid r/D,\\[3mm]
 0,&d\nmid r/D.
 \end{cases}
\]
\end{lemma}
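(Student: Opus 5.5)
Our plan is to work entirely inside the matrix-algebra model of Lemma~\ref{lem:affine-mclm-projective}. Put $E=\mathbb F_{R^r}$ and $y=\tau^{m_\rho}$. The central fiber of $H_c=\widehat H(y)$ is $\mathbf P^{m_\rho-1}(E)$. By Lemma~\ref{lem:affine-effective-scaling}, the scaling $\psi_v\colon\tau\mapsto v\tau$ acts on the center by $y\mapsto\mathcal N(v)y$. Since $\widehat H$ is fixed, $\psi_v$ permutes the roots of $\widehat H$ by $\theta\mapsto\mathcal N(v)^{-1}\theta$ (up to the monic normalization). This induces an automorphism $\beta$ of $E=\mathbb F_R(\theta)$, and $\beta$ has order exactly $D$: the orbit of $\theta$ under multiplication by $\mathcal N(v)$ has length $D$ because $\theta\ne0$. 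Since $\beta$ lies in $\operatorname{Gal}(E/\mathbb F_R)\cong C_r$, we get $D\mid r$. The fixed field of $\beta$ is $E_0:=\mathbb F_{R^{r/D}}$.

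Next we identify the $\beta$-semilinear implementer $T$ of Lemma~\ref{lem:affine-mclm-projective} concretely. We use the $E$-module $S/SH_c$, or equivalently a simple left module $E^{m_\rho}$ realized as $k\otimes_{\mathbb F_R}E$ with $\tau$ acting semilinearly. The substitution $\tau\mapsto v\tau$ becomes the map $x\mapsto$ (multiplication by suitable elements built from $v$) composed with $\beta$. Then $T^D$ is $E$-linear. Since $\psi_v^D$ is $\tau\mapsto v^{(D)}\tau$ with $v^{(D)}:=v\rho(v)\cdots$, and its norm part is trivial, we use the Hilbert~90 element $a$ with $v^{D}=a/\rho(a)$. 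The aim is to show that $\psi_v^D$ is the inner automorphism given by conjugation by $a$. Consequently $T^D$ acts on $E^{m_\rho}$ as multiplication by $a\in k\subseteq k\otimes E$, up to an $E^\times$-scalar.

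The count then becomes a standard one: we count lines fixed by a $\beta$-semilinear $T$ with $T^D=\lambda\cdot m_a$. A line $E w$ is $T$-fixed iff $Tw=cw$ for some $c$, and we rescale to reach $Tw=w$ where possible. The $T$-fixed vectors form an $E_0$-structure exactly when $T$ is "Hilbert-90 trivializable". We study this by decomposing $k\otimes_{\mathbb F_R}E\cong\prod E$ along the $\gcd(m_\rho,r)$ factors. Multiplication by $a$ has eigenvalues the conjugates of $a$, and these generate a field of degree $d$ over $\mathbb F_R$. Fixed lines of $T$ must lie inside eigenspaces of $T^D$. Each eigenvalue of $a$ must be compatible with $\beta$, which should force $d\mid r/D$; otherwise there are no fixed points. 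When $d\mid r/D$, the $d$ eigenspaces are each $E$-subspaces of dimension $m_\rho/d$. Each eigenspace is stable under $T$, and on it $T$ is $\beta$-semilinear with $T^D$ scalar, so after adjusting by Lang's theorem it has an $E_0$-form. Its fixed lines then number $|\mathbf P^{m_\rho/d-1}(E_0)|=(R^{(r/D)(m_\rho/d)}-1)/(R^{r/D}-1)$. Summing over the $d$ eigenspaces gives the formula $\mathcal P_{R,m_\rho}(r;D,d)$.

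We expect the main obstacle to be the middle step. There we must verify precisely that $T^D$ is (up to $E$-scalars) multiplication by $a$, and that the $d$ eigenspaces are individually $T$-stable rather than permuted by $T$. If they were permuted, the fixed count would vanish on non-fixed components. We would first try to settle this by noting that $\psi_v$ fixes $a$'s minimal polynomial over $\mathbb F_R$, since $a\in k$ and $\psi_v$ is $k$-linear on coefficients. Combined with $\beta$ fixing $\mathbb F_R$, this should show the eigenvalue labels are preserved.
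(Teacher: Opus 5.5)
Your plan follows the paper's proof step for step. Both arguments show that $\beta(y)=\mathcal N(v)y$ has exact order $D$ with fixed field $E_0=\mathbb F_{R^{r/D}}$. Both use $\psi_v^D=\operatorname{Int}(a)$, so that $T^D=\xi A_a$ with $\xi\in E^\times$. Both confine fixed lines to eigenspaces of $A_a$. Both finish with a norm rescaling and Hilbert~90 on each of the $d$ eigenspaces of dimension $m_\rho/d$. Two points need correcting.

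First, $\psi_v$ fixes $k$ pointwise, so its $D$-fold composite is $\tau\mapsto v^D\tau$. It is not $\tau\mapsto v\rho(v)\cdots\rho^{D-1}(v)\tau$; that product is the coefficient of $(v\tau)^D$, a different object. With $v^D$, the identity $a\tau a^{-1}=a\rho(a)^{-1}\tau=v^D\tau$ gives $\psi_v^D=\operatorname{Int}(a)$ directly. Then $A_a^{-1}T^D$ is $E$-linear and centralizes $M_{m_\rho}(E)$, hence $T^D=\xi A_a$.

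Second, the step you flag as the main obstacle cannot be settled from the minimal polynomial of $a$ together with the fact that $\beta$ is trivial on $\mathbb F_R$. The eigenvalues of $A_a$ lie in $\mathbb F_{R^d}$, not in $\mathbb F_R$. The automorphism $\beta$ moves them whenever $\mathbb F_{R^d}\not\subseteq E_0$. So that argument would equally ``prove'' stability in the case $d\mid r$, $d\nmid r/D$, where the eigenspaces are genuinely permuted.

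The missing input is that $\psi_v(a)=a$ exactly, not merely that its minimal polynomial is preserved. Hence $TA_aT^{-1}=A_a$ as semilinear operators, which gives $T(V_\lambda)=V_{\beta(\lambda)}$. Both halves of the count then follow at once.

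\emph{Necessity.} A $T$-fixed line $L$ satisfies $A_aL=\xi^{-1}T^DL=L$, so $L\subseteq V_\lambda$ for some $\lambda\in E$. Then $TL=L$ forces $\beta(\lambda)=\lambda$, so $\lambda\in E_0$. Since $[\mathbb F_R(\lambda):\mathbb F_R]=d$, this gives $d\mid r/D$.

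\emph{Sufficiency.} If $d\mid r/D$, every eigenvalue lies in $\mathbb F_{R^d}\subseteq E_0$, so each $V_\lambda$ is $T$-stable. Moreover $\xi\in E_0$, because $T$ commutes with $T^D$ and so $\beta(\xi)A_a=\xi A_a$. Thus $T^D$ acts on $V_\lambda$ as the scalar $\xi\lambda\in E_0^\times$. You can rescale this to $1$ using surjectivity of $N_{E/E_0}$ before applying Hilbert~90.

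A minor point: $k\otimes_{\mathbb F_R}E$ is a product of $\gcd(m_\rho,r)$ copies of $\mathbb F_{R^{\operatorname{lcm}(m_\rho,r)}}$, not of $E$. You do not need this decomposition. The paper gets equal eigenspace dimensions $m_\rho/d$ because the invertible image of $\tau$ permutes the eigenspaces of $A_a$ transitively, via $\tau a\tau^{-1}=\rho(a)$.
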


\begin{proof}
Write $m=m_\rho$, put $c=\mathcal N(v)$, and let $\psi_v$ denote the skew-variable scaling $\tau\mapsto v\tau$. Since the central polynomial is fixed by this scaling, $\psi_v$ induces an automorphism $\beta$ of $E=\mathbb F_{R^r}=\mathbb F_R[y]/(\widehat H)$ with $\beta(y)=cy$. Because $r>1$, the image of $y$ in $E$ is nonzero. Hence $\beta$ has order exactly $D=\operatorname{ord}(c)$. It follows that $D\mid r$ and $E^\beta=E_0:=\mathbb F_{R^{r/D}}$.

Choose $a\in k^\times$ as above, so that $v^D=a/\rho(a)$, and put $d=[\mathbb F_R(a):\mathbb F_R]$. With the convention $\tau b=\rho(b)\tau$, one has
\[
\begin{aligned}
 \psi_v^D(\tau)&=v^D\tau=a\tau a^{-1},
 \qquad \psi_v^D=\operatorname{Int}(a),\\
 T^D&=\xi A_a\qquad(\xi\in E^\times),
\end{aligned}
\]
where $T$ is the invertible $\beta$-semilinear implementer from Lemma~\ref{lem:affine-mclm-projective} and $A_a$ is the image of $a$ in $M_m(E)$.

After scalar extension, the eigenvalues of $A_a$ are the $d$ Frobenius conjugates $a,\rho(a),\ldots,\rho^{d-1}(a)$. When these eigenvalues lie in $E$, their eigenspaces have equal dimension $m/d$: the image of $\tau$, which is invertible because $y=\tau^m$ has nonzero image in $E$, cyclically permutes the eigenspaces through $\tau a\tau^{-1}=\rho(a)$.

Suppose that an $E$-line $L$ is fixed projectively by $T$. Then $T^D(L)=L$, so $A_a(L)=L$ and $L$ is an eigenline of $A_a$, say with eigenvalue $\lambda\in E$. Since $\psi_v$ fixes the coefficient field $k$, one has $\psi_v(A_a)=A_a$, equivalently $TA_a=A_aT$ as semilinear operators. Hence $T(V_\lambda)=V_{\beta(\lambda)}$. Because $T(L)=L\subseteq V_\lambda$, one has $\beta(\lambda)=\lambda$, so $\lambda\in E_0$. The eigenvalue $\lambda$ is a Frobenius conjugate of $a$, so $[\mathbb F_R(\lambda):\mathbb F_R]=d$. Therefore $d\mid[E_0:\mathbb F_R]=r/D$, proving that there are no fixed projective points if $d\nmid r/D$.

Assume now that $d\mid r/D$. Then all $d$ eigenvalues of $A_a$ lie in $E_0$, and the corresponding $T$-stable eigenspaces have $E$-dimension $m/d$. From $T^D=\xi A_a$ and commutation with $T$ one gets $\xi\in E_0^\times$; on an eigenspace one has $T^D=(\xi\lambda)I$ with $\xi\lambda\in E_0^\times$. Rescale $T$ using surjectivity of $N_{E/E_0}$ so that $T^D=1$. Nonabelian Hilbert~90 then conjugates $T$ to coordinatewise $\beta$-Frobenius. Its fixed projective lines are precisely the points of $\mathbf P^{m/d-1}(E_0)$, whose number is
\[
 \#\mathbf P^{m/d-1}(E_0)
 =\frac{R^{(r/D)(m/d)}-1}{R^{r/D}-1}.
\]
There are $d$ such eigenspaces, so multiplying this number by $d$ gives the formula in the statement.
\end{proof}

Define
\[
 \mathcal C_R(r,D):=
 \begin{cases}
 \displaystyle \frac1r\sum_{c\mid r}\mu(c)R^{r/c},&D=1,\\[3mm]
 \displaystyle \frac{\phi(D)}r\sum_{\substack{c\mid r/D\\(c,D)=1}}\mu(c)\bigl(R^{r/(Dc)}-1\bigr),&D>1,\ D\mid r,\\[4mm]
 0,&D>1,\ D\nmid r.
 \end{cases}
\]
For $r=D=2$ this gives the direct value $(R-1)/2$.

For $r=\deg_\tau\ell$, write $\operatorname{Cl}^{\rm aff}_r(q,n)$ for the number of $k$-M\"obius classes of $k$-indecomposable affine families.

\begin{theorem}[Exact enumeration of affine $k$-forms]\label{thm:affine-exact-enumeration}
For every $r>1$,
\begin{equation}\label{eq:affine-burnside-count}
 \operatorname{Cl}^{\rm aff}_r(q,n)=\frac1L\sum_{v\in\mathcal W_{q,n}}\mathcal C_R(r,D(v))\mathcal P_{R,m_\rho}(r;D(v),d(v)).
\end{equation}
Every class counted in \eqref{eq:affine-burnside-count} is exceptional.
\end{theorem}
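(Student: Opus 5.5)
The count will come from Burnside's lemma applied to the effective scaling group $\mathcal W_{q,n}$ acting on the set of monic irreducible $\ell\in S=k[\tau;\rho]$ of $\tau$-degree $r$. By Theorem~\ref{thm:affine-wild}, $k$-indecomposable affine classes with fixed $(n,r)$ correspond to $k^\times$-orbits of such $\ell$ under the weighted action. By Lemma~\ref{lem:affine-effective-scaling}, these orbits coincide with $\mathcal W_{q,n}$-orbits, the action being the skew scaling $\psi_v:\tau\mapsto v\tau$ followed by monic normalization. Hence
\[
 \operatorname{Cl}^{\rm aff}_r(q,n)=\frac1L\sum_{v\in\mathcal W_{q,n}}\#\{\ell:\psi_v(\ell)\sim\ell\},
\]
and it remains to evaluate each fixed-point count as the product $\mathcal C_R(r,D(v))\,\mathcal P_{R,m_\rho}(r;D(v),d(v))$. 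Exceptionality of every counted class then follows at once from Proposition~\ref{prop:affine-occurrence}, since $r>1$.

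To compute the fixed points, I organize the monic irreducible $\ell$ of degree $r$ by their MCLM. By Lemma~\ref{lem:affine-mclm-projective}, each such $\ell$ lies in the fiber of a unique monic irreducible central $\widehat H(y)\in\mathbb F_R[y]$, with $y=\tau^{m_\rho}$. Lavrauw--Sheekey's degree statement shows that when the right divisor has degree $r$ and $r>1$, the central polynomial $\widehat H$ also has degree $r$ in $y$, with $\widehat H\ne y$. The monic irreducible right divisors in that fiber form $\mathbf P^{m_\rho-1}(E)$ with $E=\mathbb F_{R^r}$. A fixed $\ell$ must lie in a fiber whose central label is also fixed, because $\psi_v$ maps MCLMs to MCLMs up to normalization. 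By Lemma~\ref{lem:affine-effective-scaling}, $\psi_v$ acts on central roots by $y\mapsto\mathcal N(v)^{-1}y$. The fixed count therefore factors as (number of fixed central labels) $\times$ (fixed projective points per fixed fiber). By Lemma~\ref{lem:affine-fixed-projective-fiber}, the second factor is $\mathcal P_{R,m_\rho}(r;D,d)$, which is uniform across fixed fibers.

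It remains to show that the number of monic irreducible $\widehat H\in\mathbb F_R[y]$ of degree $r$, $\widehat H\ne y$, that are stable under $y\mapsto cy$ with $c$ of order $D$, equals $\mathcal C_R(r,D)$. For $D=1$ this is Gauss's necklace count. For $D>1$, a stable $\widehat H$ has roots $\theta\in\mathbb F_{R^r}$ of degree $r$ with $c\theta$ a Galois conjugate, say $c\theta=\theta^{R^j}$. This forces $\theta^{R^j-1}=c$, so $\theta^{D}$ lies in $\mathbb F_{R^{r/D}}$ and has exact degree $r/D$ there. Counting the elements $\theta$ with $\theta^D=\beta$, where $\beta$ generates $\mathbb F_{R^{r/D}}$, and Möbius-inverting over $c\mid r/D$ with $(c,D)=1$, should give $\frac{\phi(D)}{r}\sum\mu(c)(R^{r/(Dc)}-1)$. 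The factor $\phi(D)$ appears because the orbit structure is determined by which primitive $D$th root $c$ is realized, and the $-1$ excludes $\beta=0$ from the count. When $D\nmid r$ there are no stable polynomials.

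I expect this last counting step to be the main obstacle. The coprimality condition $(c,D)=1$ and the normalization by $r$ both need care, and I would check the formula against the stated value $(R-1)/2$ for $r=D=2$ as a sanity test. A second delicate point is confirming that $\psi_v$ really acts on the fiber through the semilinear implementer of Lemma~\ref{lem:affine-mclm-projective}, so that Lemma~\ref{lem:affine-fixed-projective-fiber} applies, and that monic normalization does not change which fiber is fixed.
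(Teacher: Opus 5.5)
Your proposal follows the paper's proof. Both use Burnside over $\mathcal W_{q,n}$ via Lemma~\ref{lem:affine-effective-scaling} and Theorem~\ref{thm:affine-wild}, sort the monic irreducible $\ell$ into MCLM fibers (Lemma~\ref{lem:affine-mclm-projective}), multiply the number $\mathcal C_R(r,D(v))$ of scaling-fixed central labels by the uniform fiber count $\mathcal P_{R,m_\rho}(r;D(v),d(v))$ of Lemma~\ref{lem:affine-fixed-projective-fiber}, and take exceptionality from Proposition~\ref{prop:affine-occurrence}.

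The only divergence is that the paper imports $\mathcal C_R(r,D)$ from Reis's Theorem~1.4, together with direct boundary counts, rather than proving it. If you prove it yourself, your sketch does not work as written: counting all $\theta$ with $\theta^D=\beta$ for $\beta$ a generator of $\mathbb F_{R^{r/D}}$ overcounts unless you also require $y^D-\beta$ to be irreducible over $\mathbb F_{R^{r/D}}$, and your $r=D=2$ sanity check would expose this. Instead, parametrize stable roots by $\theta^{R^{r/D}}=\zeta\theta$ with $\zeta$ a primitive $D$th root of unity. This equation has exactly $\phi(D)(R^{r/D}-1)$ nonzero solutions, each of degree $r/c$ for some $c\mid r/D$ with $(c,D)=1$, so Möbius inversion gives the formula directly.
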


\begin{proof}
Lemma~\ref{lem:affine-effective-scaling} and Theorem~\ref{thm:affine-wild} reduce equivalence to effective scaling $v\in\mathcal W_{q,n}$. By Lemma~\ref{lem:affine-mclm-projective}, each fixed central irreducible has a projective MCLM fiber; its number is $\mathcal C_R(r,D(v))$, using \cite[Theorem~1.4]{Reis2020Invariant} for $D(v)>1,r>2$ and the stated direct boundary counts. Lemma~\ref{lem:affine-fixed-projective-fiber} counts fixed points in that fiber. The equivariance in Lemma~\ref{lem:affine-mclm-projective} and Burnside give \eqref{eq:affine-burnside-count}; Proposition~\ref{prop:affine-occurrence} gives exceptionality.
\end{proof}

\subsection{Class counts and exceptional subcounts over \texorpdfstring{$k$}{k}}

For a classification case, let $N_{\rm all}$ be the number of separable $k$-indecomposable $k$-M\"obius classes in that case and let $N_{\rm exc}$ be the exceptional subcount. Every case below is understood only in the structural and maximality range of the corresponding classification theorem, with accidental isomorphisms counted according to Section~\ref{sec:wild-master}. The total class numbers are those of the classification theorems; we record the exceptional subcounts, repeating $N_{\rm all}$ where a case has several forms.

Write $Q=p^\alpha$, $k=\mathbb F_{p^s}$, $h_0=(\alpha,s)$, and $d_F=\alpha/h_0$. Here $\mathbf1[P]$ denotes the indicator of the condition $P$.

\begin{theorem}[Class counts over $k$ and exact enumeration]\label{thm:master-occurrence}
The total class counts and exceptional subcounts in the complete classification are as follows. Among the geometrically primitive Lie-type cases, the odd-PSL Borel, odd-PSL split- and nonsplit-Cartan, odd-PSL odd-exponent subfield with $H=\operatorname{PSL}_2(Q_0)$, and the maximal $A_4$-stabilizer case have $N_{\rm all}=2$; every other geometrically primitive Lie-type case has $N_{\rm all}=1$.
\begin{enumerate}[label=(\arabic*),ref=\arabic*,font=\itshape]
\item\label{item:master-occurrence-1} Cyclic prime degree: $N_{\rm all}=2$ and $N_{\rm exc}=\mathbf1[\ell\nmid q-1]+\mathbf1[\ell\nmid q+1]$.
\item Tame dihedral prime degree: $N_{\rm all}=2$ and $N_{\rm exc}=2\mathbf1[\ell\nmid q^2-1]$.
\item\label{item:master-occurrence-3} The $V_4/A_4$ quartic has $N_{\rm all}=N_{\rm exc}=1$. The primitive tame $(A_4,C_3)$ case has $N_{\rm all}=2$ and $N_{\rm exc}=0$; the $(S_4,S_3)$ case and each primitive tame $A_5$ case have $N_{\rm all}=1$ and $N_{\rm exc}=0$.
\item\label{item:master-occurrence-4} In the affine family, Proposition~\ref{prop:affine-occurrence} gives the case $r=1$ and, for $r>1$, Theorem~\ref{thm:affine-exact-enumeration} gives $N_{\rm all}=N_{\rm exc}=\operatorname{Cl}^{\rm aff}_r(q,n)$.
\item\label{item:master-occurrence-5} In characteristic $2$, the wild dihedral case has $N_{\rm all}=1$ and $N_{\rm exc}=\mathbf1[\ell\nmid q^2-1]$. Each characteristic-$3$ $A_5$ case of degree $5,6$, or $10$ has $N_{\rm all}=1$ and $N_{\rm exc}=0$.
\item\label{item:master-occurrence-6} Every Borel case, every even split-Cartan case, every odd full-PGL Cartan case, every Lie case with stabilizer $A_4,S_4$, or $A_5$, and every odd square-subfield PSL case has $N_{\rm exc}=0$.
\item The even nonsplit-Cartan case has $N_{\rm exc}=\mathbf1[\alpha>1\text{ odd and }(\alpha,s)=1]$.
\item For a geometrically primitive odd-PSL split-Cartan case, $N_{\rm all}=2$ and $N_{\rm exc}=\mathbf1[\alpha\text{ even and }d_F\text{ even}]$, with the field-diagonal form exceptional when this is $1$.
\item\label{item:master-occurrence-9} For a geometrically primitive odd-PSL nonsplit-Cartan case, $N_{\rm all}=2$ and $N_{\rm exc}=\mathbf1[p=3,\ \alpha>1\text{ odd},\ (\alpha,s)=1]$, again with the field-diagonal form exceptional.
\item\label{item:master-occurrence-10} For a prime exponent $r_0\ne p$, a PGL subfield case has $N_{\rm all}=1$, while an odd-PSL subfield pair with $H=\operatorname{PSL}_2(Q_0)$ has $N_{\rm all}=2$. If $r_0\mid d_F$, $r_0\nmid p^{2h_0}-1$, and additionally $r_0>3$ for $p=2$, respectively $r_0\nmid p(p^2-1)$ for odd $p$, then their exceptional subcounts are $1$ and $2$, respectively; otherwise both exceptional subcounts are $0$.
\item\label{item:master-occurrence-11} If $p$ is odd and $r_0=p$, the PGL subfield case has $N_{\rm all}=1$ and $N_{\rm exc}=\mathbf1[p\mid d_F]$, while the PSL pair has $N_{\rm all}=2$ and $N_{\rm exc}=\mathbf1[d_F\text{ odd and }p\mid d_F]$. The characteristic-two exponent-$2$ case has $N_{\rm all}=1$ and $N_{\rm exc}=0$ by Proposition~\ref{prop:even-square-subfield-empty-support}.
\item\label{item:master-occurrence-12} Every Lie-type novelty case has $N_{\rm all}=1$ over the finite fields where it occurs. The unique exceptional class is the degree-$45$ $M_{10}$ form with $(G,H)=(\operatorname{PSL}_2(9),D_8)$ over $\mathbb F_{3^s}$ for odd $s$.
\end{enumerate}
\end{theorem}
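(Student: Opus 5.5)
The proof is bookkeeping over results already established, so I will go case by case. The totals $N_{\rm all}$ are copied from the classification theorems, and each exceptional subcount is read off as the number of forms $f$ over $k$ with $1\in\mathscr D(f)$. Theorem~\ref{thm:no-accidental} makes permutation over $k$ and exceptionality over $k$ the same, and Theorem~\ref{thm:exact-support} reduces membership of $m=1$ to $1\in\mathcal E_f$. For every family, then, the exceptional subcount is obtained by setting $m=1$ in the explicit extension-degree formulas of Section~\ref{sec:family-support}.

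\emph{Tame and small-characteristic cases.} Propositions~\ref{prop:cyclic} and~\ref{prop:dihedral} give the two classes in each of the cyclic and Dickson families. For the Dickson forms, Proposition~\ref{prop:closed-nonlie-support} at $m=1$ gives $\operatorname{ord}_\ell(q^2)\nmid1$, that is, $\ell\nmid q^2-1$, for both forms simultaneously; this yields $2\mathbf1[\ell\nmid q^2-1]$. For the cyclic family, the conditions $\operatorname{ord}_\ell(q)\ne1$ and $\operatorname{ord}_\ell(-q)\ne1$ give the two indicators in item~(\ref{item:master-occurrence-1}). The $V_4/A_4$ quartic is exceptional because $3\nmid1$ (Proposition~\ref{prop:V4-quartic}). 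The polyhedral cases have empty $\mathscr D(f)$, and their totals come from Propositions~\ref{prop:tetra}--\ref{prop:I10}. Item~(\ref{item:master-occurrence-5}) follows in the same way from Propositions~\ref{prop:char2-dihedral} and~\ref{prop:char3-A5}, and item~(\ref{item:master-occurrence-4}) is Proposition~\ref{prop:affine-occurrence} together with Theorem~\ref{thm:affine-exact-enumeration}.

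\emph{Lie-type cases.} The $N_{\rm all}$ assertions in the preamble collect Propositions~\ref{prop:borel-wild}--\ref{prop:exceptional-wild}. The two-form cases are exactly those in which Lemma~\ref{lem:outer-rigidity} admits both outer images $\bar\phi^{\,s}$ and $\bar\delta\bar\phi^{\,s}$. These are the odd-PSL Borel, split and nonsplit Cartan, odd-exponent $\operatorname{PSL}_2(Q_0)$ subfield, and maximal $A_4$ cases. In every other case the diagonal class is either geometric (PGL) or swaps stabilizer classes.

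For the subcounts, I set $m=1$ in Propositions~\ref{prop:lie-cartans-support}--\ref{prop:lie-novelty-support}, using $h_1=h_0$ and $e_1=d_F$ at $m=1$.
\begin{itemize}
\item Split Cartan: the diagonal form is exceptional iff $\alpha$ and $d_F$ are even.
\item Nonsplit Cartan: the conditions at $m=1$ reduce to $p=3$, $\alpha>1$ odd and $(\alpha,s)=1$, or, in the even case, to $\alpha>1$ odd and $(\alpha,s)=1$.
\item Subfield with $r_0\ne p$: $r\mid e_1$ becomes $r_0\mid d_F$, and $r\nmid p^{2h_1}-1$ becomes $r_0\nmid p^{2h_0}-1$. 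Proposition~\ref{prop:lie-subfield-support} shows the two PSL twists have the same degree set, which gives the subcounts $1$ and $2$.
\item Subfield with $r_0=p$: the PGL form is exceptional iff $p\mid d_F$, and the PSL pair contributes only its diagonal form, under $d_F$ odd and $p\mid d_F$.
\item Novelties: Theorem~\ref{thm:wild-novelty} gives one class over each field where the case occurs, and Proposition~\ref{prop:lie-novelty-support} identifies the unique exceptional one.
\end{itemize}

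The only step I expect to need care is the $r_0=p$ PSL count. I must check that the diagonal form at $m=1$ satisfies the ``$m$ odd'' condition automatically, so that only the $d_F$ conditions remain, and that the pure form contributes nothing. Apart from this, the proof is a direct substitution of $m=1$.
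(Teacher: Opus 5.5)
Your proposal is correct and is essentially the paper's own proof: the paper likewise takes $N_{\rm all}$ from the classification propositions and obtains every exceptional subcount by evaluating the extension-degree formulas of Section~\ref{sec:family-support} at $m=1$. The slips are only cosmetic: the $V_4/A_4$ extension degrees come from Proposition~\ref{prop:closed-nonlie-support}~(\ref{item:closed-nonlie-support-4}) rather than Proposition~\ref{prop:V4-quartic}, and item~(\ref{item:master-occurrence-6}) and the characteristic-two exponent-$2$ case deserve explicit citations of Proposition~\ref{prop:lie-cartans-support}~(\ref{item:lie-cartans-support-1}) and Proposition~\ref{prop:even-square-subfield-empty-support}.
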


\begin{proof}
Evaluate the extension-degree formulas at $m=1$. Items~(\ref{item:master-occurrence-1})--(\ref{item:master-occurrence-3}),(\ref{item:master-occurrence-5}) follow from Proposition~\ref{prop:closed-nonlie-support}; (\ref{item:master-occurrence-4}) from Proposition~\ref{prop:affine-occurrence} and Theorem~\ref{thm:affine-exact-enumeration}; (\ref{item:master-occurrence-6})--(\ref{item:master-occurrence-9}) from Proposition~\ref{prop:lie-cartans-support}; (\ref{item:master-occurrence-10}) from Proposition~\ref{prop:lie-subfield-support}; (\ref{item:master-occurrence-11}) from Proposition~\ref{prop:lie-novelty-support}, with the characteristic-$2$ square-subfield case from Proposition~\ref{prop:even-square-subfield-empty-support}; and (\ref{item:master-occurrence-12}) from Proposition~\ref{prop:lie-novelty-support}.
\end{proof}

\subsection{Branch arithmetic and polynomial representatives}

\begin{theorem}[Criterion for polynomial representatives]
\label{thm:polynomializability}
Let $\mathcal D=(\kappa/k,C,G,H,\varphi)$ be a semilinear $\mathbf P^1$-datum over $k$ with $[G:H]>1$. Then $\mathcal M_{\mathcal D}$ contains a polynomial representative over $k$ if and only if there is an orbit $[P]_G\in\mathcal B_G$ and an element $g\in G$ such that $[\varphi(P)]_G=[P]_G$ and $G=H(gG_Pg^{-1})$. Equivalently, a conjugate of an inertia subgroup above a $k$-rational branch point is transitive on $G/H$.
\end{theorem}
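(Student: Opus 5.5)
The plan is to use the standard characterization: a rational function over $k$ is $k$-M\"obius equivalent to a polynomial exactly when some $k$-rational point $\bar y$ of the target has a fiber consisting of a single geometric point, necessarily $k$-rational. Sufficiency is easy. If $\gamma_{\mathcal D}^{-1}(\bar y)=\{x\}$, then $x$ is Frobenius-fixed, so it is $k$-rational. Moving $\bar y$ and $x$ to $\infty$ by elements of $\operatorname{PGL}_2(k)$ gives a representative with $f^{-1}(\infty)=\{\infty\}$, which is a polynomial. Conversely, a polynomial has $\infty$ totally ramified over $\infty$. So the task is to translate ``some $k$-rational target point has a one-point fiber'' into the group condition in the statement.

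I then fix a target point $\bar y$ and choose $P\in C(\bar k)$ above it. By Theorem~\ref{thm:branch-orbit}, the points of $C/H$ over $\bar y$ correspond to $H\backslash G/G_P$. Hence the fiber is a single point exactly when $|H\backslash G/G_P|=1$, that is, when $G=HG_P$. This is equivalent to $G_P$ acting transitively on $G/H$. Replacing $P$ by $g^{-1}P$ replaces $G_P$ by a conjugate, which explains the $g$ in the statement. Two further observations are needed. First, $\bar y$ is $k$-rational exactly when its $G$-orbit is Frobenius-stable, i.e.\ $[\varphi(P)]_G=[P]_G$, using the description of Frobenius on $C/G$ from Theorem~\ref{thm:branch-orbit}. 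Second, since $[G:H]>1$, a one-point fiber forces $G_P\neq1$, so $[P]_G$ automatically lies in $\mathcal B_G$. Conversely, any orbit satisfying the stated conditions yields such a $\bar y\in Y_{\mathcal D}(k)$.

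The main technical step is checking that $k$-rationality of the orbit matches Frobenius-stability once one passes from $\kappa$ to $k$. For this I would use the identification $(Y_{\mathcal D})_\kappa\cong C/G$ together with the fact that $\varphi$ induces the descended $q$-Frobenius. A closed point of degree one is precisely a $\varphi$-fixed geometric point of $C/G$, which is exactly what the branch-orbit theorem records. No twisted-conjugacy issue arises, because the condition $G=H\,gG_Pg^{-1}$ is invariant under replacing $g$ by $hg$ with $h\in H$ and under conjugation by $\varphi$, since $\varphi$ normalizes both $G$ and $H$.
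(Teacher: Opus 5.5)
Your proposal is correct and follows essentially the same route as the paper. Both arguments reduce polynomiality to a $k$-rational target point with a single geometric preimage (necessarily rational and totally ramified), parametrize that fiber by $H\backslash G/G_P$ via Theorem~\ref{thm:branch-orbit}, and identify rationality of the target point with Frobenius-stability of $[P]_G$. Your extra remarks, notably that a one-point fiber forces $G_P\neq1$ because $[G:H]>1$ and so $[P]_G\in\mathcal B_G$, only make explicit what the paper leaves implicit.
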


\begin{proof}
A rational function is $k$-M\"obius equivalent to a polynomial exactly when a $k$-rational target point has a unique geometric preimage with ramification index equal to the degree. By Theorem~\ref{thm:branch-orbit}, if $P\in\mathcal R_G$ and $I:=G_P$ is its inertia group for $C\to C/G$, then the points in the corresponding $\pi_{\mathcal D}$-fiber are parametrized by $H\backslash G/I$. This set has one element exactly when $G=HgI$, and then \eqref{eq:double-coset-passport} gives total ramification. Frobenius-fixity of $[P]_G$ is exactly rationality of the target branch point; the unique source point is then rational as well.
\end{proof}

\begin{proposition}[Projective transitivity when $H\cong A_4,S_4$, or $A_5$]
\label{prop:exceptional-stabilizer-transitivity}
Among the stabilizers $A_4,S_4,A_5$ in
Proposition~\ref{prop:exceptional-wild}, transitivity on the natural projective
line occurs exactly for
\[
\begin{array}{c|c}
(G,H)&\text{parameters}\\ \hline
(\operatorname{PSL}_2(p),A_4)&p=5\\
(\operatorname{PSL}_2(p),S_4)&p=7,23\\
(\operatorname{PSL}_2(p),A_5)&p=11,19,29,59\\
(\operatorname{PSL}_2(p^2),A_5)&p=3\\
(\operatorname{PGL}_2(p),S_4)&p=5,11.
\end{array}
\]
\end{proposition}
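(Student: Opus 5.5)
We read ``transitivity on the natural projective line'' as transitivity of $H$ on $\mathbf P^1(\mathbb F_Q)$, where $Q=p$ or $p^2$ is the defining field. This is the condition relevant to Theorem~\ref{thm:polynomializability}: in the Lie-type cases the wild branch orbit is $\mathcal R_{\rm w}=\mathbf P^1(\mathbb F_Q)$, its branch point is $k$-rational (Corollary~\ref{cor:branch-degree-three}), and the inertia groups there are Borel subgroups $B$. The condition $G=HB$ is equivalent to transitivity of $H$ on $G/B\cong\mathbf P^1(\mathbb F_Q)$.

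The first step is a necessary divisibility condition. Transitivity forces $Q+1\mid|H|$, where $|H|\in\{12,24,60\}$. For $Q=p$ prime this leaves a short list of candidates:
\[
 Q+1\mid12:\ p\in\{5,11\},\qquad
 Q+1\mid24:\ p\in\{5,7,11,23\},\qquad
 Q+1\mid60:\ p\in\{5,11,19,29,59\}.
\]
For $Q=p^2$ we have $p^2+1\mid60$, which forces $p=3$. We then intersect these candidates with the parameter ranges in Proposition~\ref{prop:exceptional-wild}. The $A_4$ case needs $p\equiv\pm3\pmod8$ and $p\not\equiv\pm1\pmod{10}$, so $p=11$ is excluded and $p=5$ survives. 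The $\operatorname{PSL}_2$ case with $S_4$ needs $p\equiv\pm1\pmod 8$, leaving $7$ and $23$. The prime-field $A_5$ case needs $p\equiv\pm1\pmod{10}$, leaving $11,19,29,59$. The $\operatorname{PGL}_2$ case with $S_4$ needs $p\equiv\pm3\pmod8$, leaving $5$ and $11$. Two boundary cases need attention. For $p=5$ in $\operatorname{PSL}_2$ the $A_4$ stabilizer is allowed. For $p=5$ in $\operatorname{PGL}_2(5)\cong S_5$, the subgroup $S_4$ is a point stabilizer of $S_5$ and is core-free of index $5$, and we must confirm this case is counted and not excluded by the $p\geqslant5$ conventions.

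The second step is sufficiency: for each survivor, $H$ must actually be transitive and not merely of suitable order. Since $|H|$ is divisible by $Q+1$, transitivity is equivalent to $|H\cap B_P|=|H|/(Q+1)$ for one point $P$. Equivalently, the $H$-orbits on $\mathbf P^1(\mathbb F_Q)$ have sizes $|H|/|H_P|$, with each $H_P$ a subgroup of the Borel $\mathbb F_Q\rtimes C_{(Q-1)/e}$ intersected with $H$. Because $p\nmid|H|$ in every survivor except the case where $p$ divides $|H|$ (here $p=5$ with $A_5$, $S_4$, or $A_4$ containing elements of order $5$ or $3$), the point stabilizers $H_P$ are cyclic of order dividing $(Q-1)/e$. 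We will then use an orbit-counting identity. Summing over nonidentity $h\in H$, an element of order $m$ fixes $2$ points of $\mathbf P^1(\mathbb F_Q)$ if $m\mid Q-1$ and $0$ if $m\mid Q+1$. Burnside's lemma then gives the number of orbits as
\[
 \frac{1}{|H|}\Bigl(Q+1+2\#\{h\ne1:\operatorname{ord}h\mid Q-1\}\Bigr).
\]
We check that this equals $1$ in each survivor. For example, for $(\operatorname{PSL}_2(11),A_5)$ the orders $2$ and $3$ divide $12$, the order $5$ divides $10$, and the count is $(12+2\cdot24)/60=1$. For $p=5$, where elements of order $5$ are unipotent with one fixed point, we adjust the identity accordingly.

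The main obstacle is uniformity in this computation. We must track, in each case, which element orders split (divide $Q-1$) versus nonsplit (divide $Q+1$). We must also handle the $p$-singular elements separately when $p$ divides $|H|$: this occurs for $p=5$, and for $p=3$ in the square-field $A_5\leqslant\operatorname{PSL}_2(9)\cong A_6$ case, where transitivity on the $10$ points follows from $A_5$ being the point-transitive copy. Once this is done, the necessity list and the Burnside verification together give exactly the table in the statement.
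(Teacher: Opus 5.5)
Your plan follows the paper's proof: the necessary condition $Q+1\mid|H|$ intersected with the congruence ranges of Proposition~\ref{prop:exceptional-wild}, then a Burnside count on $\mathbf P^1(\mathbb F_Q)$. Your necessity step is correct, but the sufficiency step has a gap in its fixed-point rule for involutions.

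For odd $Q$ one has both $2\mid Q-1$ and $2\mid Q+1$. So the order of an involution does not decide whether it fixes $2$ or $0$ points of $\mathbf P^1(\mathbb F_Q)$. Your displayed formula counts every $h\ne1$ with $\operatorname{ord}h\mid Q-1$, and so treats all involutions as split. Taken literally, it returns a non-integral orbit count in seven of the ten cases. Examples are $(8+2\cdot17)/24$ for $(\operatorname{PSL}_2(7),S_4)$ and $(6+2\cdot15)/24$ for $(\operatorname{PGL}_2(5),S_4)$. Your own $p=11$ example silently counts the $15$ involutions as nonsplit, contrary to the formula.

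The missing input is this. An involution is represented by a trace-zero matrix $M$ with $M^2=-(\det M)\,I$, so it is split exactly when $-\det M$ is a square. Hence involutions in $\operatorname{PSL}_2(Q)$ are split iff $Q\equiv1\pmod4$, and those in $\operatorname{PGL}_2(Q)\setminus\operatorname{PSL}_2(Q)$ are split iff $Q\equiv3\pmod4$. This is essential for $(\operatorname{PGL}_2(p),S_4)$. Since $p\equiv\pm3\pmod8$, one has $S_4\cap\operatorname{PSL}_2(p)=A_4$, so the three double transpositions and the six transpositions behave oppositely. This gives $3$ split involutions at $p=5$ and $6$ at $p=11$. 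The paper sidesteps identifying them by integrality: with $x$ split involutions, $(18+2x)/24$ (resp. $(12+2x)/24$) must be an integer with $0\leqslant x\leqslant9$.

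Your treatment of $p$-singular elements is also misplaced. At $p=5$ the survivors have $H\cong A_4$ or $S_4$, which contain no element of order $5$, so no adjustment arises. The only survivor with $p\mid|H|$ is $(\operatorname{PSL}_2(9),A_5)$. There, ``$A_5$ being the point-transitive copy'' is not an argument; carry out the count instead:
\begin{itemize}
\item involutions are split, since $9\equiv1\pmod4$;
\item order-$3$ elements are unipotent with one fixed point;
\item order-$5$ elements are nonsplit, since $5\mid10$.
\end{itemize}
This gives $(10+15\cdot2+20\cdot1)/60=1$. It covers both $\operatorname{PSL}_2(9)$-classes of $A_5$, which are $\operatorname{PGL}_2(9)$-conjugate. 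Also, $p^2+1\mid60$ admits $p=2$ as well; the range $p\equiv\pm3\pmod{10}$ excludes it. With these repairs your argument becomes the paper's.
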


\begin{proof}
Transitivity requires $Q+1\mid |H|$. Intersecting this divisibility condition
with the structural congruence ranges in Proposition~\ref{prop:exceptional-wild}
leaves exactly the displayed parameters.

A nonidentity element of $\operatorname{PGL}_2(p)$ whose order is
prime to $p$ has two distinct geometric fixed points. It has two fixed
points on $\mathbf P^1(\mathbb F_p)$ when this pair is split over
$\mathbb F_p$, and none when the pair is nonsplit. A nonidentity element
of order $p$ has exactly one fixed point on $\mathbf P^1(\mathbb F_p)$. For $A_4$ at $p=5$, Burnside gives
\(\frac{6+3\cdot2}{12}=1.\)
For $S_4$ at $p=7$,
\(\frac{8+8\cdot2}{24}=1,\)
while at $p=23$ every nonidentity element of the embedded $S_4$ is nonsplit,
so the orbit count is $24/24=1$.

For $A_5$, using the $15$, $20$, and $24$ elements of orders $2$, $3$, and
$5$, respectively, the Burnside numerators are
\[
\begin{array}{c|c}
p&\text{numerator}\\ \hline
11&12+24\cdot2\\
19&20+20\cdot2\\
29&30+15\cdot2\\
59&60,
\end{array}
\]
all equal to $60$. For $\operatorname{PSL}_2(9)$ with $H=A_5$, the involutions are split,
the order-$3$ elements each have one rational fixed point, and the
order-$5$ elements have nonsplit fixed-point pairs, so
\(\frac{10+15\cdot2+20}{60}=1.\)

When two $G$-conjugacy classes of one of these stabilizers occur, they form a single $\operatorname{PGL}_2(Q)$-conjugacy class and hence have the same orbit structure on $\mathbf P^1(\mathbb F_Q)$. Finally consider $S_4\leqslant\operatorname{PGL}_2(p)$. At $p=5$, the identity and the six split
order-$4$ elements contribute $18$. If $x$ of the nine involutions are
split, Burnside gives
\(\frac{18+2x}{24}\in\mathbb Z.\)
Since $0\leqslant x\leqslant9$, necessarily $x=3$, and the orbit count is one. At
$p=11$, the order-$3$ and order-$4$ elements are nonsplit; hence
\(\frac{12+2x}{24}\in\mathbb Z\)
forces $x=6$, again giving one orbit.
\end{proof}

For a defining-characteristic Lie pair, the two inertia groups in the Galois quotient $C\to C/G$ are the
Borel $B$ and a nonsplit torus $T_{\rm ns}$. Hence
Theorem~\ref{thm:polynomializability} reduces existence of polynomial representatives to
\[
 G=HgB
 \qquad\text{or}\qquad
 G=HgT_{\rm ns}.
\]
The PGL/even Borel satisfies the second factorization and has a polynomial representative. The odd-PSL Borel is not: its tame inertia has order $(Q+1)/2<[G:B]=Q+1$, and $B$ is not transitive on the projective line.

A split-Cartan normalizer is not transitive and its degree exceeds the tame inertia order. The PGL/even nonsplit torus acts freely and transitively on $\mathbf P^1(\mathbb F_Q)$. For odd $\operatorname{PSL}_2(Q)$ the nonsplit-Cartan normalizer, of order $Q+1$, acts freely and transitively on $\mathbf P^1(\mathbb F_Q)$ exactly when its involutions have no rational fixed points, equivalently when $Q\equiv3\pmod4$. No proper subfield stabilizer is transitive: for prime exponent at least $3$ its order is too small, while in the square-subfield case $Q_0^2+1\nmid Q_0(Q_0^2-1)$.
The tame-inertia factorization is likewise excluded by index. For $H\cong A_4,S_4$, or $A_5$, the Borel factorization occurs exactly in the cases of Proposition~\ref{prop:exceptional-stabilizer-transitivity}. The nonsplit-torus factorization adds no further cases: in PSL, $[G:H]>|T_{\rm ns}|$; in PGL-$S_4$, that factorization would force $p(p-1)\mid24$.

\begin{corollary}[Polynomial representatives across the classification]
\label{cor:complete-polynomializability}
For the non-Lie and basic wild cases, existence of polynomial representatives is given by
\[
\begin{array}{c|c}
\text{classification case}&\text{polynomial representative}\\ \hline
\text{cyclic split}&\text{yes}\\
\text{cyclic nonsplit R\'edei}&\text{no}\\
\text{tame dihedral}&\text{yes, both forms}\\
V_4/A_4&\text{no}\\
(A_4,C_3)&\text{no}\\
(S_4,S_3)&\text{yes}\\
(A_5,A_4),\ \deg5&\text{yes}\\
(A_5,D_{10}),\ \deg6&\text{no}\\
(A_5,S_3),\ \deg10&\text{no}\\
\text{all affine families}&\text{yes}\\
\text{characteristic-$2$ dihedral}&\text{yes}\\
\text{characteristic-$3$ $A_5$, degree $5$}&\text{yes}\\
\text{characteristic-$3$ $A_5$, degrees $6,10$}&\text{no}.
\end{array}
\]
For the defining-characteristic Lie-type cases, the PGL/even Borel and
PGL/even nonsplit-Cartan cases have polynomial representatives, the odd-PSL Borel, all geometrically primitive split-Cartan cases, and all
subfield cases have no polynomial representative, and a geometrically primitive odd-PSL nonsplit-Cartan case has
a polynomial representative exactly when $Q\equiv3\pmod4$. The cases with $H\cong A_4,S_4$, or $A_5$
are exactly those of
Proposition~\ref{prop:exceptional-stabilizer-transitivity}. Among the geometrically decomposable small
Cartan novelty cases only the $Q=7$, degree-$21$ case has a polynomial representative; in
particular the $Q=9$, degree-$36$ case is not. In the infinite
$A_4/S_4$ novelty family, existence of polynomial representatives occurs exactly for $p=11$.

Among these classes, the exceptional classes having a polynomial representative over $k$ are precisely:
\begin{enumerate}[label=(\arabic*),ref=\arabic*,font=\itshape]
\item a split cyclic class with $\ell\nmid q-1$;
\item a tame or characteristic-$2$ dihedral class with $\ell\nmid q^2-1$;
\item an exceptional affine Ore class;
\item an exceptional defining-characteristic nonsplit-Cartan class.
\end{enumerate}
In particular, exceptional nonsplit R\'edei classes, the $V_4/A_4$ quartic, exceptional odd-$\operatorname{PSL}_2$ split-Cartan diagonal forms, all exceptional subfield forms, and the degree-$45$ $M_{10}$ novelty have no polynomial representative.
\end{corollary}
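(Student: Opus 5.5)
The plan is to apply Theorem~\ref{thm:polynomializability} case by case, and to intersect the result with the exceptional subcounts of Theorem~\ref{thm:master-occurrence}. Polynomial representability means the following: some $k$-rational branch point has an inertia group $I$, in a suitable conjugate, that is transitive on $G/H$. Equivalently, some rational fiber is a single totally ramified point.

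\emph{Non-Lie cases.} Here I would work with the explicit representatives.
\begin{itemize}
\item $X^\ell$ and $D_\ell(X,a)$ are polynomials.
\item For the R\'edei form the only totally ramified fibers lie over the nonsplit quadratic pair, so no rational branch point qualifies.
\item For the $V_4/A_4$ quartic, inertia has order $2<4$.
\item For $(A_4,C_3)$ the partitions are $\{\!\{2,2\}\!\},\{\!\{3,1\}\!\},\{\!\{3,1\}\!\}$, none total.
\item $O_4$ and $I_5$ are polynomials.
\item $I_6$ and $I_{10}$ have partitions $\{\!\{5,1\}\!\}$ and $\{\!\{5,5\}\!\}$ at the order-$5$ point, so they are not total. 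The same holds for $W_6$ and $W_{10}$, while $W_5$ is a polynomial.
\item The affine $F_{\ell,n}$ are polynomials by \eqref{eq:affine-wild}.
\item The characteristic-$2$ dihedral form is normalized to a polynomial in Proposition~\ref{prop:char2-dihedral}.
\end{itemize}

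\emph{Lie-type cases.} The two inertia classes are $B$ and $T_{\rm ns}$, and both branch points are $k$-rational by the proof of Corollary~\ref{cor:branch-degree-three}. The criterion therefore reduces to $G=HgB$ or $G=HgT_{\rm ns}$. In the Borel case $G=HgB$ means $H$ is transitive on $\mathbf P^1(\mathbb F_Q)$, and $G=HgT_{\rm ns}$ forces $[G:H]\mid |T_{\rm ns}|$. I would then run through each family as in the remarks preceding the statement:
\begin{itemize}
\item The PGL/even Borel satisfies the $T_{\rm ns}$ factorization, since $T_{\rm ns}$ is regular on $\mathbf P^1(\mathbb F_Q)$. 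The odd-PSL Borel fails because $T_{\rm ns}$ has order $(Q+1)/2$.
\item Split Cartan normalizers are intransitive, and their index exceeds $|T_{\rm ns}|$.
\item The PGL/even nonsplit normalizer contains the regular torus. In odd PSL, $D_{Q+1}$ is regular exactly when $Q\equiv3\pmod 4$, that is, when its involutions are nonsplit.
\item Subfield groups are too small, or fail the divisibility $Q_0^2+1\mid |H|$.
\item The cases with $H\cong A_4,S_4,A_5$ are handled by Proposition~\ref{prop:exceptional-stabilizer-transitivity}.
\end{itemize}
For the novelties:
\begin{itemize}
\item For the Cartan novelties, $Q=7$ with $H=D_8$ is the nonsplit case with $7\equiv3\pmod4$, giving degree $21$ and matching the polynomial $F_{21,\eta}$. $Q=9$ with $D_{10}$ has $9\equiv1\pmod4$, and the split ones fail.
\item The infinite $A_4$ family is the corollary after Proposition~\ref{prop:Rp-factor}, which gives $p=11$ only.
\end{itemize}

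\emph{Exceptional classes.} Intersecting with the exceptional subcounts gives the following.
\begin{itemize}
\item The polyhedral, Borel, $A_4$/$S_4$/$A_5$-stabilizer and characteristic-$3$ cases have $N_{\rm exc}=0$.
\item The survivors are split cyclic classes with $\ell\nmid q-1$, dihedral classes with $\ell\nmid q^2-1$ (both forms are polynomial), exceptional affine classes, and exceptional nonsplit Cartan classes.
\item The even nonsplit Cartan class is polynomial. The odd exceptional one requires $p=3$ and $\alpha$ odd, so $Q=3^\alpha\equiv3\pmod4$ and it is polynomial as well.
\item The exceptional split-Cartan diagonal forms, the subfield forms and the $M_{10}$ degree-$45$ novelty (a split Cartan) all lack polynomial representatives by the list above.
\end{itemize}

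The step I expect to be the main obstacle is making the odd-PSL nonsplit regularity criterion rigorous, including that both $G$-classes behave alike. I would verify it by Burnside counting of fixed points of $D_{Q+1}$ on $\mathbf P^1(\mathbb F_Q)$, as in Proposition~\ref{prop:exceptional-stabilizer-transitivity}.
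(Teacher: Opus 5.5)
Your proposal is correct and follows the paper's own argument. You apply Theorem~\ref{thm:polynomializability} case by case, using the explicit ramification data for the non-Lie families and the $G=HgB$ / $G=HgT_{\rm ns}$ factorization analysis for the Lie families, and then intersect with the exceptional subcounts of Theorem~\ref{thm:master-occurrence}.

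Two checks you left implicit are immediate. For $W_6$, the wild fibre over $0$ has inertia $S_3$ of order $6=\deg$, so it must also be checked; its partition $\{\!\{3,3\}\!\}$ is not total. For $H\cong A_4,S_4,A_5$, the $T_{\rm ns}$ factorization must still be excluded by comparing $[G:H]$ with $|T_{\rm ns}|$, because Proposition~\ref{prop:exceptional-stabilizer-transitivity} treats only the Borel one.
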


\begin{proof}
For the non-Lie-type cases, this follows from the explicit ramification partitions and
representatives in Sections~\ref{sec:tame-basic}--\ref{sec:wild-master} together with
Theorem~\ref{thm:polynomializability}. The Lie assertions are the
factorization analysis above and
Proposition~\ref{prop:exceptional-stabilizer-transitivity}. The novelty
statements follow from the corresponding Cartan or $A_4$ stabilizer geometry. Intersecting this list with the ground-field extension-degree formulas in Section~\ref{sec:family-support} gives the exceptional sublist.
\end{proof}

\subsection{Sharp minima and prime-degree realization}

Let $d_{\rm exc}(q)$ be the least degree of a separable $k$-indecomposable exceptional class with Galois closure of genus zero, $d_{\rm wild}(q)$ the corresponding wild minimum, $d_{\rm ar}(q)$ the minimum among exceptional $k$-indecomposable maps that are geometrically decomposable, and $d_{\rm aff,2}(q)$ the minimum among exceptional affine classes with $r=\deg_\tau\ell\geqslant2$. Define $\lambda(q):=\min\{\ell:\ell\text{ is an odd prime and }\ell\nmid q-1\}$.
Let $d_{\rm poly}(q)$ be the least degree of an exceptional $k$-indecomposable class containing a polynomial representative over $k$, and let $d_{\rm rat}(q)$ be the least degree of an exceptional $k$-indecomposable class containing no polynomial representative over $k$.

\begin{theorem}[Sharp minimum degrees]\label{thm:sharp-minima}
For $q=p^s$,
\[
\begin{alignedat}{2}
 d_{\rm exc}(q)&=3,\qquad&
 d_{\rm wild}(q)&=
 \begin{cases}
 4,&p=2,\\
 p,&p\text{ odd},
 \end{cases}\\
 d_{\rm ar}(q)&=4,\qquad&
 d_{\rm aff,2}(q)&=p^2.
\end{alignedat}
\]
Moreover,
\[
\begin{aligned}
 d_{\rm poly}(q)&=\min\{\lambda(q),p^2\},\\
 d_{\rm poly}(2^s)&=
 \begin{cases}
 3,&s\text{ odd},\\
 4,&s\text{ even}.
 \end{cases}
\end{aligned}
\]
For odd $p$, the first formula simplifies to $d_{\rm poly}(q)=\lambda(q)$. Finally,
\[
 d_{\rm rat}(q)=
 \begin{cases}
 5,&q=2^s\text{ with }s\text{ odd},\\
 3,&q=2^s\text{ with }s\text{ even},\\
 3,&p\text{ odd},\ p\ne3,\text{ and }q\equiv1\pmod3,\\
 4,&p\text{ odd and the preceding condition does not hold}.
 \end{cases}
\]
\end{theorem}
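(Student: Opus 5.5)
The plan is to read each minimum off the class counts and exceptional subcounts of Theorem~\ref{thm:master-occurrence}, evaluated at $m=1$. For each quantity I will first list the classification cases of small degree: degree $2$ never occurs, since indecomposable degree-$2$ maps are cyclic $C_2$ or affine with $n=1$, $r=1$, and both are nonexceptional; then degrees $3,4,5,\dots$. Each minimum then needs two checks: an explicit exceptional class attaining it, and an exclusion of every smaller degree. The exclusions use the degree constraints of Theorems~\ref{thm:intro-tame} and~\ref{thm:wild-master}.

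\emph{Upper bounds.} For $d_{\rm exc}(q)=3$:
\begin{itemize}
\item if $3\nmid q-1$, use the split cyclic class $X^3$ (tame when $p\ne3$; for $p=3$, use the affine $r=1$, $n=1$ class instead);
\item otherwise use the nonsplit R\'edei class, since $3\mid q-1$ forces $3\nmid q+1$.
\end{itemize}
For characteristic $2$, degree $3$ still comes from the cyclic classes.

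For $d_{\rm wild}$ with odd $p$, use the affine $r=1$ class with $n=2$. Here $\delta_2=(\,(p-1)/2,q-1)>1$ whenever $p>3$. For $p=3$, use $n=1$ with $Q=3$, which gives $\delta_1=(2,q-1)=2$, so one exceptional class exists. Lower wild degrees are excluded because every wild class has $p\mid \deg f$ or degree at least $p$; this requires inspecting the wild dihedral case (char $2$, degree $\ell\ge3$, which is tame in odd characteristic) and the Lie degrees $Q+1$, etc.

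For $p=2$ and wild degree $4$: degree $3$ is only the char-$2$ dihedral case with $\ell=3$, i.e.\ $S_3$, and exceptionality needs $3\nmid q^2-1$, which is impossible. Degree $2$ is nonexceptional, and degree $4$ is attained by an affine $r=2$ Ore class or $Q=4$ with $n=3$.

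For $d_{\rm ar}$: the $V_4/A_4$ quartic is always exceptional by item~(\ref{item:master-occurrence-3}). Every geometrically decomposable class has degree at least $4$, by the tame statement and because the affine and novelty degrees are at least $p^2\ge4$.

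For $d_{\rm aff,2}=p^2$: Proposition~\ref{prop:affine-occurrence} makes every irreducible $r=2$ Ore class with $Q=p$ exceptional. Existence reduces to finding an irreducible degree-$2$ skew polynomial over $k$. I expect this to follow from the count in Theorem~\ref{thm:affine-exact-enumeration} being positive, or more simply from a $2$-dimensional simple $\mathbb F_p[\Phi]$-module.

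\emph{Polynomial and rational minima.} Corollary~\ref{cor:complete-polynomializability} reduces $d_{\rm poly}$ to the minimum over four cases: split cyclic with $\ell\nmid q-1$, dihedral with $\ell\nmid q^2-1$, exceptional affine, and exceptional nonsplit-Cartan. Among these:
\begin{itemize}
\item dihedral degrees are never smaller than cyclic ones;
\item affine $r=1$ polynomials have degree $Q=p^{\operatorname{ord}_n(p)}$, and are exceptional only when $\delta_n>1$; for $n=1$ and $p=2$ there are none;
\item nonsplit-Cartan degrees are large;
\item the cyclic prime $2$ is never exceptional over odd $q$.
\end{itemize}
This gives $\min\{\lambda(q),p^2\}$ after checking that wild affine degree $p$ with $r=1$ counts as $p\ge\lambda$. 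This last check is what the simplification for odd $p$ means: $\lambda(q)\le p$ because $p\nmid q-1$. For $q=2^s$, $\lambda=3$ exactly when $3\nmid 2^s-1$, i.e.\ $s$ odd; otherwise the minimum is $4$, from affine $r=2$ or $Q=4$.

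For $d_{\rm rat}$, scan the nonpolynomial exceptional list in increasing degree: R\'edei of degree $\ell$ with $\ell\nmid q+1$, the $V_4/A_4$ quartic of degree $4$ (only for odd $q$), and the Lie cases. Degree $3$ R\'edei is exceptional iff $3\nmid q+1$, which holds iff $q\equiv1\pmod 3$ or $p=3$. The case $p=3$ is excluded from the tame cyclic family, and the $p=3$ analogue is affine and hence polynomial. For $q=2^s$ there is no $V_4/A_4$ quartic (it requires odd $q$), so the next candidate is R\'edei of degree $5$, since $5\nmid 2^s+1$ for $s$ odd. For $s$ even, $3\nmid 2^s+1$ gives $3$.

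\emph{Main obstacle.} The delicate step is the lower bounds in $d_{\rm rat}$ for characteristic $2$. There one must verify that no Lie-type exceptional nonpolynomial class of degree $4$ exists. The $Q=4$ Lie cases have degree at least $5$, and the even nonsplit Cartan case is polynomial. The char-$3$ $A_5$ and polyhedral classes are nonexceptional.
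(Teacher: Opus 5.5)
Your strategy is the paper's: read each minimum off the ground-field exceptional subcounts and Corollary~\ref{cor:complete-polynomializability}. Your treatment of the cyclic, R\'edei and $V_4/A_4$ cases is correct, and so is your $d_{\rm rat}$ scan. The genuine gap is the existence of an irreducible monic $\ell\in k[\tau;\rho]$ of $\tau$-degree $2$ with $Q=p$. You only say you ``expect'' it, yet it carries $d_{\rm aff,2}=p^2$ and every characteristic-$2$ degree-$4$ claim: $d_{\rm wild}(2^s)=4$, $d_{\rm ar}(2^s)=4$, and $d_{\rm poly}(2^s)=4$ for even $s$. Proving positivity of the Burnside formula in Theorem~\ref{thm:affine-exact-enumeration} is not a practical route. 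The paper takes $\ell=\tau^2-\tau+b$ in $k[\tau;v\mapsto v^p]$. A right factor $\tau-v$ exists iff $v^{p+1}-v+b=0$. The map $v\mapsto v^{p+1}-v$ sends both $0$ and $1$ to $0$, so it is not surjective on $k$. Choosing $-b$ outside its image (automatically $b\ne0$) makes $\ell$ irreducible. Your module idea can also be completed. Take $T^2+aT+b$ irreducible over $\mathbb F_p$, a nonzero root $w$ of $Z^{q^2}+aZ^q+bZ$, and $V=\mathbb F_pw+\mathbb F_pw^q$. This $V$ is $q$-Frobenius stable and two-dimensional, since $w^q\in\mathbb F_pw$ would give a root of $T^2+aT+b$ in $\mathbb F_p$. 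It is simple, so Lemma~\ref{lem:affine-ore-dictionary} gives irreducibility.

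Three further points need correcting. First, for $d_{\rm ar}$ you call the $V_4/A_4$ quartic always exceptional, but it exists only for odd $q$, as you note later. For $q=2^s$ the required example is the pure-additive $n=1$, $r=2$ quartic. It is geometrically decomposable by Theorem~\ref{thm:affine-wild} since $r>1$, and exceptional by Proposition~\ref{prop:affine-occurrence}, so it again rests on the existence result above. Second, your alternative degree-$4$ example ``$Q=4$ with $n=3$'' is never exceptional. Here $a_3=1$, so $\delta_3=1$ and $N_{\rm exc}^{\rm aff}(q,3,1)=0$; concretely $F_{\ell,3}=X(X+c_0)^3$ has $F(0)=F(c_0)=0$ in characteristic $2$. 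Third, ``novelty degrees are at least $p^2$'' is false: degree $21$ occurs at $Q=7$ and degree $66$ at $Q=11$. Your wild lower bound is also left as an unfinished inspection. Both bounds are one-liners in the paper: a map of prime degree is geometrically indecomposable, and $p\mid|G|$ with $G\leqslant S_d$ forces $d\geqslant p$.
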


\begin{proof}
In degree two the arithmetic and geometric permutation groups are both $C_2$, and the generating coset contains the identity, fixing two sheets; hence the function is not exceptional. Degree three is attained by an additive class for $p=3$ and by one of the two cyclic forms otherwise. For a wild degree $d$, the faithful embedding $G\leqslant S_d$ and $p\mid |G|$ imply $d\geqslant p$. The additive degree-$p$ classes attain this bound for odd $p$. In characteristic two, a wild transitive subgroup of $S_3$ must be $S_3=D_6$; its dihedral cubic is not exceptional because $q^2\equiv1\pmod3$. A pure-additive rank-two class gives degree four instead.

Degrees two and three cannot be geometrically decomposable because they are prime. The $V_4/A_4$ quartic for odd $q$, and the pure-additive $n=1,r=2$ quartic for even $q$, therefore give $d_{\rm ar}=4$. Affine rank at least two has degree at least $p^2$. For existence at degree $p^2$, use $k[\tau;v\mapsto v^p]$ and $\ell=\tau^2-\tau+b$. A right linear factor exists exactly when $v^{p+1}-v+b=0$ for some $v\in k$. The function $v\mapsto v^{p+1}-v$ sends both $0$ and $1$ to $0$, so it is not surjective on the finite set $k$. Choosing $-b$ outside its image makes $\ell$ irreducible and attains $d_{\rm aff,2}=p^2$.

For polynomial representatives, Corollary~\ref{cor:complete-polynomializability} leaves only split cyclic, dihedral, affine, and defining-characteristic nonsplit-Cartan cases. Exceptional cyclic or dihedral prime degrees are at least $\lambda(q)$. For odd $p$, one has $\lambda(q)\leqslant p$, and every affine degree is at least $p$; the exceptional nonsplit-Cartan polynomial degrees are larger. If $\lambda(q)<p$, the separable polynomial $X^{\lambda(q)}$ attains the minimum. If $\lambda(q)=p$, use an exceptional additive degree-$p$ class, not the inseparable polynomial $X^p$. In characteristic two, no affine quadratic is exceptional, a rank-two affine quartic is exceptional, and the exceptional nonsplit-Cartan degrees are larger than four. The split cyclic cubic is exceptional exactly when $s$ is odd. This proves $d_{\rm poly}=\min\{\lambda(q),p^2\}$ and its stated characteristic-two specialization.

For $d_{\rm rat}$, if $q=2^s$ with $s$ even, then $q\equiv1\pmod3$ and the nonsplit cyclic cubic is exceptional and has no polynomial representative. If $s$ is odd, the exceptional cubic is the split polynomial class, every exceptional quartic is affine and has a polynomial representative, and the nonsplit cyclic quintic is exceptional because $2^s\not\equiv-1\pmod5$; hence the minimum is $5$. For odd $q$, an exceptional cubic with no polynomial representative occurs exactly when $p\ne3$ and $q\equiv1\pmod3$, again as the nonsplit cyclic class. In all remaining odd cases there is no such cubic, while the $V_4/A_4$ quartic is always exceptional and never has a polynomial representative.
\end{proof}

\begin{corollary}[Every odd prime degree occurs]
For every finite field $\mathbb F_q$, the set of prime degrees of separable $k$-indecomposable exceptional rational functions with Galois closure of genus zero is exactly the set of odd primes.
\end{corollary}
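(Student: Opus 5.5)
The statement has two directions. For every odd prime $\ell$ we must exhibit a separable $k$-indecomposable exceptional class of degree $\ell$ with Galois closure of genus zero. Conversely, no such class has degree $2$. Theorem~\ref{thm:sharp-minima} already shows that degree $2$ is impossible: in degree two the arithmetic and geometric groups are both $C_2$, so the generating coset contains the identity and is never exceptional. It therefore remains to realize every odd prime $\ell$ over every $k=\mathbb F_q$.

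\textbf{Case $\ell\ne p$.} Take the tame cyclic and R\'edei classes of Proposition~\ref{prop:cyclic}. Their geometric pair is $(C_\ell,1)$, with $H$ maximal in $C_\ell$, so they are geometrically primitive and hence $k$-indecomposable. By Theorem~\ref{thm:master-occurrence}(\ref{item:master-occurrence-1}), the number of exceptional classes among them is $\mathbf1[\ell\nmid q-1]+\mathbf1[\ell\nmid q+1]$. Since $\ell$ is odd, it cannot divide both $q-1$ and $q+1$, because their difference is $2$. So at least one of the two classes, $X^\ell$ or $R_{\ell,\delta}$, is exceptional.

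\textbf{Case $\ell=p$ (odd $p$).} Use the affine family with $n=1$ and $r=1$, i.e.\ $\ell(\tau)=\tau+c_0$, which gives the additive polynomial $X^p+c_0X$. It has degree $p$, so it is geometrically indecomposable, and its Galois closure has genus zero by Theorem~\ref{thm:affine-wild}. Proposition~\ref{prop:affine-occurrence} with $n=1$ gives $a_n=(p-1)/1=p-1$. Then $\delta_1=\gcd(p-1,q-1)=p-1\ge2$, and the number of exceptional classes is $\delta_1-1\ge1$. Concretely, $X^p+c_0X$ is exceptional over $k$ whenever $-c_0$ is not a $(p-1)$th power in $k$, since then it has no nonzero root in $k$.

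Combining the two cases, every odd prime occurs as a degree. Together with the exclusion of degree $2$, the set of prime degrees is exactly the set of odd primes.

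The only point needing care is the boundary case $\ell=p$. There the tame cyclic realization fails, because $X^p$ is inseparable and the tame cyclic classes require $p\nmid n$. The additive family is the intended substitute, and all we need to check is that $\delta_1-1>0$, which holds because $p$ is odd.
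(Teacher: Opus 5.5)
Your proposal is correct and matches the paper's own proof essentially step for step. The paper also excludes degree $2$ because $A=G=C_2$ and the identity fixes both sheets. It realizes each odd $\ell\neq p$ by whichever of the two cyclic forms is exceptional, since $\ell$ cannot divide both $q-1$ and $q+1$. It realizes $\ell=p$ by the $\delta_1-1=p-2\geqslant1$ exceptional additive classes of Proposition~\ref{prop:affine-occurrence}. Your explicit checks of separability and indecomposability via prime degree are harmless additions the paper leaves implicit.
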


\begin{proof}
A separable exceptional rational function cannot have degree $2$: its arithmetic and geometric monodromy on two sheets are both $C_2$, whose identity fixes two sheets. Let $\ell$ be odd. If $\ell\ne p$, the cyclic classification gives both $k$-forms with no congruence condition; they fail exceptionality respectively for $q\equiv1$ and $q\equiv-1\pmod\ell$, which cannot occur simultaneously. If $\ell=p$, then $p$ is odd and Proposition~\ref{prop:affine-occurrence} gives $p-2\geqslant1$ exceptional additive classes with $r=1$.
\end{proof}

\section{Known exceptional families: recovery and comparison}
\label{sec:historical-recovery}

We compare the classification with several known exceptional families. When the only input is a historical formula or a published genus and geometric pair, the present results sometimes yield an independent reproof and sometimes additional extension-degree or decomposition information. When an earlier paper already proves the same numerical criterion, we claim only a different proof or interpretation; mere normal-form compatibility is stated as such.

\begingroup
\footnotesize
\renewcommand{\arraystretch}{1.05}
\setlength{\tabcolsep}{2.2pt}
\begin{longtable}{|>{\raggedright\arraybackslash}p{0.15\textwidth}|>{\raggedright\arraybackslash}p{0.20\textwidth}|>{\raggedright\arraybackslash}p{0.21\textwidth}|>{\raggedright\arraybackslash}p{0.27\textwidth}|}
\hline
Family & Historical input used here & Argument used here & Conclusion obtained here \\
\hline
\endfirsthead
\hline
Family & Historical input used here & Argument used here & Conclusion obtained here \\
\hline
\endhead
Ding--Zieve quartic & displayed formula and irreducible cubic denominator & $V_4/A_4$ identification and non-Lie extension-degree formula & independent reproof and recovery of the unique odd-characteristic exceptional quartic class; semilinear interpretation of decomposition and extension degrees \\
\hline
Linearized/\newline sublinearized & associated identity and decomposition correspondence & direct description of the full Galois closure and affine/Ore classification & rational full Galois closure and classification placement \\
\hline
M\"oller affine examples & Propositions 16--17 explicit polynomials and parameter conditions & literal affine identification, scalar-order collapse, Frobenius operator of order $p^r-1$ & independent recovery of exceptionality and monodromy; exact arithmetic group, full constants, and extension arithmetic for every permitted scalar order \\
\hline
$\operatorname{PSL}_2/\operatorname{PGL}_2$ polynomial case & published genus, geometric pair, and formula & nonsplit-Cartan descent-class identification and direct extension-degree criterion & uniform independent reproof of the historical extension criteria \\
\hline
Ding--Zieve wild family & displayed formula and Dickson identity & split-Cartan identification, direct extension-degree criterion, ramification and decomposition results & independent reproof; sharp iff criterion, exact extension degrees, and $Q=9$ decomposition behavior over every finite extension \\
\hline
Classical and cubic forms & published normal forms and cubic classification & cyclic, dihedral, and affine extension-degree theory & compatibility and a uniform extension-degree derivation \\
\hline
\end{longtable}
\endgroup

\subsection{Low-degree recovery: the Ding--Zieve quartic}

\begin{corollary}[The Ding--Zieve exceptional quartic from the $V_4/A_4$ case]

Assume that $q$ is odd and that $h(X)=X^3+\alpha X+\beta\in k[X]$ is irreducible. For the formula in \cite[Theorem~1.4\textup{(1)}]{DingZieve2022},
\[
 f^{\mathrm{DZ}}_{\alpha,\beta}(X)
 =\frac{X^4-2\alpha X^2-8\beta X+\alpha^2}{X^3+\alpha X+\beta},
\]
one has
\[
\begin{aligned}
 f^{\mathrm{DZ}}_{\alpha,\beta}&=4L_h,\\
 (A,G,U,H)&\cong(A_4,V_4,C_3,1),\\
 \mathscr D(f^{\mathrm{DZ}}_{\alpha,\beta})&=\{m\geqslant1:3\nmid m\}.
\end{aligned}
\]
Hence these conclusions follow from the results of this paper without the monodromy or permutation proof of \cite{DingZieve2022}.
The rational function is $k$-indecomposable but geometrically decomposable and represents the unique $V_4/A_4$ quartic class. Its set of permutation extension degrees has least period $3$ and natural density $2/3$.
\end{corollary}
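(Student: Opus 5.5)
The plan is to reduce everything to Proposition~\ref{prop:V4-quartic} and Proposition~\ref{prop:closed-nonlie-support}. The only real work is to identify $f^{\mathrm{DZ}}_{\alpha,\beta}$ with $4L_h$ for the depressed cubic $h=X^3+\alpha X+\beta$. In the notation of Proposition~\ref{prop:V4-quartic} this means $a=0$, $b=\alpha$, $c=\beta$. The numerator of $L_h$ then becomes
\[
X^4-2\alpha X^2-8\beta X+\alpha^2,
\]
since the term $4ac$ vanishes, and the denominator is $4h(X)$. Multiplying by $4$ gives exactly the displayed Ding--Zieve formula.

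Since $h$ is monic irreducible over $k$ and $q$ is odd, Proposition~\ref{prop:V4-quartic} applies to $L_h$. It shows that $L_h$ is separable of degree $4$ with $(A,G,U,H)\cong(A_4,V_4,C_3,1)$, and that $L_h$ is $k$-indecomposable but geometrically decomposable. The same proposition shows that $L_h$ represents the unique $V_4/A_4$ class. Post-composition by the $k$-M\"obius map $t\mapsto4t$ is legitimate because $4\in k^\times$ for odd $q$. It preserves all of these properties and the $k$-M\"obius class, so $f^{\mathrm{DZ}}_{\alpha,\beta}\sim_k L_h$.

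For the extension degrees, Proposition~\ref{prop:closed-nonlie-support}\,(\ref{item:closed-nonlie-support-4}) gives $\mathscr D(f)=\{m:3\nmid m\}$. That formula rests on the permutation triple together with Theorem~\ref{thm:exact-support}, which applies since $d=[\kappa_f:k]=3$. As a check: for $3\nmid m$ the generating coset consists of $3$-cycles in $A_4$ acting on four sheets, each with exactly one fixed sheet. For $3\mid m$ the coset is $V_4$ itself, which contains the identity. By Theorem~\ref{thm:no-accidental} this set is simultaneously the permutation set and the exceptional set.

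The least period $3$ and density $2/3$ then follow from the prime-power corollary with $\ell=3$ and $r=0$. The independence claim, that no monodromy or permutation input from \cite{DingZieve2022} is used, holds because every ingredient above is proved in the present paper. There is no genuine obstacle. The only points needing care are the coefficient matching (checking that $b^2-4ac=\alpha^2$) and noting that the scalar $4$ is invertible.
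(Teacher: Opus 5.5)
Your proposal is correct and follows the paper's own argument: specializing Proposition~\ref{prop:V4-quartic} to $a=0$, $b=\alpha$, $c=\beta$ gives $f^{\mathrm{DZ}}_{\alpha,\beta}=4L_h$. That proposition then supplies the monodromy, uniqueness, and decomposition claims, and Proposition~\ref{prop:closed-nonlie-support} supplies $\mathscr D(f)$; your appeal to the prime-power corollary (with $d=3$, $r=0$) for the period and density is simply an equivalent substitute for the period/density clause already stated in Proposition~\ref{prop:closed-nonlie-support}.
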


\begin{proof}
Comparing the formula in Proposition~\ref{prop:V4-quartic} with the displayed Ding--Zieve formula gives $f^{\mathrm{DZ}}_{\alpha,\beta}=4L_h$ immediately. The same proposition gives the monodromy,
uniqueness, and decomposition assertions, while
Proposition~\ref{prop:closed-nonlie-support} gives the set of extension degrees, period, and density.
\end{proof}

Ding--Zieve's degree-$4$ theorem also contains an even-characteristic additive
case and nonexceptional small-field sporadic cases. We do not reprove that
classification; the corollary isolates the odd exceptional formula and derives its
properties from the present structural theory.

\subsection{The affine construction and exact arithmetic for M\"oller's examples}

Let $R=p^s$ and
\[
 L(Z)=\sum_{i=0}^r a_iZ^{R^i},\qquad
 S(X)=X\left(\sum_{i=0}^r a_iX^{(R^i-1)/n}\right)^n,
\]
so that $L(Z)^n=S(Z^n)$.
Here $n\mid R-1$ and $a_0a_r\ne0$. This is the classical construction by associated linearized and sublinearized
polynomials; see \cite{CoulterHavasHenderson2004}.
For complementary results on exact value distributions of explicit affine
polynomials with rational Galois closure, see Kumallagov
\cite{Kumallagov2026}.

\begin{proposition}[Direct rational full closure for the sublinearized construction]

Assume $\deg S>1$, and after target scaling assume $a_r=1$. Put $e=1$ for $n=1$ and
$e=\operatorname{ord}_n(p)$ for $n>1$, let $Q=p^e$, $h=s/e$, and
$\rho(c)=c^Q$, and set $\ell(\tau):=\sum_{i=0}^r a_i\tau^{hi}\in k[\tau;\rho]$. Then $\mathcal K_\ell=L$ and $F_{\ell,n}=S$. If $V=\ker L$, the full
geometric Galois closure of $S$ is $\mathbf P^1_z$, with $(G,H)=(V\rtimes\mu_n,\mu_n)$ (and $H=1$ for $n=1$). Consequently, every separable $k$-indecomposable exceptional member of this
classical construction is a polynomial representative of the affine family of
Theorem~\ref{thm:affine-wild}.
\end{proposition}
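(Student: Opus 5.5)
We work directly with the normalization of Theorem~\ref{thm:affine-wild}, without any classification input, in three steps.

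\emph{Step 1: identify the Ore polynomial.} Since $h=s/e$ and $R=p^s=Q^h$, one has $\Psi(\tau^{hi})=Z^{Q^{hi}}=Z^{R^i}$. Hence
\[
\mathcal K_\ell(Z)=\sum_i a_iZ^{R^i}=L(Z).
\]
Every exponent $(R^i-1)/n$ is an integer because $n\mid R-1$. Expanding \eqref{eq:affine-wild} term by term for this $\ell$ (with $r$ replaced by $hr$ and with zero coefficients in the intermediate $\tau$-degrees) shows that $F_{\ell,n}=S$. Equivalently, the identity $L(Z)^n=S(Z^n)$, compared with \eqref{eq:affine-kernel-identity}, gives this directly.

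\emph{Step 2: exhibit the Galois closure geometrically.} Put $V=\ker L$. Since $a_0\ne0$, $L$ is separable, so $V$ is an $\mathbb F_R$-subspace (hence an $\mathbb F_Q$-subspace) of order $R^r$. For $n>1$ we have $\mu_n\subseteq\mathbb F_R^\times$, so $L(\zeta z)=\zeta L(z)$. Thus the group $G=V\rtimes\mu_n$ of maps $z\mapsto\zeta z+v$ leaves $t=L(z)^n$ invariant. Counting degrees, $[\bar k(z):\bar k(t)]=nR^r=|G|$, so $\bar k(z)/\bar k(t)$ is Galois with group $G$. The intermediate field $\bar k(z^n)=\bar k(x)$ is the fixed field of $H=\mu_n$ (or of $H=1$ when $n=1$), and $t=S(x)$. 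The Galois closure of $\bar k(x)/\bar k(t)$ is then $\bar k(z)^{\operatorname{core}_G(H)}$.

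\emph{Step 3: core-freeness.} We check that $\operatorname{core}_G(\mu_n)=1$: a nontrivial $\zeta$ does not commute with the nonzero translations, and conjugating $\mu_n$ by a translation $v$ moves its fixed point to $v$. So the full geometric closure is $\mathbf P^1_z$, of genus zero, with pair $(V\rtimes\mu_n,\mu_n)$. The main obstacle is matching this with the \emph{arithmetic} closure rather than just the geometric one. For this we invoke Theorem~\ref{thm:reconstruction}: the Galois closure over $k$ has genus zero because its constant extension is $\bar k(z)$. Theorem~\ref{thm:affine-wild} then applies to the datum, with $p$-semi-elementary group $G$, and forces the indecomposable members into the affine family. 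Since $S$ is already a polynomial of the form $F_{\ell,n}$, it is a polynomial representative of that family. Exceptionality plays no role beyond restricting to the members in question.
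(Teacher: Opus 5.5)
Your proof is correct and follows the paper's argument essentially step for step. You identify $\mathcal K_\ell=L$ and $F_{\ell,n}=S$ termwise via $R=Q^h$, show $t=L(z)^n$ is invariant under $V\rtimes\mu_n$ with degree count $n|V|$, get $\operatorname{core}(\mu_n)=1$ from the point stabilizers of the affine action (using $V\neq0$, which follows from $\deg S=R^r>1$), and finish with Theorem~\ref{thm:affine-wild}. Your explicit remark that the arithmetic closure has genus zero because its constant extension is $\bar k(z)$ just spells out a step the paper leaves implicit.
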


\begin{proof}
Since $R=Q^h$, the identities $\mathcal K_\ell=L$ and $F_{\ell,n}=S$ are
termwise. Let $V$ act on $\bar k(z)$ by translations and $\mu_n$ by
$z\mapsto\zeta z$. With $x:=z^n$ and $t:=L(z)^n=S(x)$, one has $\bar k(z)^{\mu_n}=\bar k(x)$ and $t$ is fixed by
$V\rtimes\mu_n$. Since $L$ is separable,
\[
 [\bar k(z):\bar k(t)]=n|V|=|V\rtimes\mu_n|,
\]
so $\bar k(z)^{V\rtimes\mu_n}=\bar k(t)$. The affine action of
$V\rtimes\mu_n$ on $V$ is faithful and has point stabilizer $\mu_n$; hence
$\operatorname{core}_{V\rtimes\mu_n}(\mu_n)=1$. Therefore $\bar k(z)$ is
the full geometric Galois closure of $\bar k(x)/\bar k(t)$. The final assertion
follows from Theorem~\ref{thm:affine-wild} and the affine extension-degree theory.
\end{proof}

This decomposition correspondence is compatible with
\cite{CoulterHavasHenderson2004}: their complete algorithm first passes to the
least exponent $e$ with $n\mid p^e-1$, namely $e=\operatorname{ord}_n(p)$, and
then uses the corresponding skew-polynomial ring. The additional conclusions are the model of the full Galois closure, its placement in the complete genus-zero classification, and the extension arithmetic supplied by the Frobenius module.

\begin{corollary}[Exact extension arithmetic for M\"oller's affine examples]

Let $\mu(T):=\sum_{i=0}^r m_iT^i\in\mathbb F_p[T]$ be the minimal polynomial of a primitive $N=(p^r-1)$-st root. Let $M$ be the
additive polynomial of \cite[Proposition~16]{Moller2012}, and for $n>1$ let
$f_n$ be a sublinearized polynomial allowed by \cite[Proposition~17]{Moller2012}.
Then necessarily $n\mid p-1$, and moreover
\[
\begin{aligned}
 G_{f_n}&=V\rtimes\mu_n,\\
 A_{f_n}&=V\rtimes\langle\Phi\rangle\cong\operatorname{AGL}_1(p^r),\\
 [\kappa_{f_n}:\mathbb F_p]&=\frac{N}{n},\\
 \mathscr D(f_n)&=\left\{j\geqslant1:\frac{N}{n}\nmid j\right\},
\end{aligned}
\]
where $V=\ker M$ and $\Phi(v)=v^p$.
Thus the least period is $N/n$ and the natural density is $1-n/N$. For the additive
family $M$, the same formulas hold with $n=1$.
\end{corollary}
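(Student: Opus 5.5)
The plan is to specialize the general affine theory to Möller's explicit data; no new group theory should be needed. First I would identify $f_n$, and the additive polynomial $M$ in the case $n=1$, as members of the sublinearized construction. The preceding proposition then gives geometric Galois closure $\mathbf P^1_z$ with $(G,H)=(V\rtimes\mu_n,\mu_n)$, where $V=\ker M$. In Möller's Proposition~16, $M$ is $p$-linearized with coefficients $m_i\in\mathbb F_p$ coming from $\mu(T)$. Hence $M$ corresponds under $\Psi$ to a central Ore polynomial, and $V$ is the $\mathbb F_p[\Phi]$-module $\mathbb F_p[T]/(\mu)$ with $\Phi$ acting as $T$. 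Since $\mu$ is irreducible of degree $r$, $V\cong\mathbb F_{p^r}$ and $\Phi$ acts as multiplication by a primitive $N$-th root $\omega$. In particular $V$ has no proper $\Phi$-stable subspaces, which is the indecomposability input for Theorem~\ref{thm:affine-wild}.

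Second, I would establish $n\mid p-1$. The scalars $\mu_n$ must normalize $V$, so $V$ is an $\mathbb F_{p^{\operatorname{ord}_n(p)}}$-space. The condition in Möller's Proposition~17 that $f_n\in\mathbb F_p[X]$ forces the normalized Frobenius on the translation module to commute with $\mu_n$ up to the descent twist, that is, $\zeta^p=\zeta$. I expect the cleanest route is this: $S(X)\in\mathbb F_p[X]$ requires the exponents $(p^{i}-1)/n$ to be integers for each $i$ with $m_i\ne0$. Since $m_0\ne0$ and $m_1$, or some coefficient with $\gcd$ of its index with $r$ equal to $1$, is nonzero, this gives $n\mid p-1$. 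This is the step I am least sure of, and I would check it against Möller's hypotheses before committing. It is also the collapse to $Q=p$ noted in the comparison table.

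Third, I would apply Theorem~\ref{thm:affine-module-support} with $Q=p$ and $\Phi(v)=v^p=\omega v$. The arithmetic group is $V\rtimes\langle\mu_n,\Phi\rangle$. Because $\mu_n\le\mathbb F_p^\times\le\langle\omega\rangle=\mathbb F_{p^r}^\times$, we get $\langle\mu_n,\Phi\rangle=\mathbb F_{p^r}^\times$, so $A\cong\operatorname{AGL}_1(p^r)$. Moreover $d_V$ is the order of $\omega$ modulo $\mu_n$, which is $N/n$, so $[\kappa:\mathbb F_p]=N/n$. For the extension degrees, criterion~(ii) of that theorem says $m\in\mathscr D(f)$ if and only if $\zeta\omega^m\ne1$ for all $\zeta\in\mu_n$, that is, $\omega^m\notin\mu_n$, which is equivalent to $N/n\nmid m$. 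The least period and density then follow from the prime-power-free corollary on periodicity, or directly: the set is $\{m:b\nmid m\}$ with the single forbidden divisor $b=N/n$. For $n=1$ the same computation gives $d_V=N$, which matches part~(4) of the affine corollary.
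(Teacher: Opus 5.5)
Your first and third steps coincide with the paper's proof. You take $\ell=\mu$ so that $F_{\ell,1}=M$ and $F_{\ell,n}=f_n$, and observe that $\Phi$ acts on $V$ with minimal polynomial $\mu$, hence as a Singer cycle $\omega$ of order $N$. You then read off $A_{f_n}$, $d_V=N/n$ and the criterion $\omega^m\notin\mu_n\iff (N/n)\nmid m$ from Theorem~\ref{thm:affine-module-support}.

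The gap is in your second step. It carries the only genuinely new content of the corollary, and your third step depends on it, since it uses $Q=p$ and $\mu_n\leqslant\mathbb F_p^\times\leqslant\langle\omega\rangle$. From M\"oller's divisibility hypothesis you correctly get that $e=\operatorname{ord}_n(p)$ divides every index $i$ with $m_i\neq0$, so $e\mid r$.

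You then assert that some $m_i$ with $\gcd(i,r)=1$ is nonzero, and nothing in your proposal supports this. It is also stronger than necessary. What is needed is only that the support of $\mu$ is not contained in $e\mathbb Z$ for any $e>1$, i.e.\ $\mu(T)\neq\nu(T^e)$. A support such as $\{0,2,3,6\}$ satisfies this without containing any index coprime to $6$.

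Moreover, this cannot follow from irreducibility of $\mu$ alone. $T^2+1\in\mathbb F_3[T]$ is irreducible yet is a polynomial in $T^2$, and with $n=4$ it satisfies the divisibility condition while $4\nmid p-1$. Primitivity of the root must enter, and your alternative phrasing via $\zeta^p=\zeta$ does not supply it either.

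The missing argument is short. Suppose $\mu(T)=\nu(T^e)$ with $e>1$, and let $\lambda$ be a root of $\mu$.
\begin{itemize}
\item $\lambda^e$ is a root of $\nu\in\mathbb F_p[T]$, which has degree $r/e$. So $\lambda^e$ lies in a subfield $\mathbb F_{p^j}$ with $j\leqslant r/e$, and its order divides $p^j-1\leqslant p^{r/e}-1$.
\item $\lambda$ has order $N$, so $\lambda^e$ has order $N/\gcd(N,e)\geqslant N/e$.
\item $N/(p^{r/e}-1)=\sum_{i=0}^{e-1}p^{ir/e}\geqslant 2e-1>e$, so $N/e>p^{r/e}-1$.
\end{itemize}
These are incompatible, so $e=1$ and $n\mid p-1$.

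The paper reaches the same conclusion slightly differently. It first excludes $p\mid e$ via $\mu'\neq0$. It then uses $\mu(\zeta T)=\mu(T)$ for a primitive $e$-th root of unity $\zeta$ to get $\zeta\lambda=\lambda^{p^j}$ with $0<j<r$. This forces $e=(p^r-1)/(p^{(r,j)}-1)>r$, contradicting $e\mid r$.

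With this step in place, the rest of your proposal goes through as written.
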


\begin{proof}
Over $k=\mathbb F_p$ the coefficient automorphism is trivial. Taking
$\ell(\tau)=\mu(\tau)$ in the affine construction gives
$F_{\ell,1}=M$ and $F_{\ell,n}=f_n$ literally.

For $n>1$, let $e=\operatorname{ord}_n(p)$. Since $m_r=1$, M\"oller's divisibility
condition gives $e\mid r$, and it gives $e\mid i$ whenever $m_i\ne0$, hence
$\mu(T)=\nu(T^e)$. If $p\mid e$ then $\mu'=0$, impossible. If $e>1$, take a
primitive $e$-th root $\zeta$ and a root $\lambda$ of $\mu$. From
$\mu(\zeta T)=\mu(T)$ we get $\zeta\lambda=\lambda^{p^j}$ for some
$0<j<r$. With $d=(r,j)$,
\[
 e=\frac{p^r-1}{p^d-1}\geqslant p^{r/2}+1>r,
\]
contrary to $e\mid r$. Thus $e=1$ and $n\mid p-1$.

The minimal polynomial of $\Phi$ on $V$ is $\mu$, so $\Phi$ has order $N=p^r-1$. Its centralizer is $\langle\Phi\rangle$, and since
$\mu_n\leqslant\mathbb F_p^\times$, one has $\mu_n\leqslant\langle\Phi\rangle$. Theorem~\ref{thm:affine-module-support}
therefore gives the displayed arithmetic group and constant-field degree.
For extension degree $j$, its fixed-vector criterion fails exactly when
$1-\zeta\Phi^j$ has nonzero kernel for some $\zeta\in\mu_n$, equivalently when
$\lambda^j\in\mu_n$, i.e., when $(N/n)\mid j$. For $M$ one takes
$\mu_1=1$, giving the same conclusion with $n=1$.
\end{proof}

The proof uses M\"oller's explicit constructions and their parameter conditions, not his proofs of exceptionality or monodromy. Those conditions include $N>1$ in the additive case and $1<n<N$ in the sublinearized case, so $N/n>1$. The affine theory above independently recovers exceptionality and monodromy, and determines the exact arithmetic group, full constant-field degree, and complete extension arithmetic. The fixed-field rationality used by M\"oller for the affine kernel acting freely and transitively on the sheets does
not by itself imply rationality of the full normal closure; the proposition above
proves that stronger statement for the additive/sublinearized branch. No claim is
made here for the genuinely dihedral-stabilizer branch or for all historical
``affine type'' examples.

\subsection{Genus-zero specializations of Lie-type polynomial families}

Guralnick--Zieve \cite{GuralnickZieve2010} obtain in their one-wild-branch
family a full Galois-closure model
\[
 v^Q-v=w^n,
\]
of genus $(Q-1)(n-1)/2$. Thus its intersection with the present genus-zero
classification is exactly $n=1$.

\begin{corollary}[A uniform Cartan reproof for the genus-zero polynomial case]

For $Q=2^\alpha$, $\alpha>1$ odd, the $n=1$ case is the unique
nonsplit-Cartan form $N_Q$, and Proposition~\ref{prop:lie-cartans-support}~(\ref{item:lie-cartans-support-2})
gives
\[
\begin{aligned}
 f\text{ is exceptional over }\mathbb F_{2^s}
 &\Longleftrightarrow(\alpha,s)=1,\\
 \mathscr D(f)
 &=
 \begin{cases}
 \{m\geqslant1:(m,\alpha)=1\},&(\alpha,s)=1,\\
 \varnothing,&(\alpha,s)>1.
 \end{cases}
\end{aligned}
\]
For $Q=3^\alpha$, $\alpha>1$ odd, let $P_\lambda$ denote the $n=1$
specialization of the characteristic-$3$ formula in
\cite[Theorem~4.1]{GuralnickZieve2010}, with the exceptional finite-field
forms classified in \cite[Theorem~4.2]{GuralnickZieve2010}. In that formula, the factor $X^2-\lambda$ gives a ramified pair, each point having
multiplicity $(Q+1)/4$.
This pair identifies the two quadratic twist classes with those of the present
model $M_{Q,\eta}$:
\[
 \lambda\text{ square}\Longleftrightarrow\eta\text{ square},\qquad
 \lambda\text{ nonsquare}\Longleftrightarrow\eta\text{ nonsquare}.
\]
Hence Proposition~\ref{prop:lie-cartans-support}~(\ref{item:lie-cartans-support-4}), without the historical exceptionality proof, gives
\[
\begin{aligned}
 P_\lambda\text{ is exceptional over }\mathbb F_{3^s}
 &\Longleftrightarrow \lambda\text{ nonsquare and }(\alpha,s)=1,\\
 \mathscr D(P_\lambda)&=\{m:(m,2\alpha)=1\}.
\end{aligned}
\]
\end{corollary}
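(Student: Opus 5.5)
The plan has two independent parts: identify the historical families with the paper's normalized Cartan forms, then read off the extension-degree conclusions from Proposition~\ref{prop:lie-cartans-support}.

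\emph{Step 1: the characteristic-$2$ identification.} For $Q=2^\alpha$ with $\alpha>1$ odd, the Guralnick--Zieve closure model $v^Q-v=w^n$ at $n=1$ is a genus-zero curve. Its geometric monodromy pair has stabilizer of order $Q+1$ inside $\operatorname{PGL}_2(Q)$, i.e.\ the nonsplit-Cartan normalizer, which is the pair recorded in their paper. By Proposition~\ref{prop:nonsplit-cartan} this pair has exactly one $k$-form over every $\mathbb F_{2^s}$, represented by $N_Q$. Since $N_Q$ is unique, the polynomial must be $k$-M\"obius equivalent to it; I will not compare normal forms. Part~(\ref{item:lie-cartans-support-2}) of Proposition~\ref{prop:lie-cartans-support} then gives the displayed $\mathscr D(f)$ directly. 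Exceptionality over $\mathbb F_{2^s}$ is the statement $1\in\mathscr D(f)$, which Theorem~\ref{thm:no-accidental} identifies with the permutation property.

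\emph{Step 2: the characteristic-$3$ identification.} Here $G=\operatorname{PSL}_2(Q)$ with $Q=3^\alpha$ and $\alpha$ odd, so $Q\equiv3\pmod4$. By Proposition~\ref{prop:nonsplit-cartan} and Lemma~\ref{lem:outer-rigidity} there are exactly two $k$-forms, $M_{Q,\eta}$ with $\eta$ running over the two square classes. These forms are separated by an intrinsic arithmetic invariant, namely the splitting type over $k$ of a Frobenius-stable ramified pair. I locate that pair on both sides and compare.

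\begin{itemize}
\item In $P_\lambda$, the factor $X^2-\lambda$ is a pair of points of multiplicity $(Q+1)/4$, split exactly when $\lambda$ is a square.
\item In $M_{Q,\eta}$, I find the tame ramified pair from the relation $M_Q(X)^2=N_Q(-X^2-2\epsilon)$. The tame fiber of $N_Q$ pulls back to points with $X^2=$ const$\cdot\eta$, and the ramification index there is $(Q+1)/4$ after halving by the degree-two map. Their splitting is governed by the square class of $\eta$, with the constant a square in $\mathbb F_3$ after accounting for $\epsilon=-1$.
\end{itemize}

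Matching the two splitting types gives the dictionary $\lambda$ square $\Leftrightarrow$ $\eta$ square.

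\emph{Step 3: conclusions.} Proposition~\ref{prop:lie-cartans-support}~(\ref{item:lie-cartans-support-4}) says the pure form has empty $\mathscr D$. The field-diagonal (nonsquare) form has $\mathscr D=\{m:(m,2\alpha)=1\}$ when $(\alpha,s)=1$; its two conditions are $m$ odd and $(m,\alpha)=1$. Evaluating at $m=1$ and applying Theorem~\ref{thm:exact-support} gives the exceptionality criterion.

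\emph{Main obstacle.} I expect the hardest step to be the twist-class identification in Step 2: checking that the constant in the fiber $X^2=c\,\eta$ is a square, so that the dictionary is not reversed. I would first test it at the smallest case $Q=27$, using the explicit formula \eqref{eq:NQ} evaluated at the tame endpoint $z=\pm2$.
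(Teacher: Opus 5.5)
Your proposal is correct and follows essentially the same route as the paper: uniqueness of the PGL nonsplit-Cartan $k$-form in characteristic $2$, then in characteristic $3$ locating the multiplicity-$(Q+1)/4$ pair of $M_{Q,\eta}$ through $M_{Q,\eta}(X)^2=\eta N_Q(-X^2/\eta+2)$ and matching splitting types, and finally invoking Proposition~\ref{prop:lie-cartans-support}. Two small corrections are needed. First, in characteristic $2$ the stabilizer is the normalizer $D_{2(Q+1)}$, of order $2(Q+1)$ rather than $Q+1$. Second, the pair lies over $z=-2$ only, that is $W=-1$, where $N_Q$ vanishes to order $Q+1$ in $W$; this gives $X^2=4\eta=\eta$ in characteristic $3$, and you should state explicitly that the pair is intrinsic because every other zero of $M_{Q,\eta}$ has multiplicity $(Q+1)/2$ or $1$.
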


\begin{proof}
The historical genus and geometric pair put the $n=1$ stabilizer in the
nonsplit-Cartan case of Proposition~\ref{prop:nonsplit-cartan}; in characteristic
$2$ that case has one $k$-form. Its criterion after extension to $\mathbb F_{2^{sm}}$ is $(\alpha,sm)=1$; this is impossible for every $m$ if $(\alpha,s)>1$, and otherwise is equivalent to $(\alpha,m)=1$. In characteristic $3$, the historical formula exhibits $X^2=\lambda$ as the unique pair with the stated multiplicity. In the present model (where $4=1$),
\[
 M_{Q,\eta}(X)^2=\eta N_Q\!\left(-\frac{X^2}{\eta}+2\right),
\]
where $Q\equiv3\pmod4$. To compute the corresponding multiplicity, write $z=W+W^{-1}$. At $W=-1$, one has $z+2=(W+1)^2/W$. The factor $W^{Q-1}-1$ in \eqref{eq:NQ} has a simple zero there, and the denominator is nonzero. Thus $N_Q(z)$ has order $Q+1$ in $W+1$, hence order $(Q+1)/2$ in $z+2$. At each point $X^2=\eta$, the parameter $-X^2/\eta+2$ meets $z=-2=1$ simply. Taking the square root gives multiplicity $(Q+1)/4$ for $M_{Q,\eta}$. The other zeros have different multiplicities, so this pair is intrinsic. Its split/quadratic splitting type is invariant
under $k$-M\"obius equivalence and distinguishes the two finite-field
forms. The square-class identification follows, and
Proposition~\ref{prop:lie-cartans-support}~(\ref{item:lie-cartans-support-4}) gives the arithmetic assertions.
\end{proof}

The numerical extension criteria themselves are historical.
For characteristic $2$, Cohen--Matthews prove the criterion in Theorems 1.1--1.2
\cite{CohenMatthews1994}. Lenstra--Zieve give the characteristic-$3$ criterion on
p.~218 \cite{LenstraZieve1996}. The present framework gives a uniform Cartan/semilinear reproof and identifies the historical twist parameter with the nonsplit-Cartan descent class. The Guralnick--Zieve model already shows that $n>1$ has positive full-closure genus. The higher-parameter Lenstra--Zieve family, whose parameter is denoted $m$ in \cite{LenstraZieve1996}, has genus $(m-1)(Q-1)/2$; thus its members with $m>1$ lie outside the present scope.
The two-wild-branch family of \cite{GuralnickRosenbergZieve2010} is likewise
positive-genus and is not claimed here.

\subsection{The Ding--Zieve wild family}

Ding--Zieve use $r=p^{2k}$ and $q=p^\ell$; here $Q=r$, $e=k$, and $s=\ell$.
Their Theorem~1.7 proves, under $\nu_2(s)\leqslant\nu_2(e)$, indecomposability,
exceptionality, and permutation on every odd-degree extension; Proposition~5.1
gives the ramification, and Remark~1.8 gives one $Q=9$ decomposition over
$\mathbb F_{q^2}$. Their proof of exceptionality runs through Proposition~4.1(4.1.4),
whereas our proof factors through the general split-Cartan theorem above. Their
Section~6 also describes the discovery as a genus-zero $\operatorname{PSL}_2$ search with a
dihedral stabilizer of order $Q-1$, exactly the split-Cartan geometry here. We use
their displayed function and the Dickson identity as input.

\begin{corollary}[The Ding--Zieve family from the split-Cartan classification]

Let $p$ be odd, $k=\mathbb F_q$ with $q=p^s$, $Q=p^{2e}$, and let
$a\in k^\times$ be nonsquare. Put
\[
 f^{\mathrm{DZ}}_{Q,a}(X)=
 \frac{E_Q(X,a)^{(Q+1)/2}}{(X^2-4a)^{(Q^2-Q)/4}},
\]
where $E_Q$ is the Dickson polynomial of the second kind. Then:
\begin{enumerate}[label=(\arabic*),ref=\arabic*,font=\itshape]
\item\label{item:unlabelled-the-ding-zieve-family-from-the-split-cartan-classification-1}
\[
 f^{\mathrm{DZ}}_{Q,a}=a^{(Q-1)/2}T_{Q,a}.
\]
Hence, up to the displayed $k$-target scaling, it is the nonsquare diagonal
split-Cartan form with $(G,H)=(\operatorname{PSL}_2(Q),D_{Q-1})$, degree $Q(Q+1)/2$, and one $k$-M\"obius class for fixed $Q$ and $k$.

\item\label{item:unlabelled-the-ding-zieve-family-from-the-split-cartan-classification-2} If $d_F=2e/(2e,s)$, then
\[
\begin{aligned}
 f^{\mathrm{DZ}}_{Q,a}\text{ exceptional}
 &\Longleftrightarrow d_F\text{ even}
 \Longleftrightarrow \nu_2(s)\leqslant\nu_2(e),\\
 \mathscr D(f^{\mathrm{DZ}}_{Q,a})
 &=\{m\geqslant1:m\text{ odd}\}.
\end{aligned}
\]
Thus the hypothesis and odd-extension assertion of
\cite[Theorem~1.7]{DingZieve2023} are recovered, while necessity and failure on every even-degree extension are additional relative to the statement of that theorem. Under these conditions, the rational function is
indecomposable over $k$ and wild. It is geometrically indecomposable exactly for
$Q\ne9$, and it is not $k$-M\"obius equivalent to a polynomial.

\item\label{item:unlabelled-the-ding-zieve-family-from-the-split-cartan-classification-3} Its branch points are $0,\infty$. The fiber over $0$ consists of $Q$
points of index $(Q+1)/2$; the fiber over $\infty$ has three points of indices
\[
 Q,\qquad \frac{Q(Q-1)}4,\qquad \frac{Q(Q-1)}4.
\]
For the Galois quotient $\mathbf P^1\to\mathbf P^1/G$, the inertia orders on
$\mathbf P^1(\mathbb F_Q)$, $\mathbf P^1(\mathbb F_{Q^2})\setminus\mathbf P^1(\mathbb F_Q)$, and its complement are
$Q(Q-1)/2$, $(Q+1)/2$, and $1$, respectively, with $I_1=\operatorname{Syl}_p(I_0)$ and $I_2=1$. This gives a second proof of \cite[Proposition~5.1]{DingZieve2023}.

\item If $Q=9$ and the equivalent conditions in (\ref{item:unlabelled-the-ding-zieve-family-from-the-split-cartan-classification-2}) hold, then
$f^{\mathrm{DZ}}_{9,a}/\mathbb F_{q^m}$ is indecomposable exactly for odd $m$; for
every even $m$, every complete decomposition has inner-to-outer factor degrees
$(3,15)$. Over $\mathbb F_{q^2}$, if $b^2=a$, one such decomposition is
\[
 f^{\mathrm{DZ}}_{9,a}=g_b\circ h_b,
 \quad h_b(X)=\frac{X^3+ab}{X^2+bX+a},
 \quad g_b(Y)=\frac{(Y^3+aY+ab)^5}{Y^6}.
\]
\end{enumerate}
\end{corollary}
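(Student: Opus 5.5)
The plan is to deduce the corollary from the split-Cartan machinery of Proposition~\ref{prop:split-cartan} and Proposition~\ref{prop:lie-cartans-support}, with the Dickson identity as the only explicit input. The steps are taken in the order of the four items.

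\emph{Item (1): the identity.} With $W_0=Z^{(Q-1)/2}$ and $Y=W_0+\epsilon W_0^{-1}$, where $\epsilon=1$ since $Q=p^{2e}\equiv1\pmod4$, the defining relation \eqref{eq:TQ-def} gives $T_Q(Y)=J_Q(Z)$. Substituting $Y=X/\theta$ with $\theta^2=a$ and using $W_0=w/\theta$, the variable $X=w+a/w$ is the Dickson parameter. The identity $E_Q(w+a/w,a)=(w^{Q+1}-(a/w)^{Q+1})/(w-a/w)$ together with $X^2-4a=(w-a/w)^2$ expresses the displayed quotient in terms of $w$. Comparing with $J_Q$ written through $Z^{Q^2}-Z$ and $Z^Q-Z$ in the $W_0$ coordinate should produce $f^{\mathrm{DZ}}_{Q,a}=a^{(Q-1)/2}T_{Q,a}$. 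The power of $a$ is fixed by matching leading terms at $X=\infty$, and the odd symmetry of $T_Q$ is used throughout. Since $a$ is a nonsquare, this is the diagonal form. The degree and the single class per $(Q,k)$ then come from Proposition~\ref{prop:split-cartan}, or from Theorem~\ref{thm:wild-novelty} when $Q=9$.

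\emph{Item (2): exceptionality and extension degrees.} Proposition~\ref{prop:lie-cartans-support}(\ref{item:lie-cartans-support-3}) states that the diagonal form has $\mathscr D=\{m \text{ odd}\}$ exactly when $\alpha=2e$ and $d_F$ are even, and that $\mathscr D$ is empty otherwise. For $Q=9$ the relevant class is the degree-$45$ novelty of Proposition~\ref{prop:lie-novelty-support}, which gives the same set. The arithmetic equivalence is short: $d_F=2e/(2e,s)$ is even iff $\nu_2(2e)>\nu_2(s)$, i.e. iff $\nu_2(s)\leqslant\nu_2(e)$.

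Indecomposability and wildness follow from the classification. Geometric indecomposability for $Q\ne9$ is Giudici's maximality of $D_{Q-1}$ for $Q\geqslant13$. The small values $Q=p^{2e}$ are $9,25,49,\dots$, so $Q=9$ is the only exception. For $Q=9$, $k$-indecomposability is the novelty row of Theorem~\ref{thm:wild-novelty}. Non-polynomiality is Corollary~\ref{cor:complete-polynomializability}.

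\emph{Item (3): ramification.} Theorem~\ref{thm:branch-orbit} applies with the two inertia types from Proposition~\ref{prop:Rp-factor}-style orbit reasoning. Borel inertia in $\operatorname{PSL}_2(Q)$ has order $Q(Q-1)/2$, and nonsplit-torus inertia has order $(Q+1)/2$. The double cosets $D_{Q-1}\backslash G/I$ are then computed via $e=[I:I\cap g^{-1}Hg]$. $D_{Q-1}$ meets the nonsplit torus trivially, so all $Q$ points have index $(Q+1)/2$. On $\mathbf P^1(\mathbb F_Q)$, the split Cartan has one orbit of size $2$ and the remaining $Q-1$ points split into two orbits. This yields the indices $Q,\ Q(Q-1)/4,\ Q(Q-1)/4$ by orbit-stabilizer.

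The lower filtration comes from the standard fact that the unipotent radical is elementary abelian and acts by translations. That gives $I_1=\operatorname{Syl}_p$ and $I_2=1$, and Hilbert's formula can be cross-checked against Riemann--Hurwitz of genus zero.

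\emph{Item (4): the $Q=9$ decomposition.} By Theorem~\ref{thm:complete-decomposition}, decompositions over $\mathbb F_{q^m}$ correspond to Frobenius-stable chains above $H=D_8$ in $\operatorname{PSL}_2(9)\cong A_6$. For even $m$ the base-changed Frobenius has trivial diagonal component, so it is inner modulo $H$ and normalizes the overgroups $D_8<S_4<A_6$. The indices $3$ and $15$ give the factor degrees $(3,15)$. For odd $m$ the novelty maximality persists.

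Uniqueness of the degree pattern for complete decompositions needs that $S_4$ is the only overgroup chain: the overgroups of $D_8$ are the two $S_4$ classes, and I expect that only one of them is stable. The explicit $g_b\circ h_b$ is checked by direct composition.

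The main obstacle is item (1), the exact identity with its scalar $a^{(Q-1)/2}$. The plan is to do it in the $w$ coordinate, fixing the scalar by comparing behaviour at infinity.
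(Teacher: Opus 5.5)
Items (1)--(3) follow the paper's route. In (1), the paper first reduces to $a=1$ via $E_Q(\theta X,a)=\theta^QE_Q(X,1)$, which produces the scalar $\theta^{Q-1}=a^{(Q-1)/2}$ exactly instead of by matching at infinity. In (2), you cite Proposition~\ref{prop:lie-cartans-support}(3) plus the $2$-adic rewriting, as the paper does. In (3), you use the double-coset count of Theorem~\ref{thm:branch-orbit}, as the paper does, but you should say why \emph{every} conjugate of $D_{Q-1}$ meets the tame inertia trivially: $\gcd(Q-1,(Q+1)/2)=1$ because $Q\equiv1\pmod4$. Your Riemann--Hurwitz cross-check is a valid alternative proof of $I_2=1$: $C$ has genus zero and $I_1$ is the Sylow $p$-subgroup of $I_0$, so the count leaves exactly $Q-1$ for $\sum_{i\geqslant1}(|I_i|-1)$ at each wild point. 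The paper instead computes $\operatorname{ord}_t(\sigma(t)-t)$ directly.

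The step that fails is in item (4). You claim that uniqueness of the degree pattern requires a unique overgroup chain, and you expect only one of the two $S_4$'s over $D_8$ to be stable. That expectation is false, and it contradicts your own preceding sentence. For even $m$ the Frobenius has trivial outer image, so it acts on $G$ as $\operatorname{Int}(g_0)$ with $g_0\in\mathrm N_G(D_8)=D_8$; hence \emph{every} overgroup of $D_8$ is stable. Now $A_6$ has $45$ Sylow $2$-subgroups, and each of its two $S_4$-classes has $15$ members, each containing three of them. So $D_8$ lies in exactly one $S_4$ of each class, and there are two stable maximal chains over $\mathbb F_{q^m}$.

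What the statement actually needs is that every maximal chain has index sequence $(3,15)$, that is, that every subgroup strictly between $D_8$ and $A_6$ is an $S_4$. This follows from orders, as in the paper. An overgroup has order divisible by $8$, which excludes the Borel subgroup (order $36$), $A_5$, $A_4$ and $D_{10}$, and the split-Cartan normalizer is $D_8$ itself. So the only proper overgroups are the subgroups $\operatorname{PGL}_2(3)\cong S_4$, and $[S_4:D_8]=3$ is prime. Replace the uniqueness expectation with this argument; as the paper notes, the intermediate $S_4$ is not unique, but the chain length and index sequence are.
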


\begin{proof}
Put $W_0=Z^{(Q-1)/2}$ and $Y=W_0+W_0^{-1}$. The Dickson identity and
\eqref{eq:TQ-def} give $f^{\mathrm{DZ}}_{Q,1}(Y)=T_Q(Y)$. If $\theta^2=a$, then
$E_Q(\theta X,a)=\theta^Q E_Q(X,1)$, hence $f^{\mathrm{DZ}}_{Q,a}(X)=\theta^Q f^{\mathrm{DZ}}_{Q,1}(X/\theta)=a^{(Q-1)/2}T_{Q,a}(X)$, proving (\ref{item:unlabelled-the-ding-zieve-family-from-the-split-cartan-classification-1}). Proposition~\ref{prop:lie-cartans-support}~(\ref{item:lie-cartans-support-3}) now gives
(\ref{item:unlabelled-the-ding-zieve-family-from-the-split-cartan-classification-2}); the $2$-adic equivalence is immediate from
$d_F=2e/(2e,s)$. The remaining structural assertions follow from
Proposition~\ref{prop:split-cartan}, Theorem~\ref{thm:structural-consequences},
Theorem~\ref{thm:wild-novelty}, and
Corollary~\ref{cor:complete-polynomializability}.

For (\ref{item:unlabelled-the-ding-zieve-family-from-the-split-cartan-classification-3}), the two $G$-orbits in the ramification locus of $\mathbf P^1\to\mathbf P^1/G$ are
$\mathbf P^1(\mathbb F_Q)$ and $\mathbf P^1(\mathbb F_{Q^2})\setminus\mathbf P^1(\mathbb F_Q)$, with inertia orders
$Q(Q-1)/2$ and $(Q+1)/2$; the defining invariant sends them to $\infty$ and $0$.
Since $\gcd(Q-1,(Q+1)/2)=1$, the tame intersections with $H=D_{Q-1}$ are trivial,
so the branch-orbit formula gives the $Q$ points over $0$. On
$G/I_{\rm w}\cong\mathbf P^1(\mathbb F_Q)$ the split-Cartan normalizer has orbit sizes
$2,(Q-1)/2,(Q-1)/2$; the double-coset formula in Theorem~\ref{thm:branch-orbit} gives
$\operatorname{Ram}_{\infty}(f^{\mathrm{DZ}}_{Q,a})
=\{\!\{Q,Q(Q-1)/4,Q(Q-1)/4\}\!\}$. At a wild point, with $t=1/Z$ and
$\sigma(Z)=\lambda Z+c$ in the Borel inertia,
\[
 \sigma(t)-t=\frac{(1-\lambda)t-ct^2}{\lambda+ct}.
\]
Its valuation is $1$ for $\lambda\ne1$ and $2$ for a nontrivial translation,
so $I_1$ is the translation Sylow-$p$ subgroup and $I_2=1$; the other inertia is
tame.

Finally, let $Q=9$. Theorem~\ref{thm:wild-novelty} gives
$G\cong A_6$, $A=M_{10}$, and $H=D_8$. Odd extensions retain the arithmetic
pair, while even extensions reduce to $(G,H)$. To determine complete decompositions, use the Dickson subgroup list for $A_6\cong\operatorname{PSL}_2(9)$. A proper overgroup of $D_8$ has order divisible by eight. The Borel, $A_4$, $A_5$, and nonsplit-Cartan types have orders $36,12,60,10$ and cannot contain it; the split-Cartan normalizer is $D_8$ itself. The only maximal overgroup type is the square-subfield group $\operatorname{PGL}_2(3)\cong S_4$. Since $[S_4:D_8]=3$ and $[A_6:S_4]=15$, every maximal chain has the form
\[
 D_8\lneq S_4\lneq A_6
\]
with indices $3,15$. The intermediate $S_4$ need not be unique, but the chain length and index sequence are. Thus Theorem~\ref{thm:complete-decomposition} gives the
factor-degree statement. The displayed $\mathbb F_{q^2}$ decomposition is precisely
\cite[Remark~1.8]{DingZieve2023}; the decomposition behavior over every finite extension is supplied by the
preceding subgroup-chain argument.
\end{proof}

\subsection{Classical and cubic forms}

The prime-index separable power and R\'edei towers are the prime-order
specializations of Proposition~\ref{prop:cyclic}, while the Dickson tower is
the dihedral family of Proposition~\ref{prop:dihedral}; hence the normal forms
agree. Independently, the extension-degree theory above gives the three
classical extension criteria at once:
\[
\begin{aligned}
 \mathscr D(X^\ell)&=\{m:\operatorname{ord}_\ell(q)\nmid m\},\\
 \mathscr D(R_{\ell,\delta})&=\{m:\operatorname{ord}_\ell(-q)\nmid m\},\\
 \mathscr D(D_\ell(X,a))&=\{m:\operatorname{ord}_\ell(q^2)\nmid m\}.
\end{aligned}
\]
Here $a\in k^\times$. Within the present classification the classical Dickson family appears as two square-class forms in odd characteristic and one in characteristic $2$. Composite indices lie in the decomposition closure, while the characteristic-prime endpoints belong to the inseparable boundary.

For degree $3$, Ding--Zieve \cite[Theorem~1.3]{DingZieve2022} give a conceptual
reformulation of the Ferraguti--Micheli classification. The separable permutation class is represented by $X^3$ when $q\equiv2\pmod3$, by a degree-three R\'edei function when $q\equiv1\pmod3$, and by $X^3-\alpha X$ with $\alpha$ nonsquare when $3\mid q$. These are the split cyclic, nonsplit cyclic, and pure-additive affine case with $r=1$, respectively. The extension-degree theory gives uniformly
\[
 (A,G,U,H)\cong(S_3,C_3,C_2,1),\qquad
 \mathscr D(f)=\{m\geqslant1:m\text{ odd}\}.
\]
The $\alpha=0$ characteristic-$3$ case $X^3$ is the inseparable Frobenius
boundary. No new proof of the historical cubic classification is claimed.
For the broader classification of all degree-three rational functions over
finite fields of odd characteristic under pre- and post-composition by
M\"obius transformations over the same field, see
Hou--Peng--Qiang--Zhao \cite{HouPengQiangZhao2026}.

\section{Concluding remarks}\label{sec:conclusion}

For a semilinear $\mathbf P^1$-datum $\mathcal D$ over $k$, the quotient morphism $\pi_{\mathcal D}:C/H\to C/G$ and its Frobenius descent $\gamma_{\mathcal D}:X_{\mathcal D}\to Y_{\mathcal D}$ determine an intrinsic $k$-M\"obius class $\mathcal M_{\mathcal D}$, and every separable rational function with Galois closure of genus zero belongs to such a class. Different semilinear data can determine the same class; Section~\ref{sec:semilinear-data} gives the exact equality criterion. The Frobenius component is essential: arithmetic and geometric primitivity can differ, producing rational functions that are indecomposable over $k$ but geometrically decomposable. The tame and wild classifications determine all such indecomposable $k$-M\"obius classes of degree greater than one over the prescribed finite field.

For every separable rational function of degree greater than $1$ in the genus-zero class, permutation over a finite field is equivalent to exceptionality over that field. If $d$ is the full constant-field degree, whether $m\in\mathscr D(f)$ depends only on $\gcd(m,d)$ through the divisor set $\mathcal E_f$, which is closed under taking divisors. For the indecomposable classes, the family analysis determines $\mathscr D(f)$ explicitly, together with exact class counts and the existence or nonexistence of polynomial representatives.

Section~\ref{sec:historical-recovery} locates several previously known exceptional families within this classification. For the entries explicitly identified there as independent recoveries, only the stated historical formula or geometric input is used, and the present arguments independently rederive the indicated extension-degree or structural conclusions. When the numerical criterion is already historical, that section says so explicitly. Other entries are recorded only as compatibility with known normal forms or as structural placement inside the present classification.

Every functional decomposition is encoded by a strict $\varphi$-stable subgroup chain
\[
 H=J_0\lneq J_1\lneq\cdots\lneq J_r=G,
\]
and complete decompositions are exactly those for which each interval contains no further $\varphi$-stable subgroup. Every factor is separable and again has Galois closure of genus zero; its arithmetic normal closure and full constant field are determined by the corresponding factor core. Complete decompositions need not have the same length or factor-degree multiset in general. In the affine family, the Jordan--H\"older theorem for the associated Frobenius module gives the same length and factor-degree multiset for all complete decompositions.

Related positive-genus results include permutation--exceptionality criteria
for rational functions induced by equivariant elliptic isogenies
\cite{Fan2026EllipticIsogeny} and the arithmetic classification of separable
indecomposable tame exceptional maps with genus-one Galois closure
\cite{Fan2026GenusOne}. Beyond these settings, it is natural to ask which
parts of the present subgroup-chain formalism extend to positive-genus
Galois closures, where quotient curves need no longer have genus zero.

\end{document}